\documentclass[letterpaper]{article}
\usepackage{arxiv}
\usepackage[round]{natbib}
\usepackage[T1]{fontenc}
\usepackage[utf8]{inputenc}
\usepackage{amsmath,amssymb,amsthm,mathtools}
\usepackage{booktabs,array,graphicx,longtable}
\usepackage{alphalph}
\usepackage{algorithm}
\usepackage[noend]{algpseudocode}
\usepackage{titletoc,microtype,xurl,balance,hyperref}
\newtheorem{theorem}{Theorem}[section]
\newtheorem{lemma}[theorem]{Lemma}
\newtheorem{proposition}[theorem]{Proposition}
\newtheorem{corollary}[theorem]{Corollary}
\theoremstyle{definition}
\newtheorem{assumption}[theorem]{Assumption}

\newcommand{\R}{\mathbb R}
\newcommand{\E}{\mathbb E}
\newcommand{\Prb}{\mathbb P}
\newcommand{\norm}[1]{\lVert #1\rVert}
\newcommand{\ip}[2]{\langle #1,#2\rangle}
\newcommand{\floor}[1]{\lfloor #1\rfloor}
\newcommand{\ceil}[1]{\lceil #1\rceil}

\newcommand{\argmin}{\operatorname*{arg\,min}}
\newcommand{\Ball}{\mathbb B}

\newcommand{\proj}{\Pi_Q}
\newcommand{\pos}[1]{[#1]_+}
\newcommand{\gammaMax}{\gamma_{\max}}
\newcommand{\FreshMean}{\textsc{SampleMean}}
\newcommand{\PaidBatch}{\textsc{SelectBatch}}
\hypersetup{colorlinks=true,linkcolor=blue,citecolor=blue,urlcolor=blue,pdfauthor={},pdftitle={Adaptive Two-Point Zeroth-Order Accelerated Methods for Stochastic Convex Optimization Problems}}
\allowdisplaybreaks

\begin{document}
%\runningtitle{Adaptive Two-Point Zeroth-Order Acceleration}
\title{Adaptive two-points zero-order accelerated methods for stochastic convex optimization problems}
%\twocolumn[
%\aistatstitle{Adaptive Two-Point Zeroth-Order Accelerated Methods\\for Stochastic Convex Optimization Problems}
%\aistatsauthor{Anonymous Authors}
%\aistatsaddress{Anonymous Institution}
%]
\author{
  Yuriy Dorn \\
  IAI MSU \\
  \texttt{dornyv@my.msu.ru}
  \And
  Osman Osmanov \\
  Innopolis University\\
  \And
  Timofei Loginov \\
  MIRIAI \\
  \And
   Nazarii Tupitsa\\
   MBZUAI\\
   \AND
  Darina Dvinskikh \\
  HSE University \\
  \And
  Alexander Gasnikov \\
  Innopolis University, Trusted AI Research Center, RAS, \\
  Steklov Mathematical Institute, RAS \\
}
\maketitle
\begin{abstract}
We study stochastic convex optimization, where each sampled function can be
evaluated at two arbitrary points. Our accelerated method chooses
step sizes, batch sizes and smoothing radii from function values.
A forward difference between smoothing scales controls the radius;
we account for changes in the smoothed objective. Under the stated
regularity and second-moment assumptions, exact function evaluations give
\begin{multline*}
  \mathbb{E}\bigl[f(x_N)-f(x^\star)\bigr]
  \le \widetilde{O}\biggl(
    \frac{\sqrt{d}\,M_2 D}{\sqrt{T}} 
  +\min\biggl(
    \frac{L D^2}{N^2},\;
    \frac{d^{1/4} M D}{N}+\frac{\sqrt{d}\,M D^2}{\gamma_{\max} N^2}
  \biggr)\biggr).
\end{multline*}
Here $N,T,d,D$ denote the numbers of optimization steps and paired
evaluations, the dimension and a feasible-set diameter bound, respectively. Evaluations are available within Euclidean distance $\gamma_{\max}$ of the feasible set. For the population objective $f(x)=\mathbb{E}_{\xi} F(x,\xi)$, $M$ satisfies
$|f(x)-f(y)|\le M\|x-y\|_2$, and $M_2$ satisfies
$\mathbb{E}_{\xi}\|\nabla F(x,\xi)\|_2^2\le M_2^2$ for sample weak gradients in
the interior of the evaluation domain. The method requires no $L,M,M_2$, where $L$ is a Lipschitz constant
of $\nabla f$; set $L=+\infty$ without smoothness. The method uses exactly $2T$ values in
$3N$ sequential sampling rounds with predetermined batches, or $5N$ with
batches chosen from observations. An extension selects a search radius without a supplied
distance bound. Fixed radius parameters can leave
nonvanishing error; suitable choices depending on the
budgets yield convergence.
\end{abstract}

\section{Introduction}
\label{sec:intro}
\label{sec:comparison} 

We study stochastic convex optimization using only function values:
\begin{equation}
 \min_{x\in Q}f(x),\qquad f(x)=\E_\xi F(x,\xi),
 \label{eq:problem}
\end{equation}
%where $Q\subset\R^d$ is compact and convex. Each sampled loss can be evaluated at two arbitrary points. Differences along random directions estimate gradients of a smoothed objective, while averaging independent estimates reduces sampling error. Evaluations within a batch can run in parallel, so we distinguish updates $N$ from sampled losses $T$ ($2T$ function values). We also count sequential sampling rounds and track admissible evaluation errors.
%\na{``track admissible evaluation errors'' is vague; the rewrite changes only the last sentence.}
%\na{Terminology: ``sampled loss'', ``sample loss'', ``sampled function'' (abstract), ``mean loss''. Pick one term for $F(\cdot,\xi)$ and one for $f$ throughout.}
%\na{The rewrite also adds citations for two-point estimation and smoothing (entries in \texttt{additions.bib}). Rewrite below replaces the line above these notes.}
where $Q\subset\R^d$ is compact and convex. Each sampled loss can be evaluated at two arbitrary points. Differences along random directions estimate gradients of a smoothed objective \citep{nemirovski1983problem,flaxman2005online,nesterov2017random,duchi2015optimal,ghadimi2013stochastic}, while averaging independent estimates reduces sampling error; see \citet{akhavan2020exploiting,berahas2022theoretical} for other estimators and comparisons. Evaluations within a batch can run in parallel, so we distinguish updates $N$ from sampled losses $T$ ($2T$ function values). We also count sequential sampling rounds and allow bounded errors in function values.

%The approach of \citet{Gasnikov2022} combines two-point estimation \citep{Shamir2017} with smoothing, acceleration, and batching. Let $M$ bound the mean-loss Lipschitz constant, $M_2$ control the sample gradient second moments, and $R$ be the initial distance to a minimizer. The smoothing bias is at most $M\gamma$. For $M>0$, choosing $\gamma=\varepsilon/(2M)$ gives the tuned tradeoff
%\na{Definitions of $M$ and $M_2$ are awkward (``mean-loss Lipschitz constant'', ``control the \dots second moments'').}
%\na{$\gamma$ is not introduced before ``smoothing bias is at most $M\gamma$''.}
%\na{``tuned tradeoff'' $\to$ ``the bounds''.}
%\na{The rewrite also adds citations. Check that ``tolerates adversarial noise in function values'' describes \citet{Gasnikov2022} correctly: \citet{kornilov2023accelerated} say its optimality includes the admissible level of adversarial noise. Then Section~\ref{sec:protected-main} should compare our allowance for $\Delta$ with that level. Rewrite below replaces the line above these notes.}
The approach of \citet{Gasnikov2022} combines two-point estimation \citep{duchi2015optimal,Shamir2017} with smoothing \citep{nesterov2005smooth,yousefian2012stochastic,duchi2012randomized}, acceleration \citep{lan2012optimal,GasnikovNesterov2018}, and batching, and also tolerates adversarial noise in function values; related accelerated zeroth-order methods include \citet{dvurechensky2021accelerated,kornilov2023accelerated}. Let $M$ be the Lipschitz constant of $f$, let $M_2^2$ bound the second moment of sample gradients, and let $R$ be the initial distance to a minimizer. For smoothing radius $\gamma$, the smoothing bias is at most $M\gamma$. For $M>0$, choosing $\gamma=\varepsilon/(2M)$ gives the bounds
\begin{equation}
 \begin{split}
 N_\star&=\widetilde O\!\left(1+
 d^{1/4}MR/\varepsilon\right),\\
 T_\star&=\widetilde O\!\left(N_\star+
 dM_2^2R^2/\varepsilon^2\right)
 \end{split}
 \label{eq:gasnikov-target}
\end{equation}
for expected error at most $\varepsilon$, provided $\varepsilon\le2M\gammaMax$, where $\gammaMax$ bounds the permitted evaluation distance from $Q$. The step, batch, and radius choices use $M,M_2,R$, motivating adaptation to unknown parameters.

Prior adaptive methods remove different subsets of these inputs. First-order step sizes can adapt to the gradient scale and regularity \citep{duchi2011adaptive,nesterov2013gradient,nesterov2015universal,malitsky2020adaptive}, to unknown smoothness while keeping acceleration on bounded domains \citep{levy2018online,kavis2019unixgrad}, and to the unknown distance \citep{orabona2016coin,carmon2022making,ivgi2023dog,defazio2023learning,khaled2023dowg}. For stochastic gradients, \citet{Rodomanov2024} adapts to unknown H\"older regularity and variance on a domain with a known diameter. U-DoG \citep{Kreisler2024} adapts to the unknown distance while preserving acceleration, but requires a noise bound and an initial distance scale. For function-value feedback, POEM \citep{POEM2025} adapts steps and smoothing radii under sample convexity and a uniform sample Lipschitz bound, and evaluates functions throughout $\R^d$; \citet{chen2026parameter} extend it to decentralized networks. We require only convexity of $f$, second-moment bounds and evaluations within distance $\gammaMax$ of $Q$. Removing inputs can incur costs, as \citet{CarmonHinder2024} show for missing distance and Lipschitz information \citep[see also][]{cutkosky2017online}. On unbounded domains, tuning-free stochastic optimization is impossible in general \citep{khaled2024tuning}, which motivates a known diameter. Oracle access also matters: the method of \citet{KreislerCarmonHinder2026} modifies model outputs before evaluating the loss, whereas our oracle evaluates fixed sample losses. None of these methods combines function-value feedback, acceleration and adaptation to $M$, $M_2$ and $L$.
% the oracle that we use in our research evaluates fixed sample losses.

%Combining these ideas requires accounting for two additional effects. Parameter selection consumes samples that must be charged to $T$, while changing the smoothing radius alters the objective being optimized. The latter also affects distance guarantees: the initial distance to a smoothed minimizer can greatly exceed the distance to the original minimizer (Proposition~\ref{prop:smoothed-distance}).
%\na{Related work on adaptive batch sizes and on choosing finite-difference intervals is missing; the rewrite adds it. ``The latter'' would then be ambiguous, so the last two sentences are merged. Rewrite below replaces the line above.}
Combining these ideas requires accounting for two additional effects. Parameter selection consumes samples that must be charged to $T$: adaptive batch rules choose sample sizes from observed variance \citep{byrd2012sample,friedlander2012hybrid,bollapragada2018adaptive,bollapragada2023adaptive}, and finite-difference intervals are often chosen from estimated noise levels \citep{more2011estimating,berahas2019derivative,shi2022adaptive}. Changing the smoothing radius also alters the objective being optimized, which affects distance guarantees: the initial distance to a smoothed minimizer can greatly exceed the distance to the original minimizer (Proposition~\ref{prop:smoothed-distance}).

%Overall, the costs depend on smoothing radii, batch sizes, and step sizes, which jointly control approximation error, sampling error, and progress per update. Tuning requires regularity, variance, and distance information that may be unavailable. We therefore ask: \emph{Can steps, batches, and smoothing radii be chosen from observations while retaining accelerated iteration bounds, controlling total evaluations, and tolerating bounded evaluation errors? What further costs arise when the initial distance and domain diameter are unknown?}
%\na{``Tuning requires \dots'' repeats ``The step, batch, and radius choices use $M,M_2,R$'' above; the rewrite drops it.}
%\na{The results tolerate errors only with a known bound $\Delta$; the rewrite says so in the question. Rewrite below replaces the line above these notes.}
Overall, the costs depend on smoothing radii, batch sizes, and step sizes, which jointly control approximation error, sampling error, and progress per update. We therefore ask: \emph{Can steps, batches, and smoothing radii be chosen from observations while retaining accelerated iteration bounds, controlling total evaluations, and tolerating bounded evaluation errors with a known bound? What further costs arise when the initial distance and domain diameter are unknown?}

\paragraph{Results and contributions.}
Our method allocates samples to both parameter selection and optimization updates. A forward difference between two smoothing scales controls the model correction relative to the original objective. This comparison and observed gradient differences set the radius and a scalar step coefficient without restarts when the smoothing radius changes. We obtain the following guarantees.
\begin{enumerate}
 \item \textbf{Acceleration with adaptive parameters.}
 With a supplied diameter bound $D$, Algorithm~\ref{alg:protected-main} attains the smooth $N^{-2}$ and nonsmooth $d^{1/4}N^{-1}$ orders in one execution, without supplied regularity or variance constants. For accuracy $\varepsilon\le2M\gammaMax$, the budgets of Section~\ref{sec:scope} match \eqref{eq:gasnikov-target} up to logarithmic factors, with $R$ replaced by $D$. The guarantee includes an additional term $\sqrt d\,MD^2/(\gammaMax N^2)$ due to the bounded evaluation domain, and requires vector and scalar second moments (Assumptions~\ref{ass:vector} and~\ref{ass:forward-moments}) and an initial step size $1/h_0$ with sufficiently small $h_0>0$ (Theorem~\ref{thm:protected-main} and Corollary~\ref{cor:protected-main-clean}).
 \item \textbf{Sampling costs.}
 Counting all parameter-selection samples, the method uses exactly $T$ pairs, $2T$ function values and $N$ projections. Batch sizes fixed in advance for each iteration require $3N$ sequential sampling rounds; batch sizes chosen from observations require $5N$. Both attain the same worst-case orders. Corollary~\ref{cor:epsilon-clean-budgets} gives explicit budgets $N,T$ for accuracy $\varepsilon$ without hidden logarithmic factors.
 \item \textbf{Distance and evaluation errors.}
 With exact evaluations, an initial batch selects a search ball without supplied $D$ or $R$. The bound includes additional error from the initial gradient estimate, its smoothing and the restricted search region, and requires projection onto the ball intersected with $Q$ (Theorem~\ref{thm:joint-main}). For fixed $p,r_0$, this error need not vanish as the budgets increase; suitable choices of $p$ and the initial smoothing radius yield convergence (Appendix~\ref{subsec:joint-convergence}). With a supplied diameter, fixed smoothing tolerates bounded function-value errors without knowing $\Delta$ (Theorem~\ref{thm:corruption}); adaptive smoothing uses a supplied $\Delta$ (Theorem~\ref{thm:protected-main}). On inexact oracles, see \citet{devolder2014first, RisteskiLi2016}.
 \item \textbf{Limits of simultaneous adaptation.}
 To delimit these guarantees, we quantify the sampling cost of constructing upper confidence bounds for mean function-value differences, and establish limits on joint distance--variance adaptation and corruption tolerance for arbitrary pairs (Theorem~\ref{thm:observable-certification-cost} and Corollaries~\ref{cor:universal-distance-main} and~\ref{cor:noise-frontier}). These complement classical oracle lower bounds \citep{agarwal2012information,jamieson2012query,duchi2015optimal}.
\end{enumerate}

Table~\ref{tab:comparison} compares these bounds and their assumptions, inputs and resource counts, distinguishing our known-diameter and selected-search-radius guarantees. Table~\ref{tab:setup-bounds} also records the fixed-smoothing-radius and corrupted-value variants.

\begin{table*}[!t]
\caption{Bounds on $f(\widehat x)-f(x^\star)$ after $N$ updates,
where $\widehat x$ is each method's output ($x_N$ in our rows).
Each gradient call returns an unbiased stochastic
gradient. Queries within a round can run in parallel; rounds are
sequential. Function-value rows assume exact evaluations.
Bounds are in expectation unless marked otherwise. Notation is detailed in Appendix~\ref{sec:result-setups}.
}
\label{tab:comparison}
\centering
\small
\setlength{\tabcolsep}{3pt}
\renewcommand{\arraystretch}{1.0}
\begin{tabular}{@{}
  >{\raggedright\arraybackslash}p{0.16\textwidth}
  p{0.42\textwidth}
  >{\raggedright\arraybackslash}p{0.21\textwidth}
  >{\raggedright\arraybackslash}p{0.17\textwidth}
@{}}
\toprule
Method & Error bound & Assumptions and inputs & Calls and rounds\\
\midrule
Tuned smoothing \citep{Gasnikov2022}
& $\displaystyle O\!\left(
    \frac{\sqrt d\,MR^2}{\gamma N^2}
    +\frac{\sqrt d\,M_2R}{\sqrt T}+M\gamma\right)$
& Uses $M,M_2,R$ to choose steps, batch size $B$, and
  $0<\gamma\le\gammaMax$.
& $T=NB$ pairs; $N$ rounds.\\
Universal fast gradient \citep{Rodomanov2024}
& $\displaystyle O\!\left(
    \inf_{\nu\in[0,1]}\frac{L_\nu D^{1+\nu}}{N^{(1+3\nu)/2}}
    +\frac{\sigma D}{\sqrt N}\right)$
& Requires $D$; does not require $L_\nu,\nu,\sigma$.
& $2N$ gradient calls; $2N$ rounds.\\
U-DoG \citep{Kreisler2024}
& $\displaystyle\widetilde O\!\left(
    \frac{LR^2}{N^2}+\frac{\sigma_{\rm sg}R}{\sqrt N}
    +\frac{\widehat\sigma R}{N}\right)$
  \par\emph{High probability.}
& Smooth, unconstrained; requires
  $\widehat\sigma\ge\sigma_{\rm sg}$ and an initial distance scale
  $0<r_{\rm init}\le R$.
& $3N$ gradient calls; $2N$ rounds.\\
POEM \citep[Thm.~4.9]{POEM2025}
& $\displaystyle\widetilde O\!\left(
    \frac{dM_2D}{T}+\frac{\sqrt d\,M_2D}{\sqrt T}\right)$
  \par\emph{Conditional mean on a high-probability event.}
& Choose $0<r_{\rm init}\le D$; $D,M_2$ need not be given.
& $T=N$ pairs; $N$ rounds.\\
This work: known diameter, Cor.~\ref{cor:protected-main-clean}
& $\displaystyle\widetilde O\!\Bigg(
    \frac{\sqrt d\,M_2D}{\sqrt T}$\par
  $\displaystyle\quad+\min\!\left\{
      \frac{LD^2}{N^2},\frac{d^{1/4}MD}{N}
      +\frac{\sqrt d\,MD^2}{\gammaMax N^2}\right\}\Bigg)$
& Requires $D,\gammaMax$; choose $h_0>0$ and, for adaptive batches,
  $s_0>0$ before sampling. $L,M,M_2,R$ are not required.
& Fixed batches: $T\ge8N$, $3N$ rounds.\par
  Adaptive batches: $T\ge16N$, $5N$ rounds.\\
This work: selected search radius, Thm.~\ref{thm:joint-main}
& $\displaystyle\mathcal A_*(x^\star)+\widetilde O\!\Bigg(
    \frac{\sqrt d\,M_2S_2}{\sqrt T}$\par
  $\displaystyle\quad+\min\!\left\{
      \frac{LS_2^2}{N^2},\frac{d^{1/4}MS_2}{N}
      +\frac{\sqrt d\,MS_2^2}{\gammaMax N^2}\right\}\Bigg)$
& Requires $\gammaMax$; choose
  $p,r_0,h_{\rm ref},\gamma_{\rm a}$ and, for adaptive batches,
  $s_0>0$ before sampling. $D,R,L,M,M_2$ are not required.
  Needs projection onto $Q$ intersected with a ball.
& Fixed batches: $T\ge16N$, $3N+1$ rounds.\par
  Adaptive batches: $T\ge32N$, $5N+1$ rounds.\\
\bottomrule
\end{tabular}
\par\smallskip
\begin{minipage}{\textwidth}\small
\textbf{Batch choices.}
The total $T$ includes all samples used for batch selection, radius
tests, the initial batch, and optimization updates.

\textbf{Problem constants.}
POEM additionally assumes convex sample functions with Lipschitz
constant at most $M_2$ almost surely, and evaluations throughout
$\mathbb R^d$.
Both of our rows require gradient-estimate variance at most
$CdM_2^2$, where $C\ge1$ is universal.
The additional scalar second-moment requirements are those of
Assumption~\ref{ass:forward-moments} for the forward difference
in the known-diameter row, and
Assumption~\ref{ass:adaptive-moments} for the radius-derivative
estimate in the selected-radius row.

\textbf{Small initial coefficients and remaining error.}
Write $\mathcal B$ for the sum inside $\widetilde O$ in each
of our rows. The known-diameter row assumes
$2h_0D^2/N^2\le\mathcal B$; the selected-radius row assumes
$4h_{\rm ref}r_p^2/N^2\le\mathcal B$ for the term outside
$\mathcal A_*$.

$O$ hides universal constants; $\widetilde O$ also hides the
logarithmic factors specified in each cited result.
\end{minipage}
\end{table*}

\paragraph{Organization.}
Section~\ref{sec:problem} specifies the oracle and assumptions. Sections~\ref{sec:method} and~\ref{sec:theory} develop the accelerated method and its fixed-smoothing-radius bounds, which form the basis for adaptive smoothing in Section~\ref{sec:protected-main}. Section~\ref{sec:scope} then studies distance adaptation and its limits. The appendices provide all proofs, additional variants and the notation
summary.
\section{Problem and oracle model}
\label{sec:problem}

Write $\Ball_2^d=\{u\in\R^d:\|u\|\le1\}$ and fix $\gammaMax>0$. The following geometry applies to the known-diameter methods; Section~\ref{sec:scope} selects a search radius without a supplied $D$.

\begin{assumption}[Geometry and population]\label{ass:geometry}
The set $Q$ is nonempty, compact and convex, has an exact Euclidean projector, and has a known diameter bound $D>0$. Function values are available throughout $Q+\gammaMax\Ball_2^d$, the evaluation domain. The population $f$ is convex and $M$-Lipschitz on this evaluation domain. Fix $x^0\in Q$ and a minimizer $x^\star\in\argmin_Q f$ (one exists since $Q$ is compact), and let $R=\norm{x^0-x^\star}$.
\end{assumption}

The method uses only $D$, $\gammaMax$, the initial step size $1/h_0$ and, for adaptive batch sizes, $s_0$ (and $\Delta$, $g$ for adaptive smoothing with inexact evaluations); it does not use $R$, $M$, $M_2$ or $L$. If $\nabla f$ is Lipschitz on the evaluation domain, $L$ denotes its Lipschitz constant; otherwise, $L=+\infty$. We count projections onto $Q$ but not their computational cost.

For each independent sample, the oracle accepts arbitrary points $z_1,z_2$
in the evaluation domain and returns
\begin{align}
 \bigl(\widetilde F(z_1,\xi),\widetilde F(z_2,\xi)\bigr),
 \qquad\norm{\delta}_{\infty}&\le\Delta,\nonumber\\*
 \widetilde F(z,\xi)&=F(z,\xi)+\delta(z).
 \label{eq:oracle}
\end{align}

Here $\delta$ is a fixed function of the query point; exact evaluations correspond to $\Delta=0$. A common random offset in $F(\cdot,\xi)$ cancels in the difference and needs no second-moment bound. The population $f$ and the differences in \eqref{eq:record} are assumed to have well-defined expectations.

Define ball smoothing and a symmetric gradient estimate by
\begin{align}
 f_\gamma(z)&=\E_{U\sim\mathrm{Unif}(\Ball_2^d)}f(z+\gamma U),
 \label{eq:smoothing}\\
 G_\gamma(z;e,\xi)&=\frac{d}{2\gamma}
 [\widetilde F(z+\gamma e,\xi)-\widetilde F(z-\gamma e,\xi)]e,
 \label{eq:record}
\end{align}
where $e$ is uniform on the unit sphere and independent of $\xi$. Symmetry is an estimator choice: the oracle also permits forward differences, which compare function values at two smoothing radii.
For $z\in Q$ and $0<\gamma\le\gammaMax$, every point
$z+\gamma U$ with $\|U\|\le1$ lies in the evaluation domain.
To define derivatives with respect to both $z$ and $\gamma$
at boundary parameter values, the analysis uses a convex
$M$-Lipschitz extension of $f$ beyond this domain.
The resulting derivatives depend only on function values within
the evaluation domain, so the method requires no evaluations
of the extension
(Lemmas~\ref{lem:smoothing} and~\ref{lem:radial-affine}).

\begin{assumption}[Gradient-estimate moments]
\label{ass:vector}
Let $G_\gamma^0$ denote the estimate \eqref{eq:record} computed
with exact function values ($\Delta=0$).
At every queried center and admissible radius,
\begin{align*}
 \E G_\gamma^0(z)&=\nabla f_\gamma(z),\\
 \E\|G_\gamma^0(z)-\nabla f_\gamma(z)\|^2
 &\le \sigma_\gamma^2<\infty.
\end{align*}
Whenever a uniform variance bound is used, assume
$\sigma_0:=\sup_\gamma\sigma_\gamma<\infty$, where the supremum
covers all positive radii permitted by the method, including initialization.
\end{assumption}

For the bounds expressed in terms of sample gradients, $M_2$
satisfies $\E_\xi\|\nabla F(z,\xi)\|^2\le M_2^2$
for sample weak gradients in the interior of the evaluation
domain. Thus $M_2^2$ bounds an uncentered second moment,
whereas $\sigma_\gamma^2$ bounds the variance of the gradient
estimate.
Under the local Lipschitz and integrability hypotheses of
Lemma~\ref{lem:oracle-moments},
\begin{align*}
 \E\|G_\gamma^0(z)-\nabla f_\gamma(z)\|^2
 \le \E\|G_\gamma^0(z)\|^2 \le 2dM_2^2.
\end{align*}
Hence the variance bounds can be chosen with
$\sigma_0^2\le2dM_2^2$; $C=2$ suffices in the
dimension-dependent bounds.
A random sample Lipschitz constant $m(\xi)$ on the evaluation
domain with $\E m(\xi)^2\le M_2^2$ is another sufficient condition.
Individual estimates need not be bounded.
The analysis stated in terms of gradient-estimate moments
requires only the specified conditional moment conditions
and population convexity; sample losses need not be convex.

Ball smoothing satisfies $f\le f_\gamma\le f+M\gamma$, and $f_\gamma$ has gradient Lipschitz constant $L_\gamma\le\sqrt dM/\gamma$. Since $f$ is convex, $f_\gamma(z)$ is nondecreasing in $\gamma$, so a predictable nonincreasing radius sequence produces pointwise nonincreasing convex targets $\phi_t=f_{\gamma_t}$. 

\begin{assumption}[Conditional update estimates]
\label{ass:conditional}
For the iterates and update estimates defined in
Section~\ref{sec:method}, let $\mathcal F_t^y$ and $\mathcal F_t^x$
denote the histories
immediately before the update batches at $y_t$ and $x_t$,
respectively. Each history contains the corresponding center,
the selected radius, the batch size, and all observations used
to choose them.
The history $\mathcal F_t^x$ additionally contains the
$y$-batch from iteration $t$, its estimate $g_t^y$, the resulting
projected point $v_t$ and averaged point $x_t$, and the
observations used to select the $x$-batch size.

Conditionally on the corresponding history, the function samples
and random directions in each update batch are independent and
follow the oracle's sampling distributions.
For each batch label $\iota\in\{y,x\}$, assume
$\E\|g_t^\iota\|<\infty$.
Define the conditional biases by
\begin{align*}
 b_t^y
 &=\E[g_t^y\mid\mathcal F_t^y]-\nabla\phi_t(y_t),\\
 b_t^x
 &=\E[g_t^x\mid\mathcal F_t^x]-\nabla\phi_t(x_t).
\end{align*}
The centered errors are defined by
\begin{align}
 g_t^y
 &=\nabla\phi_t(y_t)+b_t^y+\zeta_t^y,\nonumber\\
 g_t^x
 &=\nabla\phi_t(x_t)+b_t^x+\zeta_t^x,\nonumber\\
 \E[\zeta_t^\iota\mid\mathcal F_t^\iota]
 &=0,\qquad \iota\in\{y,x\}.
 \label{eq:conditional}
\end{align}
We assume
$\E[\|\zeta_t^\iota\|^2\mid\mathcal F_t^\iota]<\infty$
almost surely.
Additional unconditional moment requirements are specified
in the results that use them; in particular, a finite bound
in Lemma~\ref{lem:growth} requires its quantity $W$ to be finite.
\end{assumption}

For batch means of \eqref{eq:record} with exact evaluations and
targets $\phi_t=f_{\gamma_t}$, Assumptions~\ref{ass:vector}
and~\ref{ass:conditional} give $b_t^y=b_t^x=0$.
With fixed spatial corruption, these biases are the mean
contributions of $\delta$, bounded in
Lemma~\ref{lem:oracle-moments}.
They are measured relative to the smoothed objective
$\phi_t$ and do not include its approximation error relative
to $f$.

\begin{assumption}[Forward-difference moments]
\label{ass:forward-moments}
For Algorithm~\ref{alg:protected-main}, at each center $z\in Q$
and radius $0<\gamma\le\gammaMax/2$ chosen from preceding
observations, the clean estimates $\widehat q_\gamma^0(z)$,
defined by \eqref{eq:protected-main-record} with exact function values,
are conditionally independent, with conditional mean
$f_{2\gamma}(z)-f_\gamma(z)$ and conditional second moment
at most $\nu_dM_2^2\gamma^2$, where $\nu_d=d/(d+2)$.
\end{assumption}

Lemma~\ref{lem:corrupt-forward-secant} derives this bound under
its sample regularity conditions.

\begin{assumption}[Moments for radius-derivative estimates]
\label{ass:adaptive-moments}
For the selected-search-radius extension with exact evaluations,
assume Assumption~\ref{ass:vector} with $\sigma_0<\infty$.
At each admissible $(z,\gamma)$ chosen from preceding observations,
draw mutually independent $e,r,\xi$, independently of those
observations, with $e$ uniform on the unit sphere and
$r\sim\mathrm{Beta}(d,1)$. Define
\begin{equation}
 J_\gamma(z)
 =d[F(z+\gamma e,\xi)-F(z+\gamma r e,\xi)].
 \label{eq:radial-estimate-main}
\end{equation}
Conditionally on preceding observations, its mean is
$h_\gamma(z)=\gamma\partial_\gamma f_\gamma(z)$ and its second
moment is at most $\chi_dM_2^2\gamma^2$, where
$\chi_d=2d^2/[(d+1)(d+2)]$.
\end{assumption}

Lemma~\ref{lem:radial-moment} derives this scalar moment bound from a random
sample Lipschitz constant, or from its stated weak-gradient and segment
conditions.
Uniformity across selected positive radii is essential because radius selection
can give an arbitrarily small value.

\section{Scalar acceleration and adaptive batches}
\label{sec:method}

The method maintains a projected point $v_t$, an averaged point $x_t$, and a positive scalar coefficient $H_t$. Set $a_t=t$ and $A_t=t(t+1)/2$, with $A_0=0$. The center $y_t$ combines $x_{t-1}$ and $v_{t-1}$. Averaged gradient estimates at $y_t$ determine a projected step, and estimates at $x_t$ measure the gradient change along it. The updates are
\begin{align}
 y_t&=(A_{t-1}x_{t-1}+a_tv_{t-1})/A_t,
 \label{eq:center-y}\\
 v_t&=\proj(v_{t-1}-a_tg_t^y/H_{t-1}),\nonumber\\
 x_t&=(A_{t-1}x_{t-1}+a_tv_t)/A_t,
 \label{eq:step}\\
 r_t&=\norm{v_t-v_{t-1}},\quad
 \widehat\beta_t=\ip{g_t^x-g_t^y}{x_t-y_t},
 \label{eq:curvature}\\
 H_t&=H_{t-1}+
 \frac{\pos{A_t\widehat\beta_t-H_{t-1}r_t^2/2}}
      {D^2+r_t^2/2}.
 \label{eq:coefficient}
\end{align}
We initialize $v_0=x_0=x^0$ and $H_0=h_0>0$. This positive value sets the first step size and keeps the update well defined. We choose $h_0$ without a regularity or variance input; it has units of function value divided by squared distance. In the bound of Theorem~\ref{thm:potential} divided by $A_N$, it contributes $h_0R_u^2/(2A_N)$, where $R_u=\|u-x^0\|$ for a comparator $u\in Q$. Each iteration uses the same target $\phi_t$ in both batches (which matters for the adaptive radius in later sections). The output is $x_N$.

For a fixed radius and a chosen integer budget $T\ge8N$, reserve $4N$ pairs to select batch sizes and set
\begin{align}
 \mathsf U&=T-4N,\qquad
 B_t^0=1+\floor{\frac{(\mathsf U-2N)t}{2N(N+1)}},
 \label{eq:reserve}\\
 m_{\mathrm{sur}}&=\mathsf U-2\sum_{t=1}^NB_t^0.
 \label{eq:surplus}
\end{align}

The surplus satisfies $m_{\mathrm{sur}}\ge\mathsf U/4$ (Lemma~\ref{lem:paid-accounting}). At each center, two independent gradient estimates give an unbiased variance estimate for choosing the update batch:
\begin{equation}
 \widehat v_j=\tfrac12\norm{G_j^1-G_j^2}^2,
 \quad \lambda_j=t\sqrt{\widehat v_j},
 \quad \Lambda_j=\Lambda_{j-1}+\lambda_j.
 \label{eq:pilot}
\end{equation}
The center order is $j=2t-1$ at $y_t$ and $j=2t$ at $x_t$,
with $B_t^y=B_{2t-1}$ and $B_t^x=B_{2t}$.
For $j<2N-1$ allocate
\begin{equation}
 k_j=\floor{m_j\min\{\tfrac12,\tfrac{\lambda_j}{s_0+\Lambda_j}\}},
 \quad B_j=B_t^0+k_j,
 \label{eq:extras}
\end{equation}
Here $s_0>0$ is chosen before sampling and has the units
of a gradient norm. It controls surplus allocation;
every positive choice preserves the stated worst-case
bounds because $B_j\ge B_t^0$.
At $j=2N-1$ allocate all
remaining surplus, and at $j=2N$ allocate zero surplus. Set
$m_{j+1}=m_j-k_j$, starting from $m_1=m_{\mathrm{sur}}$ and
$\Lambda_0=0$. The final surplus is therefore spent before the final projected step. Batch selection and gradient updates use separate samples.

\begin{algorithm}[t]
\caption{Scalar acceleration with adaptive batch sizes}
\label{alg:scalar}
\small
\begin{algorithmic}[1]
\Require $Q,x^0,D$; $N\in\mathbb N_+,T\in\mathbb N$ with $T\ge8N$;
 $h_0>0,s_0>0$, fixed $\gamma\in(0,\gammaMax]$
\State Set $v_0=x_0=x^0$, $H_0=h_0$, $\Lambda_0=0$
\State Compute $\mathsf U,B_t^0,m_1$ by \eqref{eq:reserve}--\eqref{eq:surplus}
\For{$t=1,\ldots,N$}
 \State $a_t=t$, $A_t=t(t+1)/2$; compute $y_t$ by \eqref{eq:center-y}
 \State $B_t^y\gets\PaidBatch(y_t,t,2t-1,\gamma)$
 \State $g_t^y\gets\FreshMean(y_t,B_t^y,\gamma)$
 \State Compute $v_t,x_t$ by \eqref{eq:step}
 \State $B_t^x\gets\PaidBatch(x_t,t,2t,\gamma)$
 \State $g_t^x\gets\FreshMean(x_t,B_t^x,\gamma)$
 \State Update $r_t,\widehat\beta_t,H_t$ by \eqref{eq:curvature}--\eqref{eq:coefficient}
\EndFor
\State \Return $x_N$
\Function{\PaidBatch}{$z,t,j,\gamma$}
 \State Draw two independent estimates $G_j^1,G_j^2$ at $z$ using $G_\gamma$
 \State Compute $\widehat v_j,\lambda_j,\Lambda_j$ by \eqref{eq:pilot}
 \If{$j<2N-1$}
  \State Compute $k_j$ by \eqref{eq:extras}
 \ElsIf{$j=2N-1$}
  \State $k_j\gets m_j$
 \Else
  \State $k_j\gets0$
 \EndIf
 \State $m_{j+1}\gets m_j-k_j$; \Return $B_t^0+k_j$
\EndFunction
\end{algorithmic}
\end{algorithm}

In Algorithm~\ref{alg:scalar}, $\FreshMean(z,B,\gamma)$ averages $B$
independent estimates \eqref{eq:record}, using exactly $B$ samples
and $2B$ values in one sampling round.
The algorithm stores $O(d)$ real numbers
and uses $O(d)$ arithmetic for each scalar update outside projection.
Generating directions and averaging all estimates cost $O(dT)$ arithmetic.

\section{Expected error and resource bounds} \label{sec:theory}

The fixed allocation below uses only the two vector
sample means per iteration.

\begin{corollary}[Batch sizes fixed before sampling]\label{cor:public}
For a fixed radius $\gamma\in(0,\gammaMax]$, exact function evaluations, and $T\ge4N$, replace the adaptive batch rule by
\begin{equation}
 B_t^y=B_t^x=1+
 \floor{\frac{(T-2N)t}{N(N+1)}}.
 \label{eq:public-batch}
\end{equation}
At most $T$ pairs, $2T$ values and $2N$ sequential rounds suffice, and
\begin{equation}
 \begin{aligned}
 \E[f(x_N)-f(x^\star)]
 \le \frac{h_0R^2}{2A_N}+\frac{16L_\gamma D^2}{N^2}+\frac{4\sqrt2\sigma_\gamma D}{\sqrt T}+M\gamma.
 \end{aligned}
 \label{eq:public-risk}
\end{equation}
The full proof is in Appendix~\ref{subsec:proof-public}.
\end{corollary}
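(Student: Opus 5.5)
Our plan is to invoke the potential bound of Theorem~\ref{thm:potential}, specialized to a fixed target $\phi_t=f_\gamma$ for all $t$, exact evaluations (so $b_t^y=b_t^x=0$), and comparator $u=x^\star$. That theorem bounds $A_N\,\E[\phi(x_N)-\phi(u)]$ by three contributions:
\begin{itemize}
\item the initial term $h_0R_u^2/2$;
\item a term coming from the growth of $H_t$;
\item a stochastic term collecting $\sum_t a_t^2\E\|\zeta_t^y\|^2/H_{t-1}$-type and cross contributions involving $\zeta_t^x$.
\end{itemize}
We then pass from $f_\gamma$ to $f$ by the sandwich $f\le f_\gamma\le f+M\gamma$:
\[
f(x_N)-f(x^\star)\le f_\gamma(x_N)-f_\gamma(x^\star)+M\gamma .
\]
This gives the last term of \eqref{eq:public-risk}. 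Dividing the initial term by $A_N$ gives the first term.

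For the deterministic growth of $H_t$, we use Lemma~\ref{lem:growth}. If the estimates were exact, $\widehat\beta_t=\ip{\nabla f_\gamma(x_t)-\nabla f_\gamma(y_t)}{x_t-y_t}\le L_\gamma\|x_t-y_t\|^2$. Since $x_t-y_t=a_t(v_t-v_{t-1})/A_t$, this gives $A_t\widehat\beta_t\le L_\gamma a_t^2r_t^2/A_t\le 2L_\gamma r_t^2$. The update \eqref{eq:coefficient} then keeps $H_t$ at most of order $\max(h_0,L_\gamma)$, with the $D^2$ denominator bounding each increment. The resulting contribution $H_N D^2$ over $A_N\ge N^2/2$ yields $16L_\gamma D^2/N^2$ after tracking constants, using $\|v_t-u\|\le D$ in $Q$.

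The noise enters $\widehat\beta_t$ through $\ip{\zeta_t^x-\zeta_t^y}{x_t-y_t}$, and Lemma~\ref{lem:growth} handles it via its quantity $W$, which is a sum of conditional variances weighted by $a_t^2/A_t$-type factors.

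For the stochastic term, the batch means give conditional variance bounds $\E[\|\zeta_t^\iota\|^2\mid\mathcal F_t^\iota]\le\sigma_\gamma^2/B_t$, where $B_t=1+\floor{(T-2N)t/(N(N+1))}\ge (T-2N)t/(N(N+1))$. With $T\ge4N$ we have $T-2N\ge T/2$, so $B_t\gtrsim Tt/N^2$.

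The error term is of the form $D\sqrt{\sum_t a_t^2\sigma_\gamma^2/B_t}$, obtained from the Young-optimized bound in the potential theorem, which trades $H$-growth against variance. This sum is at most $\sigma_\gamma^2\sum_t t\,N^2/T\lesssim \sigma_\gamma^2N^4/T$. Dividing by $A_N$ gives order $\sigma_\gamma D/\sqrt T$, and we fix the constant at $4\sqrt2$ by carrying the factors $2$ from $T-2N\ge T/2$ and from $A_N\ge N^2/2$.

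Resource counting is direct. Each iteration uses two rounds ($y$ and $x$ batches), so there are $2N$ rounds. The pair count is
\[
2\sum_t B_t\le 2N+2(T-2N)\sum_t t/(N(N+1))=T .
\]

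\textbf{Main obstacle.} The hardest step is extracting the exact constants $16$ and $4\sqrt2$ from the potential/growth lemmas, in particular bounding $\E H_N$ jointly with the noise. We expect to take the stated bounds of Theorem~\ref{thm:potential} and Lemma~\ref{lem:growth} with $b=0$ and $\phi_t$ constant, and simply substitute the batch sizes.
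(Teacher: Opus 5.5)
Your skeleton matches the paper's proof: Theorem~\ref{thm:potential} with $\phi_t=f_\gamma$ and $u=x^\star$, then Lemma~\ref{lem:growth}, the floor $B_t\ge Tt/(2N(N+1))$ from $1+\floor{z}\ge z$ and $T-2N\ge T/2$, and the same pair and round count. However, your account of where the noise enters is wrong in a way that matters.

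Theorem~\ref{thm:potential} has no term of the form $\sum_t a_t^2\E\norm{\zeta_t^y}^2/H_{t-1}$ and involves no Young step. Its only error term is $\sum_t Z_t$ with $Z_t=t\ip{e_t^y}{u-v_{t-1}}-t\ip{e_t^x}{v_t-v_{t-1}}$. Since $u-v_{t-1}$ is fixed before the $y$ batch and $v_t-v_{t-1}$ is fixed before the $x$ batch, $\E\sum_tZ_t=0$ exactly. All of the noise is therefore paid through $2D^2\E(H_N-h_0)/A_N$, which Lemma~\ref{lem:growth} bounds by $8L_\gamma D^2/A_N+2D\sqrt W/A_N$. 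If you also carried a $1/H_{t-1}$-weighted variance term, with only $H_{t-1}\ge h_0$ available, you would get an extra contribution of order $\sigma_\gamma^2A_N/(h_0T)$. That term grows with $N$, depends on $h_0$, and does not appear in \eqref{eq:public-risk}. Likewise, the quantity that enters is $H_N-h_0\le(4L_\gamma-h_0)_++\sqrt{\mathcal E_N}/D$, not $H_N$. Bounding $H_N$ by something of order $\max\{h_0,L_\gamma\}$ would add a spurious $h_0D^2/A_N$ term.

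The constant $4\sqrt2$ also needs a step you do not supply. The quantity is $W=\sum_tt^2\E\norm{e_t^x-e_t^y}^2$, with weights $t^2$ rather than $a_t^2/A_t$, and it involves both batches of each iteration. To obtain $W\le\sum_tt^2\sigma_\gamma^2(1/B_t^y+1/B_t^x)\le8\sigma_\gamma^2A_N^2/T$ you must show $\E\ip{e_t^x}{e_t^y}=0$.

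This is not automatic, because the center $x_t$ depends on $e_t^y$. It holds because $e_t^y$ is measurable before the $x$ batch is drawn, and that batch is conditionally unbiased given its history. Without it, $\norm{a-b}^2\le2\norm{a}^2+2\norm{b}^2$ only gives $W\le16\sigma_\gamma^2A_N^2/T$ and a stochastic constant $8$. With it, $2D\sqrt W/A_N\le4\sqrt2\sigma_\gamma D/\sqrt T$ follows directly. The inequality $A_N\ge N^2/2$ is used only to turn $8L_\gamma D^2/A_N$ into $16L_\gamma D^2/N^2$, not in the stochastic term as your constant bookkeeping suggests.
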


\begin{theorem}[Adaptive batch guarantee]\label{thm:paid}
For a fixed radius $\gamma\in(0,\gammaMax]$ and exact function evaluations, Algorithm~\ref{alg:scalar} satisfies
\begin{align}
 \E[f(x_N)-f(x^\star)]
 \le \frac{h_0R^2}{2A_N}
 +\frac{16L_\gamma D^2}{N^2}
 \nonumber +\frac{8\sqrt2\sigma_\gamma D}{\sqrt T}
 +M\gamma. \label{eq:paid-risk}
\end{align}
No variance parameter enters the batch rule or the scalar update. The full proof is in Appendix~\ref{subsec:proof-paid}.
\end{theorem}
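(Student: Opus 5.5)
We prove Theorem~\ref{thm:paid} by reusing the potential-function argument behind Corollary~\ref{cor:public} (the bound of Theorem~\ref{thm:potential} divided by $A_N$). Only the variance term changes, since the batch sizes are now random but predictable. With a fixed radius and exact evaluations, every target is $\phi_t=f_\gamma$ and all biases $b_t^\iota$ vanish. The potential bound with comparator $u=x^\star$ then yields three kinds of terms: the initialization term $h_0R^2/(2A_N)$; a curvature term governed by the adaptive coefficient $H_t$, which is at most $16L_\gamma D^2/N^2$ because the clipped increment in \eqref{eq:coefficient} is active only when $A_t\widehat\beta_t$ exceeds what $L_\gamma$-smoothness allows, exactly as in the fixed-batch case; and a noise term of the form $A_N^{-1}\sum_t \E[a_t\ip{\zeta_t^y}{x^\star-v_{t-1}}+\ldots]$ plus quadratic terms $\E[\,a_t^2\|\zeta_t^\iota\|^2/H\,]$ or, after the standard $D$-trick, $D\sum_t a_t\E\|\zeta_t^\iota\|$. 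The approximation term $M\gamma$ comes from $f\le f_\gamma\le f+M\gamma$. So I would first restate the fixed-batch decomposition and isolate the noise term as $\frac{cD}{A_N}\sum_{j=1}^{2N}t_j\,\E\|\zeta_j\|$, where $t_j=\lceil j/2\rceil$.

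Next, I would control each $\E\|\zeta_j\|$ conditionally. Because $B_j$ is $\mathcal F_j$-measurable and the update samples are fresh (Assumption~\ref{ass:conditional}), $\E[\|\zeta_j\|^2\mid\mathcal F_j]\le\sigma_\gamma^2/B_j$. Hence the noise term is at most $\frac{cD\sigma_\gamma}{A_N}\E\sum_j t_j B_j^{-1/2}$. The point of the rule is $B_j\ge B_t^0$, so the worst case reduces to the deterministic baseline. Lemma~\ref{lem:paid-accounting} gives $\mathsf U\ge T/2$ (since $T\ge8N$) and $B_t^0\gtrsim \mathsf U t/(4N^2)$. Therefore
\[
\sum_j t_jB_j^{-1/2}\le 2\sum_{t=1}^N t\,(B_t^0)^{-1/2}\lesssim \frac{N}{\sqrt{\mathsf U}}\sum_t\sqrt t\lesssim \frac{N^{5/2}}{\sqrt T}.
\]
Dividing by $A_N\asymp N^2/2$ gives $O(\sigma_\gamma D\sqrt N/\sqrt T)$, which is too weak. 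So I would instead use Cauchy--Schwarz over the sum, as the fixed-batch proof presumably does: $\sum_t t(B_t^0)^{-1/2}\le \bigl(\sum_t t\bigr)^{1/2}\bigl(\sum_t t/B_t^0\bigr)^{1/2}$, with $\sum_t t/B_t^0\lesssim N^3/\mathsf U$. This gives $N^{5/2}/\sqrt{\mathsf U}$ again, so the correct bookkeeping must instead use the quadratic form $\sum_t a_t^2\E\|\zeta_t\|^2/H_{t-1}$ balanced against the $H$-terms, giving $\sqrt{\sum_t t^2/B_t^0}\,D\sigma_\gamma/A_N\lesssim D\sigma_\gamma\sqrt{N^2/\mathsf U}/N$. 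I would copy exactly the form used in Corollary~\ref{cor:public}, which yields $4\sqrt2\sigma_\gamma D/\sqrt T$, and substitute $\mathsf U$ for $T$. Since $\mathsf U\ge T/2$ and the pilot samples are charged separately, the constant degrades by a factor of about $\sqrt{4}=2$, giving $8\sqrt2$.

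The main obstacle is making the reduction to the baseline rigorous when the $B_j$ are random. I must check that the noise functional is monotone decreasing in each $B_j$ conditionally, and that the pilot estimates $G_j^1,G_j^2$, which influence $B_j$, are independent of the update batch, so that conditioning on $\mathcal F_j$ is legitimate. The surplus allocation then only helps, because each term of the form $\E[\sigma_\gamma^2/B_j\mid\mathcal F_j]$ is at most $\sigma_\gamma^2/B_t^0$. A further point is that the rule uses $B_j\ge B_t^0$ and is deterministic at $j=2N-1,2N$, so the accounting closes with exactly $T$ pairs. The variance-adaptive choice of $s_0$ therefore plays no role in the worst-case bound, matching the remark that no variance parameter enters.
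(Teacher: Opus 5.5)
\textbf{Where the noise enters.} Your decomposition puts the noise in the wrong place, and the step that produces the $\sigma_\gamma D/\sqrt T$ term is missing.

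With exact evaluations and a fixed radius, the error terms $Z_t$ of Theorem~\ref{thm:potential} have \emph{exactly} zero mean. The error $e_t^y$ is paired with $x^\star-v_{t-1}$, which is fixed before the $y$ batch, and $e_t^x$ is paired with $v_t-v_{t-1}$, which is fixed before the $x$ batch. The interaction of the noisy $g_t^y$ with the step $v_t-v_{t-1}$ is absorbed into the observed $\widehat\beta_t$. So there is no first-moment term $D\sum_t a_t\E\|\zeta_t\|$ to control, which is why both of your attempts lose a factor $\sqrt N$.

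Conversely, the curvature term is \emph{not} bounded by $16L_\gamma D^2/N^2$ ``exactly as in the fixed-batch case.'' Since $\widehat\beta_t$ uses the noisy difference $g_t^x-g_t^y$, $H_N$ can exceed $4L_\gamma$. The entire stochastic contribution enters through $2D^2(H_N-h_0)/A_N$.

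Your fallback is not yet an argument. A quadratic form in $\|\zeta_t\|^2/H_{t-1}$ ``balanced against the $H$-terms'' either bounds $1/H_{t-1}$ by $1/h_0$, producing an $h_0$-dependent term, or needs a growth estimate for $H_t$ that you never supply. Copying the form of Corollary~\ref{cor:public} also does not work, because its statement cannot be used as a black box when batch sizes are random.

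\textbf{The missing step.} What you need is Lemma~\ref{lem:growth}. Put $K_t=[H_t-4L_\gamma]_+$. The exact balance identity and $A_t\widehat\beta_t\le 2L_\gamma r_t^2+t\|e_t^x-e_t^y\|r_t$ give $D^2(K_t^2-K_{t-1}^2)\le t^2\|e_t^x-e_t^y\|^2$. Hence $\E(H_N-h_0)\le 4L_\gamma+\sqrt W/D$ with $W=\sum_t t^2\E\|e_t^x-e_t^y\|^2$. The square root is taken \emph{after} summation, and this recovers the $1/\sqrt T$ rate.

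From there your conditioning idea is exactly right:
\begin{itemize}
\item each $B_t^\iota$ is fixed by its pilots before the fresh update samples, so $\E[\|e_t^\iota\|^2\mid\text{history}]\le\sigma_\gamma^2/B_t^\iota$;
\item the cross term $\E\langle e_t^x,e_t^y\rangle$ vanishes because $e_t^y$ belongs to the $x$-history;
\item $W$ is linear in $\E(1/B_t^y+1/B_t^x)$, so the deterministic floor $B_t^\iota\ge \mathsf U t/[4N(N+1)]$ gives $W\le 16\sigma_\gamma^2A_N^2/\mathsf U$.
\end{itemize}
Therefore $2D\sqrt W/A_N\le 8\sigma_\gamma D/\sqrt{\mathsf U}\le 8\sqrt2\sigma_\gamma D/\sqrt T$.

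Your constant bookkeeping needs a small correction. The factor $2$ relative to Corollary~\ref{cor:public} comes from two sources. First, the floor is only half the fixed batch size per unit of budget, since the other half is surplus that may be spent anywhere. Second, $\mathsf U\ge T/2$. Substituting $\mathsf U$ for $T$ alone yields only a factor $\sqrt2$.
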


Theorem~\ref{thm:paid} establishes a guarantee for batch sizes chosen from observations, including their cost; it does not quantify a gain over the fixed allocation \eqref{eq:public-batch}. Its worst-case constants are larger; Theorem~\ref{thm:batch-variance-actual} and Proposition~\ref{prop:batch-variance-paid-limit} compare the allocation with the variances at the actual centers.

A radius chosen from $d,D,N,\gammaMax$ already gives the nonsmooth rate under vector moments alone (Corollary~\ref{cor:horizon} in Appendix~\ref{subsec:proof-horizon}). Section~\ref{sec:protected-main} adapts the radius from function values and obtains both regularity regimes.

For a fixed $\gamma\in(0,\gammaMax]$,
Algorithm~\ref{alg:scalar} requires no knowledge of $\Delta$;
the error bound \eqref{eq:corruption-risk} holds whenever
$\|\delta\|_\infty\le\Delta$.
Algorithm~\ref{alg:protected-main} uses a supplied bound
$\Delta$ to select the smoothing radius.

\section{Forward-difference adaptive smoothing}
\label{sec:protected-main}

To compare an update for $f_\gamma$ with $f$,
Lemma~\ref{lem:radial-affine} gives the affine lower bound
\[
 f(u)\ge f_\gamma(y)+\ip{\nabla f_\gamma(y)}{u-y}-h_\gamma(y),
\]
where $h_\gamma(y)=\gamma\partial_\gamma f_\gamma(y)$ is the correction.
Draw a uniform unit-ball vector $U$ and a sampled function independently
of one another and of preceding observations. We use the two-value estimate
\begin{equation}
 \widehat q_\gamma(y)=\widetilde F(y+2\gamma U,\xi)
                    -\widetilde F(y+\gamma U,\xi).
 \label{eq:protected-main-record}
\end{equation}
The clean counterpart $\widehat q_\gamma^0$, obtained by replacing
$\widetilde F$ with $F$, has mean
$q_\gamma(y)=f_{2\gamma}(y)-f_\gamma(y)\ge h_\gamma(y)\ge0$.
Since $h_{\lambda\gamma}(y)\le\lambda h_\gamma(y)$ for $0<\lambda\le1$,
dividing the radius by $\max\{1,q_\gamma(y)/\tau\}$ would bound the correction
by any prescribed $\tau>0$. The method estimates $q_\gamma(y)$ from a batch.
Lemma~\ref{lem:corrupt-forward-secant} proves
$\E(\widehat q_\gamma^0)^2\le\nu_dM_2^2\gamma^2$, where
$\nu_d=d/(d+2)$, and the corruption sensitivity
$|\widehat q_\gamma-\widehat q_\gamma^0|\le2\Delta$.
Algorithm~\ref{alg:protected-main} uses $\widehat q_\gamma$
with clean second-moment factor $\nu_d$; the extension in
Section~\ref{sec:scope} uses $J_\gamma$ from
Assumption~\ref{ass:adaptive-moments}, with factor $\chi_d$.
Assumption~\ref{ass:forward-moments} is additional to the vector variance bound;
a sample Lipschitz constant with second moment $M_2^2$ suffices.
The outer endpoint requires $2\gamma\le\gammaMax$.

\subsection{Choosing the smoothing radius}
Let $h_{\rm init}\ge h_{\rm ref}>0$ and $\mathsf K_t=h_{\rm ref}+H_t-h_{\rm init}$.
A chosen lower radius bound $g\in[0,\gammaMax/2]$ determines
\[
 \Gamma=\max\{g,\min\{\gammaMax/2,d^{1/4}D/N\}\}.
\]
The supplied error bound $\Delta\ge0$ enters the radius rule in Algorithm~\ref{alg:protected-main}; for exact evaluations, set $\Delta=g=0$.

\begin{algorithm}[t]
\caption{Acceleration with adaptive smoothing}
\label{alg:protected-main}
\small
\begin{algorithmic}[1]
\Require $Q,x^0,D$, $N\ge1$, $\gammaMax>0$, $\Delta\ge0$,
$0\le g\le\gammaMax/2$, $h_{\rm init}\ge h_{\rm ref}>0$;
fixed batches: $T\ge8N$; adaptive batches: $T\ge16N,s_0>0$
\State $x_0=v_0=x^0$, $H_0=h_{\rm init}$,
$\mathsf K_0=\underline K_0=h_{\rm ref}$, $\gamma_0=\Gamma$
\State $B^{\rm rad}\gets\floor{T/(8N)}$; initialize the chosen $\mathsf{Batch}$ rule
\For{$t=1,\ldots,N$}
 \State $a_t=t$, $A_t=t(t+1)/2$; compute $y_t$ by \eqref{eq:center-y}
 \If{$\mathsf K_{t-1}\ge2\underline K_{t-1}$}
  \State $\underline K_t\gets\mathsf K_{t-1}$, $\gamma_t^{\rm try}\gets\Gamma$
 \Else
  \State $\underline K_t\gets\underline K_{t-1}$,
  $\gamma_t^{\rm try}\gets\gamma_{t-1}$
 \EndIf
 \State $\tau_t\gets\underline K_tD^2/N^2$
 \State Draw mean $\widehat q_t$ of $B^{\rm rad}$ records
 \eqref{eq:protected-main-record} at $(y_t,\gamma_t^{\rm try})$
 \State $\gamma_t\gets\max\{g,\gamma_t^{\rm try}/
       \max\{1,[\widehat q_t-2\Delta]_+/\tau_t\}\}$
 \State $B_t^y\gets\mathsf{Batch}(y_t,t,2t-1,\gamma_t)$
 \State $g_t^y\gets\FreshMean(y_t,B_t^y,\gamma_t)$
 \State Compute $v_t,x_t$ by \eqref{eq:step}
 \State $B_t^x\gets\mathsf{Batch}(x_t,t,2t,\gamma_t)$
 \State $g_t^x\gets\FreshMean(x_t,B_t^x,\gamma_t)$
 \State Update $H_t$ by \eqref{eq:curvature}--\eqref{eq:coefficient}
 \State $\mathsf K_t\gets h_{\rm ref}+H_t-h_{\rm init}$
\EndFor
\State \Return $x_N$
\end{algorithmic}
\end{algorithm}

The rule $\mathsf{Batch}(z,t,j,\gamma)$ is chosen before sampling. For fixed batches, use \eqref{eq:public-batch} with $T$ replaced by $T-NB^{\rm rad}$ and add all unused pairs to the final $y$ batch. For adaptive batches, call $\PaidBatch(z,t,j,\gamma)$ from
Algorithm~\ref{alg:scalar}. Initialize $\Lambda_0=0$, with $\mathsf U=T-NB^{\rm rad}-4N$, $B_t^0$ and $m_1$ given by \eqref{eq:reserve}--\eqref{eq:surplus} at this budget. Both batch-selection estimates and update means use the selected radius, with independent samples as in Algorithm~\ref{alg:scalar}. The complete counts are
\eqref{eq:corrupt-public-counts}--\eqref{eq:corrupt-paid-counts}. All optimizer state, weights and remaining sample budgets persist.

\subsection{Regularity and corruption guarantees}
The affine model replaces the comparator smoothing bias in Theorem~\ref{thm:potential} by charges $h_{\gamma_t}(y_t)$ at queried centers. Radius increases add positive changes of the target value at $x_{t-1}$. Theorem~\ref{thm:radial-potential} gives this potential, and Lemma~\ref{lem:corrupt-growth-drift} bounds those changes. The resulting bound is \eqref{eq:corrupt-adaptive-risk}. Set $c_d=d\E|[e]_1|\le\sqrt d$; the index $\mathrm{reg}\in\{\mathrm{s},\mathrm{ns}\}$ denotes the smooth or nonsmooth case. Write
$\ell_{\mathrm{reg}}^\Delta=\log_2(1+\mathcal H_{\mathrm{reg}}^\Delta/h_{\rm ref})$, with
$\mathcal H_{\mathrm{reg}}^\Delta$ defined in \eqref{eq:corrupt-curvature-envelopes}.

\begingroup
\interdisplaylinepenalty=10000
\begin{theorem}[Accelerated bounds with adaptive smoothing]
\label{thm:protected-main}
Under Assumptions~\ref{ass:geometry} and~\ref{ass:forward-moments}, assume uniform clean vector variance
$\sigma_0^2\le CdM_2^2$ for a fixed numerical constant $C\ge1$ and fixed spatial corruption $\|\delta\|_\infty\le\Delta$. Set $h_{\rm init}=h_{\rm ref}=h_0>0$. For an objective-error tolerance $\rho>0$, choose $g=c_d\Delta D/\rho\le\gammaMax/2$. Algorithm~\ref{alg:protected-main} satisfies
\begin{equation}
 \begin{aligned}
 &\E[f(x_N)-f(x^\star)]\le\\
 &\widetilde O\!\Biggl(
 \frac{h_0D^2}{N^2}+\frac{\sqrt dM_2D}{\sqrt T}+M_2g\sqrt{\frac NT}+\rho\left(1+\sqrt{\frac dT}\right)+\Delta
 \\
 &+\min\!\Biggl\{L\left(\frac{D^2}{N^2}+g^2\right),\frac{d^{1/4}MD}{N}+\frac{\sqrt dMD^2}{\gammaMax N^2}+Mg\Biggr\}\Biggr).
 \end{aligned}
 \label{eq:protected-main-order}
\end{equation}
The smooth branch requires $L$-smoothness on the evaluation domain; otherwise set $L=+\infty$. In each branch, $\widetilde O$ hides only the factor $1+\ell_{\mathrm{reg}}^\Delta$ of that branch. If $\Delta=0$, take $g=0$ and remove all $\rho$ terms. The method uses exactly $T$ pairs, $2T$ values and $N$ projections in $3N$ rounds with fixed batches or $5N$ rounds with adaptive batches. The full proof is in Appendix~\ref{subsec:proof-protected-main}.
\end{theorem}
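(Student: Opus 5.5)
We plan to derive \eqref{eq:protected-main-order} from the radial potential of Theorem~\ref{thm:radial-potential}, which we take as given. It bounds $A_N\E[f(x_N)-f(x^\star)]$ by four contributions. The first is the initial term $h_0R^2/2\le h_0D^2/2$. The second is the accumulated charges $\sum_t a_t\E h_{\gamma_t}(y_t)$ at queried centers. The third is the drift from radius increases, $\sum_t A_{t-1}\E[\phi_t(x_{t-1})-\phi_{t-1}(x_{t-1})]_+$. The fourth is the noise and curvature terms of the scalar method, controlled exactly as in Theorem~\ref{thm:paid} and Corollary~\ref{cor:public} once $L_\gamma$ is replaced by the final coefficient $H_N$. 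Dividing by $A_N\asymp N^2$ then produces each term of the statement. Corruption enters through the biases $b_t^\iota$ of Lemma~\ref{lem:oracle-moments}, of size $c_d\Delta/\gamma_t\le c_d\Delta/g=\rho/D$. Paired with distances $\le D$, they contribute $O(\rho)$. Variance inflation by $\Delta/\gamma$ in the batch means gives the $\rho\sqrt{d/T}$ factor after the usual $\sqrt{A_N}$ summation. The raw value error produces the additive $\Delta$.

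\textbf{Correction charges.} In step 13 the radius is shrunk so that, by $h_{\lambda\gamma}\le\lambda h_\gamma$ and $h_\gamma\le q_\gamma$, the charge is $h_{\gamma_t}(y_t)\le\tau_t+[q-\widehat q_t+2\Delta]_+$, except when the floor $g$ binds. When $g$ binds, the charge is at most $Mg$ in the nonsmooth case and $Lg^2$ in the smooth case. The estimation error is handled through Assumption~\ref{ass:forward-moments}. The mean of $B^{\rm rad}=\floor{T/(8N)}$ records has standard deviation $\le M_2\gamma_t\sqrt{8N/T}$, and the corruption shift is at most $2\Delta$, which the $-2\Delta$ offset cancels. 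Because $\gamma_t^{\rm try}\le\Gamma$ and $\Gamma\le\max\{g,d^{1/4}D/N\}$, the weighted sum $\sum_t a_t M_2\gamma\sqrt{N/T}$ divided by $A_N$ gives $M_2g\sqrt{N/T}$ plus a term of order $d^{1/4}M_2D\sqrt{1/(NT)}\le\sqrt d M_2D/\sqrt T$. The threshold part contributes $\sum_t a_t\tau_t/A_N\lesssim \underline K_N D^2/N^2$.

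\textbf{Controlling $H_N$ (main obstacle).} Both $\underline K_N D^2/N^2$ and the scalar curvature term require $\E H_N$ to be bounded by a regularity envelope $\mathcal H_{\mathrm{reg}}^\Delta$, and this is the step we expect to be hardest. Lemma~\ref{lem:growth} bounds $H_N$ by the observed curvature $\widehat\beta_t$, which is at most the Lipschitz constant of $\nabla\phi_t$ plus noise. In the smooth case this constant is $\le L$. In the nonsmooth case it is $\le\sqrt dM/\gamma_t$, and this needs $\gamma_t$ bounded below. The lower bound comes from the rule itself: the radius shrinks only when $\widehat q_t>\tau_t$, and $q_\gamma\le M\gamma$. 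Hence $\gamma_t\gtrsim\min\{\Gamma,\tau_t/M\}$, up to estimation error, which we control on the same high-probability/expectation split. This inequality is self-referential, since $\tau_t\propto\underline K_t$ and $\underline K_t$ tracks $H$. Solving it gives $H\lesssim\sqrt dMN/(d^{1/4}D)+\sqrt dM/\gammaMax$, that is, $H D^2/N^2\lesssim d^{1/4}MD/N+\sqrt dMD^2/(\gammaMax N^2)$. The cap $\Gamma\le\gammaMax/2$ produces the $\gammaMax$ term.

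\textbf{Drift and logarithms.} Radius increases happen only when $\mathsf K$ doubles, at which point $\gamma$ is reset to $\Gamma$. The doubling epochs therefore number $1+\ell^\Delta_{\mathrm{reg}}$. By Lemma~\ref{lem:corrupt-growth-drift}, each reset costs at most the correction level of the current epoch, so the drift sum is at most $(1+\ell)$ times the charge bound above. This is the origin of the hidden factor. Taking the minimum over the two branches gives the stated bound. The resource counts follow by adding $NB^{\rm rad}\le T/8$ to the batch budgets of Corollary~\ref{cor:public} and Theorem~\ref{thm:paid}; the radius round adds one round per iteration, giving $3N$ and $5N$ rounds.
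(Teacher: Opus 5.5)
Your plan follows essentially the same route as the paper. The paper proves the theorem by identifying Algorithm~\ref{alg:protected-main} with Algorithm~\ref{alg:corrupt-radial} and expanding Theorem~\ref{thm:corrupt-radial} through Corollary~\ref{cor:corrupt-bias-budget}, and those proofs use your ingredients: the radial potential, the protected local charge, the self-referential inverse-radius control of $H_N$, the drift at reference doublings, and the floor-based bias and variance bounds. When you write it out, fix three details:
\begin{enumerate}
\item The $-2\Delta$ offset does not cancel corruption in the charge; it costs $4\Delta$ there. Its role is to give $[\widehat q_t-2\Delta]_+\le[\widehat q_t^0]_+$ in the inverse-radius bound.
\item Each reset's drift at $x_{t-1}$ also carries a Bregman remainder at $(x_{t-1},y_t)$. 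The thresholds should be summed by $\sum_{I_t=1}\underline K_t\le2(H_N-h_0)$, not bounded by count times level, since Jensen does not control the expectation of that product.
\item No high-probability split is needed. The pathwise bound on $H_N$ depends on the diagnostics only through $\sqrt{\mathsf S_N}$, and $\|\mathsf S_N\|_{L_2}\le\mu_dM+\omega$ then controls the expectation directly.
\end{enumerate}
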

\endgroup

Subtracting $2\Delta$ from $\widehat q_t$ prevents corruption from shrinking the radius; the floor $g$ on the radius controls the bias of the gradient estimates. With
$h_0=4\varepsilon N^2/(77D^2)$ and $\rho=\varepsilon/64$, Corollary~\ref{cor:epsilon-corruption-budgets} gives the sufficient nonsmooth allowance $\Delta=O(\varepsilon^2/[c_dMD])$, together with explicit evaluation-domain and sampling conditions. A smooth population permits a quadratic floor cost $Lg^2$ in place of $Mg$. Since $(d/c_d)^2\le3d$, the additional condition $T\ge150d$ suffices to control the added vector variance at this bias allowance. The extra scalar term is absorbed by the usual sampling term when $g\sqrt N\le\sqrt dD$; it remains explicit for larger horizons. The method uses a supplied $\Delta$ in both the radius update and the choice of $g$.

\begin{corollary}[Controlling the initialization error]
\label{cor:protected-main-clean}
For exact evaluations, define
\begin{equation}
\begin{aligned}
 \mathcal B_{N,T}&=\frac{\sqrt dM_2D}{\sqrt T}+\min\!\left\{\frac{LD^2}{N^2},
 \frac{d^{1/4}MD}{N}+\frac{\sqrt dMD^2}{\gammaMax N^2}\right\}.
 \label{eq:protected-main-base}
\end{aligned}
\end{equation}
If $\mathcal B_{N,T}>0$ and
$0<h_0\le N^2\mathcal B_{N,T}/(2D^2)$, then
$\E[f(x_N)-f(x^\star)]\le\widetilde O(\mathcal B_{N,T})$.
No positive problem-dependent lower bound on $h_0$ is imposed.
For any target $\varepsilon>0$, the choice made before sampling
$h_0\le\varepsilon N^2/(8D^2)$ makes the combined initialization and reference
cost at most $\varepsilon/4$.
The full proof is in Appendix~\ref{subsec:proof-protected-main-clean}.
\end{corollary}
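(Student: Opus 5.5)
The plan is to specialize Theorem~\ref{thm:protected-main} to exact evaluations and then track the single place where $h_0$ enters. With $\Delta=0$ we take $g=0$, so every $\rho$ term, the term $M_2g\sqrt{N/T}$, the term $\Delta$, and the floor costs $Lg^2$ and $Mg$ all vanish. The bound \eqref{eq:protected-main-order} then reads
\[
\E[f(x_N)-f(x^\star)]\le \widetilde O\!\left(\frac{h_0D^2}{N^2}+\mathcal B_{N,T}\right),
\]
where each branch of the minimum carries its own factor $1+\ell^0_{\mathrm{reg}}$, with $\ell^0_{\mathrm{reg}}=\log_2(1+\mathcal H^0_{\mathrm{reg}}/h_{\rm ref})$ and $h_{\rm ref}=h_{\rm init}=h_0$. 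Under the hypothesis $h_0\le N^2\mathcal B_{N,T}/(2D^2)$, the first term is at most $\mathcal B_{N,T}/2$. This gives the claim $\E[f(x_N)-f(x^\star)]\le\widetilde O(\mathcal B_{N,T})$, provided the hidden factor is genuinely logarithmic.

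The main obstacle is exactly that factor. As $h_0\to0$ the factor $\ell^0_{\mathrm{reg}}$ grows like $\log(1/h_0)$, so a small $h_0$ is cheap in the initialization term but not free. I would argue this is acceptable because the corollary imposes no problem-dependent lower bound on $h_0$, and $\widetilde O$ is declared in the theorem to hide precisely the factor $1+\ell_{\mathrm{reg}}$. To be careful, I would return to the explicit form \eqref{eq:corrupt-adaptive-risk} rather than the $\widetilde O$ summary, for two checks. First, the $h_0$ contribution should appear only as the initialization term $h_0R_u^2/(2A_N)$ together with the reference cost, which is of order $h_{\rm ref}D^2/N^2$ through $\tau_t=\underline K_tD^2/N^2$ with $\underline K_t\ge h_{\rm ref}$. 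Second, the doubling counter in Algorithm~\ref{alg:protected-main} should contribute only the multiplicative $\ell$ factor, and no additive term with a negative power of $h_0$.

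For the $\varepsilon$ statement, I would bound the two $h_0$-dependent costs separately. Since $A_N=N(N+1)/2\ge N^2/2$ and $R\le D$, the initialization cost satisfies $h_0R^2/(2A_N)\le h_0D^2/N^2$. The reference cost, taken from the charges $\sum_t h_{\gamma_t}(y_t)$ weighted by $a_t/A_N$ with each charge below $\tau_t$, contributes at most $h_{\rm ref}D^2/N^2$ from the $h_{\rm ref}$ part of $\underline K_t$. With $h_0=h_{\rm ref}\le\varepsilon N^2/(8D^2)$, the sum of these two costs is at most $2h_0D^2/N^2\le\varepsilon/4$. If the explicit constants in the appendix bound differ (for instance a factor $2$ on the reference charge), I would adjust by checking that the combined coefficient of $h_0D^2/N^2$ in \eqref{eq:corrupt-adaptive-risk} is at most $2$. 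That is the only arithmetic point needing verification.
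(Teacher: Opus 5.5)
Your proposal is correct and follows the paper's proof essentially verbatim. Both specialize Theorem~\ref{thm:protected-main} to $\Delta=g=0$, absorb $h_0D^2/N^2$ into $\mathcal B_{N,T}$ via the hypothesis while leaving the $h_0$-dependent reference logarithm inside $\widetilde O$, and bound the startup term $h_0R^2/(2A_N)$ and the reference term $h_{\rm ref}D^2/N^2$ of \eqref{eq:corrupt-adaptive-risk} by $h_0D^2/N^2$ each, using $R\le D$ and $2A_N\ge N^2$. Your second check also goes through: at $\Delta=0$ one has $\beta=0$ and $\bar\sigma=\sigma_0$, so the $1/h_{\rm ref}$ inside $\zeta$ never enters additively, and $h_0$ appears only in those two terms and in the logarithm.
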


The first condition explains the bound; the accuracy-based choice uses only
$D,N,\varepsilon$. For arbitrary $h_0$, the bounds retain logarithmic dependence
on $h_0$. The specific initialization and sufficient budgets in
Corollary~\ref{cor:epsilon-clean-budgets} bound these factors by universal constants.
\section{Distance and joint-adaptation tradeoffs}
\label{sec:scope}

With exact evaluations, an independent initial batch selects a search ball.

Before sampling, choose $r_0,h_{\rm ref}>0$, $p\ge1$ and the initial smoothing radius
$0<\gamma_{\rm a}\le\gammaMax$. Draw $B_0=\floor{T/2}$ clean vector records at
$(x^0,\gamma_{\rm a})$, with mean $g_0$. Put $A=A_N=N(N+1)/2$, $T'=T-B_0$ and
\begin{align}
 h_{\rm a}&=h_{\rm ref}+A\norm{g_0}/(p r_0),\nonumber\\
 S&=r_0+2A\norm{g_0}/h_{\rm a},\qquad r_p=r_0(1+2p).
 \label{eq:joint-main-anchor}
\end{align}
Thus $r_0\le S<r_p$. Set
$Q_S=Q\cap\overline{\mathbb B}(x^0,S)$ and require exact
projection onto $Q_S$. We use $J_\gamma$ in
\eqref{eq:radial-estimate-main} to estimate $h_\gamma$ directly,
with both evaluation points within distance $\gamma$ of their center.
Run Algorithm~\ref{alg:threshold}
(Appendix~\ref{sec:app-threshold}) once on $Q_S$, with $D_S=2S$,
$H_0=h_{\rm init}=h_{\rm a}$ and budget $T'$.
Radius selection tracks $\mathsf K_t=h_{\rm ref}+H_t-h_{\rm a}$,
while projected updates use step size $a_t/H_{t-1}$.
Thus $\mathsf K_0=h_{\rm ref}$ and $H_0=h_{\rm a}$.

For fixed $u\in Q$, write $R_u=\|x^0-u\|$.
The bound $\mathcal A_*(u)$ accounts for initialization, initial smoothing
and restricting the search to $Q_S$. The deterministic radius bound $S_2$
satisfies $(\E S^2)^{1/2}\le S_2$.
Both are analysis quantities defined in
\eqref{eq:joint-main-moments}--\eqref{eq:joint-main-saturated-price}.

\begin{theorem}[Guarantee with a selected search radius]\label{thm:joint-main}
Under Assumptions~\ref{ass:vector} and~\ref{ass:adaptive-moments} and exact projection onto the selected intersection, the execution on the selected ball satisfies
\begin{equation}
 \begin{aligned}
 &\E[f(x_N)-f(u)]\le\mathcal A_*(u)+\frac{4h_{\rm ref}S_2^2}{N^2}\\
 &\quad+\widetilde O\!\Biggl(\frac{\sigma_0S_2}{\sqrt T}
 +\min\!\Biggl\{\frac{LS_2^2}{N^2}+\frac{d^{1/4}M_2S_2}{\sqrt{NT}},\\
 &\qquad\frac{d^{1/4}MS_2}{N}
 +\frac{\sqrt d\,MS_2^2}{\gammaMax N^2}
 +\frac{d^{1/4}M_2S_2}{\sqrt T}\Biggr\}\Biggr).
 \end{aligned}
 \label{eq:joint-main-order}
\end{equation}
Set $L=+\infty$ without population smoothness.
The notation $\widetilde O$ hides only universal constants and the
logarithmic factors specified in Appendix~\ref{subsec:proof-joint-main},
which also gives the exact bound.
Fixed batches require $T\ge16N$ and $3N+1$ rounds; adaptive batches require $T\ge32N$ and $5N+1$ rounds. Both use exactly $T$ pairs, $2T$ values and $N$ intersection projections, with no supplied $D,R,L,M,M_2$ or variance. The full proof is in Appendix~\ref{subsec:proof-joint-main}.
\end{theorem}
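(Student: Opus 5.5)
We treat the run on $Q_S$ as a conditional instance of the known-diameter analysis, with the initial batch as a separate, independent stage. Conditionally on $\mathcal F_0$ (the $B_0$ initial records), the quantities $g_0,h_{\rm a},S$ and $Q_S$ are fixed. $Q_S$ is compact and convex, contains $x^0$, and has diameter at most $D_S=2S$. The remaining budget $T'=T-B_0\ge T/2$ still satisfies $T'\ge8N$ (fixed batches) or $T'\ge16N$ (adaptive batches). So the threshold variant (Algorithm~\ref{alg:threshold}) run on $Q_S$ gives the conditional potential bound of Theorem~\ref{thm:radial-potential}, with $H_0=h_{\rm a}$ and $\mathsf K_0=h_{\rm ref}$, for any comparator $w\in Q_S$. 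In that bound $J_\gamma$ replaces $\widehat q_\gamma$: Assumption~\ref{ass:adaptive-moments} supplies an unbiased estimate of $h_\gamma$ itself with second moment $\chi_dM_2^2\gamma^2$. We therefore obtain
\[
\E[f(x_N)-f(w)\mid\mathcal F_0]\le \frac{h_{\rm a}\|x^0-w\|^2}{2A_N}+\mathcal E(S),
\]
where $\mathcal E(S)$ collects the variance term $\sigma_0S/\sqrt{T'}$, the regime-wise curvature term $\min\{LS^2/N^2,\ d^{1/4}MS/N+\sqrt d MS^2/(\gammaMax N^2)\}$, the reference cost $h_{\rm ref}S^2/N^2$, and the scalar-estimate terms $d^{1/4}M_2S/\sqrt{NT}$ or $d^{1/4}M_2S/\sqrt T$. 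The logarithmic factor $\log_2(1+\mathcal H/h_{\rm ref})$ is inherited as in Theorem~\ref{thm:protected-main}.

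The comparator $u\in Q$ may lie outside $Q_S$. We take $w=\Pi_{Q_S}(u)$ and write $f(x_N)-f(u)=[f(x_N)-f(w)]+[f(w)-f(u)]$. The second bracket is the restriction price. It vanishes when $R_u\le S$. Otherwise we bound it using convexity along the segment from $x^0$ to $u$, together with the first-order information in $g_0$ relative to $\nabla f_{\gamma_{\rm a}}(x^0)$: either $f(w)-f(u)\le M(R_u-S)_+$, or a sharper version exploiting that $S$ grows with $\|g_0\|$. The initialization term $h_{\rm a}\|x^0-w\|^2/(2A_N)$ is split into two parts. The part $h_{\rm ref}S^2/(2A_N)\le h_{\rm ref}S^2/N^2$ gives the displayed $4h_{\rm ref}S_2^2/N^2$ after taking expectations. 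The remaining part, $\|g_0\|S^2/(2pr_0)$, together with the initial smoothing bias $M\gamma_{\rm a}$ and the error $\|g_0-\nabla f_{\gamma_{\rm a}}(x^0)\|$ (whose mean is at most $\sigma_0/\sqrt{B_0}$), and the restriction price, all go into $\mathcal A_*(u)$ as defined in \eqref{eq:joint-main-moments}--\eqref{eq:joint-main-saturated-price}.

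Next we take the outer expectation. Every term in $\mathcal E(S)$ is at most quadratic in $S$, and $S<r_p$ deterministically. The linear terms are bounded by Jensen, $\E S\le(\E S^2)^{1/2}\le S_2$, and the quadratic ones directly by $\E S^2\le S_2^2$. The logarithmic envelope factors depend on $S$ only through $\mathcal H$, so we bound them at $r_p$, which keeps them deterministic and inside $\widetilde O$. Using $T'\ge T/2$ turns $\sigma_0S_2/\sqrt{T'}$ into $O(\sigma_0S_2/\sqrt T)$. The resource counts follow from the counts in Theorem~\ref{thm:protected-main} applied with budget $T'$, plus the one extra round for the initial batch, giving $3N+1$ or $5N+1$ rounds and exactly $T$ pairs.

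We expect the main obstacle to be the coupling through $S$. The step coefficient $h_{\rm a}$, the domain $Q_S$ and the radius schedule all depend on the same $g_0$. The conditional argument requires independence of the subsequent samples from $\mathcal F_0$, which holds by construction. The difficulty is rather that the product $h_{\rm a}S^2$ and the terms of the form $\|g_0\|\cdot S$ are correlated. Our first approach is to use the algebraic identity $h_{\rm a}(S-r_0)=2A\|g_0\|$ to bound $h_{\rm a}S^2/A$ by $h_{\rm ref}S^2/A+2\|g_0\|S$ deterministically. Every such product then becomes an explicit function of $\|g_0\|$ and $S$ that is bounded by $r_p$. This moves all residual dependence into $\mathcal A_*(u)$, which is defined to absorb exactly this randomness.
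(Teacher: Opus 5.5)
Your architecture matches the paper's: condition on the anchor batch, run the offset threshold controller of Algorithm~\ref{alg:threshold} on $Q_S$ with $H_0=h_{\rm a}$, $\mathsf K_0=h_{\rm ref}$, $D_S=2S$ and budget $T'\ge T/2$, clip the comparator, and average using the moments of $S$. Your $w=\Pi_{Q_S}(u)$ is in fact the radial clip $u_S=x^0+\min\{1,S/R_u\}(u-x^0)$, because the ball projection of $u$ lies on $[x^0,u]\subset Q$; this is what licenses your ``convexity along the segment''. Two steps, however, fail as written.

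\textbf{The logarithms are bounded in the wrong direction.} Each envelope $\mathcal H_{\mathrm{reg}}(D)$ is nonincreasing in $D$: it is $4L$ or $4\sqrt dM/\Gamma(D)$ plus multiples of $1/D$. Evaluating at the largest diameter $2r_p$ therefore gives a \emph{lower} bound on $\ell_{\mathrm{reg}}(D_S)$, not an upper bound. You must instead use $D_S\ge2r_0$. Replace $\ell_{\mathrm{reg}}(D_S)$ by the deterministic $\ell_{\mathrm{reg}}^0=\ell_{\mathrm{reg}}(2r_0)$ inside the conditional bound (Lemma~\ref{lem:joint-offset-potential}), before averaging over the anchor. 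This is also the factor the theorem's $\widetilde O$ refers to. The relevant finite bound is the radial-derivative controller's, not Theorem~\ref{thm:protected-main}'s.

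\textbf{The charge defining $\mathcal A_*(u)$ is not derived, and your bookkeeping does not produce it.}

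First, the displayed $4h_{\rm ref}S_2^2/N^2$ is the threshold reference cost $h_{\rm ref}\E D_S^2/N^2$. It is not the $h_{\rm ref}$-part of the startup term, which belongs inside $\mathcal A_*$.

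Second, bounding $\norm{x^0-w}\le S$ discards the $R_u$-dependence of both prices. You need $\norm{x^0-u_S}^2=\min\{R_u^2,S^2\}$.

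For $\mathcal A_p$, bound this by $R_u^2$. Then use the tangent inequality for $f_{\gamma_{\rm a}}$ at $x^0$; the bound $M(R_u-S)_+$ is not a term of $\mathcal A_*$, and going through $f_{\gamma_{\rm a}}$ is what produces $\beta_{\rm a}(u)$. Finally, complete the square using $S=r_0+2A\norm{g_0}/h_{\rm a}$:
\[
 G_{\rm a}(R_u-S)_+\le\norm{g_0}\bigl[a_{R_u}-2A\norm{g_0}/h_{\rm a}\bigr]_+ +a_{R_u}\norm{e_0}\le\frac{h_{\rm a}a_{R_u}^2}{8A}+a_{R_u}\norm{e_0}.
\]
This is where $\Psi(R_u)$ comes from. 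Expanding $h_{\rm a}$ then leaves only terms linear in $\norm{g_0}$.

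For $\mathcal A_{\rm sat}$, the correlated product $\norm{g_0}\min\{R_u^2,S^2\}$ is handled by Cauchy--Schwarz, with $\E\norm{g_0}^2\le z_{\rm a}^2$ and $\min\{R_u^2,S^2\}^2\le\min\{R_u^4,r_p^2S^2\}$.

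Your deterministic shortcut cannot replace this. For $g_0\ne0$, the inequality $h_{\rm a}S^2/A\le h_{\rm ref}S^2/A+2\norm{g_0}S$ is equivalent to $S\le2pr_0$, which fails as $S\to r_0(1+2p)$. In any case $\norm{g_0}S$ is unbounded and remains correlated, so ``bounded by $r_p$'' does not hold.

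Finally, $\E S^2\le S_2^2$ is itself a Jensen argument, via concavity of $w\mapsto s(\sqrt w)^2$. You use it without proof.
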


The alternative error bound $\mathcal A_{\rm sat}(u)$ in
\eqref{eq:joint-main-saturated-price} satisfies
$\mathcal A_*(u)\le\mathcal A_{\rm sat}(u)$ and grows at most
linearly with $R_u$. Using this bound, choosing
$0<h_{\rm ref}\le\varepsilon N^2/(20r_p^2)$ makes the explicit
reference contribution at most $\varepsilon/4$
(Corollary~\ref{cor:run5-distance-reference-absorption}).
For fixed $p,r_0$, the contribution $\mathcal A_*(u)$
need not vanish as $N,T$ increase, even when
$g_0=\nabla f_{\gamma_{\rm a}}(x^0)$.
Indeed, for $F(x,\xi)=f(x)=-x$ on $Q=[0,R]$,
$x^0=0$, and $R>r_p$, every output satisfies
$f(x_N)-f(R)\ge R-r_p>0$
(Appendix~\ref{subsec:comparison-parameters}).
Increasing $p$ or $r_0$ reduces the factor $1/(pr_0)$
in $h_{\rm a}/A$ and enlarges $r_p$, but can increase
the terms depending on $S_2$.
Convergence follows with $p$ increasing and
$\gamma_{\rm a}$ decreasing according to the budgets
(Appendix~\ref{subsec:joint-convergence}).

\begin{corollary}[Unknown variance and original distance]
\label{cor:universal-distance-main}
Fix $N\ge1$, $T\ge4$, $D,L>0$, and $0<r\le D/2$. Set $\gammaMax=D$. There are two convex sample-function laws on $Q=[0,D]$, with common supplied $x^0=r,D,L,N,T,\gammaMax$ and finite uniform second-moment bounds, such that any method that receives only these inputs, uses at most $T$ sampled functions and satisfies on both laws
\[
 \E_i[f_i(\widehat x)-f_i(x_i^\star)]\le
 C\left(\frac{LR_i^2}{N^2}+
       \frac{M_{2,i}R_i}{\sqrt T}\right)
\]
requires $C\ge\min\{N^2,\sqrt T,D/r\}/96$. The conclusion holds even if the method queries arbitrary points of each sampled function or observes it in full. The full proof is in Appendix~\ref{subsec:proof-universal-distance-main}.
\end{corollary}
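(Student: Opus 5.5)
The plan is a two-point (Le Cam) argument with two laws that produce identical sampled functions on an event of constant probability. The two laws are designed so that the minimizers are far apart. One law has small initial distance $R_1\approx r$ and a large second moment. The other has large initial distance $R_2\approx D-r\ge D/2$ and a small second moment. Because the method receives the same inputs $x^0=r,D,L,N,T,\gammaMax$ under both laws, its output $\widehat x$ has the same conditional law on that event. Its error on at least one law is then a constant fraction of a quantity that the claimed bound forces to be small on both.

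\emph{Construction.} Under law~2, every sample equals a fixed convex $f_2$ on $[0,D]$, say the linear $f_2(x)=-a x$, or an $L$-smooth convex variant with minimizer at $D$. Thus $M_{2,2}=a$ and $R_2=D-r$. Law~1 is a mixture. With probability $1-1/T$ the sample is $f_2$. With probability $1/T$ it is a steep convex function $K$, e.g.\ $K(x)=bx$ with $b\approx aT$, or a quadratic centred left of $r$ with curvature at most $LT$. The mixture mean $f_1$ is then convex, $L$-smooth where needed, and minimized at $0$ (or within distance $r$ of $x^0$), so $R_1\le r$. Its uncentered second moment is $M_{2,1}^2\approx b^2/T$, so $M_{2,1}\approx a\sqrt T$.

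\emph{Indistinguishability.} Let $E$ be the event that none of the at most $T$ sampled functions equals $K$. Then $\Prb_1(E)\ge(1-1/T)^T\ge 1/4$ for $T\ge4$. On $E$ the full observations, even with arbitrary queries or full observation of each sampled function, have the same law as under law~2. Consequently, for any nonnegative function $\phi$ of the output, $\E_1\phi(\widehat x)\ge\frac14\E_2\phi(\widehat x)$.

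\emph{Two-sided squeeze.} Under law~2 the claimed guarantee gives $a\,\E_2(D-\widehat x)\le C(LR_2^2/N^2+aR_2/\sqrt T)$, which forces $\E_2\widehat x\ge D-O(CD/\sqrt T)$ or $D-O(CD/N^2)$, depending on which branch dominates. Transferring to law~1 via $E$ gives $\E_1\widehat x\gtrsim\frac14 D(1-O(C/\sqrt T)-O(C/N^2))$. The law~1 error is at least the mean slope times $\widehat x$, and the law~1 bound is $C(Lr^2/N^2+M_{2,1}r/\sqrt T)$, which is of order $Cr$ times the slope scale. Dividing these two relations gives either $C\gtrsim D/r$, or that one of the correction terms is of order one, i.e.\ $C\gtrsim\sqrt T$ or $C\gtrsim N^2$. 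Hence $C\ge\min\{N^2,\sqrt T,D/r\}/96$ once the constants $a,b,1/4,\dots$ are tracked.

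\emph{Main obstacle.} The delicate step is calibrating $a$, $b$ and the curvature of $K$. Three requirements must hold at once. First, $f_1$ must have its minimizer within $r$ of $x^0$ while its slope stays comparable to $a$. Second, both laws must respect the common smoothness parameter $L$ and $\gammaMax=D$. Third, the smooth branch $LR_i^2/N^2$ must not absorb the contradiction. I would first try a purely linear pair, which has $L$-term zero and isolates the $\min\{\sqrt T,D/r\}$ regime. I would then add a quadratic of curvature $\asymp L$ to $f_2$ to obtain the $N^2$ regime, checking that the mixture weight $1/T$ keeps $M_{2,1}$ at the $\sqrt T$ scale. Finally I would take the worse of the two constructions, which yields the stated minimum.
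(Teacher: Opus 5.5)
Your argument is correct and follows the paper's skeleton. Pre-draw the $T$ rare indicators independently of the method's seed, so that the all-common event has probability $(1-1/T)^T\ge1/4$. On that event the whole adaptive transcript, even with full-function access, coincides with the deterministic-law execution. Then split on whether the deterministic-law output is near the far minimizer $D$.

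The route differs only in the family. The paper (Lemma~\ref{lem:universal-distance-family}, Corollary~\ref{cor:universal-distance-moment}) uses common-curvature quadratics $F_0(x)=\frac L2(x-D)^2$ and $F_*=F_0+LDTx$, so $f_1(x)=\frac L2x^2+\frac L2D^2$. The law-1 gap is $\frac L2\widehat x^2$, and Cauchy--Schwarz with Jensen gives $\varepsilon_1\ge q_T\frac L2(D-\sqrt{2\varepsilon_0/L})_+^2$. Combined with $b_0\le LD^2(N^{-2}+3T^{-1/2})$ and $b_1\le3LDr$, this yields the constant $1/96$.

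Two points in your calibration need correcting, though neither is fatal.

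\emph{The rare slope.} It must be $b=2aT$ (or $caT$ for a fixed $c>1$), not $b\approx aT$. With $b=aT$ the mean slope of $f_1$ is only $a/T$ while $M_{2,1}/\sqrt T\approx a$, so your $D/r$ branch degrades to order $D/(Tr)$. With $b=2aT$ one has $f_1(x)=a(1+1/T)x$ and $M_{2,1}^2=a^2(4T+1-1/T)$.

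\emph{The $L$-term.} The benchmark's $LR_i^2/N^2$ uses the supplied $L>0$, so it does not vanish for a linear pair. This actually makes your quadratic second stage unnecessary. Take $a=LD$.
\begin{itemize}
\item If $\E_2\widehat x\le D/2$, then $\varepsilon_2\ge LD^2/2$ while $b_2\le LD^2(N^{-2}+T^{-1/2})$, so $C\ge\min\{N^2,\sqrt T\}/4$.
\item Otherwise $\varepsilon_1\ge a\,q_T\,\E_2\widehat x\ge LD^2/8$ while $b_1\le(1/2+\sqrt{4.25})LDr$, so $C\ge D/(21r)$.
\end{itemize}

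\emph{What each route buys.} Your linear route needs only first moments. Because $M_{2,i}$ is an uncentered moment charged even to the deterministic law, taking $a\gg LD$ (for instance $a=LD\sqrt T$) removes the $N^2$ branch entirely and gives $C\ge\min\{\sqrt T,D/r\}/21$.

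The paper's family instead keeps the actual curvature equal to the supplied $L$ and the population Lipschitz scale at $3LD$ on both laws, so its obstruction does not rest on a large unknown slope. Its single inequality (Theorem~\ref{thm:universal-distance-pareto}) is also reused for the variance benchmark and for the exact competitive factor of Theorem~\ref{thm:universal-distance-menu} on that family.
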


Without a supplied moment or range bound, no procedure can
report a finite upper confidence bound for a mean
function-value difference almost surely and maintain
uniform coverage over all laws with finite second moments,
even with adaptive sampling and stopping
(Corollary~\ref{cor:observable-certification-no-cap}).

\subsection{Reading the resource guarantee}
For exact evaluations and the known-diameter method with
$\sigma_0^2\le CdM_2^2$, choose the initial coefficient $h_0=4\varepsilon N^2/(77D^2)$.
Corollary~\ref{cor:epsilon-clean-budgets} gives explicit budgets $N_\varepsilon,T_\varepsilon$ for accuracy $\varepsilon$ without the logarithmic factors of adaptation. In the smooth case,
\begin{equation}
 N_\varepsilon=O\!\left(1+
 D\sqrt{L/\varepsilon}\right).
 \label{eq:adaptive-smooth-accuracy}
\end{equation}
In the nonsmooth case, including a finite permitted evaluation radius,
\begin{equation}
 N_\varepsilon=O\!\left(1+d^{1/4}MD/\varepsilon
 +d^{1/4}D\sqrt{M/(\gammaMax\varepsilon)}\right).
 \label{eq:adaptive-nonsmooth-accuracy}
\end{equation}
For either case,
\[
 T_\varepsilon=O\!\left(N_\varepsilon+
 dM_2^2D^2/\varepsilon^2\right)
\]
suffices, together with $T\ge8N$ (fixed batches) or $T\ge16N$ (adaptive batches); see Appendix~\ref{subsec:epsilon-clean}. When $\varepsilon\le2M\gammaMax$, the nonsmooth budgets match
\eqref{eq:gasnikov-target} with $R$ replaced by $D$,
without additional adaptation logarithms. The method does not use
$M,M_2,L$.

The budgets guarantee error bounds, without an observable
stopping test. For nonsmooth problems, the largest admissible
function-value error remains undetermined at finer accuracy:
our sufficient allowance is of order
$\varepsilon^2/(\sqrt d\,MD)$ under
Corollary~\ref{cor:epsilon-corruption-budgets}, whereas the
impossibility threshold in Corollary~\ref{cor:noise-frontier}
is of order $\varepsilon\log(2T+1)/d$ in its finer-accuracy
regime. Obtaining these adaptive-smoothing guarantees without
a supplied $\Delta$, sharp achievable distance-dependent
bounds with unknown variance, and an optimal tradeoff between
sequential sampling rounds and evaluations also remain open.

\label{page:main-end}
\clearpage
\subsection*{AI Use Statement}
Generative AI tools assisted with proof development and manuscript preparation. The authors are responsible for the mathematical claims and final presentation.
\bibliographystyle{apalike}
\bibliography{references}
\clearpage
\clearpage\appendix\onecolumn
\renewcommand{\thesection}{\AlphAlph{\value{section}}}
\renewcommand{\theHsection}{\AlphAlph{\value{section}}}
\title{Adaptive Two-Point Zeroth-Order Accelerated Methods\\for Stochastic Convex Optimization Problems:\\Supplementary Material}
\startcontents[appendices]
\section*{Contents of the supplementary material}
\begingroup\small
\titlecontents{section}[2.3em]{\addvspace{0.5em}\bfseries}
 {\contentslabel{2.3em}}{}{\titlerule*[0.75pc]{.}\contentspage}
\titlecontents{subsection}[5.4em]{\addvspace{0.1em}}
 {\contentslabel{3.1em}}{}{\titlerule*[0.75pc]{.}\contentspage}
\printcontents[appendices]{}{1}{\setcounter{tocdepth}{2}}
\endgroup
\clearpage
\section{Detailed assumptions and bounds}
\label{sec:result-setups}

\textbf{Gradient noise.}
For the first-order rows, $\sigma$ bounds the root mean square
gradient error at each query, and $\sigma_{\rm sg}$ bounds its tail:
$\Prb(\|g-\nabla f\|\ge s)\le
2\exp(-s^2/\sigma_{\rm sg}^2)$ for $s>0$.
The supplied $\widehat\sigma$ bounds this tail scale.

\textbf{Starting values and selected radius.}
$S_2$ bounds the root mean square selected radius.
$\mathcal A_*(x^\star)$ bounds contributions from initialization,
smoothing, and restricting the search to the selected ball.
Neither is an input; see
\eqref{eq:joint-main-moments}--\eqref{eq:joint-main-saturated-price}.

$L=L_1$ and $L_\nu$ satisfy
$\|\nabla f(x)-\nabla f(y)\|\le L_\nu\|x-y\|^\nu$
(with subgradients when $\nu=0$); set $L=+\infty$ without smoothness.

Fixed batch sizes are specified for each iteration before sampling.
Adaptive batch sizes use additional samples to select the number of
evaluations to average; $s_0>0$ controls their allocation.

For known diameter, $h_0>0$ is the initial step coefficient,
giving first step size $1/h_0$.
For the selected radius, $r_0>0$ is its minimum,
$p\ge1$ sets its upper bound $r_p=(1+2p)r_0$, and
$0<\gamma_{\rm a}\le\gammaMax$ is the initial-batch smoothing radius.
The reference coefficient $h_{\rm ref}>0$ initializes
smoothing-radius selection; the initial step coefficient
$h_{\rm a}$ also depends on the initial gradient estimate,
as defined in \eqref{eq:joint-main-anchor}.

For a target $\varepsilon>0$ fixed before sampling, choosing
$h_0\le\varepsilon N^2/(8D^2)$ and
$h_{\rm ref}\le\varepsilon N^2/(20r_p^2)$ makes the explicit
contributions $2h_0D^2/N^2$ and $5h_{\rm ref}r_p^2/N^2$
at most $\varepsilon/4$ in
Corollaries~\ref{cor:protected-main-clean} and
\ref{cor:run5-distance-reference-absorption}, respectively.

Table~\ref{tab:setup-bounds} records the assumptions and complete bounds
for the algorithms and lower bounds. Optimization upper bounds concern the original population risk, except
for the explicitly identified exact-record target row. Write
$A=A_N=N(N+1)/2$ for the default scalar recurrences, $R_u=\norm{u-x^0}$ and
$\nu_d=d/(d+2)$. The supplied $D$ bounds the diameter of $Q$;
it is distinct from solution distance $R$. Each original pair uses one
sampled function at exactly two points and contributes two scalar values.
Positive scalar references, budgets and the evaluation domain are supplied. Regularity
and variance scales in the bounds are analysis quantities unless the
setup explicitly states otherwise. Sampling stages are distinct from
the $N$ optimizer updates.

For the forward-secant rows, define the complete remainder
\begin{align}
 \mathcal P_{\mathrm{reg}}^\Delta={}&k_ND^2\mathcal H_{\mathrm{reg}}^\Delta
 +h_{\rm ref}D^2/N^2
 +(\eta+4\Delta+w_{\mathrm{reg}})(1+\ell_{\mathrm{reg}}^\Delta)
 +q_{\mathrm{reg}}\ell_{\mathrm{reg}}^\Delta+2D\beta.
 \label{eq:setup-protected-remainder}
\end{align}
All its quantities are defined in
\eqref{eq:corrupt-diagnostic-scales}--\eqref{eq:corrupt-curvature-envelopes}.
At $\Delta=0$, $g=0$, $\beta=w_{\mathrm{reg}}=0$ and
$\bar\sigma=\sigma_0$. This is an exact specialization of
\eqref{eq:corrupt-adaptive-risk}.

\begingroup\small
\setlength{\tabcolsep}{5pt}
\renewcommand{\arraystretch}{1.16}
\begin{longtable}{@{}p{0.30\textwidth}p{0.66\textwidth}@{}}
\caption{Assumptions, error bounds and sampling and projection costs.
The smooth method with fixed batches uses $2N$ sampling rounds under vector
moment bounds. Other rows adapt batch sizes or smoothing
radii from function values under the stated conditions.
The same radial controller has both clean regularity guarantees.
Lower bounds state their quantifiers and scope separately.}
\label{tab:setup-bounds}\\
\toprule
Setup and result & Bound and resource conditions\\
\midrule
\endfirsthead
\multicolumn{2}{l}{\textit{Table~\thetable\ continued}}\\
\toprule
Setup and result & Bound and resource conditions\\
\midrule
\endhead
\bottomrule
\endfoot

\textbf{Smooth population, fixed batch sizes.}
Convex $L$-smooth population on the evaluation domain; clean vector records with
variance at most $\sigma_\gamma^2$ at the selected fixed radius.
Take $u=x^\star$ and
$\gamma=\min\{\gammaMax,D/N\}$.
Use \eqref{eq:public-batch}.
Neither $L$ nor $\sigma_\gamma$ is supplied.
&
$\displaystyle
 \frac{h_0R^2}{2A}
 +\left(16+\frac{\nu_d}{2}\right)\frac{LD^2}{N^2}
 +\frac{4\sqrt2\sigma_\gamma D}{\sqrt T}$.
\par\smallskip
$T\ge4N$; at most $T$ pairs and $2T$ values,
$2N$ sampling stages, $N$ projections.
No radial sample condition is needed.
The bound follows from Corollary~\ref{cor:public} and
Proposition~\ref{prop:protected-smoothing-smooth}; its derivation is below.\\[7pt]

\textbf{Nonsmooth population, adaptive batch sizes with all samples counted.}
Convex $M$-Lipschitz population; clean vector moment assumptions
$\sigma_\gamma^2\le CdM_2^2$, $C\ge1$.
Set $u=x^\star$ and
$\gamma=\min\{\gammaMax,d^{1/4}D/N\}$.
Algorithm~\ref{alg:scalar} learns the batch surplus from pilot observations.
Neither $M$ nor $M_2$ is supplied.
&
$\displaystyle \frac{h_0R^2}{2A}
 +\frac{17d^{1/4}MD}{N}
 +\frac{16\sqrt d\,MD^2}{\gammaMax N^2}$
\par\smallskip
$\displaystyle\hfill
 +\frac{8\sqrt{2Cd}\,M_2D}{\sqrt T}$.
\par\smallskip
$T\ge8N$; exactly $T$ pairs and $2T$ values,
$4N$ stages, $N$ projections.
Corollary~\ref{cor:horizon}. Fixed batch sizes use $2N$ stages
and replace the stochastic coefficient $8\sqrt{2Cd}$ by
$4\sqrt{2Cd}$, without a surplus decision based on samples.\\[7pt]

\textbf{One clean forward-secant policy, two regimes.}
Convex $M$-Lipschitz population, uniform clean vector variance
$\sigma_0^2\le CdM_2^2$, and Assumption~\ref{ass:forward-moments}. Use
Algorithm~\ref{alg:protected-main} with $\Delta=g=0$ and
$h_{\rm init}=h_{\rm ref}=h_0>0$.
Neither regularity nor variance scales are supplied.
&
$\displaystyle \frac{h_0R_u^2}{2A}+\mathcal P_{\mathrm{reg}}^0$,
\par\smallskip
where $\mathcal P_{\mathrm{reg}}^\Delta$ is defined in \eqref{eq:setup-protected-remainder} from the complete finite
bound \eqref{eq:corrupt-adaptive-risk}.
For the initialization of Corollary~\ref{cor:protected-main-clean},
$\displaystyle\widetilde O(\mathcal B_{N,T})$
with $\mathcal B_{N,T}$ in \eqref{eq:protected-main-base}.
The two branches are evaluated with their own finite reference logs.
With supplied target $\varepsilon>0$ and
$h_0=4\varepsilon N^2/(77D^2)$,
Corollary~\ref{cor:epsilon-clean-budgets} gives gap below $\varepsilon$
for log-free sufficient orders
\par\smallskip
$\displaystyle N_{\rm s}=O(1+D\sqrt{L/\varepsilon})$,
\par\smallskip
$\displaystyle N_{\rm ns}=O\!\left(1+\frac{d^{1/4}MD}{\varepsilon}
+d^{1/4}D\sqrt{\frac M{\gammaMax\varepsilon}}\right)$,
\par\smallskip
$\displaystyle T=O\!\left(N+
\frac{(\sigma_0^2+dM_2^2)D^2}{\varepsilon^2}\right)$.
The sufficient conditions use population scales only in the analysis.
\par\smallskip
Fixed batch sizes: $T\ge8N$, $3N$ stages.
Adaptive batch sizes: $T\ge16N$, $5N$ stages.
Both: exactly $T$ pairs, $2T$ values, $N$ projections.
Theorem~\ref{thm:protected-main}. The bounds include the model error for the
original objective and the cost of radius increases; no optimizer state is reset.\\[7pt]

\textbf{Selected search ball and adaptive smoothing.}
Uniform vector and radial moment assumptions, clean initial-record variance
$\sigma_{\rm a}^2$, and an exact ball-intersection projector.
Choose $r_0,h_{\rm ref}>0$, $p\ge1$, and legal $\gamma_{\rm a}>0$ before sampling;
$B_0=\floor{T/2}$, $T'=T-B_0$.
Use \eqref{eq:joint-main-anchor} once and the adjusted radius reference.
&
$\displaystyle\mathcal A_*(u)+\min\{\mathcal J_{\rm s}(S_2),\mathcal J_{\rm ns}(S_2)\}$,
\par\smallskip
where $\mathcal A_*$ is the minimum of
\eqref{eq:joint-main-price} and \eqref{eq:joint-main-saturated-price};
the costs are defined in
\eqref{eq:joint-main-smooth-cost}--\eqref{eq:joint-main-nonsmooth-cost},
with $\mathcal J_{\rm s}(S_2)=+\infty$ without population smoothness.
Here $\E S^2\le S_2^2$, $\E S\le S_2<r_p$.
The direct nonsmooth moment substitution requires
$\gammaMax\ge2d^{1/4}r_p/N$; otherwise the displayed finite-radius
minimum applies. All coefficient bounds use $T'$; initial smoothing bias remains.
\par\smallskip
Fixed batch sizes: $T\ge16N$, $3N+1$ stages.
Adaptive batch sizes: $T\ge32N$, $5N+1$ stages.
Both: exactly $T$ pairs, $2T$ values, $N$ intersection projections.
The exact bound is \eqref{eq:joint-main-risk}; Theorem~\ref{thm:joint-main}
states its explicit order bound \eqref{eq:joint-main-order}.
No supplied $D,R,L,M,M_2$ or variance.
The explicit reference contribution can be absorbed by
Corollary~\ref{cor:run5-distance-reference-absorption};
initial-estimate and search-restriction errors remain.\\[7pt]

\textbf{Adaptive smoothing with a supplied error bound.}
The forward-secant assumptions above hold uniformly.
A fixed spatial error satisfies $\|\delta\|_\infty\le\Delta$;
the bound $\Delta$ is supplied. Choose $\rho>0$ and
$g=c_d\Delta D/\rho\le\gammaMax/2$.
Use Algorithm~\ref{alg:protected-main}. For calibrated accuracy,
set $h_0=4\varepsilon N^2/(77D^2)$ and $\rho=\varepsilon/64$.
&
$\displaystyle\frac{h_0R_u^2}{2A}+\mathcal P_{\mathrm{reg}}^\Delta$,
\par\smallskip
or the order bound \eqref{eq:protected-main-order}.
For calibrated accuracy, Corollary~\ref{cor:epsilon-corruption-budgets}
gives the sufficient nonsmooth allowance
$\displaystyle\Delta\le\frac{\varepsilon^2}
 {2560c_d\mu_dMD}$
with its common scalar, radius and sampling conditions.
The smooth floor condition is
$\displaystyle\Delta\le\frac{\varepsilon^{3/2}}
 {64\sqrt{40}\,c_dD\sqrt{\nu_dL}}$.
The reference log is below $3/2$; the additional diagnostic term
$M_2g\sqrt{N/T}$ and its budget remain explicit.
\par\smallskip
Fixed batch sizes: $T\ge8N$, $3N$ stages.
Adaptive batch sizes: $T\ge16N$, $5N$ stages.
Both: exactly $T$ pairs, $2T$ values, $N$ projections.
Obtaining these adaptive-smoothing guarantees without a supplied corruption
bound remains open, as does matching arbitrary-pair optimality.\\[7pt]

\textbf{Bounded spatial corruption, fixed radius.}
Convex $M$-Lipschitz population with $L_\gamma$-smooth ball average;
clean single-record variance at most $\sigma_\gamma^2$.
A fixed corruption satisfies $\norm\delta_\infty\le\Delta$.
Algorithm~\ref{alg:scalar} uses any fixed $\gamma\in(0,\gammaMax]$
and does not require $\Delta$.
Set $u=x^\star$ and $c_d=d\E|[e]_1|\le\sqrt d$.
&
$\displaystyle\frac{h_0R^2}{2A}
 +\frac{16L_\gamma D^2}{N^2}+M\gamma
 +\frac{16\sigma_\gamma D}{\sqrt T}$
\par\smallskip
$\displaystyle\hfill
 +\frac{6c_dD\Delta}{\gamma}
 +\frac{16dD\Delta}{\gamma\sqrt T}$.
\par\smallskip
$T\ge8N$; exactly $T$ pairs, $2T$ values and $4N$ stages.
Theorem~\ref{thm:corruption}. If $T\ge d$, its last two terms
are at most $22\sqrt dD\Delta/\gamma$.
With a legal $\gamma\asymp\varepsilon/M$ and clean terms
within the target, this gives the sufficient scale
$\Delta=O(\varepsilon^2/(\sqrt dMD))$.\\[7pt]

\textbf{Selected search ball with fixed smoothing.}
A clean convex $L_\gamma$-smooth $f_\gamma$ dominates $f$;
uniform vector variance is at most $\sigma_\gamma^2$.
An exact ball-intersection projector is supplied.
Use \eqref{eq:frontier-anchor-rule}, with
$p\ge1,r_0,h_{\rm ref}>0$ chosen before sampling and $\gamma_{\rm a}=\gamma$.
&
$\displaystyle\mathcal A_*(u)+\frac{32L_\gamma S_2^2}A
 +\frac{\nu\sigma_\gamma S_2}{\sqrt{T'}}$.
\par\smallskip
$\mathcal A_*,S_2$ are
\eqref{eq:run5-distance-best-price} and
\eqref{eq:run5-distance-moment-radius}; initial smoothing bias remains.
Theorem~\ref{thm:run5-distance-fixed}.
Fixed batch sizes: $T\ge8N$, $\nu=8\sqrt2$, $2N+1$ stages.
Adaptive batch sizes: $T\ge16N$, $\nu=16\sqrt2$, $4N+1$ stages.
Both: exactly $T$ pairs, $2T$ values, $N$ intersection projections.
Only vector second moments are needed.\\[7pt]

\textbf{A smooth original stationary minimizer.}
The clean joint assumptions hold and $\nabla f(u)=0$ for the
original comparator, with $R_u=\|u-x^0\|$ and population smoothness $L$.
&
$\displaystyle\mathcal A_*^{\rm stat}(R_u)
 +\mathcal U_{\rm s}(2S_{R_u},\ell_{\rm s}^0)$,
\par\smallskip
where $S_{R_u}$ is \eqref{eq:run5-distance-stationary-radius} and
$\mathcal A_*^{\rm stat}$ is the substitution specified in
Corollary~\ref{cor:run5-distance-stationary}.
It retains anchor bias $L\nu_d\gamma_{\rm a}^2/2$ and finite moments.
This stationarity condition need not hold at a constrained minimizer.
The counts are those of the clean joint row.\\[7pt]

\textbf{A quadratic limitation of the selected-ball method.}
Fix $N\ge1,p\ge1,r_0,h_{\rm ref},G>0$; set
$h=h_{\rm ref}+AG/(pr_0)$, $q=G/h$ and $S=r_0+2Aq$.
Use the one-dimensional deterministic quadratic on $Q=[0,S]$,
$x^0=0$, $R=G/L\le r_0$, with $L\ge\max\{2Ah,G/r_0\}$
and fixed $0<\gamma\le\min\{\gammaMax,q/(2A)\}$.
&
$\displaystyle f_L(x_N)-f_L(x^\star)\ge Lq^2/(8A^2)$,
\par\smallskip
while $LR^2=G^2/L\to0$.
The exact trajectory is $x_N=q/A$, for fixed and selected radii alike;
vector variance is zero.
Proposition~\ref{prop:run5-distance-curved} and
Corollary~\ref{cor:run5-distance-curved-transfer}.
This limitation applies to the specified initialization.\\[7pt]

\textbf{Error under the relaxed affine conditions.}
$p\ge5$, $T\ge16N$, $R_0=2p r_0/5$, $Q=[0,r_0+R_0]$,
$x^0=r_0$, $m\ge h_{\rm ref}p r_0/A$.
Common-law risk is at most
$h_{\rm ref}\Psi(R_0)/A+m\Psi(R_0)/(p r_0)$.
An algorithm may use arbitrary adaptive endpoints or even
complete access to at most $T$ sampled functions.
The same algorithm and known inputs return $\widehat x\in Q$ on both laws.
&
A rare law has $R=r_0$, $G_1=m(1+1/T)$ and
$\sigma_1^2\le4Tm^2$. A proposed upper bound
$h_{\rm ref}\Psi(r_0)/A+G_1\Psi(r_0)/(p r_0)
 +P\sigma_1\mathcal H_p(r_0)/\sqrt T$
with $P\ge0$ requires $P\ge p/88$.
Here $\mathcal H_p(r_0)=(2+1/p)r_0$.
\par\smallskip
Theorem~\ref{thm:frontier-pareto-lower} gives the exact constant.
The upper multiplier for fixed batch sizes is $\sqrt3+16(1+2p)$
(Corollary~\ref{cor:frontier-noise-multiplier}); the order is attained
under these affine conditions. Curved-objective optimality remains open.\\[7pt]

\textbf{Constraint from initial progress on affine objectives.}
$r_0>0$, $N\ge2$, $T\ge8N$, $h_0=1/r_0$,
$S=r_0(1+A)$, $Q=[0,S]$, $x^0=r_0$.
An algorithm uses at most $T$ independently sampled functions and arbitrary endpoints,
with the deterministic bound in Corollary~\ref{cor:startup-price}.
The same algorithm and known inputs return $\widehat x\in Q$ on both laws.
&
An alternative finite-moment law has solution distance $R=r_0$
and $\sigma_1^2\le4T$.
A proposed bound
$h_0R^2/(2A)+P\sigma_1(R+r_0)/\sqrt T$
with $P\ge0$ requires $P\ge A/32$.
Allowing deterministic residual at most $r_0A/4$ still requires
$P\ge A/64$.
This lower constraint depends on the displayed initialization requirements;
it permits even full sample-function access.\\[7pt]

\textbf{Corruption constraint at a pair budget.}
$T\ge1$, $\ell_T=\log(8(2T+1))$, $d\ge8\ell_T$,
$M,R>0$, $0<\varepsilon\le MR/8$.
The algorithm may use arbitrary query points and any sequential
schedule, with at most $T$ independent pairs.
&
If
$\displaystyle\Delta\ge\frac{8\varepsilon\ell_T}{d}
 +\frac{16\sqrt2\varepsilon^2\sqrt{\ell_T}}{MR\sqrt d}$,
an admissible convex $M$-Lipschitz instance on $R\mathbb B_2^d$
with original distance $R$ has expected gap at least $3\varepsilon/2$.
Corollary~\ref{cor:noise-frontier}.
This is a pair-budget lower bound; joint stage/value/corruption
optimality and the finer-accuracy gap remain open.\\

\textbf{Actual variance along one fixed-target trajectory.}
Convex $L$-smooth target $\phi\ge f$; conditional record means equal
$\nabla\phi$; counts are chosen before independent update records.
Set $\omega_{2t-1}=\omega_{2t}=t$ and let $V_j$ be the variance
at the actual exposed center.
&
$\displaystyle W=\E\sum_{j=1}^{2N}\omega_j^2V_j/B_j$,
\par\smallskip
$\displaystyle
 \frac{h_0R_u^2}{2A}+\frac{2D^2(4L-h_0)_+}{A}
 +\phi(u)-f(u)+\frac{2D\sqrt W}{A}$.
\par\smallskip
For the adaptive batch rule $W\le16A^2\overline{\mathcal V}_N/(T-4N)$;
for fixed batch sizes $W\le8A^2\overline{\mathcal V}_N/T$, where
$\overline{\mathcal V}_N=\E\sum_j\omega_jV_j/(2A)$.
With adaptive batch sizes, $T\ge8N$ is required and the method uses exactly $T$ pairs, $2T$ values and $4N$ stages;
the version with fixed batch sizes requires $T\ge4N$ and uses at most $T$ pairs and $2N$ stages.
Theorem~\ref{thm:batch-variance-actual} and
Corollary~\ref{cor:batch-variance-public}.\\[7pt]

\textbf{Scalar pre-step testing with exact records.}
Convex $L$-smooth fixed target; exact gradient and relative-value
records; $L,R_u$ are unknown; $h_0,\mathcal L_0>0$ chosen before sampling.
One accepted trajectory persists across rejected trials.
&
$\displaystyle \phi(x_N)-\phi(u)
 \le2\max\{\mathcal L_0,2L\}R_u^2/N^2$.
\par\smallskip
At most $J_\star=\ceil{\log_2\max\{1,L/\mathcal L_0\}}$
rejected trials. A cap of $N+J_\star$ trials suffices.
For deterministic one-dimensional quadratics, exact records use two
pairs, four values, two stages and one projection per trial.
In the general stochastic model, the potential retains the rejected
error charge and a random terminal weight.
Theorem~\ref{thm:prestep-exact-distance} and
Proposition~\ref{prop:prestep-rejection-charge}.\\[7pt]

\textbf{Confidence-controlled scalar pre-step execution.}
Fixed legal smoothing radius, supplied $D$ and uniform record
moment caps $\bar\sigma,\overline M_2>0$; unknown smoothness $L_\phi$
of $\phi=f_\gamma$ and original comparator distance $R_u$.
Use robust record estimates, tolerance $\varepsilon>0$,
confidence $\delta_{\rm conf}\in(0,1)$ and a finite trial cap.
&
$\displaystyle
 \E[f(\widehat x)-f(u)]\le
 \frac{2\overline{\mathcal L}R_u^2}{N^2}
 +\frac{\varepsilon}{2}+\beta_\gamma(u)+\delta_{\rm conf}MD$,
\par\smallskip
where $\overline{\mathcal L}=\max\{\mathcal L_0,4L_\phi\}$ and
$\beta_\gamma(u)=f_\gamma(u)-f(u)$.
The cap satisfies
$K_{\rm tr}\ge N+\ceil{\log_2\max\{1,2L_\phi/\mathcal L_0\}}$.
For the actual executed trials,
$T_{\rm exec}=k_{\rm mom}\sum_j(b_j^g+b_j^q)$ pairs,
$2T_{\rm exec}$ values, $2J_{\rm exec}$ stages and
$J_{\rm exec}$ projections.
The sufficient cap computable from known inputs and deterministic sample budget
are in Corollary~\ref{cor:preconf-public-cap} and
Theorem~\ref{thm:preconf-paid-risk}.
The confidence sample cost exceeds the tuned joint work bound;
the supplied caps are retained.\\[7pt]

\textbf{Unknown variance and original distance: common curvature.}
Two convex quadratic sample laws share supplied $L,D,N,T$ and
$x^0=r$, with $\gammaMax=D$ and $0<r\le D/2$. Their intrinsic distance and variance
are unknown. Require $T\ge4$. Arbitrary pairs and full realized functions are allowed.
&
A bound $\E_i[f_i(\widehat x)-f_i(x_i^\star)]\le Cb_i$
on both laws requires
\par\smallskip
$\displaystyle C\ge\min\{N^2,D/r\}/64$
for $b_i=LR_i^2/N^2+\sigma_iR_i/\sqrt T$,
\par\smallskip
$\displaystyle C\ge\min\{N^2,\sqrt T,D/r\}/96$
for $b_i=LR_i^2/N^2+M_{2,i}R_i/\sqrt T$.
The exact matching factor is
$q_TLD^2/[2(\sqrt{b_1}+\sqrt{q_Tb_0})^2]$ on the disclosed
two-law family; its upper uses one stage and exactly $T$ pairs.
Theorem~\ref{thm:universal-distance-menu}. With unknown curvature,
Corollary~\ref{cor:prestep-rare-price} gives a necessary
diameter-scale charge for any method making the stated linear progress.\\[7pt]

\textbf{A finite-moment limitation of the adaptive allocation.}
The benchmark freezes the adaptive policy's actual queried centers and
reallocates only its surplus $S_{\rm sur}$; assume $N\ge2,T\ge8N$.
&
The exact worst-case variance-energy ratio is
$1+S_{\rm sur}/b_1$, of order $\min\{\mathsf U,N^2\}$,
relative to the floor-respecting allocation on the same trace.
The lower construction uses a global iid sample function with legal
two-value records. This ratio does not bound optimization risk from below.
Proposition~\ref{prop:batch-variance-paid-limit}.\\[7pt]

\textbf{Adaptive relative-mean certification.}
Global affine sample losses $F(x,Z)=Zx$; arbitrary endpoints,
adaptive sampling and stopping. Require a uniformly valid upper
mean report with false-report probability at most
$\delta_{\rm conf}$. Under the null $Z=0$, put
$C=\{\tau<\infty,\widehat U\le\eta\}$ and $q=\Pr_0(C)$.
&
Without a supplied moment cap,
$\Pr_0(\tau<\infty,\widehat U<\infty)\le\delta_{\rm conf}$.
With supplied $\E Z^2\le\overline M_2^2$, $0<\eta<\overline M_2$
and $q>\delta_{\rm conf}$,
\par\smallskip
$\displaystyle
 \E_0[\tau\mathbf1_C]\ge
 \frac{q\log(q/\delta_{\rm conf})}
 {-\log(1-\eta^2/\overline M_2^2)}$.
\par\smallskip
For $\overline M_2^2\ge2\eta^2$, $\delta_{\rm conf}\le1/4$,
and $q=1$, the order
$\overline M_2^2\eta^{-2}\log(1/\delta_{\rm conf})$
is attained by a median of block means using one sampling stage,
even though the null variance is zero.
Corollary~\ref{cor:observable-certification-no-cap} and
Theorem~\ref{thm:observable-certification-capped}.\\[7pt]

\end{longtable}
\endgroup

For completeness, the smooth row with fixed batch sizes retains the exact
comparator bias in the proof of Corollary~\ref{cor:public}.
Proposition~\ref{prop:protected-smoothing-smooth} gives
$L_\gamma\le L$ and
$f_\gamma(x^\star)-f(x^\star)\le L\nu_d\gamma^2/2$.
That proof therefore yields
\[
 \E[f(x_N)-f(x^\star)]\le
 \frac{h_0R^2}{2A}
 +\frac{16LD^2}{N^2}
 +\frac{4\sqrt2\sigma_\gamma D}{\sqrt T}
 +\frac{L\nu_d\gamma^2}{2}.
\]
Substituting $\gamma\le D/N$ proves the stated row. The accounting for fixed batch sizes
is unchanged. Its two vector update batches per iteration
give $2N$ sequential stages; no radial estimate is used.

The earlier fixed-threshold derivative-record execution requires
$\tau>2d\Delta$ and its own reciprocal-radius terms
(Theorem~\ref{thm:radial-corruption}). The protected forward-secant
execution instead subtracts $2\Delta$ from its diagnostic and charges its
floor, bias, variance and positive target drift in the displayed bounds.
These are distinct policies with their assumptions stated above.
Every algorithm in the table retains one optimizer state and uses no optimizer restart.

\section{Smoothing and the paired oracle}\label{sec:app-oracle}
Let $U$ be uniform on the Euclidean unit ball and $e$ uniform on its sphere. Write $[e]_i$ for the $i$th coordinate of $e$. Set $\mu_d=d/(d+1)$, $\nu_d=d/(d+2)$, and
\[
 c_d=d\E|[e]_1|=\frac{d\Gamma(d/2)}{\sqrt\pi\Gamma((d+1)/2)}\le\sqrt d.
\]
Throughout this section, $x\in Q$ and $0<\gamma\le\gammaMax$.
Other positive radii are also at most $\gammaMax$; where a zero radius
is explicitly allowed, we set $f_0=f$.
For centers on the boundary of $Q$ or radii equal to $\gammaMax$, we take
smoothing derivatives after a globally convex $M$-Lipschitz extension of $f$.
The proof below constructs this extension and shows that the displayed
derivatives depend only on values in the evaluation domain.

\begin{lemma}[Ball smoothing]\label{lem:smoothing}
For a convex $M$-Lipschitz population objective, $f_\gamma(x)=\E f(x+\gamma U)$ is convex and differentiable, with
\[
 \nabla f_\gamma(x)=\frac d\gamma\E[f(x+\gamma e)e],\quad
 0\le f_\gamma(x)-f(x)\le M\mu_d\gamma,\quad
 L_\gamma\le\frac{c_d M}{\gamma}\le\frac{\sqrt d M}{\gamma}.
\]
For each fixed $x$, $f_\gamma(x)$ is nondecreasing in $\gamma$. If $f$ is additionally $L$-smooth, then $L_\gamma\le L$ and
\[
 0\le f_\gamma(x)-f(x)\le L\nu_d\gamma^2/2.
\]
\end{lemma}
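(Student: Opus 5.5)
We first construct a globally convex $M$-Lipschitz extension of $f$ from the evaluation domain $E=Q+\gammaMax\Ball_2^d$. Set $\bar f(z)=\sup_{w\in E,\,s\in\partial f(w)}\{f(w)+\ip{s}{z-w}\}$. At interior points of $E$ we have $\|s\|\le M$, and for boundary points we restrict to limits of interior subgradients. Then $\bar f$ is convex and $M$-Lipschitz. It equals $f$ on $E$: on the interior this follows from convexity, and at the boundary it follows from continuity. Since $x+\gamma U\in E$ for $x\in Q$ and $\gamma\le\gammaMax$, all statements about $f_\gamma(x)$ use only values in $E$. Convexity of $f_\gamma$ is immediate because it is an average of convex functions $x\mapsto \bar f(x+\gamma u)$.

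\textbf{Gradient formula.} Write $f_\gamma(x)=\frac{1}{\mathrm{vol}(\gamma\Ball)}\int_{\gamma\Ball}\bar f(x+w)\,dw$. By translation, this is an integral of $\bar f$ over the moving ball $x+\gamma\Ball$. Differentiating under the integral, justified because $\bar f$ is Lipschitz and hence a.e. differentiable with bounded gradient, and applying the divergence theorem gives $\nabla f_\gamma(x)=\frac{1}{\mathrm{vol}(\gamma\Ball)}\int_{\partial(\gamma\Ball)}\bar f(x+w)\frac{w}{\gamma}\,dS$. The ratio of surface area to volume is $d/\gamma$, so this equals $\frac d\gamma\E[f(x+\gamma e)e]$. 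Continuity of this expression in $x$ gives differentiability.

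\textbf{Value bounds.} The lower bound follows from Jensen: $f_\gamma(x)\ge \bar f(x+\gamma\E U)=f(x)$. For the upper bound, Lipschitzness gives $f_\gamma(x)-f(x)\le M\gamma\E\|U\|=M\gamma\, d/(d+1)=M\mu_d\gamma$, because $\|U\|$ has density $dr^{d-1}$ on $[0,1]$.

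\textbf{Monotonicity in $\gamma$.} Write $U\overset{d}{=}rV$ with $V$ uniform on the ball independent of $r$. For $\gamma'\le\gamma$, we have $f_{\gamma'}(x)=\E \bar f(x+\gamma' U)$. Write $\gamma'U=\lambda(\gamma U)$ with $\lambda=\gamma'/\gamma\in[0,1]$. The function $\lambda\mapsto \E\bar f(x+\lambda\gamma U)$ is convex, with minimum at $\lambda=0$ by the Jensen bound just shown, since symmetry gives zero mean. A convex function of $\lambda$ whose minimum over $[0,1]$ is attained at $0$ is nondecreasing on $[0,1]$, so $f_{\gamma'}(x)\le f_\gamma(x)$.

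\textbf{Gradient Lipschitz constant (main step).} This is the most delicate part. From the gradient formula,
\[
\ip{\nabla f_\gamma(x)-\nabla f_\gamma(y)}{v}=\frac d\gamma\E\bigl[(\bar f(x+\gamma e)-\bar f(y+\gamma e))\ip{e}{v}\bigr]\le \frac d\gamma M\|x-y\|\,\E|\ip{e}{v}|.
\]
For unit $v$, rotation invariance gives $\E|\ip{e}{v}|=\E|[e]_1|$. Hence $L_\gamma\le c_dM/\gamma$.

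Finally, the bound $c_d\le\sqrt d$ comes from $d\E|[e]_1|\le d(\E[e]_1^2)^{1/2}=d/\sqrt d=\sqrt d$. The closed form of $c_d$ follows from the standard Beta density of $[e]_1$.

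\textbf{Smooth case.} If $\nabla f$ is $L$-Lipschitz, then $\nabla f_\gamma=\E\nabla f(\cdot+\gamma U)$, which is $L$-Lipschitz. For the value bound, the descent inequality gives $f(x+\gamma U)\le f(x)+\gamma\ip{\nabla f(x)}{U}+L\gamma^2\|U\|^2/2$. Taking expectations with $\E U=0$ and $\E\|U\|^2=d/(d+2)=\nu_d$ yields $f_\gamma(x)-f(x)\le L\nu_d\gamma^2/2$.

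The main obstacle is the extension and boundary technicalities. For boundary centers and $\gamma=\gammaMax$, we must ensure that smoothness of the extension is not needed beyond what $E$ supplies. We will handle this by using only integrals over $x+\gamma\Ball\subseteq E$ together with Lipschitz difference quotients. This avoids any reliance on pointwise gradients outside $E$.
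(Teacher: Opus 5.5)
Your proof is correct and follows the paper's route: a convex $M$-Lipschitz extension off the evaluation domain, the divergence-theorem boundary formula for $\nabla f_\gamma$, Jensen and the radial density for the value bounds, the dual-norm estimate giving $c_dM/\gamma$, convexity of the radial average for monotonicity, and interior gradients of $f$ in the smooth case. The differences are minor. You build the extension as a supremum of affine minorants with interior (or limiting) subgradients instead of the paper's inf-convolution $\min_{z\in C}\{f(z)+M\|x-z\|\}$, and you cite Rademacher plus Gauss--Green for Lipschitz functions where the paper mollifies explicitly. Your decomposition $U\overset{d}{=}rV$ is misstated ($V$ should be uniform on the sphere), but the monotonicity argument never uses it.
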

\begin{proof}
Let $C=Q+\gammaMax\Ball_2^d$ and define
\[
 \bar f(x)=\min_{z\in C}\{f(z)+M\|x-z\|\},\qquad x\in\R^d.
\]
The minimum exists because $C$ is nonempty and compact and the integrand is
continuous. For $x\in C$, Lipschitzness gives $f(z)+M\|x-z\|\ge f(x)$;
the choice $z=x$ proves $\bar f(x)=f(x)$. Combining minimizers at two
points and using convexity of $C$, $f$ and the norm proves convexity of
$\bar f$. Evaluating at a minimizer for $y$ and applying the triangle
inequality gives $\bar f(x)\le\bar f(y)+M\|x-y\|$. Interchanging $x,y$
proves that $\bar f$ is globally $M$-Lipschitz, including when $M=0$.

For a smooth function, differentiation under the ball integral and the
divergence theorem give the displayed boundary identity: the
sphere-to-ball volume ratio is $d$. Convolve $\bar f$ with smooth kernels.
The approximations converge uniformly on compact sets, as do their ball
averages. For fixed $\gamma>0$, the boundary expressions converge uniformly
on compact sets because their difference is at most $d/\gamma$ times the
uniform approximation error. Integrating these gradients along line segments
and passing to the limit proves that the limiting boundary expression is the
gradient of the ball average. This argument applies at boundary centers and
at $\gamma=\gammaMax$. On $Q$, the formula uses only values in $C$ and is
therefore independent of the chosen extension.

Convexity is preserved by translation and expectation. Jensen's inequality gives $f\le f_\gamma$. The Lipschitz inequality and the radial density $dr^{d-1}$ give $f_\gamma-f\le M\gamma\E\|U\|=M\mu_d\gamma$. For every unit vector $v$, the boundary formula gives
\[
 |\langle\nabla f_\gamma(x)-\nabla f_\gamma(y),v\rangle|
 \le \frac{dM}{\gamma}\|x-y\|\E|\langle e,v\rangle|
 =\frac{c_d M}{\gamma}\|x-y\|.
\]
Take the supremum over $v$. Rotational symmetry and Cauchy--Schwarz give $c_d\le d\sqrt{\E [e]_1^2}=\sqrt d$; integration of the spherical coordinate density gives the stated gamma-function expression (for $d=1$, both sides equal one).

The function $r\mapsto\E\bar f(x+rU)$ is even and convex, so it is
nondecreasing on $r\ge0$. In the smooth case, almost every ball sample has
$\|U\|<1$, and hence $x+\gamma U$ lies in the interior of $C$, even at
the largest legal radius. There $\bar f$ agrees with $f$ on a neighborhood,
so their gradients agree. The global Lipschitz bound justifies
differentiation under the integral and gives
$\nabla f_\gamma(x)=\E\nabla f(x+\gamma U)$. Averaging the original
gradient's $L$-Lipschitz bound proves $L_\gamma\le L$ on $Q$; no smoothness
of $\bar f$ outside $C$ is required. Averaging
$f(x+\gamma U)\le f(x)+\gamma\langle\nabla f(x),U\rangle+L\gamma^2\|U\|^2/2$
and using $\E U=0$, $\E\|U\|^2=\nu_d$ proves the last claim.
\end{proof}

\begin{lemma}[Moments and corruption of the symmetric estimate]\label{lem:oracle-moments}
Suppose the sample functions are locally Lipschitz on a neighborhood of the evaluation domain and their weak gradients satisfy $\E_\xi\|\nabla F(z,\xi)\|^2\le M_2^2$ on its interior, with the integrability needed for the following sphere and segment integrals. The clean estimate
\[
 G_\gamma^0(x)=\frac d{2\gamma}[F(x+\gamma e,\xi)-F(x-\gamma e,\xi)]e
\]
has mean $\nabla f_\gamma(x)$ and $\E\|G_\gamma^0(x)\|^2\le2dM_2^2$. A random sample Lipschitz constant $m(\xi)$ with $\E m(\xi)^2\le M_2^2$ is a sufficient condition. With fixed spatial corruption $|\delta|\le\Delta$, write
$G_\gamma=\nabla f_\gamma+b_\gamma+\zeta_\gamma$, where $\E\zeta_\gamma=0$. Then
\[
 \|b_\gamma\|\le c_d\Delta/\gamma,\qquad
 \E\|\zeta_\gamma\|^2\le2\sigma_\gamma^2+2d^2\Delta^2/\gamma^2
\]
whenever the clean centered second moment is at most $\sigma_\gamma^2$.
For a uniform bound, replace $\sigma_\gamma$ by $\sigma_0$. An average of $B$ conditionally independent records, drawn after the center, radius and batch size are fixed, has the same mean bias and centered second moment at most this bound divided by $B$.
\end{lemma}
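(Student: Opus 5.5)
The plan is to treat the clean estimate and the corruption contribution separately, using the decomposition $G_\gamma=G_\gamma^0+\frac{d}{2\gamma}[\delta(x+\gamma e)-\delta(x-\gamma e)]e$.

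\emph{Clean mean.} Since $e$ and $-e$ have the same law, $\E[F(x-\gamma e,\xi)(-e)]=\E[F(x+\gamma e,\xi)e]$. Integrating first over $\xi$ (Fubini, justified by the stated integrability) gives
\[
\E G_\gamma^0=\frac d\gamma\E[f(x+\gamma e)e]=\nabla f_\gamma(x)
\]
by the boundary formula of Lemma~\ref{lem:smoothing}.

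\emph{Clean second moment.} Since $\|e\|=1$,
\[
\|G_\gamma^0\|^2=\frac{d^2}{4\gamma^2}\bigl(F(x+\gamma e,\xi)-F(x-\gamma e,\xi)\bigr)^2.
\]
Write the difference as a segment integral of the weak gradient,
\[
\int_{-1}^{1}\gamma\,\ip{\nabla F(x+s\gamma e,\xi)}{e}\,ds .
\]
Cauchy--Schwarz in $s$ gives the bound
\[
\frac{d^2}{2}\int_{-1}^1\ip{\nabla F(x+s\gamma e,\xi)}{e}^2ds .
\]
A direct bound by $\|\nabla F\|^2$ would lose a factor $d$, so I will use rotational symmetry instead. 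For fixed $s$, the point $x+s\gamma e$ together with its law in $e$ is the key issue: the projection $\ip{\nabla F}{e}^2$ should average to roughly $\|\nabla F\|^2/d$. This step is the main obstacle, because $e$ enters both the evaluation point and the projection direction.

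The approach I would try first is the standard one of Shamir/Gasnikov. Center the difference: subtract $F(x,\xi)$, or better a median of $F(x+\gamma\,\cdot,\xi)$ over the sphere, from both terms; this common offset cancels. Then apply the Poincaré-type concentration inequality on the sphere,
\[
\mathrm{Var}_e\,\psi(e)\le\frac{C}{d}\,\E\|\nabla_{\rm sph}\psi\|^2,
\]
to $\psi(e)=F(x+\gamma e,\xi)$, whose tangential gradient is at most $\gamma\|\nabla F\|$. This yields
\[
\E\|G_\gamma^0\|^2\le\frac{d^2}{\gamma^2}\cdot\frac{\gamma^2}{d}\cdot c\,M_2^2 .
\]
The sharp constant for the spherical Poincaré inequality, $1/(d-1)$ for $d\ge2$, combined with the sharp first eigenvalue normalization, gives the claimed $2dM_2^2$. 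For $d=1$ the bound follows directly from the segment integral. When the sample functions have a random Lipschitz constant $m(\xi)$, they have weak gradients bounded by $m(\xi)$ (Rademacher), so the condition $\E m(\xi)^2\le M_2^2$ implies the stated hypothesis.

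\emph{Corruption bias.} The corruption part is deterministic given $e$, so
\[
b_\gamma=\frac d{2\gamma}\E\bigl[(\delta(x+\gamma e)-\delta(x-\gamma e))e\bigr]=\frac d\gamma\E[\delta(x+\gamma e)e]
\]
by symmetry. For a unit vector $v$,
\[
|\ip{b_\gamma}{v}|\le\frac{d\Delta}{\gamma}\E|\ip e v|=\frac{c_d\Delta}{\gamma}.
\]
Taking the supremum over $v$ gives $\|b_\gamma\|\le c_d\Delta/\gamma$.

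\emph{Centered error.} Write $\zeta_\gamma=(G_\gamma^0-\nabla f_\gamma)+(D_\gamma-b_\gamma)$, where $D_\gamma$ is the corruption part. Since
\[
\|D_\gamma\|\le \frac{d}{2\gamma}\cdot 2\Delta=\frac{d\Delta}{\gamma},
\]
its centered second moment is at most $d^2\Delta^2/\gamma^2$. The inequality $\|a+b\|^2\le2\|a\|^2+2\|b\|^2$ then gives the stated bound.

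\emph{Batches.} For a batch average drawn after the center, radius and batch size are fixed, the conditional mean is unchanged. Conditional independence kills the cross terms, so the centered second moment divides by $B$. Taking expectations preserves both conclusions.
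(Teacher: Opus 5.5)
Your argument follows the paper's proof: symmetry of $e$ and the boundary formula for the mean, the spherical Poincar\'e inequality with first eigenvalue $d-1$ for $d\ge2$, a segment integral for $d=1$, the dual-norm bound $c_d\Delta/\gamma$ for the corruption bias, $\norm{a+b}^2\le2\norm{a}^2+2\norm{b}^2$ for the fluctuation, and conditional independence for batches. The only variation is where you apply Poincar\'e. You use $\psi(e)=F(x+\gamma e,\xi)$, whereas the paper uses the odd difference $h_\xi(e)=F(x+\gamma e,\xi)-F(x-\gamma e,\xi)$, which has zero spherical mean automatically. Your choice gives the same constant. Replace the vague centering remarks with the actual computation: $\E_e(\psi(e)-\psi(-e))^2=2\mathrm{Var}_e\psi-2\mathrm{Cov}(\psi(e),\psi(-e))\le4\mathrm{Var}_e\psi\le4\gamma^2\E_e\norm{\nabla F(x+\gamma e,\xi)}^2/(d-1)$, hence $\E\norm{G_\gamma^0}^2\le d^2M_2^2/(d-1)\le2dM_2^2$. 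The alternatives you mention do not work. Subtracting $F(x,\xi)$ and bounding the two pieces separately loses the factor $1/d$ (take $F(z)=\norm{z-x}$). A median combined with $(a-b)^2\le2a^2+2b^2$ yields only $2d^2M_2^2/(d-1)>2dM_2^2$.

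The genuine gap is the step where you bound $\norm{\nabla_{\mathrm{sph}}\psi(e)}$ by $\gamma\norm{\nabla F(x+\gamma e,\xi)}$ and then invoke the moment bound at $x+\gamma e$. The samples are only locally Lipschitz, and $\nabla F(\cdot,\xi)$ is a weak gradient, determined only Lebesgue-almost everywhere. The sphere $\{x+\gamma e:\norm{e}=1\}$ is a null set, and a sample may be nondifferentiable at every point of it. So neither the pointwise gradient bound nor the moment hypothesis can be applied there. Moreover, at $\gamma=\gammaMax$ the sphere can lie on the boundary of $Q+\gammaMax\Ball_2^d$ (for instance when $Q$ is a single point), where no moment bound is assumed at all.

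The paper closes both holes as follows.
\begin{itemize}
\item For $\gamma<\gammaMax$, it convolves each sample with a smooth kernel supported in a ball of radius less than $\gammaMax-\gamma$. Jensen's inequality shows that the mollified gradients still have expected squared norm at most $M_2^2$; this needs the weak-gradient bound only in integrated form. It then runs the smooth Poincar\'e argument and shrinks the kernel, using sample continuity and Fatou's lemma.
\item The radius $\gammaMax$ is reached by letting inner radii increase to $\gammaMax$ and applying Fatou again.
\end{itemize}
These two limiting arguments apply verbatim to your $\psi$-based version and complete it.
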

\begin{proof}
We first bound the clean estimate, then separate the corruption bias from
its centered fluctuation.

\emph{Mean.}
Taking expectation in the sample and using sphere symmetry gives
\[
 \E G_\gamma^0(x)
 =\frac d{2\gamma}\E_e[(f(x+\gamma e)-f(x-\gamma e))e]
 =\frac d\gamma\E_e[f(x+\gamma e)e]
 =\nabla f_\gamma(x),
\]
where the last equality is Lemma~\ref{lem:smoothing}. The integrability
hypotheses justify the exchanges of expectations.

\emph{Second moment for $d\ge2$.}
First suppose the samples are smooth and $\gamma<\gammaMax$. The function
$h_\xi(e)=F(x+\gamma e,\xi)-F(x-\gamma e,\xi)$ is odd and therefore has
zero spherical mean. The spherical Poincar\'e inequality, with first
nonzero eigenvalue $d-1$, gives
\[
 \E_e h_\xi(e)^2\le\frac1{d-1}\E_e\|\nabla_S h_\xi(e)\|^2.
\]
The tangent gradient is
\[
 \nabla_S h_\xi(e)
 =\gamma(I-ee^\top)
 [\nabla F(x+\gamma e,\xi)+\nabla F(x-\gamma e,\xi)].
\]
Since $I-ee^\top$ is an orthogonal projector, the squared norm of this
expression is at most twice $\gamma^2$ times the sum of the two squared
gradient norms. Taking expectation in $\xi$ and $e$ yields
\[
 \E h_\xi(e)^2\le\frac{4\gamma^2M_2^2}{d-1},
 \qquad
 \E\|G_\gamma^0(x)\|^2
 =\frac{d^2}{4\gamma^2}\E h_\xi(e)^2
 \le\frac{d^2}{d-1}M_2^2\le2dM_2^2.
\]

For locally Lipschitz samples, convolve with a smooth nonnegative kernel
of unit mass supported in a ball of radius less than
$\gammaMax-\gamma$. Its translates at the queried endpoints remain in
the interior of the evaluation domain. Jensen's inequality and the
weak-gradient moment assumption preserve the bound $M_2^2$ for the
expected squared gradient of each convolved sample. The smooth argument
therefore applies. As the kernel support shrinks to zero, sample
continuity gives convergence of the endpoint differences. Fatou's lemma
then proves the same second-moment bound for the original samples.
This argument avoids evaluating weak gradients on a prescribed sphere.
The resulting dimension dependence also appears in the two-point
analysis of \citet{Shamir2017}.

\emph{Dimension one and the largest radius.}
For $d=1$, the fundamental theorem along the interval $[-\gamma,\gamma]$
and Jensen's inequality give
\[
 \E_\xi|F(x+\gamma,\xi)-F(x-\gamma,\xi)|^2
 \le 2\gamma\int_{-\gamma}^{\gamma}
       \E_\xi|F'(x+s,\xi)|^2\,ds
 \le4\gamma^2M_2^2.
\]
Thus $\E\|G_\gamma^0(x)\|^2\le M_2^2$ in this case. For any dimension,
let inner radii increase to $\gammaMax$. Sample continuity gives
pointwise convergence of the estimates, and Fatou's lemma preserves
the second-moment bound at the limiting radius. The mean formula above
also holds there by Lemma~\ref{lem:smoothing} and the assumed
integrability.

The random-Lipschitz alternative requires Lipschitzness only on the closed
evaluation domain $C=Q+\gammaMax\Ball_2^d$. Extend each sample by
\[
 \bar F(z,\xi)=\inf_{w\in C}\{F(w,\xi)+m(\xi)\|z-w\|\}.
\]
This extension preserves all queried values and is globally
$m(\xi)$-Lipschitz. Thus the preceding argument applies, and
$\E m(\xi)^2\le M_2^2$ bounds its gradient moments and endpoint differences.

\emph{Corruption bias and variance.}
Write
\[
 C_\gamma=\frac d{2\gamma}
 [\delta(x+\gamma e)-\delta(x-\gamma e)]e,
 \qquad b_\gamma=\E C_\gamma.
\]
For every unit vector $v$,
\[
 |\langle b_\gamma,v\rangle|
 \le\frac{d\Delta}{\gamma}\E|\langle e,v\rangle|
 =\frac{c_d\Delta}{\gamma}.
\]
Taking the supremum over $v$ proves the bias bound. Since
$\|C_\gamma\|\le d\Delta/\gamma$ and
$\zeta_\gamma=(G_\gamma^0-\E G_\gamma^0)+(C_\gamma-\E C_\gamma)$, we have
\[
 \E\|\zeta_\gamma\|^2
 \le2\E\|G_\gamma^0-\E G_\gamma^0\|^2
       +2\E\|C_\gamma-\E C_\gamma\|^2
 \le2\sigma_\gamma^2+\frac{2d^2\Delta^2}{\gamma^2}.
\]

\emph{Conditional batching.}
Condition on the observations fixing the center, radius and batch size
$B$. The $B$ centered update records have conditional mean zero and are
conditionally independent, so cross terms in the squared norm of their
average vanish. Its centered second moment is therefore at most
$(2\sigma_\gamma^2+2d^2\Delta^2/\gamma^2)/B$, while its bias remains
$b_\gamma$. The conditioning includes any separate diagnostic
observations used to select $B$.
\end{proof}

\begin{proposition}[Nonsymmetric directional endpoints]\label{prop:asymmetric}
For $\alpha>0$, $\beta\ge0$, define
\[
 G_{\alpha,\beta}(x)=\frac d{\alpha+\beta}
 [\widetilde F(x+\alpha e,\xi)-\widetilde F(x-\beta e,\xi)]e.
\]
Its clean mean is the gradient of
$\phi_{\alpha,\beta}=(\alpha f_\alpha+\beta f_\beta)/(\alpha+\beta)$, with $f_0=f$. Its smoothing bias is at most
\[
 M\mu_d\frac{\alpha^2+\beta^2}{\alpha+\beta},
 \quad\text{or, under $L$-smoothness,}\quad
 \frac{L\nu_d}{2}\frac{\alpha^3+\beta^3}{\alpha+\beta}.
\]
Its mean corruption is at most $2c_d\Delta/(\alpha+\beta)$; this is attained at a center when both radii are positive. For $\beta=0$ the stronger bound is $c_d\Delta/\alpha$. At the center of $f(z)=M\|z\|$, the clean second moment is exactly
\[
 d^2M^2\left(\frac{\alpha-\beta}{\alpha+\beta}\right)^2.
\]
\end{proposition}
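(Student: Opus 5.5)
Each claim about $G_{\alpha,\beta}$ reduces to the sphere formula of Lemma~\ref{lem:smoothing}, applied separately at the two radii.

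\emph{Clean mean.} Take the sample expectation first. Using the symmetry $e\mapsto-e$ on the second endpoint gives
\[
 \E G^0_{\alpha,\beta}=\frac d{\alpha+\beta}\bigl(\E_e[f(x+\alpha e)e]+\E_e[f(x+\beta e)e]\bigr).
\]
By Lemma~\ref{lem:smoothing}, $\frac d\gamma\E_e[f(x+\gamma e)e]=\nabla f_\gamma(x)$. Hence $d\,\E_e[f(x+\gamma e)e]=\gamma\nabla f_\gamma(x)$, and the display equals $(\alpha\nabla f_\alpha+\beta\nabla f_\beta)/(\alpha+\beta)=\nabla\phi_{\alpha,\beta}$. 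When $\beta=0$ the second term vanishes, because $\E e=0$; this matches $f_0=f$ with weight $0$.

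\emph{Smoothing bias.} The bias $\phi_{\alpha,\beta}-f$ is the same convex combination of the biases $f_\alpha-f$ and $f_\beta-f$. Inserting $M\mu_d\gamma$ or $L\nu_d\gamma^2/2$ from Lemma~\ref{lem:smoothing} gives the two displayed bounds directly.

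\emph{Mean corruption.} The corruption part is
\[
 C=\frac d{\alpha+\beta}\bigl[\delta(x+\alpha e)-\delta(x-\beta e)\bigr]e .
\]
For a unit vector $v$, $|\langle \E C,v\rangle|\le \frac{2d\Delta}{\alpha+\beta}\E|\langle e,v\rangle|=2c_d\Delta/(\alpha+\beta)$, as in Lemma~\ref{lem:oracle-moments}. When $\beta=0$ the term $\delta(x)e$ has mean zero because $\delta(x)$ is a fixed number, so only $\E[\delta(x+\alpha e)e]$ remains. This gives $c_d\Delta/\alpha$.

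\emph{Attainment.} For both radii positive, fix $v$ and set $\delta(x+\alpha w)=\Delta\,\mathrm{sign}\langle w,v\rangle$ on the outer sphere and $\delta(x-\beta w)=-\Delta\,\mathrm{sign}\langle w,v\rangle$ on the inner sphere. With $\alpha\ne\beta$ the two spheres are disjoint, so this is a well-defined fixed function with $\|\delta\|_\infty\le\Delta$. Then $\langle \E C,v\rangle=2d\Delta\E|\langle e,v\rangle|/(\alpha+\beta)$, so the bound is attained. When $\alpha=\beta$ the two spheres coincide, and this construction requires checking consistency under $w\mapsto-w$. The prescribed values are $\delta(x+\alpha w)=\Delta\,\mathrm{sign}\langle w,v\rangle$ and $\delta(x-\alpha w)=-\Delta\,\mathrm{sign}\langle w,v\rangle$, and these agree with each other. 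So the construction remains consistent, and off the spheres $\delta$ is set to zero. This consistency check is the only step that needs care; the rest is direct computation.

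\emph{Second moment at the center of $M\|z\|$.} At $x=0$ the estimate is
\[
 \frac d{\alpha+\beta}\bigl(M\alpha-M\beta\bigr)e ,
\]
which is deterministic in norm. Squaring it gives $d^2M^2(\alpha-\beta)^2/(\alpha+\beta)^2$ exactly.
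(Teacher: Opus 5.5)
Your proposal is correct and follows the paper's proof: you apply the sphere identity of Lemma~\ref{lem:smoothing} at each radius to get the mean, mix the two smoothing bounds for the bias, use the dual-norm argument of Lemma~\ref{lem:oracle-moments} with coefficient $d/(\alpha+\beta)$, and note that the estimate has constant norm at the center of $M\|z\|$. Your attainment example is the paper's single global corruption $\delta(z)=\Delta\operatorname{sign}\langle z-x,v\rangle$ restricted to the two spheres; written that way, the consistency check you flag for $\alpha=\beta$ holds automatically.
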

\begin{proof}
The boundary identity gives
$\E[f(x+\alpha e)e]=(\alpha/d)\nabla f_\alpha(x)$ and
$\E[f(x-\beta e)e]=-(\beta/d)\nabla f_\beta(x)$ when $\beta>0$.
For $\beta=0$, the latter mean equals $f(x)\E e=0$. Addition proves the target identity. Mixing the two smoothing bounds of Lemma~\ref{lem:smoothing} proves the bias claims. The dual norm argument in Lemma~\ref{lem:oracle-moments}, now with the coefficient $d/(\alpha+\beta)$, proves the corruption bound. At a fixed center $x_c$, the spatial function $\delta(z)=\Delta\operatorname{sign}\langle z-x_c,v\rangle$ attains it for any unit $v$ and positive radii. If $\beta=0$, the center contribution has mean zero, so only one endpoint contributes. Finally, the norm objective has endpoint values $M\alpha,M\beta$ at zero. Its estimate is the constant $dM(\alpha-\beta)/(\alpha+\beta)$ times $e$, proving the last identity. This instance concerns this estimator family; it imposes no restriction on other legal pairs.
\end{proof}

\begin{proposition}[An adjacent-scale paired diagnostic]\label{prop:adjacent-scale}
Under the sample-gradient moment and sphere/segment integrability hypotheses of Lemma~\ref{lem:oracle-moments}, one legal nonsymmetric pair at $x+\gamma U$ and $x+(\gamma/2)U$ gives
\[
 V_\gamma=\widetilde F(x+\gamma U,\xi)-\widetilde F(x+(\gamma/2)U,\xi).
\]
Without corruption its mean is $v_\gamma=f_\gamma(x)-f_{\gamma/2}(x)$, and
\[
 \tfrac12[f_\gamma(x)-f(x)]\le v_\gamma\le f_\gamma(x)-f(x),\qquad
 \E (V_\gamma^0)^2\le M_2^2\nu_d\gamma^2/4.
\]
Corruption shifts the mean by at most $2\Delta$ and gives
$\E V_\gamma^2\le M_2^2\nu_d\gamma^2/2+8\Delta^2$.
\end{proposition}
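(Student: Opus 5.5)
The claims split into four parts: the mean identity, the two-sided sandwich, the clean second moment, and the corruption effects. I would prove them in that order, reusing the tools of Lemma~\ref{lem:smoothing} and Lemma~\ref{lem:oracle-moments}.

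\emph{Mean.} Since $U$ is uniform on the ball and independent of $\xi$, and both points are legal, Fubini gives
\[
 \E V_\gamma^0=\E f(x+\gamma U)-\E f(x+(\gamma/2)U)=f_\gamma(x)-f_{\gamma/2}(x).
\]
\emph{Upper bound of the sandwich.} Monotonicity in the radius (Lemma~\ref{lem:smoothing}) gives $f_{\gamma/2}\ge f$, hence $v_\gamma\le f_\gamma(x)-f(x)$.

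\emph{Lower bound of the sandwich.} Let $\psi(r)=\E\bar f(x+rU)$ with the convex extension $\bar f$. Then $\psi$ is convex and even in $r$. Convexity on $[0,\gamma]$ gives
\[
 \psi(\gamma/2)\le\tfrac12\psi(0)+\tfrac12\psi(\gamma).
\]
This is exactly $f_\gamma-f_{\gamma/2}\ge\tfrac12(f_\gamma-f)$. All values involved lie in the evaluation domain, so the extension does not matter.

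\emph{Clean second moment.} Write $V^0_\gamma=F(x+\gamma U,\xi)-F(x+\tfrac\gamma2U,\xi)$. First take $F$ smooth. By the fundamental theorem along the segment,
\[
 V^0_\gamma=\int_{\gamma/2}^{\gamma}\ip{\nabla F(x+sU,\xi)}{U}\,ds.
\]
Cauchy--Schwarz gives
\[
 (V^0_\gamma)^2\le\frac\gamma2\int_{\gamma/2}^\gamma\|\nabla F(x+sU,\xi)\|^2\|U\|^2\,ds.
\]
Taking expectation in $\xi$ first (using independence of $U$ and $\xi$) bounds the gradient factor by $M_2^2$. This leaves $(\gamma/2)^2M_2^2\E\|U\|^2=M_2^2\nu_d\gamma^2/4$.

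For the general case I would pass to locally Lipschitz samples by mollification and Fatou, exactly as in Lemma~\ref{lem:oracle-moments}. The one point needing care is interior points at the outer radius $\gamma=\gammaMax$. I would handle it by limiting inner radii, as in that lemma. This regularization is the only step I expect to need care; the rest is direct.

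\emph{Corruption.} The corrupted record is $V_\gamma=V^0_\gamma+[\delta(x+\gamma U)-\delta(x+\tfrac\gamma2U)]$. The bracket has absolute value at most $2\Delta$, so the mean shifts by at most $2\Delta$. Applying $(a+b)^2\le2a^2+2b^2$ gives
\[
 \E V_\gamma^2\le 2M_2^2\nu_d\gamma^2/4+2(2\Delta)^2=M_2^2\nu_d\gamma^2/2+8\Delta^2.
\]
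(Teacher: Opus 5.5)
Your proposal is correct and follows essentially the paper's proof. It uses the same-sample expectation for the mean. For the bracket it uses midpoint convexity on the segment from $x$ to $x+\gamma U$ (your convexity of the radial average $\psi$ is the averaged form of the paper's pointwise inequality) together with $f_{\gamma/2}\ge f$. It uses the fundamental theorem with Cauchy--Schwarz on $s\in[\gamma/2,\gamma]$ for the bound $M_2^2\nu_d\gamma^2/4$, and the $2\Delta$ endpoint shift with $(p+q)^2\le2p^2+2q^2$ for corruption. Your mollification and limiting-radius regularization is extra care that the paper skips by applying the fundamental theorem directly under its segment hypotheses, and it is harmless.
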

\begin{proof}
The mean identity follows from the same-sample sample expectation. Convexity gives $f(x+(\gamma/2)U)\le[f(x+\gamma U)+f(x)]/2$, and Jensen's inequality gives $f_{\gamma/2}(x)\ge f(x)$. These imply the bracket. Along the segment from $x+(\gamma/2)U$ to $x+\gamma U$, the fundamental theorem and Jensen bound the squared sample difference by $(\gamma^2\|U\|^2/4)$ times the average squared gradient norm on that segment. Integrating in $\xi,U$ yields the displayed moment bound. The two corruption values differ by at most $2\Delta$; applying $(p+q)^2\le2p^2+2q^2$ proves the last assertion.

The bracket concerns bias at the queried center. It does not bound smoothing bias at an optimizer: for $f(x)=|x|$ in dimension one and $x\ge\gamma$, $v_\gamma=0$, whereas $f_\gamma(0)-f(0)=\gamma/2$.
\end{proof}

\section{The scalar accelerated potential}
\label{sec:app-core}

We prove the results in Section~\ref{sec:theory} for arbitrary vector
errors. The proof uses convexity of each mean target and Euclidean
projection. Individual realized vector records need not be gradients of
convex functions.

\subsection{Proof of Theorem~\ref{thm:potential}}
\label{subsec:proof-potential}

The following potential is valid for predictable nonincreasing targets $f\le\phi_{t+1}\le\phi_t$. It separates the smoothing error from the errors of the two vector sample means. Write
\[
 e_t^y=g_t^y-\nabla\phi_t(y_t),\qquad
 e_t^x=g_t^x-\nabla\phi_t(x_t),
\]
and define
\begin{equation}
 Z_t=t\ip{e_t^y}{u-v_{t-1}}-
       t\ip{e_t^x}{v_t-v_{t-1}}.
 \label{eq:martingale-term}
\end{equation}

\begin{theorem}[Accelerated potential]\label{thm:potential}
For every fixed $u\in Q$, the iterates
\eqref{eq:center-y}--\eqref{eq:coefficient} satisfy
\begin{equation}
 A_N[f(x_N)-f(u)]
 \le \tfrac{h_0}{2}\norm{u-x^0}^2
       +2D^2(H_N-h_0)
       +\sum_{t=1}^Nt[\phi_t(u)-f(u)]
       +\sum_{t=1}^NZ_t.
 \label{eq:potential}
\end{equation}
If the total errors $e_t^y,e_t^x$ are integrable and satisfy
the conditional decomposition \eqref{eq:conditional}, then
$\E\sum_tZ_t\le D\sum_tt\,\E(\norm{b_t^y}+\norm{b_t^x})$.
For batch means of \eqref{eq:record} with exact evaluations and
targets $\phi_t=f_{\gamma_t}$, $b_t^y=b_t^x=0$, and hence
$\E\sum_{t=1}^N Z_t=0$.

\end{theorem}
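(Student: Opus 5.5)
The plan is a one-step potential argument with a scalar coefficient that is allowed to increase. For $t\ge0$ set
\[
 \Phi_t=A_t\bigl[\phi_t(x_t)-f(u)\bigr]+\tfrac{H_t}{2}\norm{u-v_t}^2,
\]
with $\Phi_0=\tfrac{h_0}{2}\norm{u-x^0}^2$ because $A_0=0$. I will prove the one-step inequality
\[
 \Phi_t\le\Phi_{t-1}+2D^2(H_t-H_{t-1})+t\bigl[\phi_t(u)-f(u)\bigr]+Z_t .
\]
Summing it over $t$ and using $f(x_N)\le\phi_N(x_N)$ (assumed $f\le\phi_N$) gives \eqref{eq:potential}. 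The only properties used are convexity of each $\phi_t$, the ordering $f\le\phi_t\le\phi_{t-1}$, the projection inequality, $\norm{u-v_t}\le D$, and $r_t\le D$.

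\textbf{Step 1 (one-step decomposition).} By convexity of $\phi_t$ applied at $x_t$,
\[
 \phi_t(x_t)\le\phi_t(y_t)+\ip{\nabla\phi_t(y_t)}{x_t-y_t}+\beta_t,\qquad
 \beta_t=\ip{\nabla\phi_t(x_t)-\nabla\phi_t(y_t)}{x_t-y_t}.
\]
From \eqref{eq:center-y}--\eqref{eq:step} we have $x_t-y_t=a_t(v_t-v_{t-1})/A_t$ and $A_tx_t=A_{t-1}x_{t-1}+a_tv_t$. Multiply by $A_t$ and split the linear term along these two pieces.

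The $A_{t-1}$ piece is at most $A_{t-1}\phi_t(x_{t-1})\le A_{t-1}\phi_{t-1}(x_{t-1})$, by convexity and then because the targets are nonincreasing.

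For the $a_t$ piece, write it as $a_t[\phi_t(y_t)+\ip{\nabla\phi_t(y_t)}{u-y_t}]+a_t\ip{\nabla\phi_t(y_t)}{v_t-u}$. The first bracket is at most $a_t\phi_t(u)$. In the second term, replace $\nabla\phi_t(y_t)$ by $g_t^y-e_t^y$. The projection defining $v_t$ then gives
\[
 a_t\ip{g_t^y}{v_t-u}\le\tfrac{H_{t-1}}{2}\bigl(\norm{u-v_{t-1}}^2-\norm{u-v_t}^2-r_t^2\bigr).
\]

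\textbf{Step 2 (replace true curvature by measured curvature and collect errors).} Since $\widehat\beta_t=\beta_t+\ip{e_t^x-e_t^y}{x_t-y_t}$, we get $A_t\beta_t=A_t\widehat\beta_t-a_t\ip{e_t^x-e_t^y}{v_t-v_{t-1}}$. Add this to the error term $-a_t\ip{e_t^y}{v_t-u}$ from Step 1. The total error is
\[
 a_t\ip{e_t^y}{u-v_{t-1}}-a_t\ip{e_t^x}{v_t-v_{t-1}}=Z_t,
\]
which matches \eqref{eq:martingale-term} because $a_t=t$.

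The remaining scalar term is $A_t\widehat\beta_t-H_{t-1}r_t^2/2$. It is at most its positive part, and by \eqref{eq:coefficient} that equals $(H_t-H_{t-1})(D^2+r_t^2/2)$.

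\textbf{Step 3 (variable-coefficient telescoping; this is the main obstacle).} The projection produces $\tfrac{H_{t-1}}{2}\norm{u-v_t}^2$, but $\Phi_t$ needs $\tfrac{H_t}{2}\norm{u-v_t}^2$. I rewrite
\[
 \tfrac{H_{t-1}}{2}\norm{u-v_t}^2=\tfrac{H_t}{2}\norm{u-v_t}^2-\tfrac{H_t-H_{t-1}}{2}\norm{u-v_t}^2 .
\]
Since $H_t$ is nondecreasing and $\norm{u-v_t}\le D$, the correction is at most $(H_t-H_{t-1})D^2/2$. Together with Step 2, the total charge on $H_t-H_{t-1}$ is $D^2+r_t^2/2+D^2/2\le2D^2$, using $r_t\le D$. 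It is this bookkeeping that forces the denominator $D^2+r_t^2/2$ in \eqref{eq:coefficient}. Writing $a_t\phi_t(u)=a_tf(u)+t[\phi_t(u)-f(u)]$ and $A_t=A_{t-1}+a_t$ closes the one-step inequality, and the sum of the $H$-increments telescopes to $2D^2(H_N-h_0)$.

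\textbf{Step 4 (expectations).} Take $\iota\in\{y,x\}$ and condition on $\mathcal F_t^\iota$. The point $v_{t-1}$ is $\mathcal F_t^y$-measurable, and $v_t-v_{t-1}$ is $\mathcal F_t^x$-measurable by Assumption~\ref{ass:conditional}. Each factor has norm at most $D$, since $u,v_{t-1},v_t\in Q$.

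Integrability of $g_t^\iota$, together with boundedness of $\nabla\phi_t$ (the targets are Lipschitz) and of $Q$, makes $Z_t$ integrable, which justifies the tower property. Using $\E[e_t^\iota\mid\mathcal F_t^\iota]=b_t^\iota$ gives
\[
 \E Z_t=t\,\E\ip{b_t^y}{u-v_{t-1}}-t\,\E\ip{b_t^x}{v_t-v_{t-1}}\le tD\,\E\bigl(\norm{b_t^y}+\norm{b_t^x}\bigr).
\]
For exact-evaluation batch means, Lemma~\ref{lem:oracle-moments} with conditional batching gives zero biases, so $\E\sum_tZ_t=0$.
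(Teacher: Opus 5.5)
Your proposal is correct and is essentially the paper's proof. It uses the same three convexity inequalities (grouped through $A_tx_t=A_{t-1}x_{t-1}+a_tv_t$ rather than the definition of $y_t$), the projection inequality, the bound of the remainder $A_t\widehat\beta_t-H_{t-1}r_t^2/2+\tfrac12(H_t-H_{t-1})\norm{u-v_t}^2$ by $2D^2(H_t-H_{t-1})$, and conditional centering at the two histories. Beyond that regrouping, the differences are cosmetic: you bound the remainder with the pre-update positive part where the paper uses the equivalent post-update balance identity, and you apply $\phi_t(x_{t-1})\le\phi_{t-1}(x_{t-1})$ within each step, whereas the paper telescopes first and then discards the nonnegative target-change sum.
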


Fix $u\in Q$ and write $p_t^y=\nabla\phi_t(y_t)$, $p_t^x=\nabla\phi_t(x_t)$,
$\delta_t=H_t-H_{t-1}$ and $w_t=v_t-v_{t-1}$. The scalar update has the
exact balance identity
\begin{equation}
 \delta_tD^2=
 \pos{A_t\widehat\beta_t-H_tr_t^2/2}.
 \label{eq:balance-app}
\end{equation}
Indeed, if the numerator in \eqref{eq:coefficient} is positive, multiply
that equality by $D^2+r_t^2/2$ and move the term
$\delta_tr_t^2/2$ to the right. If it is nonpositive, $\delta_t=0$ and
both sides of \eqref{eq:balance-app} vanish.

Optimality of the projection in \eqref{eq:step} gives
\begin{align}
 a_t\ip{g_t^y}{v_t-u}
 \le \frac{H_{t-1}}2\bigl(
 \norm{u-v_{t-1}}^2-\norm{u-v_t}^2-r_t^2\bigr).
 \label{eq:projection-app}
\end{align}
Convexity at $y_t$, together with the definition of $y_t$, yields
\begin{align}
 A_{t-1}\phi_t(x_{t-1})+a_t\phi_t(u)
 \ge A_t\phi_t(y_t)+a_t\ip{p_t^y}{u-v_{t-1}}.
 \label{eq:convex-y-app}
\end{align}
The identity $A_t(x_t-y_t)=a_tw_t$ and convexity at $x_t$ imply
\begin{equation}
 A_t[\phi_t(x_t)-\phi_t(y_t)]
 \le a_t\ip{p_t^x}{w_t}.
 \label{eq:convex-x-app}
\end{equation}
Adding these inequalities and substituting $p_t^\iota=g_t^\iota-e_t^\iota$ for $\iota\in\{y,x\}$ gives
\begin{align}
 A_t\phi_t(x_t)-A_{t-1}\phi_t(x_{t-1})-a_t\phi_t(u)
 \le a_t\ip{g_t^y}{v_t-u}+A_t\widehat\beta_t+Z_t.
 \label{eq:one-step-app}
\end{align}
Add $H_t\norm{u-v_t}^2/2$ to both sides and use
\eqref{eq:projection-app}. The curvature remainder is
\begin{align}
 A_t\widehat\beta_t-\frac{H_t r_t^2}{2}
 +\frac{\delta_t}{2}\bigl(\norm{u-v_t}^2+r_t^2\bigr)
 \le 2D^2\delta_t. \label{eq:curvature-remainder-app}
\end{align}
Here \eqref{eq:balance-app} bounds the first term by $D^2\delta_t$,
and both distances in the second term are at most $D$. Therefore
\begin{align}
 &A_t\phi_t(x_t)+\frac{H_t}{2}\norm{u-v_t}^2 \nonumber\\
 &\quad\le A_{t-1}\phi_t(x_{t-1})
 +\frac{H_{t-1}}2\norm{u-v_{t-1}}^2
 +a_t\phi_t(u)+2D^2\delta_t+Z_t.
 \label{eq:potential-step-app}
\end{align}
Sum over $t$. The target terms on the left telescope according to
\begin{align}
 &\sum_{t=1}^N\bigl[A_t\phi_t(x_t)
                   -A_{t-1}\phi_t(x_{t-1})\bigr] \nonumber\\
 &=A_N\phi_N(x_N)+
 \sum_{t=1}^{N-1}A_t[\phi_t(x_t)-\phi_{t+1}(x_t)]
 \ge A_N\phi_N(x_N).
 \label{eq:moving-target-telescope-app}
\end{align}
The inequality follows from the assumed target order. Discard the
nonnegative terminal distance, use $\phi_N\ge f$, and subtract
$A_Nf(u)=\sum_ta_tf(u)$. This proves \eqref{eq:potential}.

For the expectation statement, $u-v_{t-1}$ is known before the $y$
update batch. The displacement $w_t$ is known before the $x$ update
batch. Conditional centering in \eqref{eq:conditional} therefore removes
both $\zeta$ inner products separately. The two remaining bias inner
products are at most
$a_tD(\norm{b_t^y}+\norm{b_t^x})$, since $r_t\le D$.
Integrability of the total errors, together with these bounded
displacements, makes both centered inner products integrable. For
smoothing targets, it follows from Assumption~\ref{ass:conditional}
and $\|\nabla f_{\gamma_t}\|\le M$.
Thus summation and expectation are justified. For batch means of
\eqref{eq:record} with exact evaluations and targets
$\phi_t=f_{\gamma_t}$, both biases vanish and $\E\sum_t Z_t=0$.
\qed

\subsection{Proof of Lemma~\ref{lem:growth}}
\label{subsec:proof-growth}

\begin{lemma}[Scalar coefficient growth]\label{lem:growth}
If each $\phi_t$ has gradient Lipschitz constant at most $L_\circ$, then
\begin{equation}
 \pos{H_N-4L_\circ}^2\le\pos{h_0-4L_\circ}^2
 +\frac{1}{D^2}\sum_{t=1}^Nt^2\norm{e_t^x-e_t^y}^2.
 \label{eq:growth}
\end{equation}
Consequently, if
$W=\sum_{t=1}^N t^2\E\norm{e_t^x-e_t^y}^2<\infty$, then
$\E(H_N-h_0)\le4L_\circ+\sqrt W/D$.

\end{lemma}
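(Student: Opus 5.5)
The plan is to convert the balance identity \eqref{eq:balance-app} into a one-step recursion for $K_t:=H_t-4L_\circ$, then sum it and take expectations. Throughout write $w_t=v_t-v_{t-1}$, $r_t=\norm{w_t}$, $c_t=\norm{e_t^x-e_t^y}$, and $\delta_t=H_t-H_{t-1}\ge0$.

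\emph{Splitting the curvature estimate.} From \eqref{eq:center-y}--\eqref{eq:step} we have $A_t(x_t-y_t)=a_tw_t$. Decompose $g_t^\iota=\nabla\phi_t(\cdot)+e_t^\iota$, so that
\[
 A_t\widehat\beta_t
 =a_t\ip{\nabla\phi_t(x_t)-\nabla\phi_t(y_t)}{w_t}
 +a_t\ip{e_t^x-e_t^y}{w_t}.
\]
For the first term, the $L_\circ$-Lipschitz gradient and $\norm{x_t-y_t}=a_tr_t/A_t$ give the bound $L_\circ a_t^2r_t^2/A_t$. Since $a_t^2/A_t=2t/(t+1)\le2$, this is at most $2L_\circ r_t^2$. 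The second term is at most $t\,c_tr_t$ by Cauchy--Schwarz. Hence
\[
 A_t\widehat\beta_t\le 2L_\circ r_t^2+t\,c_tr_t .
\]

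\emph{One-step inequality.} Suppose $\delta_t>0$. Using the form $\delta_t(D^2+r_t^2/2)=A_t\widehat\beta_t-H_{t-1}r_t^2/2$ of \eqref{eq:coefficient}, equivalently \eqref{eq:balance-app}, we get
\[
 \delta_tD^2\le\Bigl(2L_\circ-\tfrac{H_t}{2}\Bigr)r_t^2+t\,c_tr_t
 =t\,c_tr_t-\tfrac{K_t}{2}r_t^2 .
\]
If $K_t>0$, maximizing the right side over $r_t\ge0$ yields $\delta_tK_t\le t^2c_t^2/(2D^2)$.

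\emph{Bounding the increment of $[K_t]_+^2$.} We split into three cases.
\begin{itemize}
\item If $K_t\le0$, then $[K_t]_+^2-[K_{t-1}]_+^2=0$, because $K_{t-1}\le K_t$.
\item If $K_{t-1}\ge0$ and $K_t>0$, then
\[
 K_t^2-K_{t-1}^2=\delta_t(K_t+K_{t-1})\le2\delta_tK_t\le t^2c_t^2/D^2 .
\]
\item If $K_{t-1}<0<K_t$, then $K_t\le\delta_t$, so $K_t^2\le\delta_tK_t\le t^2c_t^2/(2D^2)$.
\end{itemize}
The case $\delta_t=0$ is trivial. Telescoping over $t=1,\dots,N$ proves \eqref{eq:growth}.

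\emph{Expectation.} Take square roots in \eqref{eq:growth} and use $\sqrt{a+b}\le\sqrt a+\sqrt b$. Then apply Jensen, $\E\sqrt X\le\sqrt{\E X}$. This gives
\[
 \E[H_N-4L_\circ]_+\le[h_0-4L_\circ]_+ +\sqrt W/D .
\]
Finally $H_N\le4L_\circ+[H_N-4L_\circ]_+$ and $4L_\circ+[h_0-4L_\circ]_+-h_0=[4L_\circ-h_0]_+\le4L_\circ$, so $\E(H_N-h_0)\le4L_\circ+\sqrt W/D$.

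The main obstacle is the one-step inequality. It must retain the \emph{new} coefficient $H_t$, not $H_{t-1}$, in front of $r_t^2$; the $r_t^2/2$ in the denominator of \eqref{eq:coefficient} provides this. Only then does the maximization over $r_t$ produce the factor $1/K_t$ that makes the squared recursion telescope cleanly, including the crossing case.
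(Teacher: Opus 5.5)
Your proposal is correct and follows the paper's proof essentially step by step. Both use the secant bound $A_t\widehat\beta_t\le 2L_\circ r_t^2+t\norm{e_t^x-e_t^y}r_t$, the balance identity with the post-update $H_t$, and completion of the square to get $\delta_tK_t\le t^2\norm{e_t^x-e_t^y}^2/(2D^2)$; both then telescope the squares and finish with Jensen. The only cosmetic difference is that the paper sets $K_t=\pos{H_t-4L_\circ}$ from the start and uses $0\le K_t-K_{t-1}\le H_t-H_{t-1}$ to absorb the crossing case in one line, whereas you treat it in three explicit cases.
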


The scalar update uses no $L_\circ$; its growth is controlled by curvature
and realized error. 

Put $\mathcal E_N=\sum_tt^2\norm{e_t^x-e_t^y}^2$ and
$K_t=\pos{H_t-4L_\circ}$. Smoothness and
$A_t(x_t-y_t)=t(v_t-v_{t-1})$ give
\begin{equation}
 A_t\widehat\beta_t
 \le2L_\circ r_t^2+t\norm{e_t^x-e_t^y}r_t.
 \label{eq:growth-curvature-app}
\end{equation}
For $K_t>0$, the balance identity \eqref{eq:balance-app} implies
\[
 (H_t-H_{t-1})D^2
 \le\pos{-K_tr_t^2/2+t\norm{e_t^x-e_t^y}r_t}
 \le\frac{t^2\norm{e_t^x-e_t^y}^2}{2K_t}.
\]
The last step completes a scalar square. Since
$0\le K_t-K_{t-1}\le H_t-H_{t-1}$,
\begin{equation}
 D^2(K_t^2-K_{t-1}^2)
 \le2D^2K_t(K_t-K_{t-1})
 \le t^2\norm{e_t^x-e_t^y}^2.
 \label{eq:growth-square-app}
\end{equation}
For $K_t=0$, both positive parts vanish and the same inequality holds.
Summation proves \eqref{eq:growth}. Taking square roots gives
$H_N-h_0\le(4L_\circ-h_0)_++\sqrt{\mathcal E_N}/D$:
if $H_N\le4L_\circ$ the conclusion is immediate; otherwise
$K_N\le K_0+\sqrt{\mathcal E_N}/D$.
When $W=\E\mathcal E_N<\infty$, Jensen's inequality gives the
expected bound in the lemma.
\qed

\subsection{Proof of Corollary~\ref{cor:public}}
\label{subsec:proof-public}

The sum of the counts in \eqref{eq:public-batch} is at most
\begin{equation}
 2N+\frac{2(T-2N)}{N(N+1)}\sum_{t=1}^Nt=T.
\end{equation}
The inequality $1+\floor z\ge z$, and $T-2N\ge T/2$, imply
\begin{equation}
 B_t^y=B_t^x\ge\frac{Tt}{2N(N+1)}.
 \label{eq:public-floor-app}
\end{equation}
For $\iota\in\{y,x\}$, let $e_t^\iota$ be the error of the clean
update mean at the corresponding center. Its conditional second moment
is at most $\sigma_\gamma^2/B_t^\iota$. Moreover,
$e_t^y$ is known before the second update mean and that second mean is
conditionally unbiased. Hence
$\E\ip{e_t^x}{e_t^y}=0$, even though its center depends on $e_t^y$.
It follows that
\begin{align}
 W&\le\sum_{t=1}^Nt^2\sigma_\gamma^2
       (1/B_t^y+1/B_t^x)
 \le\frac{8\sigma_\gamma^2A_N^2}{T}.
 \label{eq:public-energy-app}
\end{align}
Apply Theorem~\ref{thm:potential} with $u=x^\star$ and
$\phi_t=f_\gamma$. The expectation of $Z_t$ vanishes, and
$f_\gamma(x^\star)-f(x^\star)\le M\gamma$.
Lemma~\ref{lem:growth} now bounds the curvature contribution by
$8L_\gamma D^2/A_N+4\sqrt2\sigma_\gamma D/\sqrt T$.
The inequality $A_N\ge N^2/2$ proves \eqref{eq:public-risk}.
There are two vector sampling stages and one projection per iteration. Each
record consumes one pair and two values, which proves the resource
statement.
\qed

\subsection{A radius from the iteration budget}
\label{subsec:proof-horizon}

\begin{corollary}[A radius from the iteration budget]\label{cor:horizon}
For exact function evaluations, assume $\sigma_\gamma^2\le CdM_2^2$ and set
\begin{equation}
 \gamma=\min\{\gammaMax,d^{1/4}D/N\}.
 \label{eq:horizon-radius}
\end{equation}
The risk of Algorithm~\ref{alg:scalar} is at most
\begin{align}
 \frac{h_0R^2}{2A_N}
 &+\frac{17d^{1/4}MD}{N}
 +\frac{16\sqrt d\,MD^2}{\gammaMax N^2}\nonumber\\
 &
 +\frac{8\sqrt{2Cd}\,M_2D}{\sqrt T}. \label{eq:horizon-risk}
\end{align}
For fixed batch sizes replace $8$ by $4$ in the final term.
\end{corollary}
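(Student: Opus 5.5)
The plan is to apply Theorem~\ref{thm:paid} with the radius \eqref{eq:horizon-radius} and bound its three $\gamma$-dependent terms, $16L_\gamma D^2/N^2$, $M\gamma$ and $8\sqrt2\sigma_\gamma D/\sqrt T$. Exact evaluations give zero biases, and the initialization term $h_0R^2/(2A_N)$ passes through unchanged. For fixed batches, Corollary~\ref{cor:public} plays the same role with stochastic coefficient $4\sqrt2$ in place of $8\sqrt2$; this is the source of the ``replace $8$ by $4$'' remark.

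\emph{Variance term.} The hypothesis $\sigma_\gamma^2\le CdM_2^2$ gives $8\sqrt2\sigma_\gamma D/\sqrt T\le 8\sqrt{2Cd}\,M_2D/\sqrt T$. This step is immediate.

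\emph{Smoothing bias.} Lemma~\ref{lem:smoothing} gives $f_\gamma(x^\star)-f(x^\star)\le M\mu_d\gamma\le M\gamma$. Since $\gamma\le d^{1/4}D/N$, this contributes at most $d^{1/4}MD/N$.

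\emph{Curvature term.} Lemma~\ref{lem:smoothing} gives $L_\gamma\le\sqrt dM/\gamma$, so
\[
 \frac{16L_\gamma D^2}{N^2}\le\frac{16\sqrt d\,MD^2}{\gamma N^2}.
\]
I will split according to which argument attains the minimum in \eqref{eq:horizon-radius}.
\begin{itemize}
 \item If $\gamma=d^{1/4}D/N$, the bound equals $16d^{1/4}MD/N$. Adding the bias term gives the coefficient $17$.
 \item If $\gamma=\gammaMax<d^{1/4}D/N$, the bound is exactly $16\sqrt d\,MD^2/(\gammaMax N^2)$, and the bias $M\gammaMax\le d^{1/4}MD/N$ is still covered by the $17$ coefficient.
\end{itemize}
Since $1/\gamma=\max\{1/\gammaMax,N/(d^{1/4}D)\}$ is at most the sum of the two, a single bound
\[
 \frac{16\sqrt d\,MD^2}{\gamma N^2}\le\frac{16d^{1/4}MD}{N}+\frac{16\sqrt d\,MD^2}{\gammaMax N^2}
\]
covers both cases at once. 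Summing the pieces gives \eqref{eq:horizon-risk}.

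There is no real obstacle. The only point needing care is that Theorem~\ref{thm:paid} is stated with $D$, the diameter bound in the curvature term, rather than $R$. That is already built in, so the argument is purely arithmetic.
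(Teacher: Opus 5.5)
Your proposal is correct and matches the paper's proof: apply Theorem~\ref{thm:paid} (or Corollary~\ref{cor:public} for fixed batches), use $L_\gamma\le\sqrt d\,M/\gamma$ together with $1/\gamma\le 1/\gammaMax+N/(d^{1/4}D)$, bound the bias by $M\gamma\le d^{1/4}MD/N$ to obtain the coefficient $16+1$, and substitute $\sigma_\gamma\le\sqrt{Cd}\,M_2$. Your preliminary case split is redundant once you use the single reciprocal bound, as the paper does, but it is harmless.
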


Use Theorem~\ref{thm:paid} for adaptive batches with $T\ge8N$, and
Corollary~\ref{cor:public} for fixed batches with $T\ge4N$.
Put $\rho=d^{1/4}D/N>0$. The choice
$\gamma=\min\{\gammaMax,\rho\}$ is positive and satisfies
\begin{equation}
 \frac1\gamma
 =\max\left\{\frac1{\gammaMax},\frac1\rho\right\}
 \le\frac1{\gammaMax}+\frac1\rho,
 \qquad \gamma\le\rho.
 \label{eq:horizon-reciprocal-app}
\end{equation}
The smoothing inequalities in Lemma~\ref{lem:smoothing} give
$L_\gamma\le\sqrt dM/\gamma$. Hence
\begin{align*}
 \frac{16L_\gamma D^2}{N^2}+M\gamma
 &\le\frac{16\sqrt d\,MD^2}{N^2}
       \left(\frac1{\gammaMax}+\frac1\rho\right)+M\rho\\
 &=\frac{16\sqrt d\,MD^2}{\gammaMax N^2}
       +\frac{(16+1)d^{1/4}MD}{N}.
\end{align*}
The vector moment assumptions give
$\sigma_\gamma\le\sqrt{Cd}\,M_2$. Substituting these bounds into
\eqref{eq:paid-risk} proves \eqref{eq:horizon-risk}.
Using \eqref{eq:public-risk} instead replaces $8$ by $4$ in its
stochastic term.
\qed

\section{Deterministic fixed-objective trajectories}
\label{sec:formal-trajectory}

Let $E$ be a real Hilbert space and $Q\subset E$ be nonempty and convex.
Assume that the metric projections in the execution below exist. Let
$D,h_0>0$, $x^0\in Q$, and
$\norm{z-w}\le D$ for every $z,w\in Q$.
Let $\phi:E\to\mathbb R$ be convex on $Q$ and differentiable at the
points used below. Fix $N\ge0$, initialize $x_0=v_0=x^0$ and $H_0=h_0$,
and set $a_t=t$ and $A_t=t(t+1)/2$. For $t=1,\ldots,N$, use
\begin{align}
 y_t&=\frac{A_{t-1}}{A_t}x_{t-1}+\frac{a_t}{A_t}v_{t-1},
 \label{eq:formal-trajectory-y}\\*
 v_t&=\operatorname{Proj}_Q\left(v_{t-1}-\frac{a_t}{H_{t-1}}g_t^y\right),
 \qquad
 x_t=\frac{A_{t-1}}{A_t}x_{t-1}+\frac{a_t}{A_t}v_t,
 \label{eq:formal-trajectory-x}\\*
 \widehat\beta_t&=\ip{g_t^x-g_t^y}{x_t-y_t},
 \qquad
 H_t=H_{t-1}+
 \frac{\pos{A_t\widehat\beta_t-H_{t-1}\norm{v_t-v_{t-1}}^2/2}}
 {D^2+\norm{v_t-v_{t-1}}^2/2}.
 \label{eq:formal-trajectory-H}
\end{align}
The vectors $g_t^y,g_t^x$ may have arbitrary deterministic errors
$e_t^y=g_t^y-\nabla\phi(y_t)$ and
$e_t^x=g_t^x-\nabla\phi(x_t)$. No distributional assumption is used here.

\begin{theorem}[Fixed-objective trajectory potential]
\label{thm:formal-trajectory-potential}
For every $u\in Q$, the execution
\eqref{eq:formal-trajectory-y}--\eqref{eq:formal-trajectory-H} satisfies
$x_t,v_t,y_t\in Q$ at their defined indices and $H_t\ge h_0>0$.
Moreover,
\begin{align}
 A_N[\phi(x_N)-\phi(u)]+\frac{H_N}{2}\norm{u-v_N}^2
 \le{}&\frac{h_0}{2}\norm{u-x^0}^2+2D^2(H_N-h_0)
       +\sum_{t=1}^N Z_t(u),
 \label{eq:formal-trajectory-potential}\\
 Z_t(u)={}&a_t\ip{e_t^y}{u-v_{t-1}}
           -a_t\ip{e_t^x}{v_t-v_{t-1}}.
 \label{eq:formal-trajectory-charge}
\end{align}
\end{theorem}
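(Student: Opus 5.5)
The proof repeats the argument of Theorem~\ref{thm:potential} with a single fixed target $\phi$. This time we keep the terminal distance term and check feasibility and positivity explicitly. We proceed by induction on $t$.

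\textbf{Feasibility and positivity.} The projection gives $v_t\in Q$. The points $y_t$ and $x_t$ are convex combinations, with weights $A_{t-1}/A_t$ and $a_t/A_t$ summing to one, of $x_{t-1}$ and $v_{t-1}$ (respectively $v_t$). Convexity of $Q$ therefore keeps them in $Q$. The increment of $H_t$ is a positive part divided by a positive quantity, so $H_t\ge H_{t-1}\ge h_0>0$. The same computation as in \eqref{eq:balance-app} gives the balance identity $\delta_tD^2=\pos{A_t\widehat\beta_t-H_t r_t^2/2}$, where $\delta_t=H_t-H_{t-1}$ and $r_t=\norm{v_t-v_{t-1}}$. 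The argument is: multiply the update by $D^2+r_t^2/2$ and move $\delta_t r_t^2/2$ across; if the numerator is nonpositive, both sides vanish.

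\textbf{One-step inequality.} Fix $u\in Q$.
\begin{itemize}
\item The variational inequality for the projection in a Hilbert space gives \eqref{eq:projection-app}.
\item Convexity of $\phi$ at $y_t$, applied to the points $x_{t-1}$ and $u$ and combined with $A_ty_t=A_{t-1}x_{t-1}+a_tv_{t-1}$, gives \eqref{eq:convex-y-app}.
\item Convexity at $x_t$ applied to $y_t$, together with $A_t(x_t-y_t)=a_t(v_t-v_{t-1})$, gives \eqref{eq:convex-x-app}.
\end{itemize}
Add the two convexity inequalities and write $\nabla\phi(\cdot)=g-e$. The gradient terms then regroup exactly into $a_t\ip{g_t^y}{v_t-u}+A_t\widehat\beta_t+Z_t(u)$. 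The key identity here is
\[
a_t\ip{g_t^x}{w_t}-a_t\ip{g_t^y}{w_t}=A_t\widehat\beta_t ,\qquad w_t=v_t-v_{t-1}.
\]
Now insert \eqref{eq:projection-app} and add $H_t\norm{u-v_t}^2/2$ to both sides. The leftover quantity is
\[
A_t\widehat\beta_t-\tfrac{H_{t-1}}2r_t^2+\tfrac{\delta_t}2\norm{u-v_t}^2 .
\]
Since $H_{t-1}=H_t-\delta_t$, this equals the left side of \eqref{eq:curvature-remainder-app}. The balance identity bounds its first part by $D^2\delta_t$, and $\norm{u-v_t}\le D$, $r_t\le D$ bound the rest by $D^2\delta_t$. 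This yields \eqref{eq:potential-step-app} with $\phi_t=\phi$.

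\textbf{Summation.} Because the target is fixed, the terms $A_t\phi(x_t)$ telescope exactly; no monotonicity assumption is needed. The base values are $A_0=0$ and $H_0=h_0$, and $v_0=x^0$. The sum $\sum_t2D^2\delta_t$ equals $2D^2(H_N-h_0)$, and the sum $\sum_ta_t\phi(u)$ equals $A_N\phi(u)$. Keeping $H_N\norm{u-v_N}^2/2$ on the left then gives \eqref{eq:formal-trajectory-potential}.

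\textbf{Main obstacle.} The only delicate point is the exact algebraic regrouping of the four gradient inner products into $A_t\widehat\beta_t+Z_t$ with the correct signs. Otherwise the argument is routine. Differentiability is needed only at $y_t$ and $x_t$, and the first-order convexity inequality on $Q$ at those points suffices.
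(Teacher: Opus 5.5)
Your proposal is correct and follows the paper's proof essentially step by step. Both arguments get feasibility from convex combinations and the projection, use the post-update balance identity $\delta_tD^2=\pos{A_t\widehat\beta_t-H_tr_t^2/2}$, and combine projection optimality with the supporting inequalities at $y_t$ and $x_t$, regrouped into $a_t\ip{g_t^y}{v_t-u}+A_t\widehat\beta_t+Z_t(u)$. Both then bound the curvature remainder by $2D^2\delta_t$ and telescope exactly for the fixed target while keeping $H_N\norm{u-v_N}^2/2$ on the left.
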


\begin{proof}
The weights in both centers are nonnegative and sum to one. Starting
from $x_0=v_0\in Q$, convexity and projection therefore imply all stated
memberships. The coefficient denominator is strictly positive and its
numerator is nonnegative, so induction gives $H_t\ge h_0>0$.

Write $r_t=\norm{v_t-v_{t-1}}$ and $\delta_t=H_t-H_{t-1}\ge0$.
Projection optimality gives
\[
 a_t\ip{g_t^y}{v_t-u}\le\frac{H_{t-1}}2
 \left(\norm{u-v_{t-1}}^2-\norm{u-v_t}^2-r_t^2\right).
\]
The two center definitions imply
\[
 A_{t-1}(x_{t-1}-y_t)+a_t(u-y_t)=a_t(u-v_{t-1}),
 \qquad A_t(x_t-y_t)=a_t(v_t-v_{t-1}).
\]
The convex gradient supporting inequalities at $y_t$ with
$x_{t-1},u$, and at $x_t$ with $y_t$, consequently yield
\[
 A_t\phi(x_t)-A_{t-1}\phi(x_{t-1})-a_t\phi(u)
 \le a_t\ip{g_t^y}{v_t-u}+A_t\widehat\beta_t+Z_t(u).
\]
The scalar recurrence has the exact post-update identity
\[
 \delta_tD^2=\pos{A_t\widehat\beta_t-H_tr_t^2/2}.
\]
To verify it, the nonpositive-numerator case has $\delta_t=0$;
otherwise multiply the update by its denominator and use
$H_t=H_{t-1}+\delta_t$.
Since $r_t,\norm{u-v_t}\le D$, the resulting curvature remainder obeys
\[
 A_t\widehat\beta_t-\frac{H_t r_t^2}{2}
 +\frac{\delta_t}{2}\left(\norm{u-v_t}^2+r_t^2\right)
 \le 2D^2\delta_t.
\]
Combining these inequalities gives
\begin{align*}
 A_t[\phi(x_t)-\phi(u)]+\frac{H_t}{2}\norm{u-v_t}^2
 \le{}&A_{t-1}[\phi(x_{t-1})-\phi(u)]
 +\frac{H_{t-1}}2\norm{u-v_{t-1}}^2
 +2D^2\delta_t+Z_t(u).
\end{align*}
Summation, $A_0=0$, $v_0=x^0$, and $H_0=h_0$ prove
\eqref{eq:formal-trajectory-potential}.
\end{proof}

\begin{corollary}[Exact-gradient accelerated comparator bound]
\label{cor:formal-trajectory-rate}
In addition, let $L\ge0$ and assume
$\norm{\nabla\phi(z)-\nabla\phi(w)}\le L\norm{z-w}$ for
$z,w\in Q$. If $g_t^y=\nabla\phi(y_t)$ and
$g_t^x=\nabla\phi(x_t)$, then
$H_t\le\max\{h_0,4L\}$. For $N\ge1$ and every $u\in Q$,
\[
 \phi(x_N)-\phi(u)\le
 \frac{h_0\norm{u-x^0}^2/2+8LD^2}{A_N}.
\]
\end{corollary}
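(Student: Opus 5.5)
We prove the two claims in order: the coefficient bound $H_t\le\max\{h_0,4L\}$ first, and then the rate, which follows by inserting that bound into Theorem~\ref{thm:formal-trajectory-potential}.

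\emph{Coefficient bound.} We use induction on $t$, reusing the curvature estimate from the proof of Lemma~\ref{lem:growth} with zero errors. The gradients are exact, so $\widehat\beta_t=\ip{\nabla\phi(x_t)-\nabla\phi(y_t)}{x_t-y_t}$. The identity $A_t(x_t-y_t)=a_t(v_t-v_{t-1})$ and $L$-Lipschitzness of $\nabla\phi$ on $Q$ (all points lie in $Q$ by the theorem) then give
\[
 A_t\widehat\beta_t\le LA_t\norm{x_t-y_t}^2=\frac{La_t^2}{A_t}r_t^2=\frac{2Lt}{t+1}r_t^2\le2Lr_t^2 .
\]
Suppose $H_{t-1}\le\max\{h_0,4L\}$. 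If $H_{t-1}\ge4L$, the numerator $A_t\widehat\beta_t-H_{t-1}r_t^2/2$ is at most $(2L-H_{t-1}/2)r_t^2\le0$. Hence $H_t=H_{t-1}$ and the bound persists.

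The remaining case, $H_{t-1}<4L$, is the only delicate step, since the coefficient can increase there. The increment is at most $(2L-H_{t-1}/2)r_t^2/(D^2+r_t^2/2)$. Put $c=2L-H_{t-1}/2>0$ and $s=r_t^2\in[0,D^2]$. The map $s\mapsto cs/(D^2+s/2)$ is increasing in $s$, so its largest value is at $s=D^2$, namely $2c/3$. This gives
\[
 H_t\le H_{t-1}+\tfrac23\bigl(2L-\tfrac{H_{t-1}}{2}\bigr)=\tfrac23H_{t-1}+\tfrac43L<4L .
\]
So $H_t\le\max\{h_0,4L\}$ in both cases, which closes the induction. (Using the balance identity $\delta_tD^2=[A_t\widehat\beta_t-H_tr_t^2/2]_+$ gives the same conclusion.)

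\emph{Rate.} Apply Theorem~\ref{thm:formal-trajectory-potential} with exact gradients, so that $Z_t(u)=0$, and drop the nonnegative distance term on the left. This leaves
\[
 A_N[\phi(x_N)-\phi(u)]\le\tfrac{h_0}2\norm{u-x^0}^2+2D^2(H_N-h_0).
\]
By the coefficient bound, $H_N-h_0\le(4L-h_0)_+\le4L$. Therefore $2D^2(H_N-h_0)\le8LD^2$, and dividing by $A_N>0$ gives the claimed bound.
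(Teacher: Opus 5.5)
Your proposal is correct and follows the paper's proof: you use the same secant bound $A_t\widehat\beta_t\le2Lr_t^2$, the same two-case induction on $H_{t-1}\gtrless4L$, and the same substitution into Theorem~\ref{thm:formal-trajectory-potential} with $Z_t(u)=0$. The only difference is minor: for $H_{t-1}<4L$ you use $r_t\le D$ to get the sharper $H_t\le\frac23H_{t-1}+\frac43L$, whereas the paper only uses $r_t^2/(2D^2+r_t^2)\le1$ to get $H_t-H_{t-1}\le4L-H_{t-1}$.
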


\begin{proof}
Cauchy--Schwarz and the gradient Lipschitz condition give
$\widehat\beta_t\le L\norm{x_t-y_t}^2$.
The center identity implies
$A_t\norm{x_t-y_t}=a_t r_t$ and
$a_t^2/A_t=2t/(t+1)\le2$, hence
$A_t\widehat\beta_t\le2Lr_t^2$.
If $H_{t-1}\ge4L$, the update numerator vanishes.
If $H_{t-1}<4L$, its positive part is at most
$(4L-H_{t-1})r_t^2/2$, which gives
$H_t-H_{t-1}\le4L-H_{t-1}$.
Thus $H_t\le\max\{H_{t-1},4L\}$, and induction proves the cap.
In particular $H_N-h_0\le4L$. Exact gradients make every
$Z_t(u)$ zero. Drop the nonnegative terminal term in
\eqref{eq:formal-trajectory-potential}, substitute the coefficient cap,
and divide by $A_N>0$.
\end{proof}

\section{Conditional-expectation trajectory potential}
\label{sec:stochastic-trajectory}

Consider the fixed-objective execution
\eqref{eq:formal-trajectory-y}--\eqref{eq:formal-trajectory-H} on a
probability space $(\Omega,\mathcal F,\mathbb P)$. The proximal set,
population target $\phi$, initial point, diameter bound and initial
coefficient are deterministic. Every realization satisfies the displayed
projection and gradient-error assumptions. The errors may have general
probability laws. For each $t$, let $\mathcal F_t^y,\mathcal F_t^x$
be sub-$\sigma$-algebras of $\mathcal F$. The vector $v_{t-1}$ is
measurable with respect to $\mathcal F_t^y$, and $v_t-v_{t-1}$ is
measurable with respect to $\mathcal F_t^x$. Assume that the vector
errors are integrable and satisfy
\begin{equation}
 \E[e_t^y\mid\mathcal F_t^y]=0,\qquad
 \E[e_t^x\mid\mathcal F_t^x]=0.
 \label{eq:stochastic-trajectory-centering}
\end{equation}
Strong measurability is sufficient; almost-everywhere strong
measurability gives the same conclusions. The second history includes
the first update and the projected displacement.

\begin{theorem}[Expected fixed-objective comparator potential]
\label{thm:stochastic-trajectory-potential}
Let $Q$ be convex with diameter at most $D>0$, let $h_0>0$, and let
$\phi$ be convex on $Q$ and differentiable at the points used in the
execution. Fix $x^0,u\in Q$ and $N\ge1$. Under the preceding trajectory,
predictability and conditional-mean assumptions, additionally assume that
$H_N$ and $\phi(x_N)-\phi(u)$ are integrable. Then all defined
$x_t,v_t,y_t$ belong to $Q$, $H_t\ge h_0$, and
\begin{equation}
 \E[\phi(x_N)-\phi(u)]\le
 \frac{h_0\norm{u-x^0}^2/2+2D^2(\E H_N-h_0)}{A_N}.
 \label{eq:stochastic-trajectory-expected}
\end{equation}
\end{theorem}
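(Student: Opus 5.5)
The plan is to take expectations in the deterministic inequality of Theorem~\ref{thm:formal-trajectory-potential}, which holds for every realization. Fix $u\in Q$. Pathwise,
\[
 A_N[\phi(x_N)-\phi(u)]+\tfrac{H_N}{2}\norm{u-v_N}^2
 \le\tfrac{h_0}{2}\norm{u-x^0}^2+2D^2(H_N-h_0)+\sum_{t=1}^NZ_t(u).
\]
The memberships $x_t,v_t,y_t\in Q$ and the bound $H_t\ge h_0$ are therefore already available. I will drop the nonnegative terminal distance term, take expectations, and divide by $A_N>0$. The assumed integrability of $H_N$ and of $\phi(x_N)-\phi(u)$ makes every term except the charges integrable. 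The whole argument then reduces to showing that $\E\sum_tZ_t(u)=0$, or at least $\le0$.

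For this I will treat the two inner products in $Z_t(u)=a_t\ip{e_t^y}{u-v_{t-1}}-a_t\ip{e_t^x}{v_t-v_{t-1}}$ separately.
\begin{itemize}
 \item The vector $u-v_{t-1}$ is $\mathcal F_t^y$-measurable and bounded in norm by $D$, because both points lie in $Q$.
 \item The vector $v_t-v_{t-1}$ is $\mathcal F_t^x$-measurable and bounded by $D$.
\end{itemize}
Since the errors are integrable, each product $\ip{e_t^\iota}{w}$ with $\norm w\le D$ is integrable, bounded by $D\norm{e_t^\iota}$. The pull-out property of conditional expectation for a bounded measurable factor, together with \eqref{eq:stochastic-trajectory-centering}, gives
\[
 \E\ip{e_t^y}{u-v_{t-1}}=\E\ip{\E[e_t^y\mid\mathcal F_t^y]}{u-v_{t-1}}=0,
\]
and similarly for the $x$ term. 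Summing over the finitely many $t$ yields $\E\sum_tZ_t(u)=0$.

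The one technical point is justifying the pull-out for Hilbert-space-valued errors, since $E$ may be infinite dimensional. I would handle it coordinate-free. Under strong measurability, approximate the bounded factor $w$ by $\mathcal F_t^\iota$-simple functions $w^{(k)}$ with $\norm{w^{(k)}}\le D$ converging pointwise. For a simple function, $\E\ip{e}{w^{(k)}}=\sum_j\E[\ip{e}{c_j}\mathbf1_{A_j}]$. Each summand vanishes, because $\ip{e}{c_j}$ has conditional mean $\ip{\E[e\mid\mathcal F]}{c_j}=0$ by linearity of the Bochner conditional expectation. Dominated convergence, with dominating function $D\norm e$, then passes to the limit.

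The almost-everywhere measurable case follows by modifying $w$ on a null set. This is the step I expect to need the most care, though it is standard; everything else is direct substitution.

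Combining these steps gives
\[
 A_N\E[\phi(x_N)-\phi(u)]\le\tfrac{h_0}{2}\norm{u-x^0}^2+2D^2(\E H_N-h_0),
\]
which is \eqref{eq:stochastic-trajectory-expected}.
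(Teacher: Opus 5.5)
Your proposal is correct and follows the paper's proof essentially step for step. You take expectations in the pathwise potential of Theorem~\ref{thm:formal-trajectory-potential}, drop the terminal distance, and show each charge has zero mean via the pull-out property with the $D$-bounded predictable factors. You then divide by $A_N$. Your simple-function approximation just spells out the Hilbert-space pull-out identity that the paper invokes directly.
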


\begin{proof}
The membership and coefficient conclusions follow for each realization
from Theorem~\ref{thm:formal-trajectory-potential}. Consequently both
$u-v_{t-1}$ and $v_t-v_{t-1}$ have norm at most $D$.
Cauchy--Schwarz and integrability of the vector errors make both
error inner products integrable. The conditional-expectation pull-out
identity for the continuous bilinear inner product gives
\begin{align*}
 \E[\ip{e_t^y}{u-v_{t-1}}\mid\mathcal F_t^y]
 &=\ip{\E[e_t^y\mid\mathcal F_t^y]}{u-v_{t-1}}=0,\\
 \E[\ip{e_t^x}{v_t-v_{t-1}}\mid\mathcal F_t^x]
 &=\ip{\E[e_t^x\mid\mathcal F_t^x]}{v_t-v_{t-1}}=0.
\end{align*}
The two identities use their respective histories. Taking expectations
and multiplying by the deterministic weights yields $\E Z_t(u)=0$
for the charge in \eqref{eq:formal-trajectory-charge}.

Drop the nonnegative terminal distance in
\eqref{eq:formal-trajectory-potential}. Its remaining terms are
integrable by the assumptions and the preceding bounds. Take
expectations, use the finite sum of zero-mean charges, and divide by
$A_N>0$ to obtain \eqref{eq:stochastic-trajectory-expected}.
\end{proof}

The conditional-mean assumptions specify the required oracle behavior.
The result covers general probability measures and the actual
accelerated centers and projections for a fixed target. Sampling laws
for two-value records and changes in the smoothing target require
their additional moment and target-drift arguments.

\section{Diagnostic costs and batch guarantees}
\label{sec:app-batches}

\subsection{Proof of Lemma~\ref{lem:paid-accounting}}
\label{subsec:proof-paid-accounting}

\begin{lemma}[Adaptive batch sample counts]\label{lem:paid-accounting}
Algorithm~\ref{alg:scalar} uses exactly $T$ pairs and $2T$
scalar values, $4N$ sequential sampling rounds, and $N$ projections. Moreover, $m_{\mathrm{sur}}\ge \mathsf U/4$ and $B_t^\iota\ge B_t^0\ge \mathsf U t/[4N(N+1)]$ for $\iota\in\{y,x\}$. At a fixed center, $\E[\widehat v_j\mid\text{history}]$ equals the single-record variance. 
\end{lemma}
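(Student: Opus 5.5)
The claims are pure bookkeeping about Algorithm~\ref{alg:scalar}, together with one moment identity. I would verify them in the order in which they are stated.

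\emph{Counts.} Each iteration makes two calls to \PaidBatch{} and two calls to \FreshMean. Each \PaidBatch{} call draws two pilot records and forms one sampling round. Each \FreshMean{} call forms one round and consumes $B_j$ pairs. This gives $4N$ rounds and $N$ projections per execution. The pilot records use $4N$ pairs in total. The update records use
\[
 \sum_j B_j=2\sum_t B_t^0+\sum_j k_j .
\]
The surplus recursion $m_{j+1}=m_j-k_j$ has the following properties. For $j<2N-1$, $k_j\le m_j/2$, so $m_j$ stays nonnegative. At $j=2N-1$ we set $k_j=m_j$, which exhausts the remaining surplus, and at $j=2N$ we set $k_{2N}=0$. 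Hence $\sum_j k_j=m_1=m_{\rm sur}$. Then $\sum_jB_j=2\sum_tB_t^0+m_{\rm sur}=\mathsf U$ by \eqref{eq:surplus}, so the total is $4N+\mathsf U=T$ pairs and $2T$ values.

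\emph{Surplus and floor.} I would bound $2\sum_tB_t^0$ from above by
\[
 2N+2\frac{(\mathsf U-2N)}{2N(N+1)}\cdot\frac{N(N+1)}2=2N+\frac{\mathsf U-2N}{2}=\frac{\mathsf U}2+N .
\]
This gives $m_{\rm sur}\ge \mathsf U/2-N$. Since $T\ge8N$, we have $\mathsf U\ge4N$, hence $N\le\mathsf U/4$ and $m_{\rm sur}\ge\mathsf U/4$. For the floor, I would use $1+\floor z\ge z$ (with $z\ge0$ because $\mathsf U\ge2N$) to get $B_t^0\ge(\mathsf U-2N)t/[2N(N+1)]$. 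Then $\mathsf U-2N\ge\mathsf U/2$ gives $B_t^0\ge\mathsf Ut/[4N(N+1)]$. Finally, $B_t^\iota=B_t^0+k_j\ge B_t^0$ because $k_j\ge0$.

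\emph{Unbiased variance estimate.} Conditionally on the history fixing the center $z$ and $\gamma$, the pilot records $G_j^1,G_j^2$ are independent with common mean $\mu$ and covariance trace $V$. Expanding $\norm{G^1-G^2}^2=\norm{(G^1-\mu)-(G^2-\mu)}^2$, the cross term has conditional mean zero by independence. This leaves $2V$, so $\E[\widehat v_j\mid\text{history}]=V$. Finite second moments, from Assumption~\ref{ass:vector} or from the conditional assumption, justify the expansion.

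The only delicate point I expect is the surplus algebra. One has to check that the rounding in $B_t^0$ only decreases the sum, so that $m_{\rm sur}\ge0$ and indeed $m_{\rm sur}\ge\mathsf U/4$. One also has to confirm that the last allocation at $j=2N-1$ spends exactly the remaining surplus, so that the count is \emph{exactly} $T$ rather than at most $T$.
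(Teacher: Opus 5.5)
Your proposal is correct and follows the paper's proof essentially step for step. It uses the same bound $2\sum_tB_t^0\le\mathsf U/2+N$ to get $m_{\mathrm{sur}}\ge\mathsf U/4$, the same surplus-exhaustion argument at $j=2N-1$ for the exact count $T$, the same $1+\lfloor z\rfloor\ge z$ with $\mathsf U-2N\ge\mathsf U/2$ for the floor, and the same independence computation for $\E[\widehat v_j\mid\text{history}]$. One ordering fix: nonnegativity of every $m_j$ in your counting paragraph needs $m_1=m_{\mathrm{sur}}\ge0$, which you only prove in the next paragraph, so state that bound first.
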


Set $\mathsf U=T-4N\ge4N$. The reserve satisfies
\begin{align}
 2\sum_{t=1}^NB_t^0
 &\le2N+\frac{\mathsf U-2N}{N(N+1)}\sum_{t=1}^Nt
 =\frac{\mathsf U}{2}+N\le\frac{3\mathsf U}{4}.
 \label{eq:paid-reserve-cost-app}
\end{align}
Thus $m_{\mathrm{sur}}\ge \mathsf U/4$. For $j<2N-1$ the allocated extra is an
integer between zero and $m_j/2$; at $j=2N-1$ it equals the entire
remaining surplus. Consequently every $m_j$ is nonnegative,
$\sum_{j=1}^{2N}k_j=m_{\mathrm{sur}}$, and the update counts sum to
$\mathsf U$. There are two pilot records at each of $2N$ centers, so their
total cost is $4N$ pairs. The complete cost is exactly $T$ pairs and
$2T$ values. At each center the pilot stage precedes the update stage.
The $x$ center depends on the $y$ update. These are four sequential
stages per iteration and one projection.

From $1+\floor z\ge z$ and $\mathsf U-2N\ge \mathsf U/2$,
\begin{equation}
 B_t^\iota\ge B_t^0\ge\frac{\mathsf U t}{4N(N+1)},\qquad \iota\in\{y,x\}.
 \label{eq:paid-reserve-lower-app}
\end{equation}
Finally, condition on the selected center and radius and the previous
history. The two pilots are conditionally independent and identically
distributed. If their mean is $\mu$ and variance is
$V=\E\norm{G-\mu}^2$, independence gives
\[
 \E\bigl[\tfrac12\norm{G^1-G^2}^2\bigr]
 =\tfrac12(2V)=V.
\]
The claim applies to the observed law, including fixed value corruption.
\qed

\subsection{Proof of Theorem~\ref{thm:paid}}
\label{subsec:proof-paid}

Each count is fixed after its pilots and before the update records,
which use sampled functions and directions independent of that history.
Thus, conditionally on that history, the clean update mean is unbiased
and has variance at most $\sigma_\gamma^2/B_t^\iota$ for $\iota\in\{y,x\}$. The first update error
is known before the second update mean, so conditional centering removes
the cross term between their errors. Equation
\eqref{eq:paid-reserve-lower-app} therefore gives
\begin{align}
 W&=\sum_{t=1}^Nt^2\E\norm{e_t^x-e_t^y}^2 \nonumber\\
 &\le\sum_{t=1}^Nt^2\sigma_\gamma^2
       \E(1/B_t^y+1/B_t^x)
 \le\frac{16\sigma_\gamma^2A_N^2}{\mathsf U}.
 \label{eq:paid-energy-app}
\end{align}
Use Theorem~\ref{thm:potential} and Lemma~\ref{lem:growth} with
$u=x^\star$ and $\phi_t=f_\gamma$. The stochastic contribution is
\[
 \frac{2D\sqrt W}{A_N}
 \le\frac{8\sigma_\gamma D}{\sqrt{\mathsf U}}
 \le\frac{8\sqrt2\sigma_\gamma D}{\sqrt T},
\]
since $\mathsf U\ge T/2$. The deterministic curvature contribution is
$8L_\gamma D^2/A_N\le16L_\gamma D^2/N^2$, and the comparator
smoothing error is at most $M\gamma$. This proves
\eqref{eq:paid-risk}. Neither inequality used the accuracy of an
individual pilot. All random batch choices are covered by the reserve.
\qed

\subsection{Proof of Theorem~\ref{thm:corruption}}
\label{subsec:proof-corruption}

\begin{theorem}[Bounded value corruption]\label{thm:corruption}
Suppose the clean single-record variance is at most $\sigma_\gamma^2$ and
$\norm{\delta}_\infty\le\Delta$. With
$c_d=d\E|[e]_1|\le\sqrt d$, where $[e]_1$ denotes the first
coordinate of $e$, Algorithm~\ref{alg:scalar} satisfies
\begin{equation}
 \E[f(x_N)-f(x^\star)]
 \le \frac{h_0R^2}{2A_N}+\frac{16L_\gamma D^2}{N^2}+M\gamma
 +\frac{16\sigma_\gamma D}{\sqrt T}
 +\frac{6c_dD\Delta}{\gamma}+\frac{16dD\Delta}{\gamma\sqrt T}.
 \label{eq:corruption-risk}
\end{equation}
All batch-selection and update costs remain as in Lemma~\ref{lem:paid-accounting}. 
\end{theorem}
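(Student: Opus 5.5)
We follow the proof of Theorem~\ref{thm:paid}, now carrying the corruption bias through Theorem~\ref{thm:potential} and Lemma~\ref{lem:growth}. Apply Theorem~\ref{thm:potential} with $u=x^\star$ and fixed target $\phi_t=f_\gamma$. The comparator charge is at most $M\gamma$ per unit weight by Lemma~\ref{lem:smoothing}, which gives the $M\gamma$ term after dividing by $A_N$.

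By Lemma~\ref{lem:oracle-moments}, each update mean at a center has a conditional bias $b$ with $\|b\|\le c_d\Delta/\gamma$. The batch size is fixed from pilots before the update records are drawn, so this bias survives conditioning unchanged. Theorem~\ref{thm:potential} then gives
\[
\E\textstyle\sum_t Z_t\le D\sum_t t\cdot 2c_d\Delta/\gamma=2c_dD\Delta A_N/\gamma,
\]
which contributes $2c_dD\Delta/\gamma$ to the risk.

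For the coefficient term $2D^2\E(H_N-h_0)/A_N$, we use Lemma~\ref{lem:growth} with $L_\circ=L_\gamma$. The deterministic part gives $8L_\gamma D^2/A_N\le16L_\gamma D^2/N^2$. The remaining part is $2D\sqrt W/A_N$, and we bound $W$ by splitting each error as $e_t^\iota=b_t^\iota+\zeta_t^\iota$. Then $\|e_t^x-e_t^y\|^2\le 2\|b_t^x-b_t^y\|^2+2\|\zeta_t^x-\zeta_t^y\|^2$. The bias part is at most $8c_d^2\Delta^2/\gamma^2$.

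For the centered part, $\zeta_t^y$ is $\mathcal F_t^x$-measurable and $\E[\zeta_t^x\mid\mathcal F_t^x]=0$, so the cross term vanishes. Lemma~\ref{lem:oracle-moments} gives a conditional second moment of at most $(2\sigma_\gamma^2+2d^2\Delta^2/\gamma^2)/B_t^\iota$. Combining this with $B_t^\iota\ge\mathsf Ut/[4N(N+1)]$ from Lemma~\ref{lem:paid-accounting} and $\mathsf U\ge T/2$, as in \eqref{eq:paid-energy-app}, yields
\[
W\le 16\sum_t t^2 c_d^2\Delta^2/\gamma^2+\frac{64A_N^2(2\sigma_\gamma^2+2d^2\Delta^2/\gamma^2)}{\mathsf U}.
\]
Since $\sum_t t^2\le \tfrac{2N+1}{3}A_N\le A_N^2\cdot\frac{4}{3}$ roughly, the square root splits by $\sqrt{a+b+c}\le\sqrt a+\sqrt b+\sqrt c$. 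This gives a bias term of order $c_dD\Delta/\gamma$, a variance term $2D\cdot 8\sqrt2\sigma_\gamma\sqrt2/\sqrt T=16\sigma_\gamma D/\sqrt T$, and a corruption-variance term $16dD\Delta/(\gamma\sqrt T)$.

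The main obstacle is the bias part of $W$. A crude bound $\sum_t t^2\le 2A_N^2$ only gives $2D\sqrt{32}c_d\Delta A_N/(\gamma A_N)\approx 11c_dD\Delta/\gamma$, which is too large. To reach the stated constant $6$ (that is, $2$ from $Z_t$ plus $4$ from growth), we need a sharper route. First, keep the bias difference $\|b_t^x-b_t^y\|\le 2c_d\Delta/\gamma$ without the factor $2$ from splitting. Second, handle the bias inside the growth recursion directly rather than through $W$: in \eqref{eq:growth-curvature-app} replace $t\|e_t^x-e_t^y\|r_t$ by $t(\|\beta\|+\|\zeta\|)r_t$, and bound the bias-driven increase linearly. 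By the balance identity, $D^2\delta_t\le t\cdot 2c_d\Delta r_t/\gamma$ when $K_t$ is small, and $\sum_t t r_t\le D\cdot A_N$ up to constants. That route gives $2D^2\E(H_N-h_0)/A_N\le 4c_dD\Delta/\gamma$ plus the variance terms. We would compare the square-root route and this linear route and keep whichever delivers the constants $6$ and $16$.

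Finally, the resource counts are unchanged because the corruption alters only the returned values, so Lemma~\ref{lem:paid-accounting} applies verbatim.
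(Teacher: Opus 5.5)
Your skeleton matches the paper's proof: Theorem~\ref{thm:potential} with $u=x^\star$ and $\phi_t=f_\gamma$, the direct bias charge $2Db$ with $b=c_d\Delta/\gamma$, Lemma~\ref{lem:growth} with $L_\circ=L_\gamma$, cancellation of $\E\langle\zeta_t^x,\zeta_t^y\rangle$, and the floors of Lemma~\ref{lem:paid-accounting}. But the proposal does not prove the stated constants, and the route you actually display fails.

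The split $\|e_t^x-e_t^y\|^2\le2\|b_t^x-b_t^y\|^2+2\|\zeta_t^x-\zeta_t^y\|^2$ costs a factor $\sqrt2$ in \emph{every} term, not only the bias term. Done correctly, it gives $W\le8b^2\sum_tt^2+32VA_N^2/\mathsf U$ with $V=2\sigma_\gamma^2+2d^2\Delta^2/\gamma^2$; your $16$ and $64$ double this again. This yields $(2+4\sqrt2)\,c_dD\Delta/\gamma$ and $16\sqrt2\,\sigma_\gamma D/\sqrt T+16\sqrt2\,dD\Delta/(\gamma\sqrt T)$. Your claimed $16\sigma_\gamma D/\sqrt T$ comes from an arithmetic slip: $2D\cdot8\sqrt2\cdot\sqrt2\,\sigma_\gamma/\sqrt T=32\sigma_\gamma D/\sqrt T$. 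You also cannot simply drop the factor $2$ by cancelling cross terms. The term $\E\langle b_t^x,\zeta_t^y\rangle$ need not vanish, because $b_t^x$ is the bias at $x_t$, and $x_t$ depends on the $y$-batch.

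The missing step is Minkowski's inequality in $L_2$ of the weighted product space. Put $W_\zeta=\sum_tt^2\E\|\zeta_t^x-\zeta_t^y\|^2$, and treat $(t(b_t^x-b_t^y))_t$ and $(t(\zeta_t^x-\zeta_t^y))_t$ as random vectors. Using $\sum_tt^2\le(\sum_tt)^2=A_N^2$ and $W_\zeta\le16VA_N^2/\mathsf U$,
\[
 \sqrt W\le2b\Bigl(\sum_{t=1}^Nt^2\Bigr)^{1/2}+\sqrt{W_\zeta}
 \le2bA_N+4A_N\sqrt{V/\mathsf U}.
\]
No bias--noise cancellation is needed. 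Then $2D\sqrt W/A_N\le4Db+8D\sqrt{V/\mathsf U}$, so the bias total is $2Db+4Db=6Db$. With $\mathsf U\ge T/2$ and $\sqrt V\le\sqrt2\sigma_\gamma+\sqrt2d\Delta/\gamma$, both stochastic coefficients equal $16$.

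Your linear route is also viable and gives the same constants, but it has to be carried out. You would need to:
\begin{itemize}
\item bound the bias part of each increment by $c_t=2btr_t/D^2$ in every iteration, via $[a+c]_+\le c+[a]_+$, not only when $K_t$ is small;
\item run the square recursion of Lemma~\ref{lem:growth} on $[K_t-\sum_{s\le t}c_s]_+$ using only the centered errors;
\item use $\sum_tc_t\le2bA_N/D$, which holds because $r_t\le D$.
\end{itemize}
As written, the proposal leaves the choice between routes open, so it does not establish \eqref{eq:corruption-risk}.
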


When $T\ge d$, the last two terms are at most $22\sqrt d\,D\Delta/\gamma$. For $M>0$, choosing $\gamma\le\gammaMax$ proportional to $\varepsilon/M$ gives the sufficient noise
scale $\Delta$ proportional to
$\varepsilon^2/(DM\sqrt d)$ once the clean terms meet the target.

We separate the bias from the centered sampling error. The corruption
calculation in the proof of Lemma~\ref{lem:oracle-moments} requires only
the clean record's mean and variance and $\norm{\delta}_\infty\le\Delta$.
It bounds the bias of each observed record by $b=c_d\Delta/\gamma$
and its centered second moment by
\begin{equation}
 V=2\sigma_\gamma^2+\frac{2d^2\Delta^2}{\gamma^2}.
 \label{eq:corrupt-single-var-app}
\end{equation}
Averaging the $B_t^\iota$ update records preserves the bias and divides the
conditional centered second moment by $B_t^\iota$, for $\iota\in\{y,x\}$.
The conditioning includes the diagnostic observations used to choose
that batch size.

Write $e_t^\iota=b_t^\iota+\zeta_t^\iota$, where $\norm{b_t^\iota}\le b$ and
$\E[\zeta_t^\iota\mid\mathcal F_t^\iota]=0$. The $x$ history contains
$\zeta_t^y$, so conditional centering of $\zeta_t^x$ gives
$\E\ip{\zeta_t^x}{\zeta_t^y}=0$. Consequently, the minimum batch sizes
in Lemma~\ref{lem:paid-accounting} imply
\begin{align}
 W_\zeta
 &:=\sum_{t=1}^Nt^2\E\norm{\zeta_t^x-\zeta_t^y}^2\nonumber\\
 &\le V\sum_{t=1}^Nt^2\E(1/B_t^y+1/B_t^x)
 \le\frac{16VA_N^2}{\mathsf U}.
 \label{eq:corrupt-energy-app}
\end{align}
The bias $b_t^x$ may depend on $\zeta_t^y$; the argument requires no
cancellation between these vectors.

Let $W=\sum_{t=1}^Nt^2\E\norm{e_t^x-e_t^y}^2$.
The potential bias contribution is at most $2Db$ after division by
$A_N$. Applying Theorem~\ref{thm:potential} and Lemma~\ref{lem:growth}
gives
\begin{align}
 \E[f(x_N)-f(x^\star)]
 &\le\frac{h_0R^2}{2A_N}+\frac{8L_\gamma D^2}{A_N}
       +M\gamma+2Db+\frac{2D\sqrt W}{A_N}.
 \label{eq:corrupt-before-constants-app}
\end{align}
To control $W$, regard the weighted sequences
$(t(b_t^x-b_t^y))_{t=1}^N$ and
$(t(\zeta_t^x-\zeta_t^y))_{t=1}^N$ as random vectors in the product
Euclidean space. Minkowski's inequality in $L_2$, followed by
$\norm{b_t^x-b_t^y}\le2b$ and \eqref{eq:corrupt-energy-app}, yields
\begin{equation}
 \sqrt W\le
 2b\sqrt{\sum_{t=1}^Nt^2}+\sqrt{W_\zeta}
 \le2b\sqrt{\sum_{t=1}^Nt^2}
       +\frac{4A_N\sqrt V}{\sqrt{\mathsf U}}.
 \label{eq:corrupt-minkowski-app}
\end{equation}
Substitution into \eqref{eq:corrupt-before-constants-app} bounds the
combined bias contribution by
\[
 Db\left(2+\frac{4\sqrt{\sum_{t=1}^Nt^2}}{A_N}\right)\le6Db,
\]
since $\sum_{t=1}^Nt^2\le(\sum_{t=1}^Nt)^2=A_N^2$.
The centered-error contribution is $8D\sqrt V/\sqrt{\mathsf U}$.
Using $\mathsf U\ge T/2$ and
$\sqrt V\le\sqrt2\sigma_\gamma+\sqrt2d\Delta/\gamma$ bounds it by
$16\sigma_\gamma D/\sqrt T+16dD\Delta/(\gamma\sqrt T)$.
Finally, $8L_\gamma D^2/A_N\le16L_\gamma D^2/N^2$.
This proves \eqref{eq:corruption-risk}. The sample and stage counts in
Lemma~\ref{lem:paid-accounting} are deterministic identities and remain
valid for corrupted diagnostic observations.
\qed

\section{Conditional variance and adaptive batch sizes}
\label{sec:batch-variance}

We bound the variance contribution along the trajectory generated by the
scalar method. We then quantify a limitation of its adaptive batch rule under
the original two-value oracle. The allocation comparison concerns variance
energy at the exposed centers.

Minimum batch sizes preserve the error bound when variance estimates are inaccurate. For a fixed target, Theorem~\ref{thm:batch-variance-actual} expresses the bound through conditional variances at the actual centers. Proposition~\ref{prop:batch-variance-paid-limit} quantifies how much the allocation can lose relative to redistributing its surplus at those same centers. This variance comparison fixes the centers because changing batch sizes can change the trajectory.

\subsection{The variance energy of the actual trajectory}
\label{subsec:batch-variance-actual}

Fix a convex $L$-smooth target $\phi$ on a compact convex set $Q$ of
diameter at most $D>0$, and use the scalar recurrence
\begin{align}
 a_t&=t,\quad A_t=t(t+1)/2,\quad A_0=0,\quad H_0=h_0>0,
 \nonumber\\
 y_t&=(A_{t-1}x_{t-1}+a_tv_{t-1})/A_t,
 \nonumber\\
 v_t&=\Pi_Q(v_{t-1}-a_tg_t^y/H_{t-1}),\qquad
 x_t=(A_{t-1}x_{t-1}+a_tv_t)/A_t,
 \nonumber\\
 r_t&=\|v_t-v_{t-1}\|,\qquad
 \widehat\beta_t=\langle g_t^x-g_t^y,x_t-y_t\rangle,
 \nonumber\\
 H_t&=H_{t-1}+
 \frac{[A_t\widehat\beta_t-H_{t-1}r_t^2/2]_+}
 {D^2+r_t^2/2}.
 \label{eq:batch-variance-scalar}
\end{align}
Here $x_0=v_0=x^0\in Q$. At stage $j=2t-1$ the center is $y_t$,
and at stage $j=2t$ it is $x_t$. Write $\omega_{2t-1}=\omega_{2t}=t$.
Let $\mathcal F_j$ contain the history before the current pilots,
including the exposed center $z_j$ and radius, and define
\begin{equation}
 V_j=\mathbb E[\|G_j-\nabla\phi(z_j)\|^2\mid\mathcal F_j].
 \label{eq:batch-variance-predictable}
\end{equation}
The center and radius remain fixed during the pilots and the update
batch. The integer count $B_j\ge1$ is chosen after its pilots and before
sampling the functions and directions used in the update records,
independently of that history. Their conditional mean and
variance, given this enlarged history, remain $\nabla\phi(z_j)$ and
$V_j$. Thus the pilots can affect the count without changing the
frozen record law. We assume integrability of the displayed second
moments.

\begin{theorem}[Expected risk in terms of conditional variance]
\label{thm:batch-variance-actual}
Let $e_t^y=g_t^y-\nabla\phi(y_t)$, $e_t^x=g_t^x-\nabla\phi(x_t)$, $A=A_N$, and let $u\in Q$ be fixed.
Then
\begin{equation}
 W:=\sum_{t=1}^Nt^2\mathbb E\|e_t^x-e_t^y\|^2
 =\mathbb E\sum_{j=1}^{2N}\frac{\omega_j^2V_j}{B_j}.
 \label{eq:batch-variance-exact}
\end{equation}
If $f\le\phi$ on $Q$ and $R_u=\|u-x^0\|$, then
\begin{align}
 \mathbb E[f(x_N)-f(u)]
 &\le\frac{h_0R_u^2}{2A}
 +\frac{2D^2(4L-h_0)_+}{A}
 +\phi(u)-f(u)+\frac{2D\sqrt W}{A}.
 \label{eq:batch-variance-risk}
\end{align}
For the adaptive batch rule with $T\ge8N$, $\mathsf U=T-4N$, and the reserve
\begin{equation}
 b_t=1+\left\lfloor
 \frac{(\mathsf U-2N)t}{2N(N+1)}
 \right\rfloor,
 \label{eq:batch-variance-reserve}
\end{equation}
define
\begin{equation}
 \overline{\mathcal V}_N
 =\frac1{2A}\mathbb E\sum_{j=1}^{2N}\omega_jV_j.
 \label{eq:batch-variance-average}
\end{equation}
Its actual counts satisfy
\begin{equation}
 W\le\frac{16A^2\overline{\mathcal V}_N}{\mathsf U}.
 \label{eq:batch-variance-paid-energy}
\end{equation}
Consequently the variance term in \eqref{eq:batch-variance-risk} is at
most $8D\sqrt{\overline{\mathcal V}_N/\mathsf U}$.
The complete execution uses exactly $T$ sampled functions and $2T$ values,
uses $4N$ sequential sampling stages, and performs $N$ projections.
\end{theorem}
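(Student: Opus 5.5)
The plan proceeds in three parts. First comes the exact variance identity \eqref{eq:batch-variance-exact}. Then the risk bound \eqref{eq:batch-variance-risk} follows from Theorem~\ref{thm:potential} and Lemma~\ref{lem:growth} with a fixed target. Last comes the allocation bound \eqref{eq:batch-variance-paid-energy}, which uses the reserve and surplus structure of Lemma~\ref{lem:paid-accounting}.

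\emph{Identity.} Expand $\|e_t^x-e_t^y\|^2=\|e_t^x\|^2+\|e_t^y\|^2-2\ip{e_t^x}{e_t^y}$. The $x$-history contains $e_t^y$, and $e_t^x$ is conditionally centered given that history, so the tower property kills the cross term. For each stage $j$, condition on $\mathcal F_j$ together with the pilots. The count $B_j$ is then fixed, and the $B_j$ update records are conditionally i.i.d.\ with mean $\nabla\phi(z_j)$ and variance $V_j$. Hence $\E[\|e_j\|^2\mid\cdot]=V_j/B_j$. Taking expectations and weighting by $t^2=\omega_j^2$ gives equality, with no inequality anywhere.

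\emph{Risk.} Apply Theorem~\ref{thm:potential} with $\phi_t\equiv\phi$. The target order holds trivially, and $f\le\phi$ on $Q$ covers the final step $\phi(x_N)\ge f(x_N)$. The biases are zero, so $\E\sum Z_t=0$. For the coefficient, Lemma~\ref{lem:growth} with $L_\circ=L$ gives $H_N-h_0\le(4L-h_0)_++\sqrt{\mathcal E_N}/D$ pathwise, as in its proof. Jensen then gives $\E(H_N-h_0)\le(4L-h_0)_++\sqrt W/D$. Substituting this into $2D^2(\E H_N-h_0)/A$ yields \eqref{eq:batch-variance-risk}. The term $\phi(u)-f(u)$ comes from $\sum_t t[\phi(u)-f(u)]/A$.

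\emph{Allocation.} This is the main obstacle, since the batches $B_j=b_t+k_j$ are random and correlated with $V_j$. Two ingredients help. By Lemma~\ref{lem:paid-accounting}, $B_j\ge b_t\ge\mathsf U t/[4N(N+1)]$, and $N(N+1)=2A$, so $\omega_j^2/B_j\le 8A\,\omega_j/\mathsf U$. Summing gives
\[
W\le \frac{8A}{\mathsf U}\E\sum_j\omega_jV_j=\frac{16A^2\overline{\mathcal V}_N}{\mathsf U}.
\]
This uses only the reserve floor, which matches the constant $16$. I would first check that the floor alone suffices, as it appears to; if so, the surplus rule is irrelevant to this bound. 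The final claim follows from $2D\sqrt W/A\le 8D\sqrt{\overline{\mathcal V}_N/\mathsf U}$. Resource counts are quoted from Lemma~\ref{lem:paid-accounting}.
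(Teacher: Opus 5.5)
Your proposal is correct and follows the paper's proof essentially step for step. It uses the same cross-term cancellation and conditional-variance identity given the enlarged pilot history, the fixed-target potential with the pathwise form of Lemma~\ref{lem:growth} plus Jensen, and only the reserve floor $B_j\ge b_t\ge\mathsf U\omega_j/(8A)$ for \eqref{eq:batch-variance-paid-energy}; the surplus rule is indeed irrelevant there. The one cosmetic difference is that the paper obtains the risk bound from Theorem~\ref{thm:stochastic-trajectory-potential}, which makes the integrability checks explicit, whereas you apply Theorem~\ref{thm:potential} with constant targets.
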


\begin{proof}
Condition on the enlarged history after the current pilots. Its count
is fixed, and conditional independence and centering of the update
records imply
$\mathbb E[\|e_j\|^2\mid\text{enlarged history}]=V_j/B_j$.
Before the $x$ update, $e_t^y$ is known. Conditional centering of the
$x$ update therefore gives
$\mathbb E\langle e_t^x,e_t^y\rangle=0$. Expand each square and take
expectations to obtain \eqref{eq:batch-variance-exact}.

The recurrence \eqref{eq:batch-variance-scalar} is the fixed-target
execution of Theorem~\ref{thm:stochastic-trajectory-potential}.
Its two conditional-mean assumptions hold with the respective histories
before each update batch, including the current pilots and count.
Lemma~\ref{lem:growth}, with $L_\circ=L$, gives
\[
 H_N-h_0\le(4L-h_0)_+
 +\frac1D\sqrt{\sum_{t=1}^Nt^2\|e_t^x-e_t^y\|^2}.
\]
The assumed second moments make this upper bound integrable.
The target is continuous on compact $Q$, so its objective differences
are also integrable. Thus all assumptions of
Theorem~\ref{thm:stochastic-trajectory-potential} are satisfied.
Apply that theorem, use Jensen's inequality and
\eqref{eq:batch-variance-exact}, and then use $f\le\phi$ to obtain
\eqref{eq:batch-variance-risk}.

For the adaptive batch rule, $\mathsf U\ge4N$ and
$1+\lfloor z\rfloor\ge z$ imply
$B_j\ge b_t\ge\mathsf U t/[4N(N+1)]=\mathsf U \omega_j/(8A)$.
Substitution into \eqref{eq:batch-variance-exact} gives
\[
 W\le\frac{8A}{\mathsf U}\mathbb E\sum_j\omega_jV_j
 =\frac{16A^2\overline{\mathcal V}_N}{\mathsf U}.
\]
The baseline total satisfies
$2\sum_tb_t\le\mathsf U/2+N\le3\mathsf U/4$.
All extras are nonnegative, and the remaining surplus is allocated
at final $y$; hence the update counts sum to $\mathsf U$.
Two pilots at each of $2N$ centers require $4N$ sampled functions, giving exactly
$T$ sampled functions and $2T$ values. Each iteration has a $y$ pilot stage, a $y$
update stage, an $x$ pilot stage, an $x$ update stage, and one projection.
\end{proof}

\begin{corollary}[Fixed batch sizes]
\label{cor:batch-variance-public}
With the fixed batch sizes
$B_t^y=B_t^x=1+\lfloor(T-2N)t/[N(N+1)]\rfloor$ and $T\ge4N$,
the same actual-trajectory identity satisfies
\[
 W\le\frac{8A^2\overline{\mathcal V}_N}{T}.
\]
The variance term in \eqref{eq:batch-variance-risk} is at most
$4\sqrt2D\sqrt{\overline{\mathcal V}_N/T}$.
The execution uses at most $T$ sampled functions and $2T$ values, $2N$ sampling
stages, and $N$ projections.
\end{corollary}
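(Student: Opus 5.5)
The plan is to reuse Theorem~\ref{thm:batch-variance-actual} and replace only the batch-count step. The identity \eqref{eq:batch-variance-exact} needs only two facts: each count is fixed before its update records are drawn, and the $x$ update is conditionally centered given the $y$ error. A fixed schedule satisfies both trivially, since $B_j$ is deterministic and chosen before any sampling. Likewise \eqref{eq:batch-variance-risk} relies only on the scalar recurrence, Theorem~\ref{thm:stochastic-trajectory-potential} and Lemma~\ref{lem:growth}. Both transfer verbatim, with the pilot stages removed and the histories $\mathcal F_j$ containing only the center and radius.

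The key estimate is a deterministic lower bound on the counts in terms of the weights $\omega_j$. From $1+\floor z\ge z$ and $T-2N\ge T/2$ (valid for $T\ge4N$) I get
\[
 B_j\ge\frac{(T-2N)t}{N(N+1)}\ge\frac{Tt}{2N(N+1)}=\frac{T\omega_j}{4A},
\]
using $N(N+1)=2A$. Substituting into \eqref{eq:batch-variance-exact} gives
\[
 W\le\frac{4A}{T}\,\E\sum_{j=1}^{2N}\omega_jV_j=\frac{8A^2\overline{\mathcal V}_N}{T}
\]
by \eqref{eq:batch-variance-average}. The variance term $2D\sqrt W/A$ is then at most $2D\sqrt8\sqrt{\overline{\mathcal V}_N/T}=4\sqrt2D\sqrt{\overline{\mathcal V}_N/T}$, as claimed.

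The resource count repeats the computation in the proof of Corollary~\ref{cor:public}. The sum of the counts is at most $2N+\frac{2(T-2N)}{N(N+1)}\cdot\frac{N(N+1)}2=T$, so at most $T$ sampled functions and $2T$ values are used. There is one sampling stage at $y_t$ and one at $x_t$, giving $2N$ stages, plus one projection per iteration.

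The only step needing care is confirming that the exact identity \eqref{eq:batch-variance-exact} survives. Without pilots, the conditioning in that identity is on the history before each update batch. That history still makes $e_t^y$ measurable before the $x$ batch, so the cross term vanishes exactly as before. I expect no genuine obstacle beyond bookkeeping the constant $4A/T$.
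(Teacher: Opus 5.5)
Your proposal is correct and follows essentially the same route as the paper. The pilot-free history leaves the identity \eqref{eq:batch-variance-exact} intact, and the floor $B_j\ge T\omega_j/(4A)$ gives $W\le 8A^2\overline{\mathcal V}_N/T$, hence the variance term $4\sqrt2D\sqrt{\overline{\mathcal V}_N/T}$; the counts sum to at most $T$, with two update stages and one projection per iteration.
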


\begin{proof}
There are no pilots, so the proof of
\eqref{eq:batch-variance-exact} applies with an empty pilot history.
The floor satisfies
$B_j\ge T\omega_j/[2N(N+1)]=T\omega_j/(4A)$.
Substitution gives
$W\le(4A/T)\mathbb E\sum_j\omega_jV_j
=8A^2\overline{\mathcal V}_N/T$.
The sum of the counts is at most
$2N+2(T-2N)\sum_tt/[N(N+1)]=T$.
Each record uses two values; each iteration has two update sampling rounds
and one projection.
\end{proof}

The average \eqref{eq:batch-variance-average} is defined at the actual
centers and conditional record laws. It can be substituted into the risk
bound without replacing those centers by another allocation's trajectory.
It is a population-dependent analysis quantity; the theorem does not
give a data-derived upper confidence bound for the variance under finite second moments.

\subsection{A limitation of the adaptive batch rule under the original oracle}
\label{subsec:batch-variance-paid-limit}

Use the adaptive batch rule with the reserve \eqref{eq:batch-variance-reserve}
and surplus $S_{\mathrm{sur}}=\mathsf U-2\sum_tb_t$.
Its pilot statistic is
$\widehat v_j=\|G_j^1-G_j^2\|^2/2$, its allocation signal is
$\lambda_j=\omega_j\sqrt{\widehat v_j}$, and its cumulative signal is
$\Lambda_j=\sum_{i\le j}\lambda_i$. For $j<2N-1$ it allocates
\[
 k_j=\left\lfloor m_j
       \min\{1/2,\lambda_j/(s_0+\Lambda_j)\}\right\rfloor,
 \qquad B_j=b_{t(j)}+k_j.
\]
It allocates all remaining surplus at final $y$ and none at final $x$.
Here $s_0>0$ and $m_j$ is the remaining surplus.
For the variance sequence of its actual trajectory, define
\begin{equation}
 \mathcal C_\star^{\mathrm{tr}}(v)
 =\min_{\substack{n_j\ge b_{t(j)}\ {\rm integer}\\
                  \sum_jn_j=\mathsf U}}
       \sum_jv_j/n_j,\qquad v_j=\omega_j^2V_j.
 \label{eq:batch-variance-trace-comparator}
\end{equation}
This retrospective allocation benchmark fixes the observed centers and
redistributes only the adaptive policy's surplus. It does not assert that
an optimizer with different counts would expose the same centers.

\begin{proposition}[Exact worst-case ratio of the adaptive batch rule]
\label{prop:batch-variance-paid-limit}
For $N\ge2$, $T\ge8N$, and positive $h_0,s_0$, put
$\mathfrak R_{\mathrm{batch}}=1+S_{\mathrm{sur}}/b_1$.
Every actual trajectory satisfies
\begin{equation}
 \sum_jv_j/B_j
 \le\mathfrak R_{\mathrm{batch}}\mathcal C_\star^{\mathrm{tr}}(v).
 \label{eq:batch-variance-paid-upper}
\end{equation}
There are convex
affine populations on a compact one-dimensional domain, a global iid
sample-function law with a finite-second-moment Lipschitz envelope,
and a legal fixed-radius two-value record such that
\begin{equation}
 \frac{\mathbb E\sum_jv_j/B_j}
      {\mathbb E\mathcal C_\star^{\mathrm{tr}}(v)}
 \ge\frac{\mathfrak R_{\mathrm{batch}}}{2}
 \ge\frac12+\min\left\{\frac{\mathsf U}{16},
                         \frac{N(N+1)}8\right\}.
 \label{eq:batch-variance-paid-loss}
\end{equation}
There is a sequence of these laws, with the same population and a
uniformly bounded second-moment envelope, for which
\begin{equation}
 \lim_{p\downarrow0}
 \frac{\mathbb E\sum_jv_j/B_j}
      {\mathbb E\mathcal C_\star^{\mathrm{tr}}(v)}
 =\mathfrak R_{\mathrm{batch}}.
 \label{eq:batch-variance-paid-limit}
\end{equation}
Thus $\mathfrak R_{\mathrm{batch}}$ is the exact supremum of the ratio
in this oracle class, allowing the compact domain and legal radius
to vary in the matching construction. It satisfies
\begin{equation}
 1+\min\left\{\frac{\mathsf U}8,\frac{N(N+1)}4\right\}
 \le\mathfrak R_{\mathrm{batch}}
 \le1+\min\{\mathsf U,4N(N+1)\}.
 \label{eq:batch-variance-paid-scale}
\end{equation}
It equals $1+2N$ when $T=8N$, and converges to $1+N(N+1)$ as
$\mathsf U\to\infty$ at fixed $N$.
The numerator is exactly $W$ in \eqref{eq:batch-variance-exact}.
\end{proposition}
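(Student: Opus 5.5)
The plan has four parts: a deterministic upper bound, a lower-bound construction, a limit argument, and the scale bounds.

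\emph{Upper bound.} Fix a trajectory, so the $v_j$ are fixed. The adaptive rule satisfies $B_j\ge b_{t(j)}$, and because the baseline is nondecreasing in $t$ we have $b_{t(j)}\ge b_1$. Let $n^\star$ be any feasible comparator allocation. It satisfies $n_j^\star\le b_{t(j)}+S_{\mathrm{sur}}$, since it redistributes only the surplus above the floors. Hence
\[
 \frac{v_j}{B_j}\le\frac{v_j}{b_{t(j)}}
 =\frac{v_j}{n_j^\star}\cdot\frac{n_j^\star}{b_{t(j)}}
 \le\frac{v_j}{n_j^\star}\Bigl(1+\frac{S_{\mathrm{sur}}}{b_{t(j)}}\Bigr)
 \le\mathfrak R_{\mathrm{batch}}\frac{v_j}{n_j^\star}.
\]
Summing over $j$ and minimizing over $n^\star$ gives \eqref{eq:batch-variance-paid-upper}. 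Taking expectations gives the ratio bound in expectation, and the identity \eqref{eq:batch-variance-exact} identifies the numerator with $W$.

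\emph{Lower construction.} I would force the adaptive rule to put almost all of its surplus where variance is zero, while the true variance sits at stage $j=1$ with weight $\omega_1=1$.

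Take $Q=[0,D]$ with the affine population $f(x)=cx$. Choose the sample law $F(x,\xi)=cx+Z\,x$, where $Z$ is rare and heavy: $Z=\pm K$ with probability $p/2$ each and $Z=0$ otherwise. The two-point record at a fixed legal radius is then $d(c+Z)e$ in $d=1$, so every stage has the same conditional variance $V=K^2p$ and a finite second-moment envelope $K^2p$ (keep $K^2p$ bounded).

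With probability $1-O(Np)$ all pilots see $Z=0$, so $\widehat v_j=0$, $\lambda_j=0$, and every $k_j=0$ for $j<2N-1$. The whole surplus then goes to final $y$, where $v_{2N-1}=N^2V$ is the largest weight. That is not bad for this rule.

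To make it bad I would instead make the variance nonuniform in $x$, using the fact that the centers are known in advance. Take $F(x,\xi)=cx+Z\,\psi(x)$ with $\psi$ affine on a region, so that the variance is large only at the first center $y_1=x^0$. The iterates move deterministically because the drift is exact except on rare events. I would arrange $V_j$ concentrated at $j=1$ and zero elsewhere. Then the adaptive allocation gives $B_1=b_1$, while the comparator gives $n_1=b_1+S_{\mathrm{sur}}$. The ratio is $\mathfrak R_{\mathrm{batch}}$ on the good event. The failure event has probability $O(Np)$, and accounting for it loses at most the factor $2$ in \eqref{eq:batch-variance-paid-loss}. Sending $p\downarrow0$ with a bounded envelope gives the limit \eqref{eq:batch-variance-paid-limit}.

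\emph{Main obstacle.} The delicate point is keeping the sample law global and iid with a legal fixed-radius record while making the variance vanish at all later centers. I would handle this with a $\psi$ that is constant outside a small neighborhood of $x^0$ smaller than the first step, checking that $y_1$ and the later centers are separated by more than $2\gamma$.

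\emph{Scale bounds.} The bounds \eqref{eq:batch-variance-paid-scale} are arithmetic, using $b_1=1+\lfloor(\mathsf U-2N)/(2N(N+1))\rfloor$ and $\mathsf U/4\le S_{\mathrm{sur}}\le\mathsf U$ from Lemma~\ref{lem:paid-accounting}. There are two cases. When $\mathsf U-2N<2N(N+1)$ we have $b_1=1$, so the ratio is of order $\mathsf U$. Otherwise $b_1\approx\mathsf U/(2N(N+1))$, so the ratio is of order $N(N+1)$. The special value at $T=8N$ is checked directly: there $\mathsf U=4N$, $b_t=1$ and $S_{\mathrm{sur}}=2N$, giving $1+2N$. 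The limit as $\mathsf U\to\infty$ follows from $S_{\mathrm{sur}}/b_1\to N(N+1)$.
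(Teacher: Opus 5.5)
Your proposal is essentially the paper's proof. It uses the same floor argument ($n_j\le b_{t(j)}+S_{\mathrm{sur}}$, $B_j\ge b_{t(j)}\ge b_1$) for the pathwise bound, and the same rare $\pm\sqrt{V/p}$ noise multiplied by a bump supported within a few $\gamma$ of $y_1=x^0$ (the paper takes $f=-mx$ and $x^0=0$, so every later center is at least $q_0=m/h_0\ge8\gamma$ away). As a result the pilots see zero and the whole surplus lands at the zero-variance final $y$, and the scale bounds follow from the same arithmetic.

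The one step you only assert is the factor $2$, and it needs two repairs:
\begin{itemize}
\item The good event must cover every sample drawn at $y_1$, not just the $O(N)$ pilots, because a nonzero update record there moves the trajectory. The paper simply requires all $T$ samples to vanish, which has probability $(1-p)^T$.
\item Off that event you need a trajectory-uniform bound such as $\mathcal C_\star^{\mathrm{tr}}(v)\le\sum_j\omega_j^2V/b_{t(j)}\le16VA_N^2/\mathsf U$, which follows from $V_j\le V$ and the reserve floors.
\end{itemize}
With these, choosing $pT\le1/(32A_N^2)$ gives the factor $2$, and letting $p\downarrow0$ gives the limit.
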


\begin{proof}
Every feasible trace-comparator count satisfies
$n_j\le b_{t(j)}+S_{\mathrm{sur}}$, and $B_j\ge b_{t(j)}\ge b_1$.
Consequently $n_j/B_j\le1+S_{\mathrm{sur}}/b_1$.
Multiply by $v_j/n_j$, sum, and minimize over comparator counts to
obtain \eqref{eq:batch-variance-paid-upper}.

For the matching example, choose $m>0$, put $q_0=m/h_0$, and let
$Q=[0,D]$ with $D>q_0A_N$.
The population is $f(x)=-mx$, whose ball smoothing equals itself.
On a history of records all equal to $-m$, all pilot differences
vanish, the coefficient stays $h_0$, and no surplus is allocated
before final $y$. Projection is inactive. Directly from the scalar
step and averaging recurrence,
\begin{equation}
 v_t=q_0A_t,\qquad
 x_t=q_0(3t^2+7t+2)/12,\qquad
 y_t=x_t-\frac{2q_0t}{t+1}.
 \label{eq:batch-variance-affine-trajectory}
\end{equation}
Indeed, $v_t-v_{t-1}=q_0t$ and
$A_tx_t=\sum_{s\le t}sv_s$; the square and cube sums give the formula
for $x_t$, and $A_t(x_t-y_t)=t(v_t-v_{t-1})$ gives $y_t$.
For $t\ge3$,
\[
 y_t-x_{t-1}=\frac{q_0(3t-1)(t-2)}{6(t+1)}>0,
\]
while $x_t-y_t>0$, $y_1=0$, and $y_2=x_1=q_0$.
Thus the common centers have order
$y_1<x_1=y_2<x_2<y_3<x_3<\cdots<y_N<x_N$.
The initial $y$ center $c=y_1=x^0=0$ is unique among these centers.
Every other common center is at distance at least $q_0$ from $c$.
Choose a legal fixed $\gamma>0$ with $8\gamma\le q_0$.
Function values below are globally defined, so the required evaluation
domain is available.

For $r=x-c$ define
\[
 \psi_c(x)=
 \begin{cases}
 r,& |r|\le2\gamma,\\
 4\gamma-r,&2\gamma\le r\le4\gamma,\\
 -4\gamma-r,&-4\gamma\le r\le-2\gamma,\\
 0,&|r|\ge4\gamma.
 \end{cases}
\]
The formulas agree at their endpoints and have slopes of magnitude
at most one. Hence $\psi_c$ is globally $1$-Lipschitz.
Its symmetric secant at $c$ has slope one. Every other common center
has both radius-$\gamma$ endpoints outside the support, so its secant
has slope zero.

Fix $V>0$ and let the independently sampled variable $Z_p$ equal zero with probability
$1-p$ and equal $\pm\sqrt{V/p}$ with probability $p/2$ each.
Define the single time-independent sample function
\[
 F(x,Z_p)=-mx+Z_p\psi_c(x).
\]
Its mean is $f$, and its Lipschitz envelope satisfies
$\mathbb E(m+|Z_p|)^2\le2m^2+2V$ uniformly in $p$.
Population convexity is satisfied, and samplewise convexity is not
required. The one-dimensional symmetric record uses one sampled function
at exactly two points and equals
\[
 G_\gamma(z;Z_p)=-m+Z_ph(z),\qquad
 h(z)=\frac{\psi_c(z+\gamma)-\psi_c(z-\gamma)}{2\gamma}.
\]
It is independent of the sign of the spherical direction.
Its conditional mean is $-m$ and its conditional variance is
$Vh(z)^2\le V$ at every center.

Let $E_0$ be the event that all $T$ samples have $Z_p=0$, and put
$q_p=(1-p)^T$. On this event the complete trajectory and allocation
equal the common history. The only center on that history with
nonzero conditional variance is $c=y_1$; its weight is one.
Consequently
\begin{equation}
 \mathbb E\sum_jv_j/B_j\ge q_pV/b_1.
 \label{eq:batch-variance-loss-numerator}
\end{equation}
On $E_0$, the trace comparator places all surplus at $c$, so its
value is $V/(b_1+S_{\mathrm{sur}})$.
On every trajectory, a feasible redistribution can only improve on
the baseline cost. Since $V_j\le V$ and $b_t\ge\mathsf Ut/(8A_N)$,
\[
 \mathcal C_\star^{\mathrm{tr}}
 \le\sum_j \omega_j^2V_j/b_{t(j)}
 \le16VA_N^2/\mathsf U.
\]
It follows that
\begin{align}
 \mathbb E\mathcal C_\star^{\mathrm{tr}}
 &\le\frac{q_pV}{b_1+S_{\mathrm{sur}}}
 +(1-q_p)\frac{16VA_N^2}{\mathsf U}.
 \label{eq:batch-variance-loss-denominator}
\end{align}
Choose $p>0$ with $pT\le1/(32A_N^2)$.
Then $1-q_p\le pT$ and $q_p\ge1/2$. The second term in
\eqref{eq:batch-variance-loss-denominator} is at most
$V/(2\mathsf U)$, which is at most the first term since
$b_1+S_{\mathrm{sur}}\le\mathsf U$.
Dividing \eqref{eq:batch-variance-loss-numerator} by
\eqref{eq:batch-variance-loss-denominator} proves the first inequality
of \eqref{eq:batch-variance-paid-loss}.
The baseline total is at most
$\mathsf U/2+N\le3\mathsf U/4$, so $S_{\mathrm{sur}}\ge\mathsf U/4$.
Also $b_1\le1+\mathsf U/[2N(N+1)]$, and hence
\[
 \frac{S_{\mathrm{sur}}}{b_1}
 \ge\frac{\mathsf U}{4+2\mathsf U/[N(N+1)]}
 \ge\min\left\{\frac{\mathsf U}8,\frac{N(N+1)}4\right\}.
\]
The reserve lower bound gives $b_1\ge\mathsf U/[4N(N+1)]$.
Together with $b_1\ge1$ and $S_{\mathrm{sur}}\le\mathsf U$, this
also proves the upper bound in \eqref{eq:batch-variance-paid-scale}.
Letting $p\downarrow0$ in the numerator and denominator bounds gives
a lower limit of at least $\mathfrak R_{\mathrm{batch}}$.
Equation \eqref{eq:batch-variance-paid-upper} bounds the upper limit
by the same value, proving \eqref{eq:batch-variance-paid-limit}.
The population and uniform second-moment envelope stay fixed.
When $\mathsf U=4N$, every $b_t=1$ and $S_{\mathrm{sur}}=2N$.
For fixed $N$ as $\mathsf U\to\infty$,
$S_{\mathrm{sur}}/\mathsf U\to1/2$ and
$b_1/\mathsf U\to1/[2N(N+1)]$, giving the final limit.
\end{proof}

The predictable variance cost can be positive on the event where
all realized update errors vanish. The equality to $W$ is the
unconditional identity \eqref{eq:batch-variance-exact}.
The proposition establishes a limitation of the variance allocation achieved by
the specified adaptive batch rule. It does not provide a lower bound on convex
optimization risk or a minimax impossibility for adaptive batching.

\section{The price of joint adaptation}\label{sec:app-obstructions}
The rare-sample construction of \citet[Section~1.1 and Theorem~3]{CarmonHinder2024} explains a basic difficulty in learning distance and noise simultaneously: an unobserved sample can change both the optimizer and the noise scale. The original-distance constraints in Appendices~\ref{sec:universal-distance-frontier} and~\ref{subsec:prestep-rare-curvature} apply this transcript-coupling argument to the two-value oracle. We first record the corresponding limitation for variance estimates from finitely many preliminary samples.

\begin{proposition}[A finite sample cannot bound an unknown variance uniformly]\label{prop:variance-certificate}
Fix $k<\infty$ and $\alpha\in(0,1)$. No statistic that is finite almost surely on the all-zero sample, including a randomized statistic, can be an upper confidence bound with coverage $1-\alpha$ for the variance of every real distribution having finite second moment, using $k$ independent observations.
\end{proposition}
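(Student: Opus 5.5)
The plan is a contamination argument in the spirit of \citet{CarmonHinder2024}. We show that a statistic finite on the all-zero sample must fail coverage for some finite-variance law that almost always produces that sample.

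Let $\widehat U$ be any statistic built from $k$ independent observations, possibly with external randomization $\omega$ independent of the data. Suppose $\widehat U(0,\ldots,0;\omega)<\infty$ almost surely. Since this finite random variable is tight, there is a finite level $c$ with
\[
 \Prb_\omega\bigl(\widehat U(0,\ldots,0;\omega)\le c\bigr)\ge 1-\alpha/4 .
\]

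Next fix $p\in(0,1)$ and a scale $a>0$. Define the law $P_{p,a}$ by
\[
 X=0 \text{ with probability } 1-p, \qquad X=\pm a \text{ with probability } p/2 \text{ each}.
\]
It has mean zero and variance $pa^2<\infty$, so it lies in the class. Under $P_{p,a}$ the sample is all zero with probability $(1-p)^k$. Choose $p$ small so that $(1-p)^k\ge1-\alpha/4$. Then choose $a$ with $pa^2>c$, say $a=\sqrt{2c/p}+1$.

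On the intersection of the all-zero sample event with the event $\{\widehat U(0,\ldots,0;\omega)\le c\}$, the statistic reports a value at most $c$. That value is strictly below the true variance $pa^2$. Because $\omega$ is independent of the data, this intersection has probability at least
\[
 (1-\alpha/4)^2\ge1-\alpha/2 .
\]
Hence the coverage probability under $P_{p,a}$ is at most $\alpha/2<1-\alpha$ whenever $\alpha<2/3$.

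For general $\alpha\in(0,1)$ I would sharpen the margins. Take $c$ so that the all-zero report is at most $c$ with probability $\ge1-\eta$, and take $(1-p)^k\ge1-\eta$. Then coverage is at most $1-(1-\eta)^2\le2\eta$. Choosing $\eta<(1-\alpha)/2$ gives coverage strictly below $1-\alpha$, a contradiction for every $\alpha\in(0,1)$.

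I expect the main obstacle to be making the randomized-statistic case precise. The fix is to condition on the data event, which has positive probability. Then use independence of $\omega$, so the conditional law of $\widehat U$ given the all-zero sample is exactly the law of $\widehat U(0,\ldots,0;\omega)$. Tightness of that law supplies $c$, and $c$ does not depend on $p$ or $a$. Choosing $c$ before $p$ and $a$ is what makes the argument go through.
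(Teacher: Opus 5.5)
Your proof is correct and is essentially the paper's argument. Fix a finite level from almost-sure finiteness of the statistic on the all-zero sample, then contaminate with a rare large value so the all-zero sample stays likely while the variance exceeds that level. The paper only needs the intersection probability $\Prb(\widehat U\le c)\,(1-p)^k$ to exceed $\alpha$, which removes your preliminary $\alpha<2/3$ case; also take $c\ge0$ so that $\sqrt{2c/p}$ is defined.
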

\begin{proof}
Let $U$ denote the proposed bound. On the all-zero sample choose a finite $K$ with $p_0=\Prb(U\le K)>\alpha$; such a $K$ exists by almost-sure finiteness. Choose $p\in(0,1)$ small enough that $(1-p)^k p_0>\alpha$. Consider a variable equal to zero with probability $1-p$ and $A$ with probability $p$. Its variance is $p(1-p)A^2$, which is finite and exceeds $K$ for sufficiently large finite $A$. The probability of observing only zero and producing $U\le K$ is $(1-p)^k p_0>\alpha$. On this event the proposed bound is smaller than the true variance. This contradicts the required coverage.
\end{proof}
The batch policy uses preliminary differences to allocate the samples remaining after the minimum batch sizes have been reserved. Its guarantee follows from these minimum batch sizes and conditional centering of update estimates formed from subsequent independent samples; it does not require an upper confidence bound on the variance.

\section{Data-selected smoothing and trajectory curvature}
\label{sec:app-smoothing}

We use the same-sample two-value oracle and the scalar accelerated updates from the main text. All norms in this appendix are Euclidean. We write $a_t=t$, $A_t=t(t+1)/2$, $r_t=\|v_t-v_{t-1}\|$, and $H_0=h_0>0$. The known diameter bound is $D$. The radius is chosen before the first vector batch of an iteration and remains fixed for its first- and second-center vector batches. Each batch uses functions and directions sampled independently of the preceding history. Each sampled function is used in one record; diagnostics and update means use separate batches.

\subsection{Radius selection and complete resource accounting}
\label{subsec:smoothing-accounting}

Let $\gamma_{\max}$ be the permitted evaluation distance from $Q$ and define
\begin{equation}
 \bar\gamma=\min\{\gamma_{\max},d^{1/4}D/N\},\qquad
 \underline\gamma=\bar\gamma/2,\qquad \gamma_0=\bar\gamma.
 \label{eq:protected-radius-range}
\end{equation}
At the beginning of iteration $t$, the point $y_t$ is determined by the preceding state. Draw a uniform ball direction $U_t$ and a sampled function independently of each other and of the preceding history, and observe
\begin{equation}
 V_t=\widetilde F(y_t+\gamma_{t-1}U_t,\xi_t)
       -\widetilde F(y_t+\gamma_{t-1}U_t/2,\xi_t).
 \label{eq:radius-diagnostic-app}
\end{equation}
For a reference $s_\gamma>0$ chosen before sampling, set
\begin{equation}
 q_t=\frac{[V_t]_+}{s_\gamma+[V_t]_+},\qquad
 \gamma_t=\max\{\underline\gamma,\gamma_{t-1}/(1+q_t)\}.
 \label{eq:radius-policy-app}
\end{equation}
Both endpoints in \eqref{eq:radius-diagnostic-app} lie in the evaluation domain. They use exactly two values of the same sampled function. A positive observed scale difference requests a smaller radius; the lower radius bound chosen before sampling protects the subsequent finite-difference estimates.

Take $T\ge10N$ and reserve the $N$ radius pairs and $4N$ vector-pilot pairs. Put
\begin{equation}
 \mathsf U=T-5N,\qquad
 B_t^0=1+\left\lfloor\frac{(\mathsf U-2N)t}{2N(N+1)}\right\rfloor,
 \qquad m_{\mathrm{sur}}=\mathsf U-2\sum_{t=1}^N B_t^0.
 \label{eq:smoothing-budget-app}
\end{equation}
At each center $j=2t-1,2t$, after the radius has been selected, obtain two independent vector pilots $G_j^1,G_j^2$ at that center and radius. Starting from $\Lambda_0=0$ and $m_1=m_{\mathrm{sur}}$, define
\[
 \widehat v_j=\tfrac12\|G_j^1-G_j^2\|^2,\qquad
 \lambda_j=t\sqrt{\widehat v_j},\qquad
 \Lambda_j=\Lambda_{j-1}+\lambda_j.
\]
For $s_0>0$ chosen before sampling, spend
\[
 k_j=\left\lfloor m_j\min\{1/2,\lambda_j/(s_0+\Lambda_j)\}\right\rfloor
 \quad(j<2N-1),\qquad
 k_{2N-1}=m_{2N-1},\qquad k_{2N}=0.
\]
Set $m_{j+1}=m_j-k_j$ and $B_j=B_t^0+k_j$, and obtain the update mean of size $B_j$. The final surplus is spent before the final projected step. The terminal second-center mean receives its protected count and is also charged.

\begin{lemma}[Sampling costs and monotone radii]
\label{lem:smoothing-accounting}
The specified execution uses exactly $T$ original pairs, $2T$ scalar values, $5N$ sequential stochastic stages, and $N$ projections. Its radii are nonincreasing and lie in $[\underline\gamma,\bar\gamma]$. Each clean update mean is conditionally unbiased for the gradient of its selected smoothed objective at its chosen center. Moreover,
\begin{equation}
 B_j\ge B_t^0\ge\frac{\mathsf U t}{4N(N+1)},\qquad \mathsf U\ge T/2.
 \label{eq:radius-batch-lower-app}
\end{equation}
\end{lemma}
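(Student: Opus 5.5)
The argument splits into three parts: counting, the radius properties, and conditional unbiasedness. It follows the proof of Lemma~\ref{lem:paid-accounting}, with one extra diagnostic pair per iteration.

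\emph{Counting.} Since $T\ge10N$, we have $\mathsf U=T-5N\ge5N\ge4N$, and also $\mathsf U\ge T/2$ because $5N\le T/2$. As in \eqref{eq:paid-reserve-cost-app}, the reserve costs
\[
2\sum_tB_t^0\le2N+(\mathsf U-2N)/2=\mathsf U/2+N\le3\mathsf U/4,
\]
so $m_{\mathrm{sur}}\ge\mathsf U/4\ge0$. For $j<2N-1$, each $k_j$ is an integer in $[0,m_j/2]$. Step $2N-1$ spends the remainder and $k_{2N}=0$. Hence every $m_j\ge0$, $\sum_jk_j=m_{\mathrm{sur}}$, and the update counts sum to exactly $\mathsf U$. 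Adding the $N$ radius pairs and the $4N$ pilot pairs gives exactly $T$ pairs, and each pair contributes two values, so $2T$ values in total. For stages: each iteration first runs the radius diagnostic (its center $y_t$ depends on the previous state), then the $y$ pilots, the $y$ update, the $x$ pilots, and the $x$ update. Each stage depends on the preceding one, which gives $5N$ sequential stages and one projection per iteration. The batch lower bound \eqref{eq:radius-batch-lower-app} follows from $1+\lfloor z\rfloor\ge z$ and $\mathsf U-2N\ge\mathsf U/2$, exactly as in \eqref{eq:paid-reserve-lower-app}, together with $k_j\ge0$.

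\emph{Radii.} We argue by induction from $\gamma_0=\bar\gamma$. Since $q_t\in[0,1)$, we have $\gamma_{t-1}/(1+q_t)\le\gamma_{t-1}$. Also $\underline\gamma\le\gamma_{t-1}$ by the induction hypothesis, so $\gamma_t=\max\{\underline\gamma,\gamma_{t-1}/(1+q_t)\}\le\gamma_{t-1}\le\bar\gamma$, and $\gamma_t\ge\underline\gamma$ holds trivially. This shows the radii are nonincreasing and stay in $[\underline\gamma,\bar\gamma]$. Because $\bar\gamma\le\gammaMax$ and all centers lie in $Q$, every evaluation point lies in the evaluation domain, which includes the diagnostic endpoints at radii $\gamma_{t-1}$ and $\gamma_{t-1}/2$.

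\emph{Unbiasedness.} Condition on the history up to the moment the batch size $B_j$ is fixed. This history includes the diagnostic $V_t$, the selected $\gamma_t$, the center, and the pilots. The update records use functions and directions drawn independently of this history. By Lemma~\ref{lem:oracle-moments} (or Assumption~\ref{ass:vector}), applied at the now-deterministic center and radius $\gamma_t\in(0,\gammaMax]$, each clean record has conditional mean $\nabla f_{\gamma_t}$ at that center, and the average inherits this.

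The only delicate point, and I expect it to be the main obstacle, is stating this conditioning carefully. The radius and the count are random and depend on data, so one must use freezing: an independent draw evaluated at history-measurable parameters has conditional law equal to the fixed-parameter law. Once that is stated, everything else is bookkeeping.
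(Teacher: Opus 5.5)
Your proposal is correct and follows the paper's proof essentially step for step. It uses the same reserve and surplus accounting with $\mathsf U=T-5N\ge T/2\ge4N$, the same induction on $\gamma_t=\max\{\underline\gamma,\gamma_{t-1}/(1+q_t)\}$ using $0\le q_t<1$, and the same freezing argument for conditional unbiasedness (center, radius and count fixed by the completed history, update records drawn independently of it).

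One small caveat: the $5N$ count is just the five scheduled rounds per iteration, so you need not claim that every stage depends on the preceding one. That claim is not literally true across iterations, because the diagnostic at $y_{t+1}$, with radius $\gamma_t$, uses nothing from the $x$-update batch of iteration $t$.
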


\begin{proof}
Since $0\le q_t<1$, \eqref{eq:radius-policy-app} gives
$\underline\gamma\le\gamma_t\le\gamma_{t-1}\le\bar\gamma$ by induction. The resource condition gives $\mathsf U\ge T/2\ge5N\ge4N$. Summing the protected counts yields
\[
 2\sum_tB_t^0\le2N+(\mathsf U-2N)/2=\mathsf U/2+N\le3\mathsf U/4.
\]
Thus $m_{\mathrm{sur}}\ge \mathsf U/4\ge0$. Every surplus expenditure is an integer between zero and the remaining account. The account is exhausted at the last first center, so the update means consume $2\sum_t B_t^0+m_{\mathrm{sur}}=\mathsf U$ pairs. The $N$ radius diagnostics and $4N$ vector pilots give a total of $T$ pairs, each consisting of two scalar values.

The five successive stages in each iteration are the radius diagnostic, the first-center vector pilots, the first-center update mean, the second-center vector pilots, and the second-center update mean. Each stage uses functions and directions sampled independently of the completed history. Conditional on the completed radius and relevant vector diagnostics, the current center, radius, and count are fixed; the clean update mean is therefore conditionally unbiased for its smoothed gradient. The projected update uses one projection per iteration. Finally, $1+\lfloor z\rfloor\ge z$ and $\mathsf U-2N\ge \mathsf U/2$ prove \eqref{eq:radius-batch-lower-app}.
\end{proof}

The clean conditional mean of \eqref{eq:radius-diagnostic-app} is
$f_{\gamma_{t-1}}(y_t)-f_{\gamma_{t-1}/2}(y_t)$. For convex $f$, it lies between one half and one times the smoothing bias at $y_t$. This follows by applying even convexity to $r\mapsto\mathbb E_U f(y_t+rU)$: its value at $\gamma/2$ lies between its value at zero and the midpoint of its values at zero and $\gamma$. The diagnostic can therefore inform scale selection, while the theorem accounts separately for the smoothing bias at the comparator.

\subsection{Actual secant curvature}
\label{subsec:secant-growth}

For the current differentiable convex target $\phi_t$, write
$p_t^y=\nabla\phi_t(y_t)$, $p_t^x=\nabla\phi_t(x_t)$, and
$e_t^\iota=g_t^\iota-p_t^\iota$ for $\iota\in\{y,x\}$. Define
\begin{equation}
 \kappa_t=\begin{cases}
 A_t\langle p_t^x-p_t^y,x_t-y_t\rangle/r_t^2,&r_t>0,\\
 0,&r_t=0,
 \end{cases}
 \qquad
 \mathcal C_N=\sum_{t=1}^N\frac{r_t^2}{D^2}
                   [\kappa_t-H_{t-1}/4]_+,
 \label{eq:secant-charge-app}
\end{equation}
and $\mathcal E_N=\sum_{t=1}^Nt^2\|e_t^x-e_t^y\|^2$. Convexity gives $\kappa_t\ge0$. These quantities describe the actual execution; they are used only in the analysis.

\begin{lemma}[Secant-curvature growth]
\label{lem:secant-growth}
For every execution,
\begin{align}
 H_N&\le\mathcal C_N+
       \sqrt{\mathcal C_N^2+h_0^2+2\mathcal E_N/D^2},
       \label{eq:secant-growth-exact}\\
 H_N-h_0&\le2\mathcal C_N+\sqrt{2\mathcal E_N}/D.
       \label{eq:secant-growth-simple}
\end{align}
If all targets are $L_*$-smooth, then also
\begin{equation}
 H_N-h_0\le4L_*+\sqrt{\mathcal E_N}/D.
 \label{eq:secant-growth-uniform}
\end{equation}
\end{lemma}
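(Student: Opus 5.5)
The plan is to derive a one-step inequality from the exact balance identity \eqref{eq:balance-app}, multiply it by $H_t$ so that it telescopes in $H_t^2$, and then solve the resulting quadratic inequality for $H_N$. Write $\varepsilon_t=\|e_t^x-e_t^y\|$, $\delta_t=H_t-H_{t-1}\ge0$ and $c_t=(r_t^2/D^2)[\kappa_t-H_{t-1}/4]_+$, so that $\mathcal C_N=\sum_tc_t$.

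\emph{Step 1 (splitting the curvature estimate).} Substituting $g_t^\iota=p_t^\iota+e_t^\iota$ and using the center identity $A_t(x_t-y_t)=t(v_t-v_{t-1})$, we get
\[
 A_t\widehat\beta_t=\kappa_tr_t^2+t\langle e_t^x-e_t^y,v_t-v_{t-1}\rangle\le\kappa_tr_t^2+t\varepsilon_tr_t .
\]
When $r_t=0$, both sides vanish. The balance identity \eqref{eq:balance-app}, together with $H_t\ge H_{t-1}$, then gives
\[
 \delta_tD^2\le\Bigl[\bigl(\kappa_t-\tfrac{H_{t-1}}4\bigr)r_t^2+\bigl(t\varepsilon_tr_t-\tfrac{H_t}{4}r_t^2\bigr)\Bigr]_+ .
\]
This uses $H_tr_t^2/2\ge H_{t-1}r_t^2/4+H_tr_t^2/4$. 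The first bracket is at most $D^2c_t$. The second is at most $t^2\varepsilon_t^2/H_t$ by completing the square, since $H_t\ge h_0>0$. Hence
\[
 \delta_t\le c_t+t^2\varepsilon_t^2/(H_tD^2).
\]

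\emph{Step 2 (telescoping in squares).} Since $H_t^2-H_{t-1}^2=\delta_t(H_t+H_{t-1})\le2H_t\delta_t$, Step 1 yields
\[
 H_t^2-H_{t-1}^2\le2H_tc_t+2t^2\varepsilon_t^2/D^2 .
\]
Sum over $t$ and use the monotonicity $H_t\le H_N$. This gives the quadratic inequality
\[
 H_N^2\le h_0^2+2H_N\mathcal C_N+2\mathcal E_N/D^2 .
\]
Its positive root yields \eqref{eq:secant-growth-exact}. Applying $\sqrt{a+b+c}\le\sqrt a+\sqrt b+\sqrt c$ to the square root gives
\[
 H_N\le2\mathcal C_N+h_0+\sqrt{2\mathcal E_N}/D,
\]
which is \eqref{eq:secant-growth-simple}.

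\emph{Step 3 (uniform smoothness).} If every $\phi_t$ is $L_*$-smooth, then \eqref{eq:secant-growth-uniform} is exactly the pathwise consequence stated in the proof of Lemma~\ref{lem:growth} with $L_\circ=L_*$, namely $H_N-h_0\le(4L_*-h_0)_++\sqrt{\mathcal E_N}/D$. That proof uses only $L_\circ$-smoothness of each current target and the balance identity, so it applies verbatim here with time-varying targets.

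The main obstacle is Step 1. Splitting $H_tr_t^2/2$ into a part charged against $\kappa_t$ (with $H_{t-1}$, giving the predictable-looking $\mathcal C_N$) and a part absorbing the noise cross term (with $H_t$, so that the multiplication in Step 2 cancels the $1/H_t$) has to be done with exactly these weights. If the weights are chosen differently, either the $1/4$ in $\mathcal C_N$ or the factor $2$ in $2\mathcal E_N$ would change.
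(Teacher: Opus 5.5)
Your proposal is correct and follows the paper's proof essentially step for step. The shared steps are the split $\kappa_t-H_t/2\le[\kappa_t-H_{t-1}/4]_+-H_t/4$ inside the balance identity, the completing-the-square bound $t^2\varepsilon_t^2/H_t$, multiplication by $H_t+H_{t-1}\le2H_N$ to obtain the quadratic inequality $H_N^2-h_0^2\le2H_N\mathcal C_N+2\mathcal E_N/D^2$, subadditivity of the square root, and the appeal to Lemma~\ref{lem:growth} for the uniform bound.
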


\begin{proof}
Put $\epsilon_t=\|e_t^x-e_t^y\|$. The scalar update's exact balance identity is
\[
 (H_t-H_{t-1})D^2=
 [A_t\langle g_t^x-g_t^y,x_t-y_t\rangle-H_t r_t^2/2]_+.
\]
If the update numerator is nonpositive, both sides are zero. Otherwise multiplication of the increment by $D^2+r_t^2/2$ and rearrangement prove the identity. Since $A_t(x_t-y_t)=t(v_t-v_{t-1})$, its right-hand side is at most
$[(\kappa_t-H_t/2)r_t^2+t\epsilon_t r_t]_+$.
Let $q=[\kappa_t-H_{t-1}/4]_+$. Monotonicity $H_t\ge H_{t-1}\ge h_0$ gives
$\kappa_t-H_t/2\le q-H_t/4$. Thus
\begin{align*}
 (H_t-H_{t-1})D^2
 &\le q r_t^2+[-H_t r_t^2/4+t\epsilon_t r_t]_+\\
 &\le q r_t^2+t^2\epsilon_t^2/H_t.
\end{align*}
The last inequality maximizes a scalar quadratic over $r_t\ge0$. Multiplying by $(H_t+H_{t-1})/D^2$, using $H_t+H_{t-1}\le2H_t\le2H_N$, and summing gives
\[
 H_N^2-h_0^2\le2H_N\mathcal C_N+2\mathcal E_N/D^2.
\]
Solving this quadratic inequality proves \eqref{eq:secant-growth-exact}. Subadditivity of the square root gives \eqref{eq:secant-growth-simple}.

For the uniform bound, apply Lemma~\ref{lem:growth} to the selected
targets with common bound $L_*$. It gives
$\pos{H_N-4L_*}^2\le\pos{h_0-4L_*}^2+\mathcal E_N/D^2$,
and hence $H_N-h_0\le4L_*+\sqrt{\mathcal E_N}/D$.
This proves \eqref{eq:secant-growth-uniform}.
\end{proof}

\subsection{Proof of the learned-radius guarantee}
\label{subsec:proof-learned-radius}

\begingroup
\interdisplaylinepenalty=10000
\paragraph{Proof of Theorem~\ref{thm:learned-radius}.}
Fix $u\in Q$ and write $R_u=\|u-x_0\|$. Let
\[
 \mathcal S_N(u)=\frac1{A_N}\sum_{t=1}^Nt\,
                 \mathbb E[f_{\gamma_t}(u)-f(u)],\qquad
 W=\mathbb E\mathcal E_N.
\]
The radius policy is nonincreasing, so convex symmetric ball smoothing gives $f\le f_{\gamma_t}$ and $f_{\gamma_{t+1}}\le f_{\gamma_t}$ pointwise on $Q$. Lemma~\ref{lem:smoothing-accounting} supplies the required pre-batch measurability and conditional centering of the update errors. The potential from Theorem~\ref{thm:potential}, proved in Appendix~\ref{subsec:proof-potential}, consequently yields
\begin{align}
 \mathbb E[f(x_N)-f(u)]
 \le{}&\frac{h_0R_u^2}{2A_N}
 +\frac{2D^2\mathbb E(H_N-h_0)}{A_N}
 +\mathcal S_N(u)\nonumber\\
 &+\frac D{A_N}\sum_t t\,
       \mathbb E[\|b_t^y\|+\|b_t^x\|],
 \label{eq:learned-radius-potential-app}
\end{align}
where $b_t^\iota$, for $\iota\in\{y,x\}$, is the mean vector bias conditional on that center's completed diagnostics. Martingale cancellation takes place before division by the deterministic $A_N$.

For the symmetric sphere update estimate, bounded spatial corruption gives
\[
 \|b_t^\iota\|\le b:=c_d\Delta/\underline\gamma,
 \qquad
 V:=2\sigma_0^2+2d^2\Delta^2/\underline\gamma^2
\]
as a bound on the centered single-record variance. The bias bound follows by taking a scalar product with a unit vector and bounding the corruption difference by $2\Delta$:
$\|(d/(2\gamma_t))\mathbb E_e[(\delta(z+\gamma_t e)-\delta(z-\gamma_t e))e]\|
\le c_d\Delta/\gamma_t$.
For the variance bound, the corruption vector has norm at most $d\Delta/\gamma_t$; centering cannot increase its second moment, and $\|a+b\|^2\le2\|a\|^2+2\|b\|^2$ proves the asserted $V$.

Every update mean has centered conditional variance at most
$V/B_j$. Conditional centering of the second error removes its inner
product with the first centered error. Their accumulated variance sum
is at most $16VA_N^2/\mathsf U$. The difference of the two mean biases
has norm at most $2b$. Minkowski's inequality in the weighted product
$L_2$ space therefore gives
\[
 \sqrt W\le2b\sqrt{\sum_tt^2}+4A_N\sqrt{V/\mathsf U}.
\]
This argument permits the second bias to depend on the first mean;
only the centered second error is canceled.
Lemma~\ref{lem:growth} and Jensen's inequality bound the curvature
term in \eqref{eq:learned-radius-potential-app} by
$8L_*D^2+2D\sqrt W$. Since $\sqrt{\sum_tt^2}\le A_N$,
the bias and stochastic contributions after division by $A_N$ are
at most
\[
 2Db+2D\sqrt W/A_N
 \le6Db+8D\sqrt{V/\mathsf U}
 \le6Db+16\sigma_0D/\sqrt T
           +16dD\Delta/(\underline\gamma\sqrt T).
\]
We used $\mathsf U\ge T/2$ and
$\sqrt V\le\sqrt2\sigma_0+\sqrt2d\Delta/\underline\gamma$.
Thus
\begin{align}
 \E[f(x_N)-f(u)]\le{}&\frac{h_0R_u^2}{2A_N}
 +\frac{8L_*D^2}{A_N}+\mathcal S_N(u)\nonumber\\
 &+\frac{16\sigma_0D}{\sqrt T}
 +\frac{6c_dD\Delta}{\underline\gamma}
 +\frac{16dD\Delta}{\underline\gamma\sqrt T}.
 \label{eq:learned-radius-uniform-app}
\end{align}
The inequality $A_N\ge N^2/2$ gives the stated curvature term.
Lipschitzness and Jensen give
$0\le f_{\gamma_t}(u)-f(u)\le M\gamma_t\le M\bar\gamma$.
Lemma~\ref{lem:smoothing} supplies
$L_*=\sqrt dM/\underline\gamma$.

When $\Delta=0$, both biases vanish and the clean variance is at most
$\sigma_0^2/B_j$. The same conditional centering gives
$W\le16\sigma_0^2A_N^2/\mathsf U$.
Consequently $2D\sqrt W/A_N\le8\sqrt2\sigma_0D/\sqrt T$.
Together with the same curvature and comparator terms this proves
the clean guarantee.\hfill$\square$
\endgroup

\subsection{Nonsmooth and smooth specializations}
\label{subsec:smoothing-specializations}

\paragraph{Nonsmooth dimensional order.}
If the clean variance satisfies $\sigma_0^2\le CdM_2^2$ uniformly over the protected radii, then
\[
 \mathbb E[f(x_N)-f(u)]\le
 \frac{h_0R_u^2}{2A_N}
 +\frac{33d^{1/4}MD}{N}
 +\frac{32\sqrt d MD^2}{\gamma_{\max}N^2}
 +\frac{8\sqrt{2Cd}M_2D}{\sqrt T}
\]
in the clean case. If $T\ge d$ and $\gamma_{\max}\ge d^{1/4}D/N$, the corruption terms in Theorem~\ref{thm:learned-radius} sum to at most $44d^{1/4}\Delta N$.
To obtain these constants, the uniform curvature term is
$32\sqrt d MD^2/(\bar\gamma N^2)$ because $\underline\gamma=\bar\gamma/2$. Use
$1/\bar\gamma\le1/\gamma_{\max}+N/(d^{1/4}D)$ and
$\mathcal S_N(u)\le M d^{1/4}D/N$, and substitute the variance envelope into the clean bound. For corruption, $c_d\le\sqrt d$ and $T\ge d$ bound its sum by
$22\sqrt dD\Delta/\underline\gamma$. In the stated evaluation domain regime,
$\underline\gamma=d^{1/4}D/(2N)$, which gives the asserted constant.

\begin{proposition}[Smooth populations]
\label{prop:protected-smoothing-smooth}
If $f$ is differentiable and $L$-smooth on the convex evaluation domain, every $f_\gamma$ is $L$-smooth on $Q$ and
\[
 0\le f_\gamma(u)-f(u)\le L\nu_d\gamma^2/2,
 \qquad \nu_d=d/(d+2).
\]
Thus the learned-radius guarantee applies with
\[
 L_*=\min\{L,\sqrt dM/\underline\gamma\},\qquad
 \mathcal S_N(u)\le\frac{L\nu_d}{2A_N}
                       \sum_t t\,\mathbb E\gamma_t^2.
\]
In particular, its clean smooth bound is
\[
 \mathbb E[f(x_N)-f(u)]\le
 \frac{h_0R_u^2}{2A_N}+\frac{16LD^2}{N^2}
 +\frac{L\nu_d}{2A_N}\sum_t t\,\mathbb E\gamma_t^2
 +\frac{8\sqrt2\sigma_0D}{\sqrt T}.
\]
\end{proposition}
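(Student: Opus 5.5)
The first two claims, $L$-smoothness of every $f_\gamma$ on $Q$ and the bias bound $0\le f_\gamma(u)-f(u)\le L\nu_d\gamma^2/2$, are the smooth case of Lemma~\ref{lem:smoothing}, so I will simply cite it. For completeness, the gradient identity $\nabla f_\gamma(x)=\E\nabla f(x+\gamma U)$ holds because almost every ball point lies in the interior of the evaluation domain, where the extension agrees with $f$. Averaging the Lipschitz bound on $\nabla f$ then gives $L_\gamma\le L$. For the bias, I average the quadratic upper bound $f(x+\gamma U)\le f(x)+\gamma\ip{\nabla f(x)}{U}+L\gamma^2\|U\|^2/2$ and use $\E U=0$ and $\E\|U\|^2=\nu_d$. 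The lower bound $f\le f_\gamma$ is Jensen's inequality.

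To transfer this to the learned-radius guarantee, I revisit the proof of Theorem~\ref{thm:learned-radius} in Appendix~\ref{subsec:proof-learned-radius}. That proof uses only a common gradient-Lipschitz bound $L_*$ for the selected targets $f_{\gamma_t}$, which enters through Lemma~\ref{lem:growth}, and the comparator term $\mathcal S_N(u)$.
\begin{itemize}
\item \textbf{Choice of $L_*$.} Every $\gamma_t$ lies in $[\underline\gamma,\bar\gamma]$ by Lemma~\ref{lem:smoothing-accounting}. So each target has gradient Lipschitz constant at most $L$ (smooth case) and at most $\sqrt dM/\gamma_t\le\sqrt dM/\underline\gamma$ (general case). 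Hence $L_*=\min\{L,\sqrt dM/\underline\gamma\}$ is a valid common bound, and substituting it into Lemma~\ref{lem:growth} leaves the rest of the argument unchanged.
\item \textbf{Bound on $\mathcal S_N(u)$.} I bound each summand pointwise: $f_{\gamma_t}(u)-f(u)\le L\nu_d\gamma_t^2/2$. This holds for each realized $\gamma_t$ because the inequality is deterministic in $\gamma$. Taking expectations and weighting by $t/A_N$ gives the displayed bound on $\mathcal S_N(u)$.
\end{itemize}

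For the clean smooth bound I take $\Delta=0$. The clean branch of the learned-radius proof then gives
\[
h_0R_u^2/(2A_N)+8L_*D^2/A_N+\mathcal S_N(u)+8\sqrt2\sigma_0D/\sqrt T.
\]
Using $L_*\le L$ and $A_N\ge N^2/2$ turns the curvature term into $16LD^2/N^2$, which gives the stated display.

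The only delicate point is measurability. I need the pointwise bias bound to hold for data-dependent $\gamma_t$, which it does because it is uniform in $\gamma$. I also need the smoothness constant to be uniform over all admissible radii, so that Lemma~\ref{lem:growth} applies with one constant along the whole random trajectory. I expect this uniformity to be the step needing care, and it is supplied by the interior-agreement argument of Lemma~\ref{lem:smoothing}, which holds even at the largest legal radius.
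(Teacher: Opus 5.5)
Your proposal is correct and follows essentially the same route as the paper. The paper likewise obtains $\nabla f_\gamma(x)=\E_U\nabla f(x+\gamma U)$, averages the gradient-Lipschitz bound and the quadratic upper model (using $\E U=0$, $\E\norm U^2=\nu_d$, and Jensen for the lower bound), and substitutes the valid smoothness constant and the radius-averaged bias into \eqref{eq:learned-radius-uniform-app}; your remarks on uniformity in $\gamma$ and on using $A_N\ge N^2/2$ only make explicit what the paper leaves implicit.
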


\begin{proof}
Differentiation of the bounded-ball average gives
$\nabla f_\gamma(x)=\mathbb E_U\nabla f(x+\gamma U)$. Smoothness on the evaluation domain implies
\[
 \|\nabla f_\gamma(x)-\nabla f_\gamma(y)\|
 \le\mathbb E\|\nabla f(x+\gamma U)-\nabla f(y+\gamma U)\|
 \le L\|x-y\|.
\]
The smooth upper model gives
$f(u+\gamma U)\le f(u)+\gamma\langle\nabla f(u),U\rangle
+L\gamma^2\|U\|^2/2$. Symmetry makes its linear expectation zero, and Jensen supplies the lower bound. Since $\mathbb E\|U\|^2=\nu_d$, the quadratic bias follows. Average this bound over the selected radii and substitute it and the valid smoothness bound in \eqref{eq:learned-radius-uniform-app}. No higher derivative is required.
\end{proof}

The protected interval provides the uniform upper bound
$A_N^{-1}\sum_t t\mathbb E\gamma_t^2\le\bar\gamma^2\le\sqrt dD^2/N^2$.
The resulting uniform smooth deterministic term is therefore
$(16+\nu_d\sqrt d/2)LD^2/N^2$. The actual-radius statement gives a smaller term whenever the selected weighted squared radius is smaller. The forward-secant controller in Section~\ref{sec:protected-main} permits a wider radius range and removes this extra dimension factor while retaining the nonsmooth $d^{1/4}$ iteration dependence. The protected-interval variant here requires only vector moment bounds; its radius diagnostic needs no separate moment bound.

\subsection{Proof of Theorem~\ref{thm:trajectory}}
\label{subsec:proof-trajectory}
Use the pathwise inequality \eqref{eq:potential}. The integrable total
errors and bounded displacements make its error inner products
integrable. Conditioning at the two histories removes the centered
parts separately, so the expected gap is bounded by the startup term,
$\mathcal S_N(u)$, $2D^2\E(H_N-h_0)/A_N$, and the stated sum of bias norms.
Put $W=\E\mathcal E_N$. Lemma~\ref{lem:secant-growth} gives
\[
 \E(H_N-h_0)\le2\E\mathcal C_N+\sqrt{2W}/D,
 \qquad
 \E(H_N-h_0)\le4L_*+\sqrt W/D.
\]
Jensen bounds the expected square roots. Both inequalities hold
for this same execution and have identical startup, comparator and
bias terms. Multiply each coefficient bound by $2D^2/A_N$ and take
the smaller full numerator. This proves
\eqref{eq:trajectory-risk-main}. Finiteness of $W$ makes $H_N$
integrable, while $0\le\kappa_t\le2L_*$ gives
$0\le\mathcal C_N\le2NL_*$. The comparator is fixed and $A_N$
is deterministic, so no future-dependent normalization is used.\qed

\subsection{Proof of Corollary~\ref{cor:weighted-variance}}
\label{subsec:proof-weighted-variance}
By conditional unbiasedness for the current target, both biases
vanish. Conditional on each completed pre-update history, the average
of $B_t^\iota$ independent records, for $\iota\in\{y,x\}$, has
squared error expectation exactly $V_t^\iota/B_t^\iota$.
The first error $e_t^y$ is measurable before the second update mean
is drawn. Since $\E\mathcal E_N<\infty$ and
$e_t^y=-\E[e_t^x-e_t^y\mid\mathcal F_t^x]$, conditional Jensen
implies that $e_t^y$, and hence $e_t^x$, is square-integrable.
Thus conditional centering of the second mean gives
$\E\langle e_t^x,e_t^y\rangle=0$. Expanding the squared error
difference and taking expectations yields
\[
 \E\norm{e_t^x-e_t^y}^2
 =\E\left(V_t^x/B_t^x+V_t^y/B_t^y\right).
\]
Multiply by $t^2$ and sum. This proves
$\E\mathcal E_N=\mathcal V_N$ even when each radius, count and
conditional variance is random. The assumed conditional unbiasedness
therefore allows substitution in Theorem~\ref{thm:trajectory}, proving the
risk claim. The second center's variance need not be independent of
the first error; only conditional centering of its update mean was
used.\qed

\subsection{Proof of Corollary~\ref{cor:resources}}
\label{subsec:proof-resources}
Take, for example, $c=2^{12}$ in \eqref{eq:resource-conditions}.
Then $T\ge10N$, $T\ge d$, and
$N\ge d^{1/4}D/\gammaMax$, so
$\bar\gamma=d^{1/4}D/N$ and
$\underline\gamma=d^{1/4}D/(2N)$. The robust guarantee of
Theorem~\ref{thm:learned-radius} and the variance envelope give
\[
 \E[f(x_N)-f(x^\star)]
 \le \frac{h_0R^2}{N^2}
 +\frac{33d^{1/4}MD}{N}
 +\frac{16\sqrt{Cd}M_2D}{\sqrt T}
 +44d^{1/4}\Delta N.
\]
We used $A_N\ge N^2/2$, $c_d\le\sqrt d$, and $T\ge d$.
The four displayed terms are respectively at most
$\varepsilon/c^2$, $33\varepsilon/c$,
$16\varepsilon/\sqrt c$, and $44\varepsilon/c$.
Since $c=4096$, their sum is less than $\varepsilon$.
The exact stage and scalar-value counts follow from
Lemma~\ref{lem:smoothing-accounting}. The parameters in these
sufficient inequalities are used only to assess the supplied budgets;
no parameter estimate or stopping certificate is invoked.\qed

\section{Fixed-threshold radial executions}
\label{sec:extensions}

\subsection{An affine lower model of the original objective}
The smoothing parameter supplies an additional derivative that can be
estimated within the two-value interface. Define
\begin{equation}
 h_\gamma(y)=\gamma\,\partial_\gamma f_\gamma(y)
 =d\bigl[\E_e f(y+\gamma e)-f_\gamma(y)\bigr].
 \label{eq:radial-main-definition}
\end{equation}
Joint convexity of $(y,\gamma)\mapsto f_\gamma(y)$ gives the global model
\begin{equation}
 f(u)\ge f_\gamma(y)+\ip{\nabla f_\gamma(y)}{u-y}-h_\gamma(y).
 \label{eq:radial-main-model}
\end{equation}
Lemma~\ref{lem:radial-affine} proves this statement for Lipschitz populations,
including at the boundary of the evaluation domain. It also proves
$0\le h_\gamma(y)\le M\gamma$ and
$h_{\lambda\gamma}(y)\le\lambda h_\gamma(y)$ for $0<\lambda\le1$.
For a smooth population, $h_\gamma(y)\le Ld\gamma^2/(d+2)$.

Let $e$ be uniform on the sphere and let $r$ have density $dr^{d-1}$
on $[0,1]$, independently of $e$ and the sampled function. One radial record is
\begin{equation}
 J_\gamma(y)=d\bigl[\widetilde F(y+\gamma e,\xi)
                  -\widetilde F(y+\gamma r e,\xi)\bigr].
 \label{eq:radial-main-record}
\end{equation}
The endpoints are nonsymmetric and use exactly two values of one sampled function.
Its clean mean equals $h_\gamma(y)$, and its second moment is at most
$2M_2^2\gamma^2$ under the radial regularity conditions of
Lemma~\ref{lem:radial-moment}. A random Lipschitz envelope on the evaluation domain with
second moment $M_2^2$ suffices; that lemma also gives the weaker
sample-gradient and segment conditions. The vector variance assumption
alone does not imply this different scalar moment bound.

The potential now replaces comparator smoothing bias by the local charge
$\sum_t t h_{\gamma_t}(y_t)$
(Theorem~\ref{thm:radial-potential}). Each original comparator satisfies
\eqref{eq:radial-main-model}; no distance to a minimizer of the smoothed
objective enters. If radii increase, the exact additional target drift is
$\sum_{t<N}A_t[f_{\gamma_{t+1}}(x_t)-f_{\gamma_t}(x_t)]$.
Our shrinking rule makes this drift nonpositive.

The key weighted inequality follows by combining ordinary convexity
at $x_{t-1}$ with \eqref{eq:radial-main-model} at $u$:
\begin{align}
 &A_{t-1}\phi_t(x_{t-1})+a_tf(u)\nonumber\\
 &\quad\ge A_t\phi_t(y_t)
 +a_t\ip{\nabla\phi_t(y_t)}{u-v_{t-1}}
 -a_th_{\gamma_t}(y_t),
 \label{eq:radial-main-weighted-model}
\end{align}
where $\phi_t=f_{\gamma_t}$. The original comparator therefore enters
before any target values are telescoped. Projection and the observed
curvature balance then give
\begin{align}
 A_N[f(x_N)-f(u)]\le{}&h_0R_u^2/2+2D^2(H_N-h_0)\nonumber\\
 &+\sum_ta_th_{\gamma_t}(y_t)+\sum_tZ_t(u),
 \label{eq:radial-main-potential}
\end{align}
for nonincreasing radii. Here $Z_t(u)$ is the stochastic error term of
Theorem~\ref{thm:potential}. Theorem~\ref{thm:radial-potential}
proves the full pathwise statement, including arbitrary radius changes
and their signed drift. This is why a current-center radial diagnostic
can control original-objective risk without certifying smoothing bias
at an unknown solution.

\subsection{A persistent shrinking-radius execution}
Fix $T\ge16N$, a positive threshold $\tau$ chosen before sampling, and
$\Gamma\in(0,\gammaMax]$. Set
\begin{equation}
 B^{\rm rad}=\floor{T/(8N)},\quad
 \mathsf U=T-NB^{\rm rad}-4N,\quad\gamma_0=\Gamma.
 \label{eq:radial-main-budget}
\end{equation}
At the fixed first center $y_t$, average $B^{\rm rad}$ conditionally independent records
\eqref{eq:radial-main-record} using $\gamma_{t-1}$, obtaining
$\widehat h_t$. Select
\begin{equation}
 \gamma_t=\frac{\gamma_{t-1}}
                 {\max\{1,[\widehat h_t]_+/\tau\}}.
 \label{eq:radial-main-selection}
\end{equation}
Every vector pilot and update mean in that iteration uses this selected
radius. The update reserve is $\mathsf U$ in
\eqref{eq:reserve}--\eqref{eq:surplus}; all surplus decisions retain
\eqref{eq:pilot}--\eqref{eq:extras}. The radial records are disjoint from
all vector records, and radius selection precedes both update means.

\begin{algorithm}[t]
\caption{Scalar acceleration with observed radial smoothing}
\label{alg:radial}
\small
\begin{algorithmic}[1]
\Require $Q,x^0,D$, integers $N\ge1$, $T\ge16N$;
$h_0,s_0,\tau>0$, legal $\Gamma>0$
\State $x_0=v_0=x^0$, $H_0=h_0$, $\Lambda_0=0$, $\gamma_0=\Gamma$
\State Set $B^{\rm rad},\mathsf U$ by \eqref{eq:radial-main-budget}
\State Compute $B_t^0,m_1$ by \eqref{eq:reserve}--\eqref{eq:surplus}, using this $\mathsf U$
\For{$t=1,\ldots,N$}
 \State Set $a_t=t$, $A_t=t(t+1)/2$; compute $y_t$ by \eqref{eq:center-y}
 \State $\widehat h_t\gets$ mean of $B^{\rm rad}$ conditionally independent radial pairs at
 $(y_t,\gamma_{t-1})$
 \State Select $\gamma_t$ by \eqref{eq:radial-main-selection}
 \State $B_t^y\gets\PaidBatch(y_t,t,2t-1,\gamma_t)$
 \State $g_t^y\gets\FreshMean(y_t,B_t^y,\gamma_t)$
 \State Compute $v_t,x_t$ by \eqref{eq:step}
 \State $B_t^x\gets\PaidBatch(x_t,t,2t,\gamma_t)$
 \State $g_t^x\gets\FreshMean(x_t,B_t^x,\gamma_t)$
 \State Update $r_t,\widehat\beta_t,H_t$ by \eqref{eq:curvature}--\eqref{eq:coefficient}
\EndFor
\State \Return $x_N$
\end{algorithmic}
\end{algorithm}

The procedures in Algorithm~\ref{alg:radial} are fully specified in
Algorithm~\ref{alg:scalar}. The new execution retains one optimizer state
and has no restart. Its radius is positive and nonincreasing, with no
prescribed positive lower floor. Lemma~\ref{lem:radial-counts} proves
that the five stages per iteration spend exactly the supplied budget.

Put $\chi_d=2d^2/((d+1)(d+2))\le2$,
$\omega_{\mathrm{rad}}=\sqrt{\chi_d}M_2\sqrt{N/B^{\rm rad}}$,
$\eta_{\mathrm{rad}}=\sqrt{\chi_d}M_2\Gamma/\sqrt{B^{\rm rad}}$, and
$\mathfrak X_0=\Gamma^{-1}+(M+\omega_{\mathrm{rad}})/\tau$.
These are analysis quantities. Let $L_{\rm pop}=L$ when the population
is $L$-smooth, and $L_{\rm pop}=+\infty$ otherwise.

\begin{samepage}
\interdisplaylinepenalty=10000
\begin{theorem}[Observed radial smoothing]\label{thm:radial-clean}
Assume the uniform clean vector moment assumptions, the radial moment conditions
of Lemma~\ref{lem:radial-moment}, and integrability of the displayed
potential. For any $\tau,\Gamma$ in Algorithm~\ref{alg:radial} and fixed
$u\in Q$, its clean expected risk satisfies
\begin{align}
 \E[f(x_N)-f(u)]\le{}&\frac{h_0R_u^2}{2A_N}
 +\frac{8D^2}{A_N}\min\{L_{\rm pop},\sqrt dM\mathfrak X_0\}
 \nonumber\\
 &+\frac{8\sqrt2\sigma_0D}{\sqrt T}+\tau+\eta_{\mathrm{rad}}.
 \label{eq:radial-main-risk}
\end{align}
The method spends exactly $T$ pairs, $2T$ values and $5N$ sequential
sampling stages, with $N$ projections. The full proof is in
Appendix~\ref{subsec:proof-radial-clean}.
\end{theorem}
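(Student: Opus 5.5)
The plan is to combine the radial potential \eqref{eq:radial-main-potential} with the coefficient-growth bound of Lemma~\ref{lem:growth}, treating the smoothing charge $\sum_t a_t h_{\gamma_t}(y_t)$ separately from the curvature. Since the selection rule \eqref{eq:radial-main-selection} only shrinks radii, the target drift $\sum_{t<N}A_t[f_{\gamma_{t+1}}(x_t)-f_{\gamma_t}(x_t)]$ is nonpositive. Theorem~\ref{thm:radial-potential} then gives the pathwise bound $A_N[f(x_N)-f(u)]\le h_0R_u^2/2+2D^2(H_N-h_0)+\sum_ta_th_{\gamma_t}(y_t)+\sum_tZ_t(u)$. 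Each radius $\gamma_t$, batch count and center is fixed before its update records are drawn, and exact evaluations make the biases vanish. Hence $\E\sum_tZ_t=0$, as in the proof of Theorem~\ref{thm:potential}. The variance energy obeys $W\le16\sigma_0^2A_N^2/\mathsf U$ exactly as in Theorem~\ref{thm:paid}, with $\mathsf U=T-NB^{\rm rad}-4N\ge T/2$ because $T\ge16N$; the count identities come from Lemma~\ref{lem:radial-counts}. This yields the term $8\sqrt2\sigma_0D/\sqrt T$.

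\emph{Smoothing charge.} Write $h_{\gamma_t}(y_t)=h_{\gamma_{t-1}/\lambda_t}(y_t)$ with $\lambda_t=\max\{1,[\widehat h_t]_+/\tau\}$. Sublinearity $h_{\lambda\gamma}\le\lambda h_\gamma$ gives $h_{\gamma_t}(y_t)\le h_{\gamma_{t-1}}(y_t)/\lambda_t$. Two cases follow. If $\lambda_t=1$, then $\widehat h_t\le\tau$, so $h_{\gamma_{t-1}}(y_t)\le\tau+(h_{\gamma_{t-1}}(y_t)-\widehat h_t)$. If $\lambda_t>1$, then $h_{\gamma_t}(y_t)\le\tau h_{\gamma_{t-1}}(y_t)/\widehat h_t$, which is at most $\tau+\tau\,|h_{\gamma_{t-1}}-\widehat h_t|/\widehat h_t\le\tau+|h_{\gamma_{t-1}}-\widehat h_t|$. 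In both cases $h_{\gamma_t}(y_t)\le\tau+|\widehat h_t-h_{\gamma_{t-1}}(y_t)|$. Conditionally on the history, $\widehat h_t$ is a mean of $B^{\rm rad}$ records with mean $h_{\gamma_{t-1}}(y_t)$ and second moment at most $\chi_dM_2^2\gamma_{t-1}^2$ (Lemma~\ref{lem:radial-moment}). Since $\gamma_{t-1}\le\Gamma$, Jensen gives an expected error of at most $\sqrt{\chi_d}M_2\Gamma/\sqrt{B^{\rm rad}}=\eta_{\mathrm{rad}}$. Averaging with weights $a_t/A_N$ gives the $\tau+\eta_{\mathrm{rad}}$ terms.

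\emph{Curvature term.} This is the main obstacle, because there is no positive radius floor. In the smooth case every $f_{\gamma_t}$ is $L$-smooth, and Lemma~\ref{lem:growth} with $L_\circ=L$ gives $2D^2\E(H_N-h_0)/A_N\le8LD^2/A_N+2D\sqrt W/A_N$. In the nonsmooth case I would apply the secant argument of Lemma~\ref{lem:growth} iteration by iteration, with the local constant $L_\circ^{(t)}=\sqrt dM/\gamma_t$ in place of a uniform one. I expect this to produce a bound of the form $\E(H_N-h_0)\le4\sqrt dM\,\E\max_t\gamma_t^{-1}+\sqrt W/D=4\sqrt dM\,\E\gamma_N^{-1}+\sqrt W/D$; checking that the per-iteration version of Lemma~\ref{lem:growth} really goes through is part of this step. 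To control $\E\gamma_N^{-1}$, note that $\gamma_N^{-1}=\gamma_{N-1}^{-1}\lambda_N\le\gamma_{N-1}^{-1}+[\widehat h_N]_+/(\tau\gamma_{N-1})$. Since $h_\gamma\le M\gamma$, we have $\E[[\widehat h_t]_+\mid\cdot]/\gamma_{t-1}\le M+\sqrt{\chi_d}M_2/\sqrt{B^{\rm rad}}$. Telescoping then gives $\E\gamma_N^{-1}\le\Gamma^{-1}+N(M+\sqrt{\chi_d}M_2/\sqrt{B^{\rm rad}})/\tau$. This is larger than $\mathfrak X_0$ by a factor of order $N$, and removing that factor is the delicate point. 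I would try to recover $\mathfrak X_0$ by noting that the radius shrinks by a factor greater than one only when $\widehat h_t>\tau$. The plan is to charge each shrink against the bounded total $\sum_t\widehat h_t/\gamma_{t-1}$ through a multiplicative rather than additive telescoping, keeping the $\sqrt N$ from the radial-noise average, which is how $\omega_{\mathrm{rad}}=\sqrt{\chi_d}M_2\sqrt{N/B^{\rm rad}}$ arises. Finally, taking the minimum of the smooth and nonsmooth branches gives $\min\{L_{\rm pop},\sqrt dM\mathfrak X_0\}$, and dividing the potential by $A_N$ completes \eqref{eq:radial-main-risk}.
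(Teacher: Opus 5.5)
Most of your outline is correct. Nonincreasing radii remove the drift, conditional centering gives $\E\sum_tZ_t=0$, and the energy bound with $\mathsf U\ge T/2$ gives $8\sqrt2\sigma_0D/\sqrt T$. Your two-case bound $h_{\gamma_t}(y_t)\le\tau+|\widehat h_t-h_{\gamma_{t-1}}(y_t)|$ is exactly the paper's local-charge argument. The curvature step also goes through without a per-iteration variant of Lemma~\ref{lem:growth}. Its square-growth proof is pathwise, so on each realization you may use the common bound $\min\{L_{\rm pop},\sqrt dM/\gamma_N\}$, valid for every target because the radii are nonincreasing. This gives $H_N-h_0\le4\min\{L_{\rm pop},\sqrt dM/\gamma_N\}+\sqrt{\mathcal E_N}/D$.

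The genuine gap is the inverse-radius moment, which you leave unresolved. Your additive telescoping loses a factor $N$ in the leading $M/\tau$ term. The proposed repair, charging shrinks against $\sum_t\widehat h_t/\gamma_{t-1}$, cannot remove it, because that sum has expectation of order $NM$.

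The missing idea is already in your own formula. With $\mathsf s_t=\widehat h_t/\gamma_{t-1}$,
\[
 \gamma_t^{-1}=\gamma_{t-1}^{-1}\max\{1,[\widehat h_t]_+/\tau\}
 =\max\{\gamma_{t-1}^{-1},[\mathsf s_t]_+/\tau\},
\]
so the reciprocal recurrence is an exact maximum, not a sum or a product. Induction gives $\gamma_N^{-1}\le\max\{\Gamma^{-1},\max_{t\le N}[\mathsf s_t]_+/\tau\}$. The normalized slope $\mathsf s_t$ has conditional mean $\bar s_t=h_{\gamma_{t-1}}(y_t)/\gamma_{t-1}\in[0,M]$ and conditional variance at most $\chi_dM_2^2/B^{\rm rad}$, both independent of the current radius. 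Hence $\max_t[\mathsf s_t]_+\le M+\max_t|\mathsf s_t-\bar s_t|$. Bounding the squared maximum by the sum of squares gives $\|\max_t|\mathsf s_t-\bar s_t|\|_{L_2}\le\omega_{\rm rad}$, with no independence across iterations needed. Minkowski then yields $\E\gamma_N^{-1}\le(\E\gamma_N^{-2})^{1/2}\le\Gamma^{-1}+(M+\omega_{\rm rad})/\tau=\mathfrak X_0$; this is Lemma~\ref{lem:radial-radius}.

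The mean $M$ enters only once because it passes through a maximum, while only the centered noise pays the $\sqrt N$ in $\omega_{\rm rad}$.
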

\end{samepage}

The same proof gives a refinement along the actual trajectory:
replace $\tau+\eta_{\mathrm{rad}}$ by
\[
 \frac1{A_N}\sum_ta_t\E h_{\gamma_t}(y_t)
\]
and use $\E\min\{L_{\rm pop},\sqrt dM/\gamma_N\}$ in place of the
minimum in \eqref{eq:radial-main-risk}. The inverse-radius bound is proved
from the conditional radial moments:
\begin{equation}
 \bigl(\E\gamma_N^{-2}\bigr)^{1/2}\le\mathfrak X_0.
 \label{eq:radial-main-inverse}
\end{equation}
Lemma~\ref{lem:radial-radius} derives this envelope from the exact
maximum recurrence of $1/\gamma_t$. It holds under second moments,
without independent errors across iterations or a supplied smallest radius.
Thus a large positive diagnostic may select a much smaller radius,
while the resulting curvature cost remains in the same bound.

A choice made before sampling that preserves the smooth order is
\begin{equation}
 \Gamma=\min\{\gammaMax,D/\sqrt N\},\qquad\tau=h_0D^2/N^2.
 \label{eq:radial-main-smooth-choices}
\end{equation}
Indeed, the smooth branch of Theorem~\ref{thm:radial-clean},
$A_N\ge N^2/2$ and
$\eta_{\rm rad}\le4\sqrt2M_2D/\sqrt T$ from
\eqref{eq:radial-public-moment-scales} give, when
$\sigma_0^2\le CdM_2^2$,
\[
 \E[f(x_N)-f(u)]\le\frac{h_0R_u^2}{2A_N}
 +\frac{(16L+h_0)D^2}{N^2}
 +(8\sqrt{2Cd}+4\sqrt2)\frac{M_2D}{\sqrt T}.
\]
This is a specialization of the arbitrary-threshold guarantee.

The same smooth order is available with a radius fixed before sampling.
For a clean oracle, take $\gamma=\min\{\gammaMax,D/N\}$ in
Algorithm~\ref{alg:scalar}, with $T\ge8N$ and
$\sigma_\gamma\le\sigma_0$.
Proposition~\ref{prop:protected-smoothing-smooth} gives $L_\gamma\le L$
and comparator bias at most $L\nu_d\gamma^2/2$, where
$\nu_d=d/(d+2)$. The proof of Theorem~\ref{thm:paid} retains this
actual bias before using $M\gamma$. Substituting $\gamma\le D/N$ yields
\begin{align}
 \E[f(x_N)-f(x^\star)]\le{}&\frac{h_0R^2}{2A_N}
 +(16+\nu_d/2)\frac{LD^2}{N^2}\nonumber\\
 &+\frac{8\sqrt2\sigma_0D}{\sqrt T}.
 \label{eq:fixed-smooth-comparison}
\end{align}
This execution spends exactly $T$ pairs in $4N$ stages and needs
no additional radial regularity. The radial construction supplies the
locally estimable original-objective affine model and permits
unrestricted observed shrinking, using $5N$ stages.

The nonsmooth envelope in \eqref{eq:radial-main-risk} displays the tradeoff
between local radial charge $\tau$ and curvature growth proportional
to $M^2/\tau$. Taking $\tau$ at order $d^{1/4}MD/N$ recovers the
nonsmooth deterministic order, with the explicit initial-radius correction in
Appendix~\ref{sec:app-radial}. This specialization uses the unknown $M$.
The smooth choice \eqref{eq:radial-main-smooth-choices} alone therefore
does not establish the optimal nonsmooth rate.

With spatial corruption, the inverse-radius moments remain bounded if
$2d\Delta<\tau$; Theorem~\ref{thm:radial-corruption} gives the complete
risk bound. Above the margin, fixed spatial corruption can cause
exponential inverse-radius growth even for a flat objective
(Proposition~\ref{prop:radial-collapse}); this is a stability limitation.
The protected-radius method in
Appendix~\ref{sec:protected-extensions} retains the earlier sufficient
allowance $\Delta\asymp\varepsilon^2/(\sqrt dMD)$ under its stated
budget and evaluation domain conditions. These are separate radius policies with
different corruption guarantees. Their common lower frontier is
Corollary~\ref{cor:noise-frontier}.

\section{Unrestricted shrinking radii and a radial affine model}
\label{sec:app-radial}

We retain the original scalar state, projection and curvature update.
The radius is selected after a separate radial diagnostic and before
either vector update mean in the iteration. A local radial derivative
provides an affine lower model for the original objective at every
comparator. This model permits a wider radius range while retaining
the original optimizer as comparator.

Throughout, $a_t=t$, $A_t=t(t+1)/2$, $H_0=h_0>0$ and
$R_u=\norm{u-x^0}$ for a fixed $u\in Q$. The geometry and population
conditions are those of Assumption~\ref{ass:geometry}.
All expectations and inner products below are assumed integrable.
The clean vector-record variance bound $\sigma_0^2$ is uniform over
all the positive radii used, rather than only a protected interval.

\subsection{The radial affine lower bound}
\label{subsec:radial-affine}

We use the extension convention of Lemma~\ref{lem:smoothing} for derivatives
at centers on the boundary of $Q$ and at $\gamma=\gammaMax$. Define
\begin{equation}
 \Phi(x,\gamma)=\E_{U\sim\mathrm{Unif}(\mathbb B_2^d)}
                         f(x+\gamma U),\qquad
 h_\gamma(x)=\gamma\partial_\gamma\Phi(x,\gamma),
 \label{eq:radial-function}
\end{equation}
and set $\mu_d=d/(d+1)$ and $\nu_d=d/(d+2)$.

\begin{lemma}[Joint smoothing and a radial affine model]
\label{lem:radial-affine}
For $y\in Q$ and $0<\gamma\le\gammaMax$, the derivative in
\eqref{eq:radial-function} exists, and
\begin{align}
 h_\gamma(y)
 &=d\bigl[\E_e f(y+\gamma e)-f_\gamma(y)\bigr],
 &0\le h_\gamma(y)&\le M\mu_d\gamma,
 \label{eq:radial-boundary}\\
 f(u)&\ge f_\gamma(y)+
       \ip{\nabla f_\gamma(y)}{u-y}-h_\gamma(y)
 &&(u\in Q),\label{eq:radial-affine}\\
 h_{\lambda\gamma}(y)&\le\lambda h_\gamma(y)
 &&(0<\lambda\le1).
 \label{eq:radial-shrink}
\end{align}
If $f$ is differentiable and $L$-smooth on the evaluation domain, then also
\begin{equation}
 h_\gamma(y)\le L\nu_d\gamma^2.
 \label{eq:radial-smooth-charge}
\end{equation}
\end{lemma}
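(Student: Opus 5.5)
Throughout, replace $f$ by the globally convex $M$-Lipschitz extension $\bar f$ built in the proof of Lemma~\ref{lem:smoothing}, and write $\Phi(x,\gamma)=\E\bar f(x+\gamma U)$. This defines $\Phi$ for all $\gamma\ge0$, with $\Phi(x,0)=\bar f(x)$. The first structural fact is joint convexity of $(x,\gamma)\mapsto\Phi(x,\gamma)$ on $\R^d\times[0,\infty)$. For each fixed $U$, the map $(x,\gamma)\mapsto \bar f(x+\gamma U)$ is the composition of a convex function with a linear map, and averaging over $U$ preserves convexity. Lemma~\ref{lem:smoothing} already shows that $\gamma\mapsto\Phi(x,\gamma)$ is even in the sense of being symmetric under $U\mapsto -U$. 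Hence it is convex and nondecreasing on $\gamma\ge0$.

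For the derivative formula, I will use polar coordinates. Write $\Phi(x,\gamma)=\E_r \psi(\gamma r)$ with $\psi(s)=\E_e\bar f(x+se)$ and $r$ of density $dr^{d-1}$. Then $\Phi(x,\gamma)=d\gamma^{-d}\int_0^\gamma s^{d-1}\psi(s)\,ds$. The function $\psi$ is continuous (indeed $M$-Lipschitz), so $\Phi$ is differentiable in $\gamma>0$ and
\[
 \gamma\partial_\gamma\Phi=-d\,\Phi+d\,\psi(\gamma)=d\bigl[\E_e\bar f(x+\gamma e)-f_\gamma(x)\bigr].
\]
For $x\in Q$ and $\gamma\le\gammaMax$, the sphere points lie in the evaluation domain, so the formula uses only original values. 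The bound $h_\gamma\ge0$ is monotonicity in $\gamma$. For the upper bound, note that $\psi(\gamma)-\psi(\gamma r)\le M\gamma(1-r)$ and $\E(1-r)=1/(d+1)$. This gives $h_\gamma\le dM\gamma/(d+1)=M\mu_d\gamma$.

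For the affine model \eqref{eq:radial-affine}, apply the gradient inequality of the jointly convex $\Phi$ at $(y,\gamma)$ toward $(u,0)$. This requires differentiability of $\Phi$ in $(x,\gamma)$ jointly, which I expect to be the main technical obstacle. I would handle it by noting that the partial derivatives exist (Lemma~\ref{lem:smoothing} for $x$, the formula above for $\gamma$) and are continuous in $(x,\gamma)$, since they are averages of the Lipschitz function $\bar f$ with continuous dependence. A convex function with continuous partials is differentiable. If the boundary point $\gamma=0$ of the half-space causes trouble, the supporting inequality still holds by convexity along the segment $\lambda\mapsto(y+\lambda(u-y),\gamma(1-\lambda))$ for $\lambda\in[0,1]$, using its one-sided derivative at $\lambda=0$. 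Along this segment,
\[
 \bar f(u)=\Phi(u,0)\ge\Phi(y,\gamma)+\ip{\nabla f_\gamma(y)}{u-y}-\gamma\partial_\gamma\Phi(y,\gamma).
\]
Since $\bar f=f$ on $Q$, this is \eqref{eq:radial-affine}.

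Two further claims remain. For \eqref{eq:radial-shrink}, convexity of $g(\gamma)=\Phi(y,\gamma)$ makes $g'$ nondecreasing and nonnegative. I will use the chord bound: for $\lambda\le1$, convexity gives $\lambda\gamma\, g'(\lambda\gamma)\le\lambda\gamma\, g'(\gamma)$, so $h_{\lambda\gamma}\le\lambda h_\gamma$. For \eqref{eq:radial-smooth-charge}, under $L$-smoothness the proof of Lemma~\ref{lem:smoothing} gives $\nabla_x$ formulas and differentiation under the integral. Then
\[
 \gamma\partial_\gamma\Phi=\E\ip{\nabla f(y+\gamma U)}{\gamma U}=\E\ip{\nabla f(y+\gamma U)-\nabla f(y)}{\gamma U}\le L\gamma^2\E\|U\|^2=L\nu_d\gamma^2,
\]
where the middle equality uses $\E U=0$. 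The points $y+\gamma U$ lie in the interior of the evaluation domain almost surely, so only the smoothness of the original $f$ is used.
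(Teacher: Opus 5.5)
Your proof is correct and follows the paper's own route. You use the globally convex Lipschitz extension, the polar integral with the fundamental theorem of calculus for $\gamma\partial_\gamma\Phi$, and joint convexity with continuous joint differentiability of $\Phi$, applying the supporting inequality at $(y,\gamma)$ toward $(u,0)$; the shrink bound comes from monotonicity of the radial derivative, and the smooth case from differentiation under the integral with $\E U=0$, exactly as the paper does.

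The differences are minor. You bound $h_\gamma\le M\mu_d\gamma$ directly from the polar formula via $\E(1-r)=1/(d+1)$, whereas the paper uses the $M\mu_d$-Lipschitz constant of $\gamma\mapsto\Phi(y,\gamma)$. Your segment fallback is unnecessary, because $(y,\gamma)$ with $\gamma>0$ is an interior point of the domain of the jointly convex $\Phi$ on $\R^d\times\R$.
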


\begingroup
\interdisplaylinepenalty=10000
\begin{proof}
Use the globally convex $M$-Lipschitz extension $\bar f$ constructed in
Lemma~\ref{lem:smoothing}. Its ball average defines $\Phi$ for all centers
and radii and agrees with the original smoothing on the legal domain.
All ball and sphere values in the asserted formulas belong to that domain.

For fixed $y$, let $q_y(s)=\E_e\bar f(y+se)$. This is continuous and
$M$-Lipschitz in $s$. Polar integration gives
\begin{equation}
 \Phi(y,\gamma)=d\gamma^{-d}\int_0^\gamma
                      s^{d-1}q_y(s)\,ds.
 \label{eq:radial-polar}
\end{equation}
The fundamental theorem of calculus applies to its continuous
integrand. Differentiating proves the equality in
\eqref{eq:radial-boundary} for every $\gamma>0$.
Lemma~\ref{lem:smoothing} gives the spatial derivative of $\Phi$.
Its sphere expression and the radial derivative just obtained are
continuous jointly in $x$ and positive $\gamma$: continuity of $\bar f$
gives uniform convergence of the integrands on compact parameter sets.
Thus $\Phi$ is continuously differentiable in the joint variables.
Both derivative formulas use only legal values when $x\in Q$ and
$0<\gamma\le\gammaMax$, so they are independent of the extension.

For each fixed $U$, the function $(x,\gamma)\mapsto\bar f(x+\gamma U)$
is convex. Its expectation is jointly convex and
$\Phi(u,0)=f(u)$. Apply its affine lower model at $(y,\gamma)$
to $(u,0)$ to obtain \eqref{eq:radial-affine}. The segment in
this application uses only points of the original evaluation domain.

The radial function $\gamma\mapsto\Phi(y,\gamma)$ is convex and
even. Its derivative on the positive half-line is nonnegative and
nondecreasing. Its Lipschitz constant is at most
$M\E\norm U=M\mu_d$, which proves the inequality in
\eqref{eq:radial-boundary}. Monotonicity of the derivative gives
\[
 h_{\lambda\gamma}(y)
 =\lambda\gamma\partial_\gamma\Phi(y,\lambda\gamma)
 \le\lambda\gamma\partial_\gamma\Phi(y,\gamma)
 =\lambda h_\gamma(y).
\]
This proves \eqref{eq:radial-shrink}.

In the smooth case, almost every sampled ball point is interior to the
original evaluation domain, as in Lemma~\ref{lem:smoothing}. Differentiation
under the integral therefore uses the original gradient, without requiring
a smooth extension. Symmetry gives
\begin{align*}
 \partial_\gamma\Phi(y,\gamma)
 &=\E\ip{\nabla f(y+\gamma U)}{U}\\
 &=\E\ip{\nabla f(y+\gamma U)-\nabla f(y)}{U}
 \le L\gamma\E\norm U^2=L\nu_d\gamma.
\end{align*}
Multiplication by $\gamma$ proves
\eqref{eq:radial-smooth-charge}, without a higher derivative.
\end{proof}
\endgroup

The inequality in \eqref{eq:radial-affine} concerns an affine
lower model of the original objective at all comparators. It does
not identify smoothing bias at the optimizer from a small
pointwise bias at the current center.

\subsection{Two-value radial records and their regularity}
\label{subsec:radial-moments}

The radial moment analysis requires a sample regularity condition
in addition to the vector variance assumption. A sufficient condition
is a random Lipschitz envelope on the evaluation domain $m(\xi)$ with
$\E m(\xi)^2\le M_2^2$. Another sufficient condition consists of
measurable locally Lipschitz samples on a neighborhood of the closed
evaluation domain, a measurable weak gradient with the pointwise envelope
$\E_\xi\norm{\nabla F(z,\xi)}^2\le M_2^2$ for $z$ in the interior
of the evaluation domain, and the integrability
conditions in Lemma~\ref{lem:oracle-moments}. In the latter setting,
the weak-gradient fundamental theorem must hold on almost every
sampled radial segment and its integrals must be integrable. These
conditions follow by polar Fubini from local Lipschitzness and the
stated moment envelope; they can equivalently be imposed explicitly
as the segment regularity assumption. Samples need not be convex.

Draw a uniform sphere direction $e$, a radius $r$ with density
$dr^{d-1}$ on $[0,1]$, and an independent sample $\xi$.
The clean radial record \eqref{eq:radial-estimate-main} is
\begin{equation}
 J_\gamma(y;e,r,\xi)
 =d[F(y+\gamma e,\xi)-F(y+\gamma re,\xi)].
 \label{eq:radial-record}
\end{equation}
The same sampled function is evaluated at both endpoints. Set
$\chi_d=2d^2/[(d+1)(d+2)]\le2$.

\begin{lemma}[Radial mean and second moment]
\label{lem:radial-moment}
Under either sufficient regularity condition above,
\begin{equation}
 \E J_\gamma(y)=h_\gamma(y),\qquad
 \E J_\gamma(y)^2\le\chi_dM_2^2\gamma^2.
 \label{eq:radial-record-moment}
\end{equation}
For a mean $\widehat h^{\,0}$ of $B$ clean records at a
center and radius fixed by the preceding history, with functions and
directions sampled independently of that history and of one another,
\begin{equation}
 \E[\widehat h^{\,0}\mid\mathcal F]=h_\gamma(y),\qquad
 \E[|\widehat h^{\,0}-h_\gamma(y)|^2\mid\mathcal F]
 \le\chi_dM_2^2\gamma^2/B.
 \label{eq:radial-mean-moment}
\end{equation}
The corruption contribution to a radial record or its mean has
absolute value at most $2d\Delta$.
\end{lemma}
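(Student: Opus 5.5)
The mean identity is a direct calculation. The variance bound is a segment argument. For the mean, I condition on $r$ and $e$ and take the expectation over $\xi$, which turns $\E J_\gamma(y)$ into
\[
 d\bigl[\E_e f(y+\gamma e)-\E_{e,r}f(y+\gamma re)\bigr].
\]
Since $re$ with $r$ of density $dr^{d-1}$ and $e$ uniform on the sphere is uniform on the unit ball, the second term is exactly $f_\gamma(y)$. The formula \eqref{eq:radial-boundary} in Lemma~\ref{lem:radial-affine} then identifies the bracket times $d$ with $h_\gamma(y)$. Interchanging expectations is justified by the integrability hypotheses, or by the Lipschitz envelope, since $|J_\gamma|\le d\,m(\xi)\gamma(1-r)$.

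For the second moment I would treat the random Lipschitz envelope first. There the pathwise bound $|J_\gamma|\le d\,m(\xi)\gamma(1-r)$ gives
\[
 \E J_\gamma^2\le d^2\gamma^2M_2^2\,\E(1-r)^2 .
\]
The Beta$(d,1)$ moments are $\E r=d/(d+1)$ and $\E r^2=d/(d+2)$, so
\[
 \E(1-r)^2=1-\frac{2d}{d+1}+\frac{d}{d+2}=\frac{2}{(d+1)(d+2)} .
\]
This yields exactly $\chi_dM_2^2\gamma^2$. As in Lemma~\ref{lem:oracle-moments}, the McShane-type extension lets the envelope be required only on the evaluation domain.

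Under the weak-gradient alternative, I would write the difference as a segment integral:
\[
 F(y+\gamma e)-F(y+\gamma re)=\gamma\int_r^1\ip{\nabla F(y+\gamma se,\xi)}{e}\,ds .
\]
Cauchy--Schwarz in $s$ bounds its square by $\gamma^2(1-r)\int_r^1\|\nabla F(y+\gamma se,\xi)\|^2ds$. Taking $\E_\xi$ inside via Fubini, using the pointwise envelope $M_2^2$ at interior points, and then integrating over $e$ and $r$ again gives $d^2\gamma^2M_2^2\E(1-r)^2$. Boundary radii $\gamma=\gammaMax$ (outer endpoint possibly on the domain boundary) are handled by the same limiting and Fatou argument as in Lemma~\ref{lem:oracle-moments}. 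I expect this measure-theoretic justification, namely the a.e. fundamental theorem on sampled segments, to be the only delicate step; it is exactly what the stated segment regularity supplies.

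The batch statement follows from conditional independence given $\mathcal F$. Each record has conditional mean $h_\gamma(y)$, so the mean of $B$ records does too. Cross terms vanish, so the conditional variance is at most the single-record second moment divided by $B$, and variance is bounded by the second moment. For corruption, the added term in a record is $d[\delta(y+\gamma e)-\delta(y+\gamma re)]$, which has absolute value at most $2d\Delta$. Averaging preserves this bound.
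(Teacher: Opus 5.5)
Your proposal is correct and follows essentially the same route as the paper's proof. Both arguments obtain the mean from uniformity of $re$ on the ball together with \eqref{eq:radial-boundary}. Both obtain the second moment from the pathwise envelope $|J_\gamma|\le d\,m(\xi)\gamma(1-r)$, or from the radial-segment fundamental theorem with Jensen and Fubini; your Cauchy--Schwarz on $[r,1]$ is the same step. The rest also matches: $\E(1-r)^2=2/((d+1)(d+2))$, conditional independence for the batch mean, and the $2d\Delta$ corruption bound. The paper likewise singles out, as you do, the almost-sure validity of the fundamental theorem on the sampled radial segment as the only delicate point, and justifies it via Rademacher's theorem and polar Fubini.
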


\begin{proof}
The random point $re$ is uniform in the unit ball and independent
of $\xi$. Taking the expectation of the population difference in
\eqref{eq:radial-record} and applying
\eqref{eq:radial-boundary} gives its asserted mean. This uses the
difference, so a common random sample offset cancels.

The random Lipschitz envelope gives
$|J_\gamma|\le dm(\xi)\gamma(1-r)$. Under the weak-gradient
condition, the fundamental theorem on the radial segment gives
\[
 J_\gamma=d\gamma(1-r)\int_0^1
 \ip{\nabla F(y+\gamma[r+s(1-r)]e,\xi)}{e}\,ds.
\]
Jensen in this integral, followed by the gradient moment envelope
and Fubini, gives the same conditional squared bound
$d^2M_2^2\gamma^2(1-r)^2$. To justify the weak-gradient use,
local Lipschitzness makes every segment restriction absolutely
continuous. For almost every $\xi$, Rademacher differentiability
holds at almost every spatial point. Polar Fubini about each fixed
center shows that its weak gradient and the derivative along a
radial line agree for almost every direction and almost every
positive radius. Applying this statement to the sampled segment
gives the displayed fundamental theorem almost surely.
The moment envelope supplies the integrability used in Jensen and
Fubini. Finally,
\[
 \E(1-r)^2=1-\frac{2d}{d+1}+\frac d{d+2}
          =\frac2{(d+1)(d+2)}.
\]
This proves \eqref{eq:radial-record-moment}.
Conditional independence of the records gives the variance and mean of
their average; its variance is at most the single-record second
moment divided by $B$. This proves
\eqref{eq:radial-mean-moment}. Each corruption difference is at
most $2\Delta$, so multiplication by $d$ proves the last claim.
\end{proof}

\subsection{Construction and exact sampling cost}
\label{subsec:radial-construction}

Before sampling choose $N\ge1$, integer $T\ge16N$, threshold $\tau>0$ and
legal initial radius $0<\Gamma\le\gammaMax$. Define
\begin{equation}
 B^{\mathrm{rad}}=\left\lfloor\frac{T}{8N}\right\rfloor,
 \qquad\mathsf U=T-NB^{\mathrm{rad}}-4N,
 \qquad\gamma_0=\Gamma.
 \label{eq:radial-budget}
\end{equation}
After computing $y_t$ from the previous scalar state, obtain
$B^{\mathrm{rad}}$ conditionally independent observations of
\eqref{eq:radial-record} at radius $\gamma_{t-1}$ and center $y_t$.
Let $\widehat h_t$ be their mean, and set
\begin{equation}
 \gamma_t=\frac{\gamma_{t-1}}
                 {\max\{1,\pos{\widehat h_t}/\tau\}}.
 \label{eq:radial-policy}
\end{equation}
The selected radius is strictly positive, nonincreasing and at
most $\Gamma$. It is fixed before the vector update means at
both centers, which use the same objective $f_{\gamma_t}$.

Use the batch-selection procedures of Algorithm~\ref{alg:scalar} at this
radius with reserve $\mathsf U$ and fixed minimum batch sizes
\begin{equation}
 B_t^0=1+\left\lfloor
          \frac{(\mathsf U-2N)t}{2N(N+1)}\right\rfloor.
 \label{eq:radial-vector-floor}
\end{equation}
In each iteration, after the radial stage, the order is first-center
pilots, its update mean, the projected step and second center,
second-center pilots, and its update mean. The scalar
curvature update is unchanged. No diagnostic record is reused in
an update mean. The optimizer state and remaining sample budgets persist across iterations.

\begin{lemma}[Sampling costs of radial selection]
\label{lem:radial-counts}
The method uses exactly $T$ original pairs, $2T$ scalar values,
$5N$ sequential sampling stages and $N$ projections. It satisfies
\begin{equation}
 B^{\mathrm{rad}}\ge\frac{T}{16N},\qquad
 \mathsf U\ge\frac{5T}{8}\ge\frac T2,\qquad
 B_t^\iota\ge B_t^0\ge\frac{\mathsf U t}{4N(N+1)},\quad \iota\in\{y,x\}.
 \label{eq:radial-floors}
\end{equation}
\end{lemma}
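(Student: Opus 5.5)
The plan is to follow the accounting pattern of Lemmas~\ref{lem:paid-accounting} and~\ref{lem:smoothing-accounting}, adding one radial stage per iteration. I first establish the three inequalities in \eqref{eq:radial-floors} directly from \eqref{eq:radial-budget}, and then count pairs, values, stages and projections.

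\emph{Floors.} Since $T\ge16N$, we have $T/(8N)\ge2$. Using $\floor z\ge z/2$ for $z\ge1$ gives $B^{\mathrm{rad}}\ge T/(16N)$. From $NB^{\mathrm{rad}}\le T/8$ and $4N\le T/4$ we get $\mathsf U\ge T-T/8-T/4=5T/8\ge T/2$, and in particular $\mathsf U\ge10N\ge4N$. The reserve \eqref{eq:radial-vector-floor} then satisfies $B_t^0\ge\mathsf Ut/[4N(N+1)]$ by the argument of \eqref{eq:paid-reserve-lower-app}: use $1+\floor z\ge z$ together with $\mathsf U-2N\ge\mathsf U/2$. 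Because every extra allocation $k_j$ is a nonnegative integer, $B_t^\iota=B_t^0+k_j\ge B_t^0$.

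\emph{Exact pair count.} As in \eqref{eq:paid-reserve-cost-app}, I would bound $2\sum_tB_t^0\le\mathsf U/2+N\le3\mathsf U/4$, so the surplus $m_{\mathrm{sur}}\ge0$. The rules $k_j\in[0,m_j/2]$ for $j<2N-1$, $k_{2N-1}=m_{2N-1}$ and $k_{2N}=0$ keep every account $m_j$ nonnegative. They also make $\sum_jk_j=m_{\mathrm{sur}}$, so the update means consume exactly $\mathsf U$ pairs. Adding $4N$ pilot pairs (two at each of $2N$ centers) and $NB^{\mathrm{rad}}$ radial pairs gives $\mathsf U+4N+NB^{\mathrm{rad}}=T$. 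Each radial record \eqref{eq:radial-record}, vector pilot and update record uses one sampled function at exactly two legal points, so the total is $2T$ values. Legality holds because $\gamma_t\le\Gamma\le\gammaMax$ by \eqref{eq:radial-policy}, and both radial endpoints lie within distance $\gamma_{t-1}$ of $y_t\in Q$.

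\emph{Stages and projections.} Each iteration has five sequential stages: the radial diagnostic, the $y$ pilots, the $y$ update mean, the $x$ pilots and the $x$ update mean. The $y$ pilots depend on the selected $\gamma_t$, and the $x$ center depends on the $y$ update, so these stages cannot be merged; the records within a stage are drawn in parallel. There is exactly one projection per iteration, which gives $5N$ stages and $N$ projections.

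The only step needing care is the floor bound on $B^{\mathrm{rad}}$ and the constant $5/8$; the rest transfers verbatim from Lemma~\ref{lem:paid-accounting}.
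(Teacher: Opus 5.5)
Your proposal is correct and follows the paper's proof essentially step for step. It uses the same floor estimates from $T\ge16N$ and the same reserve bound $2\sum_tB_t^0\le\mathsf U/2+N\le3\mathsf U/4$, which makes the surplus nonnegative and lets it be exhausted at the last first center; it then applies the same $\mathsf U+4N+NB^{\mathrm{rad}}=T$ bookkeeping and the same count of five stages per iteration. Your explicit check that all evaluation points are legal is a harmless addition that the paper's proof leaves implicit.
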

\begin{proof}
Since $T/(8N)\ge2$, its floor is at least half its value. Also
$NB^{\mathrm{rad}}\le T/8$ and $4N\le T/4$, so
$\mathsf U\ge5T/8\ge10N$. The protected counts obey
\[
 2\sum_tB_t^0\le2N+(\mathsf U-2N)/2
 =\mathsf U/2+N\le3\mathsf U/4.
\]
Thus the surplus is nonnegative. The original adaptive allocation
exhausts this surplus at the last first center, so its update means
spend exactly $\mathsf U$ pairs. Its two vector pilots at each of
the two centers cost $4N$ pairs. The radial stages cost
$NB^{\mathrm{rad}}$ pairs. Their sum is $T$ by
\eqref{eq:radial-budget}. Each sampled function supplies two values.

The radial stage precedes the four vector stages just specified.
Each stage permits parallel sampling of all its pairs; the next
stage waits for the preceding observations. Thus there are $5N$
sequential stages and one projection per iteration. Finally,
$1+\lfloor z\rfloor\ge z$ and
$\mathsf U-2N\ge\mathsf U/2$ prove the last floor bound.
\end{proof}

\subsection{The local affine bound and target drift in the potential}
\label{subsec:radial-potential}

The next inequality holds for each realization of the vector estimates;
unbiasedness and moment bounds enter only when taking expectations.
Fix $N\ge1$ and $u\in Q$. For arbitrary radii
$0<\gamma_t\le\gammaMax$, selected before either vector update batch at
iteration $t$, let
$\phi_t=f_{\gamma_t}$, $e_t^y=g_t^y-\nabla\phi_t(y_t)$, $e_t^x=g_t^x-\nabla\phi_t(x_t)$ and
\begin{align}
 \mathcal E_N&=\sum_ta_t^2\norm{e_t^x-e_t^y}^2,\nonumber\\*
 Z_t(u)&=a_t\ip{e_t^y}{u-v_{t-1}}
              -a_t\ip{e_t^x}{v_t-v_{t-1}},
 \label{eq:radial-errors}\\*
 \mathcal D_N&=\sum_{t<N}A_t
          [f_{\gamma_{t+1}}(x_t)-f_{\gamma_t}(x_t)],\nonumber\\*
 \mathcal D_N^+&=\sum_{t<N}A_t
          \pos{f_{\gamma_{t+1}}(x_t)-f_{\gamma_t}(x_t)}.
 \label{eq:radial-drift}
\end{align}
Here $\sum_t$ means $\sum_{t=1}^N$, and $\sum_{t<N}$ means
$\sum_{t=1}^{N-1}$. The difference in $\mathcal D_N$ compares the two
targets at the same point $x_t$; it records the effect of changing the
radius. The quantity $\mathcal D_N^+$ retains only positive changes.

\begin{samepage}
\begin{theorem}[Local radial potential]
\label{thm:radial-potential}
The scalar projection and curvature update satisfy
\begin{align}
 A_N[f(x_N)-f(u)]\le{}&h_0R_u^2/2+2D^2(H_N-h_0)
 +\sum_ta_th_{\gamma_t}(y_t)+\sum_tZ_t(u)\nonumber\\
 &+\mathcal D_N-A_N[f_{\gamma_N}(x_N)-f(x_N)].
 \label{eq:radial-potential}
\end{align}
The last line can be replaced by $\mathcal D_N^+$.
For nonincreasing radii this term can be omitted. For arbitrary
radii,
\begin{align}
 \mathcal D_N^+&\le M\mu_d\sum_{t<N}A_t
                            \pos{\gamma_{t+1}-\gamma_t},
 \label{eq:radial-lipschitz-drift}\\
 \mathcal D_N^+&\le\frac{L\nu_d}{2}\sum_{t<N}A_t
                            \pos{\gamma_{t+1}^2-\gamma_t^2}
 \label{eq:radial-smooth-drift}
\end{align}
in the Lipschitz and smooth cases respectively.
\end{theorem}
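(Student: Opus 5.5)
The plan is to follow the proof of Theorem~\ref{thm:potential} step by step, with two changes. The comparator now enters through the radial affine model of Lemma~\ref{lem:radial-affine}, and the telescoping is done exactly rather than through the monotone-target inequality \eqref{eq:moving-target-telescope-app}.

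\textbf{One-step inequality.} Fix $t$ and set $\phi_t=f_{\gamma_t}$, which is convex and differentiable by Lemma~\ref{lem:smoothing}. Add two lower bounds:
\begin{itemize}
\item convexity of $\phi_t$ at $y_t$ toward $x_{t-1}$, weighted by $A_{t-1}$;
\item \eqref{eq:radial-affine} at $(y_t,\gamma_t)$ toward $u$, weighted by $a_t$.
\end{itemize}
The identity $A_{t-1}(x_{t-1}-y_t)+a_t(u-y_t)=a_t(u-v_{t-1})$ then gives the weighted bound \eqref{eq:radial-main-weighted-model}.

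Next, combine this with the convexity bound \eqref{eq:convex-x-app} at $x_t$ and with the projection inequality \eqref{eq:projection-app}. Substitute $p_t^\iota=g_t^\iota-e_t^\iota$ and bound the curvature remainder by $2D^2\delta_t$, using the balance identity \eqref{eq:balance-app}. This reproduces \eqref{eq:potential-step-app} with two differences: $a_t\phi_t(u)$ is replaced by $a_tf(u)+a_th_{\gamma_t}(y_t)$, and $Z_t=Z_t(u)$ is as in \eqref{eq:radial-errors}. None of these steps uses any relation between the radii at different iterations.

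\textbf{Exact telescoping.} Sum over $t$. The target terms give
\[
\sum_t[A_t\phi_t(x_t)-A_{t-1}\phi_t(x_{t-1})]=A_N\phi_N(x_N)-\sum_{t<N}A_t[\phi_{t+1}(x_t)-\phi_t(x_t)]=A_N\phi_N(x_N)-\mathcal D_N.
\]
Write $A_N\phi_N(x_N)=A_Nf(x_N)+A_N[f_{\gamma_N}(x_N)-f(x_N)]$. Then drop the terminal distance, use $A_0=0$ and $\sum_ta_t=A_N$, and obtain \eqref{eq:radial-potential}.

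The variants follow from two facts. By Jensen, $f_{\gamma_N}\ge f$, so the subtracted terminal term is nonnegative and can be dropped. Also $\mathcal D_N\le\mathcal D_N^+$. For nonincreasing radii, Lemma~\ref{lem:smoothing} says $f_\gamma$ is nondecreasing in $\gamma$, so every drift summand is $\le0$ and the term can be omitted.

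\textbf{Drift bounds.} For \eqref{eq:radial-lipschitz-drift}, use \eqref{eq:radial-boundary}: $\partial_\gamma f_\gamma(x)=h_\gamma(x)/\gamma\le M\mu_d$. Alternatively, use directly
\[
|f(x+\gamma'U)-f(x+\gamma U)|\le M|\gamma'-\gamma|\norm U \quad\text{and}\quad \E\norm U=\mu_d.
\]
This bounds each positive change by $M\mu_d[\gamma_{t+1}-\gamma_t]_+$. For \eqref{eq:radial-smooth-drift}, the proof of Lemma~\ref{lem:radial-affine} gives $\partial_s f_s(x)\le L\nu_ds$. Integrating from $\gamma_t$ to $\gamma_{t+1}$ gives at most $L\nu_d(\gamma_{t+1}^2-\gamma_t^2)/2$ when the radius increases; otherwise the change is nonpositive.

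The main obstacle is at centers $x_t$ on the boundary of $Q$, and at radii up to $\gammaMax$, because differentiability in $\gamma$ needs the extension convention of Lemma~\ref{lem:smoothing}. I would argue through the global $M$-Lipschitz convex extension $\bar f$, as in Lemma~\ref{lem:radial-affine}. The smooth case uses only interior ball points almost surely, so it needs no smooth extension.
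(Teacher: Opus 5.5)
Your proposal is correct and matches the paper's proof essentially step for step. It combines convexity at $y_t$ with the radial affine model of Lemma~\ref{lem:radial-affine}, then uses convexity at $x_t$, projection optimality and the balance identity, followed by exact telescoping that yields $\mathcal D_N$ and the terminal smoothing excess; the drift bounds come from integrating $\partial_\gamma f_\gamma\le M\mu_d$ and $\partial_\gamma f_\gamma\le L\nu_d\gamma$ over increasing intervals. Your coupling alternative $|f(x+\gamma'U)-f(x+\gamma U)|\le M|\gamma'-\gamma|\norm U$ for the Lipschitz drift is a harmless variant that avoids differentiating in $\gamma$.
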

\end{samepage}

\begin{proof}
Write $p_t^y=\nabla\phi_t(y_t)$, $p_t^x=\nabla\phi_t(x_t)$ and $\delta_t=H_t-H_{t-1}$.
We first derive an inequality for one update, then sum it while retaining
the changes in the smoothed target. The ordinary affine lower
model of $\phi_t$ at $y_t$, applied to $x_{t-1}$ with weight
$A_{t-1}$, and \eqref{eq:radial-affine}, applied to $u$ with
weight $a_t$, give
\[
 A_{t-1}\phi_t(x_{t-1})+a_tf(u)
 \ge A_t\phi_t(y_t)+a_t\ip{p_t^y}{u-v_{t-1}}
                                  -a_th_{\gamma_t}(y_t).
\]
Here $A_{t-1}x_{t-1}+a_tu-A_ty_t=a_t(u-v_{t-1})$.
Convexity at $x_t$ and the center identity give
\[
 A_t[\phi_t(x_t)-\phi_t(y_t)]
 \le a_t\ip{p_t^x}{v_t-v_{t-1}}.
\]
Projection optimality gives
\[
 a_t\ip{g_t^y}{v_t-u}\le\frac{H_{t-1}}2
 [\norm{u-v_{t-1}}^2-\norm{u-v_t}^2-r_t^2].
\]
The curvature balance gives
$\delta_tD^2=\pos{A_t\widehat\beta_t-H_tr_t^2/2}$,
with $\delta_t\ge0$. Since both $\norm{u-v_t}$ and $r_t$ are at most $D$,
the curvature remainder satisfies
\[
 A_t\widehat\beta_t-H_tr_t^2/2
 +\delta_t(\norm{u-v_t}^2+r_t^2)/2\le2D^2\delta_t.
\]
Substitute $p_t^\iota=g_t^\iota-e_t^\iota$ for $\iota\in\{y,x\}$ and combine these inequalities to obtain
\begin{align*}
 A_t\phi_t(x_t)+\frac{H_t}{2}\norm{u-v_t}^2
 \le{}&A_{t-1}\phi_t(x_{t-1})
       +\frac{H_{t-1}}2\norm{u-v_{t-1}}^2+a_tf(u)\\
 &+2D^2\delta_t+Z_t(u)+a_th_{\gamma_t}(y_t).
\end{align*}
Summing cancels the intermediate distance terms. For the target terms,
\begin{align*}
 \sum_{t=1}^N\bigl[A_t\phi_t(x_t)-A_{t-1}\phi_t(x_{t-1})\bigr]
 &=A_N\phi_N(x_N)-\sum_{t=1}^{N-1}A_t
             [\phi_{t+1}(x_t)-\phi_t(x_t)]\\
 &=A_N\phi_N(x_N)-\mathcal D_N.
\end{align*}
Use $A_0=0$, $v_0=x^0$, $\sum_ta_t=A_N$ and
$\sum_t\delta_t=H_N-h_0$. Discard the nonnegative terminal distance
term and write $\phi_N(x_N)=f(x_N)+[f_{\gamma_N}(x_N)-f(x_N)]$
to obtain \eqref{eq:radial-potential}.
The smoothing excess is nonnegative, and replacing the signed
drift by its positive part gives the stated simpler version.

Nonincreasing radii make the signed drift nonpositive by convex
evenness of the radial mean. To prove
\eqref{eq:radial-lipschitz-drift}, integrate
$\partial_\gamma\Phi\le M\mu_d$ on each increasing interval.
Integration of $\partial_\gamma\Phi\le L\nu_d\gamma$ gives
\eqref{eq:radial-smooth-drift}. Decreasing intervals have zero
positive contribution.
\end{proof}

For clean update means, condition on the history before each update batch.
The vector $u-v_{t-1}$ is known before the first mean, and
$v_t-v_{t-1}$ is known before the second. Each corresponding error has
conditional mean zero, so iterated expectation gives $\E Z_t(u)=0$.
The radius diagnostic precedes both batches; selecting the target from its
observations preserves these conditional cancellations.

\subsection{Shrinking and reciprocal-radius moments}
\label{subsec:radial-reciprocal}

Let $\widehat h_t^{\,0}$ be the mean of the clean parts of the
radial records actually drawn, and set
\begin{align}
 X_t&=1/\gamma_t,\qquad
 \mathsf s_t=\widehat h_t^{\,0}/\gamma_{t-1},\nonumber\\
 \omega_{\mathrm{rad}}&=\sqrt{\chi_d}\,M_2
                       \sqrt{N/B^{\mathrm{rad}}},\qquad
 \eta_{\mathrm{rad}}=\sqrt{\chi_d}\,M_2
                       \Gamma/\sqrt{B^{\mathrm{rad}}}.
 \label{eq:radial-moment-scales}
\end{align}
The clean parts and normalized slopes are analysis quantities;
the algorithm uses the observed $\widehat h_t$.

\begin{lemma}[Local charge and reciprocal moments]
\label{lem:radial-radius}
The rule \eqref{eq:radial-policy} satisfies
\begin{equation}
 \E h_{\gamma_t}(y_t)\le\tau+2d\Delta+\eta_{\mathrm{rad}}.
 \label{eq:radial-selected-charge}
\end{equation}
If $2d\Delta<\tau$, define
\begin{equation}
 \tau_- =\tau-2d\Delta,\qquad
 \mathfrak X=\Gamma^{-1}+(M+\omega_{\mathrm{rad}})/\tau_-.
 \label{eq:radial-reciprocal-scale}
\end{equation}
Then, pathwise and in expectation respectively,
\begin{align}
 X_N&\le\max\{\Gamma^{-1},
                    \max_{t\le N}\pos{\mathsf s_t}/\tau_-\},
 \label{eq:radial-reciprocal-path}\\
 \E X_N&\le\mathfrak X,\qquad
 (\E X_N^2)^{1/2}\le\mathfrak X.
 \label{eq:radial-reciprocal-moment}
\end{align}
For clean observations these statements hold for every $\tau>0$
with $\tau_- =\tau$. Moreover
\begin{equation}
 \omega_{\mathrm{rad}}\le4\sqrt2M_2N/\sqrt T,
 \qquad\eta_{\mathrm{rad}}\le
                   4\sqrt2M_2\Gamma\sqrt N/\sqrt T.
 \label{eq:radial-public-moment-scales}
\end{equation}
\end{lemma}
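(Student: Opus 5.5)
The plan is to prove the four claims in turn, each directly from the update \eqref{eq:radial-policy}. The tools are the shrink property \eqref{eq:radial-shrink}, the conditional radial moments \eqref{eq:radial-mean-moment}, and the corruption bound $|\widehat h_t-\widehat h_t^{\,0}|\le2d\Delta$ from Lemma~\ref{lem:radial-moment}. Throughout, write $\lambda_t=1/\max\{1,\pos{\widehat h_t}/\tau\}\in(0,1]$, so that $\gamma_t=\lambda_t\gamma_{t-1}$. Also write $\zeta_t=\widehat h_t^{\,0}-h_{\gamma_{t-1}}(y_t)$. This error has conditional mean zero and conditional second moment at most $\chi_dM_2^2\gamma_{t-1}^2/B^{\rm rad}$, because $y_t$ and $\gamma_{t-1}$ are fixed by the preceding history.

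\emph{Charge bound \eqref{eq:radial-selected-charge}.}
By \eqref{eq:radial-shrink}, $h_{\gamma_t}(y_t)\le\lambda_t h_{\gamma_{t-1}}(y_t)$. Next decompose
\[
 h_{\gamma_{t-1}}(y_t)=\widehat h_t^{\,0}-\zeta_t\le\pos{\widehat h_t}+2d\Delta+|\zeta_t|.
\]
The definition of $\lambda_t$ gives $\lambda_t\pos{\widehat h_t}\le\tau$, and $\lambda_t\le1$ handles the other two terms. Hence $h_{\gamma_t}(y_t)\le\tau+2d\Delta+|\zeta_t|$. It remains to bound $\E|\zeta_t|$. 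Conditional Jensen together with $\gamma_{t-1}\le\Gamma$ gives $\E|\zeta_t|\le\sqrt{\chi_d}M_2\Gamma/\sqrt{B^{\rm rad}}=\eta_{\rm rad}$.

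\emph{Pathwise reciprocal bound \eqref{eq:radial-reciprocal-path}.}
The update is equivalent to $X_t=\max\{X_{t-1},\pos{\widehat h_t}/(\tau\gamma_{t-1})\}$, so it suffices to show that $\widehat h_t>\tau$ implies $\widehat h_t/\tau\le\widehat h_t^{\,0}/\tau_-$. This uses $\widehat h_t^{\,0}\ge\widehat h_t-2d\Delta$. For $a\ge\tau>c:=2d\Delta\ge0$ the map $a\mapsto(a-c)/a$ is nondecreasing, so $(a-c)/(\tau-c)\ge a/\tau$. Dividing by $\gamma_{t-1}$ then gives $X_t\le\max\{X_{t-1},\pos{\mathsf s_t}/\tau_-\}$, and induction from $X_0=\Gamma^{-1}$ proves the pathwise bound. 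In the clean case $c=0$, so the same argument works for every $\tau>0$.

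\emph{Moments \eqref{eq:radial-reciprocal-moment}.}
Split the normalized slope as
\[
 \mathsf s_t=h_{\gamma_{t-1}}(y_t)/\gamma_{t-1}+\zeta_t/\gamma_{t-1}.
\]
By \eqref{eq:radial-boundary} the first term is at most $M\mu_d\le M$. Then
\[
 X_N\le\Gamma^{-1}+M/\tau_-+\max_t|\zeta_t/\gamma_{t-1}|/\tau_-.
\]
To control the maximum, bound its square by the sum of squares, with no independence needed:
\[
 \E\max_t\zeta_t^2/\gamma_{t-1}^2\le\sum_t\E\bigl[\E[\zeta_t^2\mid\mathcal F]/\gamma_{t-1}^2\bigr]\le N\chi_dM_2^2/B^{\rm rad}=\omega_{\rm rad}^2.
\]
Minkowski in $L_2$ then gives $(\E X_N^2)^{1/2}\le\mathfrak X$, and Jensen gives $\E X_N\le\mathfrak X$. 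The delicate point is the normalization: $\zeta_t$ must be divided by the predictable $\gamma_{t-1}$ before the conditional moment is taken, because the conditional variance scales exactly with $\gamma_{t-1}^2$.

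\emph{Scales \eqref{eq:radial-public-moment-scales}.}
These follow from $\chi_d\le2$ and $B^{\rm rad}\ge T/(16N)$, the latter from Lemma~\ref{lem:radial-counts}. Indeed $\sqrt{2N\cdot16N/T}=4\sqrt2N/\sqrt T$ and $\sqrt{2\cdot16N/T}=4\sqrt2\sqrt N/\sqrt T$.
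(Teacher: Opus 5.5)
Your proof is correct and follows the paper's argument essentially step for step: the shrink property with the clean/corrupted decomposition for the local charge, the reciprocal recursion with induction for the pathwise bound, the split of $\mathsf s_t$ into a conditional mean at most $M$ plus a centered error (maximum controlled by a sum of squares, then Minkowski), and the floor $B^{\rm rad}\ge T/(16N)$ with $\chi_d\le2$ for the scales. The only variation is in the pathwise step: you get the one-step bound $X_t\le\max\{X_{t-1},[\mathsf s_t]_+/\tau_-\}$ directly from $a/\tau\le(a-c)/(\tau-c)$ for $a\ge\tau$, whereas the paper bounds an increase by $([\mathsf s_t]_++2d\Delta X_{t-1})/\tau$ and closes the induction using the running bound, and both are valid.
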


\begin{proof}
Condition on the history before the radial stage. Its center and
old radius are fixed. Write $h=h_{\gamma_{t-1}}(y_t)$ and
$\lambda=1/\max\{1,\pos{\widehat h_t}/\tau\}$.
Lemma~\ref{lem:radial-affine} gives
$h_{\gamma_t}(y_t)\le\lambda h$. Since $\lambda\le1$ and
$\lambda\pos{\widehat h_t}\le\tau$,
\[
 \lambda h\le\tau+|\widehat h_t-h|.
\]
Lemma~\ref{lem:radial-moment} and Cauchy--Schwarz bound its
conditional expectation by
$\tau+2d\Delta+\sqrt{\chi_d}M_2\gamma_{t-1}/
\sqrt{B^{\mathrm{rad}}}$. Using $\gamma_{t-1}\le\Gamma$
proves \eqref{eq:radial-selected-charge}.

The reciprocal update is exactly
$X_t=\max\{X_{t-1},\pos{\widehat h_t}X_{t-1}/\tau\}$.
The normalized corruption is bounded by $2d\Delta X_{t-1}$,
so
\[
 X_t\le\max\{X_{t-1},
              (\pos{\mathsf s_t}+2d\Delta X_{t-1})/\tau\}.
\]
Fix a realized execution and let
$B=\max\{\Gamma^{-1},\max_t\pos{\mathsf s_t}/\tau_-\}$.
If $X_{t-1}\le B$, the second term above is at most
$(\tau_-B+2d\Delta B)/\tau=B$. Induction proves
\eqref{eq:radial-reciprocal-path}.

Let $\bar s_t$ denote the conditional mean of $\mathsf s_t$
before the radial stage. Equations
\eqref{eq:radial-boundary} and \eqref{eq:radial-mean-moment}
give
$0\le\bar s_t\le M$ and
$\E[(\mathsf s_t-\bar s_t)^2\mid\mathcal F]
\le\chi_dM_2^2/B^{\mathrm{rad}}$. Therefore
\[
 \max_t\pos{\mathsf s_t}\le M+\max_t|\mathsf s_t-\bar s_t|,
 \qquad
 \E\max_t|\mathsf s_t-\bar s_t|^2
 \le\sum_t\E|\mathsf s_t-\bar s_t|^2\le \omega_{\mathrm{rad}}^2.
\]
Minkowski in $L_2$ bounds the norm of the slope maximum by
$M+\omega_{\mathrm{rad}}$. Apply
\eqref{eq:radial-reciprocal-path},
$\max\{a,b\}\le a+b$ and Minkowski once more to obtain the
second statement of \eqref{eq:radial-reciprocal-moment}.
Its first statement follows by Cauchy--Schwarz. This does not
require independence across iterations. Finally,
$B^{\mathrm{rad}}\ge T/(16N)$ and $\chi_d\le2$ prove
\eqref{eq:radial-public-moment-scales}.
\end{proof}

\subsection{Proof of the fixed-threshold risk bound}
\label{subsec:proof-radial-clean}

For an originally smooth population write $L_{\mathrm{pop}}=L$;
otherwise write $L_{\mathrm{pop}}=+\infty$. This is an analysis
convention and supplies no parameter to the computation.

\begin{proof}
The radii are nonincreasing, so
Theorem~\ref{thm:radial-potential} has no positive drift cost.
Conditional centering gives $\E Z_t(u)=0$ separately at the two
histories preceding the respective sampling stages. For each realized execution, its targets have
the common smoothness bound
\[
 \overline L=\min\{L_{\mathrm{pop}},\sqrt dM/\gamma_N\}.
\]
The first alternative follows from averaging an $L$-smooth
population, as in Proposition~\ref{prop:protected-smoothing-smooth}.
The second uses Lemma~\ref{lem:smoothing} and the realized
minimum radius. The square-growth proof of Lemma~\ref{lem:growth}
is pathwise. Apply it after fixing the realized execution with
this common bound to obtain
$H_N-h_0\le4\overline L+\sqrt{\mathcal E_N}/D$.

The second centered update error has zero expected cross term
with the first, which is known before the second update mean. The
protected vector floors yield
\begin{align*}
 W:=\E\mathcal E_N
 &\le\sum_ta_t^2\sigma_0^2\E(1/B_t^y+1/B_t^x)
 \le16\sigma_0^2A_N^2/\mathsf U
 \le32\sigma_0^2A_N^2/T.
\end{align*}
The potential thus gives curvature contribution
$8D^2\E\overline L/A_N$ and stochastic contribution
$2D\sqrt W/A_N\le8\sqrt2\sigma_0D/\sqrt T$.
Lemma~\ref{lem:radial-radius} and $\sum_ta_t=A_N$ bound the
local charge by $\tau+\eta_{\mathrm{rad}}$. Finally,
\[
 \E\overline L\le
 \min\{L_{\mathrm{pop}},\sqrt dM\E(1/\gamma_N)\}
 \le\min\{L_{\mathrm{pop}},\sqrt dM\mathfrak X_0\}.
\]
This proves \eqref{eq:radial-main-risk} and its actual-trajectory
refinement. The required inverse-radius moments are proved in
Lemma~\ref{lem:radial-radius}; no unexplained floor is imposed.
\end{proof}

\subsection{Positive corruption and the threshold tradeoff}
\label{subsec:radial-corruption}

\begin{theorem}[Risk under spatial corruption]
\label{thm:radial-corruption}
Suppose $2d\Delta<\tau$ and use
\eqref{eq:radial-reciprocal-scale}. Under the uniform clean vector
variance bound $\sigma_0^2$ and the radial sample regularity,
\begin{align}
 \E[f(x_N)-f(u)]\le{}&\frac{h_0R_u^2}{2A_N}
 +\frac{8D^2}{A_N}
       \min\{L_{\mathrm{pop}},\sqrt dM\mathfrak X\}
 \nonumber\\
 &+\tau+2d\Delta+\eta_{\mathrm{rad}}
           +6c_dD\Delta\mathfrak X\nonumber\\
 &+\frac{16\sigma_0D}{\sqrt T}
       +\frac{16dD\Delta\mathfrak X}{\sqrt T},
 \label{eq:radial-corrupt-risk}
\end{align}
where $c_d=d\E_e|[e]_1|\le\sqrt d$.
\end{theorem}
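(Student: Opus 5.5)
The plan is to rerun the proof of the clean bound \eqref{eq:radial-main-risk} while carrying the corruption through three places: the vector biases, the vector variances and the radial diagnostic.

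\textbf{Step 1: the potential.} Start from the pathwise potential of Theorem~\ref{thm:radial-potential}. The radii selected by \eqref{eq:radial-policy} are nonincreasing, so the drift term can be omitted. Conditioning at the two histories removes the centered parts $\zeta_t^\iota$ from $Z_t(u)$. The biased parts contribute at most $D\sum_t t\,\E(\|b_t^y\|+\|b_t^x\|)$.

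\textbf{Step 2: the vector biases and variances.} By Lemma~\ref{lem:oracle-moments} at the selected radius $\gamma_t\ge\gamma_N$,
\[
 \|b_t^\iota\|\le c_d\Delta X_N,\qquad
 \E[\|\zeta_t^\iota\|^2\mid\mathcal F_t^\iota]\le\bigl(2\sigma_0^2+2d^2\Delta^2X_N^2\bigr)/B_t^\iota .
\]
These bounds involve the random quantity $X_N=1/\gamma_N$, which is not predictable. I would therefore bound pathwise by $X_N$ and postpone expectations until the end. Pulling $X_N$ out of the conditional expectation is not legitimate, so instead use $X_t\le X_N$ together with the predictable bound $X_t$. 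The radius $\gamma_t$ is selected before both batches, so $X_t$ is $\mathcal F_t^\iota$-measurable.

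\textbf{Step 3: the coefficient growth.} Apply the pathwise Lemma~\ref{lem:growth} with the realized common smoothness bound $\overline L=\min\{L_{\rm pop},\sqrt dMX_N\}$. Following the proof of Theorem~\ref{thm:corruption}, split $\sqrt{\mathcal E_N}$ by Minkowski in the weighted product $L_2$ space into a bias sequence of norm at most $2c_d\Delta\sqrt{\sum_t t^2}\,X_N$ and a centered sequence. The centered cross terms vanish by conditional centering. The floors of Lemma~\ref{lem:radial-counts} then give
\[
 \sqrt{W_\zeta}\le 4A_N\sqrt{\bigl(2\sigma_0^2+2d^2\Delta^2\,\E X_N^2\bigr)/\mathsf U}.
\]
Collecting terms yields
\begin{itemize}
 \item the bias total $(2+4)c_dD\Delta\,\E X_N\le 6c_dD\Delta\mathfrak X$;
 \item the stochastic terms $16\sigma_0D/\sqrt T+16dD\Delta(\E X_N^2)^{1/2}/\sqrt T$, after $\mathsf U\ge T/2$ and $\sqrt{a+b}\le\sqrt a+\sqrt b$;
 \item the curvature term $8D^2\E\overline L/A_N\le 8D^2\min\{L_{\rm pop},\sqrt dM\mathfrak X\}/A_N$.
\end{itemize}

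\textbf{Step 4: the local charge and reciprocal moments.} Bound the local radial charge by \eqref{eq:radial-selected-charge}, which gives $\tau+2d\Delta+\eta_{\rm rad}$ per step and averages to the same after dividing by $A_N$. Lemma~\ref{lem:radial-radius} supplies $\E X_N\le\mathfrak X$ and $(\E X_N^2)^{1/2}\le\mathfrak X$ under $2d\Delta<\tau$. Substituting everything gives \eqref{eq:radial-corrupt-risk}.

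\textbf{Main obstacle.} The hardest step is making Step 2 rigorous: the bias bound must be pathwise in a random, nonpredictable $X_N$, while the variance cancellations require conditioning. The fix is to apply each moment bound with the predictable $X_t$, dominate $X_t$ by $X_N$ afterwards, and use Cauchy--Schwarz and Jensen only at the end, with $\E X_N^2$ controlled by Lemma~\ref{lem:radial-radius}.
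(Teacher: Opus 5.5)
Your proposal is correct and follows the paper's proof essentially step by step. The shared steps are the radial potential with the drift dropped for nonincreasing radii, the bias/centered split handled by Minkowski in the weighted product $L_2$ space with only the centered cross term cancelled, the deterministic batch floors, the pathwise growth lemma with the realized bound $\min\{L_{\mathrm{pop}},\sqrt dMX_N\}$, and Lemma~\ref{lem:radial-radius} for the local charge and the reciprocal moments. One small slip: the energy part of your bias total involves $(\E X_N^2)^{1/2}$ rather than $\E X_N$, so the total is $2c_dD\Delta\E X_N+4c_dD\Delta(\E X_N^2)^{1/2}$; since both moments are at most $\mathfrak X$, the bound $6c_dD\Delta\mathfrak X$ still holds.
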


\begin{proof}
At each selected vector center the corruption mean bias has norm
at most $c_d\Delta/\gamma_t$, and its centered single-record
variance is at most
$2\sigma_0^2+2d^2\Delta^2/\gamma_t^2$, by the symmetric-record
calculation in Appendix~\ref{sec:app-batches}. Write
$e_t^\iota=b_t^\iota+\zeta_t^\iota$ for $\iota\in\{y,x\}$, with conditionally centered $\zeta_t^\iota$.
Lemma~\ref{lem:radial-radius} supplies the reciprocal $L_2$ bound
$\mathfrak X$. Therefore the weighted product-space $L_2$ norm
of $(b_t^x-b_t^y)_t$ is at most
$2c_d\Delta\mathfrak X\sqrt{\sum_ta_t^2}$.

The second centered error has zero expected cross term with the
first. The deterministic floor in \eqref{eq:radial-floors}
bounds their accumulated variance by
\[
 \E\sum_ta_t^2\norm{\zeta_t^x-\zeta_t^y}^2
 \le\frac{16A_N^2}{\mathsf U}
        (2\sigma_0^2+2d^2\Delta^2\mathfrak X^2).
\]
This uses the fixed minimum batch size, without requiring independence between
the final inverse-radius maximum and a data-selected count. Set
$\overline V=2\sigma_0^2+2d^2\Delta^2\mathfrak X^2$.
Minkowski in the weighted product space gives
\[
 \sqrt{\E\mathcal E_N}\le
 2c_d\Delta\mathfrak X\sqrt{\sum_ta_t^2}
                       +4A_N\sqrt{\overline V/\mathsf U}.
\]
The expected potential bias, after division by $A_N$, is at most
$2c_dD\Delta\mathfrak X$. The first term of the error-energy
bound adds at most $4c_dD\Delta\mathfrak X$, because
$\sqrt{\sum_ta_t^2}\le A_N$. Their sum gives coefficient $6$.
The variance term is $8D\sqrt{\overline V/\mathsf U}$.
Use $\mathsf U\ge T/2$ and
$\sqrt{\overline V}\le\sqrt2\sigma_0+\sqrt2d\Delta\mathfrak X$
to obtain the last two terms in
\eqref{eq:radial-corrupt-risk}. The curvature and local charge are
bounded exactly as in Theorem~\ref{thm:radial-clean} and
Lemma~\ref{lem:radial-radius}. No cross-term cancellation between
a second-center mean bias and the first error is assumed.
\end{proof}

For any threshold chosen before sampling, the clean theorem also gives
\begin{align}
 \E[f(x_N)-f(u)]\le{}&\frac{h_0R_u^2}{2A_N}
 +\frac{16\sqrt dMD^2}{\Gamma N^2}
 +\frac{16\sqrt dM^2D^2}{\tau N^2}\nonumber\\*
 &+\frac{64\sqrt{2d}MM_2D^2}{\tau N\sqrt T}
 +\tau+\frac{8\sqrt2\sigma_0D}{\sqrt T}
 +\frac{4\sqrt2M_2\Gamma\sqrt N}{\sqrt T}.
 \label{eq:radial-nonsmooth-envelope}
\end{align}
This follows by expanding \eqref{eq:radial-main-risk}, using
$A_N\ge N^2/2$ and \eqref{eq:radial-public-moment-scales}.
The threshold scale $\tau\asymp d^{1/4}MD/N$ yields the desired
nonsmooth orders for its threshold-dependent terms, but uses the
unknown $M$. Its initial-radius correction remains explicit.
For the smooth choice \eqref{eq:radial-main-smooth-choices}, this
nonsmooth envelope instead contains the nonvanishing term
$16\sqrt dM^2/h_0$. The smooth guarantee therefore does not
establish an optimal simultaneous nonsmooth guarantee.

The condition $2d\Delta<\tau$ and the reciprocal-dependent bias
in \eqref{eq:radial-corrupt-risk} are sufficient conditions for
this unrestricted shrinking policy. They differ from the
protected-radius noise allowance. A threshold policy preserving
the best smooth and nonsmooth orders and the full admissible
corruption frontier requires further control. For radius increases,
the positive target drift in \eqref{eq:radial-drift} remains an
additional cost.

\subsection{The radial corruption margin}
\label{subsec:radial-collapse}

\begin{proposition}[Collapse beyond the radial corruption margin]
\label{prop:radial-collapse}
For $\tau,\Gamma>0$ and $2d\Delta>\tau$, there is a deterministic
flat population and a fixed spatial corruption of magnitude
$\Delta$ for which the proposed execution satisfies, almost surely,
\[
 \gamma_t=\Gamma\bigl(\tau/(2d\Delta)\bigr)^t,
 \qquad t=0,\ldots,N.
\]
Its vector records are all zero and its centers remain $x^0$.
Consequently the radial corruption bound $2d\Delta$ is sharp in
the stated spatial-corruption class. The conclusion concerns
inverse-radius stability; the original objective gap is zero.
\end{proposition}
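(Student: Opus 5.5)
The plan is to build an explicit instance in which the corruption makes every radial record equal $2d\Delta$ and every vector record vanish. Take $F(x,\xi)\equiv0$, so $f\equiv0$, every $f_\gamma\equiv0$ and every objective gap is zero. Any admissible $Q\ni x^0$ will do, for instance a ball. Put $q=\tau/(2d\Delta)\in(0,1)$ and define the spatial corruption by its distance to the start,
\[
 \delta(z)=\begin{cases}\Delta,&\norm{z-x^0}\in\{\Gamma q^k:k=0,1,2,\ldots\},\\ -\Delta,&\text{otherwise}.\end{cases}
\]
This is a Borel function with $\norm{\delta}_\infty=\Delta$, so it is admissible in the fixed spatial-corruption class of \eqref{eq:oracle}.

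I would then prove by induction on $t$ that, almost surely, $y_t=x_{t-1}=v_{t-1}=x^0$ and $\gamma_{t-1}=\Gamma q^{t-1}$.

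\emph{Radial stage.} At center $x^0$ and radius $\gamma_{t-1}$, a record \eqref{eq:radial-record} with corruption equals
\[
 d[\delta(x^0+\gamma_{t-1}e)-\delta(x^0+\gamma_{t-1}re)].
\]
The first point has distance exactly $\Gamma q^{t-1}$ from $x^0$, so its corruption is $+\Delta$. The second has distance $\gamma_{t-1}r$. Since $r$ has a continuous law, $\gamma_{t-1}r$ lies in the countable set $\{\Gamma q^k\}$ with probability zero, so that corruption is $-\Delta$ almost surely. Hence every record, and therefore $\widehat h_t$, equals $2d\Delta$ almost surely. Because $2d\Delta/\tau>1$, rule \eqref{eq:radial-policy} gives $\gamma_t=\gamma_{t-1}q=\Gamma q^t$.

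\emph{Vector stages.} Every symmetric record at $x^0$ with radius $\gamma_t$ uses the two points $x^0\pm\gamma_te$. Both have distance $\Gamma q^t$ from $x^0$, so both corruptions equal $+\Delta$ and the difference is zero. Thus all pilots, $\widehat v_j$, $\lambda_j$ and update means vanish, whatever batch sizes the surplus rule assigns. The step gives $v_t=\proj(x^0)=x^0$ and then $x_t=x^0$. The next center is $y_{t+1}=x^0$, $\widehat\beta_t=0$ and $H_t=h_0$. All query points lie within $\Gamma\le\gammaMax$ of $x^0\in Q$, so they are legal. This closes the induction and gives the displayed geometric collapse.

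The remaining claims are short. The sharpness assertion follows because Lemma~\ref{lem:radial-radius} keeps $X_N$ bounded when $2d\Delta<\tau$, whereas here $1/\gamma_N=\Gamma^{-1}q^{-N}$ grows exponentially as soon as $2d\Delta>\tau$. The gap $f(x_N)-f(u)=0$ holds trivially. The only delicate point is the almost-sure claim about inner radii. I would handle it by noting that $r\sim\mathrm{Beta}(d,1)$ has a density, so the countable set of radii has null preimage, together with independence of $r$ from the past, which fixes $\gamma_{t-1}$ at a deterministic value along the induction.
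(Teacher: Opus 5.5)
Your proposal is correct and is essentially the paper's proof. It uses the same flat instance $F\equiv0$ and the same corruption, equal to $+\Delta$ exactly on the spheres of radii $\Gamma(\tau/(2d\Delta))^j$ about $x^0$ and $-\Delta$ elsewhere. It then uses the continuous law of $r$ to send every inner radial endpoint to $-\Delta$ almost surely, equal-norm symmetric endpoints to make every vector record vanish, and the same induction. The only point the paper states that you leave implicit is that the zero samples satisfy every clean vector and radial moment requirement; this is what places the instance in the class covered by Lemma~\ref{lem:radial-radius} and gives the sharpness claim its content.
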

\begin{proof}
Translate $x^0$ to zero and set $\rho=\tau/(2d\Delta)\in(0,1)$.
Take $F(z,\xi)=f(z)=0$ on any compact convex feasible set
containing zero and any evaluation domain of width at least $\Gamma$.
Fix before the execution the Borel corruption
\[
 \delta(z)=\begin{cases}
 +\Delta,&\norm z\in\{\Gamma\rho^j:j=0,1,\ldots\},\\
 -\Delta,&\text{otherwise}.
 \end{cases}
\]
It is globally bounded by $\Delta$ and is independent of the
sampled functions and observation history. If the center is zero and the old radius
is $\Gamma\rho^{t-1}$, the radial outer endpoint is on a fixed
sphere and has value $+\Delta$. The inner endpoint has radius
$r\Gamma\rho^{t-1}$. Its continuous radial density gives zero
probability of hitting the countable set of fixed radii. Thus its
value is $-\Delta$ almost surely, and every radial record is
$2d\Delta$. The mean is the same, so the radius is multiplied
by $\rho$.

At the selected radius both endpoints of any symmetric vector
record have equal norm and equal corruption $+\Delta$. Their
difference vanishes. Hence the pilots and update means are zero,
the projection leaves $v_t=0$, and the averaged centers are zero.
Induction proves the displayed formula and the center assertion.
The samples have zero weak gradients and meet every stated clean
moment and regularity requirement. The flat population has zero
objective gap, so this is the claimed stability obstruction.
\end{proof}

\section{Curvature-controlled smoothing thresholds}
\label{sec:app-threshold}

We retain one scalar accelerated state and permit radius reexpansion
when an adjusted curvature reference doubles. The selected radial
derivative controls both the affine-model error and the positive target
drift. The controller uses no population Lipschitz, smoothness or
sample-moment parameter.

The geometry and population conditions are
Assumption~\ref{ass:geometry}. The oracle is clean, its conditional
vector variance is at most $\sigma_0^2$ uniformly at the positive radii
and centers used, and the radial records satisfy the additional sample
regularity in Lemma~\ref{lem:radial-moment}. Fix $u\in Q$ and
$R_u=\norm{u-x^0}$. Set $a_t=t$, $A_t=t(t+1)/2$ and retain the
projection, two centers and scalar update
\eqref{eq:center-y}--\eqref{eq:coefficient}.

The initial algorithm coefficient may be $H_0=h_{\rm init}\ge
h_{\rm ref}>0$, where $h_{\rm ref}$ is a reference chosen before sampling. Define
\begin{equation}
 \mathsf K_t=h_{\rm ref}+H_t-h_{\rm init},\qquad
 \underline K_0=h_{\rm ref}.
 \label{eq:threshold-adjusted-curvature}
\end{equation}
The default choice is $h_{\rm init}=h_{\rm ref}=h_0$.
An initial coefficient chosen from observations is fixed in the initial
history and is assumed integrable; its preceding oracle cost is separate
from the subsequent execution counted here. In this case the startup
term below is $\E[h_{\rm init}R_u^2]/(2A_N)$.
The adjusted coefficient satisfies $\mathsf K_t\le H_t$ and has
exactly the same increments as $H_t$.

\subsection{Executions with fixed or adaptive batch sizes}
\label{subsec:threshold-construction}

Choose $N\ge1$ before sampling. For later substitutions of the geometry, write
$\Gamma(D)=\min\{\gammaMax,d^{1/4}D/N\}$. Define
\begin{equation}
 \Gamma=\min\{\gammaMax,d^{1/4}D/N\},\qquad
 B^{\rm rad}=\floor{T/(8N)},\qquad \gamma_0=\Gamma.
 \label{eq:threshold-initial}
\end{equation}
The execution with fixed batch sizes requires integer $T\ge8N$. Put
\begin{align}
 \mathsf U_{\rm pub}&=T-NB^{\rm rad},\nonumber\\
 B_t^{\rm pub}&=1+
 \floor{\frac{(\mathsf U_{\rm pub}-2N)t}{N(N+1)}},\qquad
 m_{\rm pub}=\mathsf U_{\rm pub}-2\sum_{t=1}^NB_t^{\rm pub}.
 \label{eq:threshold-public-batches}
\end{align}
Use $B_t^y=B_t^x=B_t^{\rm pub}$, except
$B_N^y=B_N^{\rm pub}+m_{\rm pub}$. These counts are known initially.
The execution with adaptive batch sizes requires $T\ge16N$ and retains
\PaidBatch\ and the surplus accounts of Algorithm~\ref{alg:scalar},
initialized once with
\begin{equation}
 \mathsf U_{\rm emp}=T-NB^{\rm rad}-4N,\qquad
 B_t^0=1+
 \floor{\frac{(\mathsf U_{\rm emp}-2N)t}{2N(N+1)}}.
 \label{eq:threshold-empirical-batches}
\end{equation}

Before the radial stage in iteration $t$, compute
\begin{align}
 I_t&=\mathbf1\{\mathsf K_{t-1}\ge2\underline K_{t-1}\},\nonumber\\
 \underline K_t&=
 \begin{cases}\mathsf K_{t-1},&I_t=1,\\
 \underline K_{t-1},&I_t=0,
 \end{cases}
 &\gamma_t^{\rm try}&=
 \begin{cases}\Gamma,&I_t=1,\\\gamma_{t-1},&I_t=0.
 \end{cases}
 \label{eq:threshold-reference}
\end{align}
At the conditionally fixed $y_t,\gamma_t^{\rm try}$, draw
$B^{\rm rad}$ conditionally independent records
\eqref{eq:radial-estimate-main}:
\begin{equation}
 J_{\gamma_t^{\rm try}}(y_t)
 =d[F(y_t+\gamma_t^{\rm try}e,\xi)
       -F(y_t+\gamma_t^{\rm try}r e,\xi)],
 \qquad r\sim\mathrm{Beta}(d,1).
 \label{eq:threshold-radial-record}
\end{equation}
The direction, radial scalar and sampled function in each record are
independent of one another and of the preceding history; the function
is evaluated at the two endpoints. Let $\widehat h_t$ be their mean
and set
\begin{equation}
 \tau_t=\underline K_tD^2/N^2,\qquad
 \gamma_t=\frac{\gamma_t^{\rm try}}
                 {\max\{1,\pos{\widehat h_t}/\tau_t\}}.
 \label{eq:threshold-rule}
\end{equation}
Both vector update means use the same selected radius. Each batch uses
functions and directions sampled independently of all preceding observations. The positive radius can increase
only when $I_t=1$. The optimizer vectors, weights, coefficient $H_t$
and all sampling accounts persist.

\begin{algorithm}[t]
\caption{Scalar acceleration with curvature-controlled smoothing}
\label{alg:threshold}
\small
\begin{algorithmic}[1]
\Require $Q,x^0,D$, $N\ge1$, $h_{\rm init}\ge h_{\rm ref}>0$,
$\gammaMax>0$; fixed batch sizes with $T\ge8N$, or adaptive batch sizes with
$T\ge16N$ and $s_0>0$
\State Set $x_0=v_0=x^0$, $H_0=h_{\rm init}$,
$\mathsf K_0=\underline K_0=h_{\rm ref}$
\State Compute $\Gamma,B^{\rm rad}$ by
\eqref{eq:threshold-initial} and set $\gamma_0=\Gamma$
\State Initialize the fixed batch sizes
\eqref{eq:threshold-public-batches} or the adaptive-batch accounts
\eqref{eq:threshold-empirical-batches}
\For{$t=1,\ldots,N$}
 \State Set $a_t=t$, $A_t=t(t+1)/2$; compute $y_t$
 by \eqref{eq:center-y}
 \State Compute $I_t,\underline K_t,\gamma_t^{\rm try}$
 by \eqref{eq:threshold-reference}
 \State Draw $\widehat h_t$ from $B^{\rm rad}$ conditionally independent
 records \eqref{eq:threshold-radial-record}
 \State Compute $\tau_t,\gamma_t$ by \eqref{eq:threshold-rule}
 \If{batch sizes are adaptive}
  \State $B_t^y\gets\PaidBatch(y_t,t,2t-1,\gamma_t)$
 \EndIf
 \State $g_t^y\gets\FreshMean(y_t,B_t^y,\gamma_t)$
 \State Compute $v_t,x_t$ by \eqref{eq:step}
 \If{batch sizes are adaptive}
  \State $B_t^x\gets\PaidBatch(x_t,t,2t,\gamma_t)$
 \EndIf
 \State $g_t^x\gets\FreshMean(x_t,B_t^x,\gamma_t)$
 \State Update $r_t,\widehat\beta_t,H_t$ by
 \eqref{eq:curvature}--\eqref{eq:coefficient}
 \State Set $\mathsf K_t=h_{\rm ref}+H_t-h_{\rm init}$
\EndFor
\State \Return $x_N$
\end{algorithmic}
\end{algorithm}

\subsection{Exact costs and conditional moments}
\label{subsec:threshold-accounting}

Set $\chi_d=2d^2/((d+1)(d+2))\le2$ and define
\begin{align}
 \omega_{\rm rad}&=\sqrt{\chi_d}M_2\sqrt{N/B^{\rm rad}},&
 \eta_{\rm rad}&=\sqrt{\chi_d}M_2\Gamma/\sqrt{B^{\rm rad}},
 \label{eq:threshold-moment-scales}\\
 c_{\rm vec}&=
 \begin{cases}8/\sqrt7,&\text{fixed batches},\\
 4\sqrt2,&\text{adaptive batches}.
 \end{cases}\nonumber
\end{align}
Write $\phi_t=f_{\gamma_t}$,
$e_t^y=g_t^y-\nabla\phi_t(y_t)$, $e_t^x=g_t^x-\nabla\phi_t(x_t)$ and
$\mathcal E_N=\sum_t t^2\norm{e_t^x-e_t^y}^2$.
The normalized diagnostic and its maximum are
\begin{equation}
 \mathsf s_t=\widehat h_t/\gamma_t^{\rm try},\qquad
 \mathsf S_N=\max_{t\le N}\pos{\mathsf s_t}.
 \label{eq:threshold-slope}
\end{equation}

\begin{lemma}[Accounting and diagnostic moments]
\label{lem:threshold-accounting}
Algorithm~\ref{alg:threshold} spends exactly $T$ pairs, $2T$ values
and $N$ projections. It uses $3N$ sequential sampling stages with fixed
batch sizes and $5N$ with adaptive batch sizes. Moreover
\begin{align}
 \frac{\sqrt{\E\mathcal E_N}}D
 &\le\frac{c_{\rm vec}\sigma_0A_N}{D\sqrt T},
 \label{eq:threshold-energy}\\*
 \omega_{\rm rad}&\le4\sqrt2M_2N/\sqrt T,&
 \eta_{\rm rad}&\le
 \frac{4\sqrt2d^{1/4}M_2D}{\sqrt N\sqrt T},
 \label{eq:threshold-public-moments}\\*
 \E\mathsf S_N&\le M+\omega_{\rm rad},&
 (\E\mathsf S_N^2)^{1/2}&\le M+\omega_{\rm rad}.
 \label{eq:threshold-slope-moments}
\end{align}
Conditionally on the history $\mathcal F_t^{\rm rad}$ before its
radial stage,
\begin{equation}
 \E[h_{\gamma_t}(y_t)\mid\mathcal F_t^{\rm rad}]
 \le\tau_t+\eta_{\rm rad}.
 \label{eq:threshold-local-charge}
\end{equation}
\end{lemma}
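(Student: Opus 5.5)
The plan is to verify the five groups of claims separately, reusing the arguments of Lemma~\ref{lem:radial-counts}, Lemma~\ref{lem:radial-radius} and the proof of Theorem~\ref{thm:paid}. The only new ingredient relative to Appendix~\ref{sec:app-radial} is the reexpansion rule \eqref{eq:threshold-reference}. The indicator $I_t$, the reference $\underline K_t$, the trial radius $\gamma_t^{\rm try}$ and the threshold $\tau_t$ are all functions of the state after iteration $t-1$. They are therefore measurable with respect to $\mathcal F_t^{\rm rad}$, so every conditional argument used earlier for the fixed old radius $\gamma_{t-1}$ applies verbatim with $\gamma_t^{\rm try}$ in its place.

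\emph{Accounting.} I first show $B^{\rm rad}\ge T/(16N)$.
\begin{itemize}
\item In both executions $T/(8N)\ge1$, and $\floor z\ge z/2$ for $z\ge1$, so this lower bound holds.
\item Fixed batches. With $\mathsf U_{\rm pub}=T-NB^{\rm rad}\ge7T/8\ge7N$, the counts satisfy
\[
2\sum_tB_t^{\rm pub}\le2N+(\mathsf U_{\rm pub}-2N)=\mathsf U_{\rm pub}.
\]
Hence $m_{\rm pub}\ge0$, and adding it to the final $y$ batch spends exactly $\mathsf U_{\rm pub}$ pairs. Together with the $NB^{\rm rad}$ radial pairs this gives exactly $T$ pairs. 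Each iteration has a radial stage, a $y$ update stage and an $x$ update stage, so there are $3N$ stages.
\item Adaptive batches. The count is the one in Lemma~\ref{lem:radial-counts}, with $\mathsf U_{\rm emp}\ge5T/8$, giving $5N$ stages.
\end{itemize}
In both cases there is one projection per iteration and each pair contributes two values.

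\emph{Energy bound \eqref{eq:threshold-energy}.} The radius $\gamma_t$ is fixed before both update batches. Conditional centering of the $x$ mean, given a history containing $e_t^y$, removes the cross term, as in the proof of Corollary~\ref{cor:public}. The uniform variance $\sigma_0^2$ then gives
\[
\E\mathcal E_N\le\sigma_0^2\sum_tt^2\,\E\bigl(1/B_t^y+1/B_t^x\bigr).
\]
\begin{itemize}
\item Fixed batches. From $1+\floor z\ge z$ and $\mathsf U_{\rm pub}-2N\ge\mathsf U_{\rm pub}/2$ we get $B_t^\iota\ge\mathsf U_{\rm pub}t/(2N(N+1))$. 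This yields $\E\mathcal E_N\le8A_N^2\sigma_0^2/\mathsf U_{\rm pub}\le64A_N^2\sigma_0^2/(7T)$, which is the constant $8/\sqrt7$.
\item Adaptive batches. The floors of Lemma~\ref{lem:paid-accounting} with $\mathsf U_{\rm emp}\ge T/2$ give $16A_N^2\sigma_0^2/\mathsf U_{\rm emp}\le32A_N^2\sigma_0^2/T$, which is the constant $4\sqrt2$.
\end{itemize}

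\emph{Diagnostic moments.} The scales in \eqref{eq:threshold-public-moments} follow from $\chi_d\le2$, $B^{\rm rad}\ge T/(16N)$ and $\Gamma\le d^{1/4}D/N$.

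For \eqref{eq:threshold-slope-moments}, condition on $\mathcal F_t^{\rm rad}$. Lemmas~\ref{lem:radial-affine} and~\ref{lem:radial-moment} then show two facts:
\begin{itemize}
\item $\mathsf s_t$ has conditional mean $h_{\gamma_t^{\rm try}}(y_t)/\gamma_t^{\rm try}\in[0,M\mu_d]\subset[0,M]$;
\item its conditional variance is at most $\chi_dM_2^2/B^{\rm rad}$, since the factor $(\gamma_t^{\rm try})^2$ cancels.
\end{itemize}
Bounding the maximum of the deviations by the square root of their sum of squares gives an $L_2$ norm at most $\omega_{\rm rad}$. Minkowski's inequality then gives the second-moment bound, and Cauchy--Schwarz gives the first. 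No independence across iterations is needed.

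\emph{Local charge \eqref{eq:threshold-local-charge}.} Put $\lambda=1/\max\{1,\pos{\widehat h_t}/\tau_t\}$. By \eqref{eq:radial-shrink},
\[
h_{\gamma_t}(y_t)\le\lambda h_{\gamma_t^{\rm try}}(y_t)\le\tau_t+\bigl|\widehat h_t-h_{\gamma_t^{\rm try}}(y_t)\bigr|.
\]
Conditional Cauchy--Schwarz with \eqref{eq:radial-mean-moment} and $\gamma_t^{\rm try}\le\Gamma$ bounds the conditional expectation of the last term by $\eta_{\rm rad}$.

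The step I expect to need the most care is the measurability bookkeeping under reexpansion. One must check that $\tau_t$ and $\gamma_t^{\rm try}$ are determined before the radial records are drawn, so that they can be pulled out of the conditional expectation. It also has to be confirmed that reexpansion never exceeds $\Gamma$. Everything else is a direct transcription of the earlier lemmas.
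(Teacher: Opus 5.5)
Your proposal is correct and follows the paper's proof essentially step for step. The paper uses the same ingredients: the floor bound $B^{\rm rad}\ge T/(16N)$ and the batch-count sums for the accounting, conditional centering with the deterministic batch floors for the energy bound (constants $8/\sqrt7$ and $4\sqrt2$), the sum-of-squares bound on the maximal deviation followed by Minkowski and Cauchy--Schwarz for $\mathsf S_N$, and \eqref{eq:radial-shrink} with conditional Cauchy--Schwarz and $\gamma_t^{\rm try}\le\Gamma$ for the local charge. Your explicit check that $I_t,\underline K_t,\gamma_t^{\rm try},\tau_t$ are fixed before the radial records is exactly the predictability the paper uses implicitly.
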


\begin{proof}
For $x\ge1$, $\floor x\ge x/2$. Thus in both modes
$B^{\rm rad}\ge T/(16N)$ and $NB^{\rm rad}\le T/8$.
With fixed batch sizes $\mathsf U_{\rm pub}\ge7T/8\ge7N$, and
\[
 2\sum_tB_t^{\rm pub}\le
 2N+\frac{2(\mathsf U_{\rm pub}-2N)}{N(N+1)}\sum_t t
 =\mathsf U_{\rm pub}.
\]
The leftover for fixed batch sizes is nonnegative and its predetermined addition
makes the update counts sum exactly to $\mathsf U_{\rm pub}$.
Since $\mathsf U_{\rm pub}\ge4N$, their floors satisfy
$B_t^\iota\ge\mathsf U_{\rm pub}t/(2N(N+1))$ for $\iota\in\{y,x\}$.
With adaptive batch sizes $\mathsf U_{\rm emp}\ge5T/8\ge10N$.
The adaptive-batch accounting proof gives exactly $\mathsf U_{\rm emp}$ update
pairs and $B_t^\iota\ge\mathsf U_{\rm emp}t/(4N(N+1))$; the vector pilots
cost $4N$ pairs. Adding the $NB^{\rm rad}$ radial pairs gives $T$
in both modes. Each sampled function supplies exactly two values. Each iteration
has one radial stage and two vector stages with fixed batch sizes, or one radial
stage and four vector stages with adaptive batch sizes, and one projection.

The second vector error is centered conditionally on its history,
which contains the first error. Their expected cross term is zero.
The floors therefore give
\[
 \E\mathcal E_N\le
 \begin{cases}
 8\sigma_0^2A_N^2/\mathsf U_{\rm pub}
 \le(64/7)\sigma_0^2A_N^2/T,\\
 16\sigma_0^2A_N^2/\mathsf U_{\rm emp}
 \le32\sigma_0^2A_N^2/T.
 \end{cases}
\]
This proves \eqref{eq:threshold-energy}.

Let $\bar s_t$ be the conditional mean of $\mathsf s_t$ before its
radial stage. The radial moment lemma gives
$0\le\bar s_t\le M$ and conditional variance at most
$\chi_dM_2^2/B^{\rm rad}$. Hence
\[
 \mathsf S_N\le M+\max_t|\mathsf s_t-\bar s_t|,\qquad
 \E\max_t|\mathsf s_t-\bar s_t|^2
 \le\sum_t\E|\mathsf s_t-\bar s_t|^2\le\omega_{\rm rad}^2.
\]
Minkowski and Cauchy--Schwarz prove the two maximum-moment bounds,
without independence across iterations.
For the local charge set $\lambda=\gamma_t/\gamma_t^{\rm try}\le1$.
Lemma~\ref{lem:radial-affine} and the threshold rule give
\[
 h_{\gamma_t}(y_t)\le
 \lambda h_{\gamma_t^{\rm try}}(y_t)
 \le\tau_t+|\widehat h_t-h_{\gamma_t^{\rm try}}(y_t)|,
\]
because $\lambda\pos{\widehat h_t}\le\tau_t$.
Conditional Cauchy--Schwarz bounds the last expectation by
$\sqrt{\chi_d}M_2\gamma_t^{\rm try}/\sqrt{B^{\rm rad}}$
and hence by $\eta_{\rm rad}$.
The diagnostic floor, $\chi_d\le2$ and
$\Gamma\le d^{1/4}D/N$ prove
\eqref{eq:threshold-public-moments}.
\end{proof}

\subsection{Reference growth and reciprocal radii}
\label{subsec:threshold-reference-proof}

\begin{lemma}[Reciprocal radii and geometric references]
\label{lem:threshold-reference}
For every iteration,
\begin{align}
 \underline K_t&\le\mathsf K_{t-1}<2\underline K_t,&
 \gamma_t^{-1}&\le
 \max\{\Gamma^{-1},\mathsf S_NN^2/(\underline K_tD^2)\},
 \label{eq:threshold-reciprocal}\\
 L_t&\le\frac{\sqrt dM}{\Gamma}
 +\frac{2\sqrt dM\mathsf S_NN^2}{\mathsf K_{t-1}D^2},
 \label{eq:threshold-smoothness}
\end{align}
where $L_t$ is a valid smoothness bound for the selected target.
If $J=\sum_t I_t$, then
\begin{equation}
 J\le\log_2\!\left(1+\frac{H_N-h_{\rm init}}{h_{\rm ref}}\right),
 \qquad
 \sum_{t:I_t=1}\underline K_t\le2(H_N-h_{\rm init}).
 \label{eq:threshold-geometric}
\end{equation}
\end{lemma}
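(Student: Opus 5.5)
The argument is an induction over $t$ driven by the reference rule \eqref{eq:threshold-reference}, combined with the selection rule \eqref{eq:threshold-rule}. It uses three facts: $\mathsf K_t$ is nondecreasing (it has the same increments as $H_t$, and those are nonnegative); $\mathsf K_0=\underline K_0=h_{\rm ref}$; and $\underline K_t$ only changes when $I_t=1$.

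\textbf{The reference sandwich.} I would first show $\underline K_t\le\mathsf K_{t-1}<2\underline K_t$.
\begin{itemize}
\item If $I_t=1$, then $\underline K_t=\mathsf K_{t-1}$, and $\mathsf K_{t-1}<2\mathsf K_{t-1}$ holds because $\mathsf K_{t-1}\ge h_{\rm ref}>0$.
\item If $I_t=0$, then $\underline K_t=\underline K_{t-1}$ and $\mathsf K_{t-1}<2\underline K_{t-1}$ by the definition of $I_t$. For the lower inequality, induction gives $\underline K_{t-1}\le\mathsf K_{t-2}\le\mathsf K_{t-1}$, using $\underline K_0=\mathsf K_0$ at the base.
\end{itemize}

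\textbf{The reciprocal radius.} I would prove $\gamma_t^{-1}\le\max\{\Gamma^{-1},\mathsf S_NN^2/(\underline K_tD^2)\}$ by induction. The rule gives
\[
\gamma_t^{-1}=\max\{(\gamma_t^{\rm try})^{-1},\ \pos{\mathsf s_t}/\tau_t\},
\]
since $\pos{\widehat h_t}/(\tau_t\gamma_t^{\rm try})=\pos{\mathsf s_t}/\tau_t$. The second entry is at most $\mathsf S_NN^2/(\underline K_tD^2)$.
\begin{itemize}
\item If $I_t=1$, then $(\gamma_t^{\rm try})^{-1}=\Gamma^{-1}$.
\item If $I_t=0$, then $\gamma_t^{\rm try}=\gamma_{t-1}$ and $\underline K_t=\underline K_{t-1}$, so the inductive bound at $t-1$ carries over unchanged.
\item At $t=1$, $\gamma_0=\Gamma$ starts the induction.
\end{itemize}

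\textbf{The smoothness bound.} Lemma~\ref{lem:smoothing} gives $L_t\le\sqrt dM/\gamma_t$. Bound the maximum by the sum and use $\underline K_t>\mathsf K_{t-1}/2$ from the sandwich. This yields \eqref{eq:threshold-smoothness}.

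\textbf{Counting resets.} Each time $I_t=1$, the new reference $\underline K_t=\mathsf K_{t-1}$ is at least $2\underline K_{t-1}$. So along the reset times the references at least double from $h_{\rm ref}$, and after $J$ resets the last one is at least $2^Jh_{\rm ref}$. The last reference is some $\mathsf K_{t-1}\le\mathsf K_N=h_{\rm ref}+H_N-h_{\rm init}$. Dividing by $h_{\rm ref}$ and taking logarithms gives the bound on $J$.

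For the sum, list the reset references as $k_1<\dots<k_J$ with $k_{i+1}\ge2k_i$. A geometric series gives $\sum_i k_i\le 2k_J$. This bound is not yet sharp enough, because $k_J\le h_{\rm ref}+H_N-h_{\rm init}$ and the claim has no additive $h_{\rm ref}$. This is the main obstacle.

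To handle it, I would work with $k_i-h_{\rm ref}$, which is nonnegative. Since $k_i\ge2k_{i-1}\ge2h_{\rm ref}$ for $i\ge1$ (taking $k_0=h_{\rm ref}$), we get $k_i\le2(k_i-h_{\rm ref})$. Also $k_{i+1}-h_{\rm ref}\ge2(k_i-h_{\rm ref})$, so these shifted terms double as well. Therefore
\[
\sum_i k_i\le2\sum_i(k_i-h_{\rm ref})\le4(k_J-h_{\rm ref})\le4(H_N-h_{\rm init}).
\]
This is a factor of two short of the claimed constant. To close the gap I would use a sharper geometric bound, $\sum_i(k_i-h_{\rm ref})\le 2(k_J-h_{\rm ref})-(k_1-h_{\rm ref})$, together with the slack $k_i\ge 2h_{\rm ref}$. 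If the constant still cannot be made exactly $2$, I would fall back on the bound $4(H_N-h_{\rm init})$, which is enough for the downstream use.
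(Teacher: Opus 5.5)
Your proofs of the sandwich $\underline K_t\le\mathsf K_{t-1}<2\underline K_t$, the reciprocal-radius induction through $\gamma_t^{-1}=\max\{(\gamma_t^{\rm try})^{-1},\pos{\mathsf s_t}/\tau_t\}$, the smoothness bound and the count $J\le\log_2(\mathsf K_N/h_{\rm ref})$ are correct and follow the paper's argument. The gap is the last claim of \eqref{eq:threshold-geometric}. You only establish $\sum_{t:I_t=1}\underline K_t\le4(H_N-h_{\rm init})$, and your proposed repair cannot reach the stated constant $2$.

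The loss already occurs in your first step, $k_i\le2(k_i-h_{\rm ref})$, because it compares each reset value with the base $h_{\rm ref}$ instead of with its predecessor. Take $k_i=2^ih_{\rm ref}$ and $J\ge2$. Then the target inequality $\sum_ik_i\le2(k_J-h_{\rm ref})$ holds with equality, while your intermediate quantity $2\sum_i(k_i-h_{\rm ref})$ is strictly larger. So no sharpening of the later geometric sum can recover the constant, including your refinement $2(k_J-h_{\rm ref})-(k_1-h_{\rm ref})$ combined with the slack $k_i\ge2h_{\rm ref}$.

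The missing idea is to telescope against the immediately preceding reference. Write $c_0=h_{\rm ref}$ and let $c_1,\dots,c_J$ be the successive reset values, so that $c_j\ge2c_{j-1}$. This is equivalent to $c_j\le2(c_j-c_{j-1})$. Summing, and using $c_J=\mathsf K_{t-1}\le\mathsf K_N$ at the last reset time $t$, gives
\[
 \sum_{t:I_t=1}\underline K_t=\sum_{j=1}^Jc_j\le2\sum_{j=1}^J(c_j-c_{j-1})=2(c_J-h_{\rm ref})\le2(\mathsf K_N-h_{\rm ref})=2(H_N-h_{\rm init}).
\]
The case $J=0$ is trivial.

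Your fallback constant $4$ preserves orders, but it is not enough for the results as stated. Lemma~\ref{lem:threshold-drift} multiplies this sum by $D^2/N^2$ to obtain its $2D^2/N^2$ coefficient. That coefficient determines $k_N=2/A_N+3/N^2$ and the explicit numerical constants in Theorem~\ref{thm:threshold-risk} and its corollaries. The same step recurs in Lemma~\ref{lem:corrupt-controller}.
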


\begin{proof}
The first relation follows from the trigger and monotonicity of
$\mathsf K_t$. Within a sequence of iterations having an unchanged
reference, the threshold is constant and the exact reciprocal update is
\[
 \gamma_t^{-1}=
 \max\{(\gamma_t^{\rm try})^{-1},\pos{\mathsf s_t}/\tau_t\}.
\]
Its first trial radius is $\Gamma$; thereafter the trial radius is the
previous selected radius. Induction proves the inverse bound.
The smoothing bound $\sqrt dM/\gamma_t$ and
$\underline K_t>\mathsf K_{t-1}/2$ prove
\eqref{eq:threshold-smoothness}.
Each replacement raises the reference by a factor of at least two,
and its last value is at most
$\mathsf K_N=h_{\rm ref}+H_N-h_{\rm init}$.
This proves the event-count bound.
If its successive values are $c_{j-1},c_j$, then
$c_j\ge2c_{j-1}$ and $c_j\le2(c_j-c_{j-1})$.
Summation from the initial reference $h_{\rm ref}$ proves the
geometric-sum bound.
\end{proof}

\subsection{Curvature growth across reference changes}
\label{subsec:threshold-growth-proof}

For explicit geometry dependence, write $\mathcal H_{\mathrm{reg}}=\mathcal H_{\mathrm{reg}}(D)$
at the supplied diameter and define
\begin{align}
 \mathfrak H&=
 4\sqrt dM/\Gamma+
        2\sqrt2d^{1/4}\sqrt{M\mathsf S_N}\,N/D,
 \label{eq:threshold-growth-shift}\\
 \mathcal H_{\rm ns}(D)&=
 4\sqrt dM/\Gamma(D)+(2\sqrt2d^{1/4}+2)MN/D
 +\sqrt2d^{1/4}\omega_{\rm rad}N/D
 +\frac{c_{\rm vec}\sigma_0A_N}{D\sqrt T},
 \label{eq:adaptive-main-Hns}\\
 \mathcal H_{\rm s}(D)&=4L+
              \frac{c_{\rm vec}\sigma_0A_N}{D\sqrt T}.
 \label{eq:adaptive-main-Hs}
\end{align}

\begin{lemma}[Curvature under adaptive thresholds]
\label{lem:threshold-growth}
Pathwise,
\begin{equation}
 H_N-h_{\rm init}\le
 (\mathfrak H+2NM/D-h_{\rm ref})_+
                         +\sqrt{\mathcal E_N}/D.
 \label{eq:threshold-growth-path}
\end{equation}
Consequently $\E(H_N-h_{\rm init})\le\mathcal H_{\rm ns}$.
For an $L$-smooth population it is also at most $\mathcal H_{\rm s}$.
\end{lemma}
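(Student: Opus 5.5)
The proof adapts the square-growth argument of Lemma~\ref{lem:growth}. The difference is that the smoothness bound $L_t$ now varies with $t$, and through \eqref{eq:threshold-smoothness} it decreases as the adjusted coefficient $\mathsf K_{t-1}$ grows. Pathwise, the balance identity \eqref{eq:balance-app} and $A_t(x_t-y_t)=t(v_t-v_{t-1})$ give, with $\epsilon_t=\norm{e_t^x-e_t^y}$,
\[
 (H_t-H_{t-1})D^2\le\pos{(2L_t-H_t/2)r_t^2+t\epsilon_tr_t}.
\]
The first step is to locate the level above which curvature cannot drive growth. Set $a=4\sqrt dM/\Gamma$ and $b=8\sqrt dM\mathsf S_NN^2/D^2$. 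By \eqref{eq:threshold-smoothness}, $4L_t\le a+b/\mathsf K_{t-1}$. If $\mathsf K_{t-1}\ge X$, where $X$ solves $X^2=aX+b$, then $4L_t\le X\le\mathsf K_{t-1}$. Since $X\le a+\sqrt b=\mathfrak H$ by \eqref{eq:threshold-growth-shift}, it suffices to have $\mathsf K_{t-1}\ge\mathfrak H$ to get $4L_t\le\mathsf K_{t-1}$.

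The second step bounds the overshoot of a single iteration that starts below $\mathfrak H$. Use the Lipschitz bound $\norm{\nabla\phi_t}\le M$, so the exact-gradient part of $A_t\widehat\beta_t$ is at most $2tMr_t$. Dividing by $D^2+r_t^2/2$ with $r_t\le D$ and $t\le N$ bounds the exact-gradient contribution to one increment by $2NM/D$. The error part $t\epsilon_tr_t/D^2$ is kept separate and handled in the next step. Hence the first time $\mathsf K$ exceeds $\mathfrak H$, it lands at most $2NM/D$ above it, apart from error contributions.

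The third step runs a square potential $G_t=\pos{\mathsf K_t-c}$ with $c=\mathfrak H+2NM/D$. Recall that $\mathsf K_t-h_{\rm ref}=H_t-h_{\rm init}$ and that $H_t\ge\mathsf K_t$.
\begin{itemize}
\item When $\mathsf K_{t-1}\ge\mathfrak H$: by the first step $H_t\ge 4L_t$, so $(2L_t-H_t/2)r_t^2\le -\tfrac12(H_t-4L_t)r_t^2$. Completing the square as in Lemma~\ref{lem:growth} gives $D^2(G_t^2-G_{t-1}^2)\le t^2\epsilon_t^2$.
\item Iterations with $\mathsf K_{t-1}<\mathfrak H$: here $G_{t-1}=0$. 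Any positive $G_t$ arises only from the error part. I will bound it by $D^2G_t^2\le t^2\epsilon_t^2$, using that the landing excess over $c$ comes from $t\epsilon_tr_t/D^2$ alone.
\end{itemize}
Summing gives $G_N\le\sqrt{\mathcal E_N}/D$, and hence $H_N-h_{\rm init}=\mathsf K_N-h_{\rm ref}\le(c-h_{\rm ref})_++\sqrt{\mathcal E_N}/D$, which is \eqref{eq:threshold-growth-path}.

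I expect the main obstacle to be this last gluing. The square argument needs $H_t\ge 4L_t$ at the updated $H_t$, while the threshold is stated in terms of $\mathsf K_{t-1}$. The completion of the square must also absorb the error part of the crossing step, not add it twice. If this fails, I will fall back to a looser shift, such as $2c$, in the intermediate argument and then check that the displayed constant still survives.

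For the expectation, take expectations in \eqref{eq:threshold-growth-path} and drop $-h_{\rm ref}$. Jensen gives $\E\sqrt{\mathcal E_N}\le\sqrt{\E\mathcal E_N}$, which \eqref{eq:threshold-energy} bounds by $c_{\rm vec}\sigma_0A_N/\sqrt T$. Next, $\E\sqrt{M\mathsf S_N}\le\sqrt{M(M+\omega_{\rm rad})}\le M+\omega_{\rm rad}/2$ by \eqref{eq:threshold-slope-moments}. Multiplying by $2\sqrt2d^{1/4}N/D$ and adding $2NM/D$ yields exactly $\mathcal H_{\rm ns}$ in \eqref{eq:adaptive-main-Hns}. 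For an $L$-smooth population, every target has $L_t\le L$ by Proposition~\ref{prop:protected-smoothing-smooth}. Applying Lemma~\ref{lem:growth} with $L_\circ=L$ pathwise, together with the same energy bound, gives $\mathcal H_{\rm s}$ in \eqref{eq:adaptive-main-Hs}.
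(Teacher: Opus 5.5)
Your proposal is correct and follows the paper's proof essentially step for step: the same shift $c=\mathfrak H+2NM/D$, the same split on $\mathsf K_{t-1}\ge\mathfrak H$ versus $\mathsf K_{t-1}<\mathfrak H$, the same recursion $D^2(G_t^2-G_{t-1}^2)\le t^2\epsilon_t^2$, and the same expectation bookkeeping. The gluing you worry about closes without a looser shift. Your chain gives $4L_t\le X\le\mathfrak H\le c$, so $H_t\ge\mathsf K_t$ yields $H_t-4L_t\ge G_t$, which is what the square completion actually needs (not merely $H_t\ge4L_t$), and the crossing step has $G_{t-1}=0$, so its error enters only once. One small correction: summation gives $G_N\le G_0+\sqrt{\mathcal E_N}/D$ with $G_0=(h_{\rm ref}-c)_+$, not $G_N\le\sqrt{\mathcal E_N}/D$, although \eqref{eq:threshold-growth-path} still follows whether $h_{\rm ref}\le c$ or $h_{\rm ref}>c$.
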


\begin{proof}
Fix a complete realization. Its $\mathsf S_N$ and $\mathfrak H$
are fixed analysis quantities in this argument. Put
$a=\sqrt dM/\Gamma$ and $b=\sqrt dM\mathsf S_NN^2/D^2$.
The threshold $\mathfrak H=4a+2\sqrt2\sqrt b$ satisfies
\[
 \mathfrak H^2-4a\mathfrak H-8b=8\sqrt2a\sqrt b\ge0.
\]
Consequently, whenever $\mathsf K_{t-1}\ge\mathfrak H$,
\eqref{eq:threshold-smoothness} gives
$4L_t\le4a+8b/\mathfrak H\le\mathfrak H$.
If $\mathfrak H=0$, the population is constant and $L_t=0$,
so this conclusion holds directly.
The secant identity implies
$A_t\widehat\beta_t\le2L_tr_t^2+
t\norm{e_t^x-e_t^y}r_t$.
Use the single shift $\mathfrak B=\mathfrak H+2NM/D$ and
$k_t=\pos{\mathsf K_t-\mathfrak B}$.
The balance identity has negative term $H_tr_t^2/2$.
Since $H_t\ge\mathsf K_t$, it is at least as strong as the negative
term using $\mathsf K_t$. Thus, when
$\mathsf K_{t-1}\ge\mathfrak H$ and $k_t>0$,
\[
 (H_t-H_{t-1})D^2
 \le\pos{-k_tr_t^2/2+t\norm{e_t^x-e_t^y}r_t}
 \le\frac{t^2\norm{e_t^x-e_t^y}^2}{2k_t}.
\]
As $0\le k_t-k_{t-1}\le H_t-H_{t-1}$,
\begin{equation}
 D^2(k_t^2-k_{t-1}^2)
 \le t^2\norm{e_t^x-e_t^y}^2.
 \label{eq:threshold-growth-square}
\end{equation}
The case $k_t=0$ is immediate.
If $\mathsf K_{t-1}<\mathfrak H$, both true gradients have norm
at most $M$, and $r_t\le D$ gives
\[
 A_t\widehat\beta_t\le
 t(2M+\norm{e_t^x-e_t^y})r_t,\qquad
 H_t-H_{t-1}\le
 t(2M+\norm{e_t^x-e_t^y})/D.
\]
Then $k_{t-1}=0$ and
$k_t\le t\norm{e_t^x-e_t^y}/D$, so
\eqref{eq:threshold-growth-square} holds in this case too.
Summation yields $k_N^2\le k_0^2+\mathcal E_N/D^2$.
Use $k_N\le k_0+\sqrt{\mathcal E_N}/D$, together with
$\mathsf K_0=h_{\rm ref}$ and
$\mathsf K_N-h_{\rm ref}=H_N-h_{\rm init}$, to prove
\eqref{eq:threshold-growth-path}; the case
$\mathsf K_N\le\mathfrak B$ is direct.

By the diagnostic maximum bound and concavity,
\[
 \E\sqrt{M\mathsf S_N}
 \le\sqrt{M(M+\omega_{\rm rad})}
 \le M+\omega_{\rm rad}/2.
\]
Take expectations, drop the negative $h_{\rm ref}$ in the positive
part, and use \eqref{eq:threshold-energy}.
This proves the nonsmooth bound.
For a smooth population every target has the common bound $L$.
The original scalar-growth lemma, with initial coefficient
$h_{\rm init}$, gives
$H_N-h_{\rm init}\le4L+\sqrt{\mathcal E_N}/D$ and proves the
smooth bound. No independence between terminal maxima, chosen radii,
counts and curvature coefficients is required.
\end{proof}

\subsection{Positive target drift}
\label{subsec:threshold-drift-proof}

\begin{lemma}[Drift at radius reexpansions]
\label{lem:threshold-drift}
Let
$\mathcal D_N^+=\sum_{t=2}^N A_{t-1}
\pos{\phi_t(x_{t-1})-\phi_{t-1}(x_{t-1})}$.
Then
\begin{equation}
 \frac{\E\mathcal D_N^+}{A_N}
 \le\frac{2D^2}{N^2}\E(H_N-h_{\rm init})
 +\eta_{\rm rad}\E J+\frac{2MDN}{A_N}\E J.
 \label{eq:threshold-drift-bound}
\end{equation}
For an $L$-smooth population replace the last term by
$LD^2\E J/A_N$.
\end{lemma}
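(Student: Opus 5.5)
The bound comes from attributing every positive drift term to a radius reexpansion, bounding that drift by the local radial charge at $y_t$, and paying for the move from $x_{t-1}$ to $y_t$ with a Lipschitz estimate. Write $\psi_t=\phi_t-\phi_{t-1}=f_{\gamma_t}-f_{\gamma_{t-1}}$.

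First, drift can be positive only on reexpansion events. By Lemma~\ref{lem:smoothing}, $f_\gamma$ is nondecreasing in $\gamma$. If $I_t=0$, then $\gamma_t\le\gamma_t^{\rm try}=\gamma_{t-1}$, so $\psi_t\le0$ and the summand of $\mathcal D_N^+$ vanishes. Hence only the $J$ iterations with $I_t=1$ contribute, and on those $\gamma_t\le\Gamma$. Since $I_t$ is determined before the radial stage, it is $\mathcal F_t^{\rm rad}$-measurable. This will let me apply \eqref{eq:threshold-local-charge} inside the indicator.

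Second, I bound the drift on such an event. Suppose $\gamma_t>\gamma_{t-1}$. The map $\gamma\mapsto\Phi(z,\gamma)$ is convex (Lemma~\ref{lem:radial-affine}), so
\[
 \psi_t(z)\le\partial_\gamma\Phi(z,\gamma_t)(\gamma_t-\gamma_{t-1})\le h_{\gamma_t}(z)\quad\text{for every } z.
\]
Next I move the evaluation point from $x_{t-1}$ to the queried center $y_t$, because \eqref{eq:threshold-local-charge} controls $h_{\gamma_t}$ only there. The function $\psi_t$ is a difference of two $M$-Lipschitz functions, so it is $2M$-Lipschitz. From \eqref{eq:center-y},
\[
 x_{t-1}-y_t=\frac{a_t}{A_t}(x_{t-1}-v_{t-1}),\qquad \|x_{t-1}-y_t\|\le\frac{2D}{t+1}.
\]
Combining these,
\[
 A_{t-1}[\psi_t(x_{t-1})]_+\le A_{t-1}h_{\gamma_t}(y_t)+2MD\,\frac{t(t-1)}{t+1}\le A_{t-1}h_{\gamma_t}(y_t)+2MDN.
\]

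Third, I take expectations using \eqref{eq:threshold-local-charge}. This gives
\[
 \E\mathcal D_N^+\le\E\sum_{t:I_t=1}A_{t-1}\Bigl(\frac{\underline K_tD^2}{N^2}+\eta_{\rm rad}\Bigr)+2MDN\,\E J.
\]
I divide by $A_N$ and use $A_{t-1}\le A_N$. Then the geometric-sum bound \eqref{eq:threshold-geometric}, $\sum_{I_t=1}\underline K_t\le2(H_N-h_{\rm init})$, yields the first term of \eqref{eq:threshold-drift-bound}. The other two terms are immediate.

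The main obstacle is the smooth replacement $LD^2\E J/A_N$. The $2M\|x_{t-1}-y_t\|$ transfer must be replaced there, because the Lipschitz constant of $\psi_t$ is only $L(\gamma_t-\gamma_{t-1})\E\|U\|\le L\Gamma$. That route produces $L\Gamma DN$ per event, which is not obviously of order $LD^2$. I would first try a second-order expansion. The function $\psi_t$ is a difference of two $L$-smooth functions, so its gradient is $2L$-Lipschitz, and
\[
 \psi_t(x_{t-1})\le\psi_t(y_t)+\ip{\nabla\psi_t(y_t)}{x_{t-1}-y_t}+L\|x_{t-1}-y_t\|^2.
\]
The quadratic term contributes $A_{t-1}L\cdot4D^2/(t+1)^2\le2LD^2$ per event. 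For the linear term, I would bound $\psi_t(y_t)+\ip{\nabla\psi_t(y_t)}{x_{t-1}-y_t}$ using joint convexity of $\Phi$ at $(y_t,\gamma_t)$. The aim is to absorb it into the $h_{\gamma_t}(y_t)$ charge. If that fails, I would instead bound $\psi_t(x_{t-1})$ directly by $L\nu_d(\gamma_t^2-\gamma_{t-1}^2)/2$ via \eqref{eq:radial-smooth-drift}, and use the smooth charge \eqref{eq:radial-smooth-charge}. This route accepts a constant-factor adjustment in the stated replacement.
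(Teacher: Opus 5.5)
Your nonsmooth branch is correct. Radial convexity gives $\psi_t(y_t)\le h_{\gamma_t}(y_t)$, the $2M$-Lipschitz transfer costs $2MD\,t(t-1)/(t+1)\le 2MDN$, and the predictable averaging with \eqref{eq:threshold-local-charge} and \eqref{eq:threshold-geometric} matches the paper. The gap is the smooth replacement $LD^2\E J/A_N$, which you do not prove. Neither of your routes reaches it.
\begin{itemize}
\item Expanding $\psi_t=\phi_t-\phi_{t-1}$ to second order with a $2L$-Lipschitz gradient already costs $A_{t-1}L\norm{x_{t-1}-y_t}^2\le 2LD^2$ per event, twice the allowance. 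It also leaves the cross term $\ip{\nabla\phi_t(y_t)-\nabla\phi_{t-1}(y_t)}{x_{t-1}-y_t}$. Controlling that term by joint convexity introduces a Bregman term of $\phi_{t-1}$, giving $3LD^2$ in total.
\item The fallback through \eqref{eq:radial-smooth-drift} or \eqref{eq:radial-smooth-charge} costs $A_{t-1}L\nu_d\Gamma^2/2$. With $\Gamma$ up to $d^{1/4}D/N$, this is a factor $\sqrt d$ worse than $LD^2/A_N$, not a constant-factor adjustment.
\end{itemize}

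The missing idea is to avoid differencing the two targets. On an event $I_t=1$, use $\phi_{t-1}\ge f$ to bound the drift by $\phi_t(x_{t-1})-f(x_{t-1})$. Then apply the affine model \eqref{eq:radial-affine} at $(y_t,\gamma_t)$ with comparator $u=x_{t-1}\in Q$:
\[
 \pos{\phi_t(x_{t-1})-\phi_{t-1}(x_{t-1})}\le h_{\gamma_t}(y_t)+\phi_t(x_{t-1})-\phi_t(y_t)-\ip{\nabla\phi_t(y_t)}{x_{t-1}-y_t}.
\]
Applying joint convexity of $\Phi$ at $(y_t,\gamma_t)$ toward $(x_{t-1},\gamma_{t-1})$ gives the same bound directly. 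This single inequality handles the move from $x_{t-1}$ to $y_t$ and the radius change together. The remainder is the nonnegative Bregman remainder of the single target $\phi_t$.
\begin{itemize}
\item In general the remainder is at most $2M\norm{x_{t-1}-y_t}$, which recovers your $2MDN$.
\item Under population smoothness, $\phi_t$ is $L$-smooth on $Q$ by Lemma~\ref{lem:smoothing}. So the remainder is at most $L\norm{x_{t-1}-y_t}^2/2$, and its weighted value is at most $A_{t-1}\cdot 2LD^2/(t+1)^2=LD^2t(t-1)/(t+1)^2\le LD^2$, exactly the stated term.
\end{itemize}
Your expectation step then applies unchanged in both branches.
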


\begin{proof}
When $I_t=0$, the radius cannot increase, so its positive drift is
zero. On an event $I_t=1$, use $\phi_{t-1}\ge f$ and the radial
affine model at $(y_t,\gamma_t)$ applied to $x_{t-1}$. They give
\[
 \pos{\phi_t(x_{t-1})-\phi_{t-1}(x_{t-1})}
 \le h_{\gamma_t}(y_t)+D_{\phi_t}(x_{t-1},y_t),
\]
where $D_\phi(x,y)=\phi(x)-\phi(y)-\ip{\nabla\phi(y)}{x-y}$
is a nonnegative Bregman remainder. The same right-hand side
bounds $\phi_t(x_{t-1})-f(x_{t-1})$.
The general upper bound for this remainder is
$2M\norm{x_{t-1}-y_t}$; the smooth bound is
$L\norm{x_{t-1}-y_t}^2/2$.
The center identity gives $\norm{x_{t-1}-y_t}\le2D/(t+1)$.
Consequently
\[
 A_{t-1}D_{\phi_t}(x_{t-1},y_t)\le
 \begin{cases}2MDN,&\text{general case},\\LD^2,&\text{smooth case}.
 \end{cases}
\]
The exact coefficients are respectively
$2MDt(t-1)/(t+1)\le2MDN$ and
$LD^2t(t-1)/(t+1)^2\le LD^2$.

The event, center, trial radius and threshold are known before the
radial stage. Multiply \eqref{eq:threshold-local-charge}
by $I_tA_{t-1}$ before taking expectations. Since $A_{t-1}\le A_N$,
\begin{align*}
 \frac1{A_N}\E\sum_t I_tA_{t-1}h_{\gamma_t}(y_t)
 &\le\frac{D^2}{N^2}\E\sum_t I_t\underline K_t
                 +\eta_{\rm rad}\E J\\
 &\le\frac{2D^2}{N^2}\E(H_N-h_{\rm init})
                 +\eta_{\rm rad}\E J.
\end{align*}
The second inequality is the geometric-reference bound.
Add the Bregman remainder. This eventwise averaging makes no
independence assumption about the selected radius.
\end{proof}

\subsection{The clean simultaneous guarantee}
\label{subsec:proof-threshold-risk}

For use with the anchor-selected diameter, retain the geometry dependence
of the envelopes above. Put $A=A_N$, $k_N=2/A+3/N^2$,
$\eta(D)=\sqrt{\chi_d}M_2\Gamma(D)/\sqrt{B^{\rm rad}}$, and
$\ell_{\mathrm{reg}}(D)=\log_2(1+\mathcal H_{\mathrm{reg}}(D)/h_{\rm ref})$.
Set $q_{\rm ns}(D)=2MDN/A$ and $q_{\rm s}(D)=LD^2/A$, and define
\begin{align}
 \mathcal U_{\mathrm{reg}}(D,\ell)
 &=k_ND^2\mathcal H_{\mathrm{reg}}(D)+h_{\rm ref}D^2/N^2\nonumber\\
 &\quad+\eta(D)(1+\ell)+q_{\mathrm{reg}}(D)\ell.
 \label{eq:adaptive-main-U}
\end{align}
These are analysis quantities; none is an algorithm input.

\begin{theorem}[A simultaneous threshold guarantee]
\label{thm:threshold-risk}
Assume the geometry of Assumption~\ref{ass:geometry}, the uniform clean
moments of Assumption~\ref{ass:adaptive-moments}, and the radial regularity
of Lemma~\ref{lem:radial-moment}. Fix $u\in Q$ and let
$h_{\rm init}\ge h_{\rm ref}>0$ be fixed in the initial history.
Choose $\mathcal H=\mathcal H_{\rm ns}$, or under population
smoothness $\mathcal H=\mathcal H_{\rm s}$, and put
$\ell=\log_2(1+\mathcal H/h_{\rm ref})$.
Algorithm~\ref{alg:threshold} satisfies
\begin{align}
 \E[f(x_N)-f(u)]\le{}&
 \frac{\E[h_{\rm init}R_u^2]}{2A_N}
 +\left(\frac{2D^2}{A_N}+\frac{3D^2}{N^2}\right)\mathcal H
 +\frac{h_{\rm ref}D^2}{N^2}\nonumber\\
 &+\eta_{\rm rad}(1+\ell)+\frac{2MDN}{A_N}\ell.
 \label{eq:threshold-risk}
\end{align}
For an $L$-smooth population its last term may be replaced by
$LD^2\ell/A_N$.
Equivalently, each applicable branch is bounded by
$\E[h_{\rm init}R_u^2]/(2A)+\mathcal U_{\mathrm{reg}}(D,\ell_{\mathrm{reg}}(D))$.
The method spends exactly $T$ pairs, $2T$ values and $N$ projections,
in $3N$ sampling stages with fixed batch sizes or $5N$ with adaptive batch sizes.
\end{theorem}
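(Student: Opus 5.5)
The argument starts from the pathwise local radial potential of Theorem~\ref{thm:radial-potential}, with targets $\phi_t=f_{\gamma_t}$ selected in Algorithm~\ref{alg:threshold}. Radii may now increase, so we keep the positive drift term:
\[
 A_N[f(x_N)-f(u)]\le h_{\rm init}R_u^2/2+2D^2(H_N-h_{\rm init})+\sum_ta_th_{\gamma_t}(y_t)+\sum_tZ_t(u)+\mathcal D_N^+ .
\]
The initial coefficient there is $h_{\rm init}$, because the scalar recurrence starts at $H_0=h_{\rm init}$. We then take expectations term by term and divide by $A_N$.

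The error term $\sum_tZ_t(u)$ has zero mean. The radial stage precedes both vector update batches, and each update batch is conditionally centered for the selected target given its history (Lemma~\ref{lem:threshold-accounting}), so the argument following Theorem~\ref{thm:radial-potential} applies. The initialization term gives $\E[h_{\rm init}R_u^2]/(2A_N)$.

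For the curvature term, Lemma~\ref{lem:threshold-growth} gives $\E(H_N-h_{\rm init})\le\mathcal H$ in either branch. This yields the contribution $2D^2\mathcal H/A_N$.

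For the local charge, we take the expectation of \eqref{eq:threshold-local-charge}, which gives $\E h_{\gamma_t}(y_t)\le\E\tau_t+\eta_{\rm rad}$. We then use $\tau_t=\underline K_tD^2/N^2$ together with $\underline K_t\le\mathsf K_{t-1}\le\mathsf K_N=h_{\rm ref}+H_N-h_{\rm init}$ from Lemma~\ref{lem:threshold-reference}. Since $\sum_ta_t=A_N$, the local charge is at most
\[
 \frac{D^2}{N^2}\bigl(h_{\rm ref}+\E(H_N-h_{\rm init})\bigr)+\eta_{\rm rad}\le\frac{h_{\rm ref}D^2}{N^2}+\frac{D^2\mathcal H}{N^2}+\eta_{\rm rad}.
\]

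The drift term is the step I expect to be delicate. Lemma~\ref{lem:threshold-drift} bounds $\E\mathcal D_N^+/A_N$ by $2D^2\mathcal H/N^2+\eta_{\rm rad}\E J+(2MDN/A_N)\E J$, and in the smooth case the last term becomes $LD^2\E J/A_N$. This leaves $\E J$ to control. By \eqref{eq:threshold-geometric}, $J\le\log_2(1+(H_N-h_{\rm init})/h_{\rm ref})$. The function $x\mapsto\log_2(1+x/h_{\rm ref})$ is concave, so Jensen's inequality gives $\E J\le\log_2(1+\E(H_N-h_{\rm init})/h_{\rm ref})\le\ell$. This uses integrability of $H_N$, which follows from the pathwise bound \eqref{eq:threshold-growth-path} and finite moments.

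Adding the pieces, the coefficient of $\mathcal H$ is $2D^2/A_N+D^2/N^2+2D^2/N^2$, which equals $2D^2/A_N+3D^2/N^2$. The $\eta_{\rm rad}$ terms combine to $\eta_{\rm rad}(1+\ell)$, and the remaining terms are $h_{\rm ref}D^2/N^2$ and $q_{\rm reg}\ell$. This is exactly \eqref{eq:threshold-risk}, and rewriting it with $k_N$ gives the $\mathcal U_{\rm reg}$ form.

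The resource counts (exactly $T$ pairs, $2T$ values, $N$ projections, and $3N$ or $5N$ stages) are those of Lemma~\ref{lem:threshold-accounting}. The main check is that the selected targets meet the measurability requirements of the potential, since the reference triggers depend only on $\mathsf K_{t-1}$, which is known before the radial stage.
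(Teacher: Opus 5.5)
Your proposal is correct and follows the paper's proof essentially step for step. Both arguments start from the radial potential with initial coefficient $h_{\rm init}$ and the positive drift retained. Both use zero-mean error inner products, bound the local charge by the conditional charge bound together with $\underline K_t\le\mathsf K_N$, and invoke the drift lemma. Both then apply Jensen with the event-count bound to get $\E J\le\ell$, which yields the coefficient $2D^2/A_N+D^2/N^2+2D^2/N^2$. The one point the paper states explicitly and you leave implicit is that a data-selected $h_{\rm init}$ is handled by conditioning on the initial history throughout.
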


\begin{proof}
The radial potential of Theorem~\ref{thm:radial-potential}
holds with initial coefficient $h_{\rm init}$ by its projection
proof. Retain its positive drift term. Its two error inner
products have zero conditional mean at their respective sampling histories.
The weighted local radial charge satisfies
\[
 \frac1{A_N}\sum_t a_t\E h_{\gamma_t}(y_t)
 \le\frac{D^2}{N^2}\E\mathsf K_N+\eta_{\rm rad}
 =\frac{D^2}{N^2}
   [h_{\rm ref}+\E(H_N-h_{\rm init})]+\eta_{\rm rad}.
\]
This uses the conditional charge bound and
$\underline K_t\le\mathsf K_N$ pathwise.
Add the drift bound to the expected potential, obtaining exactly
the curvature coefficient in \eqref{eq:threshold-risk}.
Lemma~\ref{lem:threshold-growth} bounds the expected increase by
$\mathcal H$. Jensen and the event-count bound give
\[
 \E J\le
 \E\log_2\!\left(1+\frac{H_N-h_{\rm init}}{h_{\rm ref}}\right)
 \le\log_2(1+\mathcal H/h_{\rm ref})=\ell.
\]
Every coefficient multiplying the count is deterministic.
The eventwise diagnostic averaging precedes this logarithmic
bound, so no terminal-logarithm/noise product is introduced.
Finiteness follows from the vector and diagnostic moments and
the curvature bound. The initial coefficient can be conditioned
on its initial history throughout these arguments.
\end{proof}

\begin{corollary}[Unknown population smoothness]
\label{cor:threshold-smooth}
If the population is $L$-smooth, put
$\ell_{\rm s}=\log_2(1+\mathcal H_{\rm s}/h_{\rm ref})$. Then
\begin{align}
 \E[f(x_N)-f(u)]\le{}&
 \frac{\E[h_{\rm init}R_u^2]}{2A_N}
 +\frac{[(28+2\ell_{\rm s})L+h_{\rm ref}]D^2}{N^2}
 \nonumber\\
 &+\frac{5c_{\rm vec}\sigma_0D}{\sqrt T}
 +\frac{4\sqrt2(1+\ell_{\rm s})d^{1/4}M_2D}
             {\sqrt N\sqrt T}.
 \label{eq:threshold-smooth-risk}
\end{align}
With $h_{\rm init}=h_{\rm ref}=h_0$ and
$\sigma_0^2\le CdM_2^2$, the order is
$\widetilde O((L+h_0)D^2/N^2+\sqrt dM_2D/\sqrt T)$.
Only the displayed logarithm is suppressed.
\end{corollary}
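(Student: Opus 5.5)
The plan is to specialize Theorem~\ref{thm:threshold-risk} to its smooth branch and then bound each term using $A_N\ge N^2/2$ and the moment scales of Lemma~\ref{lem:threshold-accounting}. Under population $L$-smoothness, Lemma~\ref{lem:threshold-growth} lets us take $\mathcal H=\mathcal H_{\rm s}=4L+c_{\rm vec}\sigma_0A_N/(D\sqrt T)$. With this choice, $\ell=\ell_{\rm s}$, and the last term of \eqref{eq:threshold-risk} becomes $LD^2\ell_{\rm s}/A_N$. The startup term $\E[h_{\rm init}R_u^2]/(2A_N)$ and the reference term $h_{\rm ref}D^2/N^2$ are carried over unchanged.

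First, the curvature coefficient $(2D^2/A_N+3D^2/N^2)\mathcal H_{\rm s}$ is split into two pieces.

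For the $4L$ piece, $A_N\ge N^2/2$ gives $2D^2/A_N\le4D^2/N^2$. The coefficient is therefore at most $7D^2/N^2$, which contributes $28LD^2/N^2$.

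For the variance piece, we avoid the crude bound and use the exact factors. The first part is exactly
\[
 \frac{2D^2}{A_N}\cdot\frac{c_{\rm vec}\sigma_0A_N}{D\sqrt T}=\frac{2c_{\rm vec}\sigma_0D}{\sqrt T}.
\]
For the second part, $A_N/N^2=(N+1)/(2N)\le1$ for $N\ge1$, so
\[
 \frac{3D^2}{N^2}\cdot\frac{c_{\rm vec}\sigma_0A_N}{D\sqrt T}\le\frac{3c_{\rm vec}\sigma_0D}{\sqrt T}.
\]
Together these give the coefficient $5c_{\rm vec}$.

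Next, the smooth drift term satisfies $LD^2\ell_{\rm s}/A_N\le2\ell_{\rm s}LD^2/N^2$. Combined with the $28L$ term, this yields $(28+2\ell_{\rm s})L$.

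Then, for the diagnostic term $\eta_{\rm rad}(1+\ell_{\rm s})$, I would invoke \eqref{eq:threshold-public-moments}, which gives $\eta_{\rm rad}\le4\sqrt2d^{1/4}M_2D/(\sqrt N\sqrt T)$. This produces the last term of \eqref{eq:threshold-smooth-risk}, and completes the inequality.

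For the order statement, set $h_{\rm init}=h_{\rm ref}=h_0$ and use $R_u\le D$. This gives $h_0R_u^2/(2A_N)\le h_0D^2/N^2$, which merges with $h_{\rm ref}D^2/N^2$. Using $\sigma_0\le\sqrt{Cd}M_2$, together with $d^{1/4}/\sqrt N\le\sqrt d$, both stochastic terms are $O((1+\ell_{\rm s})\sqrt dM_2D/\sqrt T)$. Only $\ell_{\rm s}$ remains hidden.

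The only delicate point is obtaining the constant $5$ rather than $7$. This requires keeping the factor $A_N$ exact in the $2D^2/A_N$ part and using $A_N\le N^2$ only in the $3D^2/N^2$ part. Everything else is direct substitution.
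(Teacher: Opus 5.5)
Your proposal is correct and matches the paper's proof: specialize the smooth branch of Theorem~\ref{thm:threshold-risk}, bound the coefficient on $4L$ by $7D^2/N^2$, and keep $A_N$ exact in the vector-noise part to get $2+3A_N/N^2\le5$. The rest also coincides: $A_N\ge N^2/2$ for the drift term, \eqref{eq:threshold-public-moments} for $\eta_{\rm rad}$, and $R_u\le D$, $d^{1/4}/\sqrt N\le\sqrt d$, $\sigma_0\le\sqrt{Cd}M_2$ for the order statement.
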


\begin{proof}
Use the smooth form of \eqref{eq:threshold-risk}.
The identities $A_N\ge N^2/2$ and $A_N/N^2\le1$ give
\[
 \frac{2D^2}{A_N}+\frac{3D^2}{N^2}\le7D^2/N^2,\qquad
 LD^2\ell_{\rm s}/A_N\le2LD^2\ell_{\rm s}/N^2.
\]
The vector-noise contribution is, more precisely,
\[
 \left(\frac{2D^2}{A_N}+\frac{3D^2}{N^2}\right)
 \frac{c_{\rm vec}\sigma_0A_N}{D\sqrt T}
 =\frac{c_{\rm vec}\sigma_0D}{\sqrt T}
       (2+3A_N/N^2)
 \le\frac{5c_{\rm vec}\sigma_0D}{\sqrt T}.
\]
Bound the remaining radial term by
\eqref{eq:threshold-public-moments}. This proves the displayed
inequality. For the dimensional specialization use $R_u\le D$,
$d^{1/4}/\sqrt N\le\sqrt d$ and
$\sigma_0\le\sqrt{Cd}M_2$. None of $L,M,M_2$ is an input.
The comparator can be the original population optimizer.
\end{proof}

\begin{corollary}[Unknown nonsmooth regularity]
\label{cor:threshold-nonsmooth}
Put $\ell_{\rm ns}=\log_2(1+\mathcal H_{\rm ns}/h_{\rm ref})$.
Then
\begin{align}
 \E[f(x_N)-f(u)]\le{}&
 \frac{\E[h_{\rm init}R_u^2]}{2A_N}
 +\frac{h_{\rm ref}D^2}{N^2}
 +[(42+14\sqrt2)d^{1/4}+4\ell_{\rm ns}]\frac{MD}{N}\nonumber\\
 &+\frac{28\sqrt dMD^2}{\gammaMax N^2}
 +\frac{5c_{\rm vec}\sigma_0D}{\sqrt T}
 +\frac{56d^{1/4}M_2D}{\sqrt T}\nonumber\\
 &+\frac{4\sqrt2(1+\ell_{\rm ns})d^{1/4}M_2D}
             {\sqrt N\sqrt T}.
 \label{eq:threshold-nonsmooth-risk}
\end{align}
If $h_{\rm init}=h_{\rm ref}=h_0$,
$\gammaMax\ge d^{1/4}D/N$ and $\sigma_0^2\le CdM_2^2$, the
same controller has order
$\widetilde O(h_0D^2/N^2+d^{1/4}MD/N+\sqrt dM_2D/\sqrt T)$.
\end{corollary}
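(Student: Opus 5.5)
We will specialize the general bound \eqref{eq:threshold-risk} of Theorem~\ref{thm:threshold-risk} in its nonsmooth branch $\mathcal H=\mathcal H_{\rm ns}$, mirroring the proof of Corollary~\ref{cor:threshold-smooth}, and then bound every term using the explicit envelopes of Lemma~\ref{lem:threshold-accounting}. The startup term and $h_{\rm ref}D^2/N^2$ carry over unchanged. For the curvature coefficient we will use $A_N\ge N^2/2$ and $A_N\le N^2$ to get $2D^2/A_N+3D^2/N^2\le7D^2/N^2$. All remaining work is in bounding $7D^2\mathcal H_{\rm ns}/N^2$, $\eta_{\rm rad}(1+\ell_{\rm ns})$ and $2MDN\ell_{\rm ns}/A_N\le4MD\ell_{\rm ns}/N$. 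The last of these is already the $4\ell_{\rm ns}MD/N$ term.

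We expand $\mathcal H_{\rm ns}$ from \eqref{eq:adaptive-main-Hns} term by term.
\begin{itemize}
 \item The term $4\sqrt dM/\Gamma$ is handled with $1/\Gamma\le1/\gammaMax+N/(d^{1/4}D)$, as in \eqref{eq:horizon-reciprocal-app}. This gives $28\sqrt dMD^2/(\gammaMax N^2)+28d^{1/4}MD/N$.
 \item The term $(2\sqrt2d^{1/4}+2)MN/D$ contributes $(14\sqrt2d^{1/4}+14)MD/N$. Using $d^{1/4}\ge1$ to absorb the $14$, the combined $d^{1/4}MD/N$ coefficient is $42+14\sqrt2$.
 \item The term $\sqrt2d^{1/4}\omega_{\rm rad}N/D$, with $\omega_{\rm rad}\le4\sqrt2M_2N/\sqrt T$ from \eqref{eq:threshold-public-moments}, contributes $7\cdot8d^{1/4}M_2N^2/(D\sqrt T)\cdot D^2/N^2=56d^{1/4}M_2D/\sqrt T$.
 \item The vector-noise term $c_{\rm vec}\sigma_0A_N/(D\sqrt T)$, multiplied by the exact coefficient $(2/A_N+3/N^2)D^2$ as in Corollary~\ref{cor:threshold-smooth}, gives $5c_{\rm vec}\sigma_0D/\sqrt T$.
\end{itemize}
Finally $\eta_{\rm rad}\le4\sqrt2d^{1/4}M_2D/(\sqrt N\sqrt T)$ from \eqref{eq:threshold-public-moments} yields the last term.

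For the order statement, we take $h_{\rm init}=h_{\rm ref}=h_0$ and $R_u\le D$, so the first two terms are $O(h_0D^2/N^2)$. The condition $\gammaMax\ge d^{1/4}D/N$ makes $\sqrt dMD^2/(\gammaMax N^2)\le d^{1/4}MD/N$. With $\sigma_0\le\sqrt{Cd}M_2$ and $d^{1/4}\le\sqrt d$, the stochastic terms are $O(\sqrt dM_2D/\sqrt T)$, and the $\ell_{\rm ns}$ factors are the suppressed logarithms.

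The main obstacle is bookkeeping rather than ideas. Two points need care: the $+2$ in the $MN/D$ term must be folded into the $d^{1/4}$ coefficient, and the $\omega_{\rm rad}$ contribution must be confirmed to land on exactly $56$ via $7\cdot\sqrt2\cdot4\sqrt2$. If a constant fails to match, the fallback is to apply the exact coefficient $(2/A_N+3/N^2)$ rather than the crude $7/N^2$ to each piece separately.
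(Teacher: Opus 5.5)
Your proposal is correct and follows the paper's proof essentially line by line. You specialize the nonsmooth branch of Theorem~\ref{thm:threshold-risk} and bound $2D^2/A_N+3D^2/N^2\le 7D^2/N^2$. You then expand $\mathcal H_{\rm ns}$ using $\Gamma^{-1}\le\gammaMax^{-1}+N/(d^{1/4}D)$, fold the $+2$ into the $d^{1/4}$ coefficient, and use $\omega_{\rm rad}\le 4\sqrt2M_2N/\sqrt T$. Finally you apply the exact coefficient to the vector-noise term, which yields $5c_{\rm vec}$. The constants you computed ($28$, $42+14\sqrt2$, $56$, $5c_{\rm vec}$, and $4\ell_{\rm ns}$ from $2MDN/A_N\le4MD/N$) are all correct, so the fallback you mention is not needed.
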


\begin{proof}
Use
\[
 \Gamma^{-1}\le\gammaMax^{-1}+N/(d^{1/4}D),\qquad
 (4+2\sqrt2)d^{1/4}+2\le(6+2\sqrt2)d^{1/4},\qquad
 \omega_{\rm rad}\le4\sqrt2M_2N/\sqrt T
\]
in \eqref{eq:adaptive-main-Hns}.
Multiplication by at most $7D^2/N^2$ gives the respective
coefficients $28$, $42+14\sqrt2$ and $56$ for the evaluation domain,
deterministic curvature and diagnostic maximum.
The vector term uses the exact calculation in the preceding proof
and has coefficient $5c_{\rm vec}$.
The general Bregman remainder is at most $4MD\ell_{\rm ns}/N$.
The local diagnostic error uses
\eqref{eq:threshold-public-moments}. These are the displayed terms.
In the stated evaluation domain regime the evaluation-radius term itself is at most
$28d^{1/4}MD/N$. Use $R_u\le D$, $d^{1/4}\ge1$,
$d^{1/4}\le\sqrt d$ and the variance specialization to obtain
the stated order.
\end{proof}

Both corollaries concern the same threshold controller.
Its optimizer state and acceleration weights are never reset.
The cost of radius increases is included in the drift proof.
Known diameter, an evaluation domain, exact projection and radial sample
regularity remain explicit. The joint anchor guarantee uses this same
controller with an anchor-selected diameter. Positive corruption is
handled by the protected forward-secant controller.
The logarithms express learning relative to $h_{\rm ref}$.
For the smooth regime alone, $\gamma=\min\{\gammaMax,D/N\}$ and
vector batch sizes fixed before sampling attain the same order in $2N$ stages.
The simultaneous controller supplies observed thresholds and
radii across both regularity regimes.

\section{Forward secants and corruption-protected radial control}
\label{sec:app-corrupt-radial}

We use a forward secant of the ball-smoothed population to control the
radial affine-model charge. Its two endpoints yield a scalar corruption
bound independent of dimension. A radius safeguard chosen before sampling bounds the
vector bias. The scalar optimizer, acceleration weights and curvature
reference persist through every radius change.

Assume that $Q$ is compact and convex, $x^0\in Q$, and
$\operatorname{diam}_2(Q)\le D$ with $D>0$. The population is convex and
$M$-Lipschitz on the closed evaluation domain $Q+\gammaMax\mathbb B_2^d$.
The clean symmetric vector record has conditional mean $\nabla f_\gamma(z)$
and variance at most $\sigma_0^2$, uniformly at all positive radii and centers
used below, with $\sigma_0$ defined in Assumption~\ref{ass:vector}.
The clean forward differences satisfy Assumption~\ref{ass:forward-moments}.
Lemma~\ref{lem:corrupt-forward-secant} gives sufficient sample regularity
conditions for this scalar moment bound. Samples
need not be convex. At each sampling stage, functions and directions are drawn independently
of one another and of preceding observations. Each sampled function is
evaluated at both endpoints of its record.

The observed sample loss is
\begin{equation}
\widetilde F(x,\xi)=F(x,\xi)+\delta(x),\qquad
 \sup_{x\in Q+\gammaMax\mathbb B_2^d}|\delta(x)|\le\Delta,
 \label{eq:corrupt-spatial-model}
\end{equation}
where $\delta$ is a fixed Borel function. Its supplied envelope
$\Delta\ge0$ is an algorithm input. We retain
\[
 \mu_d=\frac d{d+1},\quad \nu_d=\frac d{d+2},\quad
 c_d=d\E|[e]_1|\le\sqrt d,
\]
and $f_\gamma(y)=\E_U f(y+\gamma U)$ for a uniform unit-ball $U$.
The radial quantity is $h_\gamma(y)=\gamma\partial_\gamma f_\gamma(y)$.

\subsection{A two-value upper secant for the radial charge}
\label{subsec:corrupt-forward}

\begin{lemma}[Forward radial secant]
\label{lem:corrupt-forward-secant}
Draw a uniform-ball $U$ independently of the sampled function indexed by $\xi$.
For $0<\gamma\le\gammaMax/2$, put
\begin{align}
 q_\gamma(y)&=f_{2\gamma}(y)-f_\gamma(y),\nonumber\\
 \widehat q_\gamma^0(y;U,\xi)
 &=F(y+2\gamma U,\xi)-F(y+\gamma U,\xi).
 \label{eq:corrupt-forward-record}
\end{align}
Then
\begin{equation}
 0\le h_\gamma(y)\le q_\gamma(y)\le\mu_dM\gamma.
 \label{eq:corrupt-forward-charge}
\end{equation}
Under the weak-gradient and segment-integrability hypotheses of
Lemma~\ref{lem:oracle-moments}, or a random sample Lipschitz envelope
with second moment at most $M_2^2$, the record satisfies
\begin{equation}
 \E \widehat q_\gamma^0(y)=q_\gamma(y),\qquad
 \E\bigl[\widehat q_\gamma^0(y)^2\bigr]\le\nu_dM_2^2\gamma^2.
 \label{eq:corrupt-forward-moment}
\end{equation}
The corrupted record $\widehat q_\gamma$ differs pointwise from $\widehat q_\gamma^0$
by at most $2\Delta$. For $0<\lambda\le1$,
\begin{equation}
 h_{\lambda\gamma}(y)\le\lambda q_\gamma(y).
 \label{eq:corrupt-forward-shrink}
\end{equation}
The affine lower model for every original comparator $u\in Q$ remains
\begin{equation}
 f(u)\ge f_\gamma(y)+\ip{\nabla f_\gamma(y)}{u-y}-h_\gamma(y).
 \label{eq:corrupt-original-model}
\end{equation}
If the population is $L$-smooth on the evaluation domain, then
$h_\gamma(y)\le L\nu_d\gamma^2$.
\end{lemma}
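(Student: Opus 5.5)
The whole lemma reduces to convexity of the radial profile $\psi(s)=\Phi(y,s)=\E_U\bar f(y+sU)$ plus a segment bound for the record. We work with the globally convex $M$-Lipschitz extension $\bar f$ of Lemma~\ref{lem:smoothing}, so that $\Phi$ is jointly convex. Since $\psi$ is convex and even, it is nondecreasing on $[0,\infty)$, its right derivative is nondecreasing, and Lemma~\ref{lem:radial-affine} gives $h_\gamma(y)=\gamma\psi'(\gamma)$.

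\textbf{Charge bracket \eqref{eq:corrupt-forward-charge}.} Convexity of $\psi$ on $[\gamma,2\gamma]$ gives $\psi(2\gamma)-\psi(\gamma)\ge\gamma\psi'(\gamma)$, which is exactly $q_\gamma\ge h_\gamma$. The inequality $h_\gamma\ge0$ is already in Lemma~\ref{lem:radial-affine}. For the upper bound, $\psi$ is $M\E\|U\|=M\mu_d$-Lipschitz in $s$, so $q_\gamma\le M\mu_d\gamma$. All points involved lie in $Q+2\gamma\Ball_2^d\subseteq Q+\gammaMax\Ball_2^d$, so none of these quantities depends on the extension.

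\textbf{Shrinking \eqref{eq:corrupt-forward-shrink}.} Combine \eqref{eq:radial-shrink} with the bracket: $h_{\lambda\gamma}\le\lambda h_\gamma\le\lambda q_\gamma$.

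\textbf{Affine model and smooth case.} The affine model \eqref{eq:corrupt-original-model} is \eqref{eq:radial-affine} verbatim. The smooth bound $h_\gamma\le L\nu_d\gamma^2$ is \eqref{eq:radial-smooth-charge}.

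\textbf{Moments \eqref{eq:corrupt-forward-moment}.} For the mean, $2\gamma U$ and $\gamma U$ are ball-uniform at radii $2\gamma$ and $\gamma$, and $U$ is independent of $\xi$. Taking the expectation first over $\xi$ and then over $U$ gives $f_{2\gamma}(y)-f_\gamma(y)$; the integrability hypotheses justify Fubini.

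For the second moment, apply the fundamental theorem on the segment from $y+\gamma U$ to $y+2\gamma U$, which has length $\gamma\|U\|$, and then Jensen:
\[
\bigl(\widehat q^0_\gamma\bigr)^2\le\gamma^2\|U\|^2\int_0^1\bigl\|\nabla F(y+(1+s)\gamma U,\xi)\bigr\|^2\,ds .
\]
Under the random Lipschitz envelope this reads $\bigl(\widehat q^0_\gamma\bigr)^2\le m(\xi)^2\gamma^2\|U\|^2$. Taking expectations in $\xi$ and using the envelope $M_2^2$ (with Fubini in $s$), then taking the expectation in $U$ with $\E\|U\|^2=\nu_d$, gives $\nu_dM_2^2\gamma^2$.

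The main obstacle is justifying the weak-gradient fundamental theorem on almost every sampled segment, particularly when $2\gamma=\gammaMax$ puts the outer endpoint on the boundary. We handle it as in Lemmas~\ref{lem:oracle-moments} and~\ref{lem:radial-moment}. First, polar Fubini applies because $U$ has a density, so the random segments are radial segments about $y$. Second, a boundary radius is reached by approximating with inner radii and applying Fatou together with sample continuity. Under the Lipschitz envelope alternative, no weak-gradient argument is needed at all.

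\textbf{Corruption.} Finally, $|\widehat q_\gamma-\widehat q_\gamma^0|=|\delta(y+2\gamma U)-\delta(y+\gamma U)|\le2\Delta$, since $|\delta|\le\Delta$ on the evaluation domain.
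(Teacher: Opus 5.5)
Your proof is correct and follows the paper's argument essentially step for step. The bracket $0\le h_\gamma\le q_\gamma\le\mu_dM\gamma$ comes from convexity and the $M\mu_d$-Lipschitz bound of the radial profile $s\mapsto f_s(y)$, and the shrink, affine-model and smooth-case claims are inherited from Lemma~\ref{lem:radial-affine}. The second moment follows from the fundamental theorem on the radial segment from $y+\gamma U$ to $y+2\gamma U$, together with Jensen, Fubini and $\E\norm{U}^2=\nu_d$, or directly from the Lipschitz envelope. The $2\Delta$ corruption bound comes from the two endpoints.

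A minor remark: since $\norm{U}<1$ almost surely, that segment lies in the interior of the evaluation domain almost surely, even when $2\gamma=\gammaMax$. The Fatou approximation that you and the paper invoke for the boundary radius is therefore a safeguard rather than a necessity.
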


\begin{proof}
Lemma~\ref{lem:radial-affine} proves that $(y,s)\mapsto f_s(y)$ is
jointly convex and differentiable at positive $s$, including evaluation domain
boundary radii. For fixed $y$, write $a(s)=f_s(y)$. Convexity yields
$a(2\gamma)\ge a(\gamma)+\gamma a'(\gamma)$, proving
$h_\gamma(y)\le q_\gamma(y)$. The radial function is even and convex,
so both quantities are nonnegative. Lipschitzness gives
\[
 q_\gamma(y)\le\E M\gamma\norm U=\mu_dM\gamma.
\]
Its sample mean follows by taking the same-sample expectation at both
endpoints.

Under the gradient moment assumption, the fundamental theorem along the sampled
segment gives, almost surely,
\[
 \widehat q_\gamma^0
 =\gamma\int_1^2\ip{\nabla F(y+s\gamma U,\xi)}U\,ds.
\]
Jensen and Cauchy--Schwarz bound its squared magnitude by
$\gamma^2\norm U^2\int_1^2\norm{\nabla F(y+s\gamma U,\xi)}^2ds$.
The segment integrability assumption permits Fubini; integrating in
$\xi,U$ bounds the expectation by
$\gamma^2M_2^2\E\norm U^2=\nu_dM_2^2\gamma^2$.
At an endpoint on the boundary of the evaluation domain, sample continuity
and increasing inner radii give the same conclusion by Fatou. The
uniform second-moment bound supplies uniform integrability for the
mean. Under a random Lipschitz envelope, instead apply
$|\widehat q_\gamma^0|\le m(\xi)\gamma\norm U$ directly.

The two corruption values in the record differ by at most $2\Delta$.
Lemma~\ref{lem:radial-affine} gives
$h_{\lambda\gamma}(y)\le\lambda h_\gamma(y)$, proving
\eqref{eq:corrupt-forward-shrink}. Its joint convex affine lower model
at $(y,\gamma)$ applied to $(u,0)$ proves
\eqref{eq:corrupt-original-model}. Finally, in the smooth case,
\[
 \partial_\gamma f_\gamma(y)
 =\E\ip{\nabla f(y+\gamma U)-\nabla f(y)}U
 \le L\gamma\E\norm U^2=L\nu_d\gamma,
\]
where $\E U=0$ cancels the center gradient. Multiplication by $\gamma$
proves the last claim.
\end{proof}

The forward record is the adjacent-scale diagnostic of
Proposition~\ref{prop:adjacent-scale} with outer radius $2\gamma$.
Here the convex radial derivative inequality makes that record an
upper bound for the affine-model charge at the selected target.

\subsection{Persistent protected execution and exact costs}
\label{subsec:corrupt-radial-policy}

Let $a_t=t$, $A_t=t(t+1)/2$, $A=A_N$, and initialize the scalar method
once with $H_0=h_{\rm init}\ge h_{\rm ref}>0$. Put
\begin{equation}
 \mathsf K_t=h_{\rm ref}+H_t-h_{\rm init},\qquad
 \underline K_0=h_{\rm ref}.
 \label{eq:corrupt-adjusted}
\end{equation}
An observed initial coefficient is measurable in the initial history
and integrable, so its startup expectation is finite; preceding
observations have their separate cost. Choose a
safeguard $0\le g\le\gammaMax/2$ before sampling and set
\begin{equation}
 \Gamma_0=\min\{\gammaMax/2,d^{1/4}D/N\},\quad
 \Gamma=\max\{g,\Gamma_0\},\quad
 B^{\rm rad}=\floor{T/(8N)},\quad\gamma_0=\Gamma.
 \label{eq:corrupt-initial-radii}
\end{equation}
The two vector centers, projection and coefficient update are
\eqref{eq:center-y}--\eqref{eq:coefficient}; their observed records use
\eqref{eq:corrupt-spatial-model}.

Before the radial stage of iteration $t$, compute
\begin{align}
 I_t&=\mathbf1\{\mathsf K_{t-1}\ge2\underline K_{t-1}\},\nonumber\\
 \underline K_t&=
 \begin{cases}\mathsf K_{t-1},&I_t=1,\\
 \underline K_{t-1},&I_t=0,
 \end{cases}
 &\gamma_t^{\rm try}&=
 \begin{cases}\Gamma,&I_t=1,\\\gamma_{t-1},&I_t=0.
 \end{cases}
 \label{eq:corrupt-reference}
\end{align}
At the conditionally fixed center $y_t$ and trial radius, draw
$B^{\rm rad}$ conditionally independent corrupted forward records, and let their mean be
$\widehat q_t$. Set
\begin{equation}
 \tau_t=\underline K_tD^2/N^2,\qquad
 \gamma_t=\max\left\{g,
 \frac{\gamma_t^{\rm try}}
 {\max\{1,\pos{\widehat q_t-2\Delta}/\tau_t\}}\right\}.
 \label{eq:corrupt-radius-rule}
\end{equation}
Both vector update means use $\gamma_t$. Each radial, pilot or update
record uses a separate sampled function and evaluates it at exactly
two points. The optimizer state, weights and surplus
accounts persist throughout.

Fixed batch sizes require $T\ge8N$. Define
\begin{align}
 \mathsf U_{\rm pub}&=T-NB^{\rm rad},\nonumber\\
 B_t^{\rm pub}&=1+
 \floor{\frac{(\mathsf U_{\rm pub}-2N)t}{N(N+1)}},\quad
 m_{\rm pub}=\mathsf U_{\rm pub}-2\sum_{t=1}^NB_t^{\rm pub}.
 \label{eq:corrupt-public-counts}
\end{align}
Use these counts at both centers, adding the predetermined residual
$m_{\rm pub}$ to the final first-center mean. The version with adaptive batch sizes requires
$T\ge16N$, retains \PaidBatch\ and its persistent surplus, and uses
\begin{equation}
 \mathsf U_{\rm emp}=T-NB^{\rm rad}-4N,\qquad
 B_t^0=1+
 \floor{\frac{(\mathsf U_{\rm emp}-2N)t}{2N(N+1)}}.
 \label{eq:corrupt-paid-counts}
\end{equation}
The two pilot pairs at each center use the selected radius. The scalar
surplus reference $s_0>0$ is still chosen before sampling.

\begin{algorithm}[t]
\caption{Scalar acceleration with protected forward-secant smoothing}
\label{alg:corrupt-radial}
\small
\begin{algorithmic}[1]
\Require $Q,x^0,D$, $N\ge1$, $\Delta\ge0$, $0\le g\le\gammaMax/2$,
$h_{\rm init}\ge h_{\rm ref}>0$; fixed batch sizes with $T\ge8N$, or adaptive batch sizes with
$T\ge16N,s_0>0$
\State $x_0=v_0=x^0$, $H_0=h_{\rm init}$,
$\mathsf K_0=\underline K_0=h_{\rm ref}$
\State Compute $\Gamma,B^{\rm rad}$ by
\eqref{eq:corrupt-initial-radii}; set $\gamma_0=\Gamma$
\State Initialize fixed batch sizes \eqref{eq:corrupt-public-counts}
or adaptive-batch accounts \eqref{eq:corrupt-paid-counts} once
\For{$t=1,\ldots,N$}
 \State Set $a_t=t,A_t=t(t+1)/2$; compute $y_t$ by
 \eqref{eq:center-y}
 \State Compute the predictable reference and trial radius by
 \eqref{eq:corrupt-reference}
 \State Draw the mean $\widehat q_t$ of $B^{\rm rad}$ corrupted
 records \eqref{eq:corrupt-forward-record}
 \State Select $\gamma_t$ by \eqref{eq:corrupt-radius-rule}
 \State Select $B_t^y$; compute a corrupted symmetric vector mean from independent records
 at $(y_t,\gamma_t)$
 \State Compute $v_t,x_t$ by \eqref{eq:step}
 \State Select $B_t^x$; compute a corrupted symmetric vector mean from independent records
 at $(x_t,\gamma_t)$
 \State Update $H_t$ by \eqref{eq:curvature}--\eqref{eq:coefficient}
 \State Set $\mathsf K_t=h_{\rm ref}+H_t-h_{\rm init}$
\EndFor
\State \Return $x_N$
\end{algorithmic}
\end{algorithm}

\subsection{Diagnostics, inverse radii and biased vector errors}
\label{subsec:corrupt-controller-moments}

For analysis, let $\widehat q_t^0$ be the clean mean on the same
sampled functions and directions as $\widehat q_t$. Define
\begin{align}
 \mathsf S_N&=\max_{t\le N}
       \pos{\widehat q_t^0/\gamma_t^{\rm try}},\nonumber\\
 \omega&=\sqrt{\nu_d}M_2\sqrt{N/B^{\rm rad}},&
 \eta&=\sqrt{\nu_d}M_2\Gamma/\sqrt{B^{\rm rad}},
 \label{eq:corrupt-diagnostic-scales}\\
 \zeta&=\min\left\{g^{-1},\Gamma^{-1}+
       \frac{(\mu_dM+\omega)N^2}{h_{\rm ref}D^2}\right\},&
 \beta&=c_d\Delta\zeta,\qquad
 \bar\sigma=\sigma_0+d\Delta\zeta.
 \label{eq:corrupt-bias-scales}
\end{align}
The convention is $g^{-1}=+\infty$ when $g=0$. The second argument in
the minimum is finite, including when $\Delta=0$. Put
\[
 \mathsf W_{N,2}=\sum_{t=1}^Nt^2=\frac{N(N+1)(2N+1)}6,
 \qquad
 c_{\rm vec}=\begin{cases}8/\sqrt7,&\text{fixed batches},\\
                         4\sqrt2,&\text{adaptive batches}.
             \end{cases}
\]

\begin{lemma}[Protected radius and exact resource bounds]
\label{lem:corrupt-controller}
Algorithm~\ref{alg:corrupt-radial} uses exactly $T$ pairs and $2T$
values, $N$ projections, and $3N$ sampling stages with fixed batch sizes or $5N$ with adaptive batch sizes. Its diagnostic and radius satisfy
\begin{align}
 \|\mathsf S_N\|_{L_2}&\le\mu_dM+\omega,
 &\E\mathsf S_N&\le\mu_dM+\omega,
 \label{eq:corrupt-diagnostic-maximum}\\
 \gamma_t^{-1}
 &\le\min\{g^{-1},
        \max\{\Gamma^{-1},\mathsf S_NN^2/(\underline K_tD^2)\}\},
 \label{eq:corrupt-inverse-path}\\
 \left\|\max_t\gamma_t^{-1}\right\|_{L_2}&\le\zeta.
 \label{eq:corrupt-inverse-moment}
\end{align}
If $J=\sum_t I_t$, then
\begin{equation}
 J\le\log_2\left(1+\frac{H_N-h_{\rm init}}{h_{\rm ref}}\right),
 \qquad
 \sum_{t:I_t=1}\underline K_t\le2(H_N-h_{\rm init}).
 \label{eq:corrupt-reference-cost}
\end{equation}
\end{lemma}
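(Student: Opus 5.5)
The plan mirrors the proofs of Lemmas~\ref{lem:threshold-accounting} and~\ref{lem:threshold-reference}. The only changes come from three features of the protected rule: the forward secant replaces the radial record, the diagnostic is shifted by $2\Delta$, and the floor $g$ is applied.

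\textbf{Resource counts.} These transfer verbatim from the accounting part of Lemma~\ref{lem:threshold-accounting}, since the counts \eqref{eq:corrupt-public-counts}--\eqref{eq:corrupt-paid-counts} are the same as \eqref{eq:threshold-public-batches}--\eqref{eq:threshold-empirical-batches}. From $\floor{x}\ge x/2$ for $x\ge1$ we get $NB^{\rm rad}\le T/8$. With fixed batches, $\mathsf U_{\rm pub}\ge 7N$ and the reserve sum is at most $\mathsf U_{\rm pub}$, so the residual is nonnegative. With adaptive batches, $\mathsf U_{\rm emp}\ge10N$ and the surplus argument of Lemma~\ref{lem:paid-accounting} applies. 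Adding the $NB^{\rm rad}$ radial pairs (and, in the adaptive case, the $4N$ pilot pairs) gives exactly $T$ pairs. Each iteration has one radial stage and either two or four vector stages, which gives $3N$ or $5N$ stages, and one projection.

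\textbf{Diagnostic maximum \eqref{eq:corrupt-diagnostic-maximum}.} Let $\bar s_t$ be the conditional mean of $\widehat q_t^0/\gamma_t^{\rm try}$ given the history before the radial stage. Lemma~\ref{lem:corrupt-forward-secant} (via Assumption~\ref{ass:forward-moments}) gives $0\le\bar s_t=q_{\gamma_t^{\rm try}}(y_t)/\gamma_t^{\rm try}\le\mu_dM$ and conditional variance at most $\nu_dM_2^2/B^{\rm rad}$. Hence
\[
\mathsf S_N\le\mu_dM+\max_t|\mathsf s_t-\bar s_t|,
\]
and we bound the expected square of the maximum by the sum of the squares, which is at most $\omega^2$. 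Minkowski then gives the $L_2$ bound, and Cauchy--Schwarz gives the $L_1$ bound. No independence across iterations is needed.

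\textbf{Inverse-radius path bound \eqref{eq:corrupt-inverse-path}.} The floor gives $\gamma_t\ge g$ immediately. For the other argument, note that corruption moves each record by at most $2\Delta$, so $\widehat q_t-2\Delta\le\widehat q_t^0$, hence
\[
\pos{\widehat q_t-2\Delta}\le\pos{\widehat q_t^0}=\gamma_t^{\rm try}\pos{\mathsf s_t}.
\]
This is exactly where subtracting $2\Delta$ matters: without it, corruption could shrink the radius. Consequently the unfloored candidate satisfies
\[
\gamma_t^{-1}\le\max\{(\gamma_t^{\rm try})^{-1},\pos{\mathsf s_t}/\tau_t\}.
\]
Applying the floor only enlarges $\gamma_t$, so within a block of constant reference the induction of Lemma~\ref{lem:threshold-reference} goes through. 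Each block starts from $\Gamma$, and $\tau_t=\underline K_tD^2/N^2$. One subtlety: the trial radius after a floor step is $\gamma_{t-1}\ge$ its unfloored value, so the induction hypothesis is preserved as an upper bound on the reciprocal.

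\textbf{Inverse-radius moment \eqref{eq:corrupt-inverse-moment} and reference cost \eqref{eq:corrupt-reference-cost}.} For the moment bound, use $\underline K_t\ge h_{\rm ref}$ and $\max\{a,b\}\le a+b$ in the path bound, then take $L_2$ norms with \eqref{eq:corrupt-diagnostic-maximum}. This gives the second argument of $\zeta$. The bound $g^{-1}$ holds deterministically, and the minimum of the two bounds is preserved because each holds pathwise or in norm. More carefully, $\max_t\gamma_t^{-1}\le\min\{g^{-1},X\}$ pathwise, so its $L_2$ norm is at most $\min\{g^{-1},\|X\|_{L_2}\}$. The reference-cost bound \eqref{eq:corrupt-reference-cost} is purely about the doubling trigger \eqref{eq:corrupt-reference}. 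It is identical to \eqref{eq:threshold-geometric}, with the same proof: successive references at least double, and the last reference is at most $\mathsf K_N$.

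\textbf{Main obstacle.} The step most likely to need care is the inverse-radius induction in the presence of both the floor and the corruption shift. It requires checking that the max/min composition still yields the stated nested bound. The other steps are routine transfers.
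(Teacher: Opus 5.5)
Your proposal is correct and follows the paper's proof essentially step by step. It uses the same clean-shift inequality $\pos{\widehat q_t-2\Delta}\le\pos{\widehat q_t^0}$, the same blockwise induction on the exact reciprocal with the floor as an outer minimum, the same sum-of-squares bound on the diagnostic maximum, the same doubling argument, and the same accounting. (A minor slip: $NB^{\rm rad}\le T/8$ follows from $\floor{x}\le x$, whereas $\floor{x}\ge x/2$ gives the companion bound $B^{\rm rad}\ge T/(16N)$.)
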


\begin{proof}
The trial center and radius are fixed before sampling the diagnostic functions
and directions. The clean
normalized conditional mean is between zero and $\mu_dM$, and its
conditional variance is at most $\nu_dM_2^2/B^{\rm rad}$. Therefore
\[
 \mathsf S_N\le\mu_dM+
 \max_t\left|\widehat q_t^0/\gamma_t^{\rm try}
      -q_{\gamma_t^{\rm try}}(y_t)/\gamma_t^{\rm try}\right|.
\]
The squared maximum is at most the sum of the centered squared
normalized diagnostics. Taking expectations bounds its $L_2$ norm
by $\omega$. Minkowski and Cauchy--Schwarz prove
\eqref{eq:corrupt-diagnostic-maximum}.

The endpoint bound gives
$|\widehat q_t-\widehat q_t^0|\le2\Delta$ pathwise, hence
$\pos{\widehat q_t-2\Delta}\le\pos{\widehat q_t^0}$.
The exact reciprocal of the protected rule is
\[
 \gamma_t^{-1}=\min\left\{g^{-1},
 \max\left\{(\gamma_t^{\rm try})^{-1},
 \frac{\pos{\widehat q_t-2\Delta}}
 {\tau_t\gamma_t^{\rm try}}\right\}\right\}.
\]
At the first iteration of every reference block the trial radius is
$\Gamma$; afterward it is the previous selected radius. Induction,
with the constant threshold in each block, proves
\eqref{eq:corrupt-inverse-path}. Since $\underline K_t\ge h_{\rm ref}$,
the pathwise maximum is at most $g^{-1}$ and at most
$\Gamma^{-1}+\mathsf S_NN^2/(h_{\rm ref}D^2)$. Taking its $L_2$ norm
proves \eqref{eq:corrupt-inverse-moment}. These arguments remain valid
when $g=0$, because all selected radii are positive.

Monotonicity of $\mathsf K_t$ and the trigger give
$\underline K_t\le\mathsf K_{t-1}<2\underline K_t$.
Successive replaced reference values increase by a factor at least
two, and the last value is at most
$\mathsf K_N=h_{\rm ref}+H_N-h_{\rm init}$. This gives the count in
\eqref{eq:corrupt-reference-cost}. For consecutive reference values
$c_{j-1},c_j$, $c_j\ge2c_{j-1}$ implies
$c_j\le2(c_j-c_{j-1})$. Summation proves its reference sum.

Finally $B^{\rm rad}\ge T/(16N)$ and $NB^{\rm rad}\le T/8$.
With fixed batch sizes, $\mathsf U_{\rm pub}\ge7T/8\ge7N$; summing its floors gives
$2\sum_t B_t^{\rm pub}\le\mathsf U_{\rm pub}$. Thus the prescribed
residual is nonnegative and makes the two-center update counts exact.
Moreover $B_t^\iota\ge\mathsf U_{\rm pub}t/(2N(N+1))$ for $\iota\in\{y,x\}$.
With adaptive batch sizes $\mathsf U_{\rm emp}\ge5T/8\ge10N$; the adaptive
surplus calculation gives exactly $\mathsf U_{\rm emp}$ update pairs
with $B_t^\iota\ge\mathsf U_{\rm emp}t/(4N(N+1))$. Its pilots cost $4N$
pairs. Adding the $NB^{\rm rad}$ radial pairs gives exactly $T$ in
both modes. Each sampled function contributes two values. Every iteration uses one
radial and two vector stages with fixed batch sizes, or one radial and four
vector sampling stages, plus one projection.
\end{proof}

Write $\phi_t=f_{\gamma_t}$ and
$e_t^y=g_t^y-\nabla\phi_t(y_t)$, $e_t^x=g_t^x-\nabla\phi_t(x_t)$. Define
$\mathcal E_N=\sum_t t^2\norm{e_t^x-e_t^y}^2$ and retain
$Z_t=t\ip{e_t^y}{u-v_{t-1}}-t\ip{e_t^x}{v_t-v_{t-1}}$.

\begin{lemma}[Biased vector energy and inner products]
\label{lem:corrupt-vector-energy}
The actual corrupted vector means satisfy
\begin{equation}
 \frac{\sqrt{\E\mathcal E_N}}D
 \le\frac{c_{\rm vec}\bar\sigma A}{D\sqrt T}
            +\frac{2\beta\sqrt{\mathsf W_{N,2}}}D,
 \qquad
 \frac1A\left|\E\sum_tZ_t\right|\le2D\beta.
 \label{eq:corrupt-vector-energy}
\end{equation}
\end{lemma}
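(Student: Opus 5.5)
The plan is to follow the proof of Theorem~\ref{thm:corruption} closely, replacing the fixed radius by the selected radii and the deterministic reciprocal bound by the $L_2$ bound $\zeta$ from Lemma~\ref{lem:corrupt-controller}. Write each error as $e_t^\iota=b_t^\iota+\xi_t^\iota$, where $b_t^\iota$ is the conditional mean of the corruption contribution given $\mathcal F_t^\iota$ (the clean record is conditionally unbiased for $\nabla\phi_t$) and $\xi_t^\iota$ is conditionally centered. Lemma~\ref{lem:oracle-moments}, applied conditionally at the selected radius $\gamma_t$ (fixed in both histories, because the radius is chosen before either vector batch), gives $\|b_t^\iota\|\le c_d\Delta/\gamma_t$. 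It also bounds the single-record centered second moment by $2\sigma_0^2+2d^2\Delta^2/\gamma_t^2$, and dividing by the batch size gives the bound for the update mean.

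\emph{Inner products.} Because $u-v_{t-1}$ is $\mathcal F_t^y$-measurable and $v_t-v_{t-1}$ is $\mathcal F_t^x$-measurable, the $\xi$ parts of $Z_t$ have zero expectation, exactly as in Theorem~\ref{thm:potential}. The remaining bias parts are at most $tD(\|b_t^y\|+\|b_t^x\|)\le 2tDc_d\Delta\max_s\gamma_s^{-1}$. Summing with weights $t$ and using $\E\max_s\gamma_s^{-1}\le\|\max_s\gamma_s^{-1}\|_{L_2}\le\zeta$ from \eqref{eq:corrupt-inverse-moment} yields $|\E\sum_tZ_t|\le 2AD\beta$, which is the second claim of \eqref{eq:corrupt-vector-energy}.

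\emph{Energy.} Regard $(t(b_t^x-b_t^y))_t$ and $(t(\xi_t^x-\xi_t^y))_t$ as random vectors in the weighted product $L_2$ space and apply Minkowski's inequality. The bias sequence has norm at most $2c_d\Delta\,\|\max_t\gamma_t^{-1}\|_{L_2}\sqrt{\mathsf W_{N,2}}\le2\beta\sqrt{\mathsf W_{N,2}}$. For the centered sequence, the cross term vanishes because $\xi_t^y$ is $\mathcal F_t^x$-measurable and $\xi_t^x$ is conditionally centered. Hence its expected squared norm is at most $\E\sum_tt^2(1/B_t^y+1/B_t^x)(2\sigma_0^2+2d^2\Delta^2X^2)$, where $X=\max_s\gamma_s^{-1}$.

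The batch floors from the proof of Lemma~\ref{lem:corrupt-controller} are deterministic: $B_t^\iota\ge\mathsf U_{\rm pub}t/(2N(N+1))$ with $\mathsf U_{\rm pub}\ge7T/8$, or $B_t^\iota\ge\mathsf U_{\rm emp}t/(4N(N+1))$ with $\mathsf U_{\rm emp}\ge5T/8$. These floors pull out of the expectation and give the factors $(64/7)A^2/T$ and $32A^2/T$, exactly as in Lemma~\ref{lem:threshold-accounting}. This leaves $c_{\rm vec}^2A^2T^{-1}\,\E(\sigma_0^2+d^2\Delta^2X^2)$, with the factor $2$ absorbed consistently.

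\emph{Main obstacle.} The delicate step is the constant bookkeeping in the centered term. With the factor $2$ the naive bound is $\sqrt2\,c_{\rm vec}(\sigma_0+d\Delta\zeta)$, whereas the statement has $c_{\rm vec}\bar\sigma$. I would try to avoid the $\sqrt2$ by splitting the centered error into its clean part and its centered corruption part, and applying Minkowski to those two sequences separately rather than using $(a+b)^2\le2a^2+2b^2$. The clean part contributes $c_{\rm vec}\sigma_0A/\sqrt T$. The centered corruption part has single-record norm at most $2\cdot d\Delta/\gamma_t\cdot\tfrac12=d\Delta/\gamma_t$, since the centered second moment is at most the second moment and $\|C_\gamma\|\le d\Delta/\gamma$. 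Each part keeps its own martingale cancellation of cross terms, so it contributes $c_{\rm vec}d\Delta\zeta A/\sqrt T$. Adding the two contributions gives exactly $c_{\rm vec}\bar\sigma A/\sqrt T$, and dividing by $D$ completes \eqref{eq:corrupt-vector-energy}.
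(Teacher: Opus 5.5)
Your proof is correct and is essentially the paper's argument. It uses the same bias/centered decomposition, separate cancellation of the two centered inner products via the predictable displacements $u-v_{t-1}$ and $v_t-v_{t-1}$, the deterministic batch floors, the $L_2$ reciprocal-radius bound $\zeta$, and Minkowski in the weighted product space.

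The only variation is where you separate the clean and corruption noise. The paper applies Minkowski per record, getting a centered standard deviation of at most $\sigma_0+d\Delta/\gamma_t$, and then uses $\|\sigma_0+d\Delta\gamma_t^{-1}\|_{L_2}\le\bar\sigma$. You instead split the two noise sequences and apply Minkowski at the sequence level. That route needs the $x$-history to contain the raw $y$-batch draws, which is harmless, and it yields the same constant $c_{\rm vec}\bar\sigma$. Your earlier remark that the factor $2$ is ``absorbed consistently'' does not hold by itself, so it should simply be replaced by this split.
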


\begin{proof}
At a fixed center and selected radius, decompose a single corrupted
symmetric record into its conditional mean and centered part. The
conditional bias is $b_\gamma$. The dual-norm calculation gives
$\norm{b_\gamma}\le c_d\Delta/\gamma$.
The corruption vector has pointwise norm at most $d\Delta/\gamma$.
Its centered $L_2$ norm is therefore at most the same bound.
Minkowski with the clean centered record proves that the corrupted
centered standard deviation is at most $\sigma_0+d\Delta/\gamma$.
A mean of conditionally independent records has variance equal to the
single-record variance divided by its batch size.

For $\iota\in\{y,x\}$, write $e_t^\iota=b_t^\iota+n_t^\iota$,
where $n_t^\iota=\zeta_t^\iota$ in the notation of
\eqref{eq:conditional} and is centered conditionally on the history preceding
that update mean. The second history
contains the first centered mean, so
$\E\ip{n_t^x}{n_t^y}=0$. The expected centered energy is bounded
using the deterministic lower batch floors. Since
\eqref{eq:corrupt-inverse-moment} implies
\[
 \left\|\sigma_0+d\Delta\gamma_t^{-1}\right\|_{L_2}
 \le\bar\sigma,
\]
the floor sums give
\[
 \E\sum_t t^2\norm{n_t^x-n_t^y}^2\le
 \begin{cases}
 8\bar\sigma^2A^2/\mathsf U_{\rm pub}
          \le(64/7)\bar\sigma^2A^2/T,\\
 16\bar\sigma^2A^2/\mathsf U_{\rm emp}
          \le32\bar\sigma^2A^2/T.
 \end{cases}
\]
Here the randomness of $B_t^\iota$ is harmless because its deterministic
floor holds after the pilot observations choose it.
Both biases have norm at most $c_d\Delta\gamma_t^{-1}$, so
\[
 \E\sum_t t^2\norm{b_t^x-b_t^y}^2\le4\beta^2\mathsf W_{N,2}.
\]
Minkowski in the weighted vector $L_2$ space proves the energy bound.
This decomposition retains all bias/noise correlations; it cancels
only the cross term between the two centered update means.

The vector $u-v_{t-1}$ precedes $n_t^y$, and $v_t-v_{t-1}$ precedes
$n_t^x$. Their two centered inner products vanish separately.
Each of these displacements has norm at most $D$, and
$\E\norm{b_t^\iota}\le\beta$. Thus
$|\E Z_t|\le2tD\beta$. Summation uses $\sum_t t=A$ and proves
the second claim.
\end{proof}

\subsection{Original-comparator charge, coefficient growth and drift}
\label{subsec:corrupt-growth-drift}

For $\mathrm{reg}=\mathrm{ns}$ put $w_{\mathrm{reg}}=\mu_dMg$ and $q_{\mathrm{reg}}=2MDN/A$.
For an $L$-smooth population one may instead use
$\mathrm{reg}=\mathrm{s}$, $w_{\mathrm{reg}}=\nu_dLg^2$ and $q_{\mathrm{reg}}=LD^2/A$.

\begin{lemma}[Protected local charge]
\label{lem:corrupt-local-charge}
Conditionally on the history before the radial stage,
\begin{equation}
 \E[h_{\gamma_t}(y_t)\mid\mathcal F_t^{\rm rad}]
 \le\tau_t+\eta+4\Delta+w_{\mathrm{reg}}.
 \label{eq:corrupt-local-charge}
\end{equation}
\end{lemma}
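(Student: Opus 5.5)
The plan mirrors the proof of the clean charge bound \eqref{eq:threshold-local-charge} in Lemma~\ref{lem:threshold-accounting}, with two extra cases. The first case is the floor $g$ binding in \eqref{eq:corrupt-radius-rule}. The second is the shift by $2\Delta$ in the diagnostic. Throughout, condition on $\mathcal F_t^{\rm rad}$, so $y_t$, $\gamma_t^{\rm try}$ and $\tau_t$ are fixed. Write $q=q_{\gamma_t^{\rm try}}(y_t)$. Put $\lambda=1/\max\{1,\pos{\widehat q_t-2\Delta}/\tau_t\}\le1$, and let $\tilde\gamma=\lambda\gamma_t^{\rm try}$ be the unfloored radius, so that $\gamma_t=\max\{g,\tilde\gamma\}$.

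\emph{Unfloored radius.} By \eqref{eq:corrupt-forward-shrink}, $h_{\tilde\gamma}(y_t)\le\lambda q$. By construction $\lambda\pos{\widehat q_t-2\Delta}\le\tau_t$. This gives
\[
\lambda q\le\lambda\pos{\widehat q_t-2\Delta}+\lambda\,|q-(\widehat q_t-2\Delta)|\le\tau_t+|q-\widehat q_t^0|+|\widehat q_t^0-\widehat q_t|+2\Delta .
\]
The corruption difference is at most $2\Delta$ pathwise by Lemma~\ref{lem:corrupt-forward-secant}, so the last two terms contribute at most $4\Delta$. Conditionally, $\widehat q_t^0$ is a mean of $B^{\rm rad}$ independent clean records with mean $q$ and second moment at most $\nu_dM_2^2(\gamma_t^{\rm try})^2$ (Assumption~\ref{ass:forward-moments}). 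Conditional Cauchy--Schwarz and $\gamma_t^{\rm try}\le\Gamma$ then give $\E[|q-\widehat q_t^0|\mid\mathcal F_t^{\rm rad}]\le\eta$, with $\eta$ as in \eqref{eq:corrupt-diagnostic-scales}.

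\emph{Floor.} The radius $\gamma_t$ is either $\tilde\gamma$ or $g\ge\tilde\gamma$. In the second case, use that $s\mapsto h_s(y_t)$ is nondecreasing; this follows from convexity of the even radial function, as in Lemma~\ref{lem:radial-affine}. So $h_{\gamma_t}\le\max\{h_{\tilde\gamma},h_g\}\le h_{\tilde\gamma}+h_g$. Lemma~\ref{lem:radial-affine} bounds $h_g\le\mu_dMg=w_{\rm ns}$ in general, and in the smooth case \eqref{eq:radial-smooth-charge} gives $h_g\le\nu_dLg^2=w_{\rm s}$. Combining this with the previous paragraph and taking the conditional expectation yields \eqref{eq:corrupt-local-charge}.

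The main care point is a possible mismatch between the variance bound and the moment assumption. The variance is bounded by the second moment rather than the centered variance. Assumption~\ref{ass:forward-moments} is stated at radii at most $\gammaMax/2$, and $\gamma_t^{\rm try}\le\Gamma\le\gammaMax/2$ holds because $g\le\gammaMax/2$. So the bound $\eta$ should apply without change.
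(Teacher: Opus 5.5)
Your proposal is correct and follows the paper's proof. It uses the same case split on whether the floor $g$ binds, the bound $h_{\lambda\gamma}\le\lambda q$ together with $\lambda[\widehat q_t-2\Delta]_+\le\tau_t$ and the pathwise $2\Delta$ corruption bound to produce the $4\Delta$ term, and conditional Cauchy--Schwarz to obtain $\eta$. The monotonicity of $s\mapsto h_s(y_t)$ that you invoke in the floor case is harmless but unnecessary, since $\gamma_t$ equals either $\tilde\gamma$ or $g$ exactly, so $h_{\gamma_t}(y_t)$ is already one of $h_{\tilde\gamma}(y_t)$ and $h_g(y_t)$.
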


\begin{proof}
Write $q=q_{\gamma_t^{\rm try}}(y_t)$,
$p=\pos{\widehat q_t-2\Delta}$, and
$\lambda=1/\max\{1,p/\tau_t\}$. The endpoint bound implies
\[
 q\le\widehat q_t^0+
      |\widehat q_t^0-q|
 \le p+4\Delta+|\widehat q_t^0-q|.
\]
Indeed $\widehat q_t^0\le\widehat q_t+2\Delta\le p+4\Delta$.
If the floor is inactive, Lemma~\ref{lem:corrupt-forward-secant}
gives $h_{\gamma_t}(y_t)\le\lambda q$. Since
$\lambda p\le\tau_t$ and $\lambda\le1$, this is at most
$\tau_t+4\Delta+|\widehat q_t^0-q|$.
If the floor is active, $h_{\gamma_t}(y_t)=h_g(y_t)\le w_{\mathrm{reg}}$.
Either case is bounded by their sum.
The conditional standard deviation of the clean mean is at most
$\sqrt{\nu_d}M_2\gamma_t^{\rm try}/\sqrt{B^{\rm rad}}\le\eta$.
Conditional Cauchy--Schwarz now proves
\eqref{eq:corrupt-local-charge}.
\end{proof}

Define
\begin{align}
 \mathcal V&=\frac{c_{\rm vec}\bar\sigma A}{D\sqrt T}
                         +\frac{2\beta\sqrt{\mathsf W_{N,2}}}D,\nonumber\\
 \mathcal H^\Delta_{\rm ns}
 &=\frac{4c_dM}{\Gamma}
 +\frac{2\sqrt{2c_dM(\mu_dM+\omega)}N}{D}
 +\frac{2MN}{D}+\mathcal V,\nonumber\\
 \mathcal H^\Delta_{\rm s}&=4L+\mathcal V,\qquad
 \ell_{\mathrm{reg}}^\Delta=\log_2(1+\mathcal H^\Delta_{\mathrm{reg}}/h_{\rm ref}),\qquad
 k_N=2/A+3/N^2.
 \label{eq:corrupt-curvature-envelopes}
\end{align}

\begin{lemma}[Protected coefficient and target drift]
\label{lem:corrupt-growth-drift}
For either applicable case $\mathrm{reg}$,
$\E(H_N-h_{\rm init})\le\mathcal H^\Delta_{\mathrm{reg}}$ and $\E J\le\ell_{\mathrm{reg}}^\Delta$.
For
\[
 \mathcal D_N^+=\sum_{t=2}^NA_{t-1}
 \pos{f_{\gamma_t}(x_{t-1})-f_{\gamma_{t-1}}(x_{t-1})},
\]
one has
\begin{equation}
 \frac{\E\mathcal D_N^+}A
 \le\frac{2D^2}{N^2}\mathcal H^\Delta_{\mathrm{reg}}
                      +(\eta+4\Delta+w_{\mathrm{reg}}+q_{\mathrm{reg}})\ell_{\mathrm{reg}}^\Delta.
 \label{eq:corrupt-drift}
\end{equation}
\end{lemma}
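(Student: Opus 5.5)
The plan mirrors the clean threshold analysis (Lemmas~\ref{lem:threshold-growth} and~\ref{lem:threshold-drift}), with the forward secant in place of the radial record and with the vector errors now biased. The growth bound and the drift bound are handled separately.

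\emph{Coefficient growth.} I will first bound the selected smoothness pathwise. By Lemma~\ref{lem:smoothing}, $L_t\le c_dM/\gamma_t$. Combining this with \eqref{eq:corrupt-inverse-path} gives
\[
 L_t\le\frac{c_dM}{\Gamma}+\frac{2c_dM\mathsf S_NN^2}{\mathsf K_{t-1}D^2},
\]
using $\underline K_t>\mathsf K_{t-1}/2$; the floor $g$ only helps, since it enlarges the radius. Set $a=c_dM/\Gamma$, $b=c_dM\mathsf S_NN^2/D^2$ and $\mathfrak H=4a+2\sqrt2\sqrt b$. Then $\mathsf K_{t-1}\ge\mathfrak H$ implies $4L_t\le\mathfrak H$, exactly as in the proof of Lemma~\ref{lem:threshold-growth}. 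Next take the shift $\mathfrak B=\mathfrak H+2MN/D$ and $k_t=\pos{\mathsf K_t-\mathfrak B}$, and run the same two-case square-growth argument.
\begin{itemize}
\item If $\mathsf K_{t-1}\ge\mathfrak H$, complete the square against $H_tr_t^2/2\ge \mathsf K_tr_t^2/2$.
\item If $\mathsf K_{t-1}<\mathfrak H$, use true-gradient norms at most $M$, so that $k_t\le t\norm{e_t^x-e_t^y}/D$.
\end{itemize}
Both cases give $k_N^2\le k_0^2+\mathcal E_N/D^2$ pathwise, because the argument only uses the total errors $e_t^\iota$, whatever their bias. Taking expectations, the term $\E\sqrt{c_dM\mathsf S_N}$ is bounded via \eqref{eq:corrupt-diagnostic-maximum} and Jensen by $\sqrt{c_dM(\mu_dM+\omega)}$. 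The term $\sqrt{\E\mathcal E_N}/D\le\mathcal V$ comes from Lemma~\ref{lem:corrupt-vector-energy}. Together these give $\E(H_N-h_{\rm init})\le\mathcal H^\Delta_{\rm ns}$. In the smooth case every target is $L$-smooth (Proposition~\ref{prop:protected-smoothing-smooth}), so Lemma~\ref{lem:growth} applied pathwise with the biased errors yields $\mathcal H^\Delta_{\rm s}$. The bound on $\E J$ then follows from \eqref{eq:corrupt-reference-cost}, concavity of $\log_2$ and Jensen.

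\emph{Drift.} A radius increase can happen only on $I_t=1$ (with the $g$ floor, the radius stays at most $\Gamma$ when no reset occurs). On those events, $\phi_{t-1}\ge f$ and the original affine model \eqref{eq:corrupt-original-model} at $(y_t,\gamma_t)$ applied to $x_{t-1}$ give
\[
 \pos{\phi_t(x_{t-1})-\phi_{t-1}(x_{t-1})}\le h_{\gamma_t}(y_t)+D_{\phi_t}(x_{t-1},y_t).
\]
The Bregman term satisfies $A_{t-1}D_{\phi_t}\le 2MDN$, or $LD^2$ in the smooth case, by the center identity $\norm{x_{t-1}-y_t}\le 2D/(t+1)$; this is $Aq_{\rm reg}$. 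Multiply \eqref{eq:corrupt-local-charge} by the predictable $I_tA_{t-1}\le A$ and take expectations. The $\tau_t$ part sums to at most $(D^2/N^2)\sum_{I_t=1}\underline K_t\le 2D^2(H_N-h_{\rm init})/N^2$, and the remaining constants are multiplied by $\E J\le\ell^\Delta_{\rm reg}$. This gives \eqref{eq:corrupt-drift}.

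\emph{Main obstacle.} The delicate step is the growth lemma. The curvature threshold $\mathfrak H$ depends on the random $\mathsf S_N$, yet it must be fixed before the pathwise induction, and the bias in $e_t$ must not break the square-completion. I handle both by doing everything pathwise on a fixed realization and taking expectations only at the end, as in the clean proof. The coefficient $c_d$ rather than $\sqrt d$ is kept through the smoothness bound.
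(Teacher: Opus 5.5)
Your proposal is correct and follows the paper's proof essentially step for step. It uses the same pathwise shifted-square argument with $\mathfrak H=4a+2\sqrt{2b}$, shift $\mathfrak B=\mathfrak H+2MN/D$ and the two cases $\mathsf K_{t-1}\ge\mathfrak H$ versus $\mathsf K_{t-1}<\mathfrak H$, then takes expectations via \eqref{eq:corrupt-diagnostic-maximum} and Lemma~\ref{lem:corrupt-vector-energy}. For the drift it likewise uses the affine model plus the Bregman remainder on reference events, with the local charge averaged predictably.

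One small fix concerns why the radius cannot increase when $I_t=0$. The reason is not that the radius stays at most $\Gamma$; it is that every selected radius satisfies $\gamma_{t-1}\ge g$ (since $\gamma_0=\Gamma\ge g$), so both the candidate $\gamma_{t-1}/\max\{1,\cdot\}$ and the floor $g$ are at most $\gamma_{t-1}$.
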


\begin{proof}
Fix a realization, and put
$a=c_dM/\Gamma$, $b=c_dM\mathsf S_NN^2/D^2$ and
$\mathfrak H=4a+2\sqrt{2b}$. The radius bound and
$\underline K_t>\mathsf K_{t-1}/2$ imply
\[
 L_t\le a+2b/\mathsf K_{t-1}.
\]
Moreover
$\mathfrak H^2-4a\mathfrak H-8b=8\sqrt2a\sqrt b\ge0$.
Thus $\mathsf K_{t-1}\ge\mathfrak H$ implies
$4L_t\le\mathfrak H$; if $\mathfrak H=0$, the population is
constant and this statement holds directly.

Use the unified shift $\mathfrak B=\mathfrak H+2NM/D$ and
$k_t=\pos{\mathsf K_t-\mathfrak B}$. The scalar balance identity
and $H_t\ge\mathsf K_t$ give, when
$\mathsf K_{t-1}\ge\mathfrak H$ and $k_t>0$,
\[
 (H_t-H_{t-1})D^2
 \le\pos{-k_tr_t^2/2+t\norm{e_t^x-e_t^y}r_t}
 \le\frac{t^2\norm{e_t^x-e_t^y}^2}{2k_t}.
\]
Here the observed secant is bounded by
$A_t\widehat\beta_t\le2L_tr_t^2+
t\norm{e_t^x-e_t^y}r_t$, exactly as in the original scalar-growth
proof. Since $0\le k_t-k_{t-1}\le H_t-H_{t-1}$, it follows that
\begin{equation}
 D^2(k_t^2-k_{t-1}^2)\le t^2\norm{e_t^x-e_t^y}^2.
 \label{eq:corrupt-square-recursion}
\end{equation}
The case $k_t=0$ is immediate. If
$\mathsf K_{t-1}<\mathfrak H$, both true gradients have norm at
most $M$, and $r_t\le D$ bounds the coefficient increment by
$t(2M+\norm{e_t^x-e_t^y})/D$. Then $k_{t-1}=0$ and
$k_t\le t\norm{e_t^x-e_t^y}/D$, proving the same recursion.
Summation gives
\[
 H_N-h_{\rm init}
 \le(\mathfrak H+2NM/D-h_{\rm ref})_+
                        +\sqrt{\mathcal E_N}/D.
\]
Take expectations, use
$\E\sqrt{M\mathsf S_N}\le\sqrt{M(\mu_dM+\omega)}$, and apply
Lemma~\ref{lem:corrupt-vector-energy}. This proves the nonsmooth
envelope. Under common population smoothness every target is
$L$-smooth; the original scalar-growth argument gives
$H_N-h_{\rm init}\le4L+\sqrt{\mathcal E_N}/D$, proving the
smooth envelope. Jensen and \eqref{eq:corrupt-reference-cost} yield
$\E J\le\ell_{\mathrm{reg}}^\Delta$.

When $I_t=0$, the protected radius cannot increase: its candidate is
at most the previous radius and its floor also is at most that radius.
Thus only reference events have positive target drift. On such an
event the original affine model and $f_{\gamma_{t-1}}\ge f$ give
\[
 \pos{f_{\gamma_t}(x_{t-1})-f_{\gamma_{t-1}}(x_{t-1})}
 \le h_{\gamma_t}(y_t)
 +D_{f_{\gamma_t}}(x_{t-1},y_t),
\]
where $D_\phi(x,y)=\phi(x)-\phi(y)-\ip{\nabla\phi(y)}{x-y}$.
The center identity gives
$\norm{x_{t-1}-y_t}\le2D/(t+1)$. The Bregman remainder is
at most $2M\norm{x_{t-1}-y_t}$, or
$L\norm{x_{t-1}-y_t}^2/2$ in the smooth case. After multiplication
by $A_{t-1}$ these bounds are respectively at most $2MDN$ and
$LD^2$, that is $Aq_{\mathrm{reg}}$.

The event and threshold are predictable before the radial
stage. Multiply \eqref{eq:corrupt-local-charge} by $I_tA_{t-1}$
before taking expectations. Since $A_{t-1}\le A$ and the reference
sum is at most $2(H_N-h_{\rm init})$, this gives
\[
 \frac1A\E\sum_t I_tA_{t-1}h_{\gamma_t}(y_t)
 \le\frac{2D^2}{N^2}\E(H_N-h_{\rm init})
                      +(\eta+4\Delta+w_{\mathrm{reg}})\E J.
\]
Adding the Bregman sum and applying the two envelope bounds proves
\eqref{eq:corrupt-drift}. No terminal event count is multiplied by
an earlier random diagnostic error.
\end{proof}

\subsection{The protected finite-budget guarantee}
\label{subsec:proof-corrupt-radial}

\begin{theorem}[Corruption-protected adaptive smoothing]
\label{thm:corrupt-radial}
Under the stated geometry, clean moment and fixed-corruption assumptions,
Algorithm~\ref{alg:corrupt-radial} satisfies, for every fixed $u\in Q$,
\begin{align}
 \E[f(x_N)-f(u)]\le{}&
 \frac{\E[h_{\rm init}R_u^2]}{2A}
 +k_ND^2\mathcal H^\Delta_{\mathrm{reg}}+\frac{h_{\rm ref}D^2}{N^2}\nonumber\\
 &+(\eta+4\Delta+w_{\mathrm{reg}})(1+\ell_{\mathrm{reg}}^\Delta)+q_{\mathrm{reg}}\ell_{\mathrm{reg}}^\Delta+2D\beta,
 \label{eq:corrupt-adaptive-risk}
\end{align}
where $R_u=\norm{u-x^0}$ and the quantities are
\eqref{eq:corrupt-diagnostic-scales}--\eqref{eq:corrupt-curvature-envelopes}.
The nonsmooth branch is always applicable; the smooth branch additionally
assumes population $L$-smoothness on the evaluation domain. The exact counts are
$T$ original pairs, $2T$ values and $N$ projections in $3N$ sequential sampling stages with fixed batch sizes or
$5N$ with adaptive batch sizes.
\end{theorem}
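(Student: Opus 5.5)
The plan is to follow the structure of the proof of Theorem~\ref{thm:threshold-risk}, replacing its clean ingredients with the protected ones of this section. The starting point is the pathwise radial potential of Theorem~\ref{thm:radial-potential}. It still holds with initial coefficient $h_{\rm init}$ and arbitrary selected radii, since its proof uses only the affine model \eqref{eq:corrupt-original-model}, projection optimality and the balance identity. In it I keep the positive drift $\mathcal D_N^+$ and drop the nonnegative terminal smoothing excess. This gives
\[
 A[f(x_N)-f(u)]\le h_{\rm init}R_u^2/2+2D^2(H_N-h_{\rm init})
 +\sum_t a_th_{\gamma_t}(y_t)+\sum_tZ_t+\mathcal D_N^+ .
\]
I then take expectations, conditioning $h_{\rm init}$ on the initial history. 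The resource counts are taken directly from Lemma~\ref{lem:corrupt-controller}.

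The terms are bounded one at a time. First, the error inner products are no longer centered because of corruption bias. Lemma~\ref{lem:corrupt-vector-energy} gives $|\E\sum_tZ_t|/A\le2D\beta$, which is the final term.

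Second, the local charge is handled with Lemma~\ref{lem:corrupt-local-charge}. I multiply \eqref{eq:corrupt-local-charge} by the deterministic $a_t$ before taking the outer expectation. Using $\underline K_t\le\mathsf K_N=h_{\rm ref}+H_N-h_{\rm init}$ pathwise and $\sum_ta_t=A$, the weighted charge divided by $A$ is at most
\[
 \frac{D^2}{N^2}\bigl[h_{\rm ref}+\E(H_N-h_{\rm init})\bigr]+\eta+4\Delta+w_{\mathrm{reg}} .
\]

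Third, Lemma~\ref{lem:corrupt-growth-drift} bounds the drift by \eqref{eq:corrupt-drift}.

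Collecting the coefficients of $\E(H_N-h_{\rm init})$ gives $2D^2/A$ from the potential, $D^2/N^2$ from the charge, and $2D^2/N^2$ from the drift. Their sum is exactly $k_ND^2$ with $k_N=2/A+3/N^2$. Applying $\E(H_N-h_{\rm init})\le\mathcal H^\Delta_{\mathrm{reg}}$ from the same lemma gives the term $k_ND^2\mathcal H^\Delta_{\mathrm{reg}}$. The term $h_{\rm ref}D^2/N^2$ comes from the charge. The term $(\eta+4\Delta+w_{\mathrm{reg}})$ appears once from the charge and $\ell^\Delta_{\mathrm{reg}}$ times from the drift, and $q_{\mathrm{reg}}\ell^\Delta_{\mathrm{reg}}$ also comes from the drift. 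Together these reproduce \eqref{eq:corrupt-adaptive-risk}.

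The step needing most care is integrability and the legitimacy of taking expectations when radii, batch sizes and biases are all data-dependent. The potential is pathwise. Its right-hand terms are integrable because $H_N$ has finite mean by the growth lemma, $\mathcal E_N$ has finite mean by Lemma~\ref{lem:corrupt-vector-energy}, and each $h_\gamma\le\mu_dM\gamma$ is bounded. I also need to check that the charge bound is applied eventwise before any logarithmic bound is used, as in Lemma~\ref{lem:threshold-drift}, so that no product of a terminal count with diagnostic noise appears.

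For the branches, the nonsmooth one always applies. The smooth one uses $w_{\rm s}$, $q_{\rm s}$ and $\mathcal H^\Delta_{\rm s}$, each valid only under population $L$-smoothness, through Lemma~\ref{lem:corrupt-forward-secant}.
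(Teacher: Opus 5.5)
Your proposal is correct and follows the paper's proof essentially step for step: the pathwise radial potential with initial coefficient $h_{\rm init}$ and retained positive drift, predictable averaging of the protected local charge using $\underline K_t\le\mathsf K_N$, the drift and growth envelopes of Lemma~\ref{lem:corrupt-growth-drift}, the biased inner-product bound $2D\beta$ from Lemma~\ref{lem:corrupt-vector-energy}, and the coefficient count $2D^2/A+D^2/N^2+2D^2/N^2=k_ND^2$. One attribution is slightly off: in the smooth branch only $w_{\rm s}$ comes from Lemma~\ref{lem:corrupt-forward-secant}, whereas $\mathcal H^\Delta_{\rm s}$ and $q_{\rm s}$ come from the scalar growth argument and the smooth Bregman remainder inside Lemma~\ref{lem:corrupt-growth-drift}.
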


\begin{proof}
The scalar projection potential is a pathwise statement for arbitrary
observed vector records. Its derivation uses the same selected target
at both centers and the original-objective affine model. Those
conditions hold here. Therefore the changing-target potential is
\begin{align*}
 A[f(x_N)-f(u)]\le{}&
 \frac{h_{\rm init}R_u^2}{2}+2D^2(H_N-h_{\rm init})
 +\sum_t t h_{\gamma_t}(y_t)\\
 &+\mathcal D_N^++\sum_t Z_t.
\end{align*}
This is Theorem~\ref{thm:radial-potential} with its proved arbitrary
vector-error inner products retained; it requires no centering
assumption in the pathwise inequality.
Predictable local-charge averaging and
$\underline K_t\le\mathsf K_N$ give
\[
 \frac1A\sum_t t\E h_{\gamma_t}(y_t)
 \le\frac{D^2}{N^2}
 [h_{\rm ref}+\E(H_N-h_{\rm init})]+\eta+4\Delta+w_{\mathrm{reg}}.
\]
Add \eqref{eq:corrupt-drift} and the biased inner-product bound
\eqref{eq:corrupt-vector-energy}, and use
$\E(H_N-h_{\rm init})\le\mathcal H^\Delta_{\mathrm{reg}}$ and $\E J\le\ell_{\mathrm{reg}}^\Delta$.
The three local/drift coefficient contributions are
$2D^2/A+D^2/N^2+2D^2/N^2=k_ND^2$, giving exactly
\eqref{eq:corrupt-adaptive-risk}.
All terms are integrable by Lemmas~\ref{lem:corrupt-controller} and
\ref{lem:corrupt-vector-energy}, and the coefficient envelope.
An initial observed coefficient can be conditioned on its initial
history throughout. The cost claim is
Lemma~\ref{lem:corrupt-controller}.
\end{proof}

\subsection{A vector-bias allowance chosen before sampling}
\label{subsec:corrupt-bias-budget}

Put $\kappa_d=d/c_d$ for every $\Delta\ge0$.
The dimension factor satisfies
\begin{equation}
 \kappa_d^2\le\frac{3d^2}{d+2}\le3d.
 \label{eq:corrupt-kappa-dimension}
\end{equation}
To verify this, put $X=[e]_1$. Rotational symmetry gives
$\E X^2=1/d$ and $\E X^4=3/[d(d+2)]$. For the latter identity in
$d\ge2$, a rotation of $([e]_1,[e]_2)$ gives
$\E [e]_1^4=3\E([e]_1^2[e]_2^2)$; expanding
$\E(\sum_i [e]_i^2)^2=1$ determines both moments. The formula is
also direct in $d=1$. H\"older yields
$\E X^2\le(\E|X|)^{2/3}(\E X^4)^{1/3}$, hence
$c_d^2=d^2(\E|X|)^2\ge(d+2)/3$ and proves
\eqref{eq:corrupt-kappa-dimension}.

\begin{corollary}[Protected bias budget]
\label{cor:corrupt-bias-budget}
Fix $\rho>0$. Choose
\begin{equation}
 g=\frac{c_d\Delta D}{\rho}\le\gammaMax/2.
 \label{eq:corrupt-public-floor}
\end{equation}
For $\Delta>0$,
$D\beta\le\rho$ and
$\bar\sigma\le\sigma_0+\kappa_d\rho/D$.
The following explicit bounds hold, with the actual applicable
$\ell_{\mathrm{reg}}^\Delta$ of \eqref{eq:corrupt-curvature-envelopes}:
\begin{align}
 \E[f(x_N)-f(u)]\le{}&
 \frac{\E[h_{\rm init}R_u^2]}{2A}
 +\frac{[(28+2\ell_{\rm s}^\Delta)L+h_{\rm ref}]D^2}{N^2}
 +\frac{5c_{\rm vec}\sigma_0D}{\sqrt T}\nonumber\\
 &+\left(2+\frac{14}{\sqrt N}
              +\frac{5c_{\rm vec}\kappa_d}{\sqrt T}\right)\rho
 +(\eta+4\Delta+\nu_dLg^2)(1+\ell_{\rm s}^\Delta)
 \label{eq:corrupt-smooth-explicit}
\end{align}
in the smooth case, and
\begin{align}
 \E[f(x_N)-f(u)]\le{}&
 \frac{\E[h_{\rm init}R_u^2]}{2A}
 +\frac{h_{\rm ref}D^2}{N^2}
 +\frac{56c_dMD^2}{\gammaMax N^2}\nonumber\\
 &+\left(\frac{28c_d}{d^{1/4}}+14\sqrt{2c_d}
                       +14+4\ell_{\rm ns}^\Delta\right)\frac{MD}{N}
 +\frac{5c_{\rm vec}\sigma_0D}{\sqrt T}\nonumber\\
 &+\frac{28\sqrt{2c_d}M_2D}{\sqrt T}
 +\left(2+\frac{14}{\sqrt N}
            +\frac{5c_{\rm vec}\kappa_d}{\sqrt T}\right)\rho\nonumber\\
 &+(\eta+4\Delta+\mu_dMg)(1+\ell_{\rm ns}^\Delta)
 \label{eq:corrupt-nonsmooth-explicit}
\end{align}
generally. The diagnostic scale obeys
\begin{equation}
 \eta\le4\sqrt{\nu_d}M_2
 \left(\frac{d^{1/4}D}{\sqrt N\sqrt T}
                         +g\sqrt{N/T}\right).
 \label{eq:corrupt-floor-diagnostic-cost}
\end{equation}
When $\Delta=0$, take $g=0$; then $\beta=0$ and
$\bar\sigma=\sigma_0$, and all displayed $\rho$ terms may be removed.
\end{corollary}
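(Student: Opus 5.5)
The plan is to specialize Theorem~\ref{thm:corrupt-radial} term by term. First, from the choice \eqref{eq:corrupt-public-floor} and the definition \eqref{eq:corrupt-bias-scales}, use $\zeta\le g^{-1}=\rho/(c_d\Delta D)$. This gives $D\beta=c_d\Delta D\zeta\le\rho$ and $d\Delta\zeta\le d\rho/(c_dD)=\kappa_d\rho/D$, so $\bar\sigma\le\sigma_0+\kappa_d\rho/D$. When $\Delta=0$ we set $g=0$, so $\beta=0$ and $\bar\sigma=\sigma_0$. In that case every $\rho$ contribution below vanishes identically.

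Next I would bound the curvature envelopes. The vector term is
\[
\mathcal V\le \frac{c_{\rm vec}\sigma_0A}{D\sqrt T}+\frac{c_{\rm vec}\kappa_d\rho A}{D^2\sqrt T}+\frac{2\rho\sqrt{\mathsf W_{N,2}}}{D^2}.
\]
I would multiply by $k_ND^2$, using $k_ND^2A\le(2+3A/N^2)D^2\le5D^2$ exactly as in Corollary~\ref{cor:threshold-smooth}. This produces $5c_{\rm vec}\sigma_0D/\sqrt T$ and $5c_{\rm vec}\kappa_d\rho/\sqrt T$. For the bias energy I need $k_N\sqrt{\mathsf W_{N,2}}\cdot2\rho\le14\rho/\sqrt N$. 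The check is $\sqrt{\mathsf W_{N,2}}\le N^{3/2}$ together with $k_N\le7/N^2$, which gives $14\rho/\sqrt N$. Adding the $2D\beta\le2\rho$ term from \eqref{eq:corrupt-adaptive-risk} yields the $\rho$ coefficient $2+14/\sqrt N+5c_{\rm vec}\kappa_d/\sqrt T$.

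In the smooth branch, $k_ND^2\cdot4L\le28LD^2/N^2$ and $q_{\rm s}\ell\le2LD^2\ell/N^2$, with $w_{\rm s}=\nu_dLg^2$; this gives \eqref{eq:corrupt-smooth-explicit}. In the nonsmooth branch I would split $\mathcal H^\Delta_{\rm ns}$ using:
\begin{itemize}
\item $\Gamma\ge\Gamma_0$ and $\Gamma^{-1}\le2/\gammaMax+N/(d^{1/4}D)$, so $7\cdot4c_d$ times these gives the $56c_dMD^2/(\gammaMax N^2)$ term and $28c_d/d^{1/4}$;
\item $\sqrt{2c_dM(\mu_dM+\omega)}\le\sqrt{2c_d}(M+\omega/2)$, where $\omega\le4\sqrt{2\nu_d}M_2N/\sqrt T$ from $B^{\rm rad}\ge T/(16N)$, so the product gives $14\sqrt{2c_d}MD/N$ plus a $M_2D/\sqrt T$ term that I would bound by $28\sqrt{2c_d}M_2D/\sqrt T$;
\item $2MN/D$, which gives $14MD/N$;
\item $q_{\rm ns}\ell\le4MD\ell/N$.
\end{itemize}

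Finally, I would use $\Gamma\le\Gamma_0+g$, $\nu_d\le1$ and $B^{\rm rad}\ge T/(16N)$ to get \eqref{eq:corrupt-floor-diagnostic-cost}.

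The main obstacle is constant bookkeeping, specifically the $\omega$ cross term and the factor two in $\Gamma^{-1}$ coming from $\gammaMax/2$. I would check these carefully, and loosen the displayed constants if one does not close.
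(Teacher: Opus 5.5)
Your plan is the paper's proof: specialize Theorem~\ref{thm:corrupt-radial} using $\zeta\le g^{-1}$, $k_NA\le5$, $k_N\le7/N^2$, $\mathsf W_{N,2}\le N^3$, $\Gamma^{-1}\le2/\gammaMax+N/(d^{1/4}D)$ and $\sqrt{M(\mu_dM+\omega)}\le M+\omega/2$. With these, every constant you list closes except one.

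The slip is your bound on $\omega$. Since $\omega=\sqrt{\nu_d}M_2\sqrt{N/B^{\rm rad}}$ and $B^{\rm rad}\ge T/(16N)$, you get $\omega\le4\sqrt{\nu_d}M_2N/\sqrt T\le4M_2N/\sqrt T$, with no extra $\sqrt2$; that factor belongs to the radial-record variant, where $\chi_d\le2$. With the correct bound, the cross term gives $7\sqrt{2c_d}\,\omega D/N\le28\sqrt{2c_d}M_2D/\sqrt T$ exactly. Your stated $4\sqrt{2\nu_d}$ would instead overshoot the displayed constant by the factor $\sqrt{2\nu_d}$, which exceeds $1$ once $d\ge3$.
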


\begin{proof}
For positive $\Delta$, the inverse-radius bound is at most $g^{-1}$,
so \eqref{eq:corrupt-public-floor} gives
$\beta\le\rho/D$ and
$\bar\sigma\le\sigma_0+(d/c_d)\rho/D$.
Since $A\ge N^2/2$, $A/N^2\le1$, and $\mathsf W_{N,2}\le N^3$,
\begin{align*}
 k_ND^2\frac{c_{\rm vec}\bar\sigma A}{D\sqrt T}
 &\le\frac{5c_{\rm vec}\bar\sigma D}{\sqrt T},\\
 k_ND^2\frac{2\beta\sqrt{\mathsf W_{N,2}}}D
 &\le\frac{14D\beta}{\sqrt N}.
\end{align*}
The direct inner-product term is at most $2\rho$.
The smooth deterministic envelope gives
$4Lk_ND^2\le28LD^2/N^2$ and
$q_{\rm s}\ell_{\rm s}^\Delta\le2L\ell_{\rm s}^\Delta D^2/N^2$.
These inequalities prove \eqref{eq:corrupt-smooth-explicit}.

For the nonsmooth branch,
\[
 \sqrt{M(\mu_dM+\omega)}\le\sqrt{M(M+\omega)}
                                     \le M+\omega/2.
\]
Also $k_N\le7/N^2$, $\Gamma\ge\Gamma_0$, and
\[
 \Gamma^{-1}\le2/\gammaMax+N/(d^{1/4}D),\qquad
 \omega\le4\sqrt{\nu_d}M_2N/\sqrt T\le4M_2N/\sqrt T.
\]
The deterministic coefficients are therefore bounded by
$56c_dMD^2/(\gammaMax N^2)$ and
$(28c_d/d^{1/4}+14\sqrt{2c_d}+14)MD/N$.
The $\omega$ term contributes at most
$28\sqrt{2c_d}M_2D/\sqrt T$, and
$q_{\rm ns}\ell_{\rm ns}^\Delta\le4M\ell_{\rm ns}^\Delta D/N$.
Combining these with the preceding vector bounds proves
\eqref{eq:corrupt-nonsmooth-explicit}.
Finally $B^{\rm rad}\ge T/(16N)$ and
$\Gamma=\max\{g,\Gamma_0\}\le g+d^{1/4}D/N$ prove
\eqref{eq:corrupt-floor-diagnostic-cost}.
For zero corruption, \eqref{eq:corrupt-bias-scales} gives exactly
$\beta=0$ and $\bar\sigma=\sigma_0$, so their additional terms vanish.
\end{proof}

At zero corruption the policy attains the clean smooth and nonsmooth
orders of the curvature-reference method. The secant assumptions use
$\nu_d\le1$ instead of the exact radial record's second-moment
constant, and the evaluation domain is halved to accommodate its outer point.
For positive corruption, \eqref{eq:corrupt-floor-diagnostic-cost}
retains the extra sampling cost of raising the trial radius to the
safeguard. It is explicit even for oversized optimizer budgets.
If $g\sqrt N\le\sqrt dD$, the additional diagnostic scale is bounded
by the same $\sqrt dM_2D/\sqrt T$ statistical order, with the displayed
reference logarithm retained. Equation~\eqref{eq:corrupt-kappa-dimension}
shows that the extra vector-variance restriction in
Corollary~\ref{cor:corrupt-accuracy} has order $d$.

\subsection{Sufficient accuracy and corruption levels}
\label{subsec:corrupt-accuracy}

For either branch, let $\mathcal C_{\mathrm{reg}}$ be the right-hand side of
\eqref{eq:corrupt-smooth-explicit} or
\eqref{eq:corrupt-nonsmooth-explicit} after removing its displayed
$\rho$ term and $(4\Delta+w_{\mathrm{reg}})(1+\ell_{\mathrm{reg}}^\Delta)$. Thus
$\mathcal C_{\mathrm{reg}}$ retains startup, deterministic terms, clean vector
variance, the nonsmooth radial-maximum variance term when present,
and the actual $\eta(1+\ell_{\mathrm{reg}}^\Delta)$.

\begin{corollary}[A sufficient corruption allowance]
\label{cor:corrupt-accuracy}
Fix $\varepsilon>0$ and choose $\rho=\varepsilon/64$ in
\eqref{eq:corrupt-public-floor}. Suppose $\mathcal C_{\mathrm{reg}}\le\varepsilon/2$,
the appropriate mode budget holds, and
\begin{equation}
 T\ge\left(\frac{5c_{\rm vec}\kappa_d}{4}\right)^2,
 \qquad
 \Delta\le\frac{\varepsilon\gammaMax}{128c_dD},
 \qquad
 \Delta\le\frac{\varepsilon}{64(1+\ell_{\mathrm{reg}}^\Delta)}.
 \label{eq:corrupt-accuracy-common}
\end{equation}
In the nonsmooth branch additionally assume
\begin{equation}
 \Delta\le
 \frac{\varepsilon^2}
 {1024c_d\mu_dMD(1+\ell_{\rm ns}^\Delta)}.
 \label{eq:corrupt-accuracy-nonsmooth}
\end{equation}
In the smooth branch this condition may instead be replaced by
\begin{equation}
 \Delta\le
 \frac{\varepsilon^{3/2}}
 {256c_dD\sqrt{\nu_dL(1+\ell_{\rm s}^\Delta)}}.
 \label{eq:corrupt-accuracy-smooth}
\end{equation}
A constraint with zero $M$ or zero $L$ in its denominator is omitted
because the corresponding floor charge is zero. Then
$\E[f(x_N)-f(u)]\le\varepsilon$, with the exact costs of
Theorem~\ref{thm:corrupt-radial}.
\end{corollary}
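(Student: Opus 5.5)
The plan is to start from the explicit bounds of Corollary~\ref{cor:corrupt-bias-budget} with $\rho=\varepsilon/64$, and to split each right-hand side into three groups. The first group is $\mathcal C_{\mathrm{reg}}$, which is at most $\varepsilon/2$ by hypothesis. The second group is the displayed $\rho$ term. The third group is the corruption and floor charge $(4\Delta+w_{\mathrm{reg}})(1+\ell_{\mathrm{reg}}^\Delta)$. It then suffices to show that the second and third groups together contribute at most $\varepsilon/2$. Before using the corollary, I will check its hypothesis $g=c_d\Delta D/\rho\le\gammaMax/2$. With $\rho=\varepsilon/64$ this reads $64c_d\Delta D/\varepsilon\le\gammaMax/2$, which is exactly the second condition in \eqref{eq:corrupt-accuracy-common}. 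So the floor is legal, and the corollary applies in both branches.

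For the $\rho$ term, the coefficient is $2+14/\sqrt N+5c_{\rm vec}\kappa_d/\sqrt T$. The first condition in \eqref{eq:corrupt-accuracy-common} gives $5c_{\rm vec}\kappa_d/\sqrt T\le4$, and $14/\sqrt N\le14$. The coefficient is therefore at most $20$, so the whole term is at most $20\varepsilon/64$. If a tighter count is wanted, one can note that the $14/\sqrt N$ piece comes from $\mathsf W_{N,2}\le N^3$ and use it directly. For the $4\Delta(1+\ell)$ term, the third condition in \eqref{eq:corrupt-accuracy-common} gives at most $4\varepsilon/64$. Together these give $24\varepsilon/64$, which leaves at least $8\varepsilon/64=\varepsilon/8$ for the floor charge.

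The floor charge is the only nontrivial step. In the nonsmooth branch, $w=\mu_dMg=64\mu_dMc_d\Delta D/\varepsilon$. By \eqref{eq:corrupt-accuracy-nonsmooth},
\[
 w(1+\ell_{\rm ns}^\Delta)\le\frac{64\varepsilon}{1024}=\frac{\varepsilon}{16}.
\]
In the smooth branch, $w=\nu_dLg^2=\nu_dL(64c_d\Delta D/\varepsilon)^2$. Squaring \eqref{eq:corrupt-accuracy-smooth} gives $\nu_dL(1+\ell_{\rm s}^\Delta)c_d^2\Delta^2D^2\le\varepsilon^3/256^2$. Hence
\[
 w(1+\ell_{\rm s}^\Delta)\le\frac{64^2\varepsilon}{256^2}=\frac{\varepsilon}{16}.
\]
In both branches the total is then at most $\varepsilon/2+24\varepsilon/64+\varepsilon/16\le\varepsilon$. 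If $M=0$ or $L=0$, the floor charge vanishes, which is why the corresponding constraint may be omitted.

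The main obstacle I expect is circularity. The quantities $\ell_{\mathrm{reg}}^\Delta$ and $\eta$ depend on $\Delta$ and $g$, through $\beta$, $\bar\sigma$ and $\Gamma$. The conditions are stated in terms of these actual values, so they must be read as constraints on a fixed configuration rather than as an explicit formula for $\Delta$. Because the corollary evaluates them at the actual $\ell_{\mathrm{reg}}^\Delta$, no fixed-point argument is needed. The same bookkeeping applies to $\eta(1+\ell)$, which is placed inside $\mathcal C_{\mathrm{reg}}$ by definition. Finally, the resource counts are taken unchanged from Theorem~\ref{thm:corrupt-radial}.
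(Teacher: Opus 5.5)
Your proposal is correct and follows essentially the same route as the paper's proof. You verify that the floor is legal, bound the $\rho$ term by $20\rho=5\varepsilon/16$ (the paper splits this as $16\rho+4\rho$), bound the scalar-corruption and floor charges by $\varepsilon/16$ each, and add these to $\mathcal C_{\mathrm{reg}}\le\varepsilon/2$ to reach $15\varepsilon/16<\varepsilon$.
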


\begin{proof}
The legal-floor condition in \eqref{eq:corrupt-accuracy-common}
gives $g=64c_d\Delta D/\varepsilon\le\gammaMax/2$.
The scalar corruption term is at most
$4\Delta(1+\ell_{\mathrm{reg}}^\Delta)\le\varepsilon/16$.
For the nonsmooth floor, \eqref{eq:corrupt-accuracy-nonsmooth}
gives
\[
 \mu_dMg(1+\ell_{\rm ns}^\Delta)
 =\frac{64c_d\mu_dMD\Delta}{\varepsilon}
                                      (1+\ell_{\rm ns}^\Delta)
 \le\varepsilon/16.
\]
The smooth floor condition analogously gives
$\nu_dLg^2(1+\ell_{\rm s}^\Delta)\le\varepsilon/16$.
The direct bias and its curvature contribution obey
$(2+14/\sqrt N)\rho\le16\rho=\varepsilon/4$.
The additional vector variance is at most
$5c_{\rm vec}\kappa_d\rho/\sqrt T\le\varepsilon/16$ by
the first budget condition. Add these quantities to
$\mathcal C_{\mathrm{reg}}\le\varepsilon/2$: their sum is
$15\varepsilon/16<\varepsilon$. Zero corruption only removes terms.
\end{proof}

The conditions are sufficient interpretations of a finite-budget
expected-risk theorem. They do not provide a data-dependent accuracy
stopping certificate. The nonsmooth allowance has scale
$\varepsilon^2/(c_dMD)$ up to the displayed reference logarithm,
along with the scalar, evaluation domain and sampling conditions. Since
$c_d\le\sqrt d$, it attains the usual accelerated-smoothing
sufficient scale $\varepsilon^2/(\sqrt dMD)$. In the deterministic
Euclidean setting $M=M_2$, this agrees with the allowance obtained
from the bounded-bias analysis of \citet[Appendix A.11]{Gasnikov2022}.
Their stochastic notation uses $M_2$ for sample regularity; here the
smoothed population approximation uses its explicit constant $M$.
The positive result concerns a supplied corruption envelope and the
diameter-based controller. The larger fine-accuracy arbitrary-pair
frontier, original-distance optimality, and joint stage/value/corruption
optimality remain open.

\section{Target-linked initialization and sufficient budgets}
\label{sec:epsilon-initialization}

The initial scalar coefficient can be calibrated using the requested
accuracy, the known diameter and the iteration budget. This calibration
also bounds the curvature-reference logarithms whenever the population
scales are compatible with the chosen budgets. The policy uses no
population regularity parameter in its initialization.

We retain all geometry, conditional vector-variance, forward-secant
moment and evaluation-domain assumptions of
Theorem~\ref{thm:corrupt-radial}. In particular, $D>0$ is a supplied
diameter bound, $d,N,T$ are positive integers, and $M_2$ satisfies the uniform
forward-difference moment bound in Assumption~\ref{ass:forward-moments}.
This scalar moment assumption follows from the
sample regularity assumptions in
Lemma~\ref{lem:corrupt-forward-secant}; a vector-variance bound alone
does not assert it. Fixed batch sizes require $T\ge8N$; adaptive batch sizes require
$T\ge16N$. No preceding observations are used to select the initial
coefficient. Choose a target $\varepsilon>0$ and a constant
$\theta>0$ before sampling, and initialize once with
\begin{equation}
 h_{\rm init}=h_{\rm ref}=h_0
       =\theta\varepsilon N^2/D^2.
 \label{eq:epsilon-initialization}
\end{equation}
The optimizer and all batch and curvature-reference accounts then
persist for the entire execution.

For interpreting the budgets, define
\begin{align}
 r_{\rm s}&=LD^2/N^2,
 &r_{\rm ns}&=d^{1/4}MD/N,\nonumber\\
 a_{\rm ns}&=\sqrt dMD^2/(\gammaMax N^2),
 &r_{\rm st}&=(\sigma_0+\sqrt dM_2)D/\sqrt T,\nonumber\\
 \mathcal B_{\rm s}&=r_{\rm s}+r_{\rm st},
 &\mathcal B_{\rm ns}&=r_{\rm ns}+a_{\rm ns}+r_{\rm st}.
 \label{eq:epsilon-population-budgets}
\end{align}
The smooth branch additionally requires population $L$-smoothness on
the evaluation domain. These quantities enter the analysis; the policy does
not evaluate them.

\subsection{Calibrated clean risk}
\label{subsec:epsilon-clean}

\begin{lemma}[Clean coefficient and diagnostic scales]
\label{lem:epsilon-clean-scales}
For $\Delta=0$ and $g=0$, the envelopes and diagnostic scales of
Theorem~\ref{thm:corrupt-radial} satisfy
\begin{align}
 \mathcal H^\Delta_{\rm s}D^2/N^2
   &\le4r_{\rm s}+6r_{\rm st}\le6\mathcal B_{\rm s},\nonumber\\
 \mathcal H^\Delta_{\rm ns}D^2/N^2
   &\le(6+2\sqrt2)r_{\rm ns}+8a_{\rm ns}+6r_{\rm st}
    \le9\mathcal B_{\rm ns},\nonumber\\
 \eta&\le4r_{\rm st},\qquad
 q_{\rm s}\le2r_{\rm s},\qquad
 q_{\rm ns}\le4r_{\rm ns}.
 \label{eq:epsilon-clean-scales}
\end{align}
They hold in both batch modes, including the integer floors and
residual allocation in the stated execution.
\end{lemma}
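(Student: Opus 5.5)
The plan is direct substitution into the definitions \eqref{eq:corrupt-diagnostic-scales}--\eqref{eq:corrupt-curvature-envelopes}. I will use only $c_d\le\sqrt d$, $\mu_d,\nu_d\le1$, $N^2/2\le A\le N^2$, and the floor $B^{\rm rad}\ge T/(16N)$ from Lemma~\ref{lem:corrupt-controller}. That floor is valid in both batch modes, including the residual allocation.

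First I specialize the analysis quantities at $\Delta=0$, $g=0$. In this case $\beta=0$, $\bar\sigma=\sigma_0$ and $\Gamma=\Gamma_0=\min\{\gammaMax/2,d^{1/4}D/N\}$. Hence $\mathcal V=c_{\rm vec}\sigma_0A/(D\sqrt T)$, and so $\mathcal V D^2/N^2\le c_{\rm vec}\sigma_0D/\sqrt T$. Since $c_{\rm vec}\le4\sqrt2<6$, this term is at most $6\sigma_0D/\sqrt T$.

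\emph{Smooth branch.} Here $\mathcal H^\Delta_{\rm s}D^2/N^2=4LD^2/N^2+\mathcal VD^2/N^2\le4r_{\rm s}+6r_{\rm st}$, which is at most $6\mathcal B_{\rm s}$.

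\emph{Nonsmooth branch.} I bound the four summands of $\mathcal H^\Delta_{\rm ns}$ one at a time, each multiplied by $D^2/N^2$.
\begin{itemize}
\item The reciprocal radius satisfies $\Gamma^{-1}\le2/\gammaMax+N/(d^{1/4}D)$. With $c_d\le\sqrt d$, the term $4c_dM/\Gamma$ contributes at most $8a_{\rm ns}+4r_{\rm ns}$.
\item The term $2MN/D$ contributes $2MD/N\le2r_{\rm ns}$.
\item For the diagnostic term, I use $\sqrt{M(\mu_dM+\omega)}\le M+\omega/2$ exactly as in Corollary~\ref{cor:corrupt-bias-budget}. This gives at most $2\sqrt{2c_d}\,MD/N+\sqrt{2c_d}\,\omega D/N$. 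The first piece is at most $2\sqrt2r_{\rm ns}$ because $\sqrt{c_d}\le d^{1/4}$. For the second piece, $\omega\le4M_2N/\sqrt T$ together with $d^{1/4}\le\sqrt d$ gives at most $4\sqrt2\,\sqrt dM_2D/\sqrt T$.
\item The vector term contributes at most $c_{\rm vec}\sigma_0D/\sqrt T$, as above.
\end{itemize}
The point to check is that the two stochastic pieces share the budget $6r_{\rm st}$. They do, because $r_{\rm st}$ carries $\sigma_0$ and $\sqrt dM_2$ as separate summands, and each piece has coefficient at most $4\sqrt2<6$. Collecting coefficients gives $(4+2\sqrt2+2)r_{\rm ns}+8a_{\rm ns}+6r_{\rm st}$. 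This is at most $9\mathcal B_{\rm ns}$ because $6+2\sqrt2<9$.

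\emph{Remaining scales.}
\begin{itemize}
\item $\eta=\sqrt{\nu_d}M_2\Gamma/\sqrt{B^{\rm rad}}\le M_2(d^{1/4}D/N)\cdot4\sqrt N/\sqrt T=4d^{1/4}M_2D/(\sqrt N\sqrt T)\le4\sqrt dM_2D/\sqrt T\le4r_{\rm st}$.
\item $q_{\rm s}=LD^2/A\le2LD^2/N^2=2r_{\rm s}$.
\item $q_{\rm ns}=2MDN/A\le4MD/N\le4r_{\rm ns}$.
\end{itemize}

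No single step is a real obstacle. The only place requiring care is the diagnostic square-root term, where the split $M+\omega/2$ must send the cross term into $r_{\rm st}$ rather than $r_{\rm ns}$. A naive AM--GM split overshoots the constant $6+2\sqrt2$ in $r_{\rm ns}$.
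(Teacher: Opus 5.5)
Your proposal is correct and follows the paper's proof essentially step for step. It uses the same specialization $\beta=0$, $\bar\sigma=\sigma_0$, $\Gamma=\Gamma_0$, the same bounds $B^{\rm rad}\ge T/(16N)$, $\omega\le4M_2N/\sqrt T$, $\Gamma^{-1}\le2/\gammaMax+N/(d^{1/4}D)$ and $\sqrt{M(\mu_dM+\omega)}\le M+\omega/2$, and it charges the diagnostic cross term to $r_{\rm st}$ as the paper does; the only cosmetic differences are that the paper rounds your coefficient $4\sqrt2$ on $\sqrt dM_2D/\sqrt T$ up to $6$ before collecting terms, and derives the radial floor directly rather than citing Lemma~\ref{lem:corrupt-controller}.
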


\begin{proof}
Put $A=N(N+1)/2$. Both modes have
$B^{\rm rad}=\floor{T/(8N)}\ge T/(16N)$ and
$c_{\rm vec}\le6$. Consequently,
\[
 \omega\le4M_2N/\sqrt T,
 \qquad \eta\le
 4d^{1/4}M_2D/(\sqrt N\sqrt T)\le4r_{\rm st}.
\]
Here $d\ge1$ and $N\ge1$ give
$d^{1/4}/\sqrt N\le\sqrt d$. Also $A/N^2\le1$.
Thus the vector term in $\mathcal H^\Delta_{\mathrm{reg}}D^2/N^2$ is at most
$6\sigma_0D/\sqrt T$, and the smooth deterministic term is
$4r_{\rm s}$.

For the nonsmooth envelope, $\Gamma=\Gamma_0$ and
\[
 \Gamma^{-1}\le2/\gammaMax+N/(d^{1/4}D),\qquad
 \sqrt{M(\mu_dM+\omega)}\le M+\omega/2.
\]
The first term in the nonsmooth envelope is therefore at most
$8a_{\rm ns}+4r_{\rm ns}$, because $c_d\le\sqrt d$.
Its square-root term is at most
\[
 2\sqrt2r_{\rm ns}
   +4\sqrt2d^{1/4}M_2D/\sqrt T
 \le2\sqrt2r_{\rm ns}+6\sqrt dM_2D/\sqrt T.
\]
The $2MN/D$ term contributes at most $2r_{\rm ns}$.
Together with the vector term these inequalities prove the two
coefficient bounds. Since $6+2\sqrt2<9$, their final simplifications
follow. Finally $A\ge N^2/2$ and $d^{1/4}\ge1$ give the two
$q_{\mathrm{reg}}$ bounds. All floor estimates hold already for $T\ge8N$;
the adaptive-batch branch retains its additional $T\ge16N$ condition.
\end{proof}

\begin{theorem}[Target-linked clean initialization]
\label{thm:epsilon-clean}
Under the clean-oracle assumptions and initialization
\eqref{eq:epsilon-initialization}, define
\begin{equation}
 \lambda_{\rm s}^\varepsilon=\log_2\left(1+
             \frac{6\mathcal B_{\rm s}}{\theta\varepsilon}\right),
 \qquad
 \lambda_{\rm ns}^\varepsilon=\log_2\left(1+
             \frac{9\mathcal B_{\rm ns}}{\theta\varepsilon}\right).
 \label{eq:epsilon-calibrated-logs}
\end{equation}
For every fixed original comparator $u\in Q$, the same execution
satisfies every applicable branch of
\begin{align}
 \E[f(x_N)-f(u)]\le{}&2\theta\varepsilon
    +(28+2\lambda_{\rm s}^\varepsilon)r_{\rm s}
    +(46+4\lambda_{\rm s}^\varepsilon)r_{\rm st},
 \label{eq:epsilon-clean-smooth}\\
 \E[f(x_N)-f(u)]\le{}&2\theta\varepsilon
    +(42+14\sqrt2+4\lambda_{\rm ns}^\varepsilon)r_{\rm ns}
    +56a_{\rm ns}+(46+4\lambda_{\rm ns}^\varepsilon)r_{\rm st}.
 \label{eq:epsilon-clean-nonsmooth}
\end{align}
The counts remain exactly $T$ two-value pairs, $2T$ values and $N$
projections in $3N$ sampling stages with fixed batch sizes or $5N$ with adaptive batch sizes.
\end{theorem}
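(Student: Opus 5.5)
The plan is to specialize Corollary~\ref{cor:corrupt-bias-budget} (equivalently Theorem~\ref{thm:corrupt-radial}) to $\Delta=0$, $g=0$, and then insert the calibrated initialization \eqref{eq:epsilon-initialization} together with the scale bounds of Lemma~\ref{lem:epsilon-clean-scales}. With $\Delta=g=0$ we have $\beta=0$, $\bar\sigma=\sigma_0$, $w_{\mathrm{reg}}=0$, and all $\rho$ terms drop. For each branch the bound \eqref{eq:corrupt-adaptive-risk} then reads
\[
 \frac{h_0R_u^2}{2A}+k_ND^2\mathcal H^0_{\mathrm{reg}}+\frac{h_0D^2}{N^2}
 +\eta(1+\ell_{\mathrm{reg}}^0)+q_{\mathrm{reg}}\ell_{\mathrm{reg}}^0 .
\]
The startup and reference terms are immediate. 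Using $R_u\le D$ and $A\ge N^2/2$, the startup term is at most $h_0D^2/N^2=\theta\varepsilon$, and the reference term equals $\theta\varepsilon$. Together they give the $2\theta\varepsilon$ term.

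Next come the curvature terms. Since $k_N\le 7/N^2$, we get $k_ND^2\mathcal H^0_{\mathrm{reg}}\le 7\mathcal H^0_{\mathrm{reg}}D^2/N^2$. I would not use the crude $6\mathcal B_{\rm s}$ or $9\mathcal B_{\rm ns}$ bounds here. Instead I would use the finer first inequalities of \eqref{eq:epsilon-clean-scales}, but only for the deterministic parts. These give $28r_{\rm s}$ in the smooth branch and $(42+14\sqrt2)r_{\rm ns}+56a_{\rm ns}$ in the nonsmooth one.

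For the stochastic parts, copying the term-by-term computation in Corollary~\ref{cor:corrupt-bias-budget} is more accurate than the factor $7\cdot6$. The vector term contributes $5c_{\rm vec}\sigma_0D/\sqrt T\le 30\sigma_0D/\sqrt T$. In the nonsmooth branch the $\omega$ term contributes $28\sqrt{2c_d}M_2D/\sqrt T$. I must then check that the total stochastic coefficient, together with $\eta\le4r_{\rm st}$ from the constant part of $\eta(1+\ell)$, is at most $46r_{\rm st}$. That check splits as $30+4$ plus a slack of $12$ for the $\sqrt d M_2$ part. Two facts are needed: $\sqrt{2c_d}\le\sqrt2 d^{1/4}$, and $d^{1/4}\le\sqrt d$. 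This bookkeeping is the main obstacle. The nonsmooth $\omega$ contribution $28\sqrt2\,d^{1/4}M_2D/\sqrt T\approx 39.6\,r_{\rm st}$-type must fit. If it does not fit in $46$ with the vector term, I will redo the $\omega$ bound more carefully. Specifically, I would use $\omega\le4\sqrt{\nu_d}M_2N/\sqrt T$ and $\sqrt{\nu_d}\le1$, and split $\sigma_0$ and $\sqrt dM_2$ inside $r_{\rm st}=(\sigma_0+\sqrt dM_2)D/\sqrt T$ separately: $\sigma_0$ takes the $30$ and $\sqrt dM_2$ takes $40+4$. If that is still too loose, I would instead pass through the intermediate $(6+2\sqrt2)r_{\rm ns}+8a_{\rm ns}+6r_{\rm st}$ bound and accept its constants.

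The logarithmic terms are handled last. Since $\mathcal H^0_{\mathrm{reg}}/h_{\rm ref}=(\mathcal H^0_{\mathrm{reg}}D^2/N^2)/(\theta\varepsilon)$, the bounds $6\mathcal B_{\rm s}$ and $9\mathcal B_{\rm ns}$ together with monotonicity of $\log_2(1+\cdot)$ give $\ell_{\mathrm{reg}}^0\le\lambda_{\mathrm{reg}}^\varepsilon$. The remaining logarithmic terms are $\eta\ell\le4r_{\rm st}\lambda$ and $q_{\mathrm{reg}}\ell$. The latter is bounded by $2r_{\rm s}\lambda_{\rm s}^\varepsilon$ or $4r_{\rm ns}\lambda_{\rm ns}^\varepsilon$. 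Summing all terms gives \eqref{eq:epsilon-clean-smooth} and \eqref{eq:epsilon-clean-nonsmooth}. The resource counts are inherited unchanged from Lemma~\ref{lem:corrupt-controller}, since the initialization is deterministic and affects no sampling.
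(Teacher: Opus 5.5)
Your proposal is correct and follows essentially the paper's route: specialize Theorem~\ref{thm:corrupt-radial} at $\Delta=g=0$, bound the startup and reference terms by $2\theta\varepsilon$, bound $\ell_{\mathrm{reg}}^0$ by $\lambda_{\mathrm{reg}}^\varepsilon$ via Lemma~\ref{lem:epsilon-clean-scales}, and add $\eta(1+\ell)+q_{\mathrm{reg}}\ell$. The stochastic bookkeeping you call the main obstacle is not actually an obstacle. The paper multiplies the full first inequalities of \eqref{eq:epsilon-clean-scales}, including their $6r_{\rm st}$ term, by $k_NN^2\le7$ and adds $\eta\le4r_{\rm st}$, which gives exactly $46r_{\rm st}$; this is your stated fallback. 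Your finer split, with at most $30$ on the $\sigma_0$ part and at most $28\sqrt2+4<46$ on the $\sqrt d\,M_2$ part, is also valid but unnecessary.
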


\begin{proof}
At zero corruption the floor, bias and scalar corruption terms in
Theorem~\ref{thm:corrupt-radial} vanish. Its startup and reference
terms obey, because $R_u\le D$ and $2A\ge N^2$,
\[
 \frac{h_0R_u^2}{2A}+\frac{h_0D^2}{N^2}
 \le2\theta\varepsilon.
\]
Lemma~\ref{lem:epsilon-clean-scales} and
\eqref{eq:epsilon-initialization} give
$\ell_{\mathrm{reg}}^\Delta\le\lambda_{\mathrm{reg}}^\varepsilon$. Also $k_N\le7/N^2$. Multiplying the
coefficient bounds of \eqref{eq:epsilon-clean-scales} by $k_NN^2\le7$,
adding $\eta(1+\ell_{\mathrm{reg}}^\Delta)+q_{\mathrm{reg}}\ell_{\mathrm{reg}}^\Delta$, and using their remaining
inequalities gives the displayed component bounds directly.
No new oracle observations are required for the initialization or
its interpretation, so all counts are unchanged.
\end{proof}

\begin{corollary}[Log-free sufficient clean budgets]
\label{cor:epsilon-clean-budgets}
Choose the initialization before sampling
\begin{equation}
 h_0=h_{\rm ref}=\frac{4\varepsilon N^2}{77D^2}.
 \label{eq:epsilon-initialization-77}
\end{equation}
If an applicable population budget satisfies
$\mathcal B_{\mathrm{reg}}\le\varepsilon/77$, then
$\E[f(x_N)-f(u)]<\varepsilon$ for every fixed $u\in Q$.
In particular, sufficient smooth budgets are
\begin{equation}
 N\ge\max\{1,\sqrt{154}\,D\sqrt{L/\varepsilon}\},
 \qquad
 T\ge\left(
  \frac{154(\sigma_0+\sqrt dM_2)D}{\varepsilon}\right)^2,
 \label{eq:epsilon-explicit-smooth-budgets}
\end{equation}
in addition to the appropriate mode condition. Sufficient nonsmooth
budgets are
\begin{align}
 N\ge\max\left\{1,
   \frac{231d^{1/4}MD}{\varepsilon},
   \sqrt{231}\,d^{1/4}D
                     \sqrt{\frac M{\gammaMax\varepsilon}}\right\},
 \qquad
 T\ge\left(
   \frac{231(\sigma_0+\sqrt dM_2)D}{\varepsilon}\right)^2.
 \label{eq:epsilon-explicit-nonsmooth-budgets}
\end{align}
Thus the sufficient orders contain no adaptation logarithm:
\begin{align}
 N_{\rm s}&=O\left(1+D\sqrt{L/\varepsilon}\right),\nonumber\\
 N_{\rm ns}&=O\left(1+\frac{d^{1/4}MD}{\varepsilon}
       +d^{1/4}D\sqrt{\frac M{\gammaMax\varepsilon}}\right),\nonumber\\
 T&=O\left(N+
       \frac{(\sigma_0^2+dM_2^2)D^2}{\varepsilon^2}\right).
 \label{eq:epsilon-clean-complexities}
\end{align}
These are sufficient conditions on budgets fixed before sampling. Accuracy is
interpreted through the population risk bound for the supplied horizon.
\end{corollary}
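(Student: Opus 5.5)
The argument specializes Theorem~\ref{thm:epsilon-clean} with $\theta=4/77$, so that $2\theta\varepsilon=8\varepsilon/77$. The main step is to control the two calibrated logarithms under the hypothesis $\mathcal B_{\mathrm{reg}}\le\varepsilon/77$. In the smooth branch,
\[
 6\mathcal B_{\rm s}/(\theta\varepsilon)\le 6\cdot 77/(4\cdot 77)=3/2,
\]
so $\lambda_{\rm s}^\varepsilon\le\log_2(5/2)<4/3$. In the nonsmooth branch,
\[
 9\mathcal B_{\rm ns}/(\theta\varepsilon)\le 9/4,
\]
so $\lambda_{\rm ns}^\varepsilon\le\log_2(13/4)<1.71$. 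These are universal constants, which is exactly what removes the adaptation logarithm.

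Next I substitute into \eqref{eq:epsilon-clean-smooth}. Each of $r_{\rm s}$ and $r_{\rm st}$ is at most $\mathcal B_{\rm s}$. The smooth right-hand side is therefore at most
\[
 \tfrac{8}{77}\varepsilon+(28+2\lambda)r_{\rm s}+(46+4\lambda)r_{\rm st}.
\]
With $\lambda<4/3$ and $r_{\rm s}+r_{\rm st}=\mathcal B_{\rm s}$, I bound this by a convex-combination (max over the weights) argument: the total is at most $\tfrac{8}{77}\varepsilon+\max\{28+2\lambda,\,46+4\lambda\}\,\mathcal B_{\rm s}$. Here $46+4\lambda<51.4$, so the total is below $(8+51.4)\varepsilon/77<\varepsilon$.

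For \eqref{eq:epsilon-clean-nonsmooth}, the coefficients are $42+14\sqrt2+4\lambda$ (about $68.6$) on $r_{\rm ns}$, $56$ on $a_{\rm ns}$, and $46+4\lambda$ on $r_{\rm st}$. These three sum to $\mathcal B_{\rm ns}$. The max coefficient is $42+14\sqrt2+4\cdot1.71\approx68.64$, and $8+68.64<77$, giving strict inequality. This step is the tight one: I must keep the exact bound $\lambda_{\rm ns}<\log_2 3.25$ rather than a crude bound of $2$. With $\lambda=2$ one gets $42+19.8+8=69.8$, and $69.8+8=77.8>77$, which fails. So I would verify $\log_2 3.25\approx1.70$ numerically and record it.

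Finally I check the explicit budgets, each against the single target $\mathcal B_{\mathrm{reg}}\le\varepsilon/77$.

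\emph{Smooth case.} Here $\mathcal B_{\rm s}=r_{\rm s}+r_{\rm st}$. The condition $N\ge\sqrt{154}D\sqrt{L/\varepsilon}$ gives $r_{\rm s}\le\varepsilon/154$. The condition on $T$ gives $r_{\rm st}\le\varepsilon/154$. Summing, $\mathcal B_{\rm s}\le\varepsilon/77$.

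\emph{Nonsmooth case.} Each of the three terms is at most $\varepsilon/231$:
\begin{itemize}
 \item $r_{\rm ns}\le\varepsilon/231$ from $N\ge 231d^{1/4}MD/\varepsilon$;
 \item $a_{\rm ns}=\sqrt dMD^2/(\gammaMax N^2)\le\varepsilon/231$ from the square-root condition on $N$;
 \item $r_{\rm st}\le\varepsilon/231$ from the condition on $T$.
\end{itemize}
Their sum is $3\varepsilon/231=\varepsilon/77$.

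The $O$-orders follow by reading off these budgets together with the mode condition $T\ge8N$ or $T\ge16N$, which contributes the $N$ term in $T$. The counts carry over unchanged from Theorem~\ref{thm:epsilon-clean}.
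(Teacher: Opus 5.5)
Your proof is correct and takes the paper's route: specialize Theorem~\ref{thm:epsilon-clean} at $\theta=4/77$, bound the calibrated logarithms by universal constants, compare the largest coefficient with $\mathcal B_{\mathrm{reg}}\le\varepsilon/77$, and check that each explicit budget makes every component of $\mathcal B_{\mathrm{reg}}$ at most $\varepsilon/154$ or $\varepsilon/231$. The only difference is that the paper uses the single bound $\lambda_{\mathrm{reg}}^\varepsilon\le\log_2(13/4)<7/4$ in both branches (certified by $13^4<2^7\cdot4^4$), which already gives the nonsmooth coefficient bound $49+14\sqrt2<69$; your sharper per-branch bounds are therefore unnecessary, although you are right that the crude bound $2$ would not suffice.
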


\begin{proof}
With $\theta=4/77$ and $\mathcal B_{\mathrm{reg}}\le\varepsilon/77$,
\eqref{eq:epsilon-calibrated-logs} gives
$\lambda_{\mathrm{reg}}^\varepsilon\le\log_2(13/4)<7/4$ in both branches. The last strict
inequality follows from $13^4=28561<32768=2^7\cdot4^4$.
The nonsmooth coefficients in
\eqref{eq:epsilon-clean-nonsmooth} are therefore bounded above by
$49+14\sqrt2$, $56$, and $53$. Since $\sqrt2<10/7$,
$49+14\sqrt2<69$. The corresponding smooth coefficients are at most
$63/2$ and $53$. The noninitial terms in either branch are strictly
less than $69\mathcal B_{\mathrm{reg}}$ whenever $\mathcal B_{\mathrm{reg}}>0$.
Their sum with $2\theta\varepsilon=8\varepsilon/77$ is strictly
less than $\varepsilon$. If $\mathcal B_{\mathrm{reg}}=0$, the initial term alone
is at most $8\varepsilon/77<\varepsilon$.

In \eqref{eq:epsilon-explicit-smooth-budgets}, each of $r_{\rm s}$
and $r_{\rm st}$ is at most $\varepsilon/154$. In
\eqref{eq:epsilon-explicit-nonsmooth-budgets}, each of
$r_{\rm ns},a_{\rm ns},r_{\rm st}$ is at most $\varepsilon/231$.
This proves the sufficient conditions. Taking ceilings for integer
budgets, imposing $T\ge8N$ or $T\ge16N$, and using
$(\sigma_0+\sqrt dM_2)^2\le2(\sigma_0^2+dM_2^2)$ gives
\eqref{eq:epsilon-clean-complexities}.
\end{proof}

\subsection{A supplied corruption envelope with bounded reference logs}
\label{subsec:epsilon-corruption}

\begin{corollary}[Target-linked protected corruption budgets]
\label{cor:epsilon-corruption-budgets}
Initialize by \eqref{eq:epsilon-initialization-77}, choose before sampling the
bias allowance $\rho=\varepsilon/64$, and supply $\Delta\ge0$.
Choose
\begin{equation}
 g=\frac{64c_d\Delta D}{\varepsilon},\qquad
 \Gamma=\max\{g,\Gamma_0\}.
 \label{eq:epsilon-corruption-floor}
\end{equation}
Let $\kappa_d=d/c_d$. In addition to the mode condition, assume
\begin{align}
 \mathcal B_{\mathrm{reg}}&\le\varepsilon/256,
 &T&\ge(5c_{\rm vec}\kappa_d/4)^2,\nonumber\\
 \Delta&\le\frac{\varepsilon\gammaMax}{128c_dD},
 &\Delta&\le\varepsilon/160,\nonumber\\
 T&\ge\frac{25600\nu_dM_2^2g^2N}{\varepsilon^2}.
 \label{eq:epsilon-corruption-common}
\end{align}
In the nonsmooth branch additionally assume
\begin{equation}
 \Delta\le\frac{\varepsilon^2}{2560c_d\mu_dMD};
 \label{eq:epsilon-corruption-nonsmooth}
\end{equation}
in the smooth branch it suffices instead that
\begin{equation}
 \Delta\le\frac{\varepsilon^{3/2}}
                  {64\sqrt{40}\,c_dD\sqrt{\nu_dL}}.
 \label{eq:epsilon-corruption-smooth}
\end{equation}
A condition with $M=0$ or $L=0$ in its denominator is omitted.
Then, for every fixed $u\in Q$,
\begin{equation}
 \E[f(x_N)-f(u)]
   <\left(\frac8{77}+\frac{49}{64}\right)\varepsilon
   <\varepsilon,
 \label{eq:epsilon-corruption-risk}
\end{equation}
with exactly the same pair, value, projection and stage counts.
The reference logarithm satisfies $\ell_{\mathrm{reg}}^\Delta<3/2$.
\end{corollary}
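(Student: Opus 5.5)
We plan to specialize Corollary~\ref{cor:corrupt-bias-budget} (with $\rho=\varepsilon/64$, so $g=c_d\Delta D/\rho$ matches \eqref{eq:epsilon-corruption-floor}) and then bound each group of terms by a fixed fraction of $\varepsilon$. The first step is to control the reference logarithm. By \eqref{eq:corrupt-curvature-envelopes}, $\mathcal H^\Delta_{\mathrm{reg}}$ equals the clean envelope of Lemma~\ref{lem:epsilon-clean-scales} plus three corruption perturbations. The first is the extra vector term $c_{\rm vec}(d\Delta\zeta)A/(D\sqrt T)$ inside $\mathcal V$. The second is the bias term $2\beta\sqrt{\mathsf W_{N,2}}/D$. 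The third, in the nonsmooth branch, comes from $\Gamma=\max\{g,\Gamma_0\}\ge\Gamma_0$, which only decreases $4c_dM/\Gamma$, and from $\omega$, which is unchanged.

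Using $\zeta\le g^{-1}$ we get $D\beta\le\rho$ and $d\Delta\zeta\le\kappa_d\rho/D$. Multiplying by $D^2/N^2$ and using $A/N^2\le1$, $\mathsf W_{N,2}\le N^3$ then gives
\[
 \mathcal H^\Delta_{\mathrm{reg}}D^2/N^2\le9\mathcal B_{\mathrm{reg}}+6\kappa_d\rho/\sqrt T+2\rho/\sqrt N .
\]
The condition $T\ge(5c_{\rm vec}\kappa_d/4)^2$ makes the middle term a small multiple of $\rho$. With $h_{\rm ref}D^2/N^2=4\varepsilon/77$ and $\mathcal B_{\mathrm{reg}}\le\varepsilon/256$, the ratio $\mathcal H^\Delta_{\mathrm{reg}}/h_{\rm ref}$ is below a constant near $(77/4)(9/256+3/64)\approx1.6$. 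Hence $\ell^\Delta_{\mathrm{reg}}\le\log_2(2.6)<3/2$. This verification is the step we expect to be delicate: the constants must be tracked carefully, and $\kappa_d/\sqrt T$ must be bounded using $c_{\rm vec}\ge 8/\sqrt7$.

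With $1+\ell<5/2$ in hand, we bound the remaining terms of \eqref{eq:corrupt-adaptive-risk}, in the explicit forms \eqref{eq:corrupt-smooth-explicit}--\eqref{eq:corrupt-nonsmooth-explicit}, as follows.
\begin{itemize}
 \item \emph{Startup and reference.} These contribute at most $8\varepsilon/77$, exactly as in Theorem~\ref{thm:epsilon-clean}.
 \item \emph{Clean terms.} The deterministic and clean variance terms, including the $\eta$ part with $\Gamma=\Gamma_0$, are bounded by the coefficients of Corollary~\ref{cor:epsilon-clean-budgets}. The logs are now below $3/2$ rather than $7/4$, so these terms total at most about $69\mathcal B_{\mathrm{reg}}\le69\varepsilon/256$.
 \item \emph{Extra diagnostic cost.} Since $\Gamma\le g+\Gamma_0$, \eqref{eq:corrupt-floor-diagnostic-cost} leaves an additional term $4\sqrt{\nu_d}M_2g\sqrt{N/T}\,(1+\ell)$. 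The condition $T\ge25600\nu_dM_2^2g^2N/\varepsilon^2$ bounds it by $\varepsilon/40\cdot(5/2)=\varepsilon/16$.
 \item \emph{Scalar corruption.} We have $4\Delta(1+\ell)\le10\Delta\le\varepsilon/16$, using $\Delta\le\varepsilon/160$.
 \item \emph{Floor charge.} In the nonsmooth branch, $\mu_dMg(1+\ell)\le(5/2)\cdot64c_d\mu_dMD\Delta/\varepsilon\le\varepsilon/16$ by \eqref{eq:epsilon-corruption-nonsmooth}. In the smooth branch, $\nu_dLg^2(5/2)\le\varepsilon/16$ follows from \eqref{eq:epsilon-corruption-smooth}.
 \item \emph{Bias and extra variance.} The $\rho$-terms give $(2+14/\sqrt N)\rho\le\varepsilon/4$, and $5c_{\rm vec}\kappa_d\rho/\sqrt T\le\varepsilon/16$.
 \item \emph{Legality of the floor.} The condition $\Delta\le\varepsilon\gammaMax/(128c_dD)$ gives $g\le\gammaMax/2$.
\end{itemize}

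Finally, we sum these contributions and compare with $8/77+49/64$. The non-startup pieces are at most $69/256+3/16+1/4+1/16\le49/64$, with room to spare. This is a finite arithmetic check that we would carry out with exact fractions. The resource counts carry over unchanged from Lemma~\ref{lem:corrupt-controller}, since initialization uses no samples.
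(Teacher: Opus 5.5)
Your route is the paper's: specialize Corollary~\ref{cor:corrupt-bias-budget} with $\rho=\varepsilon/64$, show that the perturbed envelope still gives $\ell^\Delta_{\mathrm{reg}}<3/2$, then charge each corruption-induced term a fixed fraction of $\varepsilon$. Your individual charges match the paper's. These are $\varepsilon/4$ for the bias, and $\varepsilon/16$ each for the extra vector variance, the extra diagnostic $\eta_g$, the scalar $4\Delta$ term and the floor charge $w_{\mathrm{reg}}$.

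\textbf{The final arithmetic check is false.} You have $69/256+3/16+1/4+1/16=197/256$, whereas $49/64=196/256$. With the coefficient $69$ taken from Corollary~\ref{cor:epsilon-clean-budgets}, which was computed under $\lambda<7/4$, you overshoot the displayed constant $(8/77+49/64)\varepsilon$ by $\varepsilon/256$; there is no room to spare. The weaker conclusion ``$<\varepsilon$'' would survive, since $8/77+197/256<1$. The fix is to use $\ell^\Delta_{\mathrm{reg}}<3/2$ in the component bounds of Theorem~\ref{thm:epsilon-clean}:
\begin{itemize}
\item in the nonsmooth branch the coefficients become $42+14\sqrt2+6=48+14\sqrt2<68$, then $56$, and $46+6=52$;
\item in the smooth branch they become $31$ and $52$.
\end{itemize}
Hence the clean non-initial terms are strictly below $68\mathcal B_{\mathrm{reg}}\le 17\varepsilon/64$, and $17/64+1/2=49/64$ exactly. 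You noted that the logarithm drops below $3/2$ but did not feed this back into the clean coefficients.

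\textbf{The logarithm step works, but your stated numbers are off.} You first bound $c_{\rm vec}\le 6$ and then invoke $c_{\rm vec}\ge 8/\sqrt7$. That makes the coefficient of $\rho$ equal to $3\sqrt7/5+2\approx 3.59$, not the $3$ you used. The resulting ratio is $\mathcal H^\Delta_{\mathrm{reg}}/h_0\approx 1.76$, not $1.6$. This still gives $\ell^\Delta_{\mathrm{reg}}<3/2$, because $2.76<2^{3/2}\approx 2.83$, but the margin is thin. The paper uses the budget condition directly, $c_{\rm vec}\kappa_d/\sqrt T\le 4/5$. This yields $\mathcal H^\Delta_{\mathrm{reg}}D^2/N^2\le 101\varepsilon/1280$ and $\mathcal H^\Delta_{\mathrm{reg}}/h_0\le 7777/5120$, with $1+7777/5120<8/3<2^{3/2}$.

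\textbf{The case $\Delta=0$ needs a separate remark.} There $g=0$, so $\zeta\le g^{-1}$ is vacuous. However, $\beta=0$ and $\bar\sigma=\sigma_0$ hold directly, so every corruption term vanishes.
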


\begin{proof}
The finite-radius condition gives $g\le\gammaMax/2$. For positive
$\Delta$, Corollary~\ref{cor:corrupt-bias-budget} gives
$D\beta\le\rho$ and
$\bar\sigma\le\sigma_0+\kappa_d\rho/D$.
Since $\Gamma\ge\Gamma_0$, the clean deterministic envelope bounds
of Lemma~\ref{lem:epsilon-clean-scales} remain valid. The additional
vector contributions give
\begin{equation}
 \mathcal H^\Delta_{\mathrm{reg}}D^2/N^2
 \le9\mathcal B_{\mathrm{reg}}+
       \rho\left(\frac{c_{\rm vec}\kappa_d}{\sqrt T}
                       +\frac2{\sqrt N}\right).
 \label{eq:epsilon-corruption-envelope}
\end{equation}
Indeed, $A/N^2\le1$ bounds the added variance contribution,
and $\mathsf W_{N,2}\le N^3$ bounds the bias contribution.
The conditions on $\mathcal B_{\mathrm{reg}},T$, and $N\ge1$ imply
\[
 \frac{\mathcal H^\Delta_{\mathrm{reg}}D^2}{N^2}
 \le\varepsilon\left(\frac9{256}+
                  \frac1{64}\left(\frac45+2\right)\right)
 =\frac{101\varepsilon}{1280}.
\]
Dividing by $h_0D^2/N^2=4\varepsilon/77$ gives
$\mathcal H^\Delta_{\mathrm{reg}}/h_0\le7777/5120$.
Since $1+7777/5120=12897/5120<8/3<2^{3/2}$, this proves
$\ell_{\mathrm{reg}}^\Delta<3/2$. For $\Delta=0$ the same envelope bound holds with
its nonnegative additional term retained, so this conclusion is
also valid.

Write
$\eta_0=4d^{1/4}M_2D/(\sqrt N\sqrt T)\le4r_{\rm st}$ and
$\eta_g=4\sqrt{\nu_d}M_2g\sqrt{N/T}$.
Equation~\eqref{eq:corrupt-floor-diagnostic-cost} gives
$\eta\le\eta_0+\eta_g$.
Using $\ell_{\mathrm{reg}}^\Delta<3/2$ in the exact component bounds from the clean
proof, the clean noninitial terms are at most
$68\mathcal B_{\mathrm{reg}}$, strictly so when $\mathcal B_{\mathrm{reg}}>0$: in the
nonsmooth branch their coefficients are
at most $48+14\sqrt2<68$, $56$, and $52$, and in the smooth
branch they are at most $31$ and $52$. The initial and reference
terms remain at most $8\varepsilon/77$.

The additional vector bias and its curvature contribution are at
most $(2+14/\sqrt N)\rho\le\varepsilon/4$.
The added vector-variance contribution is at most
$5c_{\rm vec}\kappa_d\rho/\sqrt T\le\varepsilon/16$.
The scalar corruption contributes at most
$4\Delta(1+\ell_{\mathrm{reg}}^\Delta)\le10\Delta\le\varepsilon/16$.
Each branch condition gives $w_{\mathrm{reg}}\le\varepsilon/40$, and hence
$w_{\mathrm{reg}}(1+\ell_{\mathrm{reg}}^\Delta)\le\varepsilon/16$.
Finally the last budget condition in
\eqref{eq:epsilon-corruption-common} gives
$\eta_g(1+\ell_{\mathrm{reg}}^\Delta)\le10\sqrt{\nu_d}M_2g\sqrt{N/T}
\le\varepsilon/16$.
Their total is at most $\varepsilon/2$. The clean noninitial
contribution is strictly less than $68\varepsilon/256$, including
when $\mathcal B_{\mathrm{reg}}=0$. Adding the clean terms proves
\[
 \E[f(x_N)-f(u)]
 <\frac{8\varepsilon}{77}
       +\frac{68\varepsilon}{256}+\frac{\varepsilon}{2}
 =\left(\frac8{77}+\frac{49}{64}\right)\varepsilon.
\]
All positive-corruption contributions vanish or decrease when
$\Delta=0$. The floor and initialization use no extra observations,
so the original exact cost calculation applies.
\end{proof}

The supplied corruption envelope therefore permits sufficient
nonsmooth allowance of order $\varepsilon^2/(c_dMD)$, and smooth
allowance of order $\varepsilon^{3/2}/(c_dD\sqrt L)$, together with
the scalar and finite-radius restrictions displayed above.
Because $\kappa_d^2\le3d$, the added vector-variance condition has
order $d$. The extra forward-diagnostic budget is
$O(M_2^2g^2N/\varepsilon^2)$ and remains explicit; if
$g\sqrt N\le\sqrt dD$, it is bounded by the existing statistical
budget order. The logarithm is bounded by a universal constant in
these sufficient regimes. It bounds the expected number of reference
events; the realized count can be larger. A supplied target calibrates
an expected risk guarantee for fixed budgets, a known diameter and a
supplied corruption envelope. Observed stopping certification,
unknown-corruption adaptation and original-distance complexity remain
separate questions.

\section{Proofs of the main adaptive statements}
\label{sec:main-adaptive-proofs}

\subsection{Error from the initial estimate and selected ball}
\label{subsec:comparison-parameters}
The following quantities express the last row of Table~\ref{tab:comparison}.
They are used only in the error bound. The algorithm estimates a gradient
from its initial batch; it does not require the true gradient or its variance.
Here $A=N(N+1)/2$ and $B_0=\lfloor T/2\rfloor$.

For fixed $u\in Q$, define $a_{R_u}=(R_u-r_0)_+$,
$\Psi(R_u)=R_u^2/2+a_{R_u}^2/8$,
$G_{\rm a}=\norm{\nabla f_{\gamma_{\rm a}}(x^0)}$ and
$\beta_{\rm a}(u)=f_{\gamma_{\rm a}}(u)-f(u)$.
If the anchor record variance is at most $\sigma_{\rm a}^2$,
put $\kappa=h_{\rm ref}pr_0/A$ and
\begin{equation}
 z_{\rm a}=\sqrt{G_{\rm a}^2+\sigma_{\rm a}^2/B_0},\qquad
 S_2=r_0+\frac{2pr_0z_{\rm a}}{\kappa+z_{\rm a}}.
 \label{eq:joint-main-moments}
\end{equation}
The anchor geometry satisfies $\E S^2\le S_2^2$ and $\E S\le S_2$.
Set
\begin{align}
 \mathcal A_p(u)&=\frac{h_{\rm ref}\Psi(R_u)}{A}
 +\frac{G_{\rm a}\Psi(R_u)}{p r_0}+\beta_{\rm a}(u)\nonumber\\
 &\quad+\frac{\sigma_{\rm a}}{\sqrt{B_0}}
       \left(a_{R_u}+\frac{\Psi(R_u)}{p r_0}\right).
 \label{eq:joint-main-price}
\end{align}
\begin{align}
 \mathcal A_{\rm sat}(u)&=
 \frac{h_{\rm ref}\min\{R_u^2,S_2^2\}}{2A}
 +\frac{z_{\rm a}\min\{R_u^2,r_pS_2\}}{2pr_0}\nonumber\\
 &\quad+G_{\rm a}a_{R_u}+\beta_{\rm a}(u),
 \label{eq:joint-main-saturated-price}
\end{align}
and $\mathcal A_*(u)=\min\{\mathcal A_p(u),\mathcal A_{\rm sat}(u)\}$.

The coefficient $h_{\rm ref}$ controls only part of $\mathcal A_*$.
Indeed, $h_{\rm a}/A=h_{\rm ref}/A+\|g_0\|/(pr_0)$, so reducing
$h_{\rm ref}$ leaves a contribution from the initial gradient estimate.
The smoothing and search restriction can also contribute error.
For example, take $d=1$, $Q=[0,R]$, $x^0=0$, and
$F(x,\xi)=f(x)=-x$ throughout the evaluation domain,
with $\delta=0$.
Smoothing preserves $f$ and every initial record equals $-1$.
If $R>r_p$, every iterate satisfies $x_N\le S<r_p$, and hence
$f(x_N)-f(R)\ge R-r_p>0$ for all $N,T$ and $h_{\rm ref}>0$.
Thus fixed $p,r_0$ can leave a nonvanishing error despite arbitrarily small
$h_{\rm ref}$. Corollary~\ref{cor:run5-distance-reference-absorption}
isolates the reference terms that can be made smaller than a target accuracy.

\subsection{Proof of Theorem~\ref{thm:joint-main}}
\label{subsec:proof-joint-main}
Under the assumptions and budgets of Theorem~\ref{thm:joint-main}, the exact bound is
\begin{equation}
\begin{aligned}
 \E[f(x_N)-f(u)]&\le\mathcal A_*(u)\\
 &\quad+\min\{\mathcal J_{\rm s}(S_2),\mathcal J_{\rm ns}(S_2)\}.
\end{aligned}
 \label{eq:joint-main-risk}
\end{equation}
For each applicable case, let $\ell_{\mathrm{reg}}^0=\ell_{\mathrm{reg}}(2r_0)$.
For an $L$-smooth population on the evaluation domain, put
\begin{equation}
 \mathcal J_{\rm s}(S_2)=\mathcal U_{\rm s}(2S_2,\ell_{\rm s}^0).
 \label{eq:joint-main-smooth-cost}
\end{equation}
Otherwise set $\mathcal J_{\rm s}(S_2)=+\infty$.
If $\gammaMax\ge2d^{1/4}r_p/N$, put
$\mathcal J_{\rm ns}(S_2)=\mathcal U_{\rm ns}(2S_2,\ell_{\rm ns}^0)$.
Otherwise put
\begin{equation}
\begin{aligned}
 \mathcal J_{\rm ns}(S_2)&=\min\Big\{
       \mathcal U_{\rm ns}(2r_p,\ell_{\rm ns}^0),\\
       &\qquad\mathcal V_{\rm ns}(S_2,\ell_{\rm ns}^0)\Big\}.
\end{aligned}
 \label{eq:joint-main-nonsmooth-cost}
\end{equation}
The cost $\mathcal V_{\rm ns}$ is defined in
\eqref{eq:run5-distance-finite-halo-cost}.
The envelopes \eqref{eq:adaptive-main-Hns}--\eqref{eq:adaptive-main-U}
use $T'$ and $B^{\rm rad}=\floor{T'/(8N)}$.

In \eqref{eq:joint-main-order}, $\widetilde O$ suppresses universal constants
and at most the factor $1+\max_{\mathrm{reg}\text{ applicable}}\ell_{\mathrm{reg}}^0$.
The nonsmooth case is always applicable; the smooth case is included only
when $L<\infty$. These logarithms retain their dependence on the chosen
$h_{\rm ref}$.

\begin{proof}
The anchor rule \eqref{eq:joint-main-anchor} is
\eqref{eq:run5-distance-anchor}, with the same anchor batch,
$Q_S$, reserve $D_S=2S$ and initial coefficient $h_{\rm a}$.
Condition on the full anchor history. The execution of
Algorithm~\ref{alg:threshold} with budget $T'$ agrees with
Appendix~\ref{subsec:joint-offset-rule}: its projected updates use step size
$a_t/H_{t-1}$ and its radius selection tracks
$\mathsf K_{t-1}=h_{\rm ref}+H_{t-1}-h_{\rm a}$.
The conditional vector and radial moment assumptions hold at the selected
anchor-selected centers and radii under this conditioning.

Lemma~\ref{lem:run5-distance-moment} proves
$\E S^2\le S_2^2$ and $\E S\le S_2$ from the anchor second moment.
Lemma~\ref{lem:run5-distance-clipping} retains the clipped initial
comparator distance and proves both anchor prices, hence their minimum
\eqref{eq:joint-main-saturated-price}. Its full proof bounds the fourth
power of the bounded radius by $r_p^2 S^2$; it assumes no fourth
anchor-record moment. The prices are exactly
\eqref{eq:run5-distance-old-price}--\eqref{eq:run5-distance-best-price}.

Theorem~\ref{thm:run5-distance-joint}, whose proof starts from the
complete offset-controller potential, now applies. Its envelopes and
costs \eqref{eq:run5-distance-joint-envelopes}--\eqref{eq:run5-distance-joint-cost}
are \eqref{eq:adaptive-main-Hns}--\eqref{eq:adaptive-main-U}, with
budget $T'$ and $B^{\rm rad}=\floor{T'/(8N)}$.
Its logarithms are the deterministic $\ell_{\mathrm{reg}}(2r_0)$, frozen before
averaging any anchor-selected geometry. The smooth conclusion is
\eqref{eq:run5-distance-joint-smooth}. Under
$\gammaMax\ge2d^{1/4}r_p/N$, the evaluation domain is untruncated on the whole
possible radius range and \eqref{eq:run5-distance-joint-nonsmooth}
applies. Otherwise \eqref{eq:run5-distance-joint-finite-halo} gives
the stated minimum with \eqref{eq:run5-distance-finite-halo-cost}.
These give the costs \eqref{eq:joint-main-smooth-cost}--\eqref{eq:joint-main-nonsmooth-cost}.
Both applicable bounds hold for the same execution, so their minimum gives
\eqref{eq:joint-main-risk}. Without population smoothness, only the nonsmooth
bound is used, as encoded by $\mathcal J_{\rm s}(S_2)=+\infty$.

The initial batch consumes $B_0$ sampled functions in one stage and $T'\ge T/2$.
Thus the condition for fixed batch sizes gives $T'\ge8N$, and the adaptive-batch condition gives
$T'\ge16N$. Lemma~\ref{lem:threshold-accounting} spends exactly $T'$
later sampled functions in $3N$ or $5N$ stages. Adding the anchor gives $T$ pairs,
$2T$ values and $3N+1$ or $5N+1$ stages. Each optimizer step uses one
intersection projection. All states are initialized once and persist.

It remains to derive the explicit rate \eqref{eq:joint-main-order}.
The budgets give $B^{\rm rad}\ge T/(32N)$, and $\chi_d\le2$ implies
\[
 \omega\le\frac{8M_2N}{\sqrt T},\qquad
 \eta(2S_2)\le\frac{16d^{1/4}M_2S_2}{\sqrt{NT}}.
\]
Here $\omega$ and $\eta$ are defined in Appendix~\ref{sec:run5-distance};
the second inequality follows from
$\Gamma(2S_2)\le2d^{1/4}S_2/N$ for every $\gammaMax>0$.
Also $k_N\le7/N^2$, $k_NA\le5$, and
$c_{\rm vec}\le4\sqrt2$.
Substitution into the exact costs yields
\begin{align*}
 \mathcal U_{\rm s}(2S_2,\ell_{\rm s}^0)
 &\le\frac{4h_{\rm ref}S_2^2}{N^2}
      +\frac{80\sigma_0S_2}{\sqrt T}
      +(1+\ell_{\rm s}^0)\left(
        \frac{112LS_2^2}{N^2}
        +\frac{16d^{1/4}M_2S_2}{\sqrt{NT}}\right),\\
 \mathcal V_{\rm ns}(S_2,\ell_{\rm ns}^0)
 &\le\frac{4h_{\rm ref}S_2^2}{N^2}
      +\frac{80\sigma_0S_2}{\sqrt T}\\
 &\quad+(1+\ell_{\rm ns}^0)\left(
        \frac{112\sqrt d\,MS_2^2}{\gammaMax N^2}
        +\frac{128d^{1/4}MS_2}{N}
        +\frac{176d^{1/4}M_2S_2}{\sqrt T}\right).
\end{align*}
For the first inequality, the curvature and drift coefficients obey
$112+8\ell\le112(1+\ell)$ for $\ell\ge0$.
For the second, they obey
$84+28\sqrt2+8\ell\le128(1+\ell)$, while the radial-noise coefficient
satisfies $112\sqrt2+16(1+\ell)/\sqrt N\le176(1+\ell)$.
The nonsmooth bound using $\mathcal V_{\rm ns}$ holds for every
$\gammaMax>0$ by \eqref{eq:run5-distance-joint-finite-halo}.
Taking the minimum of the applicable bounds and bounding their logarithmic
factors by $1+\max_{\mathrm{reg}\text{ applicable}}\ell_{\mathrm{reg}}^0$
proves \eqref{eq:joint-main-order}. Without population smoothness, only
the nonsmooth bound is used.
\end{proof}

\subsection{Proof of Theorem~\ref{thm:protected-main}}
\label{subsec:proof-protected-main}
\begin{proof}
The initial state, predictable reference, forward records and protected
radius in Algorithm~\ref{alg:protected-main} agree exactly with
Algorithm~\ref{alg:corrupt-radial}. The fixed batch-size lower bounds and residual, or
pilot and update budgets, are \eqref{eq:corrupt-public-counts} and
\eqref{eq:corrupt-paid-counts}. Thus their executions coincide when
coupled to the same sampled functions and directions, and
Theorem~\ref{thm:corrupt-radial} applies.

Corollary~\ref{cor:corrupt-bias-budget} expands that finite theorem into
\eqref{eq:corrupt-smooth-explicit} and
\eqref{eq:corrupt-nonsmooth-explicit}. For $u=x^\star$, feasibility
implies $R\le D$, and $2A=N(N+1)\ge N^2$. Hence the startup and
reference terms have sum at most $2h_0D^2/N^2$.
The dimensional inequalities $c_d\le\sqrt d$,
$\sqrt{c_d}\le d^{1/4}$ and
$(d/c_d)^2\le3d$ bound all coefficients by the displayed dimension
orders. The uniform vector assumption gives
$\sigma_0\le\sqrt{Cd}M_2$. Since $d,N\ge1$,
$d^{1/4}/\sqrt N\le\sqrt d$. The diagnostic bound
\eqref{eq:corrupt-floor-diagnostic-cost} is therefore at most a
constant times $\sqrt dM_2D/\sqrt T+M_2g\sqrt{N/T}$.
Also $2+14/\sqrt N\le16$, so the bias-budget terms are bounded by a
constant times $\rho(1+\sqrt{d/T})$.

In the smooth branch the remaining deterministic terms have orders
$LD^2/N^2$ and $Lg^2$, multiplied only by
$1+\ell_{\rm s}^\Delta$. In the nonsmooth branch they have orders
$d^{1/4}MD/N$, $\sqrt dMD^2/(\gammaMax N^2)$ and $Mg$, multiplied
only by $1+\ell_{\rm ns}^\Delta$. Both share the previously bounded
sampling, initialization, direct $\Delta$ and bias-budget terms.
Taking the smaller applicable branch proves
\eqref{eq:protected-main-order}. Without smoothness only the nonsmooth
branch and its finite logarithm are used. At $\Delta=0$, the exact
finite bound has $g=\beta=0$, $\bar\sigma=\sigma_0$ and no extra
$\rho$ terms; this is the stated zero-corruption specialization.
Lemma~\ref{lem:corrupt-controller} supplies the exact resource counts.
\end{proof}

\subsection{Proof of Corollary~\ref{cor:protected-main-clean}}
\label{subsec:proof-protected-main-clean}
\begin{proof}
Take $\Delta=g=0$ in Theorem~\ref{thm:protected-main} and use its
exact zero-corruption specialization. The order is
$\widetilde O(h_0D^2/N^2+\mathcal B_{N,T})$ with only the applicable
reference logarithm suppressed. The assumed initialization gives
$2h_0D^2/N^2\le\mathcal B_{N,T}$, so the explicit sum is at most a
universal constant times $\mathcal B_{N,T}$. This proves the first
claim while retaining its logarithmic reference dependence.
The algorithm requires strict positivity because both its projected
step and radial threshold divide by positive coefficients, but none of
its assumptions imposes a further positive lower bound. Finally,
$h_0\le\varepsilon N^2/(8D^2)$ implies
$2h_0D^2/N^2\le\varepsilon/4$ directly. This choice made before sampling needs no
population smoothness, Lipschitz scale or variance, and does not remove
the logarithms in the finite theorem.
\end{proof}

\section{A tunable distance frontier}
\label{sec:frontier-distance}

We choose the initial scalar coefficient from an independent anchor
batch and retain one optimizer trajectory. A parameter $p$ chosen before sampling trades
deterministic affine progress for the cost of learning the distance.
The geometric primitive is exact projection onto
$Q\cap\overline{\mathbb B}(x^0,S)$ for each observed $S>0$.
This primitive is charged separately from projection onto $Q$.
Its arithmetic complexity depends on the feasible set.

For affine objectives, $p$ trades
deterministic progress for a noise coefficient of order $p$, matching the
lower bound for $p\ge5$ under the stated progress requirement
(Theorem~\ref{thm:frontier-pareto-lower}). The corresponding tradeoff for
curved objectives remains open. A stronger initial-progress requirement can
force an $N^2$ factor (Corollary~\ref{cor:startup-price}). These lower bounds hold even for methods that query arbitrary points and observe each sampled function in full.

\subsection{Anchor-dependent initialization}
\label{subsec:frontier-rule}

Let $Q$ be compact and convex, $x^0\in Q$, $N\ge1$ and
$A=A_N=N(N+1)/2$. Fix a legal radius $\gamma>0$ and write
$\phi=f_\gamma$. Assume that $\phi$ is convex and differentiable,
has an $L_\gamma$-Lipschitz gradient on $Q$, and satisfies
$f\le\phi$. Put $\beta_\gamma(u)=\phi(u)-f(u)$.
Uncorrupted gradient estimates have conditional mean $\nabla\phi(z)$ and conditional
variance at most $\sigma_\gamma^2$, uniformly over legal predictable
centers. Each estimate uses a newly sampled function and direction,
independent of the preceding draws.

Choose references $r_0,h_{\rm ref}>0$ and a tradeoff
$p\ge1$ before sampling. Draw $B_0=\lfloor T/2\rfloor$ anchor records at $x^0$,
and denote their mean by $g_0$. Set
\begin{align}
 T'&=T-B_0,\qquad
 h_{\rm a}=h_{\rm ref}+\frac{A\norm{g_0}}{p r_0},
 \qquad c_{\rm a}=2A/h_{\rm a},\nonumber\\
 S&=r_0+c_{\rm a}\norm{g_0},\qquad
 Q_S=Q\cap\overline{\mathbb B}(x^0,S),\qquad D_S=2S.
 \label{eq:frontier-anchor-rule}
\end{align}
Initialize $x_0=v_0=x^0$ and $H_0=h_{\rm a}$ once.
Use the scalar updates \eqref{eq:center-y}--\eqref{eq:coefficient}
on $Q_S$ with the fixed reserve $D_S$ and budget $T'$.
All later labels and directions are independent of the anchor draws.
Their centers and record values may depend on the anchor; their
uniform moment bounds hold conditionally on the preceding history.
The coefficient, reserve and feasible set are selected before the first
optimizer step, and the optimizer state persists throughout.

For fixed batch sizes require $T\ge8N$ and use
\begin{equation}
 B_t^y=B_t^x=1+\left\lfloor
 \frac{(T'-2N)t}{N(N+1)}\right\rfloor.
 \label{eq:frontier-public-batches}
\end{equation}
Add any unused labels to the final second-center batch. The execution
spends exactly $T$ pairs and $2T$ scalar values, with $2N+1$ sampling
stages and $N$ ball-intersection projections.
Alternatively, require $T\ge16N$ and use the adaptive batch allocation of Algorithm~\ref{alg:scalar},
with total budget $T'$, including its preliminary samples, and
$s_0>0$ chosen before sampling. The counts are $T$ pairs, $2T$ values, $4N+1$ stages and
$N$ ball-intersection projections. Neither version uses an input value
of $R,D,L,M,M_2$ or the variance.

The initialization satisfies the deterministic envelope
\begin{equation}
 0<S\le r_p:=r_0(1+2p),\qquad h_{\rm a}\ge h_{\rm ref}>0.
 \label{eq:frontier-radius-envelope}
\end{equation}
Indeed $A\norm{g_0}/h_{\rm a}\le p r_0$.
For a zero anchor, $S=r_0$ and $h_{\rm a}=h_{\rm ref}$.

\subsection{Conditional fixed-radius geometry}
\label{subsec:frontier-upper}

For fixed $u\in Q$, define
\begin{equation}
 R_u=\norm{u-x^0},\qquad a_{R_u}=(R_u-r_0)_+,
 \qquad \Psi(R_u)=R_u^2/2+a_{R_u}^2/8,
 \qquad G_0=\norm{\nabla\phi(x^0)}.
 \label{eq:frontier-distance-shape}
\end{equation}
Conditionally on the anchor, let
$u_S=x^0+\min\{1,S/R_u\}(u-x^0)$ for $R_u>0$ and $u_S=x^0$
otherwise. Convexity gives $u_S\in Q_S$.

\begin{lemma}[Conditional fixed-radius geometry]
\label{lem:frontier-conditional}
Under the preceding assumptions, set $\nu=8\sqrt2$ for fixed
batch sizes and $\nu=16\sqrt2$ for adaptive batch sizes. Then
\begin{align}
 \E[\phi(x_N)-\phi(u_S)\mid\text{anchor}]
 \le{}&\frac{h_{\rm a}\min\{R_u^2,S^2\}}{2A}
 +\frac{32L_\gamma S^2}{A}
 +\frac{\nu\sigma_\gamma S}{\sqrt{T'}}.
 \label{eq:frontier-conditional-risk}
\end{align}
\end{lemma}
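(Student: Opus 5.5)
Conditionally on the anchor history, the remaining execution is exactly the fixed-target scalar method of Section~\ref{sec:batch-variance} run on the compact convex set $Q_S$, whose diameter is at most $D_S=2S$. The coefficient is $H_0=h_{\rm a}$ and the budget is $T'$. The plan is therefore to condition on the anchor and invoke Theorem~\ref{thm:batch-variance-actual} (adaptive batches) or Corollary~\ref{cor:batch-variance-public} (fixed batches). In doing so we replace $D$ by $2S$, $h_0$ by $h_{\rm a}$, $L$ by $L_\gamma$, $T$ or $\mathsf U$ by the corresponding quantities built from $T'$, and take the comparator $u_S\in Q_S$ with target $\phi$ itself, so that $f\le\phi$ is replaced by the trivial $\phi\le\phi$. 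Because all later labels and directions are independent of the anchor, the conditional centering hypotheses of those results hold under the anchor-conditioned law. The uniform variance bound $\sigma_\gamma^2$ gives $\overline{\mathcal V}_N\le\sigma_\gamma^2$.

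The bound \eqref{eq:batch-variance-risk} then reads
\[
 \E[\phi(x_N)-\phi(u_S)\mid\text{anchor}]
 \le\frac{h_{\rm a}\norm{u_S-x^0}^2}{2A}
 +\frac{2(2S)^2(4L_\gamma-h_{\rm a})_+}{A}
 +\frac{2(2S)\sqrt W}{A}.
\]
The startup term is handled by $\norm{u_S-x^0}=\min\{R_u,S\}$, which gives $h_{\rm a}\min\{R_u^2,S^2\}/(2A)$. The curvature term is at most $8S^2\cdot4L_\gamma/A=32L_\gamma S^2/A$. For the variance term, with fixed batches, Corollary~\ref{cor:batch-variance-public} gives $W\le8A^2\sigma_\gamma^2/T'$, so the term is at most $4S\cdot2\sqrt2\,\sigma_\gamma/\sqrt{T'}=8\sqrt2\sigma_\gamma S/\sqrt{T'}$. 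With adaptive batches, $W\le16A^2\sigma_\gamma^2/\mathsf U$ where $\mathsf U=T'-4N\ge T'/2$ (since $T'\ge T/2\ge8N$), so the term is at most $4S\cdot4\sqrt2\sigma_\gamma/\sqrt{T'}=16\sqrt2\sigma_\gamma S/\sqrt{T'}$. These match $\nu$.

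The main point requiring care is the budget bookkeeping. For fixed batches, the batch sizes \eqref{eq:frontier-public-batches} use $T'$, and we need $T'\ge4N$; this follows from $T\ge8N$ because $T'=\ceil{T/2}$. The leftover labels added to the final second-center batch only enlarge a batch, so the floor $B_j\ge T'\omega_j/(4A)$ still holds. We should also check that Theorem~\ref{thm:batch-variance-actual} accepts a random, anchor-measurable diameter, initial coefficient and feasible set. This is settled by working entirely under the regular conditional law given the anchor. In that law $S$, $h_{\rm a}$ and $Q_S$ are constants, and the pathwise potential (Theorem~\ref{thm:formal-trajectory-potential}) is deterministic in them. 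The integrability needed in Theorem~\ref{thm:stochastic-trajectory-potential} follows from the finite second moments, since $\phi$ is continuous on the compact set $Q_S$.
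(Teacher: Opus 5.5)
Your proposal is correct and follows essentially the paper's argument. The paper conditions on the anchor and applies Theorem~\ref{thm:potential} and Lemma~\ref{lem:growth} directly to $\phi$ on $Q_S$ with $D_S=2S$. It bounds the conditional energy via the batch floors by $8\sigma_\gamma^2A^2/T'$ for fixed batches and $32\sigma_\gamma^2A^2/T'$ for adaptive batches, which is exactly what your appeal to Theorem~\ref{thm:batch-variance-actual} and Corollary~\ref{cor:batch-variance-public} packages. The resulting terms $32L_\gamma S^2/A$ and $\nu\in\{8\sqrt2,16\sqrt2\}$ match.
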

\begin{proof}
Condition on the complete anchor batch. Its realized $h_{\rm a}$,
$S$, $Q_S$ and $u_S$ are fixed, positive or finite as appropriate.
Apply Theorem~\ref{thm:potential} and Lemma~\ref{lem:growth} to
$\phi$ itself, with zero comparator smoothing charge. The initial
potential is $h_{\rm a}\norm{u_S-x^0}^2/2$, where
$\norm{u_S-x^0}^2=\min\{R_u^2,S^2\}$.
Write $e_t^y=g_t^y-\nabla\phi(y_t)$, $e_t^x=g_t^x-\nabla\phi(x_t)$ and
$\mathcal E_N=\sum_tt^2\norm{e_t^x-e_t^y}^2$.
The scalar coefficient bound is
$H_N-h_{\rm a}\le4L_\gamma+\sqrt{\mathcal E_N}/D_S$.
The fixed batch-size lower bounds satisfy
$B_t^\iota\ge T't/[2N(N+1)]$ for $\iota\in\{y,x\}$ because $T'\ge4N$.
The first mean is known before the second batch is drawn, so conditional
centering gives $\E[\ip{e_t^x}{e_t^y}\mid\text{anchor}]=0$.
Thus $\E[\mathcal E_N\mid\text{anchor}]\le8\sigma_\gamma^2A^2/T'$.
The minimum adaptive batch sizes give instead $32\sigma_\gamma^2A^2/T'$.
Both error inner products in the potential have zero conditional
expectation, since their distance factors are known before the
corresponding batches are drawn. The remaining curvature contribution is
\[
 8L_\gamma D_S^2/A+
 2D_S\sqrt{\E[\mathcal E_N\mid\text{anchor}]}/A.
\]
Substitution of $D_S=2S$ proves \eqref{eq:frontier-conditional-risk}.
Compactness bounds the distance factors, and the positive reference
and scalar energy bound ensure integrability.

For the accounting with fixed batch sizes,
$2\sum_tB_t^y\le2N+2(T'-2N)\sum_tt/[N(N+1)]=T'$.
The added final labels spend the residual and preserve every floor.
There are two vector stages per optimizer step and one anchor stage.
The adaptive sample count follows Lemma~\ref{lem:paid-accounting} with budget
$T'$. Each record contributes one pair and two values.
\end{proof}

The clipping calculation in Lemma~\ref{lem:run5-distance-clipping}
combines this conditional estimate with the original comparator.
Theorem~\ref{thm:run5-distance-fixed} gives the resulting bound using
the anchor's radius moments and the smaller of two bounds on the
initial-estimate and search-restriction error.

\begin{corollary}[A uniform noise multiplier]
\label{cor:frontier-noise-multiplier}
Under Theorem~\ref{thm:run5-distance-fixed}, with
$\gamma_{\rm a}=\gamma$ and $\sigma_{\rm a}=\sigma_\gamma$, define
\begin{equation}
 \mathcal H_p(R)=R+r_0+R^2/(p r_0),\qquad
 P_{\rm up}(p)=\sqrt3+\nu\sqrt2(1+2p).
 \label{eq:frontier-noise-shape}
\end{equation}
Then
\begin{align}
 \E[f(x_N)-f(u)]\le{}&
 \frac{h_{\rm ref}\Psi(R_u)}{A}
 +\frac{G_0\Psi(R_u)}{p r_0}
 +\frac{32L_\gamma r_p^2}{A}\nonumber\\
 &+P_{\rm up}(p)\frac{\sigma_\gamma\mathcal H_p(R_u)}{\sqrt T}
 +\beta_\gamma(u).
 \label{eq:frontier-noise-risk}
\end{align}
\end{corollary}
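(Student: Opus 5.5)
The plan is to start from Theorem~\ref{thm:run5-distance-fixed} and keep only two pieces of structure: its radius envelope $S\le r_p$, and its affine/anchor initialization terms. We leave the deterministic terms $h_{\rm ref}\Psi(R_u)/A$, $G_0\Psi(R_u)/(pr_0)$ and $\beta_\gamma(u)$ exactly as they appear. The curvature term $32L_\gamma S_2^2/A$ (or its conditional form $32L_\gamma S^2/A$ from Lemma~\ref{lem:frontier-conditional}) is bounded through the deterministic envelope \eqref{eq:frontier-radius-envelope}, giving at most $32L_\gamma r_p^2/A$. The real work is to collect every noise-dependent contribution into the single multiplier $P_{\rm up}(p)\sigma_\gamma\mathcal H_p(R_u)/\sqrt T$.

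The noise contributions come from two sources.

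\emph{Optimizer noise.} Lemma~\ref{lem:frontier-conditional} gives the term $\nu\sigma_\gamma S/\sqrt{T'}$. We bound $S\le r_p=(1+2p)r_0$ and $T'\ge T/2$, so $1/\sqrt{T'}\le\sqrt2/\sqrt T$. This gives
\[
 \nu\sqrt2(1+2p)\,\sigma_\gamma r_0/\sqrt T .
\]
Since $r_0\le\mathcal H_p(R_u)$, this is absorbed into the $\nu\sqrt2(1+2p)$ part of $P_{\rm up}$.

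\emph{Anchor noise.} This appears in the price \eqref{eq:joint-main-price} with $\sigma_{\rm a}=\sigma_\gamma$ as
\[
 \frac{\sigma_\gamma}{\sqrt{B_0}}\Bigl(a_{R_u}+\frac{\Psi(R_u)}{pr_0}\Bigr).
\]
We need $1/\sqrt{B_0}\le\sqrt3/\sqrt T$. Since $B_0=\lfloor T/2\rfloor\ge T/3$ for $T\ge2$, this holds; the fixed-batch condition $T\ge8N$ ensures $T\ge8$. For the shape factor, $a_{R_u}\le R_u$, and $\Psi(R_u)\le R_u^2/2+R_u^2/8\le R_u^2$, so $\Psi(R_u)/(pr_0)\le R_u^2/(pr_0)$. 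Hence the bracket is at most $R_u+R_u^2/(pr_0)\le\mathcal H_p(R_u)$, which accounts for the $\sqrt3$ part of $P_{\rm up}$.

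Adding the two contributions gives \eqref{eq:frontier-noise-risk}.

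The main obstacle is to confirm that the form of Theorem~\ref{thm:run5-distance-fixed} we invoke uses the price $\mathcal A_p$ rather than the minimum $\mathcal A_*$, and that its stochastic term is stated with $S_2$ and $T'$. If it instead carries $S_2$, which depends on $z_{\rm a}$, we still bound $S_2<r_p$ deterministically, so the argument is unaffected. We must also check that no other hidden $\sigma_{\rm a}$-dependence remains, for example inside $S_2$ multiplying $L_\gamma$. That dependence is removed by using the crude envelope $r_p$ for every occurrence of $S_2$.
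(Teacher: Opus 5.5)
Your proposal is correct and follows the paper's proof: bound $\mathcal A_*(u)$ by $\mathcal A_p(u)$ and $S_2$ by $r_p$. Then $B_0\ge T/3$, $T'\ge T/2$, $a_{R_u}\le R_u$ and $\Psi(R_u)\lesssim R_u^2$ fold the anchor noise into the $\sqrt3$ part of $P_{\rm up}(p)$, and $\mathcal H_p(R_u)\ge r_0$ absorbs the optimizer noise into the $\nu\sqrt2(1+2p)$ part. The only differences are cosmetic: you use $\Psi(R_u)\le R_u^2$ instead of $5R_u^2/8$, and the ``obstacle'' about $\mathcal A_*$ versus $\mathcal A_p$ is immediate because $\mathcal A_*$ is defined as the minimum of the two prices.
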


\begin{proof}
Theorem~\ref{thm:run5-distance-fixed} gives
$\mathcal A_*(u)\le\mathcal A_p(u)$ and $S_2\le r_p$.
The admissible budgets give $B_0\ge T/3$ and $T'\ge T/2$.
Since $a_R\le R$ and $\Psi(R)\le5R^2/8$, the anchor noise in
$\mathcal A_p(u)$ is at most
$\sqrt3\sigma_\gamma\mathcal H_p(R)/\sqrt T$.
The remaining noise is at most
$\nu\sqrt2(1+2p)\sigma_\gamma r_0/\sqrt T$, and
$\mathcal H_p(R)\ge r_0$. This proves the claim.
\end{proof}

\subsection{A lower bound under an affine error guarantee}
\label{subsec:frontier-lower}

Before sampling, fix $r_0,h_{\rm ref}>0$, $p\ge1$, $N\ge1$ and an integer
$n\ge\max\{4,16N\}$. Set
\begin{equation}
 R_0=2p r_0/5,\quad S=r_0+R_0,\quad
 Q=[0,S],\quad x^0=r_0,\quad T=n,
 \qquad m\ge h_{\rm ref}p r_0/A.
 \label{eq:frontier-lower-inputs}
\end{equation}
The deterministic law returns $F_0(x)=m(S-x)$ under every label.
The rare law returns
\begin{equation}
 F(x;\xi)=\begin{cases}
 m(S-x),&\text{with probability }1-1/n,\\
 2nm x,&\text{with probability }1/n.
 \end{cases}
 \label{eq:frontier-rare-law}
\end{equation}
The rare population has gradient $m(1+1/n)$, optimizer zero and
distance $R=r_0$. Its variance and sample-gradient moment are
\begin{equation}
 \sigma_1^2=m^2(4n-3/n-1/n^2)\le4nm^2,
 \qquad M_{2,1}^2=m^2(4n+1-1/n).
 \label{eq:frontier-rare-moments}
\end{equation}
Every sample is globally affine, so smoothing bias and curvature vanish.

\begin{theorem}[The cost of the relaxed affine error guarantee]
\label{thm:frontier-pareto-lower}
Let any algorithm draw at most $n$ independent sampled functions and return
$\widehat x\in Q$, with the same known inputs and internal-randomization
law on both instances. Arbitrary adaptive query endpoints are allowed,
including observation of each entire realized sample function.
Suppose its deterministic-law guarantee is
\begin{equation}
 \E_0[m(S-\widehat x)]
 \le\frac{h_{\rm ref}\Psi(R_0)}{A}
 +\frac{m\Psi(R_0)}{p r_0}.
 \label{eq:frontier-affine-contract}
\end{equation}
Suppose also its rare-law guarantee, for some $P\ge0$, is
\begin{align}
 \E_1[f_1(\widehat x)-f_1(0)]\le{}&
 \frac{h_{\rm ref}\Psi(r_0)}{A}
 +\frac{m(1+1/n)\Psi(r_0)}{p r_0}\nonumber\\
 &+P\frac{\sigma_1\mathcal H_p(r_0)}{\sqrt n}.
 \label{eq:frontier-local-contract}
\end{align}
Writing $q_n=(1-1/n)^n$, necessarily
\begin{equation}
 P\ge\frac{[q_n(1+1/n)(1+p/5)-(2+1/n)/(2p)]_+}
                {2(2+1/p)}.
 \label{eq:frontier-exact-lower}
\end{equation}
In particular $P\ge p/88$ for $p\ge5$.
\end{theorem}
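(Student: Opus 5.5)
The plan is a transcript-coupling argument in the style of \citet{CarmonHinder2024}. Let $E$ be the event that, under the rare law \eqref{eq:frontier-rare-law}, every one of the at most $n$ sampled functions equals the common function $m(S-x)$. This event has probability at least $q_n=(1-1/n)^n$; if fewer than $n$ samples are drawn, $E$ can be taken over all $n$ potential labels, which only lowers its probability to exactly $q_n$.

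\textbf{Coupling.} On $E$, the algorithm's entire transcript has the same law as under the deterministic law. This holds with arbitrary adaptive endpoints, adaptive stopping, full observation of each sampled function, and the shared internal randomization and known inputs. Hence
\[
 \E_1[\widehat x\,\mathbf 1_E]=q_n\,\E_0[\widehat x].
\]
Since $\widehat x\in Q=[0,S]$ is nonnegative, it follows that $\E_1\widehat x\ge q_n\E_0\widehat x$.

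\textbf{Rare-law objective.} The rare population is
\[
 f_1(x)=(1-1/n)m(S-x)+2mx,
\]
so $f_1(x)-f_1(0)=m(1+1/n)x$. This gives
\[
 \E_1[f_1(\widehat x)-f_1(0)]\ge q_n\, m(1+1/n)\,\E_0\widehat x.
\]

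\textbf{Deterministic-law contract.} Next, \eqref{eq:frontier-affine-contract} forces $\E_0\widehat x$ to be large. Dividing by $m$ and using $m\ge h_{\rm ref}pr_0/A$, i.e.\ $h_{\rm ref}/(Am)\le1/(pr_0)$, gives
\[
 S-\E_0\widehat x\le \frac{2\Psi(R_0)}{pr_0}.
\]
With $a_{R_0}\le R_0$ we have $\Psi(R_0)\le 5R_0^2/8=p^2r_0^2/10$ for $R_0=2pr_0/5$. Hence
\[
 \E_0\widehat x\ge S-\frac{pr_0}5=r_0\Bigl(1+\frac p5\Bigr).
\]

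\textbf{Rare-law contract.} Now bound the right side of \eqref{eq:frontier-local-contract}. Using $\Psi(r_0)=r_0^2/2$ and again $h_{\rm ref}/A\le m/(pr_0)$, the first two terms are at most $mr_0(2+1/n)/(2p)$. Using $\sigma_1\le 2\sqrt n\,m$ from \eqref{eq:frontier-rare-moments} and $\mathcal H_p(r_0)=(2+1/p)r_0$, the last term is at most $2P m r_0(2+1/p)$.

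\textbf{Combining.} Chaining these inequalities and dividing by $mr_0$ yields
\[
 q_n(1+1/n)(1+p/5)\le \frac{2+1/n}{2p}+2P(2+1/p).
\]
Rearranging, and noting that $P\ge0$ accounts for the positive part, gives \eqref{eq:frontier-exact-lower}.

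\textbf{Numerical specialization for $p\ge5$.} Here $n\ge4$ gives $q_n\ge(3/4)^4>0.31$, $(2+1/n)/(2p)\le 9/(8p)$, and $2(2+1/p)\le 22/5$. It then suffices to check
\[
 0.31(1+p/5)-\frac{9}{8p}\ge \frac{22}{5}\cdot\frac p{88}=\frac p{20}.
\]
This holds since $0.062p\ge p/20$ and $0.31-9/(8p)\ge0.31-0.225>0$.

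The main obstacle is the coupling step. It requires that the transcript law under the deterministic law equals the conditional law on $E$ under the rare law, even with adaptive stopping and fewer than $n$ draws. I would formalize this by realizing all $n$ labels in advance, so that $E$ is independent of the algorithm's randomization, and treating any unused labels as ignored. The remaining steps are elementary arithmetic.
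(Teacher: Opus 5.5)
Your proposal is correct and follows the paper's proof step for step: the deterministic contract with $h_{\rm ref}/A\le m/(pr_0)$ and $\Psi(R_0)\le 5R_0^2/8$ gives $\E_0\widehat x\ge r_0(1+p/5)$; the pre-drawn-label coupling on the all-common event, which is independent of the seed, transfers this to the rare gap $m(1+1/n)\widehat x$; and bounding the right side of the rare contract yields \eqref{eq:frontier-exact-lower}. The only difference is cosmetic: you use $q_n\ge(3/4)^4$ for $n\ge4$ where the paper uses $q_n\ge1/4$, and both bounds rely on monotonicity of $(1-1/n)^n$, which you assert without proof and the paper verifies by showing that $x\log(1-1/x)$ is nondecreasing.
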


\begin{proof}
Since $h_{\rm ref}/A\le m/(p r_0)$ and $\Psi(R_0)\le5R_0^2/8$,
the deterministic-law error bound gives
\[
 \E_0(S-\widehat x)\le2\Psi(R_0)/(p r_0)
 \le5R_0^2/(4p r_0)=R_0/2.
\]
Thus $\E_0\widehat x\ge r_0(1+p/5)$.
Generate $n$ independent draws from the second law in advance,
including unused draws if the algorithm stops early. Couple its internal random seed across
the two instances. On the event that all draws select $F_0$,
the entire corresponding functions agree. Induction through the
adaptive transcript gives identical endpoints, batch allocations,
stopping decisions and output. This reasoning also permits repeated
evaluations under one label and full realized-function observation.
The event is independent of internal randomness and has probability
$q_n$. The rare gap is the nonnegative quantity
$m(1+1/n)\widehat x$. Consequently
\begin{equation}
 \E_1[f_1(\widehat x)-f_1(0)]
 \ge q_n m(1+1/n)r_0(1+p/5).
 \label{eq:frontier-coupled-lower}
\end{equation}

Since $\Psi(r_0)=r_0^2/2$, the deterministic terms in
\eqref{eq:frontier-local-contract} are at most
$m r_0(2+1/n)/(2p)$. Moreover
$\mathcal H_p(r_0)=(2+1/p)r_0$ and $\sigma_1\le2m\sqrt n$,
so its noise term is at most $2P m r_0(2+1/p)$.
Comparison with \eqref{eq:frontier-coupled-lower} proves
\eqref{eq:frontier-exact-lower}.

For completeness, let $g(x)=x\log(1-1/x)$ on $x\ge2$ and
$z=1/(x-1)$. Then
$g'(x)=z-\log(1+z)\ge0$, because
$\log(1+z)=\int_0^z(1+s)^{-1}\,ds\le z$.
Thus $q_n\ge\exp(g(2))=1/4$.
For $n\ge4$ the numerator of \eqref{eq:frontier-exact-lower}
is at least $1/4+p/20-9/(8p)$.
For $p\ge5$ its first and last terms have nonnegative sum,
and the denominator is at most $2(2+1/5)=22/5$.
Hence $P\ge(p/20)/(22/5)=p/88$.
\end{proof}

The upper method satisfies \eqref{eq:frontier-affine-contract} because
the deterministic affine law has zero variance, curvature and smoothing
bias. Corollary~\ref{cor:frontier-noise-multiplier} gives its rare-law
guarantee with $P=P_{\rm up}(p)=O(p)$. Thus the order of this multiplier
is attained under the displayed affine error guarantee. This conclusion
does not characterize the optimal frontier for general curved objectives.
The law has finite second moments at each horizon and supplies no common
known noise envelope across the two instances.

\section{Joint localization and radial selection}
\label{sec:joint-anchor-radial}

We combine anchor-dependent initialization with a persistent radial
controller. The distance tradeoff and the ball-intersection projector
are explicit. A scalar offset prevents the anchor coefficient from
creating a nonvanishing radial reference term.

Let $Q$ be compact and convex, $x^0\in Q$, and let a known evaluation domain of
width $\gammaMax>0$ be queryable. The population is convex and
$M$-Lipschitz on the convex evaluation domain. Clean vector records are conditionally
unbiased with variance at most $\sigma_0^2$, uniformly over the legal
positive radii and exposed centers. We assume the radial regularity of
Lemma~\ref{lem:radial-moment}, uniformly over these queries, with second
moment parameter $M_2^2$. In particular its same-sample radial record has
mean $h_\gamma(y)$ and second moment at most
$\chi_d M_2^2\gamma^2$, where
$\chi_d=2d^2/[(d+1)(d+2)]\le2$.
At each sampling stage, the sampled functions, directions and radial
variables are independent of the preceding observations. Exact projection onto each
$Q\cap\overline{\mathbb B}(x^0,S)$ is an additional projection
operation and is counted separately from projection onto $Q$.

Before sampling, fix $r_0,h_{\rm ref}>0$, $p\ge1$, $N\ge1$, $T$, and a
legal anchor radius $\gamma_{\rm a}>0$. Put $A=A_N$, draw
$B_0=\lfloor T/2\rfloor$ anchor records at $(x^0,\gamma_{\rm a})$,
and let $g_0$ be their mean. Their variance may be bounded by the sharper
$\sigma_{\rm a}^2\le\sigma_0^2$. Set
\begin{align}
 T'&=T-B_0,\qquad
 h_{\rm a}=h_{\rm ref}+A\norm{g_0}/(p r_0),\nonumber\\
 S&=r_0+2A\norm{g_0}/h_{\rm a},\qquad
 Q_S=Q\cap\overline{\mathbb B}(x^0,S),\qquad D_S=2S,
 \label{eq:joint-anchor-initialization}\\
 r_p&=r_0(1+2p),\qquad r_0\le S\le r_p.\nonumber
\end{align}
Initialize the scalar optimizer once with $H_0=h_{\rm a}$ and
$x_0=v_0=x^0$. Use the original scalar updates on $Q_S$ with reserve
$D_S$. The projected step size is $a_t/H_{t-1}$.

\subsection{The offset radial controller}
\label{subsec:joint-offset-rule}

Define an observed reference coefficient
\begin{equation}
 \mathsf K_t=H_t-h_{\rm a}+h_{\rm ref},\qquad
 \mathsf K_0=h_{\rm ref}.
 \label{eq:joint-offset-coefficient}
\end{equation}
It is positive, nondecreasing, has the same increments as $H_t$,
and satisfies $\mathsf K_t\le H_t$.
Set $\Gamma_S=\min\{\gammaMax,d^{1/4}D_S/N\}$,
$\gamma_0=\Gamma_S$, and $\underline K_0=h_{\rm ref}$.
At each iteration, compute the usual first center $y_t$ and, before sampling any
record in that iteration, set
\begin{align}
 I_t&=\boldsymbol1\{\mathsf K_{t-1}\ge2\underline K_{t-1}\},\nonumber\\
 \underline K_t&=\begin{cases}\mathsf K_{t-1},&I_t=1,\\\underline K_{t-1},&I_t=0,
 \end{cases}\qquad
 \gamma_t^{\rm try}=\begin{cases}\Gamma_S,&I_t=1,\\
 \gamma_{t-1},&I_t=0,
 \end{cases}\nonumber\\
 \tau_t&=\underline K_tD_S^2/N^2,\qquad
 \gamma_t=\gamma_t^{\rm try}/
          \max\{1,[\widehat h_t]_+/\tau_t\}.
 \label{eq:joint-radial-controller}
\end{align}
Here $\widehat h_t$ is the mean of
$B^{\rm rad}=\lfloor T'/(8N)\rfloor$ conditionally independent radial records at
$(y_t,\gamma_t^{\rm try})$. Use this selected radius in both
vector update means, make the original projected step and averaging update,
and perform the original scalar post-update. Then compute
\eqref{eq:joint-offset-coefficient}. All optimizer vectors, indices,
weights, coefficients and allocation accounts persist.

With adaptive batch sizes, require $T\ge32N$ and choose
$s_0>0$ before sampling. Use the original adaptive batch allocator with
$\mathsf U=T'-NB^{\rm rad}-4N$.
It spends exactly $T$ original pairs and $2T$ values in $5N+1$
sampling stages, including the anchor, with $N$ ball-intersection
projections. With fixed vector batch sizes require $T\ge16N$,
set $\mathsf U_{\rm pub}=T'-NB^{\rm rad}$, and use
\[
 B_t^y=B_t^x=1+\left\lfloor
 \frac{(\mathsf U_{\rm pub}-2N)t}{N(N+1)}\right\rfloor.
\]
Add the residual to the final first-center batch. The counts are exactly
$T$ pairs, $2T$ values, $3N+1$ sampling stages and the same $N$
projections. No problem constant $R,D,L,M,M_2$ or variance is supplied.

\subsection{A conditional controller bound}
\label{subsec:joint-conditional}

Write $c_{\rm vec}=4\sqrt2$ for adaptive batches and
$c_{\rm vec}=8/\sqrt7$ for fixed batch sizes, and define
\begin{align}
 \omega_{\rm rad}&=\sqrt{\chi_d}M_2\sqrt{N/B^{\rm rad}},\qquad
 \Gamma(D)=\min\{\gammaMax,d^{1/4}D/N\},\nonumber\\*
 \eta(D)&=\sqrt{\chi_d}M_2\Gamma(D)/\sqrt{B^{\rm rad}},
 \label{eq:joint-diagnostic-scales}\\
 \mathcal H_{{\rm j},\rm ns}(D)
 &=\frac{4\sqrt d M}{\Gamma(D)}
  +\frac{(2\sqrt2d^{1/4}+2)MN}{D}
  +\frac{\sqrt2d^{1/4}\omega_{\rm rad} N}{D}
  +\frac{c_{\rm vec}\sigma_0A}{D\sqrt{T'}},\nonumber\\
 \mathcal H_{{\rm j},\rm s}(D)
 &=4L+\frac{c_{\rm vec}\sigma_0A}{D\sqrt{T'}}.
 \label{eq:joint-curvature-envelopes}
\end{align}
The smooth option assumes population $L$-smoothness on the evaluation domain.

\begin{lemma}[The offset-controller potential]
\label{lem:joint-offset-potential}
Condition on the complete anchor batch. For either admissible envelope
$\mathcal H(D)$ in \eqref{eq:joint-curvature-envelopes}, put
$\ell(D)=\log_2(1+\mathcal H(D)/h_{\rm ref})$.
For every conditionally fixed $w\in Q_S$,
\begin{align}
 \E[f(x_N)-f(w)\mid\text{anchor}]\le{}&
 \frac{h_{\rm a}\norm{w-x^0}^2}{2A}
 +\left(\frac2A+\frac3{N^2}\right)D_S^2\mathcal H(D_S)
 +\frac{h_{\rm ref}D_S^2}{N^2}\nonumber\\
 &+\eta(D_S)[1+\ell(D_S)]
 +\frac{2MD_SN}{A}\ell(D_S).
 \label{eq:joint-conditional-risk}
\end{align}
For the smooth envelope replace the last term by
$LD_S^2\ell(D_S)/A$.
\end{lemma}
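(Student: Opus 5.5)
Conditional on the complete anchor batch, the realized $h_{\rm a}$, $S$, $Q_S$ and $D_S=2S$ are fixed, and the subsequent execution is exactly Algorithm~\ref{alg:threshold} run on the compact convex set $Q_S$ (diameter at most $D_S$). Its parameters are $h_{\rm init}=h_{\rm a}\ge h_{\rm ref}$, budget $T'$, and diameter input $D_S$. Indeed \eqref{eq:joint-offset-coefficient} coincides with $\mathsf K_t=h_{\rm ref}+H_t-h_{\rm init}$, and \eqref{eq:joint-radial-controller} coincides with \eqref{eq:threshold-reference}--\eqref{eq:threshold-rule} with $\Gamma=\Gamma(D_S)$. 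The plan is therefore to verify that Theorem~\ref{thm:threshold-risk} applies conditionally, with $D$ replaced by $D_S$ and $T$ by $T'$, and to read off \eqref{eq:joint-conditional-risk}. Two points must be checked here. First, all later directions, labels and radial variables are independent of the anchor draws, so the conditional vector mean/variance bounds ($\sigma_0^2$) and the radial moment bounds of Lemma~\ref{lem:radial-moment} hold given the anchor, uniformly over predictable centers. Second, the evaluation domain of $Q_S$ is contained in $Q+\gammaMax\Ball_2^d$, so $f$ remains convex and $M$-Lipschitz (or $L$-smooth) there, and all queries are legal.

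Next I track the comparator. Theorem~\ref{thm:threshold-risk} is stated for $u\in Q$. Here I apply it with the feasible set $Q_S$ and comparator $w\in Q_S$, which is measurable with respect to the anchor and hence fixed under conditioning. Its startup term $\E[h_{\rm init}R_w^2]/(2A)$ becomes $h_{\rm a}\norm{w-x^0}^2/(2A)$, because $h_{\rm a}$ is fixed in the initial history; this is exactly the remark following \eqref{eq:threshold-adjusted-curvature} on initial coefficients chosen from observations. The remaining terms of \eqref{eq:threshold-risk} translate directly. The coefficient $(2/A+3/N^2)D_S^2\mathcal H$ and the reference term $h_{\rm ref}D_S^2/N^2$ carry over. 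The radial charge $\eta_{\rm rad}(1+\ell)$ becomes $\eta(D_S)(1+\ell(D_S))$, since $\Gamma=\Gamma(D_S)$ and $B^{\rm rad}=\floor{T'/(8N)}$. The Bregman term $2MD_SN\ell/A$ carries over, and in the smooth case it becomes $LD_S^2\ell/A$.

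The envelopes $\mathcal H_{\rm ns},\mathcal H_{\rm s}$ of \eqref{eq:adaptive-main-Hns}--\eqref{eq:adaptive-main-Hs}, evaluated at $D_S$ and $T'$, are precisely \eqref{eq:joint-curvature-envelopes}. So Lemma~\ref{lem:threshold-growth} gives $\E[H_N-h_{\rm a}\mid\text{anchor}]\le\mathcal H(D_S)$, and Jensen then gives $\E[J\mid\text{anchor}]\le\ell(D_S)$. For the energy bound \eqref{eq:threshold-energy} I must check the batch budgets. Under $T\ge16N$ (fixed batches) or $T\ge32N$ (adaptive batches), we have $T'\ge T/2\ge8N$ or $16N$, so Lemma~\ref{lem:threshold-accounting} applies with $T'$ in place of $T$, including the constant $c_{\rm vec}$.

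I expect the main obstacle to be this subtle point: the offset makes $\mathsf K_0=h_{\rm ref}$ rather than $H_0$. The growth proof (Lemma~\ref{lem:threshold-growth}) uses $H_t\ge\mathsf K_t$ to strengthen the negative balance term, and that inequality still holds because $h_{\rm a}\ge h_{\rm ref}$. The drift and local-charge lemmas use $\sum_{I_t=1}\underline K_t\le2(\mathsf K_N-h_{\rm ref})=2(H_N-h_{\rm a})$, which holds by the geometric argument of Lemma~\ref{lem:threshold-reference}. Once these are confirmed, the conditional version of Theorem~\ref{thm:threshold-risk} with the stated substitutions yields \eqref{eq:joint-conditional-risk} for both branches.
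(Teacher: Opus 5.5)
Your proposal is correct and follows essentially the paper's argument. Conditional on the anchor, the paper's proof re-derives inline the ingredients of Theorem~\ref{thm:threshold-risk}: the shifted-square growth recursion relying on $H_t\ge\mathsf K_t$, the doubling bound $\sum_{t:I_t=1}\underline K_t\le2(H_N-h_{\rm a})$, predictable local-charge averaging, and the reexpansion drift. You instead import them wholesale by identifying the conditional run with Algorithm~\ref{alg:threshold} under $(D,T,h_{\rm init})\mapsto(D_S,T',h_{\rm a})$, which is the same identification the paper makes in the proof of Theorem~\ref{thm:joint-main}; your checks of the budgets $T'\ge8N$ or $16N$, the envelope match in $\sqrt{T'}$, and the startup term with $h_{\rm a}$ fixed in the initial history are the right ones.
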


\begin{proof}
Under the anchor law the geometry and initial coefficients are fixed.
Let $\mathsf S_N$ be the maximum positive radial mean normalized by its
trial radius. Conditional normalized means lie in $[0,M]$, and their
centered variances are at most $\chi_dM_2^2/B^{\rm rad}$.
Thus $\norm{\mathsf S_N}_{L_2}\le M+\omega_{\rm rad}$ by bounding the squared
error maximum by the sum of squared errors and applying Minkowski.
No independence between iterations is required.

The reference satisfies $\underline K_t\le\mathsf K_{t-1}<2\underline K_t$.
Within a reference block, the reciprocal update is a maximum starting
from $\Gamma_S^{-1}$. Therefore
\begin{align}
 \gamma_t^{-1}&\le
 \max\{\Gamma_S^{-1},\mathsf S_NN^2/(\underline K_tD_S^2)\},\nonumber\\*
 L_t&\le\sqrt d M/\Gamma_S+
       2\sqrt dM\mathsf S_NN^2/(\mathsf K_{t-1}D_S^2).
 \label{eq:joint-reciprocal-control}
\end{align}
Fix a complete realization and define
\[
 \mathfrak K=4\sqrt dM/\Gamma_S+
             2\sqrt2d^{1/4}\sqrt{M\mathsf S_N}\,N/D_S.
\]
Write $a=\sqrt dM/\Gamma_S$ and
$b=\sqrt dM\mathsf S_NN^2/D_S^2$. Then
$\mathfrak K=4a+\sqrt{8b}$ and
$\mathfrak K^2-4a\mathfrak K-8b=4a\sqrt{8b}\ge0$.
Whenever $\mathsf K_{t-1}\ge\mathfrak K$,
\eqref{eq:joint-reciprocal-control} therefore gives
$4L_t\le4a+8b/\mathfrak K\le\mathfrak K$.
If $a=b=0$, every target is affine and the shifted-square argument
below applies directly with threshold zero.
Write $e_t^y=g_t^y-\nabla f_{\gamma_t}(y_t)$, $e_t^x=g_t^x-\nabla f_{\gamma_t}(x_t)$,
$\delta_t=H_t-H_{t-1}$ and
$\mathcal E_N=\sum_tt^2\norm{e_t^x-e_t^y}^2$.
The scalar balance and secant bound give
\begin{align*}
 \delta_tD_S^2
 &\le[-(H_t-4L_t)r_t^2/2+t\norm{e_t^x-e_t^y}r_t]_+\\
 &\le[-(\mathsf K_t-\mathfrak K)r_t^2/2
                   +t\norm{e_t^x-e_t^y}r_t]_+.
\end{align*}
Here $H_t\ge\mathsf K_t$ is essential.
Because $\mathsf K_t-\mathsf K_{t-1}=\delta_t$, completion of
a square gives the positive-part square recurrence after this crossing.
Before it, the gradient-norm envelope and $r_t\le D_S$ give
$\delta_t\le t(2M+\norm{e_t^x-e_t^y})/D_S$.
There is at most one crossing. Its noisy jump and all later energy
combine using $\sqrt{(a+\sqrt q)^2+b}\le a+\sqrt{q+b}$.
Consequently
\[
 H_N-h_{\rm a}=\mathsf K_N-h_{\rm ref}
 \le\mathfrak K+2NM/D_S+\sqrt{\mathcal E_N}/D_S.
\]
If $h_{\rm ref}\ge\mathfrak K$, the square recurrence holds throughout
and gives a stronger bound.
The lower bounds on the vector batch sizes yield
$\sqrt{\E[\mathcal E_N\mid\text{anchor}]}\le
c_{\rm vec}\sigma_0A/\sqrt{T'}$.
Also $\E\sqrt{M\mathsf S_N}\le M+\omega_{\rm rad}/2$.
Thus the conditional expected coefficient increase is bounded by
$\mathcal H_{{\rm j},\rm ns}(D_S)$.
Under smoothness, Lemma~\ref{lem:growth} instead gives
$\mathcal H_{{\rm j},\rm s}(D_S)$.

Each nonzero replacement at least doubles the reference. If $J$ is
their number, then
\[
 J\le\log_2(\mathsf K_N/h_{\rm ref}),\qquad
 \sum_{t:I_t=1}\underline K_t\le2(\mathsf K_N-h_{\rm ref})=2(H_N-h_{\rm a}).
\]
Lemma~\ref{lem:radial-affine} bounds the selected local charge,
conditionally on the history preceding its diagnostic, by
$\underline K_tD_S^2/N^2+\eta(D_S)$ in expectation.
Its weighted average is at most
$D_S^2\E[\mathsf K_N\mid\text{anchor}]/N^2+\eta(D_S)$.
At a reexpansion event the positive target drift is at most this new
local charge plus the Bregman remainder at $(x_{t-1},y_t)$.
Since $\norm{x_{t-1}-y_t}\le2D_S/(t+1)$, the weighted remainder
is at most $2MD_SN$, or $LD_S^2$ under smoothness.
The event is known before its radial stage, so its charge may be averaged
conditionally first. The expected radial drift is at most
$2D_S^2\E[H_N-h_{\rm a}\mid\text{anchor}]/N^2
+\eta(D_S)\E[J\mid\text{anchor}]$.

Theorem~\ref{thm:radial-potential} retains this positive target drift
and charges $2D_S^2\E[H_N-h_{\rm a}\mid\text{anchor}]/A$ for
scalar growth. Combining the preceding terms gives
\eqref{eq:joint-conditional-risk}, after Jensen bounds the event count
by $\ell(D_S)$. Every diagnostic error is averaged at its predictable
event; no product with a future random logarithm is bounded directly.
The moment estimates also ensure integrability.

For adaptive-batch accounting, $T'\ge16N$ gives the original floors and
$\mathsf U\ge5T'/8$. For the accounting with fixed batch sizes, $T'\ge8N$ gives
$B^{\rm rad}\ge T'/(16N)$ and
$\mathsf U_{\rm pub}\ge7T'/8\ge7N\ge4N$.
Its floors yield energy at most $64\sigma_0^2A^2/(7T')$.
Allocating the remaining samples and including the initial batch proves
all stated counts.
\end{proof}

The unconditional original-comparator guarantee is
Theorem~\ref{thm:run5-distance-joint}. It combines this conditional
potential with the shared clipping price and the two anchor-radius
moments. The deterministic logarithm is evaluated at $2r_0$ before
any anchor-dependent geometry is averaged.

\section{Anchor moment geometry and curved objectives}
\label{sec:run5-distance}

We retain the initialization and exact ball-intersection projection of
Section~\ref{sec:frontier-distance}. Put $A=A_N=N(N+1)/2$ for $N\ge1$.
The parameters chosen before sampling are $r_0,h_{\rm ref}>0$, $p\ge1$, a legal anchor
radius $\gamma_{\rm a}$ and the sample budget. Let $g_0$ be the mean of
$B_0\ge1$ independent clean vector records at $x^0$, with mean
$\nabla f_{\gamma_{\rm a}}(x^0)$ and centered second moment at most
$\sigma_{\rm a}^2/B_0$. The algorithm initializes once with
\begin{align}
 h_{\rm a}&=h_{\rm ref}+\frac{A\norm{g_0}}{p r_0},&
 S&=r_0+\frac{2A\norm{g_0}}{h_{\rm a}},&
 r_p&=r_0(1+2p),\label{eq:run5-distance-anchor}\\*
 Q_S&=Q\cap\overline{\mathbb B}(x^0,S),&
 D_S&=2S,&H_0&=h_{\rm a}.\nonumber
\end{align}
Projection onto $Q_S$ is an additional projection operation. All later
optimizer vectors, coefficients, weights, indices and allocation
accounts persist. Define the analysis quantities
\begin{align}
 G_{\rm a}&=\norm{\nabla f_{\gamma_{\rm a}}(x^0)},&
 z_{\rm a}&=\sqrt{G_{\rm a}^2+\sigma_{\rm a}^2/B_0},&
 \kappa&=\frac{h_{\rm ref}p r_0}{A},\nonumber\\
 s(z)&=r_0+\frac{2p r_0z}{\kappa+z},&
 S_2&=s(z_{\rm a}).
 \label{eq:run5-distance-moment-radius}
\end{align}
The method receives none of $G_{\rm a},\sigma_{\rm a},z_{\rm a},S_2$.

\subsection{Moment geometry and saturated clipping}
\label{subsec:run5-distance-moment}

\begin{lemma}[Anchor-radius second moment]
\label{lem:run5-distance-moment}
Under the preceding moment assumption,
\begin{equation}
 \E S^2\le S_2^2,\qquad \E S\le S_2<r_p.
 \label{eq:run5-distance-radius-moments}
\end{equation}
\end{lemma}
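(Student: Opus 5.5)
The plan is to express $S$ as a deterministic function of $\norm{g_0}$ and then exploit concavity. Substituting $h_{\rm a}$ into the definition of $S$ in \eqref{eq:run5-distance-anchor} gives
\[
 S=r_0+\frac{2A\norm{g_0}}{h_{\rm ref}+A\norm{g_0}/(pr_0)}
  =r_0+\frac{2pr_0\norm{g_0}}{\kappa+\norm{g_0}}=s(\norm{g_0}),
\]
with $\kappa=h_{\rm ref}pr_0/A>0$ as in \eqref{eq:run5-distance-moment-radius}. So $S=s(Z)$ for $Z=\norm{g_0}\ge0$, and we only need the moments of a fixed scalar function of a random norm.

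The first ingredient is the anchor second moment. The bias--variance decomposition, together with the stated mean and centered second moment of the averaged anchor records, gives
\[
 \E Z^2=\norm{\nabla f_{\gamma_{\rm a}}(x^0)}^2+\E\norm{g_0-\nabla f_{\gamma_{\rm a}}(x^0)}^2\le G_{\rm a}^2+\sigma_{\rm a}^2/B_0=z_{\rm a}^2 .
\]
Jensen then yields $\E Z\le z_{\rm a}$.

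The key step is to bound $\E s(Z)^2$ by $s(z_{\rm a})^2$. I would show that $\psi(w)=s(\sqrt w)^2$ is concave and nondecreasing on $w\ge0$. Granting this, Jensen gives $\E S^2=\E\psi(Z^2)\le\psi(\E Z^2)\le\psi(z_{\rm a}^2)=S_2^2$, and a second application of Jensen gives $\E S\le(\E S^2)^{1/2}\le S_2$.

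For the concavity check, write $s(z)=r_0+c\,z/(\kappa+z)$ with $c=2pr_0$. The factor $u(z)=z/(\kappa+z)$ is increasing and concave with $u(0)=0$. Concavity of $\psi(w)=s(\sqrt w)^2$ is equivalent to $\frac{d}{dz}\bigl[(s(z)^2)'/z\bigr]\le0$, that is, $z\,(s^2)''\le(s^2)'$. Since $(s^2)'=2ss'$ and $(s^2)''=2s'^2+2ss''$, this becomes $z s'^2+z s s''\le s s'$. With $s'=c\kappa/(\kappa+z)^2$ and $s''=-2c\kappa/(\kappa+z)^3$, dividing by $s'>0$ reduces the condition to
\[
 z s'\le s\Bigl(1+\frac{2z}{\kappa+z}\Bigr).
\]
This holds because $zs'=c\kappa z/(\kappa+z)^2\le cz/(\kappa+z)\le s$, which is the main computation and the step I expect to need the most care.

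Finally, the strict bound $S_2<r_p$ follows from $z_{\rm a}/(\kappa+z_{\rm a})<1$, which holds because $\kappa>0$. This gives $S_2=r_0+2pr_0\,z_{\rm a}/(\kappa+z_{\rm a})<r_0(1+2p)=r_p$, completing \eqref{eq:run5-distance-radius-moments}.
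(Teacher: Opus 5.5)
Your proposal is correct and follows essentially the same route as the paper: write $S$ as a concave, nondecreasing function of $\norm{g_0}^2$, apply Jensen with $\E\norm{g_0}^2\le z_{\rm a}^2$, then use Cauchy--Schwarz for $\E S$ and $\kappa>0$ for the strict bound $S_2<r_p$. The only difference is that you check concavity through the equivalent criterion $z\,(s^2)''\le(s^2)'$ rather than through the paper's explicit formula for $F''(w)$. As the paper notes, continuity at $w=0$ extends that concavity from $(0,\infty)$ to the closed half-line, which Jensen needs when $g_0$ can vanish.
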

\begin{proof}
Put $a=r_0$, $b=2p r_0$, $k=\kappa$ and
$F(w)=[a+b\sqrt w/(k+\sqrt w)]^2$ for $w\ge0$. It is increasing,
bounded and continuous at zero. At $z=\sqrt w>0$,
\[
 F''(w)=-\frac{bk[3(a+b)z+4ak+ak^2/z]}{2z^2(k+z)^4}<0.
\]
Indeed, writing $s(z)=a+bz/(k+z)$, differentiation gives
$F''=[(s')^2+ss''-ss'/z]/(2z^2)$,
$s'=bk/(k+z)^2$ and $s''=-2bk/(k+z)^3$.
After factoring $bk/(k+z)^4$, the numerator is
$bk-s(k+z)(3+k/z)=-3(a+b)z-4ak-ak^2/z$.
Continuity extends concavity to $[0,\infty)$. Centering gives
$\E\norm{g_0}^2=G_{\rm a}^2+
\E\norm{g_0-\nabla f_{\gamma_{\rm a}}(x^0)}^2\le z_{\rm a}^2$.
Jensen and monotonicity therefore yield
$\E S^2=\E F(\norm{g_0}^2)\le F(z_{\rm a}^2)=S_2^2$.
Cauchy--Schwarz gives $\E S\le S_2$. If $z_{\rm a}=0$, then
$g_0=0$ almost surely and $S=r_0$. The strict upper bound follows
from $\kappa>0$ and finite $z_{\rm a}$.
\end{proof}

For a fixed original comparator $u\in Q$, put
\begin{align}
 R_u&=\norm{u-x^0},&a_{R_u}&=(R_u-r_0)_+,&
 \Psi(R_u)&=\frac{R_u^2}{2}+\frac{a_{R_u}^2}{8},\nonumber\\
 u_S&=x^0+\min\{1,S/R_u\}(u-x^0),&
 \beta_{\rm a}(u)&=f_{\gamma_{\rm a}}(u)-f(u),
 \label{eq:run5-distance-comparator}
\end{align}
where $u_S=x^0$ if $R_u=0$. Define
\begin{align}
 \mathcal A_p(u)={}&\frac{h_{\rm ref}\Psi(R_u)}A
 +\frac{G_{\rm a}\Psi(R_u)}{p r_0}
 +\frac{\sigma_{\rm a}}{\sqrt{B_0}}
       \left(a_{R_u}+\frac{\Psi(R_u)}{p r_0}\right)+\beta_{\rm a}(u),
 \label{eq:run5-distance-old-price}\\
 \mathcal A_{\rm sat}(u)={}&
 \frac{h_{\rm ref}\min\{R_u^2,S_2^2\}}{2A}
 +\frac{z_{\rm a}\min\{R_u^2,r_p S_2\}}{2p r_0}
 +G_{\rm a}a_{R_u}+\beta_{\rm a}(u),
 \label{eq:run5-distance-saturated-price}\\
 \mathcal A_*(u)={}&\min\{\mathcal A_p(u),\mathcal A_{\rm sat}(u)\}.
 \label{eq:run5-distance-best-price}
\end{align}

\begin{lemma}[Saturated startup and clipping price]
\label{lem:run5-distance-clipping}
If $f\le f_{\gamma_{\rm a}}$ and $f_{\gamma_{\rm a}}$ is convex, then
\begin{equation}
 \E\left[\frac{h_{\rm a}\norm{u_S-x^0}^2}{2A}
       +f_{\gamma_{\rm a}}(u_S)-f(u)\right]\le\mathcal A_*(u).
 \label{eq:run5-distance-clipping-price}
\end{equation}
The same bound holds with $f(u_S)$ in place of
$f_{\gamma_{\rm a}}(u_S)$.
\end{lemma}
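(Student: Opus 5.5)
The plan is to prove the two bounds $\mathcal A_p(u)$ and $\mathcal A_{\rm sat}(u)$ separately for the left side of \eqref{eq:run5-distance-clipping-price}; its minimum is then $\mathcal A_*(u)$. Throughout, write $R=R_u$, $a=a_{R_u}$, $\lambda=\min\{1,S/R\}$ and $\phi=f_{\gamma_{\rm a}}$. The $f(u_S)$ variant will follow at the end from $f\le\phi$.

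\emph{Common clipping estimate.} First split
\[
 \phi(u_S)-f(u)=[\phi(u_S)-\phi(u)]+\beta_{\rm a}(u).
\]
The point $u_S=(1-\lambda)x^0+\lambda u$ lies on the segment from $x^0$ to $u$. Convexity of $\phi$ gives $\phi(u_S)\le(1-\lambda)\phi(x^0)+\lambda\phi(u)$. Applying the gradient inequality at $x^0$ to the difference $\phi(x^0)-\phi(u)$ then yields
\[
 \phi(u_S)-\phi(u)\le(1-\lambda)\ip{\nabla\phi(x^0)}{x^0-u}\le G_{\rm a}(R-S)_+ .
\]
For the startup term, use $\norm{u_S-x^0}^2=\min\{R^2,S^2\}$ and $h_{\rm a}/A=h_{\rm ref}/A+\norm{g_0}/(pr_0)$ from \eqref{eq:run5-distance-anchor}.

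\emph{Saturated bound.} Since $S\ge r_0$, we have $G_{\rm a}(R-S)_+\le G_{\rm a}a$. The concavity of $x\mapsto\min\{R^2,x\}$ and Lemma~\ref{lem:run5-distance-moment} give $\E\min\{R^2,S^2\}\le\min\{R^2,S_2^2\}$; this produces the $h_{\rm ref}$ term. For the remaining term, bound $\E[\norm{g_0}\min\{R^2,S^2\}]$ in two ways and take the smaller:
\begin{itemize}
 \item by $R^2\,\E\norm{g_0}\le R^2z_{\rm a}$;
 \item using $S\le r_p$, so $S^2\le r_pS$, by $r_p\,\E[\norm{g_0}S]\le r_p\sqrt{\E\norm{g_0}^2\,\E S^2}\le r_pz_{\rm a}S_2$.
\end{itemize}
Here $\E\norm{g_0}^2\le z_{\rm a}^2$ by centering. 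Together these give $\mathcal A_{\rm sat}(u)$.

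\emph{Old price $\mathcal A_p$.} This is the step I expect to be delicate: the clipping loss must be absorbed against the extra coefficient. Write $G_{\rm a}\le\norm{g_0}+\epsilon$ with $\epsilon=\norm{g_0-\nabla\phi(x^0)}$, and put $c=S-r_0=2A\norm{g_0}/h_{\rm a}$, so that $\norm{g_0}=h_{\rm a}c/(2A)$. Then
\[
 \norm{g_0}(R-S)_+=\frac{h_{\rm a}}{2A}\,c\,(a-c)_+\le\frac{h_{\rm a}a^2}{8A},
\]
because $c(a-c)$ is maximized at $c=a/2$. Adding the startup term $h_{\rm a}R^2/(2A)$ (bounding the minimum by $R^2$) gives
\[
 \frac{h_{\rm a}}{A}\Psi(R)=\Bigl(\frac{h_{\rm ref}}{A}+\frac{\norm{g_0}}{pr_0}\Bigr)\Psi(R).
\]
The leftover part of the clipping loss is $\epsilon(R-S)_+\le\epsilon a$. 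Taking expectations with $\E\epsilon\le\sigma_{\rm a}/\sqrt{B_0}$ and $\E\norm{g_0}\le G_{\rm a}+\sigma_{\rm a}/\sqrt{B_0}$ yields exactly $\mathcal A_p(u)$.

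Both bounds hold for the same quantity, so it is at most their minimum $\mathcal A_*(u)$. Finally, $f\le\phi$ makes the version with $f(u_S)$ in place of $\phi(u_S)$ no larger, which proves the last claim.
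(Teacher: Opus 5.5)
Your proposal is correct and follows the paper's proof essentially step for step. It uses the same segment-convexity and tangent bound $G_{\rm a}(R_u-S)_+$, the same absorption $\norm{g_0}(R_u-S)_+\le h_{\rm a}a_{R_u}^2/(8A)$ obtained by maximizing $c(a_{R_u}-c)$ for $\mathcal A_p$, and the same pointwise bound $S^2\le r_pS$ combined with Cauchy--Schwarz for $\mathcal A_{\rm sat}$. The only cosmetic difference is that you apply Cauchy--Schwarz to $\E[\norm{g_0}S]$ after that pointwise bound, whereas the paper applies it to $\E[zV]$ using $\E V^2\le\min\{R_u^4,r_p^2S_2^2\}$; both give the same result.
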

\begin{proof}
Write $\phi=f_{\gamma_{\rm a}}$, $z=\norm{g_0}$ and
$e_0=g_0-\nabla\phi(x^0)$. If $R_u>S$, set $\theta=S/R_u$.
Segment convexity and the tangent inequality at $x^0$ give
\[
 \phi(u_S)-\phi(u)
 \le(1-\theta)[\phi(x^0)-\phi(u)]\le G_{\rm a}(R_u-S).
\]
If $R_u\le S$, the difference is zero. Thus
$\phi(u_S)-f(u)\le G_{\rm a}(R_u-S)_++\beta_{\rm a}(u)$.
Since $|G_{\rm a}-z|\le\norm{e_0}$ and
$S=r_0+(2A/h_{\rm a})z$,
\[
 G_{\rm a}(R_u-S)_+\le
 z[a_{R_u}-(2A/h_{\rm a})z]_++a_{R_u}\norm{e_0}
 \le\frac{h_{\rm a}a_{R_u}^2}{8A}+a_{R_u}\norm{e_0}.
\]
The last inequality maximizes $z(a_{R_u}-cz)$ with $c=2A/h_{\rm a}$;
it holds for every realized positive $h_{\rm a}$.
Bounding the startup distance by $R_u$ gives the charge
$h_{\rm a}\Psi(R_u)/A+a_{R_u}\norm{e_0}+\beta_{\rm a}(u)$.
The inequalities $\E z\le G_{\rm a}+\sigma_{\rm a}/\sqrt{B_0}$ and
$\E\norm{e_0}\le\sigma_{\rm a}/\sqrt{B_0}$ prove
\eqref{eq:run5-distance-old-price}.

For the other price retain
$V=\norm{u_S-x^0}^2=\min\{R_u^2,S^2\}$.
Lemma~\ref{lem:run5-distance-moment} gives
$\E V\le\min\{R_u^2,S_2^2\}$. Pointwise $V^2\le R_u^4$ and
$V^2\le S^4\le r_p^2S^2$, so
$\E V^2\le\min\{R_u^4,r_p^2S_2^2\}$.
Cauchy--Schwarz consequently gives
\[
 \E[zV]\le\sqrt{\E z^2\,\E V^2}
       \le z_{\rm a}\min\{R_u^2,r_p S_2\}.
\]
Expand $h_{\rm a}$ and use $S\ge r_0$ in the clipping term to obtain
\eqref{eq:run5-distance-saturated-price}. Both prices bound the same
expected charge, so their minimum applies. The original-function
version follows from $f(u_S)\le\phi(u_S)$. The fourth power here is
that of the bounded radius; no fourth anchor moment is assumed.
\end{proof}

\subsection{Fixed-radius and joint guarantees}
\label{subsec:run5-distance-guarantees}

\begin{theorem}[Fixed-radius moment localization]
\label{thm:run5-distance-fixed}
Use the fixed-radius execution of Section~\ref{subsec:frontier-rule} with
$\gamma_{\rm a}=\gamma$. Assume uniform clean conditional vector
variance $\sigma_\gamma^2$, anchor variance
$\sigma_{\rm a}^2\le\sigma_\gamma^2$, and $L_\gamma$-smoothness of
$f_\gamma$. Put $B_0=\lfloor T/2\rfloor$, $T'=T-B_0$.
Then
\begin{equation}
 \E[f(x_N)-f(u)]\le\mathcal A_*(u)+\frac{32L_\gamma S_2^2}A
       +\frac{\nu\sigma_\gamma S_2}{\sqrt{T'}},
 \label{eq:run5-distance-fixed-risk}
\end{equation}
where $\nu=8\sqrt2$ for fixed batch sizes and $16\sqrt2$ for adaptive batches.
With fixed batch sizes, $T\ge8N$ uses exactly $T$ pairs, $2T$ values, $2N+1$ sampling
stages and $N$ ball-intersection projections. With adaptive batch sizes, $T\ge16N$ gives
$4N+1$ stages and the same other counts.
\end{theorem}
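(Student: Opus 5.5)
The proof conditions on the anchor batch, invokes Lemma~\ref{lem:frontier-conditional}, transfers its bound from the clipped comparator $u_S$ back to the original $u$, and then removes the conditioning with Lemmas~\ref{lem:run5-distance-clipping} and~\ref{lem:run5-distance-moment}.

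\emph{Conditional step.} Fix the anchor history. Then $h_{\rm a}$, $S$, $Q_S$, $D_S=2S$ and the clipped comparator $u_S\in Q_S$ are all determined. Lemma~\ref{lem:frontier-conditional} gives
\[
 \E[\phi(x_N)-\phi(u_S)\mid\text{anchor}]\le\frac{h_{\rm a}\norm{u_S-x^0}^2}{2A}+\frac{32L_\gamma S^2}{A}+\frac{\nu\sigma_\gamma S}{\sqrt{T'}},
\]
with $\phi=f_\gamma=f_{\gamma_{\rm a}}$. Its startup term was already written as $h_{\rm a}\min\{R_u^2,S^2\}/(2A)=h_{\rm a}\norm{u_S-x^0}^2/(2A)$.

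\emph{Transfer to $u$.} Since $f\le\phi$, we have $f(x_N)\le\phi(x_N)$. Write
\[
 f(x_N)-f(u)\le[\phi(x_N)-\phi(u_S)]+[\phi(u_S)-f(u)].
\]
Taking total expectation, the first bracket contributes the conditional bound above. The startup term and the second bracket together have expectation at most $\mathcal A_*(u)$ by Lemma~\ref{lem:run5-distance-clipping}, whose hypotheses are exactly $f\le f_{\gamma_{\rm a}}$ and convexity of $f_{\gamma_{\rm a}}$. This step also needs the anchor variance bound $\sigma_{\rm a}^2\le\sigma_\gamma^2$, which is assumed.

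\emph{Remaining terms.} Lemma~\ref{lem:run5-distance-moment} gives $\E S^2\le S_2^2$ and $\E S\le S_2$. Substituting these yields $32L_\gamma S_2^2/A$ and $\nu\sigma_\gamma S_2/\sqrt{T'}$, because $L_\gamma$, $\sigma_\gamma$ and $T'$ are deterministic.

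\emph{Main obstacle: justifying the conditioning.} Lemma~\ref{lem:frontier-conditional} is stated for a fixed geometry, so we must check that after conditioning on the anchor its hypotheses still hold. The later labels and directions are independent of the anchor draws, and the conditional mean and variance of each record hold uniformly over legal predictable centers. The optimizer is therefore a fixed-geometry execution on $Q_S$ with initial coefficient $h_{\rm a}$, and $\E Z_t=0$ conditionally on each history. Integrability follows from compactness of $Q$, $S\le r_p$, and the energy bound.

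\emph{Resource counts.} The anchor uses $B_0$ pairs in one stage. The remaining budget satisfies $T'\ge T/2$, so $T'\ge4N$ (respectively $T'\ge8N$) as required by the fixed-batch (respectively adaptive-batch) accounting inside Lemma~\ref{lem:frontier-conditional} and Lemma~\ref{lem:paid-accounting}. That accounting spends exactly the remaining $T'$ pairs, with the unused labels added to the final batch in the fixed mode. The totals are therefore $T$ pairs, $2T$ values, and $2N+1$ or $4N+1$ stages with $N$ ball-intersection projections.
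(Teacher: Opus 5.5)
Your proposal is correct and follows the paper's proof essentially step for step. You condition on the anchor, apply Lemma~\ref{lem:frontier-conditional} to $f_\gamma$ on $Q_S$, add $f_\gamma(u_S)-f(u)$ using $f(x_N)\le f_\gamma(x_N)$, bound the startup and clipping charges jointly by $\mathcal A_*(u)$ via Lemma~\ref{lem:run5-distance-clipping}, replace $S^2,S$ by $S_2^2,S_2$ via Lemma~\ref{lem:run5-distance-moment}, and use the same $T'\ge T/2$ resource accounting.

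One small correction: Lemma~\ref{lem:run5-distance-clipping} is stated directly in terms of $\sigma_{\rm a}$, so that step does not use the comparison $\sigma_{\rm a}^2\le\sigma_\gamma^2$. The comparison matters only if you later want to express everything through $\sigma_\gamma$.
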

\begin{proof}
Condition on the complete anchor. Lemma~\ref{lem:frontier-conditional}
gives
\[
 \E[f_\gamma(x_N)-f_\gamma(u_S)\mid\text{anchor}]
 \le\frac{h_{\rm a}\min\{R_u^2,S^2\}}{2A}
       +\frac{32L_\gamma S^2}A
       +\frac{\nu\sigma_\gamma S}{\sqrt{T'}}.
\]
Add $f_\gamma(u_S)-f(u)$ and use $f(x_N)\le f_\gamma(x_N)$.
Taking expectation, Lemma~\ref{lem:run5-distance-clipping} bounds the
sum of the startup and comparator charges by $\mathcal A_*(u)$.
Lemma~\ref{lem:run5-distance-moment} gives $\E S^2\le S_2^2$ and
$\E S\le S_2$, proving \eqref{eq:run5-distance-fixed-risk}.
The allocation is unchanged: the anchor and two stages with fixed batch sizes or four with adaptive
batch sizes per optimization iteration use all $T$ sampled functions, with two values for each
sampled function and one intersection projection per iteration. The required budgets
ensure $T'\ge4N$ or $8N$, respectively, as used in the conditional lemma.
\end{proof}

\paragraph{Fixed-radius choices made before sampling.}
The smooth specialization of \eqref{eq:run5-distance-fixed-risk} uses
$L_\gamma\le L$ and
$\beta_{\rm a}(u)\le\nu_dL\gamma^2/2$, where $\nu_d=d/(d+2)$.
The nonsmooth specialization uses
$L_\gamma\le\sqrt dM/\gamma$ and $\beta_{\rm a}(u)\le M\gamma$.
In either case retain the actual $\mathcal A_*(u)$ and $S_2$.
If $G_{\rm a}\le M$ and $\sigma_\gamma\le\sigma_0$ uniformly, the
following choices made before sampling give convergence for fixed problem and
reference quantities:
\begin{align}
 p&=\min\{\sqrt N,T^{1/4}\},&
 \gamma&=\min\{\gammaMax,d^{1/4}r_0/p\}
 &&\text{(nonsmooth)},\label{eq:frontier-nonsmooth-parameters}\\
 p&=\min\{N^{2/3},T^{1/4}\},&
 \gamma&=\min\{\gammaMax,r_0/p\}
 &&\text{(smooth)}.\label{eq:frontier-smooth-parameters}
\end{align}
Indeed, $\mathcal A_*\le\mathcal A_p$, $S_2\le r_p\le3pr_0$,
$A\ge N^2/2$, $B_0\ge T/3$ and $T'\ge T/2$.
The anchor-gradient charge is $O(p^{-1})$ and its noise charge is
$O(T^{-1/2})$; the optimizer noise is $O(p/\sqrt T)$.
For the nonsmooth choice,
$1/\gamma\le1/\gammaMax+p/(d^{1/4}r_0)$, so the curvature terms are
$O(p^2/N^2+p^3/N^2)=O(N^{-1}+N^{-1/2})$ and the smoothing bias is
$O(p^{-1})$. For the smooth choice, these terms are
$O(p^2/N^2)=O(N^{-2/3})$ and $O(p^{-2})$, respectively.
Finally $p\to\infty$ and $p/\sqrt T\le T^{-1/4}$, proving convergence.
The constants in these consequences depend only on the fixed problem
and reference quantities. The bound itself remains the moment bound
\eqref{eq:run5-distance-fixed-risk}; neither choice uses the unknown
problem constants.

For the joint controller of Section~\ref{sec:joint-anchor-radial},
retain its uniform clean vector and radial moment assumptions. Define
\begin{align}
 B^{\rm rad}&=\lfloor T'/(8N)\rfloor,&
 \Gamma(D)&=\min\{\gammaMax,d^{1/4}D/N\},\nonumber\\
 \omega&=\sqrt{\chi_d}M_2\sqrt{N/B^{\rm rad}},&
 \eta(D)&=\frac{\sqrt{\chi_d}M_2\Gamma(D)}{\sqrt{B^{\rm rad}}},
 \nonumber\\
 v&=\frac{c_{\rm vec}\sigma_0 A}{\sqrt{T'}},&
 k_N&=\frac2A+\frac3{N^2},\nonumber\\
 C_{\rm ns}&=(2\sqrt2d^{1/4}+2)MN+\sqrt2d^{1/4}\omega N+v,
 \nonumber\\
 \mathcal H_{\rm s}(D)&=4L+v/D,&
 \mathcal H_{\rm ns}(D)&=\frac{4\sqrt d M}{\Gamma(D)}
       +\frac{C_{\rm ns}}D,
 \label{eq:run5-distance-joint-envelopes}
\end{align}
where $c_{\rm vec}=8/\sqrt7$ for fixed batch sizes and $4\sqrt2$ for adaptive
batches. Put
\begin{align}
 \ell_{\mathrm{reg}}^0&=\log_2\left(1+\frac{\mathcal H_{\mathrm{reg}}(2r_0)}{h_{\rm ref}}\right),&
 q_{\rm s}(D)&=LD^2/A,&q_{\rm ns}(D)&=2MDN/A,\nonumber\\
 \mathcal U_{\mathrm{reg}}(D,\ell)&=k_ND^2\mathcal H_{\mathrm{reg}}(D)
       +\frac{h_{\rm ref}D^2}{N^2}+\eta(D)(1+\ell)+q_{\mathrm{reg}}(D)\ell,
 \label{eq:run5-distance-joint-cost}\\
 \mathcal V_{\rm ns}(s,\ell)&=
 k_N\left[\frac{16\sqrt d M s^2}{\gammaMax}
       +2s(4d^{1/4}MN+C_{\rm ns})\right]
       +\frac{4h_{\rm ref}s^2}{N^2}\nonumber\\
 &\hspace{1em}+\eta(2s)(1+\ell)+\frac{4MsN}A\ell.
 \label{eq:run5-distance-finite-halo-cost}
\end{align}

\begin{theorem}[Joint anchor moment refinement]
\label{thm:run5-distance-joint}
For a smooth population,
\begin{equation}
 \E[f(x_N)-f(u)]\le\mathcal A_*(u)
       +\mathcal U_{\rm s}(2S_2,\ell_{\rm s}^0).
 \label{eq:run5-distance-joint-smooth}
\end{equation}
If $\gammaMax\ge2d^{1/4}r_p/N$, the nonsmooth branch satisfies
\begin{equation}
 \E[f(x_N)-f(u)]\le\mathcal A_*(u)
       +\mathcal U_{\rm ns}(2S_2,\ell_{\rm ns}^0).
 \label{eq:run5-distance-joint-nonsmooth}
\end{equation}
At every finite permitted evaluation radius it also satisfies
\begin{equation}
 \E[f(x_N)-f(u)]\le\mathcal A_*(u)+
 \min\{\mathcal U_{\rm ns}(2r_p,\ell_{\rm ns}^0),
             \mathcal V_{\rm ns}(S_2,\ell_{\rm ns}^0)\}.
 \label{eq:run5-distance-joint-finite-halo}
\end{equation}
With fixed batch sizes, $T\ge16N$ uses exactly $T$ pairs, $2T$ values, $3N+1$ sampling
stages and $N$ ball-intersection projections. With adaptive batch sizes, $T\ge32N$ gives
$5N+1$ stages and the same other counts.
\end{theorem}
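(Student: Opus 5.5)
Conditioning on the complete anchor batch reduces the statement to Lemma~\ref{lem:joint-offset-potential}, which holds for any comparator fixed in the anchor history. I take $w=u_S$, the clipped comparator from \eqref{eq:run5-distance-comparator}; it lies in $Q_S$ and is anchor-measurable. Adding $f(u_S)-f(u)$ and taking expectations splits the bound into two parts. The first is the startup-plus-clipping charge $\E[h_{\rm a}\norm{u_S-x^0}^2/(2A)+f(u_S)-f(u)]$, which is at most $\mathcal A_*(u)$ by the original-function version of Lemma~\ref{lem:run5-distance-clipping}. The second is the expectation over the anchor of the remaining conditional terms of \eqref{eq:joint-conditional-risk}, each evaluated at $D_S=2S$.

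In the second part, the logarithm $\ell(D_S)$ depends on the random $S$. Since $S\ge r_0$, I have $D_S\ge2r_0$. I will check that $\mathcal H_{\rm s}(D)$ and $\mathcal H_{\rm ns}(D)$ are nonincreasing in $D$: the terms $C_{\rm ns}/D$ and $v/D$ decrease, and $1/\Gamma(D)=\max\{1/\gammaMax,N/(d^{1/4}D)\}$ is nonincreasing. It follows that $\ell(D_S)\le\ell_{\rm reg}^0$ pathwise. Freezing the logarithm at this deterministic value before averaging avoids any product of a random logarithm with random geometry. After the freeze, every term in $\mathcal U_{\rm reg}(2S,\ell^0)$ is a sum of terms that are constant, linear, or quadratic in $S$ with nonnegative deterministic coefficients:
\begin{itemize}
\item $k_N D^2\mathcal H_{\rm s}(D)=k_N(4LD^2+vD)$;
\item $k_N D^2\cdot 4\sqrt dM/\Gamma(D)$, with $D^2/\Gamma(D)=\max\{D^2/\gammaMax,ND/d^{1/4}\}$;
\item $h_{\rm ref}D^2/N^2$;
\item $\eta(D)$, which is at most linear in $D$;
\item $q_{\rm reg}(D)\ell$, which is linear or quadratic.
\end{itemize}
Linear terms are bounded using $\E S\le S_2$ and quadratic terms using $\E S^2\le S_2^2$, both from Lemma~\ref{lem:run5-distance-moment}. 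This gives the smooth bound \eqref{eq:run5-distance-joint-smooth} directly.

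The nonsmooth branch is the delicate step, because $D^2/\Gamma(D)$ is a maximum that is neither concave nor convex in $S$. When $\gammaMax\ge2d^{1/4}r_p/N$, the relation $S\le r_p$ gives $\Gamma(2S)=2d^{1/4}S/N$ on the whole support. The term then equals $2NS/d^{1/4}$, which is linear, and the moment substitution gives \eqref{eq:run5-distance-joint-nonsmooth}. For a general $\gammaMax$ I take two routes and keep the smaller:
\begin{itemize}
\item Use $S\le r_p$ pathwise throughout and bound every term by its value at $D=2r_p$, which gives $\mathcal U_{\rm ns}(2r_p,\ell_{\rm ns}^0)$.
\item Use $\max\{a,b\}\le a+b$, so that $D^2/\Gamma(D)\le D^2/\gammaMax+ND/d^{1/4}$. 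Substituting $D=2S$ and applying the two moment bounds produces exactly the sum defining $\mathcal V_{\rm ns}(S_2,\ell_{\rm ns}^0)$:
\begin{itemize}
\item the coefficient $16\sqrt dMs^2/\gammaMax$ comes from $4\sqrt dM\cdot4S^2/\gammaMax$;
\item $8d^{1/4}MNs$ comes from $4\sqrt dM\cdot2SN/d^{1/4}$;
\item $2sC_{\rm ns}$ is the linear term;
\item $4h_{\rm ref}s^2/N^2$ comes from $h_{\rm ref}D^2/N^2$;
\item $4MsN\ell/A$ comes from $q_{\rm ns}$.
\end{itemize}
The radial term $\eta(2S)\le\eta(2r_p)$ requires care. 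I plan to bound it by concavity of $\Gamma$, using $\E\Gamma(2S)\le\Gamma(2\E S)\le\Gamma(2S_2)$. This keeps $\eta(2s)$ evaluated at $s=S_2$, as displayed in $\mathcal V_{\rm ns}$.
\end{itemize}
This proves \eqref{eq:run5-distance-joint-finite-halo}.

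The resource counts follow by adding the single anchor stage of $B_0$ pairs to Lemma~\ref{lem:threshold-accounting}, run with budget $T'\ge T/2$. The conditions $T\ge16N$ and $T\ge32N$ give $T'\ge8N$ and $T'\ge16N$, respectively. This yields $3N+1$ or $5N+1$ stages, exactly $T$ pairs, and one intersection projection per iteration.
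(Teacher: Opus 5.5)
Your proposal is correct and follows the paper's proof essentially step for step. You condition on the anchor, take the clipped comparator $u_S$ and bound the startup-plus-clipping charge by $\mathcal A_*(u)$, freeze the logarithm at $\ell_{\mathrm{reg}}^0$ using $D_S\ge 2r_0$ and monotonicity of $\mathcal H_{\mathrm{reg}}$, substitute the radius moments, use linearity of $\Gamma$ in the untruncated case, and take the two routes ($S\le r_p$ pathwise, and $\max\le$ sum with moments) for the finite-radius minimum. The one slip is in the smooth branch: $\eta(D)$ is concave rather than linear, so obtaining exactly $\eta(2S_2)$ requires the concavity-plus-monotonicity (Jensen) step you invoke later, which the paper also uses there, not $\E S\le S_2$ applied to a linear majorant.
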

\begin{proof}
Conditional on the full anchor, Lemma~\ref{lem:joint-offset-potential}
gives the startup charge plus $\mathcal U_{\mathrm{reg}}(D_S,\ell_{\mathrm{reg}}(D_S))$, where
$\ell_{\mathrm{reg}}(D)=\log_2(1+\mathcal H_{\mathrm{reg}}(D)/h_{\rm ref})$.
Add clipping and apply Lemma~\ref{lem:run5-distance-clipping}, in its
original-function version. Since $D_S\ge2r_0$ and $\mathcal H_{\mathrm{reg}}$ is
decreasing, $\ell_{\mathrm{reg}}(D_S)\le\ell_{\mathrm{reg}}^0$. Its coefficients are nonnegative;
replace the logarithm by this deterministic bound before averaging.

For the smooth envelope, $D^2\mathcal H_{\rm s}(D)=4LD^2+vD$.
Lemma~\ref{lem:run5-distance-moment} gives
$\E D_S^2\le4S_2^2$ and $\E D_S\le2S_2$.
The reference and smooth target-drift terms are multiples of $D^2$.
The function $\eta(D)$ is increasing and concave, as a positive
multiple of the minimum of a constant and a linear map. Jensen gives
$\E\eta(D_S)\le\eta(\E D_S)\le\eta(2S_2)$.
These substitutions prove \eqref{eq:run5-distance-joint-smooth}.

Under the untruncated-evaluation domain condition, every anchor outcome has
$\Gamma(D_S)=d^{1/4}D_S/N$. Thus
$D^2\mathcal H_{\rm ns}(D)=(4d^{1/4}MN+C_{\rm ns})D$,
and this term, $q_{\rm ns}$ and $\eta$ are linear on the whole observed
range. Their first moments and the reference term's second moment
give \eqref{eq:run5-distance-joint-nonsmooth}.

For an arbitrary finite evaluation domain, pointwise
\[
 \frac{D^2}{\Gamma(D)}
 =\max\left\{\frac{D^2}{\gammaMax},\frac{ND}{d^{1/4}}\right\}
 \le\frac{D^2}{\gammaMax}+\frac{ND}{d^{1/4}}.
\]
Use both radius moments here and in the other linear and quadratic
terms, and use concavity for $\eta$. The result is exactly
$\mathcal V_{\rm ns}(S_2,\ell_{\rm ns}^0)$.
Independently, $D_S\le2r_p$ and monotonicity of each cost factor at
fixed logarithm give $\mathcal U_{\rm ns}(2r_p,\ell_{\rm ns}^0)$.
Each cost bound can be combined with either anchor-price bound,
because they bound separate expected charges. Taking both minima
proves \eqref{eq:run5-distance-joint-finite-halo}.
All algorithmic stages and allocation rules remain those of the
joint controller, including the anchor and each three-stage iteration with fixed batch sizes
or five-stage iteration with adaptive batch sizes.
\end{proof}

\subsection{A convergent joint choice made before sampling}
\label{subsec:joint-convergence}

Take
\[
 p=\min\{\sqrt N,T^{1/4}\},\qquad
 \gamma_{\rm a}=\min\{\gammaMax,d^{1/4}r_0/N\}.
\]
We verify convergence for fixed problem and reference quantities as
$N,T\to\infty$ subject to the stated budget condition.
Use Theorem~\ref{thm:run5-distance-joint} with
$\mathcal A_*\le\mathcal A_p$ and $S_2\le r_p$.
The anchor bias is at most $Md^{1/4}r_0/N$ and $G_{\rm a}\le M$.
Since $r_p\le3pr_0$ and $1/p\le N^{-1/2}+T^{-1/4}$, the anchor
price tends to zero, including its variance terms of order $T^{-1/2}$.
The nonsmooth deterministic and evaluation domain costs obey
$p/N\le N^{-1/2}$ and $p^2/N^2\le N^{-1}$; the smooth costs
also use the latter inequality. The optimizer stochastic terms satisfy
$p/\sqrt{T'}\le\sqrt2T^{-1/4}$.
Moreover $\eta(2r_p)\le8\sqrt2d^{1/4}M_2r_p/(\sqrt N\sqrt{T'})$.
At $D=2r_0$ and $T'\ge cN$, both curvature envelopes are
$O(1+N^{3/2})$ with constants depending only on the fixed problem
and references. Hence $\ell_{\mathrm{reg}}^0=O(1+\log N)$.
The drift terms tend to zero, and the diagnostic term does so because
$(1+\log N)/\sqrt N\to0$ and $T^{-1/4}\to0$.
Thus every term in the original-distance bound converges, with the
full $R_u/r_0$ dependence retained.

The method selects smoothing radii, scalar curvature and, in its adaptive-batch
version, batch sizes from samples without the unknown problem scales.
Its original-distance guarantee pays the displayed anchor price and
uses the separately charged intersection geometry. Optimal simultaneous
leading rates and the positive spatial-corruption allowance remain open.

\subsection{Original stationary comparators}
\label{subsec:run5-distance-stationary}

Assume the original population is $L$-smooth on the convex evaluation domain and
the fixed original comparator satisfies $\nabla f(u)=0$. Let
$\nu_d=d/(d+2)$ and define
\begin{equation}
 G_{R_u}=L\sqrt{R_u^2+\nu_d\gamma_{\rm a}^2},\qquad
 z_{R_u}=\sqrt{G_{R_u}^2+\sigma_{\rm a}^2/B_0},\qquad S_{R_u}=s(z_{R_u}).
 \label{eq:run5-distance-stationary-radius}
\end{equation}
Define $\mathcal A_*^{\rm stat}(R_u)$ by substituting
$G_{R_u},z_{R_u},S_{R_u},L\nu_d\gamma_{\rm a}^2/2$ for
$G_{\rm a},z_{\rm a},S_2,\beta_{\rm a}(u)$, respectively, in the two
prices defining \eqref{eq:run5-distance-best-price}.

\begin{corollary}[Original stationary distance]
\label{cor:run5-distance-stationary}
Under the smooth assumptions of Theorem~\ref{thm:run5-distance-joint},
\begin{equation}
 \E[f(x_N)-f(u)]\le\mathcal A_*^{\rm stat}(R_u)
       +\mathcal U_{\rm s}(2S_{R_u},\ell_{\rm s}^0).
 \label{eq:run5-distance-stationary-risk}
\end{equation}
If $\nabla f$ is affine on the evaluation domain, replace $G_{R_u}$ by $LR_u$ and $z_{R_u}$
by $\sqrt{L^2R_u^2+\sigma_{\rm a}^2/B_0}$ in these formulas. The smoothing
excess bound remains $L\nu_d\gamma_{\rm a}^2/2$.
\end{corollary}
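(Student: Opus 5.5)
The plan is to reduce Corollary~\ref{cor:run5-distance-stationary} to Theorem~\ref{thm:run5-distance-joint}, smooth branch \eqref{eq:run5-distance-joint-smooth}. The anchor-dependent quantities there, $G_{\rm a}$, $z_{\rm a}$, $S_2$ and $\beta_{\rm a}(u)$, would be replaced by the deterministic upper bounds $G_{R_u}$, $z_{R_u}$, $S_{R_u}$ and $L\nu_d\gamma_{\rm a}^2/2$, which depend only on $R_u$. This works because every place these quantities enter is monotone nondecreasing. The envelope $\mathcal U_{\rm s}(D,\ell)$ is nondecreasing in $D$: it is made of $k_ND^2\mathcal H_{\rm s}(D)=4Lk_ND^2+k_NvD$, $h_{\rm ref}D^2/N^2$, the increasing $\eta(D)$, and $q_{\rm s}(D)\ell$. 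The map $s(z)=r_0+2pr_0z/(\kappa+z)$ is increasing. Both anchor prices \eqref{eq:run5-distance-old-price} and \eqref{eq:run5-distance-saturated-price} are nondecreasing in $G_{\rm a}$, $z_{\rm a}$, $S_2$ and $\beta_{\rm a}$. The logarithm $\ell_{\rm s}^0$ is evaluated at $2r_0$ and does not involve the anchor at all.

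The key step is to bound $G_{\rm a}=\norm{\nabla f_{\gamma_{\rm a}}(x^0)}$. Lemma~\ref{lem:smoothing} (smooth case) gives $\nabla f_{\gamma_{\rm a}}(x^0)=\E\nabla f(x^0+\gamma_{\rm a}U)$. Since $\nabla f(u)=0$ and $\nabla f$ is $L$-Lipschitz on the convex evaluation domain, with all of $x^0+\gamma_{\rm a}U$ and $u$ lying in it,
\[
 G_{\rm a}\le\E\norm{\nabla f(x^0+\gamma_{\rm a}U)-\nabla f(u)}
 \le L\,\E\norm{x^0-u+\gamma_{\rm a}U}
 \le L\sqrt{R_u^2+\nu_d\gamma_{\rm a}^2}=G_{R_u}.
\]
The last inequality is Jensen, using $\E U=0$ and $\E\norm U^2=\nu_d$ to kill the cross term. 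This gives $z_{\rm a}\le z_{R_u}$ and $S_2=s(z_{\rm a})\le S_{R_u}$. The smoothing excess bound $\beta_{\rm a}(u)\le L\nu_d\gamma_{\rm a}^2/2$ comes directly from Lemma~\ref{lem:smoothing}.

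I will have to handle one point with care. In $\mathcal A_{\rm sat}$, the term $\min\{R_u^2,r_pS_2\}$ and the term $h_{\rm ref}\min\{R_u^2,S_2^2\}$ are nondecreasing in $S_2$. However, Lemma~\ref{lem:run5-distance-clipping} derived them from the moment bound $\E S^2\le S_2^2$ of Lemma~\ref{lem:run5-distance-moment}. I will check that this moment bound still holds with $S_{R_u}$ in place of $S_2$. It does: that proof only needs $\E\norm{g_0}^2\le z^2$ for some $z$, together with concavity and monotonicity of $F$, and $\E\norm{g_0}^2\le z_{\rm a}^2\le z_{R_u}^2$. Substituting these bounds into \eqref{eq:run5-distance-joint-smooth} then proves \eqref{eq:run5-distance-stationary-risk}.

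For the affine-gradient case, $\nabla f(x^0+\gamma_{\rm a}U)=\nabla f(x^0)+\gamma_{\rm a}\nabla^2f\,U$, and this averages exactly to $\nabla f(x^0)$. Hence $G_{\rm a}=\norm{\nabla f(x^0)-\nabla f(u)}\le LR_u$, and the same monotone substitution goes through. The only real obstacle I anticipate is verifying monotonicity of every term. The most delicate of these is the concave radius moment bound above, and I would settle it by re-running Lemma~\ref{lem:run5-distance-moment} with the larger parameter $z$.
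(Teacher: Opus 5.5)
Your proposal is correct and follows the paper's proof: you bound $G_{\rm a}$ through the ball-averaged gradient, original stationarity and the Lipschitz gradient (the paper applies Jensen to the squared norm directly, which is equivalent), bound $\beta_{\rm a}(u)$ by averaging the smooth upper model, and substitute into \eqref{eq:run5-distance-joint-smooth} using monotonicity of both prices and of $\mathcal U_{\rm s}(D,\ell)$ in $D$, with the affine case handled by symmetry. Re-running the radius moment lemma is harmless but unnecessary: $S_2=s(z_{\rm a})\le s(z_{R_u})=S_{R_u}$, and the theorem's bound is already nondecreasing in $S_2$.
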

\begin{proof}
Let $U$ be uniform on the unit ball. Smoothness permits differentiation
under the bounded ball integral. Jensen, original stationarity and
gradient Lipschitz continuity give
\begin{align*}
 G_{\rm a}^2
 &=\norm{\E_U\nabla f(x^0+\gamma_{\rm a}U)}^2\\
 &\le\E_U\norm{\nabla f(x^0+\gamma_{\rm a}U)-\nabla f(u)}^2\\
 &\le L^2\E_U\norm{x^0-u+\gamma_{\rm a}U}^2
 =L^2(R_u^2+\nu_d\gamma_{\rm a}^2).
\end{align*}
Here $\E U=0$ and $\E\norm U^2=\nu_d$.
Ball averaging of the smooth upper model at $u$ yields
$\beta_{\rm a}(u)\le L\nu_d\gamma_{\rm a}^2/2$.
Both prices are increasing in the nonnegative quantities
$G_{\rm a},z_{\rm a},S_2,\beta_{\rm a}(u)$, and the smooth cost at a
fixed nonnegative logarithm is increasing in $D$. Substitution into
\eqref{eq:run5-distance-joint-smooth} proves the result.
If the gradient is affine, symmetry gives
$\nabla f_{\gamma_{\rm a}}(x^0)=\nabla f(x^0)$, and original
stationarity yields $G_{\rm a}\le LR_u$. No distance to a smoothed
minimizer is assumed.
\end{proof}

\subsection{Absorbing a reference chosen before sampling}
\label{subsec:run5-distance-reference-absorption}

Define costs with their explicit reference terms removed:
\[
 \mathcal U_{\mathrm{reg}}^0(D,\ell)=\mathcal U_{\mathrm{reg}}(D,\ell)-h_{\rm ref}D^2/N^2,
 \qquad
 \mathcal V_{\rm ns}^0(s,\ell)
   =\mathcal V_{\rm ns}(s,\ell)-4h_{\rm ref}s^2/N^2.
\]
Let $\mathcal J_{\mathrm{reg}}^0(S_2)$ be the cost
\eqref{eq:joint-main-smooth-cost}--\eqref{eq:joint-main-nonsmooth-cost}
with $\mathcal U_{\mathrm{reg}},\mathcal V_{\rm ns}$ replaced by
$\mathcal U_{\mathrm{reg}}^0,\mathcal V_{\rm ns}^0$, respectively.
Every logarithm, envelope and moment radius in these costs retains its
actual $h_{\rm ref}$ dependence.

\begin{corollary}[Absorbing a reference chosen before sampling]
\label{cor:run5-distance-reference-absorption}
Under Theorem~\ref{thm:run5-distance-joint}, for each applicable regularity case $\mathrm{reg}$, the saturated-price branch gives
\begin{align}
 \E[f(x_N)-f(u)]\le{}&
 \frac{z_{\rm a}\min\{R_u^2,r_p S_2\}}{2pr_0}
 +G_{\rm a}a_{R_u}+\beta_{\rm a}(u)
 +\mathcal J_{\mathrm{reg}}^0(S_2)
 +\frac{5h_{\rm ref}r_p^2}{N^2}.
 \label{eq:run5-distance-absorbed-risk}
\end{align}
For any target $\varepsilon>0$ chosen before sampling, the choice
$0<h_{\rm ref}\le\varepsilon N^2/(20r_p^2)$ makes the final term
at most $\varepsilon/4$. It needs no supplied original distance,
domain diameter, smoothness or variance.
\end{corollary}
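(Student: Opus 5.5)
The plan is to start from Theorem~\ref{thm:run5-distance-joint} and use only the saturated-price branch of $\mathcal A_*$. Since $\mathcal A_*(u)\le\mathcal A_{\rm sat}(u)$, the joint bound (in each applicable case, via \eqref{eq:joint-main-risk}) gives
\[
 \E[f(x_N)-f(u)]\le\mathcal A_{\rm sat}(u)+\mathcal J_{\mathrm{reg}}(S_2).
\]
Here $\mathcal J_{\mathrm{reg}}$ is the cost defined through $\mathcal U_{\mathrm{reg}}$ and $\mathcal V_{\rm ns}$. I will split off every explicit $h_{\rm ref}$-proportional quadratic term and bound the pieces separately. The remaining terms of $\mathcal A_{\rm sat}$, namely the $z_{\rm a}$ clipping term, $G_{\rm a}a_{R_u}$ and $\beta_{\rm a}(u)$, are copied unchanged into \eqref{eq:run5-distance-absorbed-risk}. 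The remaining cost terms are by definition $\mathcal J^0_{\mathrm{reg}}(S_2)$, with all logarithms and envelopes kept at their actual $h_{\rm ref}$.

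The reference contributions are as follows.
\begin{itemize}
\item The startup term of $\mathcal A_{\rm sat}$ is $h_{\rm ref}\min\{R_u^2,S_2^2\}/(2A)$. Using $S_2<r_p$ (Lemma~\ref{lem:run5-distance-moment}) and $A\ge N^2/2$, it is at most $h_{\rm ref}r_p^2/N^2$.
\item The cost term in $\mathcal U_{\mathrm{reg}}(2S_2,\cdot)$ is $h_{\rm ref}(2S_2)^2/N^2\le4h_{\rm ref}r_p^2/N^2$.
\item In the alternative $\mathcal U_{\rm ns}(2r_p,\cdot)$ it is $4h_{\rm ref}r_p^2/N^2$ directly.
\item In $\mathcal V_{\rm ns}(S_2,\cdot)$ it is $4h_{\rm ref}S_2^2/N^2\le4h_{\rm ref}r_p^2/N^2$.
\end{itemize}
Summing the startup piece with whichever cost branch is active therefore gives at most $5h_{\rm ref}r_p^2/N^2$, which is the final term of \eqref{eq:run5-distance-absorbed-risk}.

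The one delicate step is the minimum inside $\mathcal J_{\rm ns}$. We need $\min\{X+c_1,Y+c_2\}\le\min\{X,Y\}+\max\{c_1,c_2\}$, with both $c_i\le4h_{\rm ref}r_p^2/N^2$. This lets us replace each branch of \eqref{eq:joint-main-nonsmooth-cost} by its reference-free version plus a common bound, which yields exactly $\mathcal J^0_{\rm ns}(S_2)$ plus $4h_{\rm ref}r_p^2/N^2$. We also take the minimum over the applicable regularity cases; for each fixed case the bound holds as stated.

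The last claim is immediate: if $h_{\rm ref}\le\varepsilon N^2/(20r_p^2)$, then $5h_{\rm ref}r_p^2/N^2\le\varepsilon/4$. The quantity $r_p=r_0(1+2p)$ uses only the parameters $r_0$ and $p$, chosen before sampling, so no problem constant is needed.

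The main obstacle is bookkeeping rather than analysis. We must check that every other appearance of $h_{\rm ref}$ (in $\kappa$, $S_2$, $z_{\rm a}$, $\ell^0_{\mathrm{reg}}$, and the envelopes) stays inside $\mathcal J^0_{\mathrm{reg}}$ or the retained anchor terms. The corollary explicitly allows this, so no monotonicity in $h_{\rm ref}$ is required.
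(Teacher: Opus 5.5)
Your proposal is correct and follows the paper's proof essentially step for step. You drop to the saturated price, bound the startup reference term by $h_{\rm ref}r_p^2/N^2$ using $S_2<r_p$ and $2A\ge N^2$, bound each cost branch's explicit reference term by $4h_{\rm ref}r_p^2/N^2$, and handle the finite-radius minimum with $\min\{b_1+q_1,b_2+q_2\}\le\min\{b_1,b_2\}+\max\{q_1,q_2\}$; the choice of $h_{\rm ref}$ then depends only on $r_p=r_0(1+2p)$, exactly as in the paper.
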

\begin{proof}
Theorem~\ref{thm:run5-distance-joint} permits the saturated price instead
of its minimum with $\mathcal A_p(u)$. Its explicit startup reference
term satisfies
\[
 \frac{h_{\rm ref}\min\{R_u^2,S_2^2\}}{2A}
 \le\frac{h_{\rm ref}r_p^2}{N^2},
\]
because $S_2\le r_p$ and $2A=N(N+1)\ge N^2$.
In the smooth or untruncated nonsmooth cost, the other explicit
reference term is $4h_{\rm ref}S_2^2/N^2$, at most
$4h_{\rm ref}r_p^2/N^2$.
For a finite nonsmooth evaluation domain, the two candidate geometry bounds have
reference terms $4h_{\rm ref}r_p^2/N^2$ and
$4h_{\rm ref}S_2^2/N^2$. Both are bounded by the former.
For any real $b_1,b_2$ and $q_1,q_2\le q$,
$\min\{b_1+q_1,b_2+q_2\}\le\min\{b_1,b_2\}+q$.
Apply this identity to the finite-radius minimum. In every regime the
remaining geometry cost is bounded by
$\mathcal J_{\mathrm{reg}}^0(S_2)+4h_{\rm ref}r_p^2/N^2$.
Adding the startup bound proves
\eqref{eq:run5-distance-absorbed-risk}.
Finally $r_p=r_0(1+2p)$ depends only on inputs chosen before sampling, so the specified
positive reference choice makes its displayed polynomial charge at
most $\varepsilon/4$ without knowledge of the problem scales.
\end{proof}

The anchor gradient and sampling charges in
\eqref{eq:run5-distance-absorbed-risk} remain. Changing $h_{\rm ref}$
also changes $S_2$ and the reference logarithms; all must be evaluated
at the chosen reference. This absorption concerns the explicit
polynomial reference terms, and does not remove the other prices of
original-distance adaptation.

\subsection{Sufficient accuracy budgets for the known-diameter policy}
\label{subsec:run5-distance-accuracy-budgets}

The following calculation concerns the known-diameter execution based on
the radial derivative in
Corollaries~\ref{cor:threshold-smooth} and~\ref{cor:threshold-nonsmooth}, with
$h_{\rm init}=h_{\rm ref}=h_0=\varepsilon N^2/(8D^2)>0$.
It interprets that fixed-budget guarantee and does not substitute
original distance for its diameter bound $D$. The forward-secant method
in Corollary~\ref{cor:epsilon-clean-budgets} has sufficient budgets without
these logarithmic factors.
Assume $\sigma_0^2\le CdM_2^2$, with fixed $C\ge0$.
The explicit bounds in these corollaries, together with
$R_u\le D$ and $A\ge N^2/2$, imply the following sufficient orders:
\begin{align}
 N_\varepsilon&=\widetilde O\left(
      1+D\sqrt{L/\varepsilon}\right)
      &&\text{in the smooth case},\label{eq:run5-distance-smooth-budget}\\
 N_\varepsilon&=\widetilde O\left(
      1+\frac{d^{1/4}MD}{\varepsilon}
      +d^{1/4}D\sqrt{\frac{M}{\gammaMax\varepsilon}}\right)
      &&\text{in the nonsmooth case},\label{eq:run5-distance-ns-budget}\\
 T_\varepsilon&=\widetilde O\left(
      N_\varepsilon+\frac{dM_2^2D^2}{\varepsilon^2}\right).
      \label{eq:run5-distance-value-budget}
\end{align}
The suppressed factors depend on the displayed reference logarithms
and fixed problem and reference quantities. The sample counts are rounded to
integers and obey $T_\varepsilon\ge8N_\varepsilon$ for fixed batch sizes
or $T_\varepsilon\ge16N_\varepsilon$ for adaptive batches.

The startup and explicit reference charges together are at most
$2h_0D^2/N^2=\varepsilon/4$. This choice made before sampling thus absorbs both
without knowledge of smoothness or variance. The remaining smooth
deterministic term is bounded by a constant times
$(1+\ell_{\rm s})LD^2/N^2$.
Assigning a fixed fraction of $\varepsilon$ to it requires
$N$ of the first displayed order. In the nonsmooth bound, the
remaining deterministic terms are bounded by constants times
$(1+\ell_{\rm ns})d^{1/4}MD/N$ and
$\sqrt dMD^2/(\gammaMax N^2)$. The last term needs
$N$ proportional to
$d^{1/4}D\sqrt{M/(\gammaMax\varepsilon)}$, which retains the finite
evaluation domain explicitly. Assigning fixed fractions of the accuracy to all
two terms gives \eqref{eq:run5-distance-ns-budget}.

In either regime, the vector term is bounded by a constant times
$\sqrt dM_2D/\sqrt T$. The radial term obeys
\[
 \frac{(1+\ell_{\mathrm{reg}})d^{1/4}M_2D}{\sqrt N\sqrt T}
 \le\frac{(1+\ell_{\mathrm{reg}})\sqrt dM_2D}{\sqrt T},
\]
since $d,N\ge1$. The additional nonsmooth term
$56d^{1/4}M_2D/\sqrt T$ has the same upper order.
Squaring these stochastic coefficients and retaining the allocation
floor proves \eqref{eq:run5-distance-value-budget}.

These logarithmic requirements are compatible with finite sufficient
budgets. Indeed, if $T\ge bN$ with $b=8$ or $16$, then
$B^{\rm rad}\ge1$, $A\le N^2$ and the curvature envelopes in
\eqref{eq:adaptive-main-Hns}--\eqref{eq:adaptive-main-Hs} are bounded
by $c_0+c_1N+c_2N^{3/2}$, with nonnegative constants depending only
on the fixed problem, dimension, evaluation domain, diameter and mode.
For the nonsmooth envelope, use
$1/\Gamma(D)\le1/\gammaMax+N/(d^{1/4}D)$ and
$\omega\le\sqrt{\chi_d}M_2\sqrt N$; for its vector term use
$A/\sqrt T\le N^{3/2}/\sqrt b$. The smooth envelope uses the latter
inequality directly. Consequently
$\mathcal H_{\mathrm{reg}}(D)\le(c_0+c_1+c_2)N^2$ for $N\ge1$.
Recomputing the logarithm at the reference chosen before sampling gives
\[
 \ell_{\mathrm{reg}}\le\log_2\left(1+
       \frac{8D^2(c_0+c_1+c_2)}{\varepsilon}\right).
\]
This finite envelope is independent of $N$ and $T$.
Increasing the displayed orders by its corresponding logarithmic
factors enforces each fixed accuracy fraction. This proves sufficiency
without a separate polynomial cost for an arbitrarily fixed reference.

Each execution uses exactly $2T_\varepsilon$ scalar values and
$3N_\varepsilon$ sampling stages with fixed batch sizes or $5N_\varepsilon$ with adaptive batch sizes.
These budgets use population quantities to interpret a risk theorem.
The actual execution accepts only $N,T$ and positive references chosen before sampling;
an accuracy stopping certificate requires additional information about
the unknown stochastic law.

\subsection{A curved limitation of the existing initialization}
\label{subsec:run5-distance-curved-limit}

Fix $N\ge1$, $A=A_N$, $p\ge1$, $r_0,h_{\rm ref},G>0$ and put
\begin{equation}
 h=h_{\rm ref}+\frac{AG}{p r_0},\qquad q=G/h,\qquad
 S=r_0+2Aq,\qquad Q=[0,S],\qquad x^0=0.
 \label{eq:run5-distance-quadratic-inputs}
\end{equation}
Choose $0<\gamma\le\min\{\gammaMax,q/(2A)\}$. For each
$L\ge\max\{2Ah,G/r_0\}$, let
\begin{equation}
 R=G/L,\qquad u=R,\qquad
 F_L(x,\xi)=f_L(x)=\frac L2(x-R)^2
 \label{eq:run5-distance-quadratic}
\end{equation}
for every sampled function.

\begin{proposition}[An exact curved trajectory]
\label{prop:run5-distance-curved}
The fixed-radius anchor optimizer satisfies
\begin{equation}
 v_1=x_1=q,\qquad v_t=0,\quad x_t=q/A_t\quad(2\le t\le N),
 \label{eq:run5-distance-quadratic-trajectory}
\end{equation}
and hence
\begin{equation}
 f_L(x_N)-f_L(u)=\frac L2(q/A-G/L)^2\ge\frac{Lq^2}{8A^2}.
 \label{eq:run5-distance-quadratic-risk}
\end{equation}
The joint radial controller has the same optimizer trajectory at every
selected legal radius. Vector and anchor variances are zero.
\end{proposition}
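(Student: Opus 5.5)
The plan is to compute the trajectory by hand. Everything in this instance is deterministic and one-dimensional, and the symmetric two-point record of a quadratic returns the exact gradient.

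\emph{Step 1: smoothing and records.} For $f_L(x)=\frac L2(x-R)^2$, ball smoothing only adds a constant, $f_{L,\gamma}=f_L+L\nu_d\gamma^2/2$. Hence $\nabla f_{L,\gamma}=\nabla f_L=L(\cdot-R)$ at every radius $\gamma$. Since $d=1$ and the sample function equals the population, the symmetric record at $z$ is
\[
[f_L(z+\gamma e)-f_L(z-\gamma e)]e/(2\gamma)=L(z-R).
\]
This holds for both signs of $e$, so every anchor, pilot and update mean equals the true gradient and all variances vanish. In particular the trajectory will not depend on the radius. I will check that it also does not depend on which legal radius the joint radial controller selects: that controller changes only $\gamma_t$, and through it neither the gradients nor $h_{\rm a}$, $S$, $D_S$ or the coefficient update.

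\emph{Step 2: initialization and first iteration.}
\begin{itemize}
\item The anchor gives $g_0=f_L'(0)=-LR=-G$. Comparing with \eqref{eq:run5-distance-anchor}, this yields $h_{\rm a}=h$ and $S=r_0+2Aq$, matching \eqref{eq:run5-distance-quadratic-inputs}, with $D_S=2S\ge4Aq$.
\item At $t=1$ we have $A_0=0$, so $y_1=v_0=0$. Then $v_1=\Pi_{[0,S]}(G/h)=q$, because $q\le S$, and $x_1=v_1=q$.
\item The curvature estimate is $\widehat\beta_1=L q^2$ with $r_1=q$.
\item The coefficient update gives
\[
H_1=h+\frac{[Lq^2-hq^2/2]_+}{D_S^2+q^2/2}\le h+\frac{L}{16A^2}.
\]
\end{itemize}

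\emph{Step 3: induction for $t\ge2$ (this is the main obstacle).} I must show $v_t=0$, which amounts to showing that every projected step overshoots below zero.

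First, $R=G/L\le G/(2Ah)=q/(2A)$, because $L\ge2Ah$.

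At $t=2$, $y_2=(x_1+2v_1)/3=q$. The step gives
\[
v_2=\Pi\bigl(q-2L(q-R)/H_1\bigr),
\]
so $v_2=0$ requires $H_1\le 2L(1-R/q)$. Using $R/q\le1/(2A)\le1/2$, the right side is at least $L$. Using $h\le L/(2A)$ and the bound from Step 2, $H_1\le L/(2A)+L/16<L$. So the argument is negative and the projection returns $0$.

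For $t\ge3$, suppose $v_{t-1}=0$ and $x_{t-1}=q/A_{t-1}$. Then $y_t=q/A_t\ge q/A\ge2R>R$, so the gradient at $y_t$ is positive. The step from $v_{t-1}=0$ is therefore negative for any positive $H_{t-1}$, and the projection gives $v_t=0$; the value of $H$ is irrelevant here. Finally $x_t=A_{t-1}x_{t-1}/A_t=q/A_t$, which closes the induction.

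\emph{Step 4: risk bound.} From Step 3, $x_N=q/A$ and $R\le q/(2A)$, so $x_N-R\ge q/(2A)$. Squaring gives
\[
f_L(x_N)-f_L(u)=\frac L2\,(x_N-R)^2\ge\frac{Lq^2}{8A^2},
\]
which is \eqref{eq:run5-distance-quadratic-risk}. The condition $\gamma\le q/(2A)$ is needed only for legality of the queries in the statement; Step 1 shows the records do not depend on it.
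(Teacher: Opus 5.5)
Your proposal is correct and follows essentially the paper's argument: exact one-dimensional central differences, the anchor $g_0=-G$ giving $h_{\rm a}=h$ and the displayed $S$, $v_1=x_1=q$, a bound on $H_1$ that forces the $t=2$ step below zero, and then induction via $y_t=q/A_t\ge2R$. The paper handles $t=2$ with the exact form $H_1=h+\kappa_q(L-h/2)$ and the condition $(2-\kappa_q)L\ge(3-\kappa_q/2)h$, whereas you use the cruder chain $H_1\le L/(2A)+L/16<L\le2L(1-R/q)$; one small correction to your closing remark is that $\gamma\le q/(2A)$ is not a legality condition (legality is $\gamma\le\gammaMax$) but is used only afterwards, in Corollary~\ref{cor:run5-distance-curved-transfer}, to bound the smoothing bias $L\gamma^2/6$.
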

\begin{proof}
In one dimension, for either $e\in\{-1,1\}$ and every radius,
\[
 \frac{f_L(z+\gamma e)-f_L(z-\gamma e)}{2\gamma}e=L(z-R).
\]
All vector means are exact. The anchor is $-G$ and selects
$h_{\rm a}=h$ and the displayed $S$. The first projected vector is
$q\in Q$, with $x_1=q$. For $D_S=2S$ its post coefficient is
\[
 H_1=h+\kappa_q(L-h/2),\qquad
 \kappa_q=\frac{q^2}{D_S^2+q^2/2}<\frac1{16A^2}\le\frac1{16}.
\]
The second center is $y_2=q$. Its tentative vector
$q-2(Lq-G)/H_1$ is nonpositive when
$(2-\kappa_q)L\ge(3-\kappa_q/2)h$.
The assumption $L\ge2Ah\ge2h$ suffices because
$2(2-\kappa_q)-(3-\kappa_q/2)=1-3\kappa_q/2>0$.
Thus $v_2=0$, $x_2=q/A_2$. If the claim holds at $t-1\ge2$, then
$y_t=q/A_t\ge q/A\ge2G/L=2R$, so its gradient is positive.
From $v_{t-1}=0$ projection sends the tentative negative point to zero,
and averaging gives $x_t=q/A_t$. This proves the trajectory by
induction; for $N=1$ the first step alone suffices. Since
$R\le q/(2A)$, the risk bound follows.

Every selected-radius vector gradient is identical in the joint
controller. Its projected vectors, averaged iterates, scalar inner
products and post coefficients are consequently identical; radial
stages change only radii and references. A deterministic sample
Lipschitz envelope on an open neighborhood of the finite evaluation domain is
$M_{2,L}=L(S+\gammaMax+r_0)+G$. This supplies all radial moments at
legal radii. Independent draws from this point-mass distribution satisfy the two-value
same-sample oracle model.
\end{proof}

\begin{corollary}[Failure of comparator-only curvature transfer]
\label{cor:run5-distance-curved-transfer}
For this fixed-radius family there are no finite $C,E_0\ge0$ depending
only on the fixed known inputs and $G$ such that, uniformly in $L$,
\begin{equation}
 f_L(x_N)-f_L(u)\le\frac{h_{\rm ref}\Psi(R)}A+
 \frac{G\Psi(R)}{p r_0}+\frac{CLR^2}A+\beta_\gamma(u)+E_0.
 \label{eq:run5-distance-invalid-transfer}
\end{equation}
This remains false if $C$ is multiplied by any fixed polynomial in
$\log(1+L/h_{\rm ref})$.
\end{corollary}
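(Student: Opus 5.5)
The argument is by contradiction using the exact trajectory of Proposition~\ref{prop:run5-distance-curved}. Suppose finite $C,E_0\ge0$ exist, depending only on the fixed inputs $N,p,r_0,h_{\rm ref},\gamma$ and on $G$, such that \eqref{eq:run5-distance-invalid-transfer} holds for every admissible $L\ge\max\{2Ah,G/r_0\}$. The plan is to let $L\to\infty$ and show that the left side grows linearly in $L$ while the right side stays bounded.

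\emph{Left side.} By \eqref{eq:run5-distance-quadratic-risk}, $f_L(x_N)-f_L(u)\ge Lq^2/(8A^2)$. Here $q=G/h$ and $h=h_{\rm ref}+AG/(pr_0)$ do not depend on $L$, so this lower bound is a fixed positive multiple of $L$.

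\emph{Right side.} Since $R=G/L\le r_0$, we have $a_R=0$ and $\Psi(R)=R^2/2=G^2/(2L^2)$. Hence the first two terms tend to zero. The curvature term is $CLR^2/A=CG^2/(AL)$, which also tends to zero. For the smoothing bias, the quadratic is smooth, so Lemma~\ref{lem:smoothing} gives $\beta_\gamma(u)\le L\nu_d\gamma^2/2$. A crude bound here would grow with $L$, so I would use the one-dimensional setting exactly: for a quadratic, $f_\gamma(u)-f(u)=L\gamma^2\E U^2/2=L\gamma^2/6$. This is linear in $L$, with slope $\gamma^2/6$.

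\emph{Comparing slopes.} The comparison reduces to $q^2/(8A^2)>\gamma^2/6$. The constraint $\gamma\le q/(2A)$ gives $\gamma^2/6\le q^2/(24A^2)<q^2/(8A^2)$, so the left side outgrows the right by at least $Lq^2/(12A^2)$, and this tends to infinity. That contradicts finiteness of $E_0$. I expect this slope comparison, which relies on the radius restriction in the construction, to be the only delicate point.

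\emph{Logarithmic version.} If $C$ is replaced by $C\cdot P(\log(1+L/h_{\rm ref}))$ for a fixed polynomial $P$, the curvature term becomes $CG^2P(\log(1+L/h_{\rm ref}))/(AL)$. This still tends to zero because polylogarithmic growth is dominated by $1/L$, so the same contradiction holds.
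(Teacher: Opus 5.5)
Your proposal is correct and follows the paper's proof essentially line for line. The paper makes the same four steps: $\Psi(R)=R^2/2$ turns the first two charges into $hR^2/(2A)\to0$; the one-dimensional identity $\beta_\gamma(u)=L\gamma^2/6\le Lq^2/(24A^2)$, combined with Proposition~\ref{prop:run5-distance-curved}, gives $f_L(x_N)-f_L(u)-\beta_\gamma(u)\ge Lq^2/(12A^2)$; the remaining right side stays bounded as $L\to\infty$; and this still holds after $C$ is multiplied by a polylogarithm.

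One small remark: in $d=1$ the bound $L\nu_d\gamma^2/2$ of Lemma~\ref{lem:smoothing} already equals $L\gamma^2/6$, so there is no distinction between a crude and an exact bound here. What matters is exactly what you do, namely comparing slopes using $\gamma\le q/(2A)$.
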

\begin{proof}
Since $R\le r_0$, $\Psi(R)=R^2/2$, and the first two proposed charges
sum to $hR^2/(2A)$. Quadratic ball smoothing in dimension one gives
$\beta_\gamma(u)=L\gamma^2/6\le Lq^2/(24A^2)$.
Proposition~\ref{prop:run5-distance-curved} therefore gives
\[
 f_L(x_N)-f_L(u)-\beta_\gamma(u)\ge\frac{Lq^2}{12A^2}.
\]
The remaining proposed right side is
$hG^2/(2AL^2)+CG^2/(AL)+E_0$, which remains bounded as $L\to\infty$.
The left side diverges. A fixed logarithmic polynomial times $1/L$
still tends to zero, proving the last assertion.
\end{proof}

For the joint controller, integer budgets
$T_L\ge\max\{32N,2(L/h_{\rm ref})^4\}$ make the radial diagnostic
scale $O(M_{2,L}/\sqrt{T_L})=O(1/L)$. Its product with
$\log(1+4L/h_{\rm ref})$ tends to zero, while vector and anchor
variances remain zero. Retaining that diagnostic error therefore
cannot repair a transfer that replaces every geometry-dependent
curvature term by $LR^2/A$ up to fixed logarithmic factors.
This is a limitation of the displayed initialization and averaging
rule. The construction has $R<r_0$; a supplied promise $r_0\le R$
imposes a different scope.

\section{Pre-step scalar models and the price of unobserved curvature}
\label{sec:prestep-distance}

A fixed Euclidean proximal coefficient and variable acceleration
weights yield a potential in the distance to any original comparator.
The weight is selected before a candidate step is accepted. This is the
similar-triangles construction underlying the adaptive methods of
\citet{GasnikovNesterov2018}. We include the observation errors explicitly and use a
two-value function-difference record. In a stochastic execution, the
acceptance decision changes the distribution of the retained errors.
The resulting rejection term is retained explicitly.

\subsection{An admissible function-difference record}
\label{subsec:prestep-relative-record}

Let $Q\subset\mathbb R^d$ be nonempty, closed and convex, and suppose
the projections below exist. For moment and expectation statements,
assume $Q$ is compact. Before sampling, fix $x^0\in Q$, $h_0,\mathcal L_0>0$,
and a legal smoothing radius $\gamma>0$. Let
$\phi=f_\gamma$ be the ball-smoothed population. The vector record at
$y$ is the usual symmetric two-value estimator for $\nabla\phi(y)$.
An independent sampled function supplies the function-difference record
\begin{equation}
 P_\gamma(x,y;U,\xi)
     =F(x+\gamma U,\xi)-F(y+\gamma U,\xi),
 \label{eq:prestep-value-record}
\end{equation}
where $U$ is a uniform unit-ball vector independent of $\xi$.
Both endpoints use the same sampled function, independently of the
samples used for the vector estimate.

\begin{lemma}[Relative-function moments]
\label{lem:prestep-relative-moments}
Suppose the population expectations exist at the queried points.
Assume the weak-gradient, segment-integrability and uniform
sample-gradient second-moment bound with envelope $M_2^2$ on the
evaluation domain, or a random sample Lipschitz envelope with second
moment at most $M_2^2$. Then, at every fixed $x,y\in Q$,
\begin{equation}
 \E P_\gamma(x,y)=\phi(x)-\phi(y),\qquad
 \E P_\gamma(x,y)^2\le M_2^2\norm{x-y}^2.
 \label{eq:prestep-value-moments}
\end{equation}
\nopagebreak[4]
An average of $B^q$ conditionally independent records consequently has
centered conditional variance at most
$M_2^2\norm{x-y}^2/B^q$.
\end{lemma}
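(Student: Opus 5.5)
The mean identity and the second-moment bound need separate arguments; the batch statement then follows from conditional independence.

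\emph{Mean.} Condition on $U$ and take the sample expectation at both endpoints. The same-sample expectation gives $f(x+\gamma U)-f(y+\gamma U)$. This is integrable because the population expectations exist at the queried points and $f$ is $M$-Lipschitz on the evaluation domain. Averaging over the independent uniform $U$ and using the definition \eqref{eq:smoothing} yields $\phi(x)-\phi(y)$. As in Lemma~\ref{lem:radial-moment}, a common random offset in $F(\cdot,\xi)$ cancels in the difference, so only the difference needs to be integrable.

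\emph{Second moment.} Under the random Lipschitz envelope this is immediate. The two endpoints $x+\gamma U$ and $y+\gamma U$ differ by exactly $x-y$, so $|P_\gamma|\le m(\xi)\norm{x-y}$, and squaring and averaging gives $M_2^2\norm{x-y}^2$. The extension of $F$ used in Lemma~\ref{lem:oracle-moments} is not even needed, since both endpoints lie in the evaluation domain; convexity of $Q$ and of the ball put the segment between them in it as well.

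Under the weak-gradient condition, apply the fundamental theorem along the translated segment $s\mapsto y+\gamma U+s(x-y)$, $s\in[0,1]$:
\[
 P_\gamma=\int_0^1\ip{\nabla F(y+\gamma U+s(x-y),\xi)}{x-y}\,ds .
\]
Cauchy--Schwarz and Jensen then bound $P_\gamma^2$ by $\norm{x-y}^2\int_0^1\norm{\nabla F(\cdot,\xi)}^2ds$. Fubini in $(\xi,U,s)$ and the pointwise envelope $\E_\xi\norm{\nabla F(z,\xi)}^2\le M_2^2$ give the claimed bound.

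The main obstacle is justifying this segment formula and the envelope when the segment touches the boundary of the evaluation domain. For this I would reuse the device of Lemma~\ref{lem:oracle-moments}. First take $\gamma$ strictly less than $\gammaMax$, or mollify with a small kernel, so that the translated segment lies in the interior, where the envelope holds almost everywhere. Polar and translational Fubini, together with local Lipschitzness (Rademacher), make the weak-gradient fundamental theorem valid on almost every sampled segment; the random $U$ supplies the averaging over translates. Then pass to the limit in the radius using sample continuity and Fatou, exactly as done there.

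\emph{Batches.} Conditionally on the history fixing $x,y$ and $B^q$, the records are independent with common mean $\phi(x)-\phi(y)$. Cross terms of the centered records vanish, so the centered conditional variance of the average is at most the single-record variance divided by $B^q$. The single-record variance is at most the second moment, which gives $M_2^2\norm{x-y}^2/B^q$.
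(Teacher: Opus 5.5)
Your proposal is correct and matches the paper's proof step for step. The mean comes from the same-sample expectation followed by the ball average. The second moment comes from the fundamental theorem along the translated segment with Jensen, Cauchy--Schwarz and Fubini, using increasing inner radii and Fatou at the boundary. The Lipschitz-envelope case is the direct bound $|P_\gamma|\le m(\xi)\norm{x-y}$, and the batch claim follows from conditional independence.

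One small point: the joint integrability needed to exchange the $U$ and $\xi$ expectations in the mean step should come from the second-moment bound you prove next, as the paper does. Lipschitzness of $f$ alone does not supply it.
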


\begin{proof}
Taking expectations of the same sampled function at both endpoints, followed by the
ball average, gives the mean. Under the gradient assumptions, the
fundamental theorem along the shifted segment gives
\[
 P_\gamma(x,y)=\int_0^1
 \ip{\nabla F(y+\gamma U+s(x-y),\xi)}{x-y}\,ds
\]
almost surely. The shifted segment remains in the convex evaluation
domain. Jensen and Cauchy--Schwarz give a bound by
$\norm{x-y}^2\int_0^1
\norm{\nabla F(y+\gamma U+s(x-y),\xi)}^2ds$
for the squared record. The segment-integrability assumption permits
Fubini and yields the claimed second moment. At a largest legal
boundary radius, sample continuity and increasing inner radii yield
the same bound by Fatou; uniform second moments give uniform
integrability for the mean. Under the alternative Lipschitz assumption,
the pointwise inequality
$|P_\gamma(x,y)|\le m(\xi)\norm{x-y}$ proves the bound directly.
Conditional independence and centering of the $B^q$ records divide their
variance by $B^q$.
\end{proof}

\subsection{One persistent trajectory with rejected trials}
\label{subsec:prestep-execution}

Initialize $A_0=0$, $x_0=v_0=x^0$, and keep the proximal coefficient
$h_0$ fixed. The current accepted state is denoted by
$(A^-,x^-,v^-)$. At trial $j$, the scalar curvature candidate
$\mathcal L_j$ and a nonnegative model allowance $\chi_j$ are known
before the vector batch. Let $a_j>0$ be the unique positive solution
and define the candidate weight and center by
\begin{align}
 \mathcal L_j a_j^2&=h_0(A^-+a_j),\qquad A_j^+=A^-+a_j,
 \nonumber\\*
 a_j&=\frac{h_0+\sqrt{h_0^2+4h_0\mathcal L_jA^-}}
                   {2\mathcal L_j},\qquad
 y_j=\frac{A^-x^-+a_jv^-}{A_j^+}.
 \label{eq:prestep-weight}
\end{align}
Draw samples independently of the preceding history, form their
vector mean $g_j$ at $y_j$, and set
\begin{equation}
 v_j^+=\operatorname{Proj}_Q(v^--a_jg_j/h_0),\qquad
 x_j^+=\frac{A^-x^-+a_jv_j^+}{A_j^+},\qquad
 w_j=v_j^+-v^- .
 \label{eq:prestep-candidate}
\end{equation}
Using independent samples, draw a mean $\widehat q_j$ for
$\phi(x_j^+)-\phi(y_j)$ using
\eqref{eq:prestep-value-record}. Accept precisely when
\begin{equation}
 A_j^+\bigl[\widehat q_j-\ip{g_j}{x_j^+-y_j}\bigr]
     \le h_0\norm{w_j}^2/2+\chi_j.
 \label{eq:prestep-acceptance}
\end{equation}
An accepted trial commits $(A_j^+,x_j^+,v_j^+)$ as the next state.
A rejected trial leaves that state fixed and doubles the curvature
candidate. The next trial uses independent sampled functions. A success carries its
curvature candidate into the next round. There is one accepted
trajectory, and the scalar coefficient, weights and state are never
reinitialized.

A cap of $K_{\rm tr}$ executed trials, chosen before sampling,
ensures a finite budget.
The execution stops after $N$ accepted trials or after this cap,
whichever occurs first, and returns its last accepted averaged point.
If there is no accepted trial, it returns $x^0$. Let $\mathcal A$
and $\mathcal R$ denote the accepted and rejected trial indices.
Their union contains every executed trial.

Choose batch sizes $B^g,B^q\ge1$ before sampling and use them in every trial.
The exact cost is
$(B^g+B^q)|\mathcal A\cup\mathcal R|$ pairs, twice that many values,
$2|\mathcal A\cup\mathcal R|$ sequential sampling stages and
$|\mathcal A\cup\mathcal R|$ projections. For $B^g=B^q=B$ it is at
most $2BK_{\rm tr}$ pairs. The rejected projections and their two
sampling stages are included. Predictable variable batches have the
exact pair cost $\sum_j(B_j^g+B_j^q)$; any pilot batches have their
additional pair and stage costs. No sampled function is evaluated more than twice.

\begin{theorem}[Original-comparator accepted-trial potential]
\label{thm:prestep-potential}
Let $\phi$ be convex and differentiable on $Q$. For arbitrary vector
means and relative-value records define
\begin{equation}
 e_j=g_j-\nabla\phi(y_j),\qquad
 \zeta_j^q=\widehat q_j-[\phi(x_j^+)-\phi(y_j)],\qquad
 Z_j(u)=a_j\ip{e_j}{u-v^-}-A_j^+\zeta_j^q .
 \label{eq:prestep-errors}
\end{equation}
For every fixed $u\in Q$, if $n$ trials have been accepted, then
\begin{equation}
 A_n[\phi(x_n)-\phi(u)]
       +\frac{h_0}{2}\norm{u-v_n}^2
 \le\frac{h_0}{2}R_u^2
       +\sum_{j\in\mathcal A}\chi_j
       +\sum_{j\in\mathcal A}Z_j(u),
 \qquad R_u=\norm{u-x^0}.
 \label{eq:prestep-pathwise-potential}
\end{equation}
If $\phi=f_\gamma\ge f$, the same fixed original comparator satisfies
\begin{align}
 A_n[f(x_n)-f(u)]+\frac{h_0}{2}\norm{u-v_n}^2
 \le{}&\frac{h_0}{2}R_u^2+
       \sum_{j\in\mathcal A}[\chi_j+Z_j(u)]
       +A_n\beta_\gamma(u),\nonumber\\
 &\beta_\gamma(u)=f_\gamma(u)-f(u).
 \label{eq:prestep-original-potential}
\end{align}
The comparator need not minimize the smoothed population.
\end{theorem}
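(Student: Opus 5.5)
The plan is to prove a one-step inequality for each accepted trial and then sum; rejected trials change nothing and contribute nothing. Fix $u\in Q$ and an accepted trial $j$ taking the state $(A^-,x^-,v^-)$ to $(A_j^+,x_j^+,v_j^+)$. We aim to show
\[
 A_j^+[\phi(x_j^+)-\phi(u)]+\tfrac{h_0}{2}\norm{u-v_j^+}^2
 \le A^-[\phi(x^-)-\phi(u)]+\tfrac{h_0}{2}\norm{u-v^-}^2+\chi_j+Z_j(u).
\]
Telescoping over the accepted indices, with $A_0=0$ and $v_0=x^0$, then gives \eqref{eq:prestep-pathwise-potential} directly.

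The one-step inequality is built from three ingredients, in order.

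\begin{enumerate}
\item \emph{Convexity at $y_j$.} Apply the supporting inequality at $y_j$ to $x^-$ with weight $A^-$ and to $u$ with weight $a_j$. Using $A^-(x^--y_j)+a_j(u-y_j)=a_j(u-v^-)$, this gives
\[
 A^-\phi(x^-)+a_j\phi(u)\ge A_j^+\phi(y_j)+a_j\ip{\nabla\phi(y_j)}{u-v^-}.
\]
\item \emph{Acceptance test.} The test \eqref{eq:prestep-acceptance}, rewritten with $\widehat q_j=\phi(x_j^+)-\phi(y_j)+\zeta_j^q$ and $A_j^+(x_j^+-y_j)=a_jw_j$, gives
\[
 A_j^+\phi(x_j^+)\le A_j^+\phi(y_j)+a_j\ip{g_j}{w_j}+\tfrac{h_0}{2}\norm{w_j}^2+\chi_j-A_j^+\zeta_j^q.
\]
This inequality replaces the smoothness-based descent step; no curvature constant enters.
\item \emph{Projection optimality.} The projected step with coefficient $h_0/a_j$ gives
\[
 a_j\ip{g_j}{v_j^+-u}\le\tfrac{h_0}{2}\bigl(\norm{u-v^-}^2-\norm{u-v_j^+}^2-\norm{w_j}^2\bigr).
\]
\end{enumerate}

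Adding the three inequalities, the $\phi(y_j)$ terms cancel. Writing $\nabla\phi(y_j)=g_j-e_j$, the gradient terms combine as $a_j\ip{g_j}{w_j}+a_j\ip{g_j}{v^--u}=a_j\ip{g_j}{v_j^+-u}$, which the projection bound controls. The $\norm{w_j}^2$ terms cancel exactly, and the leftover error terms are $a_j\ip{e_j}{u-v^-}-A_j^+\zeta_j^q=Z_j(u)$. Since $A_j^+=A^-+a_j$, the terms $a_j\phi(u)$ recombine into the weighted comparator gaps, which proves the one-step claim.

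The weight equation $\mathcal L_ja_j^2=h_0A_j^+$ is not needed in this pathwise argument; it matters only later, for the growth of $A_n$. For the original-objective version \eqref{eq:prestep-original-potential}, use $f\le\phi$ at $x_n$ and $\phi(u)=f(u)+\beta_\gamma(u)$ to pass from $A_n\phi(x_n)$ and $A_n\phi(u)$ to $f$; this adds exactly $A_n\beta_\gamma(u)$. Here $A_n$ is the accepted weight, equal to $\sum_{j\in\mathcal A}a_j$.

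The main obstacle is only bookkeeping: getting the sign of $\zeta_j^q$ right and confirming that the $\norm{w_j}^2$ terms cancel exactly. I would check the three inequalities term by term against the definitions \eqref{eq:prestep-errors}.
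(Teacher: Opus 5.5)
Your proposal is correct and follows essentially the same route as the paper: projection optimality, the supporting inequality at $y_j$ with the center identities, substitution of the acceptance test via $\widehat q_j=\phi(x_j^+)-\phi(y_j)+\zeta_j^q$, telescoping over accepted transitions only (with $A_0=0$, $v_0=x^0$), and finally $f\le\phi$ at $x_n$ to pick up $A_n\beta_\gamma(u)$. The only cosmetic difference is ordering. The paper first derives, for every trial, a one-step inequality whose residual is $A_j^+[\phi(x_j^+)-\phi(y_j)-\ip{g_j}{x_j^+-y_j}]-\tfrac{h_0}{2}\norm{w_j}^2+a_j\ip{e_j}{u-v^-}$, and only then bounds that residual by $\chi_j+Z_j(u)$ on acceptance; you instead add all three inequalities at once.
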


\begin{proof}
Projection optimality gives, at any trial,
\[
 a_j\ip{g_j}{v_j^+-u}
 \le\frac{h_0}{2}\left(
 \norm{u-v^-}^2-\norm{u-v_j^+}^2-\norm{w_j}^2\right).
\]
The center identity and convexity at $y_j$ give
\[
 A^-\phi(x^-)+a_j\phi(u)
 \ge A_j^+\phi(y_j)
             +a_j\ip{\nabla\phi(y_j)}{u-v^-}.
\]
Also $A_j^+(x_j^+-y_j)=a_jw_j$. Combining these identities, writing
$\nabla\phi(y_j)=g_j-e_j$, and adding the distance potential yields
\begin{align*}
 &A_j^+[\phi(x_j^+)-\phi(u)]
                    +\frac{h_0}{2}\norm{u-v_j^+}^2\\
 &\quad\le A^-[\phi(x^-)-\phi(u)]
                    +\frac{h_0}{2}\norm{u-v^-}^2\\
 &\qquad\quad+
 A_j^+\bigl[\phi(x_j^+)-\phi(y_j)
                       -\ip{g_j}{x_j^+-y_j}\bigr]
       -\frac{h_0}{2}\norm{w_j}^2
       +a_j\ip{e_j}{u-v^-}.
\end{align*}
On an accepted trial, substitute the observed model inequality
\eqref{eq:prestep-acceptance} and
$\phi(x_j^+)-\phi(y_j)=\widehat q_j-\zeta_j^q$.
The last three terms are bounded by $\chi_j+Z_j(u)$.
Rejected trials do not change the accepted state. Summing only the
accepted transitions, using $A_0=0$, proves
\eqref{eq:prestep-pathwise-potential}. Finally,
$f(x_n)-f(u)\le\phi(x_n)-\phi(u)+\beta_\gamma(u)$ proves
\eqref{eq:prestep-original-potential}. At $n=0$ the potential is its
initial value and both claims hold as equalities or inequalities
directly.
\end{proof}

\begin{proposition}[Expected error from rejected trials]
\label{prop:prestep-rejection-charge}
Suppose $Q$ is compact, $\phi$ is continuously differentiable on $Q$,
$K_{\rm tr}<\infty$, and all vector means and relative-value means
satisfy uniform finite conditional second-moment envelopes under
the histories preceding their respective batches. Assume their conditional means are respectively
$\nabla\phi(y_j)$ and $\phi(x_j^+)-\phi(y_j)$.
The curvature candidate is nondecreasing and at least
$\mathcal L_0>0$. Suppose additionally that the nonnegative
allowances have integrable total. Then all potential and error terms
below are integrable, and
\begin{align}
 \E\left[A_n[\phi(x_n)-\phi(u)]
                  +\frac{h_0}{2}\norm{u-v_n}^2\right]
 \le{}&\frac{h_0}{2}R_u^2
        +\E\sum_{j\in\mathcal A}\chi_j
        -\E\sum_{j\in\mathcal R}Z_j(u).
 \label{eq:prestep-rejection-expectation}
\end{align}
This is a weighted guarantee with a random terminal $A_n$.
The rejected-error term records the dependence introduced by model
selection.
\end{proposition}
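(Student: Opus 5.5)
The plan is to start from the one-trial inequality in the proof of Theorem~\ref{thm:prestep-potential} rather than its accepted-only sum. For every executed trial $j$, with pre-trial accepted state $(A^-,x^-,v^-)$, that proof gives
\[
 \Phi_j^+ \le \Phi^- + A_j^+\bigl[\widehat q_j-\ip{g_j}{x_j^+-y_j}\bigr]-\tfrac{h_0}{2}\norm{w_j}^2+Z_j(u),
\]
where $\Phi=A[\phi(x)-\phi(u)]+\frac{h_0}{2}\norm{u-v}^2$. On acceptance the middle bracket is at most $\chi_j$. Accepted trials therefore contribute $\chi_j+Z_j(u)$, and rejected trials leave $\Phi$ unchanged. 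The pathwise identity
\[
 \Phi_n-\Phi_0\le\sum_{j\in\mathcal A}\chi_j+\sum_{j\in\mathcal A\cup\mathcal R}Z_j(u)-\sum_{j\in\mathcal R}Z_j(u)
\]
is trivial. The content of the proposition is that $\E\sum_{j\text{ executed}}Z_j(u)=0$. That step will be the main one.

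To prove this zero-mean claim, I would use two histories per trial. The first is $\mathcal G_j^g$, the history before the vector batch. It contains $A^-,v^-,\mathcal L_j,a_j,y_j$, so $u-v^-$ is measurable with respect to it. The second is $\mathcal G_j^q$, the history before the value batch. It contains $g_j$, hence $x_j^+,y_j$ and $A_j^+$.

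Conditional centering then gives
\[
 \E[a_j\ip{e_j}{u-v^-}\mid\mathcal G_j^g]=0,\qquad \E[A_j^+\zeta_j^q\mid\mathcal G_j^q]=0.
\]
The event that trial $j$ is executed is determined before trial $j$ starts: fewer than $N$ acceptances so far and $j\le K_{\rm tr}$. Multiplying by its indicator preserves both identities. Summing over $j\le K_{\rm tr}$, a finite deterministic range, then gives $\E\sum_{\text{executed}}Z_j=0$.

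The obstacle is integrability. It must be checked before any conditioning or sum exchange. Three bounds will do it.

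First, because the curvature candidate is at least $\mathcal L_0$, the weight formula gives $a_j\le(h_0+\sqrt{h_0^2+4h_0\mathcal L_0A^-})/(2\mathcal L_0)$. Since $A^-$ is itself a sum of at most $K_{\rm tr}$ such weights, induction gives deterministic bounds on $a_j$ and $A_j^+$.

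Second, compactness bounds $\norm{u-v^-}$ by $\operatorname{diam}Q$.

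Third, the uniform conditional second-moment envelopes give $\E\norm{e_j}<\infty$ and $\E|\zeta_j^q|<\infty$, via conditional Jensen and the tower property.

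Together these make every $Z_j$ integrable. Continuity of $\phi$ on compact $Q$ makes $\Phi_n$ bounded. $\Phi_0=\frac{h_0}{2}R_u^2$ is deterministic. The allowance total is integrable by hypothesis.

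Taking expectations of the pathwise inequality and substituting $\E\sum_{\mathcal A\cup\mathcal R}Z_j=0$ yields \eqref{eq:prestep-rejection-expectation}. I would finish by remarking that the rejected sum cannot be dropped. Acceptance depends on $\zeta_j^q$ and $e_j$, so the accepted-only sum is not centered. That is exactly why the term $-\E\sum_{\mathcal R}Z_j(u)$ remains, and why the terminal weight $A_n$ stays random.
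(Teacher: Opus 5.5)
Your proposal is correct and follows the paper's proof essentially step for step. It obtains deterministic bounds on $a_j$ and $A_j^+$ from $\mathcal L_j\ge\mathcal L_0$ and the finite trial cap, which together with compactness give integrability. It centers the two error terms separately at their respective pre-batch histories, using the predictable execution indicator. Finally, it partitions the executed trials so that $\E\sum_{j\in\mathcal A}Z_j(u)=-\E\sum_{j\in\mathcal R}Z_j(u)$ can be substituted into the accepted-trial pathwise potential.
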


\begin{proof}
We first justify integrability under a uniform second-moment
envelope, as in the stated oracle assumptions. Let
$c=h_0/\mathcal L_0$. From \eqref{eq:prestep-weight},
$a_j\le c+\sqrt{cA^-}$. By induction on the number $t$ of accepted
states, $A_t\le c(t+1)^2$: the bound at $t=0$ is immediate, and
$A_{t+1}\le c[(t+1)^2+(t+1)+1]\le c(t+2)^2$.
Thus every executed candidate has
$a_j\le c(K_{\rm tr}+2)$ and
$A_j^+\le c(K_{\rm tr}+2)^2$.
The accepted and tentative points belong to the compact set $Q$;
$\phi$ and all distances are bounded there. Uniform conditional
second moments, these weight bounds and the finite trial cap make
every $Z_j(u)$ integrable.

The execution indicator of each trial, its weight, first center and
$u-v^-$ are known before its vector samples are drawn. Therefore the
expectation of its first error inner product is zero. Its tentative
second center and $A_j^+$ are known before the function-difference samples are drawn, so the expectation of the second error term is zero separately.
Summing over the at most $K_{\rm tr}$ executed trials gives
\[
 \E\sum_{j\in\mathcal A\cup\mathcal R}Z_j(u)=0.
\]
No accepted or rejected indicator was included in these separate
centering steps. Since their sums partition every executed trial,
$\E\sum_{\mathcal A}Z_j=-\E\sum_{\mathcal R}Z_j$.
Taking expectations of \eqref{eq:prestep-pathwise-potential} proves
\eqref{eq:prestep-rejection-expectation}.
\end{proof}

For an original comparator, the right side of
\eqref{eq:prestep-rejection-expectation} additionally contains
$\E[A_n\beta_\gamma(u)]$ as in
\eqref{eq:prestep-original-potential}. Dividing by random $A_n$ and
then dropping a centered all-trial sum is not justified by this
proposition. A finite trial cap can also end before any acceptance.
An unweighted finite-second-moment risk theorem therefore needs
explicit control of the rejection charge, the terminal denominator
and the incomplete-execution event.

\subsection{The exact-record original-distance bound}
\label{subsec:prestep-exact}

\begin{theorem}[Exact-record scalar backtracking]
\label{thm:prestep-exact-distance}
Suppose $\phi$ is convex and $L$-smooth on $Q$, each $g_j$ equals
$\nabla\phi(y_j)$, each $\widehat q_j$ equals
$\phi(x_j^+)-\phi(y_j)$, and $\chi_j=0$.
Let the first curvature candidate be $\mathcal L_0$ and carry it
across successes, doubling it only on rejections. Define
\[
 J_\star=\left\lceil
       \log_2\max\{1,L/\mathcal L_0\}\right\rceil,\qquad
 \overline{\mathcal L}=\max\{\mathcal L_0,2L\}.
\]
A trial cap $K_{\rm tr}\ge N+J_\star$ guarantees $N$ acceptances.
There are at most $J_\star$ rejected trials, every curvature
candidate is at most $\overline{\mathcal L}$, and
\begin{equation}
 A_N\ge\frac{h_0N^2}{4\overline{\mathcal L}},\qquad
 \phi(x_N)-\phi(u)
 \le\frac{h_0R_u^2}{2A_N}
 \le\frac{2\overline{\mathcal L}R_u^2}{N^2}
 \label{eq:prestep-exact-rate}
\end{equation}
for every fixed $u\in Q$. The coefficient and the accepted trajectory
persist throughout. Each rejected trial has the full pair, stage and
projection costs stated in the execution.
\end{theorem}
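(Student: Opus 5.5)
The proof will combine the pathwise accepted-trial potential of Theorem~\ref{thm:prestep-potential} with a standard descent-lemma acceptance argument. With exact records, every $e_j=0$ and every $\zeta_j^q=0$. Hence $Z_j(u)=0$, and \eqref{eq:prestep-pathwise-potential} with $\chi_j=0$ gives $A_n[\phi(x_n)-\phi(u)]\le h_0R_u^2/2$ for every number $n$ of acceptances. The remaining work is twofold: show that the trial cap yields $N$ acceptances, and lower bound $A_N$.

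\textbf{Step 1: automatic acceptance.} I first show that any trial with $\mathcal L_j\ge L$ is accepted. With exact records the left side of \eqref{eq:prestep-acceptance} equals $A_j^+[\phi(x_j^+)-\phi(y_j)-\ip{\nabla\phi(y_j)}{x_j^+-y_j}]$. By $L$-smoothness this is at most $A_j^+L\norm{x_j^+-y_j}^2/2$. Using $A_j^+(x_j^+-y_j)=a_jw_j$, the bound becomes $La_j^2\norm{w_j}^2/(2A_j^+)$. The weight equation gives $a_j^2/A_j^+=h_0/\mathcal L_j$, so the bound is $(L/\mathcal L_j)h_0\norm{w_j}^2/2\le h_0\norm{w_j}^2/2$, and the trial is accepted.

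\textbf{Step 2: counting rejections and bounding the candidate.} The candidate starts at $\mathcal L_0$, is carried across successes, and doubles only on rejections. After $k$ rejections it equals $2^k\mathcal L_0$. A rejection requires the current candidate to be below $L$, so rejections stop once $2^k\mathcal L_0\ge L$, that is, after at most $J_\star$ of them. The candidate present when a rejection occurs is below $L$, and doubling it gives a value below $2L$. Every candidate is therefore at most $\max\{\mathcal L_0,2L\}=\overline{\mathcal L}$. Since the total number of trials is at most $N+J_\star$ accepted-or-rejected trials, any cap $K_{\rm tr}\ge N+J_\star$ reaches $N$ acceptances.

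\textbf{Step 3: growth of $A_N$.} This is the main computational step. From $\mathcal L_ja_j^2=h_0A_j^+$ and $\mathcal L_j\le\overline{\mathcal L}$, at each accepted step $a_j\ge\sqrt{h_0A_j^+/\overline{\mathcal L}}$. Write $c=h_0/\overline{\mathcal L}$. Then $\sqrt{A^+}-\sqrt{A^-}=a_j/(\sqrt{A^+}+\sqrt{A^-})\ge a_j/(2\sqrt{A^+})\ge\sqrt c/2$. Starting from $A_0=0$ and summing over $N$ acceptances gives $\sqrt{A_N}\ge N\sqrt c/2$, so $A_N\ge h_0N^2/(4\overline{\mathcal L})$.

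Dividing the potential bound by $A_N$ then gives $\phi(x_N)-\phi(u)\le h_0R_u^2/(2A_N)\le 2\overline{\mathcal L}R_u^2/N^2$. The persistence and cost claims follow directly from the execution description: rejected trials leave the accepted state unchanged and are charged their full cost.
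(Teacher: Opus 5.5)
Your proposal is correct and follows the paper's proof essentially step for step. You use the descent-lemma bound with $a_j^2/A_j^+=h_0/\mathcal L_j$ to show every candidate $\mathcal L_j\ge L$ passes, the doubling argument for the rejection count and the ceiling $\overline{\mathcal L}$, the telescoping estimate $\sqrt{A_t}-\sqrt{A_{t-1}}\ge\frac12\sqrt{h_0/\overline{\mathcal L}}$, and the pathwise accepted-trial potential with all errors and allowances equal to zero.
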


\begin{proof}
Smoothness and the center identity give
\begin{align*}
 A_j^+\bigl[\phi(x_j^+)-\phi(y_j)
             -\ip{\nabla\phi(y_j)}{x_j^+-y_j}\bigr]
 &\le \frac{LA_j^+}{2}\norm{x_j^+-y_j}^2\\*
 &=\frac{La_j^2}{2A_j^+}\norm{w_j}^2
 =\frac{h_0L}{2\mathcal L_j}\norm{w_j}^2.
\end{align*}
Every candidate $\mathcal L_j\ge L$ therefore passes. Each rejection
must occur below $L$, so its doubling yields a candidate below $2L$.
Monotonicity of the candidate then proves the uniform bound and the
total rejection count. In particular, $N+J_\star$ trials suffice.

For an accepted weight,
$a_t=\sqrt{h_0A_t/\mathcal L_t}$ and $A_t=A_{t-1}+a_t$.
Consequently,
\[
 \sqrt{A_t}-\sqrt{A_{t-1}}
 =\frac{a_t}{\sqrt{A_t}+\sqrt{A_{t-1}}}
 \ge\frac12\sqrt{h_0/\mathcal L_t}
 \ge\frac12\sqrt{h_0/\overline{\mathcal L}}.
\]
Sum from $A_0=0$ to obtain the lower bound on $A_N$.
All retained errors and allowances vanish in
\eqref{eq:prestep-pathwise-potential}. Discarding its nonnegative
terminal distance and dividing by $A_N>0$ proves the risk bounds.
\end{proof}

This theorem is an exact-record statement. Relative smoothed-function
records generally have noise even for deterministic sample losses,
and a two-value vector record is generally random in $d>1$.
One exact two-value implementation exists for deterministic
one-dimensional quadratics: use the central pair $y\pm\gamma$ for
$g_j$, and an independent sampled function for the comparison pair $x_j^+,y_j$.
Both records are exact for the original quadratic population at every
legal $\gamma>0$. With one record of each type, the exact cost is
two pairs, four values, two stages and one projection per trial.
For a quadratic with $R=G/L$, the bound in
\eqref{eq:prestep-exact-rate} is $O(G^2/(LN^2))$ when
$L\ge\mathcal L_0$, overcoming the corresponding post-step
initialization trajectory while using the same original comparator.

\subsection{Unobserved curvature under arbitrary two-value queries}
\label{subsec:prestep-rare-curvature}

Rare-observation transcript couplings also appear in parameter-free
stochastic-gradient lower bounds
\citep[Section~4.4]{AttiaKoren2024}.
The limits of simultaneous parameter adaptation are studied by
\citet{khaled2024tuning}. The following construction treats an entire
sample function, so its coupling covers both function values at any
adaptive pair of endpoints. Every proof is given directly.

\begin{theorem}[A rare-curvature original-distance obstruction]
\label{thm:prestep-rare-curvature}
Fix $D,G,\gammaMax>0$ and a positive integer pair budget $T$.
Let $Q=[0,D]$, $x^0=0$, and let an arbitrary randomized adaptive
algorithm draw at most $T$ independent sampled functions, each evaluated at exactly two
endpoints in $[-\gammaMax,D+\gammaMax]$, and return a point in $Q$.
All samples from rejected trials, pilot batches and diagnostics are included in this
budget. The algorithm receives no supplied population smoothness,
sample regularity, variance or original-distance parameter.
Its other supplied inputs are held fixed.

For the deterministic sample function $F_0(x)=-Gx$, let $W\in Q$
be its output and suppose $w=(\E W^2)^{1/2}>0$.
For every $L\ge\max\{G/D,2G/w\}$ put
\begin{equation}
 p=\frac1{4T},\qquad
 F_L(x,\xi)=-Gx+\frac{L\xi}{2p}x^2,\qquad
 \xi\sim\operatorname{Bernoulli}(p).
 \label{eq:prestep-rare-sample}
\end{equation}
This oracle has a convex globally $L$-smooth population
$f_L(x)=-Gx+Lx^2/2$ with original stationary minimizer
$x^\star=R=G/L\in Q$. Its samples are convex and have finite
second moments on the entire evaluation domain. For the standard
one-dimensional symmetric vector record, uniform center variance
and sample regularity envelopes may be taken as
\begin{equation}
 \sigma_0=LD\sqrt{\frac{1-p}{p}},\qquad
 M_2=G+\frac{L(D+\gammaMax)}{\sqrt p}.
 \label{eq:prestep-rare-moments}
\end{equation}
Nevertheless the output $X_L$ obeys
\begin{align}
 \E[f_L(X_L)-f_L(x^\star)]
 &\ge\frac{3Lw^2}{32},\nonumber\\
 \frac{\sigma_0R}{\sqrt T}&\le2GD,\qquad
 \frac{M_2R}{\sqrt T}
 \le\frac{G^2}{L\sqrt T}+2G(D+\gammaMax).
 \label{eq:prestep-rare-risk}
\end{align}
The statement allows arbitrary nonsymmetric pairs and adaptive
endpoint locations. It imposes no restriction on the optimizer
states, restarts or sequential sampling stages.
\end{theorem}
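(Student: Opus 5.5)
The plan is a transcript-coupling argument, like Theorem~\ref{thm:frontier-pareto-lower}, followed by direct moment computations. First I verify the oracle properties. The population is $f_L(x)=-Gx+\tfrac{L}{2}x^2$, since $\E\xi/p=1$. Each sample is a convex quadratic because $\xi\ge0$. The minimizer is $R=G/L$, and $R\le D$ follows from $L\ge G/D$.

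For the moment envelopes I use the one-dimensional symmetric record. At center $z$ it equals $-G+L\xi z/p$ for either sign of $e$. Its variance is therefore $L^2z^2(1-p)/p\le L^2D^2(1-p)/p$ on $Q$, which gives $\sigma_0$. The sample derivative on the evaluation domain has magnitude at most $G+L\xi(D+\gammaMax)/p$. Minkowski then gives $(\E|F'|^2)^{1/2}\le G+L(D+\gammaMax)\sqrt{\E\xi^2}/p=G+L(D+\gammaMax)/\sqrt p$, which is $M_2$.

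The key step is the coupling. I pre-draw $T$ independent Bernoulli labels and fix the algorithm's internal randomness across both oracles. On the event $E$ that all labels are $0$, every sampled function coincides with $F_0$. Induction over the adaptive transcript then shows that all endpoints, observed values, stopping decisions and outputs agree, so $X_L=W$ on $E$. Because $E$ is independent of the internal seed, $\Prb(E)=(1-p)^T\ge1-pT=3/4$.

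Next I bound the risk from below using $f_L(x)-f_L(R)=\tfrac{L}{2}(x-R)^2\ge0$:
\[
 \E[f_L(X_L)-f_L(R)]\ge\tfrac{L}{2}\E[\mathbf 1_E(W-R)^2]=\tfrac{3L}{8}\E(W-R)^2 .
\]
The equality uses independence of $E$ from $W$, since $W$ depends only on the seed. The main obstacle is converting this into $\tfrac{3Lw^2}{32}$, which requires $(\E(W-R)^2)^{1/2}\ge w/2$. The triangle inequality in $L_2$ gives $(\E(W-R)^2)^{1/2}\ge w-R$. The hypothesis $L\ge 2G/w$ gives $R\le w/2$, so the bound is $\ge w/2$ and the risk is $\ge\tfrac{3L}{8}\cdot\tfrac{w^2}{4}=\tfrac{3Lw^2}{32}$.

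Finally I check the normalized moment terms with $p=1/(4T)$. For the first, $\sigma_0R/\sqrt T\le LD\,p^{-1/2}(G/L)/\sqrt T=2GD$. For the second,
\[
 \frac{M_2R}{\sqrt T}=\frac{G^2}{L\sqrt T}+\frac{G(D+\gammaMax)}{\sqrt{pT}}=\frac{G^2}{L\sqrt T}+2G(D+\gammaMax).
\]
The argument never restricts the query locations, restarts or stages, because the coupling covers the entire realized sample functions.
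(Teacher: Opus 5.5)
Your proposal is correct and follows the paper's proof essentially step for step: the same pre-drawn Bernoulli coupling with $\Prb(E)=(1-p)^T\ge1-pT=3/4$, the same exact one-dimensional record $-G+L\xi z/p$ with its Minkowski moment envelope, and a reverse triangle inequality in $L_2$ that is equivalent to the paper's Cauchy--Schwarz expansion $\E(W-R)^2\ge(w-R)^2$. One small slip: in your display, $\tfrac{L}{2}\E[\mathbf 1_E(W-R)^2]=\tfrac{L}{2}\Prb(E)\,\E(W-R)^2$ is an equality by independence, but the next step to $\tfrac{3L}{8}\E(W-R)^2$ is an inequality ($\ge$), since you only know $\Prb(E)\ge3/4$.
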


\begin{proof}
The population mean and its minimizer follow directly from
$\E\xi=p$. Completing the square gives
$f_L(x)-f_L(x^\star)=L(x-G/L)^2/2$.
The lower bound on $L$ places the stationary minimizer inside $Q$.

Each sample is a convex polynomial. Put $b=D+\gammaMax$.
Its derivative is $-G+L\xi x/p$.
On the evaluation domain, a sample Lipschitz envelope is
$m(\xi)=G+Lb\xi/p$ and Minkowski gives
$(\E m(\xi)^2)^{1/2}\le G+Lb/\sqrt p$.
Thus the displayed $M_2$ is a valid uniform sample-gradient and
sample-Lipschitz second-moment envelope. The same envelope holds on
an open neighborhood of the evaluation domain. Indeed, choose
$0<\eta_{\rm dom}\le\min\{D/2,Gp/L\}$. On
$(-\gammaMax-\eta_{\rm dom},b+\eta_{\rm dom})$, the rare sample
has derivative supremum
$L(b+\eta_{\rm dom})/p-G$: its positive endpoint dominates the
negative endpoint because $LD/p-2G\ge2G$. The squared second moment
of this supremum, including the common sample, is
\[
 G^2-2GL(b+\eta_{\rm dom})
       +\frac{L^2(b+\eta_{\rm dom})^2}{p}.
\]
Subtracting it from $(G+Lb/\sqrt p)^2$ and using
$L\eta_{\rm dom}\le Gp$ gives a difference at least
$2GLb/\sqrt p+GL\eta_{\rm dom}>0$.
Each function value also has a finite second moment throughout
this bounded neighborhood. In dimension one the symmetric vector
record at any legal radius is exactly
$-G+L\xi z/p$ at center $z$, because the quadratic central
difference equals its derivative. Its centered variance is
$L^2z^2(1-p)/p\le L^2D^2(1-p)/p$.
This proves all moment assumptions without restricting the radius.

Couple the algorithm's internal random seed with its deterministic
execution on $F_0$. Draw $T$ independent Bernoulli variables in
advance, using them in order as sampled functions are requested.
Let $E$ be the event that they are all zero. It is independent of the
algorithm seed and has probability
$(1-p)^T\ge1-Tp=3/4$.
On $E$, every sampled loss function equals
$F_0$ at every point. Induction through every adaptive query and
internal decision shows that both values, both endpoint locations,
the stopping decision and the output have the deterministic-oracle
transcript. This argument also covers any samples unused by an early
stop. In particular, on $E$, $X_L=W$ under the common seed.

Since the loss gap is nonnegative everywhere,
\[
 \E[f_L(X_L)-f_L(x^\star)]
 \ge\frac{3L}{8}\E(W-R)^2.
\]
Cauchy--Schwarz gives $\E W\le w$, so
$\E(W-R)^2=w^2-2R\E W+R^2\ge(w-R)^2$.
The assumption $L\ge2G/w$ implies $R\le w/2$, proving the
first risk inequality. Finally $pT=1/4$ and $R=G/L$ give
\[
 \frac{\sigma_0R}{\sqrt T}
 =GD\sqrt{\frac{1-p}{pT}}\le2GD,\qquad
 \frac{M_2R}{\sqrt T}
 =\frac{G^2}{L\sqrt T}+\frac{Gb}{\sqrt{pT}},
\]
which proves the remaining bounds.
\end{proof}

\begin{corollary}[A necessary additional adaptation price]
\label{cor:prestep-rare-price}
Under Theorem~\ref{thm:prestep-rare-curvature},
\begin{equation}
 \E[f_L(X_L)-f_L(x^\star)]
 \ge\frac3{64}\frac{w^2}{D^2}
                      \frac{\sigma_0D}{\sqrt T}.
 \label{eq:prestep-rare-diameter-price}
\end{equation}
If the deterministic linear instance satisfies
$\E[F_0(W)-\min_QF_0]\le GD/2$, the right side is at least
$3\sigma_0D/(256\sqrt T)$.
For fixed supplied inputs and any finite constant $C$, an algorithm
with $w>0$ cannot have, uniformly over this oracle family,
\begin{equation}
 \E[f_L(X_L)-f_L(x^\star)]
 \le C\left(\frac{(L+h_0)R^2}{N^2}
           +\frac{(\sigma_0+M_2)R}{\sqrt T}\right).
 \label{eq:prestep-pure-distance-template}
\end{equation}
Multiplying its right side by any fixed finite product of powers of
logarithms of $L,\sigma_0,M_2,R^{-1}$ also cannot restore this uniform
claim. A separate adaptation charge is necessary in the stated
unknown-scale finite-moment model.
\end{corollary}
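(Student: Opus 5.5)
The corollary follows from Theorem~\ref{thm:prestep-rare-curvature} by direct substitution. I would first combine its two displayed bounds. The theorem gives $\E[f_L(X_L)-f_L(x^\star)]\ge 3Lw^2/32$. The variance envelope is $\sigma_0=LD\sqrt{(1-p)/p}$ with $p=1/(4T)$, so $\sigma_0\le LD/\sqrt p=2LD\sqrt T$. Hence $L\ge\sigma_0/(2D\sqrt T)$, and substituting gives
\[
\E[f_L(X_L)-f_L(x^\star)]\ge\frac{3w^2}{32}\cdot\frac{\sigma_0}{2D\sqrt T}=\frac3{64}\frac{w^2}{D^2}\frac{\sigma_0D}{\sqrt T},
\]
which is \eqref{eq:prestep-rare-diameter-price}.

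For the second claim I would lower-bound $w$ using the deterministic guarantee. Since $\min_Q F_0=-GD$, the hypothesis reads $G(D-\E W)\le GD/2$, so $\E W\ge D/2$. Jensen's inequality then gives $w\ge\E W\ge D/2$, hence $w^2/D^2\ge1/4$. The constant becomes $3/256$, and in particular $w>0$.

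For the impossibility of \eqref{eq:prestep-pure-distance-template}, I would let $L\to\infty$ with $T,N,h_0,D,\gammaMax,G$ fixed. The left side is at least $3Lw^2/32$, which grows linearly in $L$. On the right, $R=G/L$, so $(L+h_0)R^2/N^2=O(1/L)$. The bounds \eqref{eq:prestep-rare-risk} give $\sigma_0R/\sqrt T\le2GD$ and $M_2R/\sqrt T\le G^2/(L\sqrt T)+2G(D+\gammaMax)$, so the right side stays bounded, which is a contradiction.

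The main obstacle is the logarithmic-factor extension, and I would handle it by tracking growth rates. As functions of $L$ at fixed $p$, we have $\sigma_0\asymp L$, $M_2\asymp L$ and $R^{-1}\asymp L$. The bracketed expression stays bounded, so multiplying by any fixed product of powers of logarithms of these quantities gives only polylogarithmic growth in $L$. This is still $o(L)$, so the linear lower bound contradicts it. Some of the logarithms could be negative or zero for small arguments; since we only need large $L$, taking $L$ sufficiently large makes their arguments exceed $e$, and then the factor is a positive polylogarithm.
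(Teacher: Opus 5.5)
Your proposal is correct and follows the paper's proof essentially step for step. You bound $\sigma_0\le 2LD\sqrt T$ and substitute into $3Lw^2/32$, then use $\E W\ge D/2$ with Jensen for the $3/256$ constant. You then let $L\to\infty$, so the template's right side stays bounded (up to polylogarithmic factors in $L$) while the lower bound grows linearly. Your remark that the logarithms' arguments eventually exceed $e$ is a small bookkeeping detail that the paper leaves implicit.
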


\begin{proof}
Equation~\eqref{eq:prestep-rare-moments} gives
$\sigma_0D/\sqrt T=LD^2\sqrt{4-1/T}\le2LD^2$.
Substitute this bound into the first inequality of
\eqref{eq:prestep-rare-risk}, proving
\eqref{eq:prestep-rare-diameter-price}.
For the linear-progress condition,
$G(D-\E W)\le GD/2$ implies $\E W\ge D/2$;
thus $w^2\ge(\E W)^2\ge D^2/4$.

In \eqref{eq:prestep-pure-distance-template},
$(L+h_0)R^2/N^2=G^2/(LN^2)+h_0G^2/(L^2N^2)$ tends to zero
as $L$ grows, and both statistical products remain bounded by
\eqref{eq:prestep-rare-risk}. Its right side is therefore bounded
for fixed supplied inputs, whereas the lower bound grows linearly in
$L$. This is a contradiction for sufficiently large $L$.
Along this family $\sigma_0,M_2,R^{-1}$ all grow at most linearly
in $L$, and every fixed product of their logarithmic powers grows
more slowly than $L$. The same contradiction applies.
\end{proof}

The construction identifies a necessary price for unknown scales
and progress on the linear instance. It does not specify the
optimal form of that price over all functions. The deterministic
pre-step theorem removes the curvature-transfer failure of the
studied anchor execution. The complete stochastic original-distance
guarantee additionally requires explicit sampling costs for acceptance tests and
control of smoothing charges at the original comparator.

\section{Stochastic pre-step acceleration with confidence bounds}
\label{sec:preconf}

We retain one accepted similar-triangles trajectory and keep its Euclidean
proximal coefficient fixed. A predictable model allowance controls both
the acceptance test and the retained errors. The construction uses a
supplied diameter and second-moment upper bounds. Its sample and
curvature-reference costs are stated explicitly.

\subsection{Paired records and finite-moment aggregation}
\label{subsec:preconf-records}

Let $Q\subset\mathbb R^d$ be nonempty, compact and convex, with exact
Euclidean projection and supplied diameter bound $D>0$. Fix $x^0\in Q$
and a legal radius $\gamma>0$. Write $\phi=f_\gamma$, and assume that
$\phi$ is convex and $L_\phi$-smooth on $Q$, with unknown $L_\phi\ge0$.
For a fixed original comparator $u\in Q$ put
\[
 R_u=\norm{u-x^0},\qquad \beta_\gamma(u)=\phi(u)-f(u)\ge0.
\]
We assume $f\le\phi$ and that $f$ is $M$-Lipschitz on $Q$.

At any center $y$ chosen before sampling, a symmetric paired record has
conditional mean $\nabla\phi(y)$ and centered second moment at most
the supplied positive bound $\bar\sigma^2$. At a pair
$x,y$ chosen before sampling, an independent sampled function supplies
\begin{equation}
 P_\gamma(x,y;U,\xi)
 =F(x+\gamma U,\xi)-F(y+\gamma U,\xi),
 \qquad U\sim\operatorname{Unif}(\mathbb B_2^d),
 \label{eq:preconf-relative-record}
\end{equation}
with $U$ independent of $\xi$. Its conditional mean is
$\phi(x)-\phi(y)$ and its conditional second moment is at most
$\overline M_2^2\norm{x-y}^2$, where $\overline M_2>0$ is supplied.
The weak-gradient and segment assumptions or random sample Lipschitz
assumption of Lemma~\ref{lem:prestep-relative-moments} gives this
relative record. Every sampled function is evaluated at exactly two points.
The vector and comparison batches use separate independent samples.

Median-based confidence amplification under finite second moments is
classical \citep{Minsker2015}. We use the following explicit block rule.
For an odd number $k$ of vector blocks, let $z_i$ be the mean of the
records in block $i$. For each $i$, take the median of
$\{\norm{z_i-z_l}:1\le l\le k\}$, including the zero distance to
itself. Return the block mean with smallest median, breaking ties by index.
For scalar blocks, return the ordinary median of their means.

\begin{lemma}[Robust records under finite second moments]
\label{lem:preconf-robust-means}
For positive error tolerances, suppose the vector observations are independent
conditional on the preceding history, with common mean
$\mu$ and centered second moment at most $\bar\sigma^2$ at the
center determined by that history. If each of $k$ odd blocks has at least
$72\bar\sigma^2/\eta_g^2$ records, the selected vector above
satisfies
\[
 \Pr\{\norm{g-\mu}>\eta_g\mid\text{preceding history}\}
 \le2^{-k/2}.
 \label{eq:preconf-vector-confidence}
\]
Under the same conditional independence assumption, suppose scalar
observations have conditional mean $q$ and
centered variance at most $\overline M_2^2s^2$, with $s\ge0$ known
before these observations. If each block has at least
$8\overline M_2^2/\eta_q^2$ records, their median satisfies
\[
 \Pr\{|\widehat q-q|>\eta_qs\mid\text{preceding history}\}
 \le2^{-k/2}.
 \label{eq:preconf-scalar-confidence}
\]
The vector and scalar aggregates need not be unbiased.
\end{lemma}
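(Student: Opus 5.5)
We prove both claims by the same two-step argument. First, Chebyshev's inequality controls each block mean. Second, a Chernoff bound controls how many blocks can be bad. Condition throughout on the preceding history, so that the center $\mu$ (respectively $q$ and $s$) is fixed and the $k$ blocks are independent.

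For a vector block with $n\ge72\bar\sigma^2/\eta_g^2$ records, the centered second moment of its mean is at most $\bar\sigma^2/n\le\eta_g^2/72$. Chebyshev then gives
\[
 \Pr\{\norm{z_i-\mu}>\eta_g/3\}\le9\bar\sigma^2/(n\eta_g^2)\le1/8.
\]
Call block $i$ \emph{good} if $\norm{z_i-\mu}\le\eta_g/3$. The number of bad blocks is a sum of independent Bernoulli variables with success probability at most $1/8$. The event that at least $k/2$ blocks are bad has probability at most $\binom{k}{\lceil k/2\rceil}(1/8)^{k/2}\le2^k\,8^{-k/2}=2^{-k/2}$. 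A direct union bound over subsets suffices here, so this is the only probabilistic estimate needed.

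On the complementary event, more than $k/2$ blocks are good, and the selection step is deterministic geometry. Any two good blocks satisfy $\norm{z_i-z_l}\le2\eta_g/3$ by the triangle inequality. So for a good index $i$, more than half of the distances $\norm{z_i-z_l}$, counting the zero distance to itself, are at most $2\eta_g/3$, and its median is at most $2\eta_g/3$. The selected index $i^\ast$ minimizes the median, so its median is also at most $2\eta_g/3$. Hence more than half of the $z_l$ satisfy $\norm{z_{i^\ast}-z_l}\le2\eta_g/3$. Both this set and the set of good blocks contain more than half of the indices, so they share some $l$. For that $l$,
\[
 \norm{z_{i^\ast}-\mu}\le\norm{z_{i^\ast}-z_l}+\norm{z_l-\mu}\le2\eta_g/3+\eta_g/3=\eta_g.
\]
One must check that ties and the self-distance convention preserve the ``more than half'' counts. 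This holds because $k$ is odd, so the median is the $\lceil k/2\rceil$-th order statistic and ``at least $\lceil k/2\rceil$'' and ``more than $k/2$'' coincide.

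For the scalar claim, each block has $n\ge8\overline M_2^2/\eta_q^2$ records. Its mean has variance at most $\overline M_2^2s^2/n\le\eta_q^2s^2/8$, so Chebyshev gives failure probability $\Pr\{|\bar q_i-q|>\eta_qs\}\le1/8$ per block. The case $s=0$ is immediate, since then the records have zero variance. If the median deviates by more than $\eta_qs$, then at least $\lceil k/2\rceil$ block means lie on the same side beyond $q\pm\eta_qs$. The same binomial bound gives probability at most $2^{-k/2}$. Neither aggregate needs to be unbiased, because only deviation events are used.

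The main obstacle is the vector median-of-distances step, which needs the careful majority-intersection counting above. The constant $72$ leaves slack relative to the $1/8$ threshold and is not tight. The probability estimates are routine.
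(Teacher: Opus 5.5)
Your proof is correct and follows essentially the same route as the paper: Chebyshev at radius $\eta_g/3$ (respectively $\eta_q s$) gives per-block failure probability $1/8$; a union bound over subsets of $(k+1)/2$ bad blocks gives $2^k(1/8)^{k/2}=2^{-k/2}$; and the majority-intersection triangle-inequality argument controls the selected vector block, while the median sandwich handles the scalar case. The only cosmetic discrepancy is that your opening mentions a Chernoff bound, whereas the argument you actually give, like the paper's, is the plain subset union bound.
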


\begin{proof}
For a vector block mean $\bar z$ of size $b$, independence and
centering give $\E\norm{\bar z-\mu}^2\le\bar\sigma^2/b$.
With $r=\eta_g/3$, Chebyshev's inequality therefore bounds the
probability of $\norm{\bar z-\mu}>r$ by $1/8$.
If a strict majority of the block means lie within $r$ of $\mu$,
any one of these good blocks has median distance at most $2r$.
The selected block thus has at least $(k+1)/2$ blocks within $2r$.
This set intersects the good majority, so the triangle inequality
places the selected block within $3r=\eta_g$ of $\mu$.
Failure requires at least $(k+1)/2\ge k/2$ bad blocks.
Independence and a union bound over subsets give
$2^k(1/8)^{k/2}=2^{-k/2}$.

For scalar blocks and $s>0$, Chebyshev with radius $\eta_qs$ gives
the same $1/8$ bound. A good majority forces the scalar median
inside $[q-\eta_qs,q+\eta_qs]$. The same subset argument applies.
For $s=0$, variance zero makes every scalar observation equal to
its mean almost surely, so the conclusion holds directly.
All arguments are conditional on the history fixing the center,
scale and block size.
\end{proof}

\subsection{One execution and the sum of trial weights}
\label{subsec:preconf-execution}

Before sampling, choose $h_0,\mathcal L_0,\varepsilon>0$, integers $N\ge1$ and
$K_{\rm tr}\ge N$, and $\delta_{\rm conf}\in(0,1)$.
Initialize $A_0=0$, $x_0=v_0=x^0$, and curvature candidate
$\mathcal L_0$. At a trial $j$ with accepted state
$(A^-,x^-,v^-)$, let
\begin{align}
 \mathcal L_j a_j^2&=h_0(A^-+a_j),\qquad A_j^+=A^-+a_j,\nonumber\\
 a_j&=\frac{h_0+\sqrt{h_0^2+4h_0\mathcal L_jA^-}}
                    {2\mathcal L_j},\qquad
 y_j=\frac{A^-x^-+a_jv^-}{A_j^+}.
 \label{eq:preconf-weight}
\end{align}
Choose positive predictable $\eta_j^g,\eta_j^q$, put
$\eta_j=\eta_j^g+\eta_j^q$, and set
\[
 \chi_j=\frac{a_j^2\eta_j^2}{h_0}.
 \label{eq:preconf-allowance}
\]
Draw a robust vector $g_j$ and form
\begin{equation}
 v_j^+=\operatorname{Proj}_Q(v^--a_jg_j/h_0),\qquad
 x_j^+=\frac{A^-x^-+a_jv_j^+}{A_j^+},\qquad
 w_j=v_j^+-v^-.
 \label{eq:preconf-candidate}
\end{equation}
Draw a distinct robust relative-value record $\widehat q_j$ at
$(x_j^+,y_j)$. Accept precisely when
\begin{equation}
 A_j^+\bigl[\widehat q_j-\ip{g_j}{x_j^+-y_j}\bigr]
 \le\frac{h_0}{2}\norm{w_j}^2+\chi_j.
 \label{eq:preconf-test}
\end{equation}
A success commits the candidate state and carries its curvature
into the next iteration. A rejection leaves the accepted state fixed
and doubles only the curvature candidate. Subsequent records use
samples independent of the preceding history.
Stop after $N$ accepted trials or $K_{\rm tr}$ executed trials and
return the last accepted point, using $x^0$ if no trial was accepted.
The accepted trajectory and proximal coefficient are never
reinitialized.

\begin{lemma}[Sum of the executed trial weights]
\label{lem:preconf-weight-account}
For every record realization, let $n\le N$ be the number of accepted
trials and let $J_{\rm exec}\le K_{\rm tr}$ be the number executed.
Put $c=h_0/\mathcal L_0$. Then
\begin{align}
 A_n&\le cn^2,\nonumber\\
 \sum_{j\ {\rm executed}}a_j
 &\le A_n+2c+(2+\sqrt2)\sqrt{cA_n}\nonumber\\
 &\le c\bigl[N^2+(2+\sqrt2)N+2\bigr].
 \label{eq:preconf-weight-account}
\end{align}
These bounds include rejected trials and executions stopped before any acceptance.
\end{lemma}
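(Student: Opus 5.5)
The plan is to establish the two inequalities separately and then combine them. First comes a bound on the accepted weight sum $A_n$. After that, each executed trial weight $a_j$ is bounded in terms of the accepted state $A^-$ at the time of the trial. Rejected trials are then grouped by the accepted state they share.

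For the first inequality I follow the induction used in the proof of Proposition~\ref{prop:prestep-rejection-charge}, but with a sharper constant. Every curvature candidate is nondecreasing and at least $\mathcal L_0$, so \eqref{eq:preconf-weight} gives $\mathcal L_j a_j^2=h_0(A^-+a_j)$, hence $a_j^2\le c(A^-+a_j)$. Solving the quadratic gives
\[
 a_j\le\frac{c+\sqrt{c^2+4cA^-}}{2}.
\]
In square-root form, for an accepted trial,
\[
 \sqrt{A_j^+}-\sqrt{A^-}=\frac{a_j}{\sqrt{A_j^+}+\sqrt{A^-}}\le\frac{a_j}{\sqrt{A_j^+}}=\sqrt{h_0/\mathcal L_j}\le\sqrt c,
\]
where I used $a_j=\sqrt{h_0A_j^+/\mathcal L_j}$. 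Summing from $A_0=0$ over the $n$ accepted transitions gives $\sqrt{A_n}\le n\sqrt c$, that is, $A_n\le cn^2$.

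For the second inequality I split the executed trials into accepted and rejected ones. The accepted weights sum exactly to $A_n$. For the rejected trials, I take a block of consecutive rejections sharing an accepted state $A^-=A_m$ ($0\le m\le n$). In that block the candidates double: if the first has curvature $\mathcal L\ge\mathcal L_0$, the $i$th has $2^{i}\mathcal L$. The weight is decreasing in $\mathcal L$, with $a\le\sqrt{h_0/\mathcal L}\,\sqrt{A_m+a}$. I bound it by
\[
 a\le\frac{h_0}{\mathcal L}+\sqrt{\frac{h_0A_m}{\mathcal L}}.
\]
Summing over the geometric doubling gives at most $2c+(2+\sqrt2)\sqrt{cA_m}$ for the block. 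The series $\sum_i 2^{-i}$ gives the factor $2$, and $\sum_i 2^{-i/2}=1/(1-2^{-1/2})=2+\sqrt2$ gives the other factor. Here I use that the first candidate in a block is at least $\mathcal L_0$.

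The main obstacle is that rejections can occur at several accepted states, so the naive count would multiply $2c$ by $n+1$. To avoid this, I exploit that the candidate is never decreased across blocks: successes carry the curvature forward. So all rejected candidates over the whole execution form a single doubling sequence $\mathcal L_0 2^{k}$, $k=0,1,\dots$, each exponent used at most once. Bounding each $A_m\le A_n$ and summing over this single sequence yields the total rejected weight $\le2c+(2+\sqrt2)\sqrt{cA_n}$, which is the middle expression. Finally, substituting $A_n\le cn^2\le cN^2$ gives $\sqrt{cA_n}\le cN$ and the last bound $c[N^2+(2+\sqrt2)N+2]$. This covers the case $n=0$ with $A_n=0$, and every step is pathwise, so the bounds hold for every record realization, including stopped executions.
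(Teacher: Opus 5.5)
Your proposal is correct and follows essentially the paper's argument. The paper proves $A_n\le cn^2$ by the same square-root telescoping based on $a_t^2\le cA_t$. It then bounds the rejected weights by noting that each rejection occurs at a distinct dyadic level $\mathcal L_0 2^r$, applying the root-formula bound $a_j\le c2^{-r}+\sqrt{cA_n}\,2^{-r/2}$, and summing the two geometric series; your intermediate block-by-block accounting is unnecessary once you adopt that single doubling sequence.
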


\begin{proof}
For an accepted increment, $a_t^2\le cA_t$ because
$\mathcal L_t\ge\mathcal L_0$.
Write $s_t=\sqrt{A_t/c}$. Then
$s_t^2-s_{t-1}^2=a_t/c\le s_t$, so
$s_t-s_{t-1}\le s_t/(s_t+s_{t-1})\le1$.
The denominator is positive for an accepted step.
Starting from $s_0=0$ gives $s_n\le n$ and $A_n\le cn^2$.

Each rejection visits a different dyadic curvature level
$\mathcal L_0\,2^r$ because the candidate doubles on rejection
and never decreases. Its current accepted mass is at most $A_n$.
The explicit positive-root formula in \eqref{eq:preconf-weight}
therefore gives
$a_j\le c2^{-r}+\sqrt{cA_n}\,2^{-r/2}$.
Sum these bounds over the subset of levels actually rejected, and
then over all $r\ge0$. The two geometric sums are $2$ and
$(1-2^{-1/2})^{-1}=2+\sqrt2$.
Accepted increments sum to $A_n$, proving the first account.
Substitute $A_n\le cN^2$ for the second.
\end{proof}

\subsection{A simultaneous-error acceleration bound}
\label{subsec:preconf-error-contract}

\begin{theorem}[Pre-step acceleration on a simultaneous error event]
\label{thm:preconf-error-contract}
Suppose that at every executed trial
\begin{equation}
 \norm{g_j-\nabla\phi(y_j)}\le\eta_j^g,\qquad
 |\widehat q_j-[\phi(x_j^+)-\phi(y_j)]|
       \le\eta_j^q\norm{x_j^+-y_j}.
 \label{eq:preconf-error-event}
\end{equation}
Put
\[
 J_\star=\ceil{\log_2\max\{1,2L_\phi/\mathcal L_0\}},\qquad
 \overline{\mathcal L}=\max\{\mathcal L_0,4L_\phi\}.
\]
Every candidate at least $2L_\phi$ passes. There are at most $J_\star$
rejections and no candidate exceeds $\overline{\mathcal L}$.
If $K_{\rm tr}\ge N+J_\star$, the execution completes $N$ accepted
steps and $A_N\ge h_0N^2/(4\overline{\mathcal L})$.

Define $e_j=g_j-\nabla\phi(y_j)$,
$\zeta_j^q=\widehat q_j-[\phi(x_j^+)-\phi(y_j)]$, and
\[
 Z_j(u)=a_j\ip{e_j}{u-v^-}-A_j^+\zeta_j^q.
\]
For every fixed $u\in Q$ and every accepted prefix,
\begin{equation}
 A_n[f(x_n)-f(u)]+\frac{h_0}{2}\norm{u-v_n}^2
 \le\frac{h_0R_u^2}{2}
       +\sum_{j\ {\rm accepted}}(\chi_j+Z_j(u))
       +A_n\beta_\gamma(u).
 \label{eq:preconf-potential}
\end{equation}
If
\begin{equation}
 \eta_j=\min\left\{\frac{\varepsilon}{4D},
                 \sqrt{\frac{h_0\varepsilon}{4a_j}}\right\},
 \label{eq:preconf-tolerance}
\end{equation}
then the completed execution satisfies the unweighted bound
\begin{equation}
 f(x_N)-f(u)
 \le \frac{2\overline{\mathcal L}R_u^2}{N^2}
               +\frac{\varepsilon}{2}+\beta_\gamma(u).
 \label{eq:preconf-unweighted}
\end{equation}
\end{theorem}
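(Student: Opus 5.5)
The plan has three parts: acceptance of large candidates, the weighted potential, and the unweighted bound from the tolerance choice. \textbf{Acceptance.} On the event \eqref{eq:preconf-error-event} I would show that every candidate $\mathcal L_j\ge2L_\phi$ passes the test \eqref{eq:preconf-test}. I would write the left side of \eqref{eq:preconf-test} as
\[
A_j^+\bigl[\phi(x_j^+)-\phi(y_j)-\ip{\nabla\phi(y_j)}{x_j^+-y_j}\bigr]+A_j^+\zeta_j^q-A_j^+\ip{e_j}{x_j^+-y_j}.
\]
The identities $A_j^+(x_j^+-y_j)=a_jw_j$ and $\mathcal L_ja_j^2=h_0A_j^+$, together with smoothness as in Theorem~\ref{thm:prestep-exact-distance}, bound the Bregman part by $h_0L_\phi\norm{w_j}^2/(2\mathcal L_j)\le h_0\norm{w_j}^2/4$. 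The two error terms are at most $a_j(\eta_j^g+\eta_j^q)\norm{w_j}=a_j\eta_j\norm{w_j}$. Young's inequality gives $a_j\eta_j\norm{w_j}\le h_0\norm{w_j}^2/4+a_j^2\eta_j^2/h_0=h_0\norm{w_j}^2/4+\chi_j$, so the test passes. Rejections therefore happen only below $2L_\phi$. Since candidates only double, this gives at most $J_\star$ rejections and every candidate is at most $\max\{\mathcal L_0,4L_\phi\}=\overline{\mathcal L}$. The lower bound $A_N\ge h_0N^2/(4\overline{\mathcal L})$ then follows from the telescoping $\sqrt{A_t}-\sqrt{A_{t-1}}\ge\tfrac12\sqrt{h_0/\overline{\mathcal L}}$, exactly as in Theorem~\ref{thm:prestep-exact-distance}.

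\textbf{Potential.} Inequality \eqref{eq:preconf-potential} is pathwise and needs none of the event. It is Theorem~\ref{thm:prestep-potential}, specifically \eqref{eq:prestep-original-potential}, with the allowance $\chi_j$ defined in this section, since that proof only used projection optimality, convexity, and the acceptance inequality. I would invoke it directly.

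\textbf{Unweighted bound.} On the event I bound the accepted charges. First, $Z_j(u)\le a_j\eta_j^gD+A_j^+\eta_j^q\norm{x_j^+-y_j}=a_j\eta_j^gD+a_j\eta_j^q\norm{w_j}\le a_j\eta_jD$. Second, $\chi_j=a_j^2\eta_j^2/h_0$. With the choice \eqref{eq:preconf-tolerance}, $a_j\eta_jD\le a_j\varepsilon/4$ and $\chi_j\le a_j\varepsilon/4$, so each accepted trial contributes at most $a_j\varepsilon/2$. Summing over accepted trials gives $A_N\varepsilon/2$. I then drop the terminal distance and divide by $A_N$. The startup term satisfies $h_0R_u^2/(2A_N)\le 2\overline{\mathcal L}R_u^2/N^2$ by the lower bound on $A_N$, which yields \eqref{eq:preconf-unweighted}.

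The main care point is using $\norm{x_j^+-y_j}$ rather than $D$ in the scalar error, so that it combines with $A_j^+$ into $a_j\norm{w_j}$. This matters both in the Young step and in bounding $Z_j$. I also need to check that the tolerance \eqref{eq:preconf-tolerance} is predictable, i.e. determined by $a_j$ before any records of trial $j$ are drawn, which holds.
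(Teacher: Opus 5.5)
Your proposal is correct and follows the paper's proof essentially step for step: the same error-plus-Bregman decomposition of the test with Young's inequality for acceptance, the same doubling and $\sqrt{A_t}$-telescoping arguments for the rejection count, ceiling and lower bound on $A_N$, and the same per-trial bounds $\chi_j\le a_j\varepsilon/4$ and $|Z_j(u)|\le a_j\varepsilon/4$. The only difference is that you cite the pathwise potential of Theorem~\ref{thm:prestep-potential} instead of re-deriving it inline. That is legitimate, because its proof uses only projection optimality, convexity and the acceptance inequality with an arbitrary nonnegative allowance, together with $f\le\phi$, which is assumed here.
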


\begin{proof}
The center identity is
$A_j^+(x_j^+-y_j)=a_jw_j$.
Smoothness and the two error bounds imply
\[
 A_j^+[\widehat q_j-\ip{g_j}{x_j^+-y_j}]
 \le\frac{h_0L_\phi}{2\mathcal L_j}\norm{w_j}^2
                   +a_j\eta_j\norm{w_j}.
\]
At $\mathcal L_j\ge2L_\phi$ the first term is at most
$h_0\norm{w_j}^2/4$. The elementary square inequality gives
$a_j\eta_j\norm{w_j}\le
h_0\norm{w_j}^2/4+a_j^2\eta_j^2/h_0$.
Thus this trial passes. Rejections occur only below $2L_\phi$;
their doubling gives a candidate below $4L_\phi$.
Monotonicity proves the rejection count and ceiling, including $L_\phi=0$.
The cap condition then ensures completion.
For accepted increments, $a_t=\sqrt{h_0A_t/\mathcal L_t}$, so
\[
 \sqrt{A_t}-\sqrt{A_{t-1}}
 =\frac{a_t}{\sqrt{A_t}+\sqrt{A_{t-1}}}
 \ge\frac12\sqrt{h_0/\overline{\mathcal L}}.
\]
Summing proves the denominator bound.

Projection optimality gives
\[
 a_j\ip{g_j}{v_j^+-u}
 \le\frac{h_0}{2}
   [\norm{u-v^-}^2-\norm{u-v_j^+}^2-\norm{w_j}^2].
\]
Convexity at $y_j$ gives
$A^-\phi(x^-)+a_j\phi(u)\ge
A_j^+\phi(y_j)+a_j\ip{\nabla\phi(y_j)}{u-v^-}$.
Combine these two inequalities and the center identity. The increase
of the potential
$A[\phi(x)-\phi(u)]+h_0\norm{u-v}^2/2$ is at most
\[
 A_j^+[\phi(x_j^+)-\phi(y_j)-\ip{g_j}{x_j^+-y_j}]
 -\frac{h_0}{2}\norm{w_j}^2
 +a_j\ip{e_j}{u-v^-}.
\]
Substitute the accepted model test and the definition of $\zeta_j^q$.
The increase is at most $\chi_j+Z_j(u)$.
Only accepted states change the potential, so these transitions
telescope. The inequality
$f(x_n)-f(u)\le\phi(x_n)-\phi(u)+\beta_\gamma(u)$ proves
\eqref{eq:preconf-potential}.

Under \eqref{eq:preconf-tolerance},
$\chi_j\le a_j\varepsilon/4$.
Both $u-v^-$ and $w_j$ have norm at most $D$, hence
$|Z_j(u)|\le a_jD(\eta_j^g+\eta_j^q)
\le a_j\varepsilon/4$.
Accepted weights sum to $A_N$. Discard the nonnegative terminal
distance in \eqref{eq:preconf-potential}, divide by the positive
$A_N$, and use its deterministic lower bound. This proves
\eqref{eq:preconf-unweighted} without centering selected errors.
\end{proof}

\subsection{An implemented finite-moment guarantee and exact resources}
\label{subsec:preconf-paid-risk}

Choose the smallest positive odd $k_{\rm mom}$ satisfying
\begin{equation}
 k_{\rm mom}\ge2\log_2(2K_{\rm tr}/\delta_{\rm conf}).
 \label{eq:preconf-block-number}
\end{equation}
Let
\begin{align}
 c_g&=72\bar\sigma^2,\qquad c_q=8\overline M_2^2,\nonumber\\
 \theta_g&=\frac{c_g^{1/3}}{c_g^{1/3}+c_q^{1/3}},
 \qquad \theta_q=1-\theta_g,\qquad
 \Lambda_{\rm mom}=(c_g^{1/3}+c_q^{1/3})^3 .
 \label{eq:preconf-moment-split}
\end{align}
Use \eqref{eq:preconf-tolerance},
$\eta_j^g=\theta_g\eta_j$ and $\eta_j^q=\theta_q\eta_j$.
Before the vector batch select integer block sizes and complete
batch counts
\begin{equation}
 b_j^g=\ceil{\frac{c_g}{\theta_g^2\eta_j^2}},\qquad
 b_j^q=\ceil{\frac{c_q}{\theta_q^2\eta_j^2}},\qquad
 B_j^g=k_{\rm mom}b_j^g,\quad B_j^q=k_{\rm mom}b_j^q .
 \label{eq:preconf-block-sizes}
\end{equation}
All vector blocks use the same first center, and all comparison
blocks use the same fixed tentative pair.

\begin{theorem}[Expected risk with confidence bounds]
\label{thm:preconf-paid-risk}
Under the stated moment assumptions and supplied upper bounds, this paired-oracle execution
has a simultaneous error event of probability at least $1-\delta_{\rm conf}$.
If $K_{\rm tr}\ge N+J_\star$, then for every fixed $u\in Q$ its
feasible returned point $\widehat x$ satisfies
\begin{equation}
 \E[f(\widehat x)-f(u)]
 \le\frac{2\overline{\mathcal L}R_u^2}{N^2}
       +\frac{\varepsilon}{2}+\beta_\gamma(u)+\delta_{\rm conf} MD.
 \label{eq:preconf-expected-risk}
\end{equation}
For every realization, the exact resources are
\begin{equation}
 T_{\rm exec}=k_{\rm mom}
       \sum_{j\ {\rm executed}}(b_j^g+b_j^q),\qquad
 2T_{\rm exec}\ \text{values},\quad
 2J_{\rm exec}\ \text{stages},\quad
 J_{\rm exec}\ \text{projections}.
 \label{eq:preconf-exact-resources}
\end{equation}
These counts include all rejected trials. The sample count satisfies
\begin{equation}
 T_{\rm exec}\le k_{\rm mom}\left[
 2J_{\rm exec}+\Lambda_{\rm mom}
 \sum_{j\ {\rm executed}}\max\left\{
       \frac{16D^2}{\varepsilon^2},
       \frac{4a_j}{h_0\varepsilon}\right\}\right].
 \label{eq:preconf-realized-budget}
\end{equation}
For every record realization, including incomplete executions,
\begin{equation}
 T_{\rm exec}\le k_{\rm mom}\left[
 2K_{\rm tr}+\frac{16\Lambda_{\rm mom}K_{\rm tr}D^2}{\varepsilon^2}
 +\frac{4\Lambda_{\rm mom}}{\mathcal L_0\varepsilon}
             [N^2+(2+\sqrt2)N+2]\right].
 \label{eq:preconf-public-budget}
\end{equation}
On the simultaneous event the stage and projection counts are at
most $2(N+J_\star)$ and $N+J_\star$, respectively.
\end{theorem}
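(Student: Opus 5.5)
The proof has three parts: a union bound over trials for the error event, a split of the expected risk on that event, and deterministic resource accounting using Lemma~\ref{lem:preconf-weight-account}.

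\emph{The simultaneous error event.} At each executed trial $j$, the center $y_j$, the weight $a_j$, the tolerance $\eta_j$ and the block sizes are fixed by the preceding history. The vector blocks are drawn next. Because $b_j^g\ge c_g/(\theta_g\eta_j)^2=72\bar\sigma^2/(\eta_j^g)^2$, Lemma~\ref{lem:preconf-robust-means} gives failure probability at most $2^{-k_{\rm mom}/2}$ conditionally on that history. After the vector stage, the pair $(x_j^+,y_j)$ is fixed. The scale $s=\norm{x_j^+-y_j}$ is therefore known before the comparison records are drawn, and $b_j^q\ge8\overline M_2^2/(\eta_j^q)^2$ gives the same conditional bound for the scalar median. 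There are at most $2K_{\rm tr}$ robust aggregates, and \eqref{eq:preconf-block-number} gives $2^{-k_{\rm mom}/2}\le\delta_{\rm conf}/(2K_{\rm tr})$. A union bound, taken over conditional probabilities and then in expectation, shows that the failure event has probability at most $\delta_{\rm conf}$. One technical point is that unexecuted trials must be handled. I would define all $K_{\rm tr}$ potential trials on a common probability space and count a failure only at executed trials; this avoids any issue with the random stopping time.

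\emph{The risk bound.} On the simultaneous event, \eqref{eq:preconf-error-event} holds at every executed trial. Since $K_{\rm tr}\ge N+J_\star$, Theorem~\ref{thm:preconf-error-contract} guarantees that the execution completes, and \eqref{eq:preconf-unweighted} bounds the gap pathwise. The tolerance choice \eqref{eq:preconf-tolerance} is exactly the one required by that theorem. Off the event, the returned point still lies in $Q$, because it is either $x^0$ or an accepted averaged point. Since $f$ is $M$-Lipschitz on $Q$, the gap is at most $MD$. Splitting the expectation over the event and its complement, and using nonnegativity of the on-event terms, yields \eqref{eq:preconf-expected-risk}.

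\emph{Resources.} Each trial uses $B_j^g+B_j^q$ sampled functions, each evaluated at two points. It has two sequential stages, vector then comparison, and one projection. This gives \eqref{eq:preconf-exact-resources}. For \eqref{eq:preconf-realized-budget}, write $\lceil z\rceil\le z+1$, so that
\[
b_j^g+b_j^q\le 2+\frac{1}{\eta_j^2}\Bigl(\frac{c_g}{\theta_g^2}+\frac{c_q}{\theta_q^2}\Bigr).
\]
The cube-root split \eqref{eq:preconf-moment-split} minimizes the bracket, and the minimum equals $\Lambda_{\rm mom}$. This follows by direct substitution: $c_g/\theta_g^2=c_g^{1/3}(c_g^{1/3}+c_q^{1/3})^2$, and similarly for the $q$ term. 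Then $\eta_j^{-2}=\max\{16D^2/\varepsilon^2,\,4a_j/(h_0\varepsilon)\}$.

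For \eqref{eq:preconf-public-budget}, bound the maximum by the sum of its two terms, use $J_{\rm exec}\le K_{\rm tr}$, and apply Lemma~\ref{lem:preconf-weight-account} to $\sum_j a_j$ over executed trials. Together with $c=h_0/\mathcal L_0$, this gives the term $4\Lambda_{\rm mom}[N^2+(2+\sqrt2)N+2]/(\mathcal L_0\varepsilon)$. That lemma is pathwise and covers incomplete executions.

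On the event, Theorem~\ref{thm:preconf-error-contract} bounds the number of rejections by $J_\star$. Hence at most $N+J_\star$ trials are executed, giving the stated stage and projection counts.

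I expect the main obstacle to be the measurability bookkeeping in the union bound. The scalar tolerance uses the random scale $\norm{x_j^+-y_j}$, which is determined by the vector stage, so the conditioning for the scalar aggregate must be on the history that includes the vector aggregate. The number of executed trials is also random, and the union bound must be arranged so that this randomness does not invalidate it.
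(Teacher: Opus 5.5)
Your proposal is correct and follows the paper's proof essentially step for step. It applies Lemma~\ref{lem:preconf-robust-means} conditionally at the two predictable batch histories, with a union bound over at most $2K_{\rm tr}$ aggregates; it splits the expectation between the good event (using Theorem~\ref{thm:preconf-error-contract} and nonnegativity of its bound) and its complement (feasibility and the $MD$ bound); and it does the same ceiling, cube-root-split and Lemma~\ref{lem:preconf-weight-account} accounting for the budgets. The only cosmetic difference is that you check $c_g/\theta_g^2+c_q/\theta_q^2=\Lambda_{\rm mom}$ by direct substitution rather than by the paper's minimization argument, which is all the bound requires.
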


\begin{proof}
Condition on the history preceding each batch. Its center, accuracy
bound and integer block size are fixed before its samples are drawn.
Lemma~\ref{lem:preconf-robust-means} and
\eqref{eq:preconf-block-number} bound each of the at most
$2K_{\rm tr}$ conditional failure probabilities by
$\delta_{\rm conf}/(2K_{\rm tr})$.
An executed-trial indicator is predictable before its first batch;
the tentative comparison pair is known before its second batch.
Taking expectations of these conditional bounds and summing gives
failure probability at most $\delta_{\rm conf}$. This union is over the
executed sequence, rather than hypothetical center locations.

On the joint event, Theorem~\ref{thm:preconf-error-contract}
guarantees completion and \eqref{eq:preconf-unweighted}.
On its complement the output remains feasible and
$f(\widehat x)-f(u)\le MD$.
The good-event upper bound is nonnegative, since $\beta_\gamma(u)\ge0$.
Splitting the expectation over the two events proves
\eqref{eq:preconf-expected-risk}. Neither conditional centering of the
robust aggregate nor division by a random failed-execution weight
is invoked.

The vector and scalar blocks use separate independent sampled
functions, each evaluated at exactly two points. All vector records can be drawn
in one parallel stage. After their aggregate determines the
projection, all comparison records can be drawn in the next stage.
There is one projection per trial. This proves the exact counts.
For any split $\theta\in(0,1)$, the continuous block-size bound is
$c_g/\theta^2+c_q/(1-\theta)^2$. Its derivative vanishes only at
$\theta=\theta_g$, and its second derivative is positive.
Substitution gives the minimum $\Lambda_{\rm mom}$.
The ceilings in \eqref{eq:preconf-block-sizes} therefore imply
$b_j^g+b_j^q\le2+\Lambda_{\rm mom}/\eta_j^2$.
The reciprocal of the squared tolerance is
$\max\{16D^2/\varepsilon^2,4a_j/(h_0\varepsilon)\}$,
proving \eqref{eq:preconf-realized-budget}.
Finally bound each maximum by its two-term sum, use
$J_{\rm exec}\le K_{\rm tr}$ and apply
Lemma~\ref{lem:preconf-weight-account}. This proves the deterministic
budget bound determined before sampling. The good-event rejection count proves the last
stage and projection bounds.
\end{proof}

Computing the selected vector block uses $O(dk_{\rm mom}^2)$
arithmetic per trial, besides averaging the records. Thus the total
arithmetic outside function evaluation and projection is
$O(dT_{\rm exec}+dk_{\rm mom}^2J_{\rm exec})$.
Only scalar curvature is maintained. These confidence costs include
an executed-trial factor in the $\varepsilon^{-2}$ term and a
term depending on the initial curvature reference; they do not give the tuned joint
Gasnikov work bound.

\begin{corollary}[A trial cap chosen before sampling for ball smoothing]
\label{cor:preconf-public-cap}
Suppose the supplied $\overline M_2$ also bounds the sample
weak-gradient and segment assumptions, or the random sample Lipschitz
assumption, on the evaluation domain. Then one may set
\begin{align}
 K_{\rm tr}&=N+
  \ceil{\log_2\max\{1,2\sqrt d\,\overline M_2/
                               (\gamma\mathcal L_0)\}},\nonumber\\
 \delta_{\rm conf}&=\min\{1/2,\varepsilon/(4\overline M_2D)\}.
 \label{eq:preconf-fully-public-cap}
\end{align}
No population smoothness or original-distance input is used. The
expected risk obeys
\begin{equation}
 \E[f(\widehat x)-f(u)]
 \le\frac{2\max\{\mathcal L_0,4L_\phi\}R_u^2}{N^2}
                  +\frac{3\varepsilon}{4}+\beta_\gamma(u),
 \label{eq:preconf-fully-public-risk}
\end{equation}
where $L_\phi$ can be any valid smoothness constant of $\phi$.
For an originally $L$-smooth population, one may take $L_\phi\le L$.
Consequently the population-dependent sufficient condition
$2\max\{\mathcal L_0,4L_\phi\}R_u^2/N^2+\beta_\gamma(u)
\le\varepsilon/4$ gives expected gap at most $\varepsilon$.
This condition is used in the analysis and does not define a stopping
rule. The exact and deterministic budget bounds of
Theorem~\ref{thm:preconf-paid-risk} apply.
\end{corollary}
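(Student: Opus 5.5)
The plan is to reduce the corollary to Theorem~\ref{thm:preconf-paid-risk}. That theorem needs two inputs. The first is the cap condition $K_{\rm tr}\ge N+J_\star$ with $J_\star=\ceil{\log_2\max\{1,2L_\phi/\mathcal L_0\}}$. The second is a bound on the failure term $\delta_{\rm conf}MD$. Both will follow once the unknown population scales $L_\phi$ and $M$ are controlled by the supplied quantities $\overline M_2$, $\gamma$ and $d$.

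\textbf{Step 1: $M$ from the supplied moment bound.} I first show that the population Lipschitz constant satisfies $M\le\overline M_2$ on the evaluation domain. Under the random sample Lipschitz envelope this is immediate from Jensen: $|f(x)-f(y)|\le\E m(\xi)\norm{x-y}\le(\E m(\xi)^2)^{1/2}\norm{x-y}$. Under the weak-gradient and segment assumptions, I apply the fundamental theorem along the segment, as in Lemma~\ref{lem:prestep-relative-moments}. Jensen and Fubini then bound $|f(x)-f(y)|$ by $\norm{x-y}$ times the averaged root second moment of the sample gradients along the segment, which is at most $\overline M_2$. I expect this step to be the main obstacle, because the corollary only says that $\overline M_2$ "also bounds" the sample assumptions. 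I would make explicit that the resulting Lipschitz constant is the one used both in Lemma~\ref{lem:smoothing} and in the failure term $MD$. Since the theorem allows any valid $M$, we may replace $M$ by $\overline M_2$.

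\textbf{Step 2: the cap.} Lemma~\ref{lem:smoothing} gives that $\phi=f_\gamma$ is $L_\gamma$-smooth with $L_\gamma\le\sqrt dM/\gamma$, which by Step 1 is at most $\sqrt d\,\overline M_2/\gamma$. So $L_\phi$ may be taken with this value whenever no better constant is used. The map $L\mapsto\ceil{\log_2\max\{1,2L/\mathcal L_0\}}$ is monotone. Hence the displayed $K_{\rm tr}$ is at least $N+J_\star$ for this valid constant, and for any smaller valid constant as well. No smoothness or distance input enters the cap, which uses only $N$, $d$, $\overline M_2$, $\gamma$ and $\mathcal L_0$.

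\textbf{Step 3: the risk bound.} The choice $\delta_{\rm conf}\le1/2$ lies in $(0,1)$, so the block rule \eqref{eq:preconf-block-number} is well defined. Theorem~\ref{thm:preconf-paid-risk} then yields \eqref{eq:preconf-expected-risk}. In that bound, $\delta_{\rm conf}MD\le\varepsilon\overline M_2D/(4\overline M_2D)=\varepsilon/4$, and adding this to $\varepsilon/2$ gives \eqref{eq:preconf-fully-public-risk}. The rate term $2\max\{\mathcal L_0,4L_\phi\}R_u^2/N^2$ holds for any valid $L_\phi$. The reason is that the proof of Theorem~\ref{thm:preconf-error-contract} uses only a valid smoothness constant, and the cap remains sufficient by monotonicity.

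\textbf{Step 4: remaining claims.} For an originally $L$-smooth population, Proposition~\ref{prop:protected-smoothing-smooth} (equivalently Lemma~\ref{lem:smoothing}) gives $L_\phi\le L$. The sufficient condition then bounds the first and third terms of \eqref{eq:preconf-fully-public-risk} by $\varepsilon/4$, and together with $3\varepsilon/4$ the expected gap is at most $\varepsilon$. The budget statements are inherited verbatim from Theorem~\ref{thm:preconf-paid-risk}, since the specified $K_{\rm tr}$ and $\delta_{\rm conf}$ are fixed before sampling.
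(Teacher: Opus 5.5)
Your proposal is correct and follows the paper's proof essentially step for step. You bound $M\le\overline M_2$ via Jensen (random Lipschitz envelope) or the segment integral (weak gradients), use the ball-smoothing constant $\sqrt d\,\overline M_2/\gamma$ to certify $K_{\rm tr}\ge N+J_\star$, bound $\delta_{\rm conf}MD\le\varepsilon/4$, and invoke Theorem~\ref{thm:preconf-paid-risk}; the one point to tighten is that for a valid $L_\phi$ exceeding $\sqrt d\,\overline M_2/\gamma$ your ``cap remains sufficient by monotonicity'' does not literally hold, which the paper avoids by applying the theorem with the valid constant $\min\{L_\phi,\sqrt d\,\overline M_2/\gamma\}$ (equivalently, note that the right side is nondecreasing in $L_\phi$).
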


\begin{proof}
Under the sample Lipschitz assumption, taking expectations of sample
differences gives population Lipschitz constant at most
$\E m(\xi)\le\overline M_2$.
Under the weak-gradient and segment assumptions, integrate along a
fixed segment and use
$\E\norm{\nabla F(z,\xi)}
\le(\E\norm{\nabla F(z,\xi)}^2)^{1/2}\le\overline M_2$.
Its stated integrability and continuity give the same inequality,
including boundary endpoints. Hence $M\le\overline M_2$ may be used.
The ball-smoothing bound supplies the valid constant
$\sqrt d\,\overline M_2/\gamma$.
For any other valid $L_\phi$, their minimum is also valid.
Apply Theorem~\ref{thm:preconf-paid-risk} with this minimum;
the cap chosen before sampling in \eqref{eq:preconf-fully-public-cap} is then at
least $N+J_\star$, and its curvature ceiling is at most
$\max\{\mathcal L_0,4L_\phi\}$. Also
$\delta_{\rm conf} MD\le\varepsilon/4$.
Apply \eqref{eq:preconf-expected-risk} and the sufficient condition.
The cap and confidence parameters depend only on the displayed
supplied inputs and moment bound.
\end{proof}

\section{The cost of relative-mean certification}
\label{sec:adaptive-certification}

We study upper confidence bounds for the mean of a function-value
difference, allowing stopping rules with no predetermined sample limit.
Without a supplied second-moment bound, a uniformly valid finite upper
bound is impossible. With such a bound, a median-of-means estimator
achieves the necessary sampling order.

\subsection{A necessary cost of adaptive mean certification}
\label{subsec:observable-certification-lower}

Let $Z$ be a scalar random variable, sampled independently on each call,
and define the sample function
$F(x,Z)=Zx$ on $\mathbb R$. Every sample and its population are affine
and convex. For any fixed $x-y=\ell>0$, its same-sample smoothed
relative-function record satisfies
\[
 \frac{F(x+\gamma U,Z)-F(y+\gamma U,Z)}{\ell}=Z,
\]
at every legal $\gamma$, independently of the ball direction $U$.
Thus $\mu=\mathbb E Z$ is a normalized relative-function mean, and
$\mathbb E Z^2<\infty$ is a sample-gradient and sample-Lipschitz
second-moment envelope.

Consider a randomized adaptive procedure that samples functions independently,
evaluates each at exactly two points, and either reports an upper
bound $\widehat U$ or continues sampling. The lower bound also holds
if each sampled function is revealed in full. Let $\tau\in\{0,1,\ldots\}\cup\{\infty\}$
be the total number of sampled functions when it reports; a report of
$+\infty$ is permitted. Every sample used for a pilot, a vector estimate
or another diagnostic is included in $\tau$. The procedure has the same supplied inputs and
internal seed law on every population under comparison.
For $\delta_{\rm conf}\in(0,1)$, its one-sided validity requirement is
\begin{equation}
 \Pr(\tau<\infty,\ \mu>\widehat U)\le\delta_{\rm conf}.
 \label{eq:observable-certification-validity}
\end{equation}

\begin{theorem}[Adaptive certification cost on the common history]
\label{thm:observable-certification-cost}
Under the null law $Z=0$, let
$C=\{\tau<\infty,\widehat U\le\eta\}$, where $\eta>0$, and let
$q=\Pr_0(C)>0$. For $\mu>\eta$ and $0<p<1$, define the alternative
\[
 Z=\frac{\mu}{p}\,\xi,\qquad \xi\sim\operatorname{Bernoulli}(p).
\]
Its mean is $\mu$ and its second moment is $\mu^2/p$.
The alternative false-report probability satisfies
\begin{equation}
 \Pr_1(\tau<\infty,\mu>\widehat U)
 \ge\mathbb E_0[\mathbf1_C(1-p)^\tau]
 \ge q\exp\left(
   \frac{\log(1-p)}{q}\,
   \mathbb E_0[\tau\mathbf1_C]\right).
 \label{eq:observable-certification-coupling}
\end{equation}
The second bound is interpreted as zero if the restricted expectation
is infinite. If \eqref{eq:observable-certification-validity} holds on
this alternative and $q>\delta_{\rm conf}$, then
\begin{equation}
 \mathbb E_0[\tau\mathbf1_C]
 \ge\frac{q\log(q/\delta_{\rm conf})}{-\log(1-p)}.
 \label{eq:observable-certification-price}
\end{equation}
The result covers arbitrary endpoints, adaptive batches and stopping,
and any number of sequential sampling stages.
\end{theorem}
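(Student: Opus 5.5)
The plan is a transcript coupling in the style of Theorem~\ref{thm:prestep-rare-curvature}, followed by Jensen's inequality. First, I will couple the two executions. Draw in advance an infinite sequence of independent variables $\xi_1,\xi_2,\ldots\sim\operatorname{Bernoulli}(p)$ and feed them, in order, as the sampled functions of the alternative law, $Z_i=(\mu/p)\xi_i$. I will use the same internal seed for the null execution. On the event $E_k=\{\xi_1=\cdots=\xi_k=0\}$, the first $k$ sampled functions coincide with the null function $F(x,0)=0$ at every point. Induction through the adaptive transcript then shows that the endpoints, batch decisions, stopping decision and report agree as long as no nonzero $\xi$ has been consumed.

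Hence, on $C\cap E_{\tau}$ (with $\tau$ the null stopping time, which is finite on $C$), the alternative execution also stops at $\tau$ and reports $\widehat U\le\eta<\mu$. This is a false report under the alternative. The seed and the null transcript are independent of the $\xi$'s, and $\tau$ is determined by the seed alone on this event. Conditioning on the seed therefore gives $\Pr(E_\tau\mid\text{seed})=(1-p)^\tau$. This yields the first inequality in \eqref{eq:observable-certification-coupling}. Care is needed here: $\tau$ depends only on the null transcript, so the conditional probability is exactly $(1-p)^\tau$. Samples unused after an early stop do not matter, which is the same point as in the earlier coupling proofs.

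Second, for the exponential lower bound, write $\mathbb E_0[\mathbf1_C(1-p)^\tau]=q\,\mathbb E_0[(1-p)^\tau\mid C]$. Since $t\mapsto\exp(t\log(1-p))$ is convex, Jensen under the conditional law on $C$ gives at least $q\exp(\log(1-p)\,\mathbb E_0[\tau\mathbf1_C]/q)$. If the restricted expectation is infinite, the bound reads zero and is trivial. The mean $\mu$ and second moment $\mu^2/p$ of the alternative are immediate.

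Finally, I will combine this with the validity requirement \eqref{eq:observable-certification-validity}. This gives $\delta_{\rm conf}\ge q\exp(\log(1-p)\,\mathbb E_0[\tau\mathbf1_C]/q)$, which forces the restricted expectation to be finite. Taking logarithms and dividing by the negative number $\log(1-p)$ flips the inequality and gives \eqref{eq:observable-certification-price}; the condition $q>\delta_{\rm conf}$ makes the right side positive. The only delicate step is the measurability in the coupling. I expect it to be routine, because the argument permits full sample-function observation, arbitrary endpoints and arbitrary stages, exactly as in the earlier proofs.
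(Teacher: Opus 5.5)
Your proposal follows the paper's proof essentially step for step: it couples the internal seed with a pre-drawn Bernoulli sequence so that on $C$ the event that the first $\tau$ indicators vanish has conditional probability $(1-p)^\tau$ and reproduces the null transcript, then it applies Jensen conditionally on $C$ and takes logarithms. One small correction: validity does not force $\mathbb E_0[\tau\mathbf 1_C]<\infty$, since under the zero convention the combined inequality is then just $\delta_{\rm conf}\ge 0$; in that case \eqref{eq:observable-certification-price} holds trivially, as the paper notes.
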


\begin{proof}
Couple the internal seed on the two populations, including all
independent direction choices. Under the null, every realized
function is identically zero, so the entire transcript, terminal
bound and sample count are functions of this seed. Prepare an infinite
independent Bernoulli sequence for the alternative. For each null
seed in $C$, the event that its first $\tau$ Bernoulli variables
vanish has probability $(1-p)^\tau$. On this event every complete
sample function agrees with the null, at every possible endpoint.
Induction through the adaptive transcript therefore identifies all
queries, batches, decisions and the terminal report with the null
execution. The reported bound is at most $\eta<\mu$, so it is false.
Integration over the seed proves the first inequality in
\eqref{eq:observable-certification-coupling}. This argument uses
only the samples drawn before the null report and requires no
deterministic horizon.

Conditional on $C$, the function $t\mapsto(1-p)^t$ is convex.
If $\mathbb E_0[\tau\mid C]<\infty$, Jensen gives the second bound.
If that expectation is infinite, its stated zero bound is immediate.
When validity holds and $q>\delta_{\rm conf}$, comparison with $\delta_{\rm conf}$, taking
logarithms and dividing by $-\log(1-p)>0$ gives
\eqref{eq:observable-certification-price}; an infinite restricted
expectation also satisfies it.
\end{proof}

\begin{corollary}[No finite upper confidence bound without a moment bound]
\label{cor:observable-certification-no-cap}
Suppose \eqref{eq:observable-certification-validity} is required
uniformly over all laws with a finite second moment, and no upper
moment or range bound is supplied. Under the null law,
\begin{equation}
 \Pr_0(\tau<\infty,\widehat U<+\infty)\le\delta_{\rm conf}.
 \label{eq:observable-certification-no-cap}
\end{equation}
This holds even when $\tau$ has an unbounded distribution or infinite
expectation.
\end{corollary}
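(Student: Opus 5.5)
The plan is to derive the corollary from Theorem~\ref{thm:observable-certification-cost} by a contradiction argument. Assume that $\Pr_0(\tau<\infty,\widehat U<+\infty)>\delta_{\rm conf}$. Since the events $\{\tau<\infty,\widehat U\le\eta\}$ increase to $\{\tau<\infty,\widehat U<\infty\}$ as $\eta\uparrow\infty$, continuity of probability gives a finite $\eta>0$ with
\[
 q=\Pr_0(\tau<\infty,\widehat U\le\eta)>\delta_{\rm conf}.
\]
This is the same truncation step as in Proposition~\ref{prop:variance-certificate}.

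Fix any $\mu>\eta$. We only need the first inequality in \eqref{eq:observable-certification-coupling}, namely
\[
 \Pr_1(\tau<\infty,\mu>\widehat U)\ge\E_0[\mathbf 1_C(1-p)^\tau].
\]
This avoids any assumption on $\E_0[\tau\mathbf 1_C]$, which may be infinite. On $C$ the count $\tau$ is finite, so $(1-p)^\tau\to1$ pointwise as $p\downarrow0$. By dominated convergence (the integrand is bounded by $1$), the right side tends to $q$.

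Choose $p\in(0,1)$ small enough that this expectation exceeds $\delta_{\rm conf}$. The alternative $Z=(\mu/p)\xi$ has the finite second moment $\mu^2/p$, so the assumed uniform validity \eqref{eq:observable-certification-validity} applies to it. On this law the false-report probability would therefore be at most $\delta_{\rm conf}$, but it is strictly larger, a contradiction. Because $\tau$ enters only through the finite-on-$C$ weights $(1-p)^\tau$, no moment or boundedness of $\tau$ is needed, which is the final remark of the statement.

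The one delicate point is to avoid the Jensen-based second bound in \eqref{eq:observable-certification-coupling}, since it degenerates when $\E_0[\tau\mathbf 1_C]=\infty$. Using the pointwise coupling bound with monotone or dominated convergence handles this case. Everything else follows directly from the coupling in Theorem~\ref{thm:observable-certification-cost}.
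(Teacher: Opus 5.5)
Your proposal is correct and takes essentially the same approach as the paper. Both use only the first, pointwise coupling inequality of Theorem~\ref{thm:observable-certification-cost}, let $p\downarrow0$ by dominated convergence on the event where $\tau$ is finite, and pass from finite thresholds to $\{\widehat U<\infty\}$ by continuity from below; the paper argues directly on $C_k=\{\tau<\infty,\widehat U\le k\}$ instead of by contradiction, which is only a cosmetic difference.
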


\begin{proof}
For each integer $k\ge1$, let
$C_k=\{\tau<\infty,\widehat U\le k\}$.
Choose the alternative mean $\mu=k+1$ and let $p\downarrow0$.
Every alternative has finite second moment, so validity and
Theorem~\ref{thm:observable-certification-cost} imply
$\mathbb E_0[\mathbf1_{C_k}(1-p)^\tau]\le\delta_{\rm conf}$.
On $C_k$, $\tau$ is finite. Dominated convergence therefore gives
$\Pr_0(C_k)\le\delta_{\rm conf}$. The increasing union of the $C_k$ is the
event in \eqref{eq:observable-certification-no-cap}.
Continuity from below completes the proof.
\end{proof}

The conclusion concerns a uniformly valid upper confidence bound for the population mean.
Algorithms can still use finite observed batches and prove
population-dependent expected-risk guarantees without such a report.

\begin{theorem}[The certification scale under a supplied moment cap]
\label{thm:observable-certification-capped}
Suppose validity is required on the class $\mathbb E Z^2\le \overline M_2^2$,
where the common cap $\overline M_2>0$ is supplied. If $0<\eta<\overline M_2$ and
$C=\{\tau<\infty,\widehat U\le\eta\}$ and
$q=\Pr_0(C)>\delta_{\rm conf}$, then
\begin{equation}
 \mathbb E_0[\tau\mathbf1_C]
 \ge\frac{q\log(q/\delta_{\rm conf})}
 {-\log(1-\eta^2/\overline M_2^2)}.
 \label{eq:observable-certification-capped-lower}
\end{equation}
In particular, when $\overline M_2^2\ge2\eta^2$,
\begin{equation}
 \mathbb E_0[\tau\mathbf1_C]
 \ge\frac{q \overline M_2^2}{2\eta^2}\log(q/\delta_{\rm conf}).
 \label{eq:observable-certification-capped-order}
\end{equation}

Conversely, for any $\delta_{\rm conf}\in(0,1)$, set
$b=\lceil8\overline M_2^2/\eta^2\rceil$ and let $k$ be the smallest positive
odd integer with $k\ge2\log_2(1/\delta_{\rm conf})$.
Draw $bk$ records in $k$ disjoint blocks of size $b$, let $m$ be
the median of their block averages, and report
$\widehat U=m+\eta$. This bound has validity at least $1-\delta_{\rm conf}$
uniformly over the stated class, and under the null it satisfies
$\widehat U=\eta$ surely. It uses exactly $bk$ independent pairs,
$2bk$ values, and one sampling stage at a fixed pair of centers.
For $\overline M_2^2\ge2\eta^2$ and $\delta_{\rm conf}\le1/4$, this matches the necessary
$\overline M_2^2\eta^{-2}\log(1/\delta_{\rm conf})$ order for $q=1$, even though the
null law has zero actual variance.
\end{theorem}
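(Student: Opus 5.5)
The lower bound \eqref{eq:observable-certification-capped-lower} will come directly from Theorem~\ref{thm:observable-certification-cost}, by choosing the alternative inside the capped class. Take the Bernoulli alternative $Z=(\mu/p)\xi$ with second moment $\mu^2/p$. We want $\mu>\eta$ and $\mu^2/p\le\overline M_2^2$. Take $\mu\downarrow\eta$ and $p=\mu^2/\overline M_2^2$, which lies in $(0,1)$ because $\eta<\overline M_2$. Validity on this alternative, together with $q>\delta_{\rm conf}$, gives \eqref{eq:observable-certification-price} with $-\log(1-\mu^2/\overline M_2^2)$ in the denominator. The bound holds for every $\mu>\eta$ close to $\eta$, and the denominator is continuous in $\mu$, so letting $\mu\downarrow\eta$ yields \eqref{eq:observable-certification-capped-lower}. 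If the restricted expectation is infinite, the bound is trivial.

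For \eqref{eq:observable-certification-capped-order}, set $x=\eta^2/\overline M_2^2\le1/2$. I will use the elementary inequality $-\log(1-x)\le 2x$ on $[0,1/2]$. It follows because $-\log(1-x)$ is convex with value zero at $0$ and value $\log 2<1$ at $x=1/2$, so it lies below the chord $2x\log 2\le 2x$. Substituting into \eqref{eq:observable-certification-capped-lower} gives the stated factor $\overline M_2^2/(2\eta^2)$.

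For the converse, I will invoke the scalar part of Lemma~\ref{lem:preconf-robust-means} with $s=1$ and scale parameter $\eta_q=\eta$. The blocks have $b\ge 8\overline M_2^2/\eta^2$ records each, and the variance satisfies $\operatorname{Var}Z\le\mathbb E Z^2\le\overline M_2^2$. The lemma then gives $\Pr\{|m-\mu|>\eta\}\le 2^{-k/2}\le\delta_{\rm conf}$ by the choice of $k$. On the complementary event $\mu\le m+\eta=\widehat U$, so the report is valid uniformly over the class. Under the null every record is $0$, so $m=0$ and $\widehat U=\eta$ surely.

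The resource counts are immediate. There are $bk$ records, each using one sampled function at two fixed points $x,y$ with $x-y=\ell$ normalized as in the setup. All records are drawn in one parallel stage.

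For the matching statement, take $q=1$ and $\delta_{\rm conf}\le1/4$. The lower bound is then of order $(\overline M_2^2/\eta^2)\log(1/\delta_{\rm conf})$. The upper procedure uses $bk\le(8\overline M_2^2/\eta^2+1)(2\log_2(1/\delta_{\rm conf})+2)$ records. Since $\overline M_2^2\ge2\eta^2$ and $\log(1/\delta_{\rm conf})\ge\log 4$, the additive constants are absorbed, and the two bounds agree up to universal constants.

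The only delicate step is the limit $\mu\downarrow\eta$, which is needed because the alternative requires the strict inequality $\mu>\eta$. Everything else is bookkeeping.
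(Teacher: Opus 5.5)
Your proposal is correct and follows essentially the same route as the paper. You use the same choice $p=\mu^2/\overline M_2^2$ in Theorem~\ref{thm:observable-certification-cost} with $\mu\downarrow\eta$, and the same bound $-\log(1-x)\le 2x$, obtained from a convexity/chord argument where the paper integrates $(1-t)^{-1}$. For the upper bound you use the same Chebyshev-plus-median-of-block-means argument, obtained by citing the scalar part of Lemma~\ref{lem:preconf-robust-means} with $s=1$ rather than reproving it inline.
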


\begin{proof}
For every $\mu\in(\eta,\overline M_2)$ choose $p=\mu^2/\overline M_2^2$ in
Theorem~\ref{thm:observable-certification-cost}. The resulting law
has second moment exactly $\overline M_2^2$. Let $\mu\downarrow\eta$ in its
necessary cost inequality to obtain
\eqref{eq:observable-certification-capped-lower}.
For $x=\eta^2/\overline M_2^2\le1/2$,
$-\log(1-x)=\int_0^x(1-t)^{-1}\,dt\le2x$, proving
\eqref{eq:observable-certification-capped-order}.

For the upper construction, the variance of each block average is
at most $\overline M_2^2/b\le\eta^2/8$. Chebyshev therefore bounds its
probability of deviating by more than $\eta$ from the mean by $1/8$.
If the median deviates by more than $\eta$, at least $(k+1)/2$
blocks are bad. Their independence and a union bound over subsets
give failure probability at most
\[
 2^k(1/8)^{(k+1)/2}
 =2^{-k/2-3/2}\le2^{-k/2}\le\delta_{\rm conf}.
\]
Thus $m+\eta$ is an upper mean bound with the stated coverage.
Under the null, every block average is zero, giving the asserted
report and $q=1$. All $bk$ functions can be sampled in parallel and evaluated at the
same fixed pair of centers. Each sampled function is evaluated twice.
Finally, $b\le8\overline M_2^2/\eta^2+1$ and
$k<2\log_2(1/\delta_{\rm conf})+2$ show the matching order.
\end{proof}

The supplied cap in the matching construction is an additional input.
It is not estimated by the null data. Replacing $F(x,Z)$ by
$-Gx+Zx$ and subtracting its known linear contribution gives the
same normalized record, with a nonzero common gradient and sample
Lipschitz root-second-moment at most $G+\overline M_2$. Thus the necessary
sampling lower bound also applies to relative model residuals
around a nonzero common first step.

\section{A smooth frontier for unknown variance and original distance}
\label{sec:universal-distance-frontier}

We give a cross-instance expected-risk relation with a supplied diameter
and a supplied common smoothness constant. The lower bounds allow
the entire sampled function to be revealed and
therefore apply to arbitrary pairs of evaluations of that function. The matching upper
construction is restricted to the explicitly specified two-law family.

\begin{lemma}[A common-curvature family]
\label{lem:universal-distance-family}
Fix integers $T\ge2$, $N\ge1$, constants $D,L>0$, and $0<r<D$.
Let $Q=[0,D]$, $x^0=r$, and let the evaluation interval be
$I=[-D,2D]$. Define on the real line
\begin{equation}
 F_0(x)=\frac L2(x-D)^2,\qquad
 F_*(x)=F_0(x)+LDTx.
 \label{eq:universal-distance-samples}
\end{equation}
Under $\mathbb P_0$ every label returns $F_0$. Under $\mathbb P_1$
independent draws return $F_*$ with probability $1/T$ and $F_0$
otherwise. Both laws have the same supplied $Q,x^0,D,L,N,T$.
Their population objectives, stationary minimizers, original distances
and uniform centered gradient variances satisfy
\begin{align}
 f_0(x)&=\frac L2(x-D)^2,&x_0^\star&=D,&R_0&=D-r,&\sigma_0&=0,
 \nonumber\\
 f_1(x)&=\frac L2x^2+\frac L2D^2,&x_1^\star&=0,&R_1&=r,
 &\sigma_1^2&=L^2D^2(T-1).
 \label{eq:universal-distance-parameters}
\end{align}
All sample and population functions are convex and globally
$L$-smooth. On the open neighborhood $J=(-2D,3D)$ of $I$, both
populations admit the common Lipschitz bound $3LD$, and valid uniform
sample-gradient second-moment envelopes are
\begin{equation}
 M_{2,0}=3LD,\qquad M_{2,1}=LD\sqrt{T+8}.
 \label{eq:universal-distance-moments}
\end{equation}
The smallest uniform envelopes on the closed interval $I$ itself are
$2LD$ and $LD\sqrt{T+3}$.
\end{lemma}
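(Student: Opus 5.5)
The proof is a direct computation from the two sample functions in \eqref{eq:universal-distance-samples}. No earlier lemma is needed beyond the observation, already used in the proof of Theorem~\ref{thm:prestep-rare-curvature}, that in dimension one the symmetric vector record of a quadratic equals its derivative at every legal radius. We verify the claims in four steps, in the order below.

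\emph{Step 1: populations, minimizers and regularity.} Under $\mathbb P_0$ the population is $f_0=F_0$, with stationary minimizer $D\in Q$ and distance $R_0=D-r$. Under $\mathbb P_1$ the linear term has weight $1/T$, so
\[
 f_1(x)=F_0(x)+LDx=\tfrac L2(x-D)^2+LDx=\tfrac L2x^2+\tfrac L2D^2 .
\]
Its minimizer is $0\in Q$, and its distance is $R_1=r$. Every sample and population function is a quadratic with second derivative exactly $L$, hence convex and globally $L$-smooth.

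\emph{Step 2: variances.} The sample gradient is
\[
 F_0'(x)+LDT\xi=L(x-D)+LDT\xi,\qquad \xi\sim\mathrm{Bernoulli}(1/T)\ \text{under }\mathbb P_1 .
\]
The symmetric record coincides with this derivative, so the centered variance does not depend on the center. It equals
\[
 L^2D^2T^2\cdot\tfrac1T\bigl(1-\tfrac1T\bigr)=L^2D^2(T-1).
\]
Under $\mathbb P_0$ the sample function is deterministic, so $\sigma_0=0$.

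\emph{Step 3: Lipschitz bounds and moment envelopes.} On $J=(-2D,3D)$ the population derivatives are $L(x-D)$ and $Lx$. Both have absolute value below $3LD$, which gives the common Lipschitz bound. For the second moments, under $\mathbb P_0$ we have $\E F'^2=L^2(x-D)^2<9L^2D^2$ on $J$, giving $M_{2,0}=3LD$. Under $\mathbb P_1$, expand the square and use $\E\xi=\E\xi^2=1/T$:
\[
 \E\bigl(L(x-D)+LDT\xi\bigr)^2=L^2\bigl[(x-D)^2+2D(x-D)+D^2T\bigr]=L^2\bigl[x^2+D^2(T-1)\bigr].
\]
Since $x^2<9D^2$ on $J$, this is below $L^2D^2(T+8)$, giving $M_{2,1}=LD\sqrt{T+8}$.

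\emph{Step 4: sharpness on the closed interval $I=[-D,2D]$.} The suprema of $(x-D)^2$ and $x^2$ over $I$ are both $4D^2$, attained at $x=-D$ and $x=2D$ respectively. This gives envelopes $2LD$ and $LD\sqrt{T+3}$. They are the smallest uniform envelopes because they are attained at these endpoints.

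The only step needing care is the second-moment expansion in Step~3, where the cross term must be kept so that the $-D^2$ and the $D^2T$ combine correctly into $D^2(T-1)$. Everything else is immediate from the quadratic structure.
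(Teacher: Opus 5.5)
Your proposal is correct and follows essentially the same direct computation as the paper. You average the linear term to get the populations, use the centered rare-label gradient $LD(T\xi-1)$ for the variance, derive $\E_1|F'(x)|^2=L^2x^2+L^2D^2(T-1)$, and maximize over $J$ and $I$. The only cosmetic difference is that the paper writes that second moment as squared population gradient plus variance instead of expanding the cross term, which is the same identity.
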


\begin{proof}
The mean of the added linear term under $\mathbb P_1$ is $LDx$.
Expanding the square proves the population formulas and the two
stationary minimizers. If $\xi$ indicates a rare label, the centered
gradient under $\mathbb P_1$ is $LD(T\xi-1)$. Its variance is
$L^2D^2(T-1)$, independently of the query point. Consequently
\[
 \E_0|F_0'(x)|^2=L^2(x-D)^2,\qquad
 \E_1|F'(x)|^2=L^2x^2+L^2D^2(T-1).
\]
On $J$, $|x-D|<3D$ and $|x|<3D$, proving the stated Lipschitz
and moment bounds. Maximizing the two displayed expressions over
$[-D,2D]$ gives the exact closed-interval envelopes. All second
derivatives equal $L$.
\end{proof}

\begin{theorem}[A universal cross-instance risk relation]
\label{thm:universal-distance-pareto}
Consider any randomized adaptive procedure drawing at most $T$ independent
sampled functions from the family in Lemma~\ref{lem:universal-distance-family}
and returning a feasible output $X\in Q$. Its supplied inputs and
internal seed law are the same on the two populations. Each draw may
reveal the entire sampled function; queries, stopping decisions and
sampling schedules may be adaptive. Let $X_0$ denote its output under
$\mathbb P_0$ and put
\[
 \varepsilon_i=\E_i[f_i(X)-f_i(x_i^\star)],\qquad
 q_T=(1-1/T)^T.
\]
Then $q_T\ge1/4$ and
\begin{equation}
 \varepsilon_1\ge q_T\frac L2\E_0X_0^2
 \ge q_T\frac L2
       \left(D-\sqrt{2\varepsilon_0/L}\right)_+^2.
 \label{eq:universal-distance-pareto}
\end{equation}
The statement includes arbitrary nonsymmetric two-value queries and
does not depend on their number of sequential sampling stages.
\end{theorem}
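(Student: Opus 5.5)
The plan is to use the same transcript coupling as in Theorem~\ref{thm:frontier-pareto-lower} and Theorem~\ref{thm:prestep-rare-curvature}, and then to translate the $\mathbb P_0$ risk into a lower bound on the second moment of the output.

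\emph{Step 1: the bound $q_T\ge1/4$.} Set $g(x)=x\log(1-1/x)$. Exactly as in the proof of Theorem~\ref{thm:frontier-pareto-lower}, $g$ is nondecreasing on $x\ge2$. Since $T\ge2$, this gives $q_T=\exp(g(T))\ge\exp(g(2))=1/4$.

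\emph{Step 2: coupling.} Couple the internal seed across the two populations. Under $\mathbb P_1$, draw the $T$ Bernoulli indicators of rare labels in advance and use them in order as sampled functions are requested; unused indicators are simply discarded if the procedure stops early. Let $E$ be the event that all $T$ indicators vanish. It is independent of the seed and $\Pr(E)=q_T$.

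On $E$, every revealed sampled function equals $F_0$ everywhere. Induction over the adaptive transcript then shows that the queries, any full-function observations, the stopping decision and the output all agree with the $\mathbb P_0$ execution under the same seed. Hence $X=X_0$ on $E$.

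The gap $f_1(X)-f_1(0)=LX^2/2$ is nonnegative. Independence of $E$ from the seed therefore gives
\[
\varepsilon_1\ge\E[\mathbf1_E\,LX_0^2/2]=q_T\frac L2\E_0X_0^2,
\]
which is the first inequality.

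\emph{Step 3: second moment from the $\mathbb P_0$ risk.} Under $\mathbb P_0$ we have $\varepsilon_0=\frac L2\E_0(X_0-D)^2$, so $\|X_0-D\|_{L_2}=\sqrt{2\varepsilon_0/L}$. Minkowski's inequality in $L_2$ gives
\[
(\E_0X_0^2)^{1/2}\ge D-\sqrt{2\varepsilon_0/L}.
\]
The left side is nonnegative, so we may take the positive part and square. This yields the second inequality.

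\emph{Main obstacle.} The only delicate step is the coupling in Step 2. It must cover adaptive stopping, arbitrary (including nonsymmetric) query endpoints, and full observation of sampled functions. The pre-drawn indicator sequence handles all three at once, since on $E$ every revealed function is literally $F_0$. Pre-drawing also means $E$ has probability $q_T$ no matter how many samples are actually used, because any procedure uses at most $T$. The number of sequential stages plays no role in the argument.
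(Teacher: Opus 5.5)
Your proof is correct and follows the paper's argument: the same pre-drawn-indicator transcript coupling on the event $E$ that all $T$ indicators vanish yields the first inequality, and the same monotonicity of $x\log(1-1/x)$ on $x\ge2$ yields $q_T\ge1/4$. The only difference is in the last step. You apply the reverse triangle inequality in $L_2$ directly to $\|X_0\|_{L_2}$, whereas the paper bounds $\E_0X_0$ by Cauchy--Schwarz and then uses $\E_0X_0^2\ge(\E_0X_0)^2$ together with $X_0\ge0$; your version does not need feasibility at that point.
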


\begin{proof}
Couple the algorithm's internal random seed on the two laws. Prepare
$T$ independent rare indicators for the alternative law, assigning
them to sampled functions in their order of use; pad an early stop with
unused indicators. The event $E$ that all $T$ indicators are zero has
probability $q_T$ and is independent of the seed. On $E$, every
revealed function is exactly $F_0$. Induction through the adaptive
transcript therefore identifies all queries, internal decisions and
the final output with the deterministic-law execution. This also
covers endpoints chosen adaptively within a same-sample pair.

The alternative excess risk is $LX^2/2$ and is nonnegative on every
transcript. Restricting its expectation to $E$ proves the first
inequality. Under the deterministic law,
$2\varepsilon_0/L=\E_0(D-X_0)^2$. Since $0\le X_0\le D$,
Cauchy--Schwarz gives
$\E_0X_0\ge D-\sqrt{2\varepsilon_0/L}$, while Jensen gives
$\E_0X_0^2\ge(\E_0X_0)^2$. This proves the second inequality.
Compactness ensures all moments used here are finite.

For the probability estimate, set $g(x)=x\log(1-1/x)$ for $x\ge2$.
With $u=1/(x-1)$ its derivative is $u-\log(1+u)\ge0$, because
$\log(1+u)=\int_0^u(1+s)^{-1}\,ds\le u$.
Thus $q_T=\exp(g(T))\ge\exp(g(2))=1/4$.
\end{proof}

\begin{theorem}[An exact competitive factor on the two-law family]
\label{thm:universal-distance-menu}
For deterministic benchmarks $b_0,b_1>0$, the smallest factor $C$
for which one procedure can satisfy $\varepsilon_i\le Cb_i$ on both
laws is
\begin{equation}
 C_* =\frac{q_TLD^2}
 {2\bigl(\sqrt{b_1}+\sqrt{q_Tb_0}\bigr)^2}.
 \label{eq:universal-distance-menu-factor}
\end{equation}
The lower bound permits full-function access. A procedure that knows
the specified two-law family and both benchmark values attains this
factor using one sampling stage, exactly $T$ same-sample pairs and
$2T$ function values, one feasible output, and no projections or
optimizer restarts.
\end{theorem}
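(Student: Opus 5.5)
The plan is to obtain the lower bound on $C$ directly from Theorem~\ref{thm:universal-distance-pareto}, and to obtain the matching upper bound from an explicit one-stage detect-then-commit procedure. The equality of the two factors should reduce to a single scalar optimization.

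\emph{Lower bound.} Suppose one procedure satisfies $\varepsilon_0\le Cb_0$ and $\varepsilon_1\le Cb_1$. Theorem~\ref{thm:universal-distance-pareto} gives
\[
Cb_1\ge\varepsilon_1\ge q_T\tfrac L2\bigl(D-\sqrt{2\varepsilon_0/L}\bigr)_+^2\ge q_T\tfrac L2\bigl(D-\sqrt{2Cb_0/L}\bigr)_+^2 .
\]
The last step holds because the right-hand side is nonincreasing in $\varepsilon_0$. I will split into two cases.
\begin{itemize}
\item If $D\le\sqrt{2Cb_0/L}$, then $C\ge LD^2/(2b_0)$. Since $q_T\le1$, this already exceeds $C_*$.
\item Otherwise, take square roots to get $\sqrt{2Cb_1/(q_TL)}\ge D-\sqrt{2Cb_0/L}$. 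This rearranges to $\sqrt C\,(\sqrt{b_1/q_T}+\sqrt{b_0})\ge D\sqrt{L/2}$. Squaring and multiplying numerator and denominator by $q_T$ gives exactly $C\ge C_*$ in \eqref{eq:universal-distance-menu-factor}.
\end{itemize}
Because the cited theorem permits full-function access and adaptive queries, the lower bound inherits that generality.

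\emph{Matching procedure.} In a single sampling stage, draw all $T$ sampled functions and evaluate each at the fixed pair $(0,D)\subset I$. The difference $F(D)-F(0)$ equals $-LD^2/2$ for $F_0$ and $-LD^2/2+LD^2T$ for $F_*$, so each pair identifies its label exactly.
\begin{itemize}
\item If any rare function is observed, output $0$. This is the exact minimizer of $f_1$, and rare functions never occur under $\mathbb P_0$.
\item Otherwise, output a constant $c\in[0,D]$ chosen before sampling.
\end{itemize}
Then $\varepsilon_0=\tfrac L2(D-c)^2$, since under $\mathbb P_0$ no rare draw is seen. Under $\mathbb P_1$, the output $c$ occurs with probability $q_T$, so $\varepsilon_1=q_T\tfrac L2c^2$.

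Choosing $c$ makes both constraints tight at $C=C_*$. Set $c=\sqrt{2C_*b_1/(q_TL)}$; then $D-c=\sqrt{2C_*b_0/L}$. A direct substitution from the definition of $C_*$ verifies this, and the same identity shows $0\le c\le D$. The procedure uses exactly $T$ same-sample pairs, $2T$ values, one stage, a feasible output, and no projections. It depends on knowing the family and both $b_i$, as the statement allows.

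\emph{Main obstacle.} The only delicate point is confirming that the lower-bound chain is tight, i.e.\ that no slack is lost in Jensen's step $\E_0X_0^2\ge(\E_0X_0)^2$ or in the Cauchy--Schwarz step. The procedure above settles this: its law-$0$ output is deterministic and equal to $c$, so both inequalities hold with equality. The remaining work is to check the algebra that identifies the tight $c$ with $C_*$ in both directions.
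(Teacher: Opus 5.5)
Your proposal is correct and follows essentially the paper's proof: the lower bound comes from Theorem~\ref{thm:universal-distance-pareto} (the paper sets $z=\E_0X_0$ and adds $D-z\le\sqrt{2Cb_0/L}$ to $z\le\sqrt{2Cb_1/(q_TL)}$, which amounts to your second case without the case split), and attainment uses the same one-stage probe at $(0,D)$ with fallback output $z_*=D\sqrt{b_1}/(\sqrt{b_1}+\sqrt{q_Tb_0})$, which equals your $c$. One small slip: in your first case, $LD^2/(2b_0)\ge C_*$ holds because $(\sqrt{b_1}+\sqrt{q_Tb_0})^2\ge q_Tb_0$, not because $q_T\le1$.
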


\begin{proof}
Put $z=\E_0X_0$. Jensen and
Theorem~\ref{thm:universal-distance-pareto} give
\[
 \varepsilon_0\ge\frac L2(D-z)^2,\qquad
 \varepsilon_1\ge q_T\frac L2z^2.
\]
Hence $\varepsilon_i\le Cb_i$ implies
\[
 D-z\le\sqrt{2Cb_0/L},\qquad
 z\le\sqrt{2Cb_1/(q_TL)}.
\]
Adding and squaring proves the lower bound in
\eqref{eq:universal-distance-menu-factor}.

For attainment, query endpoints $0,D$ for each of $T$ independently
sampled functions. For the common function, the difference is $-LD^2/2$; for the
rare function, the difference is $-LD^2/2+LD^2T$. Thus any occurrence of the rare function is
identified exactly. If at least one is observed, output zero.
Otherwise output the feasible point
\[
 z_* =\frac{D\sqrt{b_1}}
 {\sqrt{b_1}+\sqrt{q_Tb_0}}.
\]
The deterministic-law risk is $L(D-z_*)^2/2$ and the alternative-law
risk is $q_TLz_*^2/2$. Substitution shows that both benchmark ratios
are $C_*$. All pairs can be sampled in one stage.
\end{proof}

\begin{corollary}[A local accelerated variance benchmark]
\label{cor:universal-distance-variance}
For
\[
 b_i^{\rm var}=\frac{LR_i^2}{N^2}
                    +\frac{\sigma_iR_i}{\sqrt T},
\]
every procedure satisfying $\varepsilon_i\le Cb_i^{\rm var}$ on
both laws, with $C\ge0$, must have
\begin{equation}
 C\ge\frac1{64}\min\{N^2,D/r\}.
 \label{eq:universal-distance-variance-price}
\end{equation}
For $0<r\le D/2$, the exact factor on the disclosed family obeys
\begin{equation}
 \frac1{64}\min\{N^2,D/r\}
 \le C_*^{\rm var}\le2\min\{N^2,D/r\}.
 \label{eq:universal-distance-variance-match}
\end{equation}
\end{corollary}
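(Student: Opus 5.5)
The plan is to specialize the exact competitive factor of Theorem~\ref{thm:universal-distance-menu} to the benchmarks $b_i^{\rm var}$. We then bound $C_*$ from below and above by elementary inequalities, using $q_T\in[1/4,1]$. From Lemma~\ref{lem:universal-distance-family} we have $R_0=D-r$, $\sigma_0=0$, $R_1=r$ and $\sigma_1=LD\sqrt{T-1}$. This gives
\[
 b_0=\frac{L(D-r)^2}{N^2},\qquad
 b_1=\frac{Lr^2}{N^2}+LDr\sqrt{\frac{T-1}{T}}.
\]
Since any procedure achieving $\varepsilon_i\le Cb_i$ on both laws must have $C\ge C_*$, the lower bound \eqref{eq:universal-distance-variance-price} reduces to bounding $C_*$ from below.

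For the lower bound, write $C_*=q_TLD^2/[2(\sqrt{b_1}+\sqrt{q_Tb_0})^2]$. We use $(\sqrt a+\sqrt c)^2\le 2a+2c$ together with $b_1\le Lr^2/N^2+LDr$ and $q_Tb_0\le LD^2/N^2$. Since $r\le D$, this gives
\[
 (\sqrt{b_1}+\sqrt{q_Tb_0})^2\le 2\Bigl(\frac{2LD^2}{N^2}+LDr\Bigr)\le 8L\max\Bigl\{\frac{D^2}{N^2},Dr\Bigr\}.
\]
Then $C_*\ge q_TD^2/(16\max\{D^2/N^2,Dr\})=(q_T/16)\min\{N^2,D/r\}$, and $q_T\ge1/4$ yields the constant $1/64$. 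The case $r$ close to $D$ needs no separate treatment, because the crude bound already covers all $0<r<D$.

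For the upper bound in \eqref{eq:universal-distance-variance-match}, assume $r\le D/2$ and lower bound the denominator instead. First, $q_Tb_0\ge \frac14 L(D/2)^2/N^2$, so $\sqrt{q_Tb_0}\ge \sqrt L D/(4N)$. Second, $T\ge2$ gives $b_1\ge LDr/\sqrt2$, so $b_1\ge LDr/2$. Using $q_T\le1$ and the larger of the two square roots alone, this gives
\[
 C_*\le \frac{LD^2}{2\max\{LD^2/(16N^2),\,LDr/2\}}=\min\{8N^2,D/r\}.
\]
The result is within the claimed factor $2\min\{N^2,D/r\}$ in the $D/r$ branch, but the $N^2$ branch gives $8N^2$. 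I would therefore tighten the $N^2$ branch by using $(\sqrt{b_1}+\sqrt{q_Tb_0})^2\ge b_1+q_Tb_0$ with the sharper $q_T$ value, or by combining both terms. This constant bookkeeping is the main obstacle. If it cannot be closed at $2$ with $q_T\ge1/4$, I would check whether the coefficient $q_T$ in the numerator cancels more favourably, since $q_Tb_0$ appears in the denominator with the same factor. That would give $C_*\le LD^2/(2b_0)\le 2N^2$ directly via $(D-r)^2\ge D^2/4$ and $q_T\le1$ after dividing numerator and denominator by $q_T$: indeed $C_*\le q_TLD^2/(2q_Tb_0)=LD^2/(2b_0)\le 2N^2$, which resolves it. Likewise $C_*\le q_TLD^2/(2b_1)\le D/r$. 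Together these give the stated sandwich.
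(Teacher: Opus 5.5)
Your argument is correct.

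\textbf{Upper bound.} Once the detour is removed, this is exactly the paper's argument. The denominator of $C_*$ dominates each of $q_Tb_0^{\rm var}$ and $b_1^{\rm var}$ separately. Hence $C_*\le LD^2/(2b_0^{\rm var})\le 2N^2$ for $r\le D/2$. Also $C_*\le q_TLD^2/(2b_1^{\rm var})\le D/r$, since $b_1^{\rm var}\ge LDr/\sqrt2$ for $T\ge2$. Your intermediate $8N^2$ arose only because you used $q_T\ge1/4$ in the denominator while using $q_T\le1$ in the numerator. Keeping the common factor $q_T$ on both sides avoids this, so the exploratory paragraph can simply be cut.

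\textbf{Lower bound.} Here you take a genuinely different route. The paper argues by cases directly from the Pareto relation of Theorem~\ref{thm:universal-distance-pareto}. Either $C\ge N^2/8$, or $\varepsilon_0<LD^2/8$. In the second case $\varepsilon_1\ge q_TLD^2/8\ge LD^2/32$, and dividing by $b_1^{\rm var}\le 2LDr$ gives $C\ge D/(64r)$. You instead invoke $C\ge C_*$ from the lower-bound part of Theorem~\ref{thm:universal-distance-menu}. You then bound the closed form using $(\sqrt a+\sqrt c)^2\le2(a+c)$, together with $b_1^{\rm var}\le Lr^2/N^2+LDr$ and $q_Tb_0^{\rm var}\le LD^2/N^2$.

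\textbf{Comparison.} Your version obtains the universal price and the lower half of the sandwich from a single estimate on $C_*$. With the sharper constant $6$ in place of your $8$, it even yields $1/48$. The cost is reliance on the exact-factor theorem and its hypothesis $b_0,b_1>0$, which holds here because $r<D$. The paper's case split uses only the Pareto inequality and avoids the closed form entirely.
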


\begin{proof}
The two benchmarks are
\[
 b_0^{\rm var}=\frac{L(D-r)^2}{N^2},\qquad
 b_1^{\rm var}=\frac{Lr^2}{N^2}
                  +LDr\sqrt{1-1/T}\le2LDr.
\]
If $C\ge N^2/8$, the first branch of the asserted minimum follows.
Otherwise $\varepsilon_0<LD^2/8$, so
\eqref{eq:universal-distance-pareto} gives
$\varepsilon_1\ge q_TLD^2/8\ge LD^2/32$.
Dividing by $b_1^{\rm var}\le2LDr$ gives $C\ge D/(64r)$.
These alternatives prove
\eqref{eq:universal-distance-variance-price}.

Formula~\eqref{eq:universal-distance-menu-factor} is bounded above
by $LD^2/(2b_0^{\rm var})$ and by
$q_TLD^2/(2b_1^{\rm var})$. For $r\le D/2$ the first bound is
at most $2N^2$. For $T\ge2$,
$b_1^{\rm var}\ge LDr/\sqrt2$, so the second bound is at most
$D/r$. This proves the upper bound in
\eqref{eq:universal-distance-variance-match}.
\end{proof}

\begin{corollary}[A local second-moment benchmark]
\label{cor:universal-distance-moment}
Assume $T\ge4$ and use the neighborhood envelopes in
\eqref{eq:universal-distance-moments}. For
\[
 b_i^{\rm mom}=\frac{LR_i^2}{N^2}
                    +\frac{M_{2,i}R_i}{\sqrt T},
\]
every procedure satisfying $\varepsilon_i\le Cb_i^{\rm mom}$ on
both laws, with $C\ge0$, must have
\begin{equation}
 C\ge\frac1{96}\min\{N^2,\sqrt T,D/r\}.
 \label{eq:universal-distance-moment-price}
\end{equation}
For $0<r\le D/2$, the exact competitive factor also satisfies
\begin{equation}
 C_*^{\rm mom}\le2\min\{N^2,\sqrt T,D/r\}.
 \label{eq:universal-distance-moment-match}
\end{equation}
\end{corollary}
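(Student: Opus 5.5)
We follow the proof of Corollary~\ref{cor:universal-distance-variance}, using the second-moment benchmarks from \eqref{eq:universal-distance-moments}. From Lemma~\ref{lem:universal-distance-family} we have $R_0=D-r$, $R_1=r$, $M_{2,0}=3LD$ and $M_{2,1}=LD\sqrt{T+8}$. The two benchmarks are therefore
\[
 b_0^{\rm mom}=\frac{L(D-r)^2}{N^2}+\frac{3LD(D-r)}{\sqrt T}\le LD^2\Bigl(\frac1{N^2}+\frac3{\sqrt T}\Bigr),
 \qquad
 b_1^{\rm mom}=\frac{Lr^2}{N^2}+\frac{LDr\sqrt{T+8}}{\sqrt T}\le LDr\bigl(1+\sqrt3\bigr)\le 3LDr .
\]
The second bound uses $r\le D$, $N\ge1$ and $T\ge4$, so that $\sqrt{(T+8)/T}\le\sqrt3$.

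\emph{Lower bound.} We split on whether $\varepsilon_0$ is at least $LD^2/8$.

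In the first case, $\varepsilon_0\ge LD^2/8$. The hypothesis $\varepsilon_0\le Cb_0^{\rm mom}$ then gives $C\ge \frac18\bigl(1/N^2+3/\sqrt T\bigr)^{-1}$. Since $1/N^2+3/\sqrt T\le 4\max\{N^{-2},T^{-1/2}\}$, this yields $C\ge\frac1{32}\min\{N^2,\sqrt T\}$.

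In the second case, $\varepsilon_0<LD^2/8$. Then $\sqrt{2\varepsilon_0/L}<D/2$, and Theorem~\ref{thm:universal-distance-pareto} gives $\varepsilon_1\ge q_T\frac L2(D/2)^2\ge LD^2/32$, using $q_T\ge1/4$. Dividing by $b_1^{\rm mom}\le3LDr$ gives $C\ge D/(96r)$.

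Combining the two cases yields \eqref{eq:universal-distance-moment-price}. The only delicate point is tracking constants, in particular checking that $(1+\sqrt3)/32\le 1/32\cdot3$ and that $\frac1{32}\ge\frac1{96}$, so that $96$ is the uniform constant.

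\emph{Upper bound.} We use the exact formula \eqref{eq:universal-distance-menu-factor}. It shows
\[
 C_*^{\rm mom}\le \min\Bigl\{\frac{LD^2}{2b_0^{\rm mom}},\ \frac{q_TLD^2}{2b_1^{\rm mom}}\Bigr\}.
\]
For $r\le D/2$ we have $D-r\ge D/2$, so each term of $b_0^{\rm mom}$ gives its own bound:
\[
 b_0^{\rm mom}\ge \frac{LD^2}{4N^2}\quad\text{gives}\quad \frac{LD^2}{2b_0^{\rm mom}}\le 2N^2,
 \qquad
 b_0^{\rm mom}\ge \frac{3LD^2}{2\sqrt T}\quad\text{gives}\quad \frac{LD^2}{2b_0^{\rm mom}}\le \frac{\sqrt T}{3}.
\]
For the rare law, $b_1^{\rm mom}\ge LDr$, so the second term is at most $D/(2r)$ because $q_T\le1$.

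Taking the minimum of these three bounds gives $C_*^{\rm mom}\le2\min\{N^2,\sqrt T,D/r\}$, which is \eqref{eq:universal-distance-moment-match}. I expect no real obstacle beyond verifying these elementary constant comparisons.
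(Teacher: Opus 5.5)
Your proof is correct and follows essentially the same route as the paper. For the lower bound, the paper uses the same dichotomy on $\varepsilon_0<LD^2/8$, with $b_0^{\rm mom}\le LD^2(N^{-2}+3T^{-1/2})$ and $b_1^{\rm mom}\le(1+\sqrt3)LDr\le3LDr$; for the upper bound, it bounds the two denominators in \eqref{eq:universal-distance-menu-factor} using $b_0^{\rm mom}\ge LD^2/(4N^2)$, $b_0^{\rm mom}\ge3LD^2/(2\sqrt T)$ and $b_1^{\rm mom}\ge LDr$, exactly as you do.
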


\begin{proof}
Set $a=N^{-2}+3T^{-1/2}$. Since
$b_0^{\rm mom}\le LD^2a$, either $C\ge1/(8a)$, which gives
$C\ge\min\{N^2,\sqrt T\}/32$, or
$\varepsilon_0<LD^2/8$. In the latter case
\eqref{eq:universal-distance-pareto} gives
$\varepsilon_1\ge LD^2/32$. For $T\ge4$,
\[
 b_1^{\rm mom}=\frac{Lr^2}{N^2}
       +LDr\sqrt{1+8/T}\le(1+\sqrt3)LDr\le3LDr,
\]
so $C\ge D/(96r)$. These alternatives prove the lower bound.

For $r\le D/2$,
\[
 b_0^{\rm mom}\ge\frac{LD^2}{4N^2},\qquad
 b_0^{\rm mom}\ge\frac{3LD^2}{2\sqrt T},\qquad
 b_1^{\rm mom}\ge LDr.
\]
Apply these three inequalities to the two denominator bounds from
\eqref{eq:universal-distance-menu-factor} to obtain
$C_*^{\rm mom}\le\min\{2N^2,\sqrt T/3,D/(2r)\}$, proving
the stated upper bound.
\end{proof}

\begin{corollary}[A supplied initial coefficient]
\label{cor:universal-distance-startup}
Fix a supplied $h_0>0$ and choose $L\ge h_0$. If the same procedure
satisfies on both laws
\[
 \varepsilon_i\le C\left(
 \frac{(L+h_0)R_i^2}{N^2}
       +\frac{\sigma_iR_i}{\sqrt T}\right),\qquad C\ge0,
\]
then $C\ge\min\{N^2/16,D/(96r)\}$.
\end{corollary}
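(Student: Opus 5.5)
The plan is to follow the proof of Corollary~\ref{cor:universal-distance-variance}, with the supplied coefficient absorbed into the curvature term. The single fact needed is that $L\ge h_0$ gives $L+h_0\le2L$. This reduces the proposed benchmark to at most twice the curvature part of $b_i^{\rm var}$ plus the same variance part. The family is that of Lemma~\ref{lem:universal-distance-family}, with its standing $T\ge2$ and the supplied $h_0$ counted among the common inputs. The two laws therefore share every supplied quantity, and the coupling in Theorem~\ref{thm:universal-distance-pareto} applies unchanged.

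First I bound the deterministic-law benchmark. Since $\sigma_0=0$ and $R_0=D-r\le D$, it is at most $(L+h_0)(D-r)^2/N^2\le2LD^2/N^2$. Suppose $C<N^2/16$; otherwise we are done. Then $\varepsilon_0<2CLD^2/N^2<LD^2/8$, so $\sqrt{2\varepsilon_0/L}<D/2$. The second inequality of \eqref{eq:universal-distance-pareto} then gives
\[
 \varepsilon_1\ge q_T\frac L2\cdot\frac{D^2}{4}\ge\frac{LD^2}{32},
\]
using $q_T\ge1/4$.

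Next I bound the rare-law benchmark. It has $R_1=r$ and $\sigma_1=LD\sqrt{T-1}$, so
\[
 \frac{(L+h_0)r^2}{N^2}+\frac{\sigma_1r}{\sqrt T}
 \le\frac{2Lr^2}{N^2}+LDr\sqrt{1-1/T}\le2LDr+LDr=3LDr,
\]
using $r<D$ and $N\ge1$. The assumed guarantee $\varepsilon_1\le3CLDr$ combined with $\varepsilon_1\ge LD^2/32$ yields $C\ge D/(96r)$. The two cases together give $C\ge\min\{N^2/16,D/(96r)\}$.

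I expect no real obstacle. The only care needed is the constant bookkeeping: the factor $2$ from $L+h_0\le2L$ turns the $N^2/8$ threshold of the variance corollary into $N^2/16$, and the crude bound $2Lr^2/N^2\le2LDr$ costs $3$ rather than $2$ in the rare-law benchmark, giving the $96$.
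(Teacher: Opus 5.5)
Your proposal is correct and follows the paper's proof essentially line by line: use $L+h_0\le 2L$ to bound the deterministic benchmark by $2LD^2/N^2$; if $C<N^2/16$, then $\varepsilon_0<LD^2/8$, so \eqref{eq:universal-distance-pareto} with $q_T\ge 1/4$ gives $\varepsilon_1\ge LD^2/32$; finally, bound the rare-law benchmark by $2Lr^2/N^2+LDr\le 3LDr$. One small point: the first inequality in your chain $\varepsilon_0<2CLD^2/N^2$ should be non-strict, which is harmless because the following inequality is strict.
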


\begin{proof}
The deterministic benchmark is at most $2LD^2/N^2$. If
$C<N^2/16$, the deterministic risk is below $LD^2/8$, and
\eqref{eq:universal-distance-pareto} gives alternative risk at least
$LD^2/32$. Its benchmark is at most
$2Lr^2/N^2+LDr\le3LDr$, proving $C\ge D/(96r)$.
The other alternative is $C\ge N^2/16$.
\end{proof}

The two populations share the supplied diameter, smoothness constant
and population Lipschitz envelope. Their intrinsic variance and
original optimizer distance are unknown. Supplying the exact
law-dependent distance, variance or moment parameter changes the
supplied inputs and does not satisfy the common-input premise. A common
upper envelope may be supplied, but its benchmark differs from the
intrinsic benchmarks in these corollaries.

A diameter-based statistical term can absorb the alternative loss.
Thus these results are compatible with diameter-based upper bounds.
They establish no additional price specifically caused by an unknown
diameter, and no sampling-stage lower bound. The exact upper match
concerns the specified two-law family; a matching guarantee on the full
finite-second-moment class remains open.

\subsection{Proof of Corollary~\ref{cor:universal-distance-main}}
\label{subsec:proof-universal-distance-main}
\begin{proof}
Use the two sample-function laws in
Lemma~\ref{lem:universal-distance-family}, with the common supplied
inputs stated there. The neighborhood bounds
\eqref{eq:universal-distance-moments} verify the second-moment
assumptions on the permitted evaluation interval. Apply
Corollary~\ref{cor:universal-distance-moment}. Its full-function
coupling applies to every arbitrary-pair procedure and gives the
stated factor. Both laws have convex samples and the same supplied
population smoothness constant. No original-distance, variance or
law-specific moment parameter is supplied.
\end{proof}

The factor is tight for this pair: a method that knows both laws and their benchmark values attains it (Theorem~\ref{thm:universal-distance-menu}); matching bounds for the full class remain open. Unknown curvature can require a diameter-scale statistical error term as $R\to0$ (Corollary~\ref{cor:prestep-rare-price}). 

\section{Protected radii, trajectory bounds and corruption}
\label{sec:protected-extensions}
\subsection{A complete observed-radius execution}

The same two-value interface allows the radius to depend on observations.
Define the range before sampling
$\bar\gamma=\min\{\gammaMax,d^{1/4}D/N\}$,
$\underline\gamma=\bar\gamma/2$, and $\gamma_0=\bar\gamma$.
Before the vector pilots at $y_t$, draw one sampled function and a uniform
ball direction $U_t$, independently of each other and of the preceding
history, and compute
\begin{align}
 V_t&=\widetilde F(y_t+\gamma_{t-1}U_t,\xi_t)
       -\widetilde F(y_t+\gamma_{t-1}U_t/2,\xi_t),\nonumber\\
 q_t&=\frac{[V_t]_+}{s_\gamma+[V_t]_+},\qquad
 \gamma_t=\max\{\underline\gamma,\gamma_{t-1}/(1+q_t)\}.
 \label{eq:observed-radius}
\end{align}
Here $s_\gamma>0$ is a value reference chosen before sampling. For exact evaluations, the conditional mean of $V_t$ is
$f_{\gamma_{t-1}}(y_t)-f_{\gamma_{t-1}/2}(y_t)$, which lies between
one half and one times
$f_{\gamma_{t-1}}(y_t)-f(y_t)$
(Proposition~\ref{prop:adjacent-scale}). The statistic uses two values
at the two chosen endpoints of the same sampled function.

The complete variant of Algorithm~\ref{alg:scalar} requires $T\ge10N$.
Initialize the reserve with $\mathsf U=T-5N$ in
\eqref{eq:reserve}--\eqref{eq:surplus}, and use
\eqref{eq:observed-radius} immediately after computing $y_t$.
Every pilot and update record at both centers in iteration $t$ then uses
$G_{\gamma_t}$. The batch allocation, projection and scalar coefficient
remain \eqref{eq:pilot}--\eqref{eq:extras} and
\eqref{eq:step}--\eqref{eq:coefficient}. This specifies all count and
radius decisions before the corresponding update means.
The state persists across iterations.

The adjacent-scale mean is a local smoothing diagnostic, and its role
differs from a certificate at an unknown optimizer. In dimension one,
take $f(x)=|x|$ and a center $x\ge\gamma$. The two ball averages both
equal $x$, so $f_\gamma(x)-f_{\gamma/2}(x)=0$. At the optimizer,
however, $f_\gamma(0)-f(0)=\gamma/2$. The algorithm therefore retains
the weighted comparator bias in its potential instead of inferring it
from a small diagnostic at the current center.

Only boundedness of the selection signal $0\le q_t<1$ is needed for
the protected execution and its risk theorem. The moment guarantee
for the diagnostic in Proposition~\ref{prop:adjacent-scale} uses the
additional sample-gradient regularity in
Lemma~\ref{lem:oracle-moments}. Conditional on the completed diagnostic,
the selected radius is fixed for both update means; hence its
data dependence preserves their conditional vector moment assumptions. The same
reasoning permits noisy or corrupted diagnostic observations without
turning them into confidence bounds.

\begin{algorithm}[H]
\caption{Protected observed smoothing with adaptive batches}
\label{alg:observed-radius}
\small
\begin{algorithmic}[1]
\Require $Q,x^0,D$, $N\in\mathbb N_+$, integer $T\ge10N$;
$h_0,s_0,s_\gamma,\gammaMax>0$
\State Set $v_0=x_0=x^0$, $H_0=h_0$, $\Lambda_0=0$
\State $\bar\gamma\gets\min\{\gammaMax,d^{1/4}D/N\}$;
$\underline\gamma\gets\bar\gamma/2$; $\gamma_0\gets\bar\gamma$
\State Set $\mathsf U=T-5N$ and compute $B_t^0,m_1$ by
\eqref{eq:reserve}--\eqref{eq:surplus}
\For{$t=1,\ldots,N$}
 \State $a_t=t$, $A_t=t(t+1)/2$; compute $y_t$ by \eqref{eq:center-y}
 \State Obtain the diagnostic pair $V_t$ in \eqref{eq:observed-radius}
 \State Compute $q_t,\gamma_t$ by \eqref{eq:observed-radius}
 \State $B_t^y\gets\PaidBatch(y_t,t,2t-1,\gamma_t)$
 \State $g_t^y\gets\FreshMean(y_t,B_t^y,\gamma_t)$
 \State Compute $v_t,x_t$ by \eqref{eq:step}
 \State $B_t^x\gets\PaidBatch(x_t,t,2t,\gamma_t)$
 \State $g_t^x\gets\FreshMean(x_t,B_t^x,\gamma_t)$
 \State Update $r_t,\widehat\beta_t,H_t$ by \eqref{eq:curvature}--\eqref{eq:coefficient}
\EndFor
\State \Return $x_N$
\end{algorithmic}
\end{algorithm}
The procedures in Algorithm~\ref{alg:observed-radius} are those defined
for Algorithm~\ref{alg:scalar}, with the displayed radius supplied to
every estimate. Each subsequent batch uses sampled functions and
directions independent of the preceding history, which determines its
center and radius. Each iteration has one radius diagnostic followed
by four vector sampling rounds. Queries within each round run in parallel.

\begin{theorem}[Protected observed radii]
\label{thm:learned-radius}\label{thm:protected-smoothing}
The specified variant spends exactly $T$ pairs and $2T$ values in
$5N$ sequential stages and uses $N$ projections. Define
\begin{align*}
 \mathcal S_N=\frac1{A_N}\sum_{t=1}^Nt\,
      \E[f_{\gamma_t}(x^\star)-f(x^\star)]&\le M\bar\gamma,\\
 L_*&=\sqrt d\,M/\underline\gamma.
\end{align*}
Its clean expected risk is at most
\begin{equation}
 \frac{h_0R^2}{2A_N}+\frac{16L_*D^2}{N^2}
       +\frac{8\sqrt2\sigma_0D}{\sqrt T}+\mathcal S_N.
 \label{eq:learned-clean}
\end{equation}
Under bounded value corruption, replace the stochastic term by
\[
 \frac{16\sigma_0D}{\sqrt T}
 +\frac{6c_dD\Delta}{\underline\gamma}
 +\frac{16dD\Delta}{\underline\gamma\sqrt T}.
\]
Here $\sigma_0^2$ uniformly bounds the clean single-record variance at
the selected radii. The full proof is in
Appendix~\ref{subsec:proof-learned-radius}.
\end{theorem}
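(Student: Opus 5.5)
The plan has three parts: resource accounting, the pathwise potential, and expectation bounds on the error energy. Throughout, take $u=x^\star$ and reuse the machinery of Appendix~\ref{sec:app-smoothing}. The accounting claims (exactly $T$ pairs, $2T$ values, $5N$ stages, $N$ projections) and the floor $B_j\ge B_t^0\ge\mathsf U t/[4N(N+1)]$ with $\mathsf U\ge T/2$ come directly from Lemma~\ref{lem:smoothing-accounting}. That lemma also shows the radii are nonincreasing in $[\underline\gamma,\bar\gamma]$, because $0\le q_t<1$.

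\textbf{Step 1 (pathwise potential).} Monotonicity of the radii and the fact that $f_\gamma$ is nondecreasing in $\gamma$ (Lemma~\ref{lem:smoothing}) give predictable targets $f\le\phi_{t+1}\le\phi_t$ with $\phi_t=f_{\gamma_t}$. So Theorem~\ref{thm:potential} applies pathwise. Taking expectations, the comparator term becomes $A_N\mathcal S_N$, and $\mathcal S_N\le M\bar\gamma$ follows from $f_{\gamma_t}-f\le M\gamma_t\le M\bar\gamma$. The radius is fixed before both update batches of its iteration, and each batch uses fresh samples. Hence the conditional centering of \eqref{eq:conditional} holds with zero bias in the clean case, and $\E\sum_t Z_t=0$.

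\textbf{Step 2 (coefficient growth).} Every selected target is $L_*$-smooth with $L_*=\sqrt dM/\underline\gamma$, since $\gamma_t\ge\underline\gamma$ and $L_\gamma\le\sqrt dM/\gamma$ by Lemma~\ref{lem:smoothing}. Lemma~\ref{lem:growth} then gives $\E(H_N-h_0)\le4L_*+\sqrt W/D$. To bound $W$, use the cross-term cancellation: $e_t^y$ is known before the $x$ batch, so $\E\ip{e_t^x}{e_t^y}=0$. Combined with the deterministic floors and conditional variance $\sigma_0^2/B_j$, this yields $W\le16\sigma_0^2A_N^2/\mathsf U\le32\sigma_0^2A_N^2/T$. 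Substituting into \eqref{eq:potential} and using $A_N\ge N^2/2$ produces $8L_*D^2/A_N\le16L_*D^2/N^2$ and $2D\sqrt W/A_N\le8\sqrt2\sigma_0D/\sqrt T$. This proves \eqref{eq:learned-clean}.

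\textbf{Step 3 (corruption).} Split each error as bias plus centered noise, as in Theorem~\ref{thm:corruption}. The radius floor gives $\|b\|\le c_d\Delta/\underline\gamma$ and centered variance at most $V=2\sigma_0^2+2d^2\Delta^2/\underline\gamma^2$ (Lemma~\ref{lem:oracle-moments}). The bias inner products contribute at most $2Db$ after division by $A_N$. Minkowski in the weighted product $L_2$ space gives $\sqrt W\le2b\sqrt{\sum t^2}+4A_N\sqrt{V/\mathsf U}$. With $\sum t^2\le A_N^2$ and $\mathsf U\ge T/2$, this yields exactly the replacement terms.

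I expect the main obstacle to be justifying that the data-dependent radius leaves intact both the conditional unbiasedness and the variance floors, and that the cross-term cancellation survives when the biases depend on earlier noise. I will handle this by conditioning on the completed diagnostic and pilot history before each update mean, using only the deterministic floors $B_t^0$, and never assuming cancellation between $b_t^x$ and $\zeta_t^y$.
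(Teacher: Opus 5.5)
Your proposal is correct and follows the paper's own proof essentially step for step. Both take the accounting and the batch floors from Lemma~\ref{lem:smoothing-accounting} and use the monotone-target potential of Theorem~\ref{thm:potential} with $u=x^\star$. Both apply Lemma~\ref{lem:growth} with $L_*=\sqrt d\,M/\underline\gamma$ and obtain $W\le16\sigma_0^2A_N^2/\mathsf U$ from the floors together with the vanishing cross term. Under corruption, both use the bias/noise split and weighted-$L_2$ Minkowski to produce the $6c_dD\Delta/\underline\gamma$ term and the two coefficient-$16$ terms. The only difference is order: the paper derives the corrupted bound first and then specializes to $\Delta=0$, which changes nothing.
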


The radius is selected from data and decreases by at most a
factor of two over the entire computation. This protection gives the
same leading nonsmooth orders as Corollary~\ref{cor:horizon}, with
constants $33$ and $32$ in its first two regularity terms.
For an $L$-smooth population,
$L_*=\min\{L,\sqrt dM/\underline\gamma\}$ is valid and
$\mathcal S_N\le
Ld(2(d+2)A_N)^{-1}\sum_tt\,\E\gamma_t^2$.
This variant retains an additional dimension factor in its smooth bound.
The forward-secant method in Section~\ref{sec:protected-main} obtains both
regularity orders under the additional scalar moment assumption.

\begin{corollary}[Stages, values and admissible corruption]
\label{cor:resources}
There is a numerical constant $c$ such that, for any target
$\varepsilon>0$, Algorithm~\ref{alg:observed-radius} has expected gap
at most $\varepsilon$ whenever $\sigma_0^2\le CdM_2^2$ and the supplied
integer budgets satisfy
\begin{align}
 N&\ge c\left(1+\frac{\sqrt{h_0}R}{\sqrt\varepsilon}
 +\frac{d^{1/4}MD}{\varepsilon}
 +\frac{d^{1/4}D}{\gammaMax}\right),\nonumber\\
 T&\ge c\left(N+d+\frac{CdM_2^2D^2}{\varepsilon^2}\right),
 \qquad \Delta\le\frac{\varepsilon}{c\,d^{1/4}N}.
 \label{eq:resource-conditions}
\end{align}
The cost is $5N$ sequential sampling stages and $2T$ scalar values.
The full proof is in Appendix~\ref{subsec:proof-resources}.
\end{corollary}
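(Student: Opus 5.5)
The plan is to instantiate the robust form \eqref{eq:learned-radius-uniform-app} of Theorem~\ref{thm:learned-radius} with $u=x^\star$. That form bounds the expected gap by
\[
 \frac{h_0R^2}{2A_N}+\frac{8L_*D^2}{A_N}+\mathcal S_N+\frac{16\sigma_0D}{\sqrt T}+\frac{6c_dD\Delta}{\underline\gamma}+\frac{16dD\Delta}{\underline\gamma\sqrt T}.
\]
I will then show that each term is at most a small fraction of $\varepsilon$ under \eqref{eq:resource-conditions} for one numerical $c$, say $c=2^{12}$. The resource counts ($T$ pairs, $2T$ values, $5N$ stages, $N$ projections) are taken directly from Lemma~\ref{lem:smoothing-accounting}, whose hypothesis $T\ge10N$ follows from $T\ge cN$.

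\textbf{Step 1: fixing the radius regime.} Since $N\ge c\,d^{1/4}D/\gammaMax\ge d^{1/4}D/\gammaMax$, we have $d^{1/4}D/N\le\gammaMax$. Hence $\bar\gamma=d^{1/4}D/N$ and $\underline\gamma=d^{1/4}D/(2N)$, which removes the $\gammaMax$ term.

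\textbf{Step 2: the deterministic terms.} With $A_N\ge N^2/2$, $L_*=\sqrt dM/\underline\gamma=2d^{1/4}MN/D$ and $\mathcal S_N\le M\bar\gamma$, these terms are $h_0R^2/N^2$ plus $32d^{1/4}MD/N+d^{1/4}MD/N$. The condition $N\ge c\sqrt{h_0}R/\sqrt\varepsilon$ gives $h_0R^2/N^2\le\varepsilon/c^2$. The condition $N\ge c\,d^{1/4}MD/\varepsilon$ gives $33d^{1/4}MD/N\le33\varepsilon/c$.

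\textbf{Step 3: the stochastic term.} Using $\sigma_0\le\sqrt{Cd}M_2$ and $T\ge cCdM_2^2D^2/\varepsilon^2$, we get $16\sigma_0D/\sqrt T\le16\varepsilon/\sqrt c$.

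\textbf{Step 4: the corruption terms.} With $c_d\le\sqrt d$ and $T\ge d$, the two corruption terms together are at most $22\sqrt dD\Delta/\underline\gamma=44d^{1/4}N\Delta$. The assumption $\Delta\le\varepsilon/(c\,d^{1/4}N)$ then bounds this by $44\varepsilon/c$.

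Summing Steps 2–4 gives $\varepsilon\,(c^{-2}+33/c+16/\sqrt c+44/c)$, which is below $\varepsilon$ once $c=4096$. The one term whose order in $c$ is weakest is the variance term: it scales only as $c^{-1/2}$, so it is what forces $c$ to be large.

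The only step needing care is Step 1, together with the check that the corruption bound is usable. The clean expression \eqref{eq:learned-clean} is not enough here, because it does not contain the $\Delta$ terms. I will therefore start from the robust form, which carries the constant $16$ on the variance term. I also need the corruption terms to combine linearly in $N$; this is exactly what the Step 1 regime provides, since it turns $1/\underline\gamma$ into $2N/(d^{1/4}D)$. Otherwise everything is routine substitution of the budget inequalities.
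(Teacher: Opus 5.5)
Your proposal is correct and follows the paper's proof essentially line for line. The paper also takes $c=2^{12}$ and uses $N\ge d^{1/4}D/\gammaMax$ to get $\underline\gamma=d^{1/4}D/(2N)$. It then applies the robust bound \eqref{eq:learned-radius-uniform-app} with $A_N\ge N^2/2$, $c_d\le\sqrt d$ and $T\ge d$, reduces to the same four terms $h_0R^2/N^2$, $33d^{1/4}MD/N$, $16\sqrt{Cd}M_2D/\sqrt T$ and $44d^{1/4}\Delta N$, and bounds them by $\varepsilon/c^2$, $33\varepsilon/c$, $16\varepsilon/\sqrt c$ and $44\varepsilon/c$, taking the resource counts from Lemma~\ref{lem:smoothing-accounting}.
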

These are sufficient budget conditions in terms of unknown analysis
parameters. The computation itself uses only the supplied budgets,
geometry and references chosen before sampling. They do not constitute a stopping rule
that certifies $\varepsilon$ from observations. When $N$ is chosen at
order $d^{1/4}MD/\varepsilon$, the corruption allowance has order
$\varepsilon^2/(\sqrt d\,MD)$. Initialization and a restricted evaluation domain can
increase the required horizon and reduce this allowance. Corollary~\ref{cor:noise-frontier} supplies the corresponding pair-budget
obstruction in a specified accuracy regime.

\subsection{Curvature along the generated trajectory}
The worst-case smoothness bound can be replaced by a charge for the
secants encountered along the actual execution. Set
$\mathcal E_N=\sum_t t^2\norm{e_t^x-e_t^y}^2$ and, for $r_t>0$, define
\begin{align}
 \kappa_t&=\frac{A_t}{r_t^2}
  \ip{\nabla\phi_t(x_t)-\nabla\phi_t(y_t)}{x_t-y_t},\nonumber\\
 \mathcal C_N&=\sum_t\frac{r_t^2}{D^2}
        [\kappa_t-H_{t-1}/4]_+.
 \label{eq:secant-charge-main}
\end{align}
Set $\kappa_t=0$ when $r_t=0$. These are analysis quantities;
the algorithm uses the observed coefficient update. They record
movement and true secant curvature on its own trajectory.

\begin{theorem}[Trajectory-sensitive risk]\label{thm:trajectory}
Let $f\le\phi_{t+1}\le\phi_t$ be predictably chosen differentiable convex targets with
common gradient Lipschitz bound $L_*<\infty$. Assume that the total
errors $e_t^y,e_t^x$ are integrable. For $\iota\in\{y,x\}$, define
$b_t^\iota=\E[e_t^\iota\mid\mathcal F_t^\iota]$ and
$\zeta_t^\iota=e_t^\iota-b_t^\iota$, where $v_{t-1}$ is
$\mathcal F_t^y$-measurable and $v_t-v_{t-1}$ is
$\mathcal F_t^x$-measurable. For fixed $u\in Q$,
put $R_u=\norm{u-x^0}$ and
$\mathcal S_N(u)=A_N^{-1}\sum_t t\,\E[\phi_t(u)-f(u)]$.
Assume $\mathcal S_N(u)<\infty$ and $\E\mathcal E_N<\infty$.
Then
\begin{align}
 &\E[f(x_N)-f(u)]
 \le\frac{h_0R_u^2}{2A_N}+\mathcal S_N(u)\nonumber\\*
 &\quad+\frac1{A_N}\min\left\{
 \begin{aligned}
  &4D^2\E\mathcal C_N+2\sqrt2D\sqrt{\E\mathcal E_N},\\
  &8L_*D^2+2D\sqrt{\E\mathcal E_N}
 \end{aligned}\right\}\nonumber\\*
 &\quad+\frac D{A_N}\sum_t t\,\E(\norm{b_t^y}+\norm{b_t^x}).
 \label{eq:trajectory-risk-main}
\end{align}
The full proof is in Appendix~\ref{subsec:proof-trajectory}.
\end{theorem}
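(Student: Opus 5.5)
The plan is to start from the pathwise potential \eqref{eq:potential} of Theorem~\ref{thm:potential}, which holds for predictable nonincreasing targets $f\le\phi_{t+1}\le\phi_t$ and arbitrary vector errors, at the fixed comparator $u$. Taking expectations gives four contributions after division by $A_N$. The startup term is $h_0R_u^2/(2A_N)$. The comparator charge is $A_N^{-1}\sum_t t\,\E[\phi_t(u)-f(u)]=\mathcal S_N(u)$, finite by assumption. The curvature term is $2D^2\E(H_N-h_0)/A_N$. The error sum is $\E\sum_tZ_t$.

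For $\E\sum_t Z_t$, I split each error as $e_t^\iota=b_t^\iota+\zeta_t^\iota$. The factor $u-v_{t-1}$ is $\mathcal F_t^y$-measurable and $v_t-v_{t-1}$ is $\mathcal F_t^x$-measurable, so conditional centering removes both $\zeta$ inner products separately. Integrability is not an issue, since the errors are integrable and the displacements are bounded by $D$. The remaining bias inner products are at most $tD(\norm{b_t^y}+\norm{b_t^x})$, which gives the last line of \eqref{eq:trajectory-risk-main}.

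For the curvature term I would obtain two bounds on $\E(H_N-h_0)$ from the same execution. Lemma~\ref{lem:secant-growth} gives, pathwise, $H_N-h_0\le 2\mathcal C_N+\sqrt{2\mathcal E_N}/D$ by \eqref{eq:secant-growth-simple}, and $H_N-h_0\le 4L_*+\sqrt{\mathcal E_N}/D$ by \eqref{eq:secant-growth-uniform}. Taking expectations and applying Jensen, $\E\sqrt{\mathcal E_N}\le\sqrt{\E\mathcal E_N}$, then multiplying by $2D^2/A_N$ produces exactly the two entries of the minimum. Both bounds hold for the identical execution and share the same startup, comparator and bias terms, so the smaller full numerator applies. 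This is legitimate because the minimum is taken over deterministic expected quantities, not pathwise.

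The main obstacle is integrability, which is needed to take expectations in the pathwise potential. $\E\mathcal E_N<\infty$ makes the square-root term integrable. Since $0\le\kappa_t\le 2L_*$, by the center identity $A_t(x_t-y_t)=t(v_t-v_{t-1})$ and $t^2/A_t\le2$, we get $\mathcal C_N\le 2NL_*$ and hence $H_N$ is integrable. On compact $Q$ the differences $\phi_N(x_N)-\phi_N(u)$ are bounded, which justifies dropping the nonnegative terminal distance term before taking expectations. With $A_N$ deterministic, no random normalization enters the argument.
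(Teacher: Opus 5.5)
Your proposal is correct and follows the paper's proof essentially step for step. Both arguments start from the pathwise potential \eqref{eq:potential}. Both cancel the two $\zeta$ inner products separately by conditional centering and bound the bias remainder by $tD(\norm{b_t^y}+\norm{b_t^x})$. Both take the two pathwise coefficient bounds \eqref{eq:secant-growth-simple} and \eqref{eq:secant-growth-uniform} from Lemma~\ref{lem:secant-growth}, apply Jensen, multiply by $2D^2/A_N$, and keep the smaller complete bound. Both get integrability from $\E\mathcal E_N<\infty$ and $0\le\mathcal C_N\le2NL_*$.

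One minor point: \eqref{eq:potential} is already stated for $f(x_N)-f(u)$ with the terminal distance discarded. So the left side is integrable because $f$ is continuous on compact $Q$, not because $\phi_N(x_N)-\phi_N(u)$ is bounded.
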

Lemma~\ref{lem:secant-growth} proves
$H_N-h_0\le2\mathcal C_N+\sqrt{2\mathcal E_N}/D$ and the separate
uniform envelope $4L_*+\sqrt{\mathcal E_N}/D$.
Taking the smaller complete expected bound in
\eqref{eq:trajectory-risk-main} retains both guarantees. This
refinement does not require a confidence bound for a true secant.

\begin{corollary}[Weighted variance on the actual trajectory]
\label{cor:weighted-variance}
Under the hypotheses of Theorem~\ref{thm:trajectory}, let each update
estimate be the average of records that are conditionally independent
given its pre-update history. Each batch size and center is measurable
with respect to that history, and $\mathcal F_t^x$ contains
$\mathcal F_t^y$ and the first update batch. Assume each record has conditional mean
$\nabla\phi_t(y_t)$ or $\nabla\phi_t(x_t)$ at its corresponding center.
In particular, exact records of \eqref{eq:record} satisfy this condition
when $\phi_t=f_{\gamma_t}$. Let $V_t^y$ and $V_t^x$ be the conditional
single-record variances at $y_t$ and $x_t$, respectively, at the selected
radius and before the corresponding update records. Define
\begin{equation}
 \mathcal V_N=\sum_t t^2\E\left(
       \frac{V_t^y}{B_t^y}+\frac{V_t^x}{B_t^x}\right).
 \label{eq:weighted-variance-main}
\end{equation}
Then $\E\mathcal E_N=\mathcal V_N$ and the risk bound
\eqref{eq:trajectory-risk-main} holds with no bias term and with
$\mathcal V_N$ as its error energy. The full proof is in
Appendix~\ref{subsec:proof-weighted-variance}.
\end{corollary}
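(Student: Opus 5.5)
The plan is to prove $\E\mathcal E_N=\mathcal V_N$ first, and then substitute it into Theorem~\ref{thm:trajectory}, noting that both bias terms vanish. Everything follows from conditional centering at the two nested histories, plus a square-integrability check.

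\emph{Step 1: the biases vanish.} By hypothesis each record's conditional mean, given its pre-update history, equals $\nabla\phi_t$ at its center. The batch size and center are measurable with respect to that history, so averaging preserves the mean. Hence $b_t^y=b_t^x=0$, and the last sum in \eqref{eq:trajectory-risk-main} disappears.

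\emph{Step 2: the individual energies.} Conditional on $\mathcal F_t^\iota$, the batch mean is an average of $B_t^\iota$ conditionally independent centered records with variance $V_t^\iota$. The cross terms vanish, so $\E[\norm{e_t^\iota}^2\mid\mathcal F_t^\iota]=V_t^\iota/B_t^\iota$ exactly, and taking expectations gives $\E\norm{e_t^\iota}^2=\E(V_t^\iota/B_t^\iota)$. This uses only the predictability of $B_t^\iota$, not its independence from the records; this is the point where random, data-selected counts and radii are permitted.

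\emph{Step 3: the cross term.} Since $\mathcal F_t^x$ contains the first update batch, $e_t^y$ is $\mathcal F_t^x$-measurable. Conditional centering of $e_t^x$ then gives $\E[\ip{e_t^x}{e_t^y}\mid\mathcal F_t^x]=0$. Expanding $\norm{e_t^x-e_t^y}^2$, multiplying by $t^2$ and summing yields $\E\mathcal E_N=\mathcal V_N$.

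\emph{Main obstacle: justifying the expansion.} Pulling $e_t^y$ out of the conditional expectation and splitting the expectation of the expanded square both require $e_t^y,e_t^x\in L_2$. The hypotheses give only integrability and $\E\mathcal E_N<\infty$. I would handle this as follows. Centering gives $\E[e_t^x-e_t^y\mid\mathcal F_t^x]=-e_t^y$, and conditional Jensen then gives $\norm{e_t^y}^2\le\E[\norm{e_t^x-e_t^y}^2\mid\mathcal F_t^x]$. Since $\E\mathcal E_N<\infty$, this makes $e_t^y$ square-integrable, and therefore $e_t^x=(e_t^x-e_t^y)+e_t^y$ is square-integrable as well. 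With both errors in $L_2$, the inner product is integrable and the expansion is legitimate.

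\emph{Conclusion.} Substituting $\E\mathcal E_N=\mathcal V_N$ and zero biases into \eqref{eq:trajectory-risk-main} proves the claim. For exact records of \eqref{eq:record} with $\phi_t=f_{\gamma_t}$, the mean condition is supplied by Lemma~\ref{lem:oracle-moments}.
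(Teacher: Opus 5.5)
Your proposal is correct and follows the paper's proof essentially step for step. Conditional unbiasedness kills both biases, and predictable counts give the exact conditional energy $V_t^\iota/B_t^\iota$. Conditional Jensen applied to $e_t^y=-\E[e_t^x-e_t^y\mid\mathcal F_t^x]$ makes $e_t^y$, and hence $e_t^x$, square-integrable. Conditional centering of the second update mean then removes the cross term before you substitute into Theorem~\ref{thm:trajectory}.
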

The conditional variances and selected counts in
\eqref{eq:weighted-variance-main} belong to the generated trajectory.
The protected floors give the stated envelope based on the supplied budget for this
sum. A smaller realized variance statistic alone does not establish a
smaller objective risk: the counts also change the centers and their
curvature charges. A comparison with allocation using future variances
would need to account for these trajectory changes and for the
observations required to learn those variances.

For an $L$-smooth population the actual comparator bias is quadratic:
\begin{equation}
 \mathcal S_N(u)\le\frac{L\nu_d}{2A_N}
       \sum_t t\,\E\gamma_t^2,
 \qquad \nu_d=\frac d{d+2}.
 \label{eq:smooth-comparator-main}
\end{equation}
Moreover every ball smoothing remains $L$-smooth.
Proposition~\ref{prop:protected-smoothing-smooth} gives the full proof
without assumptions on higher derivatives. The protected range alone
provides the upper bound $\bar\gamma^2\le\sqrt dD^2/N^2$ for its
weighted squared radius. It therefore preserves acceleration in $N$
while leaving a possible $\sqrt d$ factor in this smoothing term.

\subsection{A pair-budget corruption frontier}
The sufficient allowance in \eqref{eq:resource-conditions} can be compared
with a lower bound that permits arbitrary adaptive endpoints, including
points outside the optimization domain. The construction uses fixed
bounded spatial corruption and deterministic sample functions.

\begin{corollary}[Noise at a fixed pair budget]\label{cor:noise-frontier}
Let $T\in\mathbb N_+$, $\ell_T=\log(8(2T+1))$, $d\ge8\ell_T$,
$R,M>0$, and $0<\varepsilon\le MR/8$.
If
\begin{equation}
 \Delta\ge\frac{8\varepsilon\ell_T}{d}
   +\frac{16\sqrt2\varepsilon^2\sqrt{\ell_T}}{MR\sqrt d},
 \label{eq:noise-frontier-main}
\end{equation}
then every randomized method using at most $T$ original pairs has an
admissible convex $M$-Lipschitz instance on the ball $Q=R\mathbb B_2$
with $x^0=0$, initial solution distance $R$, and expected gap at least
$3\varepsilon/2$. The bound allows all sequential sampling schedules
and all query points in $\R^d$. The full proof is in
Appendix~\ref{subsec:proof-noise-frontier}.
\end{corollary}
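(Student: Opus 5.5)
The plan is a two-point Le Cam argument in which the corruption $\delta$ is used to make two oracles transcript-indistinguishable. I will build two admissible convex $M$-Lipschitz instances on $Q=R\mathbb B_2$ whose minimizers are far apart, so that any output $\widehat x$ has gap at least $3\varepsilon/2$ on average over the pair. I will then choose a fixed spatial corruption with $\|\delta\|_\infty\le\Delta$ so that the observed values on the two instances coincide, except on a query region the method hits with probability at most $1/8$ over its $2T$ evaluations.

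\emph{Construction.} Let $v$ be uniform on the unit sphere, fixed before sampling. Take the deterministic samples
\[
F_\pm(x)=M\max\{\pm\ip{v}{x}\cdot c,\;\|x\|-R'\}
\]
or, more simply, $f_\pm(x)=\pm(\varepsilon'/R)\ip{v}{x}$ combined with a common Lipschitz part. The difference $f_+-f_-$ is a linear function of size $s|\ip{v}{x}|$ with slope $s\asymp\varepsilon/R$. It is only cheaply observable where $|\ip{v}{x}|$ is large. For a query point $z$ independent of $v$, concentration on the sphere gives
\[
\Pr\bigl(|\ip{v}{z}|>\|z\|\,t\bigr)\le2e^{-dt^2/2}.
\]

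Set the corruption to $\delta=\mp\tfrac12(f_+-f_-)$ clipped to magnitude $\Delta$. Then both instances return the common value $(f_++f_-)/2$ wherever $\tfrac12|f_+-f_-|\le\Delta$, a condition I call \emph{unclipped}. On the ball of radius $R$ this exhausts the domain once $s R t\lesssim\Delta$.

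For queries outside $Q$ the difference grows linearly, so I also make both populations agree far from the ball. I will use a max with a common convex function that dominates outside radius $\sim R$ (e.g. $M\|x\|-c$). The difference is then supported where $|\ip{v}{z}|\|z\|^{-1}$ is large relative to the ambient norm.

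\emph{Union bound.} Run the method on the averaged oracle, which is independent of $v$. Its at most $2T$ query points are independent of $v$ conditionally on the internal seed, so a union bound over $2T+1$ points (including the output) with $t=\sqrt{2\ell_T/d}$ gives failure probability at most $1/8$. This is where $\ell_T=\log(8(2T+1))$ and the requirement $d\ge8\ell_T$ (ensuring $t\le1/2$) come from. On the complementary event the transcripts under $\pm$ coincide with the averaged one, and the output satisfies $|\ip{v}{\widehat x}|\le Rt$. Hence $\widehat x$ is nearly orthogonal to $v$, while the minimizers sit near $\pm Rv$-directions. The gap for at least one sign is then about $sR(1-t)-$(something), and choosing $s$ proportional to $\varepsilon/R$ yields $\ge3\varepsilon/2$ in expectation after losing the $1/8$ event.

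The $\Delta$ threshold \eqref{eq:noise-frontier-main} should come out as the clipping level needed on the unclipped region. Its first term, $\varepsilon\ell_T/d$, arises from $sRt^2$-type terms: the linear part can be recentered by a common quadratic/Lipschitz modification so that only $t^2$ survives. Its second term, $\varepsilon^2\sqrt{\ell_T}/(MR\sqrt d)$, comes from the curvature needed to keep Lipschitz constant $M$ while bending outside the ball, namely $s^2R^2t/(MR)$.

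\emph{Main obstacle.} The delicate part is designing $f_\pm$ so that the difference is second order in $t$ rather than first order; a plain linear tilt only gives $\Delta\gtrsim\varepsilon\sqrt{\ell_T/d}$. I would try $f_\pm(x)=M\max\{\|x\|-a,\ \pm b\ip{v}{x}+\cdots\}$-type functions, or a Nesterov-style hidden-direction max of a norm and a tilted norm, whose difference vanishes identically off a cap $\{\ip{v}{x}\ge t\|x\|\}$ and is of size $\varepsilon t^2$ at the cap boundary. I would then verify convexity, $M$-Lipschitzness, the location of the minimizer at distance $R$, and that $\varepsilon\le MR/8$ makes the parameters consistent.
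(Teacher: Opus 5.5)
\paragraph{Verdict: genuine gap.} Your architecture matches the paper's proof (Theorem~\ref{thm:work-noise}): a uniformly random hidden direction, a fixed corruption of size at most $\Delta$, a union bound over the $2T$ endpoints and the output with $\beta=\sqrt{2\ell_T/d}\le1/2$, and a large gap for any output in the band. The corruption makes the observed function agree with a direction-free reference on the band $\{|\ip{\vartheta}{z}|\le\beta\norm{z}\}$.

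The gap is the step you defer as the ``main obstacle.'' You need an objective family whose deviation from the reference on the band is second order, namely $\eta\beta^2+\eta^2\beta/(MR)$, uniformly over all query norms. Without it you do not obtain \eqref{eq:noise-frontier-main}, and the candidates you list do not supply it. With a plain norm, $\max\{M(\norm{z}-r_c),-a\ip{v}{z}\}$, the tilt is masked only beyond radius about $r_c$. The depth of the minimum is also only about $ar_c$, attained near radius $r_c$. On the band the deviation reaches $a\beta r_c$, which is at least $\beta$ times the depth. That is first order and yields only $\Delta\gtrsim\varepsilon\sqrt{\ell_T/d}$, a strictly weaker result. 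Your suggestion to mask ``outside radius $\sim R$'' has the same defect. Separately, your description of a difference that vanishes identically off the cap yet has size $\varepsilon t^2$ at the cap boundary is inconsistent by continuity. What is needed is a uniform bound $\le\Delta$ on the whole band, not exact agreement.

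\paragraph{The missing idea.} Decouple the masking radius from the distance to the minimizer by replacing the norm with the support function of the truncated sphere (the critical-angle construction of Risteski and Li):
\[
 h_\vartheta(z)=\sup\{\ip{w}{z}:\norm{w}=1,\ \ip{w}{\vartheta}\le\beta\},\qquad
 f_\vartheta(z)=\max\{M[h_\vartheta(z)-r_c],-a\ip{\vartheta}{z}\}.
\]
Take $a=\eta/R$, $r_c=R\beta+\eta/M$, and reference $g(z)=\max\{M(\norm{z}-r_c),0\}$.
\begin{itemize}
\item On the band $h_\vartheta(z)=\norm{z}$, so $|f_\vartheta-g|\le a\beta r_c=\eta\beta^2+\eta^2\beta/(MR)$ at every norm (Proposition~\ref{prop:hidden-band}).
\item Along $\vartheta$ the first branch grows only at slope $M\beta$. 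Hence $h_\vartheta(R\vartheta)=\beta R$, both branches equal $-\eta$ at $R\vartheta$, and the minimizer sits at distance $R$ while the masking radius is $r_c\approx\beta R$. This is exactly what makes the amplitude quadratic in $\beta$.
\item The corruption is $\operatorname{clip}_{[-\Delta,\Delta]}(g-f_\vartheta)$, and you run the method against $g$, which is genuinely independent of the direction. Your averaged oracle $(f_++f_-)/2$ instead depends on $v$ through $|\ip{v}{x}|$ for the max-type candidates, so the concentration step would not apply to its queries. The $\pm$ pairing is also unnecessary.
\end{itemize}

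\paragraph{Constants.} The two-sided tail gives failure probability $2(2T+1)e^{-\ell_T}=1/4$, not $1/8$. With $1-\beta\ge1/2$ the gap is at least $3\eta/8$. Choosing $\eta=4\varepsilon\le MR/2$ then gives $3\varepsilon/2$ and exactly the two terms of \eqref{eq:noise-frontier-main}.
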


Here $D=2R$. When $\varepsilon/(MR)$ is at least of order
$\sqrt{\ell_T/d}$, the quadratic term in
\eqref{eq:noise-frontier-main} dominates. In that regime it agrees,
up to logarithms and constants, with the sufficient scale
$\varepsilon^2/(\sqrt dMD)$ when the initialization and evaluation-radius terms
permit $N$ of order $d^{1/4}MD/\varepsilon$. At finer accuracy the
linear term $\varepsilon\ell_T/d$ leaves a gap to the smoothing
allowance. The lower bound constrains total pair work and applies
even when arbitrarily many sequential stages are permitted;
it does not establish joint optimality in all three resources.

\section{Distance to a smoothed minimizer}
\label{sec:app-localization-limits}

Ball smoothing controls objective values at each fixed comparator,
but it need not preserve distance to the optimizer. The following
example separates these two statements.

\label{subsec:proof-smoothed-distance}

\begin{proposition}[Ball smoothing and minimizer distance]
\label{prop:smoothed-distance}
Let $d=2$, $\gamma>0$ and $0<\theta\le1/1000$. Define
\[
 f_\theta(x,y)=\max\{\theta^2|x|,|y|,2y-\theta x\},
 \qquad
 Q_\theta=[-3\gamma/\theta,3\gamma/\theta]\times[-3\gamma,3\gamma],
 \qquad x^0=(0,\gamma).
\]
The objective is globally $\sqrt5$-Lipschitz and convex, and its
unique minimizer on $Q_\theta$ is $(0,0)$, with $R=\gamma$.
Every minimizer $(x_\gamma,y_\gamma)$ on $Q_\theta$ of its uniform
ball smoothing satisfies
\begin{equation}
 x_\gamma>\gamma/(8\theta).
 \label{eq:smoothed-minimizer-far}
\end{equation}
Consequently the distance from $x^0$ to a smoothed minimizer cannot
be bounded by a universal multiple of $R+\gamma$, even for
deterministic convex Lipschitz samples on a compact feasible set.
\end{proposition}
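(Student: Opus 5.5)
The plan is to verify the elementary claims about $f_\theta$ directly, and then to compare smoothed values. I will exhibit one feasible point with $x=3\gamma/\theta$ whose smoothed value is at most $c\gamma+4\theta\gamma$, where $c=\E|U_2|=4/(3\pi)$ and $U=(U_1,U_2)$ is uniform on the unit disk. I will then show that every feasible point with $x\le\gamma/(8\theta)$ has smoothed value at least $c\gamma+\kappa\gamma$ for a universal $\kappa>0$. Since $\theta\le1/1000$, we have $\kappa>4\theta$, and \eqref{eq:smoothed-minimizer-far} follows. A smoothed minimizer on $Q_\theta$ exists by compactness and continuity of $(f_\theta)_\gamma$.

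\emph{Elementary claims.} $f_\theta$ is a maximum of convex functions, hence convex. On each linear piece the gradient is $(\pm\theta^2,0)$, $(0,\pm1)$ or $(-\theta,2)$, all of norm at most $\sqrt{4+\theta^2}\le\sqrt5$, so $f_\theta$ is globally $\sqrt5$-Lipschitz. Since $f_\theta\ge|y|\ge0$ and $f_\theta(0,0)=0$, the origin is a minimizer. Any zero of $f_\theta$ must have $y=0$ and $\theta^2|x|=0$, so the minimizer is unique and $R=\|x^0\|=\gamma$.

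\emph{Upper bound at the far point $(3\gamma/\theta,0)$.} Every point of the radius-$\gamma$ ball around this point has $x'\ge 3\gamma/\theta-\gamma$ and $|y'|\le\gamma$. Hence $2y'-\theta x'\le 2\gamma-3\gamma+\theta\gamma<0\le|y'|$, so the third piece is inactive. Bounding the maximum of the two remaining nonnegative pieces by their sum gives $f_\theta\le|y'|+\theta^2(3\gamma/\theta+\gamma)$. Averaging over the ball yields $(f_\theta)_\gamma\le c\gamma+4\theta\gamma$.

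\emph{Lower bound for $x_0\le\gamma/(8\theta)$; this is the main step.} On the ball around $(x_0,y_0)$ we have $\theta x'\le\gamma/8+\theta\gamma=:\delta\gamma$ with $\delta\le 1/8+10^{-3}$. This holds for all such $x_0$, including negative ones, where the third piece only grows. Therefore
\[
 f_\theta\ge g(y'):=\max\{|y'|,\,2y'-\delta\gamma\}=|y'|+(y'-\delta\gamma)_+ .
\]
It remains to bound $\Phi(y_0)=\E|y_0+\gamma U_2|+\E(y_0+\gamma U_2-\delta\gamma)_+$ below by $(c+\kappa)\gamma$, uniformly in $y_0$. I split into two cases.

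If $y_0\ge-\gamma/4$, the first term is at least $c\gamma$, because $s\mapsto\E|s+\gamma U_2|$ is convex, even and minimized at $s=0$. The second term is at least $\gamma\E(U_2-1/4-\delta)_+$, which is a fixed positive multiple of $\gamma$ since $1/4+\delta<1/2$.

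If $y_0<-\gamma/4$, I drop the second term. The function $\E|s+\gamma U_2|$ has derivative $\Pr(U_2>-s/\gamma)-\Pr(U_2<-s/\gamma)$, which is strictly positive for $s>0$. Integrating this derivative over $[0,\gamma/4]$ and using evenness gives $\E|y_0+\gamma U_2|\ge c\gamma+\kappa'\gamma$ with an explicit $\kappa'>0$.

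Both constants $\kappa$ must be checked against $4\theta\le0.004$. This numerical verification, via explicit disk-marginal integrals with density $(2/\pi)\sqrt{1-u^2}$, is the only delicate point. For example, $\E(U_2-0.376)_+$ is of order $0.1$, which is ample.

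The concluding sentence of the statement follows because $R+\gamma=2\gamma$, while the smoothed minimizer lies at distance at least $\gamma/(8\theta)-0$ from $x^0$, which is unbounded as $\theta\downarrow0$.
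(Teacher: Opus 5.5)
Your proof is correct and follows essentially the paper's route. It compares a far point on the $x$-axis, where the third piece is inactive, against a lower bound for $x\le\gamma/(8\theta)$ that uses the third piece when $y$ is not well below zero and the growth of $b(y)=\E|y+\gamma U_2|$ away from zero otherwise. The paper differs only cosmetically: it uses the point $(2\gamma/\theta,0)$, restricts to the event $U_2\in[1/2,3/4]$ instead of using your identity $\max\{|y'|,2y'-\delta\gamma\}=|y'|+(y'-\delta\gamma)_+$, and splits on whether $|y|<\gamma/4$.

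The constants you leave to be checked have ample margin. The density bound $\tfrac{2}{\pi}\sqrt{1-u^2}\ge\sqrt{15}/(2\pi)$ on $[0,1/4]$ gives $\kappa'\ge\sqrt{15}/(32\pi)\approx0.038$, which is exactly the paper's estimate. Also $\E(U_2-0.376)_+\approx0.069$. Both are well above $4\theta\le0.004$.
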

\begin{proof}
Each displayed function in the maximum is convex. Their Lipschitz
constants are $\theta^2$, one and $\sqrt{4+\theta^2}$, so their
maximum is $\sqrt5$-Lipschitz. Since
$f_\theta\ge\max\{\theta^2|x|,|y|\}$, it is nonnegative and
equals zero only at $(0,0)$, which belongs to $Q_\theta$.
The smoothed function is finite and continuous and therefore has
a minimum on the compact set $Q_\theta$.

Let $U$ be uniform on the unit disk. Its second-coordinate density is
\[
 p(u)=\frac2\pi\sqrt{1-u^2},\qquad -1\le u\le1.
\]
This follows by integrating the constant disk density $1/\pi$
over the horizontal section of length $2\sqrt{1-u^2}$.
Set $b(y)=\E|y+\gamma U_2|$. This function is convex, even and
minimized at zero. Differentiating its one-dimensional integral
gives $b'(0)=0$ and
$b''(y)=2p(y/\gamma)/\gamma$ for $|y|<\gamma$.
On $0\le y\le\gamma/4$ the density is at least
$\sqrt{15}/(2\pi)$. Integration of the second derivative and
monotonicity of the even convex function thus give
\begin{equation}
 b(y)-b(0)\ge\frac{\sqrt{15}}{32\pi}\gamma
 \quad\text{whenever }|y|\ge\gamma/4.
 \label{eq:smoothed-base-shift}
\end{equation}

Suppose $x\le\gamma/(8\theta)$ and $|y|<\gamma/4$.
On the event $U_2\in[1/2,3/4]$, put
$z=y+\gamma U_2$ and $s=\theta(x+\gamma U_1)$. Since
$|U_1|\le1$ and $\theta\le1/16$,
\[
 z\ge\gamma/4,\qquad
 s\le\gamma/8+\theta\gamma\le3\gamma/16.
\]
Therefore $2z-s\ge|z|+\gamma/16$ on this event. Its probability
is at least
\[
 \int_{1/2}^{3/4}p(u)\,du\ge\frac{\sqrt7}{8\pi}.
\]
The objective always dominates $|z|$, so
\begin{equation}
 f_{\theta,\gamma}(x,y)
 \ge b(y)+\frac{\sqrt7}{128\pi}\gamma
 \ge b(0)+\frac{\sqrt7}{128\pi}\gamma.
 \label{eq:smoothed-far-lower}
\end{equation}
For $|y|\ge\gamma/4$, inequality
\eqref{eq:smoothed-base-shift} gives a stronger bound. Hence
\eqref{eq:smoothed-far-lower} holds for every $y$ whenever
$x\le\gamma/(8\theta)$.

The comparison point $(2\gamma/\theta,0)$ belongs to $Q_\theta$.
For every $U$ in the unit disk, there
\[
 s=\theta(2\gamma/\theta+\gamma U_1)\ge(2-\theta)\gamma\ge\gamma,
 \qquad z=\gamma U_2\in[-\gamma,\gamma].
\]
Thus $2z-s\le|z|$, while
$\theta^2|2\gamma/\theta+\gamma U_1|\le(2\theta+\theta^2)\gamma$.
The inequality $\max\{a,b\}\le a+b$ for $a,b\ge0$ yields
\begin{equation}
 f_{\theta,\gamma}(2\gamma/\theta,0)
 \le b(0)+(2\theta+\theta^2)\gamma.
 \label{eq:smoothed-far-upper}
\end{equation}
Here $2\theta+\theta^2\le2001/10^6<1/256$, whereas
$\sqrt7/(128\pi)>1/256$ because $\sqrt7>2$ and $\pi<4$.
Consequently \eqref{eq:smoothed-far-upper} is strictly smaller
than \eqref{eq:smoothed-far-lower}; no constrained minimizer
can have $x\le\gamma/(8\theta)$.
Its distance from $x^0=(0,\gamma)$ is at least its positive
first coordinate. Dividing this distance by $R+\gamma=2\gamma$
and letting $\theta$ decrease to zero proves the final assertion.
\end{proof}

This example retains the comparator-value bound
$f_\gamma(u)-f(u)\le M\gamma$.
It concerns minimizer distance for an originally nonsmooth population;
the compact smoothing comparison does not assert an obstruction for
every originally smooth population. The accelerated potential uses
the original fixed comparator directly and therefore does not require
distance to an optimizer of the smoothed objective.

\section{The cost of preserving an affine error bound}
\label{sec:anchor-localization}
\label{subsec:startup-price}

Requiring the deterministic affine error to decrease at an accelerated rate
can force a polynomial increase in the stochastic error when both the
original solution distance and the noise scale are unknown. We quantify
this tradeoff by coupling two sample laws with the same supplied inputs.

Fix $r_0>0$, $N\ge2$, $A=A_N$ and an integer $n\ge\max\{4,8N\}$.
Use common supplied inputs
\begin{equation}
 Q=[0,S],\quad S=r_0(1+A),\quad x^0=r_0,
 \quad h_0=1/r_0,
 \quad T=n.
 \label{eq:startup-common-inputs}
\end{equation}
Instance zero always returns $F_0(x)=S-x$. Instance one returns the
globally affine sample
\begin{equation}
 F(x;\xi)=
 \begin{cases}
  S-x,&\text{with probability }1-1/n,\\
  2nx,&\text{with probability }1/n.
 \end{cases}
 \label{eq:startup-rare-law}
\end{equation}
This is an affine specialization of the rare-sample construction of
\citet[Section~1.1 and Theorem~3]{CarmonHinder2024}. The second population
has gradient $1+1/n$, optimizer zero and distance $R=r_0$. In dimension one,
the two-point gradient estimator of an affine sample equals its slope. Its clean
single-record variance and sample-gradient moment are
\begin{equation}
 \sigma_1^2=4n-3/n-1/n^2\le4n,
 \qquad M_{2,1}^2=4n+1-1/n.
 \label{eq:startup-rare-moments}
\end{equation}
The population has $L=0$, every ball smoothing has zero bias, and all
query-evaluation domain requirements hold on the global extensions.

\begin{theorem}[The cost of preserving the affine error bound]
\label{thm:startup-noise-price}
Let any algorithm draw at most $n$ independent sampled functions, choose arbitrary
adaptive query endpoints, and return $\widehat x\in Q$. Its supplied inputs
and internal-randomization law are the same on both instances. The result
also allows observation of the entire sample function under each label.
If for some $b_0\ge0$ its deterministic-instance gap satisfies
\begin{equation}
 \E_0[S-\widehat x]
 \le\frac{h_0(S-r_0)^2}{2A}+b_0
 =\frac{r_0A}{2}+b_0,
 \label{eq:startup-affine-contract}
\end{equation}
then
\begin{equation}
 \E_1[f_1(\widehat x)-f_1(0)]
 \ge(1-1/n)^n(1+1/n)[r_0(1+A/2)-b_0]_+.
 \label{eq:startup-affine-lower}
\end{equation}
\end{theorem}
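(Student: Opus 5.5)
The plan follows the transcript-coupling argument already used in Theorems~\ref{thm:frontier-pareto-lower} and~\ref{thm:universal-distance-pareto}. There are three steps: turn the deterministic-instance contract into a lower bound on $\E_0\widehat x$; show that, with probability $(1-1/n)^n$, the rare instance reproduces the deterministic transcript; and then bound the rare-instance gap by its contribution on that coupled event.

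\emph{Step 1 (deterministic instance).} Under instance zero every sample is $F_0(x)=S-x$. The population gap is therefore $f_0(\widehat x)-\min_Q f_0=S-\widehat x$. I first check the identity displayed in \eqref{eq:startup-affine-contract}: since $S-r_0=r_0A$ and $h_0=1/r_0$, we get $h_0(S-r_0)^2/(2A)=r_0A/2$. The contract then reads $\E_0\widehat x\ge S-r_0A/2-b_0=r_0(1+A/2)-b_0$. Because $\widehat x\ge0$, we also have $\E_0\widehat x\ge[r_0(1+A/2)-b_0]_+$.

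\emph{Step 2 (coupling).} I would couple the algorithm's internal seed across the two instances. For instance one, I draw $n$ independent rare indicators in advance and assign them, in order of use, to the sampled functions as the algorithm requests them; if the algorithm stops early, the unused indicators are simply padded. Let $E$ be the event that all $n$ indicators are zero. It has probability $q_n=(1-1/n)^n$ and is independent of the seed. On $E$, every realized sample under instance one is the entire function $S-x$. Induction through the adaptive transcript then shows that every query endpoint, batch decision, stopping decision and the output coincide with the instance-zero execution under the same seed. The argument is pathwise in the full function, so it covers arbitrary endpoints, repeated labels and full-function observation.

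\emph{Step 3 (rare-instance gap).} The rare population is $f_1(x)=(1-1/n)(S-x)+(1/n)2nx=(1-1/n)S+(1+1/n)x$. Its minimizer on $Q$ is $0$, and its gap is $(1+1/n)\widehat x\ge0$ on every transcript. I restrict the expectation to $E$, use the independence of $E$ from the seed (so that $\E_1[\widehat x\mathbf1_E]=q_n\E_0\widehat x$ under the coupling), and then apply Step 1. This gives $\E_1[f_1(\widehat x)-f_1(0)]\ge q_n(1+1/n)\E_0\widehat x\ge q_n(1+1/n)[r_0(1+A/2)-b_0]_+$, which is \eqref{eq:startup-affine-lower}.

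The main point requiring care is Step 2. One must make precise that $E$ is independent of the internal randomness and of the adaptive stopping time. Pre-drawing all $n$ indicators, rather than drawing them only as they are used, is what makes this independence clean.
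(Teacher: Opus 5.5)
Your proof is correct and follows the paper's argument: the deterministic contract gives $\E_0\widehat x\ge r_0(1+A/2)-b_0$, a shared seed with pre-drawn rare indicators reproduces the instance-zero transcript on a seed-independent event of probability $(1-1/n)^n$, and restricting the nonnegative rare gap $(1+1/n)\widehat x$ to that event gives the bound. Your use of $\widehat x\ge0$ to justify the positive part makes explicit a step the paper leaves implicit.
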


\begin{proof}
The deterministic error bound gives
$\E_0\widehat x\ge r_0(1+A/2)-b_0$. Generate $n$ independent sampled functions
in advance, including unused functions if the algorithm stops early. Couple
the algorithm's internal randomness across the two instances. On the
event that every function from the second law is $F_0$, the full functions agree.
Induction through the adaptive transcript therefore gives the same
endpoints, batch allocations, stopping decisions and output. This event
is independent of the algorithm's internal randomness and has probability
$q=(1-1/n)^n$. The rare gap is $(1+1/n)\widehat x\ge0$, so restriction
to this event proves \eqref{eq:startup-affine-lower}.

\end{proof}

\begin{samepage}
\begin{corollary}[The required stochastic error multiplier]
\label{cor:startup-price}
Under Theorem~\ref{thm:startup-noise-price}, a proposed rare-instance
guarantee
\begin{equation}
 \E_1[f_1(\widehat x)-f_1(0)]
 \le\frac{h_0R^2}{2A}
 +P\frac{\sigma_1(R+r_0)}{\sqrt n},\qquad P\ge0,
 \label{eq:startup-local-contract}
\end{equation}
requires $P\ge A/32$ if the deterministic error bound
\eqref{eq:startup-affine-contract} has $b_0=0$.
Allowing $b_0\le r_0A/4$ still requires $P\ge A/64$.
In particular this applies with $n=T\ge8N$ and arbitrary adaptive
endpoints, even when the algorithm observes each full sample function.
\end{corollary}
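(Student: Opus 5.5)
The plan is to combine the coupling lower bound of Theorem~\ref{thm:startup-noise-price} with the proposed upper guarantee \eqref{eq:startup-local-contract}, both evaluated on the rare instance. Since $R=r_0$ there, the right side of \eqref{eq:startup-local-contract} equals
\[
 \frac{h_0r_0^2}{2A}+P\frac{2r_0\sigma_1}{\sqrt n}=\frac{r_0}{2A}+P\frac{2r_0\sigma_1}{\sqrt n},
\]
and \eqref{eq:startup-rare-moments} gives $\sigma_1\le2\sqrt n$. Hence the right side is at most $r_0/(2A)+4Pr_0$.

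Next I bound the lower side below. By the monotonicity argument already used in the proof of Theorem~\ref{thm:frontier-pareto-lower}, $q_n=(1-1/n)^n\ge1/4$ for $n\ge2$. Also $1+1/n\ge1$. With $b_0=0$, \eqref{eq:startup-affine-lower} gives
\[
 \E_1[f_1(\widehat x)-f_1(0)]\ge\tfrac14 r_0(1+A/2)\ge r_0A/8.
\]
With $b_0\le r_0A/4$, the bracket in \eqref{eq:startup-affine-lower} is at least $r_0(1+A/4)$, so the gap is at least $r_0A/16$.

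Comparing the two sides yields $4P\ge A/8-1/(2A)$ in the first case and $4P\ge A/16-1/(2A)$ in the second. This is where the argument is tight: a direct division gives only $P\ge A/32-1/(8A)$, which falls just short of the clean constant. I will therefore keep the slack dropped above. In the first case the lower side is actually $r_0(1+A/2)/4=r_0/4+r_0A/8$, and $r_0/4\ge r_0/(2A)$ because $A\ge3$ when $N\ge2$. The lower side thus exceeds $r_0A/8+r_0/(2A)$, so $4P\ge A/8$, i.e. $P\ge A/32$. In the second case the same slack, $r_0/4\ge r_0/(2A)$, gives $4P\ge A/16$, i.e. $P\ge A/64$.

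The only delicate point is this bookkeeping of the $1/4$ slack against the $h_0R^2/(2A)$ term. The remaining remarks need no new argument: the requirement $n=T\ge8N$ and the allowance for arbitrary endpoints or full-function observation are inherited directly from the coupling in Theorem~\ref{thm:startup-noise-price}.
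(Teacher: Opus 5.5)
Your proposal is correct and follows the paper's proof essentially line for line. You bound the right side of \eqref{eq:startup-local-contract} by $r_0/(2A)+4Pr_0$ using $R=r_0$, $h_0=1/r_0$ and $\sigma_1\le2\sqrt n$, bound the coupled lower bound by $r_0(1/4+A/8)$ (resp.\ $r_0(1/4+A/16)$) via $q_n\ge1/4$, and absorb $r_0/(2A)$ into the $r_0/4$ slack using $A\ge3$; the only cosmetic difference is that the paper rederives $q_n\ge1/4$ inside this proof instead of citing the monotonicity argument from Theorem~\ref{thm:frontier-pareto-lower}.
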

\end{samepage}
\begin{proof}
Theorem~\ref{thm:startup-noise-price} bounds the rare-instance risk
with $q=(1-1/n)^n$.
To bound the event probability directly, let
$g(x)=x\log(1-1/x)$ for $x\ge2$ and $z=1/(x-1)$. Then
\[
 g'(x)=z-\log(1+z)\ge0,
 \qquad \log(1+z)=\int_0^z(1+s)^{-1}\,ds\le z.
\]
Thus $q\ge\exp(g(2))=1/4$.

Since $R=r_0$ and $\sigma_1\le2\sqrt n$, the upper bound in
\eqref{eq:startup-local-contract} is at most
$r_0/(2A)+4Pr_0$. For $b_0=0$, the lower bound is at least
$r_0(1/4+A/8)$, yielding
\[
 4P\ge A/8+1/4-1/(2A)\ge A/8,
\]
because $A\ge3$. Hence $P\ge A/32$. For $b_0\le r_0A/4$,
the lower risk is at least $r_0(1/4+A/16)$; the same calculation
gives $P\ge A/64$.
\end{proof}

For the deterministic law, the solution distance is
$R_0=S-r_0=r_0A$ and its Lipschitz constant is one. An additional
common-instance term $C(R_0+r_0)/N$ has
$b_0=Cr_0(A+1)/N$ and meets the weaker residual condition for
$N\ge6C$, since $A\ge3$ gives $4(A+1)/A\le16/3<6$.
A supplied instance-specific noise envelope can change the common-input
hypothesis or the deterministic error bound. The conclusion does not rule
out a smaller noise multiplier after relaxing the deterministic error bound.
The tunable-anchor construction makes this relaxation explicit in
Theorem~\ref{thm:frontier-pareto-lower} and
Corollary~\ref{cor:frontier-noise-multiplier}.

\section{The work required under bounded spatial corruption}
\label{sec:app-corruption-frontier}

We give an explicit corruption obstruction for the chosen-pair oracle.
The angular hiding mechanism follows the critical-angle construction of
\citet[Section~4]{RisteskiLi2016}. Their polynomial-query result motivates
the two accuracy regimes. We prove the support-function specialization
and its finite-budget constants here.

Throughout this appendix, $d\ge2$, $M,R>0$, $Q=R\Ball_2^d$,
$D=2R$, and $x^0=0$. Every sample function in the construction is
deterministic. Thus each original label still permits exactly two
values and costs one pair, while all observations are values of a fixed
function $f+\delta$. The functions and corruptions are defined on
all of $\R^d$. Consequently any legal evaluation domain is covered.

\subsection{Angular concentration}
\label{subsec:angular-concentration}

\begin{lemma}[Spherical hiding]\label{lem:sphere-hiding}
Let $\vartheta$ be uniform on the unit sphere in $\R^d$ and let $w$
be a deterministic unit vector. For $0<\beta<1$,
\[
 \Prb\bigl(|\ip{\vartheta}{w}|>\beta\bigr)
 \le2\exp(-d\beta^2/2).
\]
\end{lemma}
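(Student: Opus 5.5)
By rotational invariance we may take $w=e_1$, so the claim reduces to a tail bound for the first coordinate $X=[\vartheta]_1$. The plan is to represent $\vartheta=g/\norm g$ with $g\sim N(0,I_d)$ and to bound the event $|X|>\beta$ through a Chernoff argument on a quadratic form. The alternative route via the exact density $\propto(1-x^2)^{(d-3)/2}$ together with $1-x^2\le e^{-x^2}$ loses a factor of order $\sqrt d$ in the normalizing constant. I would keep that route only as a fallback for small $d$.

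\textbf{Reduction.} The event $X^2>\beta^2$ is the event
\[
 g_1^2(1-\beta^2)>\beta^2\sum_{i\ge2}g_i^2 .
\]
For $0<\lambda$ in the admissible range, Markov's inequality applied to $\exp\bigl(\lambda[(1-\beta^2)g_1^2-\beta^2\sum_{i\ge2}g_i^2]\bigr)$ gives the bound
\[
 (1-2\lambda(1-\beta^2))^{-1/2}(1+2\lambda\beta^2)^{-(d-1)/2}.
\]
Optimizing over $\lambda$ yields the classical estimate
\[
 \Prb(X^2>\beta^2)\le \beta^{-1}\bigl(\ldots\bigr)
 \le\bigl[\beta^2 e^{1-\beta^2}\bigr]^{1/2}\cdot\bigl[\text{factor}\bigr]^{d/2}.
\]
More concretely, I expect the form $\sqrt{d}\,\beta\,(1-\beta^2)^{(d-1)/2}$ or the cleaner $(1-\beta^2)^{(d-1)/2}\cdot C$. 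Since $1-\beta^2\le e^{-\beta^2}$, this produces $\exp(-(d-1)\beta^2/2)$ times a prefactor.

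\textbf{Main obstacle.} The hard step is absorbing the prefactor and the loss from $d-1$ versus $d$ into the constant $2$ exactly. I would try the spherical-cap argument first. The cap $\{\ip{\vartheta}{w}>\beta\}$ has normalized measure at most $(1-\beta^2)^{d/2}\le e^{-d\beta^2/2}$. This follows from the standard cone-volume comparison: the cone over the cap, of volume fraction equal to the cap measure, is contained in a ball of radius $\sqrt{1-\beta^2}$ centered at $\beta w$. That comparison is valid for $\beta\ge1/\sqrt2$. For $\beta<1/\sqrt2$, I would use instead the ball of radius $1/(2\beta)$ centered at $w/(2\beta)$, giving $(2\beta)^{-d}$ scaling, or simply check that $2e^{-d\beta^2/2}\ge1$ whenever $d\beta^2\le2\log2$. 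The intermediate region then needs the Gaussian Chernoff bound above, where the prefactor is at most $1$ once $d\beta^2\ge2\log 2$.

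Doubling the cap estimate for the two symmetric caps $\pm w$ gives the factor $2$ and completes the proof. The delicate part is the case split in $\beta$ when verifying the cone-in-ball containment, and I would budget most of the effort there.
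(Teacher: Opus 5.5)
Your cone-comparison case split is reversed, and the fallback route you lean on cannot give the constant $2$.

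\textbf{The reversed cases.} The ball centered at $\beta w$ of radius $\sqrt{1-\beta^2}$ contains the cap for every $\beta$. It contains the origin, and hence the cone, only when $\beta\le\sqrt{1-\beta^2}$, i.e.\ $\beta\le1/\sqrt2$. For $\beta>1/\sqrt2$, the bound $(1-\beta^2)^{d/2}$ you assign to this case is false: as $\beta\to1$ the cap measure is of order $(1-\beta^2)^{(d-1)/2}$. Conversely, the ball centered at $w/(2\beta)$ of radius $1/(2\beta)$ contains the cone for every $\beta$. But $(2\beta)^{-d}\ge1$ when $\beta\le1/2$, so it is vacuous exactly where you place it.

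\textbf{The failing Chernoff step.} As a result, for $\beta<1/\sqrt2$ and $d\beta^2>2\log2$ your argument rests on the quadratic-form Chernoff bound. This range includes $\beta\le1/2$, where the paper uses the lemma. Carrying out your optimization over $\lambda$ (valid for $d\beta^2>1$) gives exactly
\[
 \sqrt{d\beta^2}\,(1-\beta^2)^{(d-1)/2}\Bigl(\frac{d}{d-1}\Bigr)^{(d-1)/2}.
\]
For fixed $t=\beta\sqrt d$ and $d\to\infty$ this tends to $\sqrt e\,t\,e^{-t^2/2}$, which exceeds $2e^{-t^2/2}$ as soon as $t>2/\sqrt e$. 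So the claim that the prefactor is at most $1$ once $d\beta^2\ge2\log2$ is false. Applying Chernoff to a squared statistic inherently costs a factor of order $t$.

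\textbf{The repair.} Swap the two containments. Use the ball at $\beta w$ for $\beta\le1/\sqrt2$, giving $(1-\beta^2)^{d/2}\le e^{-d\beta^2/2}$. Use the ball at $w/(2\beta)$ for $\beta>1/\sqrt2$, giving $(2\beta)^{-d}\le e^{-d\beta^2/2}$, because $2\beta e^{-\beta^2/2}$ is increasing on $[0,1]$ and equals $\sqrt2e^{-1/4}>1$ at $\beta=1/\sqrt2$. Doubling for the caps at $\pm w$ then proves the lemma for all $0<\beta<1$ with no Chernoff step; this is the classical cone-volume cap estimate.

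\textbf{Comparison with the paper.} The paper stays probabilistic but applies Chernoff to the \emph{linear} coordinate. The Gaussian representation gives $\E\vartheta_1^{2k}=(2k-1)!!/[d(d+2)\cdots(d+2k-2)]\le(2k-1)!!/d^k$. Hence $\E e^{\lambda\vartheta_1}\le e^{\lambda^2/(2d)}$, and $\lambda=d\beta$ yields $e^{-d\beta^2/2}$ per tail with no prefactor. The corrected geometric argument is shorter and fully elementary. The paper's route yields a reusable sub-Gaussian moment-generating bound.
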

\begin{proof}
By rotational invariance, take $w=(1,0,\ldots,0)$. Let $Z$ have independent
standard normal coordinates. Polar coordinates in its
rotation-invariant density show that $Z/\norm Z$ is uniform on the
sphere and independent of $\norm Z$. Integration by parts in the
one-dimensional normal density gives
$\E Z_1^{2k}=(2k-1)!!$ for integers $k\ge1$. The radial density is
proportional to $r^{d-1}e^{-r^2/2}$; integration by parts gives
$\E\norm Z^{2k}=d(d+2)\cdots(d+2k-2)$. Independence therefore yields
\[
 \E\vartheta_1^{2k}
 =\frac{(2k-1)!!}{d(d+2)\cdots(d+2k-2)}
 \le\frac{(2k-1)!!}{d^k}.
\]
Symmetry implies
$\E e^{\lambda\vartheta_1}=\E\cosh(\lambda\vartheta_1)$.
Expand its nonnegative even-power series and use monotone convergence:
\[
 \E e^{\lambda\vartheta_1}
 \le\sum_{k=0}^\infty
       \frac{\lambda^{2k}(2k-1)!!}{(2k)!\,d^k}
 =e^{\lambda^2/(2d)}.
\]
The $k=0$ term is one. Markov's inequality with $\lambda=d\beta$
gives $\Prb(\vartheta_1>\beta)\le e^{-d\beta^2/2}$.
Apply symmetry and add the two tail bounds.
\end{proof}

\subsection{A globally convex hidden-direction family}
\label{subsec:hidden-direction-family}

Choose $0<\eta\le MR/2$ and $0<\beta\le1/2$. Set
\begin{equation}
 a=\eta/R,\qquad r_c=R\beta+\eta/M.
 \label{eq:corruption-family-scales}
\end{equation}
The quantity $r_c$ is a radius of the hard family, distinct from the
algorithm's length reference $r_0$ chosen before sampling. For a unit vector
$\vartheta$, define
\begin{align}
 h_\vartheta(z)
 &=\sup\{\ip{w}{z}:\norm w=1,\ \ip{w}{\vartheta}\le\beta\},
 \label{eq:truncated-support}\\
 f_\vartheta(z)
 &=\max\{M[h_\vartheta(z)-r_c],-a\ip{\vartheta}{z}\},
 \label{eq:hidden-objective}\\
 g(z)&=\max\{M(\norm z-r_c),0\}.
 \label{eq:common-reference-oracle}
\end{align}
Define the angular band
\[
 \mathcal B_\vartheta
 =\{z:|\ip{\vartheta}{z}|\le\beta\norm z\},
\]
and the corruption amplitude
\begin{equation}
 \Delta_{\eta,\beta}
 =\eta\beta^2+\frac{\eta^2\beta}{MR}.
 \label{eq:exact-band-amplitude}
\end{equation}

\begin{proposition}[Hidden-direction geometry]\label{prop:hidden-band}
The objective $f_\vartheta$ is globally convex and $M$-Lipschitz.
Its minimum over $Q$ is $-\eta$, attained at $R\vartheta$, and
$f_\vartheta(0)=0$. For every $z\in\mathcal B_\vartheta$, regardless
of its norm,
\[
 |f_\vartheta(z)-g(z)|\le\Delta_{\eta,\beta}.
\]
This amplitude is attained at a point of $Q\cap\mathcal B_\vartheta$.
Every $z\in Q\cap\mathcal B_\vartheta$ has clean objective gap at least
$\eta(1-\beta)$.
\end{proposition}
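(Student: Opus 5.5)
I would prove the four claims in order, working directly from the support-function description \eqref{eq:truncated-support}.

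\textbf{Convexity and the Lipschitz bound.} The function $h_\vartheta$ is a supremum of linear maps $z\mapsto\ip wz$ with $\norm w=1$, so it is convex and $1$-Lipschitz. The feasible set of $w$ is nonempty, since $\beta\ge0$ admits $w\perp\vartheta$. Hence $M[h_\vartheta-r_c]$ is convex and $M$-Lipschitz. The second branch is linear with slope norm $a=\eta/R\le M/2$, so the maximum is convex and $M$-Lipschitz. At $z=0$ the first branch equals $-Mr_c<0$ and the second equals $0$, so $f_\vartheta(0)=0$.

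\textbf{The minimum over $Q$.} For $\norm z\le R$, the second branch is at least $-a\norm z\ge-\eta$, with equality only at $z=R\vartheta$. At that point $h_\vartheta(R\vartheta)=R\beta$, attained by any unit $w$ with $\ip w\vartheta=\beta$. The first branch is therefore $M(R\beta-r_c)=-\eta$, so the minimum is exactly $-\eta$.

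\textbf{The band estimate.} For $z\in\mathcal B_\vartheta$ I first claim $h_\vartheta(z)=\norm z$. When $z\neq0$, the vector $w=z/\norm z$ is feasible because $\ip w\vartheta\le|\ip z\vartheta|/\norm z\le\beta$. The two objectives then share the first branch $M(\norm z-r_c)$, and they differ only in whether the second branch is $-a\ip\vartheta z$ or $0$.

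Write $s=\norm z$ and $c=\ip\vartheta z$, so that $|c|\le\beta s$. The difference is nonzero only when $-ac>\max\{M(s-r_c),0\}$ or when $0>\max\{M(s-r_c),-ac\}$. It is bounded by how far the smaller competitor lies below the largest branch, which gives
\[
 |f_\vartheta-g|\le\max_s\min\{a\beta s,\;[M(r_c-s)]_+\}.
\]
This is the minimum of an increasing and a decreasing linear function. The worst case is at their crossing $s^*=Mr_c/(M+a\beta)$, where the value is $a\beta Mr_c/(M+a\beta)\le a\beta r_c$. Moreover $a\beta r_c=\eta\beta^2+\eta^2\beta/(MR)=\Delta_{\eta,\beta}$.

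This crossing value is not quite the displayed amplitude. I expect this to be the main obstacle: sharpening $|f_\vartheta-g|\le a\beta r_c$ into exact attainment of $\Delta_{\eta,\beta}$ on $Q\cap\mathcal B_\vartheta$.

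I would first check whether attainment is meant at $z$ with $\norm z=r_c$ and $c=-\beta r_c$. There the first branch vanishes, $g=0$, and $f_\vartheta=a\beta r_c=\Delta_{\eta,\beta}$, so the difference is exactly $\Delta_{\eta,\beta}$. Membership in $Q$ follows from $r_c\le R/2+R/2=R$. At that $s$ the first branch is $0$, which is below $a\beta s$, so the maximum is $a\beta r_c$. This shows the crossing bound above was lossy, and I would redo the case analysis carefully: when $-ac>0$, the difference is $[-ac-\max(M(s-r_c),0)]_+\le a\beta s$ subject to $s\le r_c$ or the gap shrinks, giving at most $a\beta r_c$. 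When $c>0$, the difference is at most $[0-M(s-r_c)]$ capped by $ac\le a\beta s$, and again at most $a\beta r_c$ because $M(r_c-s)$ decreases. This case split yields the bound $a\beta r_c$ exactly.

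\textbf{The gap in the band.} For $z\in Q\cap\mathcal B_\vartheta$, the second branch is at least $-a\beta\norm z\ge-a\beta R=-\eta\beta$, so $f_\vartheta(z)\ge-\eta\beta$. Subtracting the minimum $-\eta$ gives a clean gap of at least $\eta(1-\beta)$.
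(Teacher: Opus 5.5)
Your final argument is correct and is essentially the paper's proof: $h_\vartheta(z)=\norm z$ on the band, then compare $\max\{b,\ell\}$ with $\max\{b,0\}$ for $b=M(\norm z-r_c)$ and $\ell=-a\ip{\vartheta}{z}$, bound the difference by $a\beta r_c=\Delta_{\eta,\beta}$, attain it at $\norm z=r_c$, $\ip{\vartheta}{z}=-\beta r_c$, and get the gap from the linear branch. Your split on the sign of $\ip{\vartheta}{z}$ replaces the paper's split on $\norm z\le r_c$ versus $\norm z\ge r_c$, which changes nothing.

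However, the intermediate ``crossing'' bound is not lossy but false: it lies strictly below the value $a\beta r_c$ that you then attain. The form $\min\{a\beta s,[M(r_c-s)]_+\}$ bounds $|f_\vartheta-g|$ only when $\ip{\vartheta}{z}>0$. When $\ip{\vartheta}{z}<0$ one has instead $f_\vartheta-g=[\ell-[b]_+]_+$. You should delete that step; the ``main obstacle'' you describe is an artifact of it. In the subcase $\ip{\vartheta}{z}<0$, $\norm z>r_c$, state the inequality $\ell-b\le a\beta r_c-(M-a\beta)(\norm z-r_c)\le a\beta r_c$ explicitly. It uses $a\beta\le M$, and this is what your phrase ``the gap shrinks'' has to mean.
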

\begin{proof}
The support set in \eqref{eq:truncated-support} is nonempty and compact.
Its support function is convex and one-Lipschitz. The two terms in
\eqref{eq:hidden-objective} are convex and respectively $M$- and
$a$-Lipschitz. Since $a\le M/2$, their maximum is $M$-Lipschitz.

At $R\vartheta$, the support supremum is $\beta R$. It is attained by
$w=\beta\vartheta+\sqrt{1-\beta^2}\,w_\perp$, for any unit
$w_\perp\perp\vartheta$, which exists because $d\ge2$.
The two terms in \eqref{eq:hidden-objective} then both equal $-\eta$.
For every $z\in Q$, its linear term is at least $-aR=-\eta$,
so this is the minimum. At zero the maximum is zero.

For $z\ne0$ in the band, $w=z/\norm z$ belongs to the support set;
thus $h_\vartheta(z)=\norm z$. Put
$r=\norm z$, $b=M(r-r_c)$ and $\ell=-a\ip{\vartheta}{z}$.
Then
\[
 f_\vartheta(z)=\max\{b,\ell\},\qquad
 g(z)=\max\{b,0\},\qquad |\ell|\le a\beta r.
\]
If $r\le r_c$, then $g(z)=0$, while
$-a\beta r_c\le\ell\le\max\{b,\ell\}\le[\ell]_+\le a\beta r_c$.
If $r\ge r_c$, then $g(z)=b\ge0$. When $\ell\le b$, the
difference is zero. Otherwise it satisfies
\[
 \ell-b
 \le a\beta r-M(r-r_c)
 =a\beta r_c-(M-a\beta)(r-r_c)
 \le a\beta r_c.
\]
Here $M-a\beta\ge0$. Since
$a\beta r_c=\Delta_{\eta,\beta}$, this proves the bound uniformly
over all query norms; zero also satisfies it.

Take
$z=r_c[-\beta\vartheta+\sqrt{1-\beta^2}\,w_\perp]$.
Because $r_c\le R$, this point lies in $Q$, belongs to the band,
and has $b=0$ and $\ell=a\beta r_c$. It attains the amplitude.
Finally, on $Q\cap\mathcal B_\vartheta$,
$f_\vartheta(z)\ge-a\ip{\vartheta}{z}\ge-\eta\beta$.
Subtract the minimum $-\eta$ to obtain the gap bound.
\end{proof}

We use one fixed spatial corruption,
\begin{equation}
 \delta_\vartheta(z)
 =\operatorname{clip}_{[-\Delta_{\eta,\beta},\Delta_{\eta,\beta}]}
                    [g(z)-f_\vartheta(z)].
 \label{eq:fixed-hidden-corruption}
\end{equation}
This function is continuous and globally bounded by
$\Delta_{\eta,\beta}$. Proposition~\ref{prop:hidden-band} gives
$f_\vartheta+\delta_\vartheta=g$ throughout the angular band.
The corruption is fixed before the interaction.

\subsection{Adaptive transcripts and total observations}
\label{subsec:work-noise-proof}

\begin{theorem}[Work under spatial corruption]\label{thm:work-noise}
Fix an integer $T\ge0$, $0<\eta\le MR/2$ and $0<\beta\le1/2$.
For every randomized algorithm making at most $T$ original pairs and
returning a point of $Q$, there is an objective
$f_\vartheta$ and a fixed corruption of magnitude at most
$\Delta_{\eta,\beta}$ such that
\begin{equation}
 \E[f_\vartheta(\widehat x)-\min_Q f_\vartheta]
 \ge\eta(1-\beta)
      [1-2(2T+1)e^{-d\beta^2/2}]_+.
 \label{eq:work-noise-lower}
\end{equation}
The objective is globally convex and $M$-Lipschitz. The result allows
arbitrary endpoints and any number of sequential stages, and remains
valid even when arbitrary points of $\R^d$ may be queried.
\end{theorem}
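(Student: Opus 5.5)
We use a Yao-style averaging argument with $\vartheta$ drawn uniformly on the unit sphere, independently of the algorithm's internal seed. Fixing the seed, we couple the execution on $f_\vartheta+\delta_\vartheta$ with the execution on the common reference oracle $g$. This reference oracle does not depend on $\vartheta$, so its transcript, all query endpoints and the output $\widehat x_g$ are functions of the seed alone. The oracle on instance $\vartheta$ returns $f_\vartheta+\delta_\vartheta$ at every label, since the samples are deterministic. Proposition~\ref{prop:hidden-band} gives $f_\vartheta+\delta_\vartheta=g$ on the band $\mathcal B_\vartheta$, by the definition \eqref{eq:fixed-hidden-corruption} and because the clipping is inactive there. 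Hence, on the event that all at most $2T$ reference query points and the reference output lie in $\mathcal B_\vartheta$, induction through the adaptive transcript shows that the $\vartheta$-execution makes the same queries, the same stopping decisions and returns $\widehat x=\widehat x_g$. The same induction applies to arbitrarily many sequential stages and to arbitrary points of $\R^d$, because the band identity holds for every query norm.

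Next we bound the probability that this event fails. For each fixed nonzero reference point $z$, membership $z\in\mathcal B_\vartheta$ means $|\ip{\vartheta}{z/\norm z}|\le\beta$. Since the reference points are independent of $\vartheta$ given the seed, Lemma~\ref{lem:sphere-hiding} bounds each failure probability by $2e^{-d\beta^2/2}$. The point $z=0$ always lies in the band. A union bound over the at most $2T$ query points and the one output gives a failure probability of at most $2(2T+1)e^{-d\beta^2/2}$, conditional on the seed.

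On the good event the output $\widehat x_g\in Q$ lies in $Q\cap\mathcal B_\vartheta$, so Proposition~\ref{prop:hidden-band} gives a gap of at least $\eta(1-\beta)$. On the bad event the gap is nonnegative. Averaging therefore bounds the expected gap under the mixture by $\eta(1-\beta)[1-2(2T+1)e^{-d\beta^2/2}]_+$. Some deterministic $\vartheta$ then attains at least this average, which gives \eqref{eq:work-noise-lower}. The listed properties of this instance follow from Proposition~\ref{prop:hidden-band} and the bound on the corruption: $f_\vartheta$ is convex and $M$-Lipschitz, $|\delta_\vartheta|\le\Delta_{\eta,\beta}$, the corruption is fixed before the interaction, and everything is defined on all of $\R^d$.

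The main care point is measurability and the conditioning. We must argue that the reference transcript is independent of $\vartheta$, so that the angular bound applies pointwise in the seed. We also need the argument to work even when the algorithm stops early, which means using only the realized queries. The rest follows directly from the earlier results.
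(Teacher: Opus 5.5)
Your proposal is correct and follows the paper's proof step for step. Both draw the hidden direction uniformly and independently of the seed, couple with the $\vartheta$-independent reference oracle $g$ through the band identity $f_\vartheta+\delta_\vartheta=g$, and apply Lemma~\ref{lem:sphere-hiding} with a union bound over the at most $2T$ endpoints plus the output. Both then take the gap $\eta(1-\beta)$ from Proposition~\ref{prop:hidden-band} and average to obtain a fixed direction; the only cosmetic difference is that the paper pads early stopping with zero endpoints, which lie in every band, instead of restricting the union bound to the realized queries.
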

\begin{proof}
Draw the hidden direction $\vartheta$ uniformly and independently of
the algorithm's internal seed $\omega$. Run the algorithm against the
deterministic reference oracle $g$. Conditional on $\omega$, this
execution determines every endpoint and its final output independently
of $\vartheta$. Pad early stopping with endpoints at zero to obtain
$2T$ reference endpoints. Denote its output by $x_g(\omega)$.

Apply Lemma~\ref{lem:sphere-hiding} to every nonzero endpoint direction
and to the output direction. Zero lies in the band automatically.
A union bound shows, conditional on every seed, that all these
$2T+1$ points lie in $\mathcal B_\vartheta$ with probability at least
$1-2(2T+1)e^{-d\beta^2/2}$.

On this event the fixed oracle
$f_\vartheta+\delta_\vartheta$ agrees with $g$ at every reference
endpoint. Induction over the algorithm's decisions therefore shows
that its actual transcript, stopping time and output coincide with the
reference execution. The induction allows arbitrary pairs, parallel
groups selected from the preceding transcript, and even selection of
each next scalar endpoint after the preceding answer.

Proposition~\ref{prop:hidden-band} gives actual clean gap at least
$\eta(1-\beta)$ on the event. Outside it the gap is nonnegative.
Average over $\vartheta,\omega$ and replace a negative probability
bound by zero. At least one fixed direction attains the resulting
average lower bound on its expected gap. This objective and its fixed
corruption prove \eqref{eq:work-noise-lower}.
\end{proof}

The theorem counts every scalar observation, including diagnostics and
comparisons. It applies to nonsymmetric same-label pairs because the
hard sample functions are deterministic. The finite-budget obstruction
also holds with parameters $M$ and $R$ supplied. It constrains total
work independently of a particular finite-difference estimator.

\subsection{Proof of the main noise corollary}
\label{subsec:proof-noise-frontier}

\paragraph{Proof of Corollary~\ref{cor:noise-frontier}.}
Put $\ell_T=\log(8(2T+1))$ and
$\beta_T=\sqrt{2\ell_T/d}$. The dimensional condition
$d\ge8\ell_T$ gives $\beta_T\le1/2$. The failure term in
\eqref{eq:work-noise-lower} is at most $1/4$, so its gap bound is
at least $3\eta/8$. The exact corruption amplitude is
\begin{equation}
 \Delta_{\eta,\beta_T}
 =\frac{2\eta\ell_T}{d}
  +\frac{\sqrt2\,\eta^2\sqrt{\ell_T}}{MR\sqrt d}.
 \label{eq:noise-two-branches}
\end{equation}
For $0<\varepsilon\le MR/8$, select $\eta=4\varepsilon$.
This is at most $MR/2$, and the expected gap lower bound is
$3\varepsilon/2$. Equation~\eqref{eq:noise-two-branches} becomes
$8\varepsilon\ell_T/d+
16\sqrt2\,\varepsilon^2\sqrt{\ell_T}/(MR\sqrt d)$.
Any allowed larger corruption includes this fixed constructed
corruption. This proves the stated impossibility. Since $D=2R$,
the second term also equals
$32\sqrt2\,\varepsilon^2\sqrt{\ell_T}/(MD\sqrt d)$.
\hfill$\square$

The quadratic branch dominates only when
$\varepsilon/(MR)$ is at least of order $\sqrt{\ell_T/d}$.
The comparison with the accelerated sufficient noise scale is
therefore restricted to that accuracy regime. In the finer-accuracy
regime the separate linear branch is larger. The result supplies a
total-work obstruction and imposes no independent lower bound on
sequential stages.

\subsection{An inverse work bound}
\label{subsec:inverse-corruption-work}

\begin{samepage}
\begin{corollary}[Exponential work requirement]
\label{cor:exponential-work}
Fix $0<\eta\le MR/2$. For an allowed $\Delta>0$, let
\[
 \beta_\Delta=\min\left\{\tfrac12,\sqrt{\Delta/(2\eta)},
                               MR\Delta/(2\eta^2)\right\}.
\]
If
\[
 T\le\frac1{16}\exp(d\beta_\Delta^2/2)-\frac12,
\]
some allowed fixed corrupted instance has expected gap at least
$3\eta/8$. Thus a uniform expected gap strictly below $3\eta/8$
requires strictly more pairs than this bound, and more than twice this
number of scalar values.
\end{corollary}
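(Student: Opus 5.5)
The plan is to apply Theorem~\ref{thm:work-noise} with $\beta=\beta_\Delta$ and with the hidden-family amplitude $\Delta_{\eta,\beta_\Delta}$ bounded by $\Delta$. Two things must then be checked: that the constructed corruption is allowed, and that the failure probability in \eqref{eq:work-noise-lower} is at most $1/4$. Given both, the gap bound becomes $\eta(1-\beta_\Delta)\cdot\tfrac34\ge\eta\cdot\tfrac12\cdot\tfrac34=3\eta/8$, using $\beta_\Delta\le1/2$.

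First, admissibility. By \eqref{eq:exact-band-amplitude}, $\Delta_{\eta,\beta}=\eta\beta^2+\eta^2\beta/(MR)$. The choice $\beta_\Delta\le\sqrt{\Delta/(2\eta)}$ gives $\eta\beta_\Delta^2\le\Delta/2$. The choice $\beta_\Delta\le MR\Delta/(2\eta^2)$ gives $\eta^2\beta_\Delta/(MR)\le\Delta/2$. Summing, $\Delta_{\eta,\beta_\Delta}\le\Delta$, so the fixed corruption \eqref{eq:fixed-hidden-corruption} has magnitude at most $\Delta$. We also have $0<\beta_\Delta\le1/2$ because $\Delta,\eta>0$, and $\eta\le MR/2$ is assumed, so Theorem~\ref{thm:work-noise} applies.

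Second, the failure term. The hypothesis on $T$ rearranges to $2T+1\le\tfrac18\exp(d\beta_\Delta^2/2)$. Hence $2(2T+1)e^{-d\beta_\Delta^2/2}\le1/4$, and the positive part in \eqref{eq:work-noise-lower} is at least $3/4$. This yields the stated expected gap of at least $3\eta/8$ for some allowed instance.

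The converse phrasing is the contrapositive. Any method with uniform expected gap strictly below $3\eta/8$ must use $T$ exceeding the bound. Since each pair contributes exactly two scalar values, the number of values exceeds twice the bound. I expect no real obstacle. The only care needed is that the bound on $T$ may be negative or non-integer, in which case the statement is vacuous or applies to integers below it; and that "allowed" means $\|\delta\|_\infty\le\Delta$, which the amplitude comparison covers.
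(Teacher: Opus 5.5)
Your proposal is correct and follows the paper's proof essentially step for step. You bound the two summands of $\Delta_{\eta,\beta_\Delta}$ by $\Delta/2$ each, rewrite the pair condition as $2(2T+1)e^{-d\beta_\Delta^2/2}\le 1/4$, and then apply Theorem~\ref{thm:work-noise} together with $1-\beta_\Delta\ge1/2$.
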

\end{samepage}
\begin{proof}
The definition gives
$\eta\beta_\Delta^2\le\Delta/2$ and
$\eta^2\beta_\Delta/(MR)\le\Delta/2$.
Consequently \eqref{eq:exact-band-amplitude} is at most $\Delta$.
The stated pair condition is equivalent to
$2(2T+1)e^{-d\beta_\Delta^2/2}\le1/4$.
Apply Theorem~\ref{thm:work-noise} and $1-\beta_\Delta\ge1/2$.
\end{proof}

Up to numerical constants, this exponent is
$d\min\{1,\Delta/\eta,M^2R^2\Delta^2/\eta^4\}$.
The result allows unlimited sequential adaptivity. It does not
identify the stage budget of a matching adaptive first-order method.

\subsection{Ambiguity in upper bounds on uncorrupted smoothing bias}
\label{subsec:smoothing-bias-ambiguity}

\begin{proposition}[Ambiguity in uncorrupted smoothing bias]\label{prop:bias-ambiguity}
Fix $\alpha\in(0,1)$. Suppose the query domain is contained in $R_h\Ball_2^d$,
$R_h>0$, and the smoothing ball of radius $0<\gamma\le R_h$ is legal at
zero. For $0<a\le M$ with $aR_h\le\Delta$, the deterministic instances
\[
 f_0(z)=0,\quad\delta_0(z)=0,\qquad
 f_1(z)=a\norm z,\quad
 \delta_1(z)=-\min\{a\norm z,\Delta\}
\]
give identical observations at every legal endpoint. Their clean
smoothing biases at zero are respectively zero and $a\mu_d\gamma$.
Every upper confidence bound with coverage $1-\alpha$ for that clean bias
on both instances must therefore satisfy
\[
 \Prb_0(U_{\mathrm{bias}}\ge a\mu_d\gamma)\ge1-\alpha.
\]
This statement is independent of the number of original pairs and
sequential stages.
\end{proposition}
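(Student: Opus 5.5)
The plan has three steps: check that the observed functions agree, compute the two smoothing biases, and then compare coverage across the two instances.

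\emph{Identical observations.} Each observation is a value $f_i(z)+\delta_i(z)$ at a legal endpoint $z$. Every query lies in $R_h\Ball_2^d$, so $\norm z\le R_h$. Hence $a\norm z\le aR_h\le\Delta$, and therefore $\min\{a\norm z,\Delta\}=a\norm z$. This gives $f_1(z)+\delta_1(z)=a\norm z-a\norm z=0=f_0(z)+\delta_0(z)$ at every point of the query domain. Both instances are deterministic, so every sampled function yields the same pair of values on both instances. In addition, $|\delta_1|\le\Delta$, so both corruptions are admissible. Also, $f_1$ is convex and $a$-Lipschitz with $a\le M$, so both instances belong to the stated class.

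\emph{Bias values.} For $f_0$ the bias at zero is $0$. For $f_1$ we compute $f_{1,\gamma}(0)-f_1(0)=a\gamma\E\norm U=a\mu_d\gamma$. This uses the radial density $dr^{d-1}$, exactly as in the proof of Lemma~\ref{lem:smoothing}. Note that the bias concerns the clean function, even at points the oracle never reveals.

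\emph{Coverage.} Couple the procedure's internal seed across the two instances. Argue by induction through an arbitrary adaptive transcript, with any number of pairs and stages and any stopping rule. At every step the same endpoints are chosen and the same values are observed, so the report $U_{\mathrm{bias}}$ has the same law under $\Prb_0$ and $\Prb_1$. Coverage on instance one gives $\Prb_1(U_{\mathrm{bias}}\ge a\mu_d\gamma)\ge1-\alpha$, and equality of laws transfers this to $\Prb_0$. Nothing in the argument depends on the number of pairs or stages.

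The only delicate point is the first step. We must ensure that every endpoint, including those of the smoothing ball at legal centers, lies in the region where $a\norm z\le\Delta$. The hypothesis that the query domain lies in $R_h\Ball_2^d$ supplies exactly this. The condition $\gamma\le R_h$ is needed only so that the smoothing at zero is legal.
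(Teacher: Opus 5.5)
Your proposal is correct and follows the paper's proof essentially step for step. The observed functions vanish on the query domain because $a\norm z\le aR_h\le\Delta$. The bias of $f_1$ at zero is $a\gamma\E\norm U=a\mu_d\gamma$. The identical law of every transcript then transfers the required coverage from instance one to instance zero.
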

\begin{proof}
Both objectives are convex and $a$-Lipschitz, and both corruptions are
globally bounded by $\Delta$. On the query domain,
$a\norm z\le aR_h\le\Delta$, so the observed functions are
identically zero. Every adaptive transcript and every output
statistic thus have the same law. For uniform ball $U$,
$f_{1,\gamma}(0)-f_1(0)=a\gamma\E\norm U=a\mu_d\gamma$,
whereas the bias on the base instance vanishes. Apply the required
upper coverage on instance one and transfer the identical statistic
law to instance zero.
\end{proof}

Zero minimizes both objectives in this last example. The proposition
concerns upper confidence bounds on uncorrupted smoothing bias; the hidden-direction
family establishes the separate optimization obstruction. Together
these results distinguish the sufficient accelerated noise guarantee
from the requirements of a complete adaptive stage, work and
corruption frontier.

\section{Notation}\label{sec:app-notation}
Reused symbols are distinguished by their algorithm or setting.
\begin{longtable}{@{}p{0.19\textwidth}p{0.76\textwidth}@{}}
\toprule
Symbol & Meaning\\
\midrule
\endhead
$d$ & Euclidean dimension.\\
$Q,D$ & Feasible set and a supplied upper bound on its diameter.\\
$x^0=x_0=v_0$ & Initial feasible point.\\
$x^\star,R$ & Fixed population minimizer and $R=\norm{x^0-x^\star}$.\\
$u,R_u$ & Fixed comparator and $R_u=\norm{x^0-u}$.\\
$M,M_2$ & Population Lipschitz constant and sample weak-gradient moment bound $\E_\xi\|\nabla F(z,\xi)\|^2\le M_2^2$; $M_2$ is distinct from the gradient-estimate variance bounds.\\
$m(\xi)$ & Sufficient sample Lipschitz bound, with $\E m(\xi)^2\le M_2^2$.\\
$L,L_\gamma$ & Gradient Lipschitz constants of the mean loss, when smooth, and its ball smoothing.\\
$L_\circ,L_*$ & Common gradient Lipschitz bounds for a sequence of objectives; $L_*$ is used for the protected-radius range.\\
$N,T,2T$ & Optimization iterations, sampled losses evaluated at two points each, and function values. Sequential sampling rounds are counted separately.\\
$a_t,A_t$ & Default scalar weights $a_t=t$, $A_t=\sum_{s\le t}a_s=t(t+1)/2$. The pre-step execution chooses variable weights through \eqref{eq:prestep-weight}.\\
$B_t^y,B_t^x$ & Sizes of the gradient-estimation batches at the two sequential centers.\\
$j,B_j,B_t^0$ & Center index $j=2t-1,2t$, corresponding batch size, and its minimum value fixed before sampling.\\
$\mathsf U,m_{\mathrm{sur}}$ & Update-pair budget after batch and radius selection, and surplus above minimum batch sizes.\\
$m_j,k_j$ & Remaining surplus and additional samples allocated at center $j$.\\
$\widehat v_j,\lambda_j,\Lambda_j$ & Variance estimate from two preliminary samples, weighted batch-allocation signal, and its cumulative sum.\\
$h_0,H_t$ & Positive initial step coefficient chosen before sampling and its updated value.\\
$h_{\rm init},h_{\rm ref}$ & Initial optimization coefficient and baseline for radius selection, with $h_{\rm init}\ge h_{\rm ref}>0$; both equal $h_0$ in the default known-diameter execution.\\
$\mathsf K_t,\underline K_t,I_t$ & Adjusted observed coefficient $h_{\rm ref}+H_t-h_{\rm init}$, stored reference and doubling test based on prior observations. The projected step size remains $a_t/H_{t-1}$.\\
$\Gamma(D),\tau_t$ & Maximum trial radius $\min\{\gammaMax,d^{1/4}D/N\}$ from the diameter bound, and radius-selection threshold $\underline K_tD^2/N^2$.\\
$c_{\rm vec},k_N,q_{\mathrm{reg}}(D)$ & Vector-error factor $8/\sqrt7$ or $4\sqrt2$, potential coefficient $2/A_N+3/N^2$ and error bound when smoothing radius increases $2MDN/A_N$ or $LD^2/A_N$.\\
$\mathcal H_{\mathrm{reg}}(D),\ell_{\mathrm{reg}}(D)$\par$\mathcal U_{\mathrm{reg}}(D,\ell)$ & Bound on expected coefficient increase, logarithmic factor and adaptive-smoothing error bound, for $\mathrm{reg}\in\{\mathrm{s},\mathrm{ns}\}$.\\
$h_{\rm a},S,Q_S,D_S$ & Initial-batch coefficient, selected ball radius, intersection with the feasible set, and diameter bound $2S$.\\
$p,r_p,\Psi(R_u),a_{R_u}$ & Distance parameter chosen before sampling, maximum selected radius $r_0(1+2p)$, distance factor $R_u^2/2+(R_u-r_0)_+^2/8$ and $(R_u-r_0)_+$.\\
$\mathcal A_p(u),\beta_{\rm a}(u),\sigma_{\rm a}^2$ & Initial-estimate and search-restriction error bound, initial smoothing bias, and variance of one uncorrupted initial gradient estimate.\\
$P,\mathcal H_p(R)$ & Noise multiplier in the affine-objective error requirement and its distance factor $R+r_0+R^2/(pr_0)$; $\mathcal H_p$ differs from the curvature envelope $\mathcal H_{\mathrm{reg}}$.\\
$y_t,v_t,x_t$ & Extrapolated center, projected point, and accumulated output average.\\
$r_t,\widehat\beta_t$ & Projected step length and inner product measuring the observed gradient change.\\
$\gammaMax,\gamma_t$ & Permitted evaluation distance defining $Q+\gammaMax\mathbb B_2^d$, and smoothing radius. Forward differences require $2\gamma_t\le\gammaMax$.\\
$\bar\gamma,\underline\gamma$ & Upper and lower radii chosen before sampling for the protected interval.\\
$U,e,[e]_i$ & Uniform ball and sphere directions, independent of the sampled loss, and the $i$th coordinate of $e$.\\
$V_t,q_t$ & Value difference at two smoothing radii and its bounded radius-selection signal.\\
$s_0,s_\gamma$ & Positive constants chosen before sampling to scale batch allocation and radius selection.\\
$\xi,\delta(z),\Delta$ & Independent random sample, fixed spatial evaluation error, and its magnitude bound.\\
$\iota\in\{y,x\}$ & Label for the update batch at $y_t$ or $x_t$, respectively.\\
$g_t^\iota,e_t^\iota$ & Batch-averaged gradient estimate and total error: $e_t^y=g_t^y-\nabla\phi_t(y_t)$ and $e_t^x=g_t^x-\nabla\phi_t(x_t)$.\\
$b_t^\iota,\zeta_t^\iota$ & Conditional mean error $b_t^\iota=\E[e_t^\iota\mid\mathcal F_t^\iota]$ and centered fluctuation $\zeta_t^\iota=e_t^\iota-b_t^\iota$. For exact batch means of \eqref{eq:record} with targets $\phi_t=f_{\gamma_t}$, both biases vanish.\\
$\sigma_\gamma^2,\sigma_0^2$ & Clean single-estimate variance bound at radius $\gamma$ and its supremum over all radii permitted by the method, including initialization.\\
$\mu_d,\nu_d,c_d$ & Ball radial moments $d/(d+1),d/(d+2)$ and sphere constant $d\E|[e]_1|$.\\
$\phi_t,\mathcal S_N(u)$ & Current convex target and its weighted expected smoothing bias at a fixed comparator.\\
$\mathcal E_N,W$ & Weighted sum of squared differences between the two gradient errors, and its expectation.\\
$V_t^\iota,\mathcal V_N$ & Uncorrupted single-estimate conditional variance at the corresponding center and its expected weighted sum divided by actual batch sizes.\\
$\kappa_t,\mathcal C_N$ & Coefficient computed from true gradient differences and accumulated curvature along the actual iterates.\\
$r_0$ & Minimum selected radius, chosen before the initial batch.\\
$\vartheta,\eta,\beta,r_c$ & Hidden unit direction, objective depth, angular band width and radial threshold in the corruption construction.\\
$\ell_T$ & Pair-budget logarithm $\log(8(2T+1))$.\\
$R_\phi$ & Initial distance to a minimizer of the queried objective, including smoothing when two-point estimates replace gradients.\\
$\Gamma,\tau,B^{\rm rad}$ & Initial smoothing radius, positive threshold chosen before sampling, and batch size for radius selection.\\
$r$ & Independent radial scalar with density $dr^{d-1}$ on $[0,1]$, distinct from the displacement $r_t$.\\
$h_\gamma(y),\widehat h_t,J_\gamma$ & Smoothing radius times the radius derivative of the smoothed objective, its batch mean, and the two-value estimate \eqref{eq:radial-estimate-main}.\\
$\chi_d$ & Radial second-moment factor $2d^2/((d+1)(d+2))\le2$.\\
$\omega_{\mathrm{rad}},\eta_{\mathrm{rad}}$ & Moment bounds for the maximum normalized diagnostic error and the local smoothing-error contribution.\\
$\mathfrak X_0,\mathfrak X,\tau_-$ & Reciprocal-radius $L_2$ bounds for exact and corrupted evaluations, and the corruption margin $\tau_-=\tau-2d\Delta$.\\
$L_{\mathrm{pop}}$ & $L$ when the mean loss is smooth and $+\infty$ for the nonsmooth alternative in the radial minimum bound.\\
$\mathcal D_N,\mathcal D_N^+$ & Signed target-change sum and sum of its positive increments.\\
$\Gamma_0,\Gamma,g$ & Baseline trial radius $\min\{\gammaMax/2,d^{1/4}D/N\}$, maximum trial radius $\max\{g,\Gamma_0\}$ and minimum radius chosen before sampling for the forward-difference method. The derivative-estimate method uses the separate maximum radius $\Gamma(D)$.\\
$\widehat q_\gamma^0,\widehat q_\gamma,q_\gamma$ & Exact and corrupted two-value differences, and their mean without corruption $f_{2\gamma}-f_\gamma$. Both values use the same sampled loss.\\
$\rho,\kappa_d$ & Objective-error tolerance chosen before sampling, with $D\beta\le\rho$, and dimension ratio $d/c_d$, with $\kappa_d^2\le3d$.\\
$\mathsf S_N,\zeta,\beta,\bar\sigma$ & Maximum normalized uncorrupted diagnostic, root mean square reciprocal-radius bound, gradient-bias and standard-deviation bounds for forward differences.\\
$\mathsf W_{N,2}$ & Power sum $\sum_{t=1}^Nt^2$, distinct from the root mean square radius bound $S_2$.\\
$\mathcal H_{\mathrm{reg}}^\Delta,\ell_{\mathrm{reg}}^\Delta$ & Coefficient bounds and logarithmic factors for forward differences. These differ from the derivative-estimate bounds $\mathcal H_{\mathrm{reg}}(D),\ell_{\mathrm{reg}}(D)$ used by the initial-batch variant.\\
$\mathcal P_{\mathrm{reg}}^\Delta,\mathcal B_{N,T}$ & Forward-difference error bound and exact-evaluation rate after absorbing a sufficiently small initial-coefficient contribution.\\
$z_{\rm a},\kappa,s(z),S_2$ & Root mean square bound on the initial batch mean, $h_{\rm ref}pr_0/A_N$, its increasing radius map, and the deterministic radius bounding $\E S^2$ by $S_2^2$.\\
$\mathcal A_{\rm sat},\mathcal A_*$ & Alternative initial-estimate and search-restriction bound, and its minimum with $\mathcal A_p$. Smoothing and search-restriction errors can remain when $h_{\rm ref}$ is small.\\
$\mathcal J_{\mathrm{reg}}(S_2),\mathcal V_{\rm ns}(s,\ell)$ & Regularity-specific terms in the exact bound \eqref{eq:joint-main-risk}, and its nonsmooth expansion for a finite evaluation radius; $\mathcal J_{\rm s}=+\infty$ without population smoothness.\\
$G_{R_u},z_{R_u},S_{R_u}$ & Gradient, initial-estimate moment and radius bounds under stationarity at the original comparator; no distance to a smoothed minimizer is substituted.\\
$\varepsilon,\theta$ & Supplied target accuracy and initialization constant chosen before sampling in $h_0=\theta\varepsilon N^2/D^2$; $\theta=4/77$ gives the calibrated sufficient budgets.\\
$r_{\rm s},r_{\rm ns},a_{\rm ns},r_{\rm st}$ & Smooth and nonsmooth iteration terms, finite-evaluation-radius correction, and statistical term in \eqref{eq:epsilon-population-budgets}.\\
$\mathcal B_{\rm s},\mathcal B_{\rm ns},\lambda_{\mathrm{reg}}^\varepsilon$ & Population budget sums and calibrated upper bounds on $\ell_{\mathrm{reg}}^\Delta$; these differ from the observed batch-allocation signal $\lambda_j$.\\
$\omega_j,V_j,\overline{\mathcal V}_N$ & Center weight $\omega_{2t-1}=\omega_{2t}=t$, conditional single-estimate variance at the actual center, and $\E\sum_j\omega_jV_j/(2A_N)$ in the fixed-target analysis.\\
$b_t,S_{\rm sur},\mathcal C_\star^{\rm tr},\mathfrak R_{\rm batch}$ & Minimum adaptive batch size, total surplus, minimum weighted variance over allocations at the same queried centers, and exact worst-case variance ratio $1+S_{\rm sur}/b_1$. These quantities describe weighted variance, not optimization error.\\
$\mathcal L_j,A_j^+,a_j,\chi_j$ & Pre-step curvature candidate, candidate accumulated and current acceleration weights, and nonnegative model allowance. Here $a_j$ solves \eqref{eq:prestep-weight} and is distinct from the default $a_t=t$.\\
$P_\gamma,\widehat q_j,\zeta_j^q$ & Two-value estimate of an objective difference, its batch mean and centered error for a fixed smoothed objective.\\
$\mathcal A,\mathcal R,K_{\rm tr},J_\star$ & Accepted and rejected trial index sets, executed-trial cap chosen before sampling, and regime-specific rejection bound. Costs include samples, rounds and projections from rejected trials.\\
$Z_j(u)$ & Pre-step error term in \eqref{eq:prestep-errors}; acceptance decisions can affect the mean of its retained sum.\\
$L_\phi,\overline{\mathcal L}$ & Smoothness bound of the fixed target in the confidence-controlled pre-step execution and maximum curvature candidate $\max\{\mathcal L_0,4L_\phi\}$.\\
$\bar\sigma,\overline M_2,\delta_{\rm conf}$ & Supplied standard-deviation bound for gradient estimates, root mean square bound per unit distance for objective-difference estimates, and failure probability. The confidence notation differs from spatial corruption $\delta(z)$.\\
$\eta_j^g,\eta_j^q,\eta_j$ & Gradient-error and normalized objective-difference tolerances chosen from past observations, and their sum in the pre-step confidence allowance.\\
$k_{\rm mom},b_j^g,b_j^q,\Lambda_{\rm mom}$ & Number of robust blocks, samples per gradient/comparison block, and continuous allocation coefficient in \eqref{eq:preconf-moment-split}.\\
$J_{\rm exec},T_{\rm exec}$ & Executed pre-step trials and their total pair count, including rejected trials.\\
$\tau,\widehat U,C,q$ & Sample count when the procedure stops, reported upper mean bound, event of a small report under the null law, and its probability.\\
$r,q_T,b_i,C_*$ & Initial point and alternative distance in the two-law family, probability of identical sampled functions, deterministic benchmarks, and exact competitive factor.\\
$\mathcal F_t^y,\mathcal F_t^x$ & Histories immediately before the respective update batches, including the selected center, radius, batch size and preceding selection observations. The $x$ history also contains the current $y$ batch, $g_t^y$, $v_t$ and $x_t$. General trajectory results require only their stated measurability and error moments.\\
\bottomrule
\end{longtable}

\stopcontents[appendices]
\end{document}